 \DeclareFontFamily{U}{mathc}{}
\DeclareFontShape{U}{mathc}{m}{it}%
{<->s*[1.03] mathc10}{}
\DeclareMathAlphabet{\mathscr}{U}{mathc}{m}{it}
\newcommand{\ee}{\mathbb{E}}
\newcommand{\e}{\varepsilon}
\newcommand{\fr}{\partial}
\newcommand{\set}[1]{\left\{#1\right\}}
\newcommand{\norm}[1]{{\left\Vert#1\right\Vert}}
\newcommand{\abs}[1]{\left\vert#1\right\vert}
\newcommand{\pu}{{\mathbb{P}^1}}
\newcommand{\cX}{\mathcal{X}}
\newcommand{\rest}[1]{ \arrowvert_{#1}}
\newcommand{\tendvers}{\underset{n\to\infty}{\longrightarrow}}
\newcommand{\unsur}[1]{\frac{1}{#1}}
\newcommand{\qq}{\mathcal{Q}}
\newcommand{\rond}{\!\circ\!}
\newcommand{\lrpar}[1]{\left(#1\right)}
\newcommand{\loc}{\mathrm{loc}}
\newcommand{\inv}{^{-1}}
\DeclareMathOperator{\supp}{Supp}
\DeclareMathOperator{\inter}{inter}
\DeclareMathOperator{\leb}{Leb}
\DeclareMathOperator{\length}{length}
\DeclareMathOperator{\id}{id}
\DeclareMathOperator{\jac}{Jac}
\DeclareMathOperator{\diam}{Diam}
\DeclareMathOperator{\dist}{dist}
\DeclareMathOperator{\hexp}{\Phi}
\DeclareMathOperator{\osc}{osc}
\DeclareMathOperator{\Lim}{Lim}
\DeclareMathOperator{\area}{area}
\newcommand{\C}{\mathbf{C}}
\newcommand{\R}{\mathbf{R}}
\newcommand{\Q}{\mathbf{Q}}
\newcommand{\Z}{\mathbf{Z}}
\newcommand{\N}{\mathbf{N}}
\newcommand{\bfe}{{\mathbf{e}}}
\newcommand{\Hyp}{\mathbb{H}}
\renewcommand{\P}{\mathbb{P}}
\newcommand{\pp}{\mathbb{P}}  
\newcommand{\NS}{{\mathrm{NS}}}
\newcommand{\Pic}{{\mathrm{Pic}}}
\newcommand{\sA}{{\mathcal A}}
\newcommand{\Mer}{{\mathcal M}}
\newcommand{\M}{{\mathbf{M}}}
\DeclareMathOperator{\Pent}{{Pent}}
\DeclareMathOperator{\Jac}{{Jac}}
\DeclareMathOperator{\Cur}{{Cur}}
\DeclareMathOperator{\Lip}{{Lip}}
\DeclareMathOperator{\Vect}{Vect}
\DeclareMathOperator{\dens}{dens}
\DeclareMathOperator{\Ax}{Geo}
\newcommand{\g}{{\mathrm{g}}}
 \newcommand{\Diff}{{\mathsf{Diff}}}
\newcommand{\disk}{\mathbb{D}}
\newcommand{\Aut}{\mathsf{Aut}}
\newcommand{\Isom}{\mathsf{Isom}}
 \newcommand{\Bir}{{\mathsf{Bir}}}
 \newcommand{\PGL}{{\sf{PGL}}}
\newcommand{\PSL}{{\sf{PSL}}}
\renewcommand{\O}{{\sf{O}}}
\newcommand{\SO}{{\sf{SO}}}
\newcommand{\End}{{\sf{End}}}
\newcommand{\GL}{{\sf{GL}}}
\newcommand{\SL}{{\sf{SL}}}
\newcommand{\Aff}{{\sf{Aff}}}
\newcommand{\U}{{\sf{U}}}
\newcommand{\vol}{{\sf{vol}}}
\newcommand{\euc}{{\sf{euc}}}
\newcommand{\moco}{{\mathrm{modc}}}
\newcommand{\Ker}{{\mathrm{Ker}}}
\newcommand{\Kah}{{\mathrm{Kah}}}
\newcommand{\Kahbar}{{\overline{{\mathrm{Kah}}}}}
\newcommand{\Amp}{{\mathrm{Amp}}}
\newcommand{\Nef}{{\mathrm{Nef}}}
\newcommand{\Psef}{{\mathrm{Psef}}}
\newcommand{\Zar}{{\mathsf{Zar}}}
\newcommand{\m}{\mathscr{m}}
\newcommand{\F}{F}
\newcommand{\X}{\mathscr{X}}
\newcommand{\cx}{\mathscr{x}}
\newcommand{\cy}{\mathscr{y}}
\newcommand{\romain}[1]{{\color{blue}*}\marginpar{\tiny  \color{blue} RD: #1}}
\newcommand{\serge}[1]{{\color{red}*}\marginpar{\tiny  \color{red} SC: #1}}
\newcommand{\bleu}[1]{\textcolor{blue}{#1}}
\theoremstyle{plain}
\newtheorem{thm}{Theorem}[section]
\newtheorem{cor}[thm]{Corollary}
\newtheorem{pro}[thm]{Proposition}
\newtheorem{lem}[thm]{Lemma}
\newtheorem{mthm}{Theorem}
\newtheorem{mcor}{Corollary}
\theoremstyle{definition}
\newtheorem{eg}[thm]{Example}
\newtheorem{rem}[thm]{Remark}
\numberwithin{equation}{section}       
\begin{document}
 
\setlength{\baselineskip}{0.46cm}        

\setlength{\textwidth}{13.2cm}                       
\setlength{\textheight}{21.2cm}                     
\setlength{\topmargin}{0.15cm}                     
\setlength{\headheight}{0.7cm}                     
\setlength{\headsep}{0.7cm}                         
\setlength{\oddsidemargin}{1.2cm}                
\setlength{\evensidemargin}{0.4cm}              


\title[Random dynamics on complex surfaces]{Random dynamics on   real and complex projective  surfaces}

\author{Serge Cantat}
\address{Serge Cantat, IRMAR, campus de Beaulieu,
b\^atiments 22-23
263 avenue du G\'en\'eral Leclerc, CS 74205
35042  RENNES C\'edex}
\email{serge.cantat@univ-rennes1.fr}

\author{Romain Dujardin}
\address{ Sorbonne Universit\'e, CNRS, Laboratoire de Probabilit\'es, Statistique  et Mod\'elisation  (LPSM), F-75005 Paris, France}
\email{romain.dujardin@sorbonne-universite.fr}

 \begin{abstract}
We initiate the study of random iteration of automorphisms of real and complex projective surfaces, as well as compact K\"ahler surfaces, 
focusing on the classification of stationary measures. We show that, in a number 
of cases, such stationary measures are invariant,  and 
provide criteria  for uniqueness,  smoothness  and rigidity  of  invariant probability measures. 
This involves a variety of tools from complex and algebraic geometry,    
 random products of matrices,  non-uniform hyperbolicity,  as well as     recent results of 
 Brown and Rodriguez Hertz  on random iteration of surface diffeomorphisms. 
\end{abstract}

 \maketitle

 \setcounter{tocdepth}{1}
\tableofcontents

\begin{vcourte}
\begin{small}
Note: A longer version of this paper  is maintained on the personal website of the authors (see \cite{cantat-dujardin:vlongue}). It contains complementary 
examples and preliminaries, and the  proofs of some known facts are included for convenience. 
\end{small}
\end{vcourte}

\newpage

 \section{Introduction}\label{sec:introduction}
 \subsection{Random dynamical systems}\label{par:RDS_intro}
Consider a compact  manifold $M$ and a probability measure $\nu$ on $\Diff(M)$; to simplify the exposition 
we assume throughout this introduction that  the support $\supp(\nu)$ is finite. The data 
$(M,\nu)$ defines a \textbf{random dynamical system}, obtained by randomly composing independent 
diffeomorphisms with distribution $\nu$. In this paper, these random dynamical systems are studied 
from the point of view of \emph{ergodic theory}, that is, we are mostly interested in understanding the  \emph{asymptotic distribution} of orbits. 
 
A probability measure $\mu$ on $M$ is {\bf{$\nu$-invariant}}
if $f_*\mu=\mu$ for $\nu$-almost every $f\in \Diff(M)$, and it is 
{\bf{$\nu$-stationary}} if it is invariant on average: $\int f_*\mu \, d\nu(f)=\mu$. A simple fixed 
point argument shows that stationary measures always exist. On the other hand, the existence of an invariant measure 
should hold only under special circumstances, for instance  when the group $\Gamma_\nu$ generated 
by $\supp(\nu)$ is amenable, or has a finite orbit, or 
preserves an invariant volume form.
 
According to Breiman’s law of large numbers,
for every $x\in M$ and $\nu^\N$-almost every 
$(f_j)\in \Diff(M)^\N$, every cluster value of the sequence of empirical measures
\begin{equation}
\frac{1}{n}\sum_{j=0}^{n-1}\delta_{f_j\circ \cdots \circ f_0(x)}
\end{equation}
is a stationary measure. Thus, a classification of stationary measures gives an essentially 
complete understanding 
of the asymptotic     distribution of such random orbits, 
as   $n$ goes to $+\infty$. 
\begin{vcourte}
Our goal is to combine  
algebraic and holomorphic dynamics  
with recent results  in random dynamics   to study the case when $M$ is a real
or complex projective surface and the action is by algebraic diffeomorphisms.
But let us first highlight  
a nice example to which our techniques apply; 
rooted in elementary euclidean geometry, it leads to interesting algebro-geometric constructions (see Remarks~\ref{rem:hessian} and \ref{rem:pentagon_enriques}). 
\end{vcourte}

\begin{vlongue}
When $\Gamma_\nu$ is a cyclic group, the set of invariant measures 
is typically too large to be amenable to a complete description. On the other hand a number of
recent works have  shown that stationary measures, even if they 
always exist, tend to  satisfy some \emph{rigidity properties} when $\Gamma_\nu$ is large.  Our goal in this 
article is to combine tools from  algebraic and holomorphic dynamics together with  these
recent results from random dynamics to study the case when $M$ is a real
or complex projective surface and the action is by algebraic diffeomorphisms. 
Before describing the state of the art and stating  a few precise results, let us highlight  
a nice geometric  example to which our techniques can be applied;
rooted in elementary euclidean geometry, it illustrates our main results in 
the case of K3 and Enriques surfaces. 
\end{vlongue}

\subsection{Randomly folding pentagons}\label{par:pentagons_intro}
 Let $\ell_0, \ldots, \ell_4$
be five positive real numbers   such that there exists a pentagon with side lengths $\ell_i$. Here a pentagon 
is just an ordered set of points $(a_i)_{i=0, \ldots, 4}$ in the Euclidean plane,
such that $\dist(a_i,a_{i+1})=\ell_i$ for $i=0,\ldots, 4$ (with $a_5 = a_0$ by definition); pentagons are not assumed to be convex, and 
two distincts sides $[a_i, a_{i+1}]$ and $[a_j, a_{j+1}]$ may 
intersect at  a point which is not one of the $a_i$'s.
Let  $\Pent(\ell_0, \ldots, \ell_4)$ be the set of pentagons with side lengths $\ell_i$. 
This set may be defined by polynomial equations of the form 
$\dist(a_i, a_{i+1})^2 = \ell_i^2$, so it is naturally a real  algebraic variety. 

For every  $i$,  $a_i$ is one of the two intersection points $\set{a_i, a'_i}$ of the circles
of respective  centers $a_{i-1}$ and $a_{i+1}$ and   radii $\ell_{i-1}$ and $\ell_i$. 
The transformation exchanging  these two points $a_i$ and $a_i’$,  while keeping 
 the other vertices fixed, defines an involution  $s_i$ of $\Pent(\ell_0, \ldots, \ell_4)$. 
 It commutes with the action of 
 positive isometries of the plane, 
  hence, it induces an involution $\sigma_i$ on the quotient space 
\begin{equation}
\Pent^0(\ell_0, \ldots, \ell_4)=\Pent(\ell_0, \ldots, \ell_4)/(\SO_2(\R)\ltimes \R^2).
\end{equation}
Each element of $\Pent^0(\ell_0, \ldots, \ell_4)$ admits a unique representative with 
$a_0=(0,0)$ and $a_1=(\ell_0, 0)$, so as before  
  $\Pent^0(\ell_0, \ldots, \ell_4)$ is a real 
algebraic variety, which is easily seen to be of dimension 2 (see \cite{Curtis-Steiner,Shimamoto-Vanderwaart}). 
When 
smooth (see Lemma~\eqref{lem:pentagon_smoothness} and the comments preceding Remark~\ref{rem:topology} for a discussion),  $\Pent^0(\ell_0, \ldots, \ell_4)$ is a real K3 surface 
on which the $\sigma_i$ 
act by algebraic diffeomorphisms, 
preserving a canonically defined area form  (see \S\ref{par:pentagons}); 
and for a general choice of lengths, the group generated by these involutions 
generates  a rich dynamics. 
Now, start
with some pentagon 
and at every unit of time,  apply randomly one of the~$\sigma_i$. This creates a random sequence of pentagons, and our results explain how this 
 sequence is asymptotically distributed on $\Pent^0(\ell_0, \ldots, \ell_4)$. (The dynamics of the folding maps acting on plane quadrilaterals
 was studied for instance in~\cite{esch-rogers, benoist-hulin}.)

\subsection{Stiffness}\label{par:intro_stiffness} Let us  present a few 
 landmark results 
 about stationary measures.
  First, suppose  that 
$\nu$ is a finitely supported 
probability measure on $\SL_2(\C)$, which we view as 
   acting by  linear  projective transformations
on $M = \P^1(\C)$. 
\begin{vcourte}
Suppose that the group $\Gamma_\nu$ generated by the support of $\nu$ 
is \textbf{non-elementary}, that is, $\Gamma_\nu$ is unbounded  
and  acts strongly irreducibly on $\C^2$. 
\end{vcourte}
\begin{vlongue}
Suppose that the group $\Gamma_\nu$ generated by the support of $\nu$ 
is \textbf{non-elementary}, that is, $\Gamma_\nu$ is non-compact  and  acts strongly irreducibly on $\C^2$
(in the non-compact case, this simply means that $\Gamma_\nu$ does not have any orbit of cardinality $1$ or $2$ in $\P^1(\C)$).
\end{vlongue}
Then, \emph{there is a unique $\nu$-stationary (probability) 
measure $\mu$ on $\P^1(\C)$, and this measure is not invariant}. 
This is one  instance of a more general result due 
to Furstenberg~\cite{Furstenber:1963}.

\begin{vlongue}
Temporarily leaving the setting
of diffeomorphisms, let us consider the semigroup of transformations of the circle 
$\R/\Z$ generated by $m_2$ and $m_3$, where $m_d(x)   = dx \mod 1$. Since the multiplications by 2 and 3 
commute, the so-called   Choquet-Deny theorem asserts that any stationary measure is invariant.  
Furstenberg's famous ``$\times 2\times 3$ conjecture'' asserts that 
any atomless probability  measure  $\mu$ invariant under $m_2$ and $m_3$ is  the Lebesgue measure (see~\cite{furstenberg-disjointness}). 
This question is still open so far, and has attracted a lot of attention. 
Rudolph \cite{rudolph} proved  that the answer is positive when $\mu$ is of positive entropy with respect to $m_2$ or $m_3$.

Back to diffeomorphisms, let $\nu$ be a finitely supported measure on $\SL_2(\Z)$, and 
 consider the action of $\SL_2(\Z)$ on  the torus $M=\R^2/\Z^2$. 
\end{vlongue}

\begin{vcourte}
Now, let $\nu$ be a finitely supported measure on $\SL_2(\Z)$, and 
consider the action of $\SL_2(\Z)$ on  the torus $M=\R^2/\Z^2$. 
\end{vcourte}
In that case, the Haar measure 
of $\R^2/\Z^2$, as well as the 
atomic measures equidistributed on finite orbits $\Gamma_\nu(x,y)$, for $(x,y)\in \Q^2/\Z^2$, are examples of $\Gamma_\nu$-invariant measures.  
 By using Fourier analysis and additive combinatorics techniques, Bourgain, Furman, Lindenstrauss
and Mozes~\cite{BFLM} proved that  \emph{if $\Gamma_\nu$ is non-elementary, then 
every stationary measure $\mu$ on $\R^2/\Z^2$ is $\Gamma_\nu$-invariant and is 
 a convex combination of the above mentioned invariant measures.}
\begin{vlongue}
This can be viewed as an affirmative answer to  a non-Abelian version 
of the $\times2\times 3$ conjecture.   
\end{vlongue}   
This  property of automatic invariance of stationary measures was called  \textbf{stiffness}
 (or 
 $\nu$-stiffness) by Furstenberg~\cite{furstenberg_stiffness}, 
 who conjectured it to hold in this setting.
Soon after, Benoist and Quint~\cite{benoist-quint1}
 gave an ergodic theoretic proof of this result 
 and extended  it 
 to certain actions of discrete groups  on homogeneous spaces. They also derived the following equidistribution result: 
 \emph{for every  $(x,y)\notin \Q^2/\Z^2$,  the  random trajectories of $(x,y)$ determined by $\nu$  almost surely  equidistribute towards the Haar measure on $\R^2/\Z^2$. }

Finally, Brown and Rodriguez-Hertz~\cite{br}, 
building on  the work of   Eskin and Mirzakhani \cite{eskin-mirzakhani},
 managed to recast these measure rigidity results in terms of Pesin theory 
 to obtain a
 version of the stiffness theorem of~\cite{BFLM} for general $C^2$ diffeomorphisms of compact surfaces. 
We shall describe their results in due time; for the moment  we 
content ourselves with one illustrative consequence of~\cite{br}. 
Let  $\nu  = \sum\alpha_j\delta_{f_j}$ 
be a finitely supported probability measure on $\SL_2(\Z)$ generating a non-elementary subgroup. 
Consider perturbations $\set{f_{i, \varepsilon }}$ of the 
$f_i$  in the group $\Diff^2_{\vol}(\R^2/\Z^2)$ of $C^2$ diffeomorphisms of $\R^2/\Z^2$ preserving the Haar measure. 
Set $\nu_\e  = \sum\alpha_j\delta_{f_{j, \e}}$.
Then, \emph{for sufficiently small perturbations,  
any $\nu_\e$-stationary measure on $\R^2/\Z^2$ is invariant and is a combination 
 of the Haar measure   and measures supported on finite $\Gamma_{\nu_\e}$-orbits}.

\begin{vcourte}
In this paper, we  prove a stiffness theorem for groups of 
algebraic diffeomorphisms of real algebraic surfaces. 
The work of Brown and Rodriguez-Hertz is our 
main source of inspiration and   a key ingredient for some of our main results. 
\end{vcourte}

\begin{vlongue}
In this paper, we  obtain a new generalization of the stiffness theorem of~\cite{BFLM}, 
for algebraic diffeomorphisms of real algebraic surfaces. 
Before entering into specifics, let us emphasize that the article~\cite{br}, by Brown and Rodriguez-Hertz, is  our 
main source of inspiration and   a key ingredient for some of our main results.
\end{vlongue}

\subsection{Sample results: stiffness, classification, and rigidity} 
Let $X$ be a smooth 
complex projective surface, or more generally a compact K\"ahler surface. 
Denote by $\Aut(X)$ its group of holomorphic diffeomorphisms, referred to in this paper as   {\bf{automorphisms}}. When $X\subset \P^N(\C)$
is defined by polynomial equations with real coefficients, the complex conjugation induces an anti-holomorphic involution 
$s\colon X\to X$, whose fixed point set is the real part $X(\R)$ of $X$. 
We denote by $X_\R$ the surface $X$ viewed as an algebraic variety defined over $\R$, and by 
$\Aut(X_\R)$ the group of automorphisms defined over $\R$; $\Aut(X_\R)$ is 
the subgroup of $\Aut(X)$ centralizing~$s$. When  $X(\R)\neq \emptyset$, the elements of $\Aut(X_\R)$ are  the 
real-analytic diffeomorphisms of $X(\R)$ admitting a holomorphic extension to $X$. Note that in stark contrast with groups of smooth diffeomorphisms,  the groups $\Aut(X_\R)$ and $\Aut(X)$ are typically \emph{discrete and at most countable}.  

The group $\Aut(X)$
acts on the cohomology  $H^*(X;\Z)$. By definition, a subgroup $\Gamma\subset \Aut(X)$
is {\bf{non-elementary}} if  its image $\Gamma^*\subset \GL(H^*(X;\C))$ contains a non-Abelian 
free group; equivalently, $\Gamma^*$ is not virtually Abelian.
When $\Gamma$ is non-elementary, there exists a pair $(f,g)\in \Gamma^2$ 
generating a free group of rank $2$ such that the topological entropy of 
every element  in that group is positive (see~Lemma~\ref{lem:non-elementary_free_groups}). 
Pentagon foldings provide examples for which $\Aut(X_\R)$ is non-elementary.

\subsubsection{Stiffness} As before, if  $\nu$ is a finitely supported probability measure on $\Aut(X)$, we denote by $\Gamma_\nu$ the subgroup 
generated by $\supp(\nu)$. 

\begin{mthm}\label{mthm:stiffness}
Let $X_\R$ be a real projective surface and $\nu$ be a finitely supported symmetric probability measure 
on $\Aut(X_\R)$. If $\Gamma_\nu$   preserves an area form on $X(\R)$, 
then every ergodic $\nu$-stationary measure $\mu$ on $X(\R)$ is either invariant or 
supported on a proper $\Gamma_\nu$-invariant  subvariety. 
In particular if there is no $\Gamma_\nu$-invariant algebraic curve, the random dynamical system $(X,\nu)$ is stiff. 
\end{mthm}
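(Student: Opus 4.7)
The plan is to apply the measure rigidity dichotomy of Brown--Rodriguez-Hertz for volume-preserving random dynamics on surfaces to $(X(\R),\nu)$ and then exploit the algebraicity of $\Gamma_\nu\subset\Aut(X_\R)$ to turn any measurable invariant structure into an algebraic one. If $\Gamma_\nu$ is elementary, then up to finite index either it acts trivially on cohomology or is virtually cyclic, and in both cases the area-preservation makes every ergodic stationary measure invariant, so assume $\Gamma_\nu$ is non-elementary. Fix an ergodic $\nu$-stationary measure $\mu$ on $X(\R)$ and consider the Lyapunov exponents $\lambda_+(\mu)\geq \lambda_-(\mu)$ of the derivative cocycle $(\omega,x)\mapsto D_x f_0$. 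Area-preservation gives $\lambda_+ +\lambda_- =0$, and the symmetry of $\nu$ ensures that these are also the Lyapunov exponents of the reversed walk.

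In the hyperbolic regime $\lambda_+(\mu)>0$, I would invoke the Brown--Rodriguez-Hertz dichotomy: either (a) the conditional measures of $\mu$ on Pesin unstable manifolds are absolutely continuous (the SRB property), or (b) the Pesin stable line field $E^s$ is $\mu$-almost surely independent of the random past, and thus defines a measurable $\Gamma_\nu$-invariant line field on $\supp(\mu)$. In case (a), the symmetry of $\nu$ yields the corresponding property along the stable direction, so $\mu$ becomes absolutely continuous with respect to the invariant area; equality in the Margulis--Ruelle inequality, applied to both $\nu$ and its reverse, then forces $\mu$ to be $\Gamma_\nu$-invariant. In case (b) one is given a measurable line field preserved by a non-elementary group of holomorphic automorphisms; using the Zariski closure of the graph of $E^s$ inside $\P(TX_\R)$ together with the existence of loxodromic pairs in $\Gamma_\nu$ (Lemma~\ref{lem:non-elementary_free_groups}), I would argue that $E^s$ must degenerate along a proper $\Gamma_\nu$-invariant algebraic curve containing $\supp(\mu)$.

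In the non-hyperbolic regime $\lambda_+(\mu)=0$, the derivative cocycle has trivial spectrum on $\mu$. Since $\Gamma_\nu^*\subset \GL(H^{1,1}(X;\R))$ is non-elementary and acts by isometries of the form of signature $(1,\rho-1)$ coming from the intersection product, Furstenberg's theorem yields strictly positive drift on the associated hyperbolic space. Comparing this cohomological growth to the integrated derivative norm growth on $\supp(\mu)$, in the spirit of Ruelle's inequality, one sees that the triviality of the Lyapunov spectrum of $\mu$ is incompatible with $\supp(\mu)$ being Zariski dense, so $\mu$ must again be supported on a proper $\Gamma_\nu$-invariant curve.

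The principal obstacle is case (b): promoting a purely measurable $\Gamma_\nu$-invariant line field into an algebraic invariant subvariety. Such line fields are abundant for smooth diffeomorphisms, but should be extremely constrained for non-elementary subgroups of $\Aut(X_\R)$, by rigidity statements in the spirit of previously known results on invariant foliations for single surface automorphisms. The interplay of Pesin theory, algebraicity and non-elementarity at this step—rather than the application of the Brown--Rodriguez-Hertz dichotomy itself—is the genuine heart of the theorem.
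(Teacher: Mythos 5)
Your reduction to the non-elementary case and your appeal to the Brown--Rodriguez-Hertz dichotomy match the skeleton of the paper's proof of Theorem~\ref{thm:stiffness_real}, but two steps in your argument do not hold up. First, in the regime $\lambda^+(\mu)=0$ your claim that a trivial Lyapunov spectrum is ``incompatible with $\supp(\mu)$ being Zariski dense'' is false: positive drift of $\Gamma_\nu^*$ on $H^{1,1}(X;\R)$ bounds the global growth of $\norm{Df^n_\omega}$, not the exponents of a particular stationary measure, and there is no Ruelle-type inequality running in that direction (indeed the theorem's own conclusion allows invariant measures, which a priori may have zero exponents and full support). The correct tool here is the invariance principle of Crauel/Avila--Viana (Theorem~\ref{thm:invariance_principle}): $\lambda^-\geq 0$ together with $\lambda^-+\lambda^+=0$ already forces $\mu$ to be almost surely invariant, so the only case needing real work is the hyperbolic one.

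Second, and more seriously, your case (b) --- a non-random stable line field --- is not resolved by taking ``the Zariski closure of the graph of $E^s$ in $\P(TX)$.'' A measurable line field defined $\mu$-almost everywhere carries no algebraic structure, its graph has no controlled Zariski closure, and the expected conclusion is not that $E^s$ degenerates along an invariant curve: by Theorem~\ref{mthm:alternative_stable}, non-randomness of the stable directions forces either support on an invariant curve \emph{or} invariance of $\mu$ with vanishing fiber entropy. Proving that alternative is precisely where the paper's machinery enters: Furstenberg theory on $H^{1,1}(X;\R)$ produces the limit classes $e(\omega)$ and the stable currents $T^s_\omega$ with H\"older potentials (Theorem~\ref{mthm:currents}); laminar/Ahlfors--Nevanlinna intersection theory on the projective surface identifies $T^s_\omega$ with the Nevanlinna current of $W^s(\omega,x)$ (Theorem~\ref{thm:nevanlinna}), so that distinct itineraries generically give distinct stable manifolds; and a quantitative local analysis of tangencies (intersection multiplicities, osculation, and a Rouch\'e/Harnack perturbation lemma) then shows that if the stable directions were non-random off an invariant curve, the stable conditionals of $\m$ would be atomic, giving zero entropy and, via the criterion of Brown--Rodriguez-Hertz, invariance of $\mu$. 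You correctly identify this step as the heart of the theorem, but the strategy you sketch for it would not work, so the proposal has a genuine gap exactly there.
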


This theorem is mostly interesting when  $\Gamma_\nu$ is non-elementary 
and we  
focus on this case in the remainder of this introduction. 
Stationary measures supported on invariant curves are easily analysed 
(see~\S\ref{par:invariant_curves2}). 
Moreover, if $\Gamma_\nu$ is non-elementary, it is always possible to  contract all $\Gamma_\nu$-invariant curves, creating a complex
analytic surface $X_0$  with  finitely many singularities. Then  on $X_0(\R)$, stiffness  holds unconditionally. 

This result applies to many  interesting examples, because Abelian, K3, and Enriques surfaces, which concentrate 
most of the   dynamically interesting automorphisms  on compact  complex surfaces, admit 
 a canonical $\Aut(X)$-invariant $2$-form. 
In particular, it applies to 
pentagon foldings. Note also that linear Anosov maps 
on $\R^2/\Z^2$  fall into this category, so  Theorem \ref{mthm:stiffness} contains 
the stiffness  statement  of \cite{BFLM}  
in the two-dimensional case. 

\subsubsection{Invariant measures} Once  stiffness is established, the next step is to \emph{classify invariant measures}. A  \textbf{parabolic} automorphism of a compact Kähler 
surface is an automorphism $g$ such that the norm of $(g^n)^*$ on $H^{2}(X;\R)$ grows quadratically (as $\alpha n^2$ for some $\alpha>0$); such an 
automorphism preserves automatically a genus $1$ fibration on $X$ (see e.g.~\cite{invariant}). An example is given by the composition of the foldings $\sigma_i$ and $\sigma_{i+1}$ of two adjacent vertices in the space of pentagons. 
When 
$\Gamma_\nu$ contains a parabolic  automorphism, $\Gamma_\nu$-invariant measures 
are classified in~\cite{cantat_groupes, invariant}. 
A nice consequence
is that for a non-elementary group of $\Aut(X_\R)$
 containing parabolic elements and preserving an area form,  any
invariant ergodic measure is  either atomic, or concentrated on a $\Gamma_\nu$-invariant algebraic curve, or is the restriction of the 
area form on some open subset of $X(\R)$ bounded by a piecewise smooth curve.

For random pentagon foldings, these results give a complete answer to the equidistribution problem raised in 
\S \ref{par:RDS_intro}. Indeed, assume for simplicity
that the group generated by the five involutions $\sigma_i$ of $\Pent^0(\ell_0, \ldots, \ell_4)$ does not preserve 
any proper Zariski closed set, and that $\mathrm{Pent}^0(\ell_0, \ldots, \ell_4)$ is connected. Then 
the stiffness and classification theorems imply that the only stationary measure is the canonical 
area form. Therefore by Breiman's law of large 
numbers, for every initial pentagon 
$P\in \Pent^0(\ell_0, \ldots, \ell_4)$ and   almost every 
sequence  $(m_j)\in \{0, \ldots, 4\}^\N$, the random 
sequence 
$P_n=(\sigma_{m_{n-1}}\circ \cdots \circ \sigma_{m_0})(P)$ equidistributes with respect to 
the area form.  Thus, quantities like the asymptotic average of the diameter  
are given by explicit integrals of semi-algebraic functions, independently of the starting pentagon $P$. 

Another widely studied example 
is the family of \textbf{Wehler surfaces}. 
These are the smooth surfaces $X\subset \P^1\times \P^1\times \P^1$  
defined by an equation of degree $(2,2,2)$. Then for each index $i\in \{1,2,3\}$, 
the projection $\pi_{i}\colon X\to \P^1\times \P^1$ which ``forgets the variable $x_i$'' has degree~$2$; thus, there is an 
involution $\sigma_i$ of $X$ that permutes the  two points in the generic fiber of~$\pi_i$.
 
\begin{mcor}
Let $X_\R\subset \P^1\times \P^1\times \P^1$ be a real Wehler surface such that $X(\R)$ is non empty. If $X_\R$ is
generic, then: 
\begin{enumerate}[\em (1)]
\item the surface $X$ is a K3 surface and there is a unique (up to choosing an orientation of $X(\R)$) algebraic $2$-form $\vol_{X_\R}$ on $X(\R)$ such that $\int_{X(\R)}\vol_{X_{\R}}=1$;
\item the group $\Aut(X_\R)$   is generated by the three involutions $\sigma_i$ and coincides with $\Aut(X)$; furthermore it preserves the probability measure defined by $\vol_{X_\R}$;
\item if $\nu$ is finitely supported and $\Gamma_\nu$ has finite index in $\Aut(X_\R)$ then $(X(\R), \nu)$ is stiff; moreover  
the only 
$\nu$-stationary measures on $X(\R)$ are convex combinations of the probability measures defined by $\vol_{X_\R}$ on the connected components of $X(\R)$. 
\end{enumerate}
\end{mcor}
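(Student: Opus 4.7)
My plan for (1) is to apply the adjunction formula: since $K_{(\P^1)^3} = \mathcal{O}(-2,-2,-2)$ and $X$ is a smooth $(2,2,2)$ hypersurface, $K_X \simeq \mathcal{O}_X$, and the Lefschetz hyperplane theorem gives $h^{0,1}(X)=0$, so $X$ is a K3 surface. Then $H^0(X, K_X)$ is one-dimensional over $\C$ and inherits a real structure from $X_\R$; a real generator $\omega$ restricts to a nowhere vanishing real $2$-form on $X(\R)$, which I normalize to produce $\vol_{X_\R}$, unique up to orientation. For (2), I would first invoke Noether--Lefschetz in the family of smooth $(2,2,2)$ hypersurfaces to get, for generic $X$, $\Pic(X)$ of rank $3$ with basis $H_1, H_2, H_3$ (the three pullbacks of the point class). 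A direct computation identifies the action of $\sigma_i^*$ on $\Pic(X)$ as the lattice reflection fixing $H_j$ and $H_k$ with $\{i,j,k\}=\{1,2,3\}$, and these three reflections generate a finite-index subgroup of $\O(\Pic(X))$. Classical K3 theory (triviality of the kernel of $\Aut(X) \to \O(H^2(X,\Z))$ and Sterk-type finiteness of $\O(\Pic(X))/\Aut(X)^*$) then yields $\Aut(X) = \langle\sigma_1, \sigma_2, \sigma_3\rangle$ for generic $X$. Since the $\sigma_i$ are defined by the real Wehler equation, $\Aut(X_\R) = \Aut(X)$. Finally, each $\sigma_i$ is the covering involution of the double cover $X \to \P^1\times\P^1$ onto a rational surface, hence anti-symplectic: $\sigma_i^*\omega = -\omega$, so $\sigma_i^*\vol_{X_\R} = \pm\vol_{X_\R}$, and the probability measure defined by $\vol_{X_\R}$ is preserved.

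\textbf{Part (3).} Assume $\Gamma_\nu$ has finite index in $\Aut(X_\R)$. By (2) it preserves $\vol_{X_\R}$, and its cohomological image $\Gamma_\nu^*$ is non-elementary (each $\sigma_i\sigma_j$ is hyperbolic on $\Pic(X)\otimes\R$). I would next verify that $\sigma_i\sigma_j$ is parabolic in the sense of \S\ref{par:parabolic_automorphisms}: both $\sigma_i$ and $\sigma_j$ fix the $k$-th coordinate, so their product preserves every fiber of the elliptic fibration $q_k\colon X\to\P^1$ (where $\{i,j,k\}=\{1,2,3\}$) and acts on it as the composition of the two hyperelliptic involutions of the genus-$1$ fiber, a fiber-dependent non-trivial translation providing the required shearing; a positive power lies in $\Gamma_\nu$. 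Genericity excludes every $\Gamma_\nu$-invariant algebraic curve (its class in $\Pic(X)\otimes\R$ would be a non-zero vector fixed by the non-elementary $\Gamma_\nu^*$, impossible) and every finite $\Gamma_\nu$-orbit in $X(\R)$. Theorem A then yields stiffness: every ergodic $\nu$-stationary measure is $\Gamma_\nu$-invariant. The classification of invariant measures in the companion paper \cite{invariant}, which applies because of the parabolic elements, asserts that such a measure is either atomic, supported on an invariant algebraic curve, or the restriction of $\vol_{X_\R}$ to a $\Gamma_\nu$-invariant open set $U \subset X(\R)$ with piecewise smooth boundary. The first two cases are excluded by genericity; in the third, the $\Gamma_\nu$-invariance of $\partial U$, combined with a unique-continuation argument (an infinite-order holomorphic automorphism preserving a real-analytic arc must preserve its complex Zariski closure, producing an invariant algebraic curve), forces $\partial U = \emptyset$, so $U$ is a union of connected components of $X(\R)$. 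Convex combinations of these ergodic invariant measures yield the claimed classification.

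\textbf{Main obstacle.} The most delicate step is the closing argument of Part (3): upgrading the ``piecewise smooth boundary'' output of the companion classification to an algebraic object so that the absence of invariant algebraic curves can be invoked. This requires analyzing how the finitely many arcs making up $\partial U$ are permuted by $\Gamma_\nu$, passing to a finite-index subgroup that fixes each arc individually, and then exploiting that an infinite-order algebraic diffeomorphism of $X$ preserving a real-analytic arc must preserve its complex Zariski closure. A secondary concern is the cohomological identification of $\Aut(X)$ in Part (2), which rests on standard but nontrivial K3-theoretic inputs (Burns--Rapoport, Sterk), and packaging the various genericity hypotheses (Picard rank $3$, no invariant curves, no finite orbits, real locus non-empty) into a single Zariski open condition on the coefficients of the $(2,2,2)$ equation defining $X_\R$.
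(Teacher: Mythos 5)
Your route is essentially the paper's: parts (1)–(2) are the content of Propositions~\ref{pro:Wehler_surface_Pic} and~\ref{pro:Wehler_generic} (adjunction, Lefschetz, Noether–Lefschetz, Torelli and the computation of the $\sigma_i^*$), and part (3) is obtained, as in the paper, by combining the stiffness theorem with the classification of invariant measures in the presence of parabolic elements (Corollaries~\ref{cor:real_invariant} and~\ref{cor:real_stationary}). Two small remarks on that skeleton: since $\Gamma_\nu$ is non-elementary you should invoke Theorem~\ref{thm:stiffness_real} rather than Theorem~\ref{mthm:stiffness} as stated, so that no symmetry of $\nu$ is needed; and your ``main obstacle'' is not one — in Corollary~\ref{cor:real_stationary} the piecewise smooth boundary of the invariant open set is already known to be cut out by an invariant algebraic curve $D\subset X$ intersected with $X(\R)$, so once invariant curves are excluded by strong irreducibility of the action on $\NS(X;\R)$ the boundary is empty, and your unique-continuation sketch is superfluous. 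Also, a slip: $\sigma_i\sigma_j$ is parabolic, not hyperbolic, on $\Pic(X)\otimes\R$; non-elementarity comes from two parabolics with distinct invariant fibrations (Lemma~\ref{lem:pairs_of_twists}), or from the loxodromic triple products $\sigma_i\sigma_j\sigma_k$ — you in fact say the correct thing one sentence later.

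The genuine gap is your treatment of finite orbits. You write that ``genericity excludes \dots every finite $\Gamma_\nu$-orbit in $X(\R)$'' with no argument, but this does not follow from the Picard-rank-$3$ / no-invariant-curve genericity, nor from any result quoted in your proof: an atomic measure equidistributed on a finite orbit is invariant, hence stationary, it passes through the stiffness theorem untouched, and it appears as case (a) of Theorem~\ref{thm:classification_invariant}. The paper is explicit on this point: with the tools of this paper one only obtains that stationary measures are convex combinations of the volume measures on components of $X(\R)$ \emph{together with} measures supported on finite orbits, and the generic non-existence of finite orbits is a separate, non-trivial theorem deferred to the forthcoming paper \cite{finite_orbits}, proved there by arithmetic/algebraic-dynamical methods. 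As written, your part (3) therefore claims more than your ingredients deliver; you must either cite that result explicitly or supply an independent proof that a very general real $(2,2,2)$-surface carries no finite $\Aut(X_\R)$-orbit (in particular, ``complement of countably many hypersurfaces'' genericity does not dispose of this by a naive parameter count, since a priori each fixed pair $(f,n)$ only gives a countable family of conditions after serious work).
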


Here by generic we mean that the equation of $X$ belongs to  the complement of at most countably 
many hypersurfaces in the set of polynomial equations of degree $(2,2,2)$ (see \S\ref{par:Wehler} for details). 
This result follows from Theorem \ref{mthm:stiffness},
Proposition \ref{pro:Wehler_generic}, 
Corollary~B of~\cite{invariant}, and the 
generic    non-existence of finite orbits  
established in 
\cite{finite_orbits}. 
If we do not assume $X$ to be generic but assume only that $X$ does not contain any fiber of the three projections $\pi_i$, then the set of stationary measures supported in $X(\R)$ is a finite dimensional simplex (see \cite{finite_orbits}). 

\begin{vlongue}
The techniques of~\cite{cantat_groupes, invariant} do not apply in the absence of parabolic automorphisms. 
Here, we establish the following   measure rigidity result, which may be compared to Rudolph's theorem on the 
$\times 2\times 3$ conjecture, but in a non-commutative and non-linear context (see~\cite{rudolph}). 
\end{vlongue}
\begin{vcourte}
The techniques of~\cite{cantat_groupes, invariant} do not apply in the absence of parabolic automorphisms. 
Here, we establish the following rigidity result.
\end{vcourte}

\begin{mthm}\label{mthm:rigidity} Let $X_\R$ be a real projective surface. Let $\Gamma$ be a non-elementary subgroup of $\Aut(X_\R)$. If $\mu$ is a $\Gamma$-invariant probability measure on $X(\R)$ 
and if $\mu$ is ergodic and of positive entropy for some    $f\in \Gamma$, then $\mu$ is absolutely continuous
with respect to any area form on~$X(\R)$.
\end{mthm}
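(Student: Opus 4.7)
The plan is to deduce the theorem from the Brown--Rodriguez Hertz measure rigidity theorem for non-elementary random actions on surfaces, after checking that $\mu$ satisfies its hypotheses with respect to a suitable random walk inside $\Gamma$.

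The first task is to verify that $\mu$ is hyperbolic for $f$. Since $\mu$ is $f$-ergodic with $h_\mu(f)>0$, Ruelle's inequality on the real $2$-manifold $X(\R)$ yields $\lambda^+(f,\mu)\geq h_\mu(f)>0$, and applying it to $f^{-1}$ (which has the same $\mu$-entropy) yields $\lambda^-(f,\mu)<0$. So $\mu$ is Pesin-hyperbolic. Next, exploiting non-elementarity of $\Gamma$, I would choose a finitely supported symmetric probability measure $\nu$ whose support lies in $\Gamma$, contains $f$, and generates a non-elementary subgroup $\Gamma_\nu$. Since $\mu$ is $\Gamma$-invariant it is $\nu$-stationary and $\Gamma_\nu$-ergodic, and the non-elementary action of $\Gamma_\nu$ on $H^{1,1}(X)$ transfers, via the holomorphic structure on $X$, into non-elementarity of the derivative cocycle of $\nu$ on the tangent bundle of $X(\R)$. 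One is thus in the setting of Brown--Rodriguez Hertz.

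The BRH dichotomy for hyperbolic ergodic stationary measures under non-elementary random $C^2$ actions on a surface leaves two possibilities: either $\mu$ is supported on a finite $\Gamma_\nu$-orbit, or $\mu$ has absolutely continuous conditional measures along the random unstable Pesin manifolds. The first is excluded by $h_\mu(f)>0$. Since $\mu$ is actually $\Gamma_\nu$-invariant (not merely stationary) and $\nu$ is symmetric, running the same argument with $\nu$ replaced by its reflection (equivalently, with $f$ replaced by $f^{-1}$) yields absolutely continuous conditionals along stable manifolds as well. On a Pesin block of positive $\mu$-measure, the stable and unstable laminations are transverse with a local product structure, and the simultaneous absolute continuity of both families of conditionals forces $\mu$ to be absolutely continuous with respect to Lebesgue on the block; $f$-ergodicity then propagates this to the full support of $\mu$.

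The main obstacle in this plan is the transfer of non-elementarity from the cohomological action (where it is assumed) to the derivative cocycle (where BRH requires it). A general $C^2$ non-elementary surface action need not have a non-elementary tangent cocycle, and this step genuinely uses that $\Gamma$ acts holomorphically: via Hodge theory and the known relations between dynamical degrees, topological entropy and Lyapunov exponents for complex surface automorphisms, the non-elementary action of $\Gamma_\nu^*$ on $H^{1,1}(X;\R)$ rules out the degenerations (common value of $\lambda^+$ and $-\lambda^-$, reducibility of the cocycle) that would make BRH inapplicable.
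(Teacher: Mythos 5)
Your skeleton follows the same broad strategy as the paper (Ruelle--Margulis hyperbolicity for $f$, then the Brown--Rodriguez Hertz machinery applied to a random walk built inside $\Gamma$, then SRB conditionals in both directions plus absolute continuity of the laminations), but the step you yourself flag as ``the main obstacle'' is a genuine gap, and the way you propose to close it does not work. The BRH result is not a dichotomy ``finite orbit or SRB'': it is a trichotomy whose third branch is that the Oseledets stable directions are \emph{non-random}, i.e.\ there is a $\Gamma_\nu$-invariant measurable line field (or pair of line fields). Non-elementarity of the action on $H^{1,1}(X;\R)$ does not by itself ``transfer'' to the derivative cocycle and rule this branch out; no relation between dynamical degrees, entropy and Lyapunov exponents does that. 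Excluding invariant line fields is precisely where the paper does real work: if two conjugate loxodromic elements $f$ and $h=g^{-1}fg$ had $E^s_h=E^s_f$ (or $E^u_h=E^s_f$) on a set of positive measure, a complex-analytic tangency argument (the osculation/intersection-multiplicity lemma in the spirit of Lemma~\ref{lem:BLS_intersection}, combined with positive entropy so that stable conditionals are non-atomic) forces $W^s(h,x)=W^s(f,x)$ almost surely, hence equality of the Ahlfors--Nevanlinna currents $T^+_h=T^+_f$, hence $g^*$ fixes the isotropic class $[T^+_f]$ (or the pair $\{[T^+_f],[T^-_f]\}$), so the group generated is elementary. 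It is this currents argument, not Hodge-theoretic exponent identities, that converts ``non-elementary on cohomology'' into ``no invariant line field,'' and without it your appeal to BRH is unjustified.

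A second, smaller gap: you apply BRH to a symmetric finitely supported $\nu$ with $f\in\supp\nu$, but BRH needs $\mu$ to be hyperbolic \emph{as a $\nu$-stationary measure}, and hyperbolicity of $\mu$ for the single map $f$ does not give this (no invariant volume is assumed in Theorem~\ref{mthm:rigidity}, so you cannot argue $\lambda^-+\lambda^+=0$). The paper handles this by working with the simplex of measures $\nu_\alpha=a\delta_f+b\delta_{f^{-1}}+c\delta_g+d\delta_{g^{-1}}$, using continuity of the exponents in $\alpha$ (valid only once the invariant-line-field cases have been excluded, since one needs $\lambda^-_\alpha<\lambda^+_\alpha$ via the Ledrappier invariance principle), the symmetry $(a,b,c,d)\mapsto(b,a,d,c)$ which swaps the exponents, and a connectedness argument to find $\alpha$ with $\lambda^-_\alpha<0<\lambda^+_\alpha$. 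Your proposal skips this verification entirely. Finally, note the paper's actual statement is proved for each pair $\langle f,g\rangle$ with $g\in\Aut_\mu(X)$, yielding ``either $\mu\ll\mathrm{Leb}_Y$ or $\Aut_\mu(X)$ is virtually cyclic,'' from which Theorem~\ref{mthm:rigidity} follows since a non-elementary $\Gamma$ cannot sit in a virtually cyclic group; your route could be organized the same way once the two gaps above are filled.
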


In particular if $\Gamma$ is a group of area preserving automorphisms, then up to normalization
$\mu$ will be the restriction of the area form on some $\Gamma$-invariant set. \textbf{Kummer examples} are a generalization of 
linear Anosov diffeomorphisms of tori to other projective surfaces  (see \cite{cantat-dupont, Cantat-Zeghib}). 
When $\Gamma$ contains a real Kummer example, we can derive an exact analogue of the classification of invariant measures of \cite{BFLM}, that is the assumption ``{\emph{$\mu$ has positive entropy}}'' can be replaced by ``{\emph{$\mu$ has no atoms}}''  (Theorem~\ref{thm:rigidity_kummer}).  
We also obtain a version of Theorem \ref{mthm:rigidity}   for 
polynomial automorphisms of the affine plane ${\mathbb{A}}^2_\R$(see Theorem~\ref{thm:rigidity_henon}).

\subsection{Some ingredients of the proofs}
The proofs of Theorems~\ref{mthm:stiffness} and \ref{mthm:rigidity} rely on  the   deep results of Brown and Rodriguez-Hertz
\cite{br}.   
To be more precise, recall that an ergodic stationary measure $\mu$ on $X$ admits  a pair
of Lyapunov exponents $\lambda^+(\mu)\geq \lambda^-(\mu)$, and that $\mu$ is  called {\bf{hyperbolic}} if 
$\lambda^+(\mu)> 0 > \lambda^-(\mu)$. In this case
   the (random) 
Oseledets theorem shows that for $\mu$-almost every $x$ and $\nu^\N$-almost every $\omega = (f_j)_{j\in \N}$ in $\Aut(X)^\N$, 
there exists a stable  direction $E^s_\omega (x)\subset 
T_xX_\R$. 
In \cite{br}, stiffness is established 
under the condition that 
$E^s_\omega(x)\subset T_xX_\R$   
depends non-trivially on the random itinerary $\omega = (f_j)_{j\in \N}$, or 
equivalently that stable directions  do not induce a   measurable $\Gamma_\nu$-invariant line field. 
One of our main contributions is to take care of this possibility in our setting:
for this we study  the dynamics on the {\emph{complex}} surface $X$.  

\begin{mthm}\label{mthm:alternative_stable}
Let $X$ be a complex  projective surface and $\nu$ be a finitely supported probability measure on $\Aut(X)$. If $\Gamma_\nu$ is non-elementary, then any hyperbolic ergodic $\nu$-stationary 
measure $\mu$ on $X$ satisfies the following alternative: 
\begin{enumerate}[\em (a)]
\item  either $\mu$ is invariant, and its fiber entropy $h_\mu(X;\nu)$ vanishes;
\item or $\mu$ is supported on a $\Gamma_\nu$-invariant algebraic curve;
\item or the field of Oseledets stable directions of $\mu$ is not $\Gamma_\nu$-invariant; in other words,
it genuinely depends on the    itinerary $\omega=(f_j)_{j\geq 0}\in \Aut(X)^\N$.
\end{enumerate} 
\end{mthm}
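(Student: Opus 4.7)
The plan is to argue by contradiction: suppose alternatives (b) and (c) both fail, and deduce alternative (a). The failure of (c) means that the measurable field $x\mapsto E^s_\omega(x)$ of Oseledets stable directions in $TX$ is almost surely independent of $\omega\in\Aut(X)^\N$, giving a well-defined measurable line field $E^s$ on $\supp(\mu)$. Equivariance of the Oseledets decomposition under $Df_0$ immediately shows that $E^s$ is $\Gamma_\nu$-invariant, that is $Df(E^s(x))=E^s(f(x))$ for $\nu$-a.e.\ $f$ and $\mu$-a.e.\ $x$. The failure of (b) says in addition that $\mu$ charges no proper $\Gamma_\nu$-invariant analytic set.

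The main step, which I expect to be the heart of the argument, is an \emph{algebraization} statement: a $\Gamma_\nu$-invariant measurable line field on a compact K\"ahler surface, with $\Gamma_\nu$ non-elementary inside $\Aut(X)$, must either come from a $\Gamma_\nu$-invariant singular holomorphic foliation of $X$, or else force $\supp(\mu)$ to lie in a proper $\Gamma_\nu$-invariant analytic subvariety. The second case is excluded by our assumption. In the first case I would invoke the classification of invariant holomorphic foliations under non-elementary groups of surface automorphisms: such a foliation must be algebraic, its leaves are then $\Gamma_\nu$-permuted algebraic curves, and pushing this forward one concludes that $\mu$ sits on a $\Gamma_\nu$-invariant algebraic curve, contradicting assumption (b). Hence $\mu$ itself must be $\Gamma_\nu$-invariant and not merely $\nu$-stationary, as the deterministic Oseledets splitting identifies $\mu$-a.e.\ with the Pesin splitting of each individual $f\in\supp(\nu)$, and ergodicity of the skew-product upgrades stationarity to invariance.

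To conclude alternative (a) I still need to verify that the fiber entropy $h_\mu(X;\nu)$ vanishes. For this I would apply the random Ledrappier--Young formula, which expresses $h_\mu(X;\nu)$ in terms of $\lambda^+(\mu)$ and the transverse dimension of the conditional measures of $\mu$ on Pesin unstable manifolds. A deterministic Oseledets splitting forces these unstable conditionals to be deterministic as well; a time-reversal argument, replacing $\nu$ by the measure of inverses $\check\nu$, transfers this determinism to the stable side, so that any positive-dimensional unstable conditional would read off as a genuine $\omega$-dependence of $E^s$, contradicting our starting assumption. Thus $h_\mu(X;\nu)=0$ and we are in case (a).

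The main obstacle is the algebraization step. Upgrading a purely measurable $\Gamma_\nu$-invariant line field to a holomorphic invariant object requires the full force of the complex structure together with non-elementarity of $\Gamma_\nu\subset\Aut(X)$. A promising route is to associate to $E^s$ a $\mu$-a.e.\ defined positive $(1,1)$-current on $X$ annihilating $E^s$, and then exploit the rigidity of $\Gamma_\nu^*$-invariant currents on $H^{1,1}(X;\R)$, in the spirit of the theory of invariant currents for hyperbolic automorphism groups, in order to produce the desired algebraic object.
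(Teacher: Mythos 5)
Your reduction of the failure of (c) to a $\Gamma_\nu$-invariant measurable line field $E^s$ is fine, but the proposal breaks down at the algebraization step, and the logic built around it is not sound. There is no tool that promotes a merely measurable, $\mu$-a.e.\ defined invariant line field to a singular holomorphic foliation: this measurable-to-holomorphic rigidity is precisely what is unavailable (the stable directions of a single loxodromic automorphism already form a measurable invariant line field that is tangent to a holomorphic foliation only in Kummer-type situations), and the consequence you draw in the foliated branch is also wrong --- invariant foliations of loxodromic automorphisms (linear foliations on tori, Kummer examples) have transcendental entire leaves, not algebraic ones, so that branch would not put $\mu$ on an invariant algebraic curve. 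Worse, even granting your dichotomy, you derive a contradiction in \emph{both} branches, which would show that not-(b) and not-(c) cannot coexist; that is not the statement, and it leaves the actual content of alternative (a) unproved. In particular, ``ergodicity of the skew-product upgrades stationarity to invariance'' is false in general: automatic invariance is exactly the stiffness-type assertion that must be proved here, and ``the deterministic Oseledets splitting identifies $\mu$-a.e.\ with the Pesin splitting of each individual $f\in\supp(\nu)$'' has no meaning, since individual elements of $\supp(\nu)$ need not preserve $\mu$.

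The paper's route is different, and the missing ingredients are instructive. Since $\mu$ is not supported on an invariant curve, Theorem~\ref{thm:nevanlinna} shows that almost every stable manifold is Zariski dense and equidistributes towards $T^s_\omega$, whose class $e(\omega)$ depends non-trivially on $\omega$ (Theorem~\ref{thm:def_stationary}); hence stable manifolds attached to two independent futures are almost surely distinct, and there is a well-defined finite generic contact order $k_0$ between them (Lemma~\ref{lem:generic_multiplicity}). If $k_0=1$ one is in case (c), with generic transversality. If $k_0\geq 2$ --- which is what failure of (c) really gives --- a quantitative local lemma on order-$k$ tangencies being destroyed into $k$ transverse intersections (Lemma~\ref{lem:BLS_intersection}, Corollary~\ref{cor:bls_quantitative}) forces the conditionals of $\m$ on a stable Pesin partition to be atomic; this yields $h_\mu(X,\nu)=0$ via Corollary~\ref{cor:rokhlin_zero_entropy}, and, through the criterion of Proposition 11.1 of \cite{br} combined with the same tangency argument, the almost sure invariance of $\mu$. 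Your entropy sketch via a ``random Ledrappier--Young formula plus time reversal'' does not substitute for this: non-randomness of $E^s$ gives no direct control on the unstable (or stable) conditionals, and the finiteness of the contact order, which requires the current-theoretic input above, is absent from your outline.
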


As opposed to Theorems~\ref{mthm:stiffness} 
and \ref{mthm:rigidity}, this result applies to the dynamics on the \emph{complex} manifold $X$, without assuming
the existence of an invariant volume form or an invariant real structure.
\begin{vlongue}
Understanding this somewhat technical result 
requires a substantial amount of material from the smooth ergodic theory of random dynamical systems, 
which will be introduced in due time. 
\end{vlongue}
When $\mu$ is not invariant, nor supported by a proper Zariski closed subset,
Assertion~(c) precisely says that the 
condition on stable directions used in \cite{br} is satisfied. This 
 is our   key input  towards Theorems~\ref{mthm:stiffness} and~\ref{mthm:rigidity}. 
The arguments leading to    Theorem \ref{mthm:alternative_stable}  
involve   an interesting blend of Hodge theory, pluripotential analysis, and Pesin theory. 
They rely on the following  well-known 
principle in higher dimensional holomorphic dynamics. 
If $\mu$ is 
ergodic and hyperbolic, 
almost every point $(\omega,x)$ provides a 
stable manifold $W^s_\omega(x)$ biholomorphic to $\C$. Then, according to  a 
construction going back to   Ahlfors  and Nevanlinna, to  any entire curve 
$\phi:\C \to X$ is   associated  a (family of) closed
positive  $(1,1)$-current(s)   describing the asymptotic distribution of  $\phi(\C)$ in $X$, 
hence also a  (family of)  cohomology class(es) in $H^2(X, \R)$. These classes 
link 
the stable manifolds of $\mu$ 
with the  action of $\Gamma_\nu$ on $H^2(X;\R)$,
which itself can be analyzed by combining tools from  complex algebraic geometry with Furstenberg's theory of random products of matrices.  

\begin{mthm}\label{mthm:currents}
Let $X$ be a complex projective surface. Let $\nu$ be a finitely supported probability measure on $\Aut(X)$ 
such that $\Gamma_\nu$ is non-elementary.  Let $\kappa_0$ be a fixed K\"ahler form on $X$.
\begin{enumerate}[\em (1)]
\item  If $\kappa$ is any K\"ahler form on $X$, then for $\nu^\N$-almost every 
$\omega:=(f_j)_{j\geq 0}\in \Aut(X)^\N$ the limit  
\[
T_{\omega}^s:=\lim_{n\to +\infty} \frac{1}{\int_X \kappa_0 \wedge (f_n\circ \cdots \circ f_0)^*\kappa } (f_n \circ \cdots \circ f_0)^*\kappa 
\]
exists as a  closed positive  $(1,1)$-current. Moreover this current $T_\omega^s$ does not depend on $\kappa$ and has H\"older continuous potentials. 

\item If the $\nu$-stationary measure $\mu$ is  ergodic, hyperbolic (or   more generally if  $\lambda^-(\mu)< 0 \leq  \lambda^+(\mu)$) and  not 
supported on a $\Gamma_\nu$-invariant proper Zariski closed set, then   for $\mu$-almost every $x$ and $\nu^\N$-almost every $\omega$, the only  Ahlfors-Nevanlinna current of mass $1$ 
(with respect to $\kappa_0$) 
associated to the stable manifold $W^s_\omega(x)$ coincides with $T_{\omega}^s$.
\end{enumerate}
\end{mthm}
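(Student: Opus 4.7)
The starting point is the linear action of $\Gamma_\nu$ on $H^{1,1}(X;\R)$, which preserves the intersection form (of signature $(1,h^{1,1}(X)-1)$ by the Hodge index theorem) and the K\"ahler cone. Non-elementarity of $\Gamma_\nu$ means that its image in $\GL(H^{1,1}(X;\R))$ contains a non-Abelian free group, and one checks that this image acts strongly irreducibly and proximally on $H^{1,1}(X;\R)$, because a non-elementary subgroup of a Lorentz group admits an element with attracting fixed point on the isotropic boundary. Furstenberg's theorem on random products of matrices then provides, for $\nu^{\N}$-a.e.\ $\omega = (f_j)_{j\geq 0}$, a simple top Lyapunov exponent $\lambda_1 > 0$ and a random isotropic direction $\R v^s_\omega$ on the boundary of the K\"ahler cone, such that the normalized classes $c_n(\omega)^{-1}(f_n\circ\cdots\circ f_0)^*[\kappa]$ converge to a positive multiple of $v^s_\omega$ for every K\"ahler class $[\kappa]$; here $c_n(\omega) = \int_X \kappa_0\wedge(f_n\circ\cdots\circ f_0)^*\kappa$ is the prescribed normalization, which is comparable to $e^{n\lambda_1}$ up to the positive constant $[\kappa_0]\cdot v^s_\omega$.

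\textbf{From cohomology to currents.} To upgrade this cohomological convergence, I fix smooth representatives $\theta_n$ of the normalized classes converging smoothly to a smooth form $\theta_\omega$ in the class of $v^s_\omega$, and use the $dd^c$-lemma to write
\[
c_n(\omega)^{-1}(f_n\circ\cdots\circ f_0)^*\kappa = \theta_n + dd^c u_n,
\]
with $u_n$ normalized by $\int_X u_n\,\kappa_0\wedge\kappa_0=0$. The technical core is to show that $(u_n)$ is Cauchy in $C^0(X)$: one expresses $u_{n+k}-u_n$ as the Green potential of an explicit exact form whose uniform norm is controlled by the exponential contraction of $(f_n\circ\cdots\circ f_0)^*$ on the hyperplane $v^s_\omega$-orthogonal for the intersection form, which yields geometric convergence in $C^0$. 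H\"older continuity of the limit $u_\omega$ then follows from standard pluripotential estimates for pullbacks by fixed automorphisms in $\supp(\nu)$ (Dinh--Sibony type arguments, transposed to the random setting using the uniform cohomological contraction). Independence from the choice of $\kappa$ is a uniqueness statement: any two candidate limits lie in the same cohomology class $\R v^s_\omega$ and satisfy the same twisted equivariance $f_0^*T^s_{\sigma\omega}\propto T^s_\omega$, so their $dd^c$-exact difference has a H\"older potential subject to a strictly contracting cocycle and must therefore be constant.

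\textbf{Part (2): comparison with the Ahlfors--Nevanlinna current.} Under the hypotheses of (2), Pesin theory for the random holomorphic cocycle gives, at $\mu$-a.e.\ $x$ and $\nu^{\N}$-a.e.\ $\omega$, a stable manifold $W^s_\omega(x)$ biholomorphic to $\C$, together with a parametrization $\phi\colon\C\to X$. The Ahlfors--Nevanlinna machinery produces weak limits $S$ of the normalized currents $[\phi(\disk_r)]/\int_{\disk_r}\phi^*\kappa_0$ that are positive closed $(1,1)$-currents of $\kappa_0$-mass $1$. To identify such a limit $S$ with $T^s_\omega$, the plan is to realize $S$ itself as a limit of normalized pullbacks along $\omega$. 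Working in Pesin--Lyapunov charts along the forward orbit of $x$, one can choose for each $n$ a small holomorphic disk $D_n$ through $f_n\circ\cdots\circ f_0(x)$, transverse to the unstable direction of $\mu$, such that $(f_n\circ\cdots\circ f_0)^{-1}(D_n)$ fills a large piece $\phi(\disk_{r_n})$ of $W^s_\omega(x)$ up to an exponentially small error, for a specific sequence $r_n\to\infty$. At the level of currents this yields $[\phi(\disk_{r_n})]\simeq(f_n\circ\cdots\circ f_0)^*[D_n]$, and smoothing $[D_n]$ into a local K\"ahler representative reduces the identification to a single comparison of normalizations.

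\textbf{Main obstacle.} The key difficulty is precisely this comparison: the intrinsic area growth $T_n = \int_{\disk_{r_n}}\phi^*\kappa_0$ of the stable disk is a Lyapunov-theoretic quantity attached to the measure $\mu$, whereas $c_n(\omega)$ is a purely cohomological cocycle on $H^{1,1}(X;\R)$, a priori unrelated to $\mu$. The hypothesis that $\mu$ is not supported on a proper Zariski closed subset is indispensable here: it prevents the stable manifolds from being confined in an algebraic subvariety on which the two rates could decouple, and it ensures that the transverse disks $D_n$ see the full cohomological expansion via $\kappa_0$. Once $T_n/c_n(\omega)$ is shown to admit a positive finite limit along the relevant subsequence --- which is where most of the analysis concentrates --- the current identification gives $S = T^s_\omega$ up to a positive constant, which the $\kappa_0$-mass normalization pins down to $1$, thereby yielding both the uniqueness and the identification claimed in (2).
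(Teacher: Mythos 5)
Your part (1) follows the paper up to the cohomological convergence (Furstenberg theory on $\Pi_{\Gamma_\nu}\subset H^{1,1}(X;\R)$, random nef isotropic class $e(\omega)$), but the mechanism you propose for passing to currents does not work as stated. You claim the normalized potentials $u_n$ are Cauchy in $C^0$ because $u_{n+k}-u_n$ is a Green potential of an exact form ``whose uniform norm is controlled by the exponential contraction of $(f_n\circ\cdots\circ f_0)^*$ on the orthogonal hyperplane''. This confuses two different scales: the \emph{uniform} norms of the pulled-back forms $(f_n\circ\cdots\circ f_0)^*\kappa$ are governed by $\norm{f_n\circ\cdots\circ f_0}_{C^1}^2$, which has no a priori relation to the cohomological norm of $(f_n\circ\cdots\circ f_0)^*$ on $H^{1,1}$; cohomological contraction does not bound sup-norms of forms or potentials. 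This is exactly why the paper does \emph{not} prove $C^0$-convergence of the $u_n$: it proves instead that for $\mu_\partial$-a.e.\ boundary class the set $\Cur(e(\omega))$ is a single current (the ``diameter'' argument of Theorem~\ref{thm:uniq+extremal}, which combines the volume estimate $\diam(f^*a)\lesssim\log\norm{\Jac(f^{-1})}_\infty$ with the exponential mass growth of Lemma~\ref{lem:growth2}), and deduces the weak convergence of $\M((f_\omega^n)^*\kappa)^{-1}(f_\omega^n)^*\kappa$ from compactness plus uniqueness of the limit. Continuity and H\"older regularity of $u_{T^s_\omega}$ are then separate arguments (Theorems~\ref{thm:Tsomega_continuous} and~\ref{thm:holder}), which require the Gou\"ezel--Karlsson good-times theorem or the exponential-moment estimate of Proposition~\ref{pro:gouezel} precisely to tame the $C^1$-norm factors your sketch ignores; your concluding ``strictly contracting cocycle on H\"older potentials'' uniqueness step presupposes regularity you have not established.

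In part (2) the central difficulty is left unresolved: you yourself isolate the comparison of the Nevanlinna normalization $T_n=\int_{\disk_{r_n}}\phi^*\kappa_0$ with the cohomological cocycle $c_n(\omega)$ as the ``main obstacle'' and then assume it (``once $T_n/c_n(\omega)$ is shown to admit a positive finite limit\ldots''), so no proof is given. Moreover, even granting such a comparison, identifying a weak limit of normalized pieces of $W^s_\omega(x)$ with $T^s_\omega$ requires an extra rigidity input which your argument never invokes. The paper proceeds quite differently: it first proves the slice vanishing $T^s_\omega\wedge\{\Delta\}=0$ for every disk $\Delta\subset W^s_\omega(x)$ (Lemma~\ref{lem:stable_intersection_Ts}, using bounded potentials along a positive-density set of times together with $\M((f_\omega^n)^*T^s_{\sigma^n\omega})\to\infty$), then shows any Ahlfors--Nevanlinna current $N$ of the Zariski-dense stable manifold satisfies $N\wedge T^s_\omega=0$ via the strong laminarity of injective Ahlfors currents (Lemma~\ref{lem:ahlfors_current}, where projectivity of $X$ is essential), and concludes $[N]=e(\omega)$ by nefness and the Hodge index theorem, whence $N=T^s_\omega$ by the uniqueness statement $\Cur(e(\omega))=\{T^s_\omega\}$. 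Your proposal uses neither the slice vanishing, nor laminarity, nor the uniqueness of the current in its class, and the claimed approximate identity $[\phi(\disk_{r_n})]\simeq(f_n\circ\cdots\circ f_0)^*[D_n]$ at the level of currents is itself a nontrivial assertion (preimages of transverse disks are only locally close to stable manifolds) that would need an argument of comparable depth. As it stands, both halves of the proposal have genuine gaps at their decisive steps.
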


 The right setting for such a statement is that of a compact K\"ahler surface, but we show 
in \S\ref{subs:X-is-projective} that any compact  K\"ahler surface supporting 
a non-elementary group of 
automorphisms is projective (see Appendix~\ref{par:appendix_non_kahler}
for the non-K\"ahler case). The algebraicity of $X$  is, in fact, a crucial technical ingredient in the proof of assertion~(2), because we use 
techniques of laminar currents which are available only on projective surfaces.  
Theorem~\ref{mthm:currents} enters 
 the proof of Theorem~\ref{mthm:alternative_stable} as follows: since 
$\Gamma_\nu$ is non-elementary, 
 Furstenberg's description of the  random action  on $H^2(X, \R)$ implies that 
  the cohomology class $[T_{\omega}^s]$ depends 
non-trivially on $\omega$; therefore for $\mu$-almost every $x$,   $W^s_\omega(x)$ also depends non-trivially on $\omega$. 
Then, taking   advantage of the complex structure again, we show in 
 Section~\ref{sec:No_Invariant_Line_Fields}, 
 that $E^s_\omega(x)$ depends non-trivially on $\omega$ as well.


\begin{rem}
Beyond finitely supported measures, Theorem~\ref{mthm:stiffness}, \ref{mthm:rigidity}, \ref{mthm:alternative_stable}, and~\ref{mthm:currents} hold under optimal moment conditions on $\nu$ (this adds several technicalities, notably in Sections~\ref{sec:furstenberg} and~\ref{sec:currents}). 
\end{rem}

\subsection{Organization of the article} 
Let $X$ be a compact K\"ahler surface  
and  $\nu$ be a probability measure on $\Aut(X)$.
\begin{enumerate}[ --]
\item In Section~\ref{par:Hodge_Minkowski} we describe the action of $\Aut(X)$ on  $H^*(X;\Z)$, in 
particular on  
$H^{1,1}(X;\R)$. The Hodge index theorem 
endows it with a Minkowski structure, which is essential in our understanding of the dynamics
of  $\Gamma_\nu$ on the cohomology. This section~\ref{par:Hodge_Minkowski} prepares the ground for the analysis of 
random products of matrices done in Section~\ref{sec:furstenberg}  (and it is also used in~\cite{invariant, finite_orbits}). A delicate point to keep in mind is that   the 
action of a non-elementary subgroup  of $\Aut(X)$ on  $H^{1,1}(X;\R)$ may be reducible.
\item Section~\ref{par:Examples_Classification} describes several classes of examples, including pentagon foldings and Wehler's surfaces.  It is also shown there that a compact  K\"ahler surface with a non-elementary group of automorphims is
necessarily projective (see~Theorem~\ref{thm:X-is-projective2} in \S\ref{subs:X-is-projective}).  
\item After a short Section~\ref{sec:Glossary_I}
introducting the vocabulary of 
random products of diffeomorphisms,  Furstenberg's theory of random products of matrices  is applied in
Section~\ref{sec:furstenberg} to the study of the action 
on $H^{1,1}(X;\R)$. 
This, combined with the theory of closed positive currents, leads to the proof of the first assertion of 
Theorem~\ref{mthm:currents} in Section~\ref{sec:currents} (see Corollary~\ref{cor:Tsomega} and Theorems~\ref{thm:Tsomega_continuous} and \ref{thm:holder}).  
The continuity of the potentials of the currents 
$T^s_\omega$, which plays a key role in 
Section \ref{sec:nevanlinna}, relies on a recent result of Gou\"ezel and Karlsson \cite{Gouezel-Karlsson}.
\item Pesin theory enters into   play in Section~\ref{sec:Glossary_II}, in which the basics of the smooth ergodic theory of random 
dynamical systems are described in some detail for complex surfaces. 
This is used in Section~\ref{sec:nevanlinna} to connect the 
stable manifolds to the currents $T_\omega^s$, using techniques of laminar currents (Theorem~\ref{thm:nevanlinna} gives the second part of Theorem~\ref{mthm:currents}). 
\item Theorem \ref{mthm:alternative_stable} is proven in Section~\ref{sec:No_Invariant_Line_Fields} by combining ideas of \cite{br} with Theorem~\ref{mthm:currents} and  an elementary fact from local complex geometry inspired by a lemma from 
\cite{bls}. 
\item Theorem~\ref{mthm:stiffness} is finally established in Section~\ref{sec:stiffness}.
 When $\Gamma_\nu$ is non-elementary (Theorem~\ref{thm:stiffness_real})
it follows rather directly from \cite{br}, Theorem \ref{mthm:alternative_stable}, and the  
\emph{invariance principle} of Crauel~\cite{crauel}. 
Elementary groups are handled separately by using the classification of automorphism groups of compact K\"ahler surfaces
(see Theorems~\ref{thm:stiffness_elementary_groups} and Proposition~\ref{pro:stiffness_elementary2}). Note that the symmetry of $\nu$ is used only in the elementary case. 
\item  Section~\ref{sec:rigidity}, as well as~\cite{invariant}, are devoted to the classification of invariant measures. We prove Theorem~\ref{mthm:rigidity} in the more precise form of 
Theorem~\ref{thm:rigidity}, as well as several  related results. 

\begin{vlongue}
This relies on a measure rigidity theorem of  \cite{br}, together 
with ideas similar to the ones involved in the proof of Theorem~\ref{mthm:alternative_stable}. 
\end{vlongue}
  \end{enumerate}

\subsection{Further comments}
\begin{enumerate}[ --]
\item This article is part of a series of papers dedicated to the dynamics of  groups of  automorphisms
of compact K\"ahler surfaces, notably K3 and Enriques surfaces. 
The article \cite{invariant} focuses on the classification of 
invariant measures in presence of parabolic elements, and 
in \cite{hyperbolic} we study the hyperbolicity of such measures.   
\begin{vcourte}
In \cite{finite_orbits} the existence of finite orbits is analyzed by using
 tools from algebraic and arithmetic dynamics.
\end{vcourte}
\begin{vlongue}
In \cite{finite_orbits} we study the existence of finite orbits for non-elementary group actions; tools 
from arithmetic dynamics are used to study the case where 
$X$ and its automorphisms are defined over a number field. 
\end{vlongue}
\item After the first   version of this paper and  \cite{finite_orbits} were released, Filip and Tosatti \cite{Filip-Tosatti:Heights} gave  
an alternate approach of some of the results of Section~\ref{sec:currents}. 
\item It is natural to wonder what remains of our results in the real-analytic category. 
As explained above, the proofs of Theorems~\ref{mthm:currents} and~\ref{mthm:alternative_stable} rely on
a global complex geometric argument (via Ahlfors-Nevanlinna currents and cohomology groups) 
to show that stable manifolds depend on random itineraries, 
and on local properties of complex analytic disks to go from stable manifolds to stable directions
(cf. Section~\ref{sec:No_Invariant_Line_Fields}). 
It is clear that the global arguments do not 
carry over to groups of  real-analytic diffeomorphisms of closed real surfaces. 
To be  more  explicit, consider the following fact: 
{\emph{if $f$ is  an automorphism of a complex projective surface with positive entropy, and if $W^s_f(x)$ and $W^u_f(x')$ 
are Zariski dense stable and unstable manifolds of saddle periodic points, then 
$W^s_f(x)\cap W^u_f(x')$ is non-empty}}. 
This can be derived from the same global strategy, namely Ahlfors-Nevanlinna currents, 
their laminarity, and the Hodge index theorem (see~\cite[Thm. 6.2]{Cantat:Milnor}). 
On the other hand, such a statement does \emph{not} hold for real analytic 
diffeomorphisms of closed surfaces.  Regarding the results of Section~\ref{sec:No_Invariant_Line_Fields}, while some of the results of \S~\ref{subs:generic_intersection} might 
persist in the real-analytic category, 
the key Lemma~\ref{lem:BLS_intersection} does not (see Remark~\ref{rem:BLS_intersection}). 
\item While not directly covered by this article, the character variety of the once punctured torus (or the four times punctured sphere) should be amenable to the same strategy (see~\cite{Cantat:BHPS, Goldman:1988, Goldman:2003}, \cite{Previte-Xia:Pacific, Previte-Xia:TAMS}, and \cite{Xiao-Chuan_Liu, Chung} for a short selection of papers on the subject).
\item In a forthcoming work, we intend 
to extend the results   of Brown and Rodriguez-Hertz to the complex setting;  
with Theorem~\ref{mthm:alternative_stable} at hand, this 
would extend Theorem~\ref{mthm:stiffness} from the real to the complex case.

 \end{enumerate}

\subsection{Conventions} 
Throughout the paper $C$ stands for a ``constant" which may change from line to line,   independently of 
some asymptotic quantity that should be clear from the context (typically an integer $n$ corresponding to the number of iterations of a dynamical system).  
We write $a\lesssim b$ if $a\leq Cb$ and $a\asymp b$ if $a\lesssim b \lesssim a$. 
Complex manifolds are considered to be connected, so from now on ``complex manifold'' stands for ``connected 
complex manifold''.  For  a random dynamical system on a disconnected complex manifold, there is a finite index sugbroup 
$\Gamma'$ 
of $\Gamma_\nu$ fixing 
each connected component, and an induced measure $\nu'$ on $\Gamma'$ with properties
qualitatively similar to those of $\nu$ (see \S\ref{subs:inducing}), so the problem is reduced to the connected case. 
 
 \subsection{Acknowledgments}
We are grateful to S\'ebastien Gou\"ezel,  François Ledrappier, and  Fran\-çois Maucourant for 
interesting discussions and insightful comments. The first named author was
partially supported by a grant from the French Academy of Sciences (Del Duca foundation), and the second named author
by a grant from the Institut Universitaire de France.

\section{Hodge index theorem and Minkowski spaces}\label{par:Hodge_Minkowski}

In this section we 
define the notion of a non-elementary group of automorphisms of a compact K\"ahler surface $X$. 
We study the action of such a group on the cohomology of $X$, 
and in particular the question of (ir)reducibilty. We refer to  Appendix~\ref{par:appendix_non_kahler} for a 
discussion of the non-K\"ahler case.

\subsection{Cohomology}

\subsubsection{Hodge decomposition}\label{par:Hodge_decomposition} Denote by $H^*(X;R)$ the cohomology of $X$ with coefficients
in the ring $R$; we shall use $R=\Z$, $\Q$, $\R$ or $\C$.  
The group $\Aut(X)$ acts on $H^*(X;\C)$, preserving the image of $H^*(X;\Z)$; 
$\Aut(X)^*$ will denote the image of $\Aut(X)$ in $\GL(H^2(X;\C))$. The Hodge decomposition 
\begin{equation}
H^{k}(X;\C)=\bigoplus_{p+q=k} H^{p,q}(X;\C)
\end{equation}
is $\Aut(X)$-invariant. On 
$H^{0,0}(X;\C)$ and $H^{2,2}(X;\C)$, $\Aut(X)$ acts trivially. Throughout the paper we denote by $[\alpha]$ 
the cohomology class of a closed differential form (or current) $\alpha$. 
 
The intersection form on $H^2(X;\Z)$ will be denoted by $\langle\cdot \, \vert\, \cdot \rangle$; the self-intersection $\langle a \vert a \rangle$ of a class $a$
will also be denoted by $a^2$ for simplicity. This intersection form  is $\Aut(X)$-invariant. By the Hodge index theorem, it is positive definite on the real 
part of $H^{2,0}(X;\C)\oplus H^{0,2}(X;\C)$ and it 
is  non-degenerate and  of signature $(1,h^{1,1}(X)-1)$ on $H^{1,1}(X;\R)$. 
\begin{vcourte}
Thus, we get:
\end{vcourte}

\begin{lem}\label{lem:unitary_on _H20} 
The restriction of $\Aut(X)^*$ to the subspace $H^{2,0}(X;\C)$ (resp. $H^{0,2}(X;\C)$) is contained in a compact
subgroup of $\GL(H^{2,0}(X;\C))$ (resp.  $\GL(H^{0,2}(X;\C))$).
\end{lem}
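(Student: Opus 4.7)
The plan is to transfer compactness from the real subspace
\[
V_\R := H^2(X;\R) \cap \bigl(H^{2,0}(X;\C)\oplus H^{0,2}(X;\C)\bigr)
\]
to $H^{2,0}(X;\C)$ via a natural real-linear isomorphism. The two inputs already in the text are: (i) $\Aut(X)^*$ preserves the Hodge decomposition and the intersection form $\langle\cdot\mid\cdot\rangle$ on $H^2(X;\Z)$; and (ii) by the Hodge index theorem, $\langle\cdot\mid\cdot\rangle$ is positive definite on $V_\R$.

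First, I would set up the isomorphism. Complex conjugation on $H^2(X;\C)$ exchanges $H^{2,0}$ and $H^{0,2}$, and these two summands intersect trivially, so the map
\[
\pi \colon H^{2,0}(X;\C) \longrightarrow V_\R, \qquad v \longmapsto v + \overline{v}
\]
is an $\R$-linear isomorphism (it is clearly injective, and surjective because any real class in $H^{2,0}\oplus H^{0,2}$ has the form $w + \overline{w}$ with $w\in H^{2,0}$). For every $f\in\Aut(X)$, the pullback $f^*$ is $\C$-linear on $H^2(X;\C)$, preserves the Hodge summands, and commutes with complex conjugation since it is induced from an action on the real cohomology. Hence
\[
f^*\pi(v) \;=\; f^*v + \overline{f^*v} \;=\; \pi(f^*v),
\]
so $\pi$ intertwines the $\Aut(X)^*$-action on $H^{2,0}(X;\C)$ (viewed as a real vector space) with the one on $V_\R$.

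Second, I would conclude with the compactness argument. By (i) and (ii), the restriction of $\Aut(X)^*$ to $V_\R$ factors through the orthogonal group $\O(V_\R, \langle\cdot\mid\cdot\rangle)$, which is compact because the bilinear form is positive definite. Conjugating by $\pi$, the image of $\Aut(X)^*$ in $\GL_\R(H^{2,0}(X;\C))$ is relatively compact. Because $f^*$ acts $\C$-linearly on $H^{2,0}$, this image lies in the closed subgroup $\GL(H^{2,0}(X;\C)) \subset \GL_\R(H^{2,0}(X;\C))$, so the closure of $\Aut(X)^*\vert_{H^{2,0}(X;\C)}$ inside $\GL(H^{2,0}(X;\C))$ is a compact subgroup containing the image. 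The statement for $H^{0,2}(X;\C)$ follows identically (or by complex conjugation).

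There is no substantive obstacle here; the only point that requires a brief justification is that bounded subsets of $\GL_\R$ consisting of $\C$-linear maps remain bounded in $\GL_\C$, which holds because the latter is a closed subgroup of the former.
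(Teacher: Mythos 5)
Your argument is correct and is essentially the paper's own proof: the paper deduces the lemma precisely from the positive definiteness of the intersection form on the real part of $H^{2,0}(X;\C)\oplus H^{0,2}(X;\C)$ (equivalently, from the invariant hermitian form $\int_X\Omega_1\wedge\overline{\Omega_2}$ on holomorphic $2$-forms), and you have simply written out the transfer of compactness from that real subspace to $H^{2,0}(X;\C)$ in detail.
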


\begin{vlongue}
\begin{proof} This follows from the fact that $\langle \cdot \vert \cdot \rangle$ is positive definite on  the real 
part of $H^{2,0}(X;\C)\oplus H^{0,2}(X;\C)$. An equivalent way to describe this argument it to identify $H^{2,0}(X;\C)$ with the 
space of holomorphic $2$-forms on $X$. Then, there is a natural, $\Aut(X)$-invariant, hermitian form 
on this space: given two holomorphic $2$-forms $\Omega_1$ and $\Omega_2$, the hermitian
product is the integral
\begin{equation}
\int_X\Omega_1\wedge\overline{\Omega_2}.
\end{equation}
Thus, the image of $\Aut(X)$ in $\GL(H^{2,0}(X;\C))$ is relatively compact.\end{proof}
\end{vlongue}

The \textbf{N\'eron-Severi group} $\NS(X;\Z)$ is, by definition, the discrete subgroup of  $H^{1,1}(X;\R)$ defined by 
 $\NS(X;\Z) =H^{1,1}(X;\R)\cap H^{2}(X;\Z)$;
more precisely, it is  the intersection of $H^{1,1}(X;\R)$ with the image of $H^{2}(X;\Z)$ in 
$H^{2}(X;\R)$, i.e. with the torsion free part of the Abelian group $H^{2}(X;\Z)$.
The Lefschetz theorem on $(1,1)$-classes identifies  
$\NS(X;\Z)$ with the subgroup of $H^{1,1}(X;\R)$ given by Chern classes of line bundles on $X$. 
The N\'eron-Severi group  is $\Aut(X)$-invariant, as well as  $\NS(X;R):=\NS(X;\Z)\otimes_\Z R$ for $R=\Q$, $\R$, or $\C$. 
 The dimension of  $\NS(X;\R)$ is the \textbf{Picard number} $\rho(X)$. 

\subsubsection{Norm of $f^*$} Let $\abs{ \cdot }$ be any norm on the vector space $H^*(X;\C)$. If $L$ is a linear
transformation of $H^*(X;\C)$ we denote by $\norm{L}$ the associated operator nom and if 
  $W\subset H^*(X;\C)$ is an $L$-invariant subspace of 
$H^*(X;\C)$, we denote by $\norm{ L}_W$ the operator norm of $L\rest{ W}$. 

If $u$ is an element of $H^{1,0}(X;\C)$, then $u\wedge {\overline{u}}$ is an element of $H^{1,1}(X;\R)$
such that $\abs{ u }^2 \leq C \abs{ u\wedge {\overline{u}}} $ for some constant $C$ that
depends only on the choice of norm on the cohomology; in particular, the norm of $f^*$ on $H^{1,0}(X;\C)$ is controlled
by the norm of $f^*$ on $H^{1,1}(X;\C)$. Using complex conjugation, the same results hold on 
$H^{0,1}(X;\C)$; by Poincar\'e duality we also control $\norm{ f^*}_{H^{p,q}(X;\C)}$ 
for $p+q>2$. Together with Lemma~\ref{lem:unitary_on _H20}, we obtain:

\begin{lem}\label{lem:cohomological_norm_estimates} 
Let $X$ be a compact  K\"ahler surface. There exists a constant $C_0>1$ such that 
\[
C_0^{-1} \norm{ f^* }_{H^*(X;\C)} \leq \norm{ f^*}_{H^{1,1}(X;\R)} \leq \norm{ f^*}_{H^*(X;\C)}
\]
for every automorphism $f\in \Aut(X)$. 
\end{lem}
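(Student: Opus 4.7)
The plan is as follows. The right-hand inequality is immediate, since $H^{1,1}(X;\R)$ is an $f^*$-invariant subspace of $H^*(X;\C)$. For the left-hand inequality, I would exploit that the Hodge decomposition $H^*(X;\C) = \bigoplus_{p,q} H^{p,q}(X;\C)$ is $\Aut(X)$-invariant: up to a multiplicative constant depending only on the fixed norm, one has $\norm{f^*}_{H^*(X;\C)} \asymp \max_{p,q}\norm{f^*}_{H^{p,q}(X;\C)}$, so it suffices to bound each $\norm{f^*}_{H^{p,q}}$ by a constant multiple of $\norm{f^*}_{H^{1,1}}$, uniformly in $f \in \Aut(X)$.

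Four of the six pieces are handled directly: $f^*$ is the identity on $H^{0,0}$ and $H^{2,2}$, and by Lemma~\ref{lem:unitary_on _H20} it is uniformly bounded on $H^{2,0}$ and $H^{0,2}$. For $H^{1,0}$ (and by complex conjugation $H^{0,1}$), I would apply the wedge inequality $|u|^2 \leq C\,|u\wedge \bar u|_{H^{1,1}}$ recorded just before the statement to $f^*u$, use that $f^*u \wedge \overline{f^*u} = f^*(u \wedge \bar u)$, and bound $|u \wedge \bar u|_{H^{1,1}}$ back by a multiple of $|u|^2$; this yields $\norm{f^*}_{H^{1,0}} \lesssim \sqrt{\norm{f^*}_{H^{1,1}}}$. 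For $H^{2,1}$ (and $H^{1,2}$), I would invoke the Poincar\'e--Serre pairing $(\alpha,\beta)\mapsto \int_X \alpha \wedge \beta \in H^{2,2}\simeq \C$ between $H^{2,1}$ and $H^{0,1}$: since $f$ is an automorphism this pairing is invariant, so $f^*|_{H^{2,1}}$ is transpose to $(f^{-1})^*|_{H^{0,1}}$, reducing the bound to $\norm{(f^{-1})^*}_{H^{0,1}} \lesssim \sqrt{\norm{(f^{-1})^*}_{H^{1,1}}}$.

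Two facts about the action of $f^*$ on $H^{1,1}(X;\R)$ then close the argument. First, $f^*|_{H^{1,1}}$ is an isometry of the intersection form, which is non-degenerate of signature $(1,h^{1,1}(X)-1)$; hence $|\det(f^*|_{H^{1,1}})| = 1$, and the elementary inequality $\norm{L}^{h^{1,1}(X)} \geq |\det L|$ yields $\norm{f^*}_{H^{1,1}} \geq c > 0$ uniformly in $f$. Second, the Cartan $KAK$ decomposition of $\O(1,h^{1,1}(X)-1)$ (every element is of the form $k_1 a_t k_2$ with $a_t$ a Minkowski boost and $k_i$ in a maximal compact) shows that for such an isometry the forward and inverse operator norms are comparable, $\norm{(f^{-1})^*}_{H^{1,1}} \asymp \norm{f^*}_{H^{1,1}}$. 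Combining these, the $\sqrt{\norm{f^*}_{H^{1,1}}}$ bounds obtained above can each be promoted to a uniform multiple of $\norm{f^*}_{H^{1,1}}$, producing the desired $C_0$.

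The main obstacle is really the last paragraph: the pieces $H^{1,0}$, $H^{0,1}$, $H^{2,1}$, $H^{1,2}$ naturally give bounds involving only the \emph{square root} of $\norm{f^*}_{H^{1,1}}$ (and, for the latter two, $f^{-1}$ rather than $f$), so the crux is to use the Minkowski structure on $H^{1,1}(X;\R)$ to show both that $\norm{f^*}_{H^{1,1}}$ is bounded away from $0$ and that the norm is essentially symmetric under $f \leftrightarrow f^{-1}$. Everything else is a routine unwinding of the Hodge decomposition combined with the wedge inequality already recorded in the text.
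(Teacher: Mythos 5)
Your proof is correct and takes essentially the same route as the paper's, which is only sketched in the paragraph preceding the lemma: the wedge inequality $\abs{u}^2\leq C\abs{u\wedge\overline{u}}$ for $H^{1,0}$ and $H^{0,1}$, Lemma~\ref{lem:unitary_on _H20} for $H^{2,0}$ and $H^{0,2}$, and Poincar\'e duality for the pieces with $p+q>2$. Your final paragraph (the uniform lower bound $\norm{f^*}_{H^{1,1}}\geq c>0$ from $\abs{\det}=1$, and the comparability $\norm{(f^{-1})^*}_{H^{1,1}}\asymp\norm{f^*}_{H^{1,1}}$ via the Cartan decomposition, cf.\ Corollary~\ref{cor:KAK}) simply makes explicit the details the paper leaves implicit.
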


\subsection{The K\"ahler, nef,  and pseudo-effective cones}\label{par:cones_definition} 
(See \cite{Boucksom:ENS, Lazarsfeld:Book1} for details on the notions introduced in this section.)

Let $\Kah(X)\subset H^{1,1}(X;\R)$ be the {\bf{K\"ahler cone}},
i.e. the cone of classes of K\"ahler forms. 
Its closure $\Kahbar(X)$ is a salient, closed, convex cone, and
\begin{equation} 
\Kah(X)\subset \Kahbar(X)\subset\{ v\in H^{1,1}(X;\R)\; ; \;  \langle v \,\vert\, v \rangle \geq 0\}.
\end{equation} 
The intersection $\NS(X;\R)\cap \Kah(X)$ is the {\bf{ample cone}} $\Amp(X)$, while $\NS(X;\R)\cap \Kahbar(X)$ is the {\bf{nef cone}} $\Nef(X)$.
They are all  invariant under the action of $\Aut(X)$ on $H^{1,1}(X;\R)$.
We shall also say that the elements of $\Kahbar(X)$ are nef classes, but the notation $\Nef(X)$ will be reserved for $\NS(X;\R)\cap \Kahbar(X)$.
The set of classes of closed positive currents is the {\bf{pseudo-effective cone}} $\Psef(X)$.  This
cone is an $\Aut(X)$-invariant, salient, closed, convex cone. It is dual to $\Kahbar(X)$ 
for the intersection form (see~\cite[Lem. 4.1]{Boucksom:ENS}): 
\begin{equation}\label{eq:cone_duality}
\Kahbar(X)=\{ u \in H^{1,1}(X;R)\; ; \; \langle u \,\vert\, v\rangle\geq 0 \quad \forall v \in \Psef(X)\}
\end{equation}
and vice-versa.  

We fix once and for all a reference K\"ahler form $\kappa_0$ with $[\kappa_0]^2= \int \kappa_0\wedge \kappa_0 =1$. Then 
we define the \textbf{mass} of a pseudo-effective class $a$ by $\M(a) = \langle a\,\vert\, [\kappa_0] \rangle$, or equivalently the 
mass of a closed positive current $T$ by $\M(T)  = \int T\wedge \kappa_0$; we may also extend this definition to any class, pseudo-effective or not (but then $\M(a)= \langle a\,\vert\, [\kappa_0] \rangle$ may be negative). The compactness of the set of closed positive currents of mass 1   implies that, for any norm $\abs{\cdot}$ on $H^{1,1}(X, \R)$, there exists a constant $C$ such that 
\begin{equation}\label{eq:comparison_norm_mass}
\forall a\in \Psef(X), \;  \ C\inv\abs{a}\leq \M(a)\leq  C \abs{a}.
\end{equation}

If $v$ is an element of $\Psef(X)$ and $v^2\geq 0$, then by the Hodge index theorem we know that $\langle u \,\vert\, v\rangle\geq 0$
for every class $u\in H^{1,1}(X;\R)$ such that $u^2\geq 0$ and $\langle u\,\vert\, [\kappa_0]\rangle\geq 0$ (see Equation~\eqref{eq:reverse_CS}). So, in Equation~\eqref{eq:cone_duality}, the
most important constraints come from the classes $v\in \Psef(X)$ with $v^2<0$. If $v$ is such a class, its Zariski decomposition 
expresses $v$ as a sum $v=p(v)+n(v)$ with the following properties (see~\cite{Boucksom:ENS}): 
\begin{enumerate}
\item this decomposition is orthogonal: $\langle p(v) \,\vert\, n(v)\rangle = 0$;
\item $p(v)$ is a nef class, i.e. $p(v)\in \Kahbar(X)$;
\item $n(v)$ is negative: it is a sum $n(v)=\sum_i a_i [D_i]$ with positive coefficients $a_i\in \R_+^*$ 
of classes of irreducible curves $D_i\subset X$ such that the Gram matrix $(\langle D_i\,\vert\, D_j\rangle)$
is negative definite. 
\end{enumerate}

\begin{pro}\label{pro:extremal_rays}
If a ray $\R_+ v$ of the cone $\Psef(X)$ is extremal, then either $v^2\geq 0$ or $\R_+ v=\R_+[D]$
for some irreducible curve $D$ such that $D^2<0$. 
The cone $\Psef(X)$ contains 
at most countably many extremal rays $\R_+v$ with $v^2<0$. 

Let $u$ be an isotropic element of $\Kahbar(X)$. If $\R_+ u$ is not an 
extremal ray  of $\Psef(X)$, then $u$ is proportional to an integral class $u'\in \NS(X; \Z)$.
\end{pro}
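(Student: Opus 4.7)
The proposition has three claims; I would address them in order, with Zariski decomposition and the Hodge index theorem as the main inputs. For the first claim, suppose $\R_+ v$ is extremal with $v^2<0$ and apply the Zariski decomposition $v=p(v)+\sum_i a_i[D_i]$: this writes $v$ as a sum of pseudo-effective classes, so extremality forces each summand to be a non-negative multiple of $v$. If $p(v)\neq 0$ then $v$ itself would be nef, contradicting $v^2<0$; hence $p(v)=0$, and linear independence of the $[D_i]$ (from negative definiteness of their Gram matrix) combined with extremality leaves a single term $v=a_1[D_1]$ with $D_1^2<0$. Countability of the set of such extremal rays is then immediate: each contains an integer class $[D]\in\NS(X;\Z)$, and this lattice is countable.

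For the third claim, start from a non-trivial decomposition $u=v_1+v_2$ with non-proportional $v_i\in\Psef(X)$. Pairing with the nef isotropic class $u$ gives $0=u^2=u\cdot v_1+u\cdot v_2$, each term non-negative, hence both zero. Zariski-decompose $v_i=p(v_i)+n(v_i)$ and iterate the pairing trick to deduce $u\cdot p(v_i)=u\cdot n(v_i)=0$. By Hodge index, the intersection form on $u^\perp$ is negative semi-definite with kernel $\R u$; combined with $p(v_i)\in\Kahbar(X)$ and the salience of $\Kahbar(X)$, this forces $p(v_i)=\mu_i u$ for some $\mu_i\geq 0$. Summing gives
\[
(1-\mu_1-\mu_2)\,u\;=\;n(v_1)+n(v_2).
\]
The case $\mu_1+\mu_2=1$ would force $n(v_i)=0$ by salience of $\Psef(X)$ and then $v_i\propto u$, contrary to assumption; so $u$ is a positive real multiple of $\sum_k a_k[D_k]$, where the $D_k$ are the finitely many distinct irreducible curves appearing in $n(v_1)+n(v_2)$, each of negative self-intersection and orthogonal to $u$.

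The main and, to my mind, only non-routine step is to upgrade this to proportionality with a single rational class. Let $W=\mathrm{span}_\R\{[D_k]\}\subset\NS(X;\R)$; I would prove that the radical of the intersection form restricted to $W$ equals $\R u$. Indeed $W\subset u^\perp$, the induced form on $u^\perp/\R u$ is negative definite by Hodge index, so any element of the radical of $W$ projects to zero in $u^\perp/\R u$ and hence lies in $W\cap\R u=\R u$ (using $u\in W$, which was established in the previous step). Since $W$ and the intersection form are defined over $\Q$, this one-dimensional radical is a $\Q$-rational line in $W$; any non-zero integral class on it provides $u'\in\NS(X;\Z)$ with $u\in\R u'$.
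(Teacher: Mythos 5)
Your proof is correct. For the first two assertions it is essentially the paper's argument: extremality plus the Zariski decomposition forces $v=p(v)$ or $v=a[D]$, and countability follows from countability of $\NS(X;\Z)$. For the third assertion, however, your route is genuinely different. The paper writes $u=\int v\,d\alpha(v)$ as an integral (Choquet-type) representation by a probability measure supported on extremal rays of mass one not containing $u$, shows that $\alpha$-almost every such $v$ lies in $u^\perp\setminus\R u$, hence has $v^2<0$, then invokes the first assertion to conclude that $\alpha$ is purely atomic and that $u$ lies in the span of countably many classes $[D_v]$. You avoid both the integral representation and the countability statement: the definition of non-extremality gives a two-term decomposition $u=v_1+v_2$, and applying the Zariski decomposition to each summand, the relations $\langle u\,\vert\, p(v_i)\rangle=\langle u\,\vert\, n(v_i)\rangle=0$ together with the Hodge index theorem force $p(v_i)=\mu_i u$, so $u$ lands in the span $W$ of the finitely many curves occurring in $n(v_1)+n(v_2)$, each orthogonal to $u$. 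From that point on the two proofs coincide: $W\subset u^\perp$ is defined over $\Q$, the radical of the intersection form restricted to $W$ equals $\R u$, and a rationally defined one-dimensional radical yields the integral class $u'$. What your version buys is a more elementary, self-contained argument; what the paper's buys is brevity, by recycling the first assertion. One small point: when you divide by $1-\mu_1-\mu_2$ you assert the multiple of $\sum_k a_k[D_k]$ is positive; strictly you should also rule out $\mu_1+\mu_2>1$, which follows by the same salience argument you used for $\mu_1+\mu_2=1$ (a negative multiple of the nonzero class $u$ cannot be pseudo-effective) -- or simply observe that only $1-\mu_1-\mu_2\neq 0$ is needed, since the final rationality step uses $u\in W$ and $u\perp W$ but not the sign.
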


\begin{proof}
If $\R_+ v$ is extremal, the Zariski decomposition $v=p(v)+n(v)$ involves only one term. If $v=p(v)$ 
then $v^2\geq 0$. Otherwise   $v=n(v)$ and by extremality $n(v)=a[D]$ for some irreducible curve $D$
with $D^2<0$. The countability assertion follows, because  $\NS(X;\Z)$ is countable.
For the last assertion, multiply $u$ by $\langle u \vert [\kappa_0]\rangle^{-1}$ to assume $\langle u \vert [\kappa_0]\rangle=1$ and 
 write $u$ as a convex combination $u=\int v \, d\alpha(v)$, where $\alpha$ is a probability measure 
on $\Psef(X)$ such that $\alpha$-almost every $v$ satisfies 
\begin{itemize}
\item $\langle v \vert [\kappa_0]\rangle =1$,
\item $\R_+ v$ is extremal in $\Psef(X)$ and does not contain $u$. 
\end{itemize} 
Since $u$ is nef, $\langle u\,\vert\, v \rangle \geq 0$ 
for each $v$; and $u$ being isotropic, we get $v\in u^\perp\setminus \R u$ for $\alpha$-almost every $v$. By the Hodge
 index theorem, $v^2<0$ almost surely. Now, the first assertion of this proposition implies that $v\in \R_+[D_v]$ for some irreducible curve 
$D_v\subset X$ with negative self-intersection;  there are only countably many classes of that type, thus $\alpha$ is purely atomic, and 
$u$ belongs to  $\Vect([D_v]; \alpha(v)>0)$, a subspace of $\NS(X;\R)$ defined over $\Q$. On  this subspace,  
$q_X$ is semi-negative, and by the Hodge index theorem its kernel is $\R u$. Since $\Vect([D_v]; \alpha(v)>0)$ and $q_X$ are defined over $\Q$, we deduce 
that $u$ is proportional to an integral class. 
\end{proof}

\subsection{Non-elementary subgroups of $\Aut(X)$} 
When $X$ is a compact K\"ahler surface, 
the  action of $\Aut(X)$ on $H^{1,1}(X, \R)$ is  subject to several constraints:   the   Hodge index theorem implies that it must 
preserve a Minkowski structure 
and  in addition it preserves the  lattice given by the Neron-Severi group. 
In this section we review the first consequences of these constraints. 
\subsubsection{Isometries of Minkowski spaces} 
Consider the Minkowski space $\R^{m+1}$, endowed with its quadratic form $q$ of signature $(1,m)$
defined by 
\begin{equation}
q(x)=x_0^2-\sum_{i=1}^mx_i^2.
\end{equation} 
The corresponding bilinear form will be denoted $\langle \cdot \vert \cdot \rangle$. For future reference, note the following reverse Schwarz inequality:
\begin{equation}\label{eq:reverse_CS}
\text{ if } \;  q(x)\geq 0 \text{ and } q(x')\geq 0 \; \text{ then } \;  \langle{x\,\vert\, x'}\rangle \geq q(x)^{1/2} q(x')^{1/2} \end{equation}
with equality if and only if $x$ and $x'$ are collinear. We say that a subspace $W\subset \R^{m+1}$ is of {\bf{Minkowski type}}
if the restriction $q_{\vert W}$ is non-degenerate and of signature $(1,\dim(W)-1)$.

In this section, we review some well-known facts concerning  isometries 
of $\R^{1,m}=(\R^{m+1},q)$ (see e.g. \cite{Ratcliffe, kapovich, franchi-lejan} for details).
We  denote by $\abs{\cdot}$ the Euclidean norm on $\R^{m+1}$, and by $\P\colon \R^{m+1}\setminus\{ 0\}\to \P( \R^{m+1})$
the projection on the projective space $\P( \R^{m+1})=\P^m(\R)$.

The hyperboloid $\set{x \; ; \;  q(x)=1}$ has two components, and we denote by 
  $\O^+_{1,m}(\R)$ the subgroup of the orthogonal group $\O_{1,m}(\R)$ that preserves 
the  component $\mathcal Q =\{ q(x)=1\; ; \; x_0 >0\}$.  Endowed with the distance 
$d_\Hyp(x,y)= \cosh\inv \langle x \,\vert\, y\rangle $, $\mathcal Q$ is  
 a model of the real hyperbolic space $\Hyp^m$
of dimension $m$. The boundary at infinity of $\Hyp^m$ will be identified with 
 $\partial\P(\mathcal Q)\subset \P(\R^{m+1})$ and will be denoted by $\fr\Hyp^m$. It is the set of isotropic lines of $q$. 

Any isometry $\gamma$ of $\Hyp^m$ is induced by an element of $\O^+_{1,m}(\R)$, and extends continuously to $\fr\Hyp^m$: its action 
on $\fr\Hyp^m$ is given by its linear projective action on $\P(\R^{m+1})$. Isometries are classified in three types, 
according to their fixed point set in  $\Hyp^m \cup \fr\Hyp^m$:
\begin{itemize}
\item $\gamma$ is {\bf{elliptic}} if $\gamma$ has a fixed point in $\Hyp^m$;
\item $\gamma$ is {\bf{parabolic}} if $\gamma$ has  no  fixed point in $\Hyp^m$ and a unique  fixed point    in $\fr \Hyp^m$;
\item  $\gamma$ is {\bf{loxodromic}}  if $\gamma$ has no  fixed point in $\Hyp^m$ and  exactly    two  fixed points in $\fr \Hyp^m$.
\end{itemize}
A subgroup $\Gamma$ of $\O^+_{1,m}(\R)$ is {\bf{non-elementary}} if it does not preserve any finite subset of 
$\Hyp^m\cup \partial\Hyp^m$.  Equivalently $\Gamma$ is non-elementary 
if and only if it contains two loxodromic elements with disjoint fixed point sets.

\begin{vcourte}
The group $\O^+_{1,m}(\R)$ admits a \textbf{Cartan} or  \textbf{KAK decomposition}.
To state it, denote by $e_0=(1,0, \ldots, 0)$ the first vector of the canonical basis of $\R^{m+1}$; this vector is 
an element of $\Hyp^m$, and its stabilizer $\mathrm{Stab}(e_0)$ in $\O^+_{1,m}(\R)$ is a maximal compact subgroup, 
isomorphic to $\O_{m-1}(\R)$.

\begin{lem}[See \S I.5 of \cite{franchi-lejan}]\label{lem:KAK}
Every $\gamma\in\O^+_{1,m}(\R) $ can be written (non-uniquely)
as $\gamma= k_1 a k_2$, where $k_i\in \mathrm{Stab}(\bfe_0)$ and $a$ is a matrix of the form
\[
\begin{pmatrix} \cosh r & \sinh r & 0\\ \sinh r & \cosh r & 0 \\ 0 & 0 & \id_{m-1} \end{pmatrix}
\]
with $r=d_\Hyp(e_0, \gamma e_0)$.
\end{lem}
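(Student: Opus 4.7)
The plan is to reduce the decomposition to a transitivity statement for $\Stab(e_0)$ on the Euclidean sphere in $e_0^\perp$, and then to a one-dimensional hyperbolic calculation in the plane spanned by $e_0$ and $e_1$.

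First I would set $v = \gamma(e_0)$, which lies on $\mathcal{Q}$ since $\gamma \in \O^+_{1,m}(\R)$. Decompose $v = v_0 e_0 + v'$ with $v_0 = \langle v \,\vert\, e_0\rangle > 0$ and $v' \in e_0^\perp$. The orthogonal complement $e_0^\perp$ is negative-definite of rank $m$, so the stabilizer of $e_0$ in $\O^+_{1,m}(\R)$ acts on $e_0^\perp$ as the full orthogonal group of $(-q)_{\vert e_0^\perp}$. In particular it acts transitively on the Euclidean spheres of $e_0^\perp$. Thus one can choose $k_1 \in \Stab(e_0)$ such that $k_1^{-1}(v')$ lies in the ray $\R_{\geq 0} e_1$, hence $k_1^{-1}(v) \in \mathrm{span}(e_0, e_1) \cap \mathcal{Q}$.

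Next I would exploit the one-dimensional hyperbolic geometry of the plane $\mathrm{span}(e_0, e_1)$. Any element of $\mathcal{Q}$ in that plane whose $e_1$-component is non-negative is of the form $(\cosh r, \sinh r, 0, \ldots, 0)$ for a unique $r \geq 0$, so $k_1^{-1}(v) = a \cdot e_0$, where $a$ is the matrix of the statement. Note that $a \in \O^+_{1,m}(\R)$: its $2 \times 2$ upper-left block is a hyperbolic boost preserving $x_0^2 - x_1^2$ and the sign of $x_0$, and it acts as identity on the orthogonal complement. Setting $k_2 := a^{-1} k_1^{-1} \gamma$, I then verify $k_2(e_0) = a^{-1} k_1^{-1}(v) = a^{-1}(a \cdot e_0) = e_0$, so $k_2 \in \Stab(e_0)$. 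Rearranging gives the factorization $\gamma = k_1 a k_2$.

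Finally, to identify $r$, I would use that $k_1, k_2$ fix $e_0$ and are isometries for $d_\Hyp$. Hence
\[
d_\Hyp(e_0, \gamma e_0) = d_\Hyp(k_1^{-1} e_0, k_1^{-1} \gamma e_0) = d_\Hyp(e_0, a e_0) = \cosh^{-1}\langle e_0 \,\vert\, a e_0\rangle = \cosh^{-1}(\cosh r) = r.
\]
The argument is essentially without obstacle; the only point that requires a moment's thought is the transitivity of $\Stab(e_0)$ on rays of $e_0^\perp$, which amounts to recognising $\Stab(e_0)$ as the orthogonal group of a negative-definite form of rank $m$. Everything else is a direct computation in the hyperbolic plane spanned by $e_0$ and $e_1$.
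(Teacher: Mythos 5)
Your argument is correct, and it follows the same basic strategy as the paper: use the compact stabilizer $\mathrm{Stab}(e_0)$ to move $\gamma(e_0)$ onto the standard geodesic $\Hyp^m\cap\Vect(e_0,e_1)$ and then read off a boost. The execution differs in a way worth noting: the paper argues two-sidedly, choosing $k_1$ and $k_2$ so that $k_1^{-1}(\gamma(e_0))$ and $k_2(\gamma^{-1}(e_0))$ both lie on that geodesic with $e_0$ as midpoint, identifies $a:=k_1^{-1}\gamma k_2^{-1}$ as a hyperbolic translation along it, and then needs a final correction by an element of $\mathrm{Stab}(e_0)$ fixing $e_1$ to force $a$ to act as the identity on $\Vect(e_0,e_1)^{\perp}$, i.e.\ to be exactly the stated matrix. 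Your one-sided version — fix only $k_1$, write $a$ explicitly from $k_1^{-1}\gamma(e_0)=(\cosh r,\sinh r,0,\ldots,0)$, and define $k_2:=a^{-1}k_1^{-1}\gamma$, checking directly that it fixes $e_0$ — bypasses both the midpoint argument and the correction step, at the small cost of making the transitivity of $\mathrm{Stab}(e_0)\simeq\O_m(\R)$ on rays of $e_0^{\perp}$ explicit (which you do). Both proofs yield $r=d_\Hyp(e_0,\gamma e_0)$ by the same isometry computation, so your route is a legitimate and slightly leaner alternative.
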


As an  immediate corollary, we get: 

\begin{cor}\label{cor:KAK}
If $\norm{\cdot}$ denotes the operator norm associated to the euclidean norm in $\R^{m+1}$,  then
$\norm {\gamma}  = \norm{a}$, where $\gamma =k_1 a k_2$ is any Cartan decomposition of $\gamma$. In particular $ \norm {\gamma} = \norm{\gamma\inv}$ and 
\[
  \norm{\gamma}  \asymp \cosh  d_\Hyp(e_0, \gamma(e_0)) \asymp \abs{\gamma e_0}.
\]
Furthermore for every $e\in \Hyp^m$ and any $\gamma\in\O^+_{1,m}(\R) $
\[
\norm{\gamma}  \asymp \cosh  d_\Hyp(e, \gamma(e)), 
\]
where the implied constant depends only on the base point $e$.
\end{cor}
\end{vcourte}

\begin{vlongue}
The group $\O^+_{1,m}(\R)$ admits a \textbf{Cartan} or  \textbf{KAK decomposition} (see~\cite[\S I.5]{franchi-lejan}).
To state it, denote by $e_0=(1,0, \ldots, 0)$ the first vector of the canonical basis of $\R^{m+1}$; this vector is 
an element of $\Hyp^m$, and its stabilizer $\mathrm{Stab}(e_0)$ in $\O^+_{1,m}(\R)$ is a maximal compact subgroup, 
isomorphic to $\O_{m-1}(\R)$.

\begin{lem} \label{lem:KAK}
Every $\gamma\in\O^+_{1,m}(\R) $ can be written (non-uniquely)
as $\gamma= k_1 a k_2$, where $k_i\in \mathrm{Stab}(\bfe_0)$ and $a$ is a matrix of the form
\[
\begin{pmatrix} \cosh r & \sinh r & 0\\ \sinh r & \cosh r & 0 \\ 0 & 0 & \id_{m-1} \end{pmatrix}
\]
with $r=d_\Hyp(e_0, \gamma e_0)$.
\end{lem}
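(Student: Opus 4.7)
The plan is to reduce the decomposition to the geometric fact that $\Stab(e_0)$ acts transitively on hyperbolic spheres centered at $e_0$, then use the elementary structure of the $(e_0,e_1)$-plane to absorb the radial part.

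First I would set $r := d_\Hyp(e_0, \gamma e_0)$ and observe that by the very definition of the hyperbolic metric one has $\langle e_0 \,\vert\, \gamma e_0 \rangle = \cosh r$. Writing $\gamma e_0 = (v_0, v_1, \ldots, v_m)$ this gives $v_0 = \cosh r$, and then the equation $q(\gamma e_0) = 1$ forces $v_1^2 + \cdots + v_m^2 = \sinh^2 r$. On the other hand the matrix $a$ displayed in the statement satisfies $a e_0 = (\cosh r)\, e_0 + (\sinh r)\, e_1$, which lies on the same Euclidean sphere of radius $\sinh r$ inside the hyperplane $\{x_0 = \cosh r\}$.

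Next, since the restriction of $\Stab(e_0)$ to the orthogonal complement $e_0^{\perp} = \{0\}\times\R^m$ is the full orthogonal group of that Euclidean factor (because $q_{\vert e_0^\perp}$ is negative definite), it acts transitively on Euclidean spheres in $e_0^\perp$. Hence there exists $k_1 \in \Stab(e_0)$ mapping $(0, \sinh r, 0, \ldots, 0)$ to $(0, v_1, \ldots, v_m)$. Since $k_1$ fixes $e_0$ and acts linearly, this yields $k_1 (a e_0) = \gamma e_0$, i.e. $a e_0 = k_1^{-1} \gamma e_0$.

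Finally, set $k_2 := a^{-1} k_1^{-1} \gamma$. This element lies in $\O^+_{1,m}(\R)$ as a product of such elements, and the previous step gives
\begin{equation*}
k_2 e_0 \;=\; a^{-1} k_1^{-1} \gamma e_0 \;=\; a^{-1} (a e_0) \;=\; e_0,
\end{equation*}
so $k_2 \in \Stab(e_0)$. By construction $\gamma = k_1 a k_2$, which completes the proof. There is no serious obstacle here: the only points that require care are correctly reading off $\cosh r$ as the Minkowski pairing $\langle e_0 \,\vert\, \gamma e_0 \rangle$, and the transitivity of $\Stab(e_0) \cong \O_m(\R)$ on spheres of $e_0^\perp$, both of which are immediate from the signature $(1,m)$ of $q$. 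Non-uniqueness is also clear, since one may compose $k_2$ on the left (and correct $k_1$ on the right) by any element of $\O_{m-1}(\R)$ stabilizing the axis $\R e_1$.
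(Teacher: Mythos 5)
Your proof is correct, and it takes a slightly different, more economical route than the one in the paper. The paper's argument is geometric: it uses the transitivity of $K=\mathrm{Stab}(e_0)$ on hyperbolic geodesics through $e_0$ to align \emph{both} $k_1^{-1}(\gamma(e_0))$ and $k_2(\gamma^{-1}(e_0))$ on the model geodesic $L=\Hyp^m\cap\Vect(e_0,e_1)$ with $e_0$ as midpoint, identifies $a=k_1^{-1}\gamma k_2^{-1}$ as a hyperbolic translation along $L$, and then needs a final correction (replacing $a$ by $a\circ k^{-1}$ and $k_2$ by $k\circ k_2$) to force $a$ to act trivially on the orthogonal complement of $\Vect(e_0,e_1)$, i.e.\ to have exactly the displayed matrix form; the case $\gamma(e_0)=e_0$ is also treated separately. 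Your version is one-sided: you read off $\langle e_0\,\vert\,\gamma e_0\rangle=\cosh r$, use only the transitivity of $\mathrm{Stab}(e_0)\cong\O_m(\R)$ on Euclidean spheres of $e_0^\perp$ to find $k_1$ with $k_1(ae_0)=\gamma e_0$, and then \emph{define} $k_2:=a^{-1}k_1^{-1}\gamma$, which automatically fixes $e_0$ and lies in $\O^+_{1,m}(\R)$. This bypasses both the midpoint argument and the final correction step (the arbitrariness on $\Vect(e_0,e_1)^\perp$ is absorbed into $k_2$ by construction), and it handles the degenerate case $r=0$ uniformly. The only facts you rely on — that the Minkowski pairing with $e_0$ computes $\cosh r$, and that the stabilizer acts as the full orthogonal group on the negative-definite complement — are exactly as you state, so there is no gap; your approach buys brevity and uniformity, while the paper's buys a geometric picture (translation along an axis) that it reuses implicitly when discussing loxodromic isometries.
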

 
\begin{proof} Note that $K:=\mathrm{Stab}(e_0)$ acts transitively on the set of 
hyperbolic geodesics through~$e_0$.  Denote by $L$ the hyperbolic 
geodesic $\Hyp^m\cap \Vect(e_0, e_1)$, where $e_1=(0,1, 0, \ldots, 0)$ is the second element of the canonical basis of $\R^{m+1}$. 
If $\gamma(e_0) = e_0$ then $\gamma$ belongs to $K$ and we are done. Otherwise
choose $k_1,k_2\in K$ such that $k_1\inv(\gamma(e_0))\in L$, $k_2(\gamma\inv(e_0))\in L$, and $e_0$ lies in between 
$k_2(\gamma\inv(e_0))$  and $k_1\inv(\gamma(e_0))$; then $e_0$ is in fact the middle point of $[k_2(\gamma\inv(e_0)), k_1\inv(\gamma(e_0))]$ because 
$d_\Hyp(e_0, \gamma(e_0)) = d_\Hyp(e_0, \gamma\inv(e_0))>0$. The isometry
$a:=k_1\inv \gamma k_2\inv$ maps $k_2(\gamma\inv(e_0))\in L$ to $e_0$ and $e_0$ to $k_1\inv(\gamma(e_0))\in L$. It follows that $a$ is a 
hyperbolic translation along $L$ of translation length $d_\Hyp (e_0, k_1\inv(\gamma(e_0))  = d_\Hyp (e_0,  \gamma(e_0))$. To conclude, change 
$a$ into $a\circ k\inv$ and $k_2$ into $k\circ k_2$ where $k$ is the element of $K$ that preserves $e_1$ and acts like 
$a$ 
on the orthogonal complement of  $\Vect(e_0, e_1)$. \end{proof}

\begin{cor}\label{cor:KAK}
If $\norm{\cdot}$ denotes the operator norm associated to the euclidean norm in $\R^{m+1}$,  then
$\norm {\gamma}  = \norm{a}$, where $\gamma =k_1 a k_2$ is any Cartan decomposition of $\gamma$. In particular $ \norm {\gamma} = \norm{\gamma\inv}$ and 
\[
  \norm{\gamma}  \asymp \cosh  d_\Hyp(e_0, \gamma(e_0)) \asymp \abs{\gamma e_0}.
\]
Furthermore for every $e\in \Hyp^m$ and any $\gamma\in\O^+_{1,m}(\R) $
\[
\norm{\gamma}  \asymp \cosh  d_\Hyp(e, \gamma(e)), 
\]
where the implied constant depends only on the base point $e$.
\end{cor}

This is an  immediate corollary of the previous lemma. 
\end{vlongue}

\subsubsection{Irreducibility}  
A non-elementary subgroup of 
$\O^+_{1,m}(\R)$  
does not need to
 act irreducibly on $\R^{m+1}$. Proposition~\ref{pro:invariant_cohomological_decomposition}, below, 
clarifies the possible situations. 

\begin{lem}\label{lem:restriction_isom}
Let $\Gamma$ be a non-elementary subgroup of $\O^+_{1,m}(\R)$ (resp. $\gamma$ be an element 
of $\O^+_{1,m}(\R)$). Let $W$ be a subspace of $\R^{1,m}$. 
\begin{enumerate}[\em (1)]
\item If $W$ is $\Gamma$-invariant, then either $(W, q\rest{ W})$ is a Minkowski space and
$\Gamma\rest{ W}$ is non-elementary, or $q\rest{ W}$ is negative definite and $\Gamma\rest{ W}$ is 
contained in a compact subgroup of $\GL(W)$.
\item If $W$ is $\gamma$-invariant and contains a vector $w$ with $q(w)>0$, then $\gamma\rest{ W}$ 
has the same type (elliptic, parabolic, or loxodromic) as $\gamma$; in particular, $W$ contains the 
$\gamma$-invariant isotropic lines if $\gamma$ is parabolic or loxodromic.
\end{enumerate} 
\end{lem}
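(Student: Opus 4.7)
The plan is to analyze the signature of $q\rest{W}$ and then exploit the structure of the Minkowski isometric action encoded in Corollary~\ref{cor:KAK}. The pivotal preliminary observation is: \emph{if $W$ contains a vector $v$ with $q(v) > 0$, then the radical of $q\rest{W}$ is trivial.} Indeed, any $w$ in the radical is isotropic and $q$-orthogonal to $v$; writing $v = (v_0, \vec v)$ and $w = (w_0, \vec w)$ in standard coordinates, the relation $v_0 w_0 = \vec v \cdot \vec w$ combined with $\abs{w_0} = \abs{\vec w}$ and Cauchy--Schwarz forces $v_0\abs{\vec w} \leq \abs{\vec v}\abs{\vec w}$, which contradicts $v_0 > \abs{\vec v}$ unless $w = 0$. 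Together with the fact that $\R^{1,m}$ admits no totally isotropic subspace of dimension at least $2$, this leaves three possibilities for $q\rest{W}$: Minkowski, negative definite, or semi-negative degenerate with a one-dimensional isotropic radical and no positive vector.

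For (1), I would rule out two sub-configurations using non-elementariness of $\Gamma$. If $q\rest{W}$ is semi-negative degenerate, its radical is a canonically attached $\Gamma$-invariant isotropic line, producing a $\Gamma$-fixed point on $\fr\Hyp^m$. If $W$ is a positive line, then $W \cap \mathcal Q$ is a single point of $\Hyp^m$, fixed by $\Gamma$ since $\Gamma \subset \O^+_{1,m}(\R)$. Both contradict the hypothesis. In the remaining configurations, if $q\rest{W}$ is negative definite then $\Gamma\rest{W}$ preserves the positive definite form $-q\rest{W}$ and hence lies in a compact orthogonal group; if $(W, q\rest{W})$ is Minkowski of dimension at least $2$, I would verify non-elementariness of $\Gamma\rest{W}$ by contraposition: were it elementary, it would preserve a finite subset of $(W \cap \Hyp^m) \cup \fr(W \cap \Hyp^m)$, but this set sits inside $\Hyp^m \cup \fr\Hyp^m$ as a $\Gamma$-invariant finite subset, contradicting the hypothesis on $\Gamma$.

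For (2), the preliminary observation places us immediately in the Minkowski case. I would then compare the growth rates of $\gamma^n$ in both spaces. Fix $e \in W \cap \Hyp^m$; since $W \cap \Hyp^m$ is a totally geodesic sub-hyperbolic space, the distance $d_\Hyp^W(e, \gamma^n e)$ computed inside $W$ coincides with $d_\Hyp(e, \gamma^n e)$ inside $\Hyp^m$. Applying Corollary~\ref{cor:KAK} in both ambient spaces yields $\norm{\gamma^n\rest{W}} \asymp \norm{\gamma^n}$. The type of a Minkowski isometry is characterised by this growth rate --- bounded for elliptic, of order $n^2$ for parabolic, exponential for loxodromic --- and therefore $\gamma\rest{W}$ and $\gamma$ share the same type. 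The ``in particular'' clause then follows by uniqueness of the fixed isotropic directions: when $\gamma$ is parabolic (resp. loxodromic), $\gamma\rest{W}$ admits one (resp. two) fixed isotropic directions inside $W$, which are $\gamma$-invariant isotropic lines of $\R^{1,m}$, and must therefore coincide with the one (resp. two) globally defined such lines. I foresee no substantial mathematical obstacle; the case analysis rests entirely on the elementary preliminary rigidity of isotropic vectors orthogonal to a positive one.
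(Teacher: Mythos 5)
Your proof is correct and follows essentially the same route as the paper's: the same signature case analysis (positive line, degenerate, negative definite, Minkowski) with the invariant fixed point in $\Hyp^m$, the invariant radical giving a fixed boundary point, and the finite-invariant-set contraposition in the Minkowski case. For assertion (2) the paper only says the argument is ``similar'', and your completion via the totally geodesic embedding of $W\cap\Hyp^m$ and the growth-rate comparison from Corollary~\ref{cor:KAK} is a valid instantiation of that remark (the minor sign slip $v_0$ versus $\abs{v_0}$ in the preliminary observation is harmless).
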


\begin{proof}
The restriction $q\rest{W}$  is either a Minkowski form or is negative definite. 
Indeed, it  cannot be positive definite, because $W$ would then be a $\Gamma$-invariant line
intersecting the hyperbolic space $\Hyp^m$ in a fixed point; and it cannot be degenerate, since 
otherwise its kernel would give a $\Gamma$-invariant point on $\partial \Hyp^m$. 
If $q\rest{ W}$ is a Minkowski form and $\Gamma\rest{ W}$ is elementary, then $\Gamma$  
preserves a finite subset of  
$(\Hyp^m\cup\partial\Hyp^m) \cap W$ and $\Gamma$ itself is elementary. This proves the first assertion. 
The proof of the second one is similar.
\end{proof}

Let $\Gamma$ be a non-elementary subgroup of $\O^+_{1,m}(\R)$.
Let $\Zar(\Gamma)\subset \O_{1,m}(\R)$ be the Zariski closure of $\Gamma$, and \begin{equation}
G=\Zar(\Gamma)^\mathrm{irr}
\end{equation}
the neutral component of $\Zar(\Gamma)$, for 
the Zariski topology. Note 
that the Lie group $G(\R)$ is not necessarily connected for the euclidean topology.

\begin{vcourte} 
\begin{lem}[see \cite{cantat-gao-habegger-xie}, \S 4.1]\label{lem:finite-index}
The group  $\Gamma\cap G(\R)$ has finite index in $\Gamma$. 
If $\Gamma_0$ is a finite index subgroup of $\Gamma$, then $\Zar(\Gamma_0)^\mathrm{irr}=G$.
\end{lem}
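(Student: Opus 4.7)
My plan is to rely on two standard facts about Zariski closures in linear algebraic groups: (a) the Zariski closure of any subgroup of $\O_{1,m}$ is itself an algebraic subgroup, with only finitely many irreducible components; these components all have the same dimension and are precisely the cosets of the neutral component, which is therefore a normal subgroup of finite index; (b) taking Zariski closures commutes with finite unions and with left translation by a fixed element.

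For the finite-index statement, I would set $Z = \Zar(\Gamma)$ and $G = Z^{\mathrm{irr}}$. By (a) the quotient group $Z/G$ is finite, so the composition $\Gamma \hookrightarrow Z \twoheadrightarrow Z/G$ is a homomorphism from $\Gamma$ to a finite group; its kernel is exactly $\Gamma \cap G(\R)$, which therefore has finite index in $\Gamma$. This already settles the first claim.

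For the second statement, let $\Gamma_0$ be a finite-index subgroup and set $Z_0 = \Zar(\Gamma_0)$. I would write $\Gamma = \bigsqcup_{i=1}^N \gamma_i \Gamma_0$ as a disjoint union of left cosets and use (b) to obtain $Z = \bigcup_{i=1}^N \gamma_i Z_0$. Decomposing $Z_0 = \bigcup_j C_j$ into its finitely many irreducible components then yields a finite covering $Z = \bigcup_{i,j} \gamma_i C_j$ by closed irreducible subsets. The standard maximality argument for irreducible components of a Noetherian space then forces every irreducible component of $Z$ to be equal to one of the $\gamma_i C_j$. Applied to the neutral component $G$, this gives $G = \gamma_i C_j$ for some indices $i,j$; since $e\in G$, we get $\gamma_i\inv \in C_j$. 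Now by (a) applied to $Z_0$, the component $C_j$ is a coset of $Z_0^{\mathrm{irr}}$, and the condition $\gamma_i\inv \in C_j$ pins that coset down to $\gamma_i\inv Z_0^{\mathrm{irr}}$. Hence $G = \gamma_i C_j = Z_0^{\mathrm{irr}} = \Zar(\Gamma_0)^{\mathrm{irr}}$, as required.

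I do not foresee any genuine obstacle: both assertions reduce to the elementary input (a). The only point requiring a little attention is the bookkeeping in (a), which ensures that all components of $Z_0$ have the same dimension and are translates of the neutral component; this is what guarantees that the component $\gamma_i C_j$ hit by $e$ is actually $\gamma_i\cdot\gamma_i\inv Z_0^{\mathrm{irr}} = Z_0^{\mathrm{irr}}$. A minor secondary point is the distinction between $G$ and $G(\R)$, but since $G$ is defined over $\R$ the identification $\Gamma \cap G = \Gamma \cap G(\R)$ is automatic.
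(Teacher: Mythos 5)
Your argument is correct, and both halves go through: the first claim is exactly the component-group argument the paper uses (index of $\Gamma\cap G(\R)$ in $\Gamma$ bounded by the number of irreducible components of $\Zar(\Gamma)$), and your second half is sound — the identity $\Zar(\Gamma)=\bigcup_i\gamma_i\Zar(\Gamma_0)$, the maximality argument identifying each component of $\Zar(\Gamma)$ with some $\gamma_iC_j$, and the fact that the components of the algebraic group $\Zar(\Gamma_0)$ are the cosets of $\Zar(\Gamma_0)^{\mathrm{irr}}$ do pin down $G=Z_0^{\mathrm{irr}}$ once you use $e\in G$ to force $\gamma_i\inv\in C_j$.

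For the second assertion your route differs from the paper's in its bookkeeping. The paper decomposes $\Gamma\cap G(\R)$ into finitely many cosets of $\Gamma_0\cap G(\R)$, takes Zariski closures inside $G$, and then uses that $\Gamma\cap G(\R)$ is Zariski dense in the \emph{irreducible} group $G$ to conclude $G=\Zar(\Gamma_0\cap G(\R))\subset\Zar(\Gamma_0)$, whence $\Zar(\Gamma_0)^{\mathrm{irr}}=G$; the irreducibility of $G$ is what replaces your component-counting. You instead decompose $\Gamma$ itself into cosets of $\Gamma_0$, work with the full closures $\Zar(\Gamma)$ and $\Zar(\Gamma_0)$, and invoke the structure of the component set of an algebraic group (cosets of the neutral component). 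Your version has the small advantage of not needing the density of $\Gamma\cap G(\R)$ in $G$, which the paper asserts without comment (it follows from the first claim: a finite-index closed subgroup of $\Zar(\Gamma)$ contains the neutral component); the paper's version is slightly leaner in that it only uses irreducibility of $G$ rather than the full statement that components of $\Zar(\Gamma_0)$ are translates of $\Zar(\Gamma_0)^{\mathrm{irr}}$. Both rest on the same elementary facts about Zariski closures of subgroups, so the difference is one of organization rather than substance.
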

\end{vcourte}

\begin{vlongue} 
\begin{lem}\label{lem:finite-index}
The group  $\Gamma\cap G(\R)$ has finite index in $\Gamma$. 
If $\Gamma_0$ is a finite index subgroup of $\Gamma$, then $\Zar(\Gamma_0)^\mathrm{irr}=G$.
\end{lem}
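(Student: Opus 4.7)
The plan is to view $\Zar(\Gamma)$ as a linear algebraic group over $\R$ and exploit two standard facts: its group of irreducible components is finite, and Zariski closure is compatible with finite unions of translates.

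For assertion~(1), set $H := \Zar(\Gamma)$, a closed algebraic subgroup of $\O_{1,m}$ defined over $\R$. The neutral component $G = H^{\mathrm{irr}}$ is a Zariski-closed normal subgroup, and the quotient $H/G$ is finite because $H$ has only finitely many irreducible components, which in an algebraic group are precisely the cosets of $G$. Restricting the projection $H \to H/G$ to $\Gamma \subset H$ exhibits $\Gamma \cap G(\R)$ as the kernel of a homomorphism from $\Gamma$ into the finite group $H/G$, so $[\Gamma : \Gamma \cap G(\R)] \leq \vert H/G \vert < \infty$.

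For~(2), let $\Gamma_0 \subset \Gamma$ have finite index, write the coset decomposition $\Gamma = \bigsqcup_{i=1}^k g_i \Gamma_0$, and set $H_0 := \Zar(\Gamma_0)$ and $G_0 := H_0^{\mathrm{irr}}$; the goal is to show $G_0 = G$. Since left translation by each $g_i$ is a Zariski homeomorphism and Zariski closure commutes with finite unions, one obtains
\[
H = \Zar\left( \bigcup_i g_i \Gamma_0 \right) = \bigcup_{i=1}^k g_i H_0.
\]
The easy inclusion $G_0 \subset G$ holds because $G_0$ is irreducible, contains $e$, and lies in $H$, hence must lie in the unique irreducible component of $H$ through $e$, namely $G$.

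The step requiring a touch more care is the reverse inclusion $G \subset G_0$. For this I would further decompose each $g_i H_0$ into its irreducible components, which are the cosets $g_i h G_0$ for $h$ ranging over a system of representatives of $G_0$ in $H_0$; thus $H$ is a finite union of irreducible closed sets of the form $g_i h G_0$, and its irreducible component $G$ must be contained in one of them, say $G \subset g_i h G_0$. Since $e \in G$, this forces $g_i h \in G_0$, hence $g_i h G_0 = G_0$ and $G \subset G_0$. The underlying fact that makes the whole argument work, and which I would also invoke implicitly to justify the finiteness of $H/G$ in step~(1), is that in a linear algebraic group the irreducible components are pairwise disjoint cosets of the neutral component; without this, one could not pin down $G$ as \emph{the} unique irreducible component through the identity.
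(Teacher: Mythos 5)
Your proof is correct and follows essentially the same route as the paper: assertion (1) comes from bounding the index by the number of irreducible components of $\Zar(\Gamma)$, and assertion (2) from writing $\Zar(\Gamma)$ as a finite union of translates arising from a coset decomposition and exploiting the irreducibility of $G$ together with the fact that the irreducible components of an algebraic group are the disjoint cosets of its neutral component. The only cosmetic difference is that the paper translates $\Zar(\Gamma_0\cap G(\R))$ by coset representatives of $\Gamma_0\cap G(\R)$ inside $\Gamma\cap G(\R)$ and invokes the Zariski density of $\Gamma\cap G(\R)$ in $G$, whereas you decompose $\Zar(\Gamma)$ directly into cosets of $\Zar(\Gamma_0)^{\mathrm{irr}}$; both arguments hinge on the same irreducibility step.
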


\begin{proof} 
The index of $G$ in $\Zar(\Gamma)$ is equal to the number $\ell$ of
 irreducible components of the algebraic variety $\Zar(\Gamma)$, 
and the index of $\Gamma\cap G(\R)$ in $\Gamma$ is at most $ \ell$. 
Now, let $\Gamma_0$ be a finite index subgroup of $\Gamma$. Then,  $\Gamma_0\cap G(\R)$ 
has finite index in $\Gamma\cap G(\R)$, and we can fix a finite subset
$\{\alpha_1, \dots , \alpha_k\}\subset \Gamma\cap G(\R)$ such that
$\Gamma\cap G(\R)=\bigcup_j\alpha_j(\Gamma_0\cap G(\R))$.
So 
\begin{equation}
\Zar(\Gamma\cap G(\R)) \subset 
\bigcup_j\alpha_j\Zar(\Gamma_0\cap G(\R))\subset G(\R).
\end{equation}
Because $\Gamma\cap G(\R)$
is Zariski dense in the irreducible group $G$ we find $G =
\Zar(\Gamma_0\cap G(\R))$. So $G\subset
\Zar(\Gamma_0)$ and the Lemma follows as
$G =  \Zar(\Gamma)^\mathrm{irr}$. 
\end{proof}
\end{vlongue}

\begin{pro}\label{pro:invariant_cohomological_decomposition}
Let $\Gamma\subset \O^+_{1,m}(\R)$ be non-elementary.
\begin{enumerate}[\em (1)]
\item The  representation of $\Gamma\cap G(\R)$ (resp. of $G(\R)$) on $\R^{1,m}$ splits as a
direct sum of irreducible representations, with exactly one irreducible factor of Minkowski type: 
\[
\R^{1,m}=V_+\oplus V_0;
\]
here $V_+$ is of Minkowski type, and $V_0$ is an orthogonal
sum of irreducible representations $V_{0,j}$ on which the quadratic form $q$ is negative definite. 
\item The restriction $G\rest{ V_+}$ coincides with $\SO(V_+;q\rest{ V_+})$.
\item The subspaces $V_+$ and $V_0$ are $\Gamma$-invariant, and the  representation of 
$\Gamma$ on $V_+$ is strongly irreducible.
\end{enumerate}
\end{pro}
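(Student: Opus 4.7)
My plan is to prove the three assertions in order, leveraging that non-elementarity is inherited by $\Gamma\cap G(\R)$ (by Lemma~\ref{lem:finite-index}) and by all finite index subgroups of $\Gamma$, so that no such subgroup fixes a point in $\Hyp^m\cup\fr\Hyp^m$, equivalently no such subgroup preserves an isotropic line or a positive line in $\R^{1,m}$.

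\emph{Step 1: Decomposition (Assertion (1)).} The first move is to show that any $G$-invariant subspace $W\subset\R^{1,m}$ has non-degenerate $q\rest{W}$. Indeed, the radical of $q\rest{W}$ is a $G$-invariant isotropic subspace; if it were non-trivial, it would contain an isotropic line fixed by $G$, hence by the Zariski-dense subgroup $\Gamma\cap G(\R)$, which contradicts non-elementarity. Consequently $\R^{1,m}=W\oplus W^\perp$ is a $G$-invariant orthogonal splitting, and iterating, one obtains a decomposition of $\R^{1,m}$ into pairwise orthogonal $G$-irreducible subspaces. Because signatures add and $q$ has signature $(1,m)$, exactly one irreducible factor has a positive-signature direction; a one-dimensional positive line would provide a $G$-fixed point in $\Hyp^m$, again forbidden, so this factor has dimension $\geq 2$ and is of Minkowski type. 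Call it $V_+$; the sum of the remaining (negative definite) factors is $V_0$. Lemma~\ref{lem:restriction_isom}(1) then guarantees that $\Gamma\cap G(\R)\rest{V_+}$ is non-elementary.

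\emph{Step 2: Identification $G\rest{V_+}=\SO(V_+,q\rest{V_+})$ (Assertion (2)).} Here $G\rest{V_+}$ is a Zariski-closed, Zariski-connected subgroup of the orthogonal group of $V_+$, and it acts irreducibly on $V_+$ (by construction). I plan to conclude using the following well-known fact: a Zariski-connected proper algebraic subgroup of $\SO(V_+,q\rest{V_+})\cong\SO(1,\dim V_+{-}1)$ always stabilizes a geometric object in $\Hyp(V_+)\cup\fr\Hyp(V_+)$ (a point in the hyperbolic space, an isotropic line, a pair of isotropic lines, or a non-degenerate proper subspace), equivalently it preserves some proper invariant subspace of $V_+$. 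Since the non-elementary action of $\Gamma\cap G(\R)\rest{V_+}$ rules out all such stabilizers, $G\rest{V_+}$ cannot be proper, so it equals $\SO(V_+,q\rest{V_+})$. This step will be the main technical point; I would either quote this classification directly or derive it from the simplicity of $\mathfrak{so}(1,n)$ together with the geometric description of its parabolic and reductive subalgebras.

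\emph{Step 3: $\Gamma$-invariance and strong irreducibility (Assertion (3)).} Every $\gamma\in\Gamma$ normalizes $\Zar(\Gamma)$ and hence its Zariski-neutral component $G$. Thus $\gamma(V_+)$ is again $G$-invariant and, because $\gamma$ is a $q$-isometry, has the same Minkowski signature as $V_+$. Since the decomposition of Step~1 has a \emph{unique} irreducible factor of Minkowski type, $\gamma(V_+)=V_+$, and then $\gamma(V_0)=V_+^\perp=V_0$. For strong irreducibility, let $\Gamma_0\subset\Gamma$ be any finite index subgroup; by Lemma~\ref{lem:finite-index} the identity component of its Zariski closure is still $G$, so $\Gamma_0\cap G(\R)$ is Zariski dense in $G$. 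Since $G\rest{V_+}=\SO(V_+,q\rest{V_+})$ acts irreducibly on $V_+$ by Step~2, its Zariski-dense subgroup $\Gamma_0\cap G(\R)\rest{V_+}$ also acts irreducibly, hence so does $\Gamma_0\rest{V_+}$. This yields strong irreducibility of $\Gamma\rest{V_+}$.

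The principal obstacle is the algebraic input in Step~2: identifying $G\rest{V_+}$ with the full special orthogonal group. The other steps are essentially book-keeping once the radical argument, Lemma~\ref{lem:restriction_isom}, and Lemma~\ref{lem:finite-index} are in hand.
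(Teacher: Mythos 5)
Your proposal is correct and follows essentially the same route as the paper: the splitting of $\R^{1,m}$ via non-degeneracy of invariant subspaces (your radical argument is exactly the content of Lemma~\ref{lem:restriction_isom}), the identification $G\rest{V_+}=\SO(V_+;q\rest{V_+})$ via irreducibility, and the normalization-plus-Zariski-density argument for assertion (3) all match the paper's proof. The only difference is that the key input you flag in Step~2 — no proper Zariski-connected algebraic subgroup of $\SO(1,k)$ acts irreducibly — is exactly what the paper outsources by citing Proposition~1 of Benoist--de la Harpe, so quoting that reference would complete your argument.
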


\begin{proof} A group $\Gamma$ is non-elementary if and only if any of its finite index subgroups 
is non-elementary. So, we can apply Lemma~\ref{lem:restriction_isom} to $\Gamma\cap G(\R)$:
if $W\subset \R^{1,m}$ is a non-trivial $(\Gamma\cap G(\R))$-invariant subspace, $q\rest{ W}$ is non-degenerate.
As a consequence, $\R^{1,m}$ is the direct sum $W\oplus W^\perp$, where $W^\perp$
is the orthogonal complement of $W$ with respect to $q$. 
This implies that the representation of $\Gamma\cap G(\R)$ on $\R^{1,m}$ splits as a
direct sum of irreducible representations, with exactly one irreducible factor of Minkowski type, 
as asserted in   (1).

The group $G$ preserves this decomposition, and by Proposition 1 of \cite{Benoist-Harpe}, the 
restriction $G\rest{ V_+}$ coincides with $\SO(V_+;q\rest{ V_+})$; this group is isomorphic to 
the almost simple group $\SO_{1,k}(\R)$, with $1+k=\dim(V_+)$. This proves the second assertion.

Since $G$ is normalized by $\Gamma$, we see that for any $\gamma\in \Gamma$, 
$\gamma V^+$ is a $G$-invariant subspace of the same dimension as $V^+$ and 
on which $q$ is of Minkowski type. 
 Hence   $V_+$, as well as    its orthogonal 
complement $V_0$ are $\Gamma$-invariant. By Lemma~\ref{lem:finite-index}, 
the action of $\Gamma$ on $V_+$ is strongly irreducible; indeed, 
if a finite index subgroup $\Gamma_0$ in $\Gamma$ preserves a non-trivial subspace of 
$V_+$ then, by Zariski density of $\Gamma_0\cap G(\R)$ in $G(\R)$, 
this subspace must be $V_+$ itself. On $V_0$, $\Gamma$ permutes the irreducible factors 
$V_{0,j}$. 
\end{proof}
  
Now, set $V=\R^{1,m}$  and assume that there is a lattice $V_\Z\subset V$ such that 
\begin{itemize}
\item[(i)] $V_\Z$ is $\Gamma$-invariant;
\item[(ii)] the quadratic form $q$ is an integral quadratic form on $V_\Z$.
\end{itemize}
In other words, there is a basis of $V$ with respect to which $q$ and the elements 
of $\Gamma$ are given by matrices with integer coefficients. In particular, $V$ has
a natural $\Q$-structure, with $V(\Q)=V_\Z\otimes_\Z \Q$. This situation naturally arises for the action
of automorphisms of compact K\"ahler surfaces on $\NS(X;\R)$. The next lemma will be useful in \cite{finite_orbits}.  
 
\begin{lem}\label{lem:decomposition_V+V0_rational}
If $\Gamma$ contains a parabolic element, the decomposition $V_+\oplus V_0$ is defined 
over~$\Q$, $\Gamma\rest{ V_0}$ is a finite group, and $G$ 
is the subgroup $\SO(V_+;q)\times  \{\id_{V_0}\}$ of $\O(V;q)$.
\end{lem}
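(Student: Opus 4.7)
The plan is to first produce a nonzero $\Q$-rational vector inside $V_+$ by exploiting the parabolic element, then to bootstrap this to the rationality of $V_+$ using strong irreducibility, and finally to use compactness on $V_0$ to conclude that $G$ acts trivially there.

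Pick a parabolic element $\gamma\in\Gamma$. By Lemma~\ref{lem:restriction_isom}(2), $\gamma|_{V_+}$ is parabolic, so $\gamma$ fixes a unique isotropic line $L\subset V_+$, and $V_+$ decomposes $q$-orthogonally as $V_+=W\oplus U$, where $W$ is the Jordan $3$-block of $\gamma$ containing $L$ (a Minkowski subspace of signature $(1,2)$), and $U$ is negative definite. The restrictions $\gamma|_U$ and $\gamma|_{V_0}$ lie in the compact orthogonal groups of $-q|_U$ and $-q|_{V_0}$, so they are semisimple with eigenvalues of modulus $1$; these eigenvalues are also roots of the integral characteristic polynomial of $\gamma$, hence algebraic integers, so Kronecker's theorem forces them to be roots of unity. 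After replacing $\gamma$ by a suitable power, I may therefore assume $\gamma|_{U\oplus V_0}=\id$. Then $(\gamma-\id)^2$ vanishes on $U\oplus V_0$, and a direct inspection of a single nilpotent $3$-Jordan block shows that the image of $(\gamma-\id)^2|_W$ is precisely the kernel $L$. Since $\gamma$ has an integer matrix in the basis of $V_\Z$, the subspace $L=\mathrm{Im}((\gamma-\id)^2)$ is $\Q$-rational.

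With $L\subset V_+$ rational, the $\R$-span of $V_+\cap V(\Q)$ is a nonzero $\Gamma$-invariant subspace of $V_+$; by the strong irreducibility of the $\Gamma$-action on $V_+$ (Proposition~\ref{pro:invariant_cohomological_decomposition}(3)) this span must equal $V_+$, so $V_+$ is defined over $\Q$ and so is $V_0=V_+^\perp$. I then pick a basis $v_1,\dots,v_d$ of $V_0$ with $v_i\in V_0\cap V_\Z$ (possible after clearing denominators). Since $\Gamma|_{V_0}\subset\O(V_0,-q|_{V_0})$ sits inside a compact group, each orbit $\Gamma\cdot v_i$ is bounded, and it also lies in the discrete set $V_0\cap V_\Z$, hence is finite. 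The common stabilizer of $v_1,\dots,v_d$ therefore has finite index in $\Gamma$ and acts trivially on $V_0$, so $\Gamma|_{V_0}$ is finite.

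Finally, Lemma~\ref{lem:finite-index} gives that $\Gamma\cap G(\R)$ has finite index in $\Gamma$, so $(\Gamma\cap G)|_{V_0}$ is a finite subgroup whose image is Zariski-dense in $G|_{V_0}$; hence $G|_{V_0}$ is itself finite. Since $G$ is Zariski-irreducible, so is the image $G|_{V_0}$, which must then reduce to $\{\id_{V_0}\}$. Combined with $G|_{V_+}=\SO(V_+,q|_{V_+})$ from Proposition~\ref{pro:invariant_cohomological_decomposition}(2), this gives $G=\SO(V_+,q)\times\{\id_{V_0}\}$. The most delicate point is the identification $\mathrm{Im}((\gamma-\id)^2)=L$: it requires first neutralizing, via Kronecker's theorem, the semisimple actions of $\gamma$ on $U$ and $V_0$ so as to isolate the genuine unipotent behavior on the $3$-Jordan block, which is what produces the rational fixed line inside $V_+$.
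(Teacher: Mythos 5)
Your proof is correct and follows essentially the same route as the paper: produce a rational isotropic line fixed by the parabolic element inside $V_+$, use irreducibility to conclude that $V_+$ (hence $V_0=V_+^\perp$) is defined over $\Q$, deduce finiteness of $\Gamma\rest{V_0}$ from integrality plus negative definiteness of $q\rest{V_0}$, and kill $G\rest{V_0}$ by connectedness before combining with $G\rest{V_+}=\SO(V_+;q\rest{V_+})$. The only divergence is local: where the paper simply notes that the unique pointwise-fixed isotropic line of $\gamma$ must be defined over $\Q$, you reconstruct it explicitly as $\mathrm{Im}((\gamma^N-\id)^2)$ after invoking Kronecker's theorem to pass to a unipotent power — a heavier but valid justification of the same step.
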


\begin{proof}
If $\gamma\in \Gamma$ is parabolic, it fixes pointwise a unique isotropic line, therefore this line is defined over $\Q$. In addition it must
be contained in $V_+$ because $(\gamma^n(u))_{n\geq 0}$ converges to the boundary point determined by this line for every $u\in \Hyp^m$. 
So, $V_+$ contains at least one non-zero element of $V_\Z$. Since the action of
$\Gamma$ on $V_+$ is irreducible, the orbit of this vector generates $V_+$ and is contained in $V_\Z$, so $V_+$
is defined over $\Q$. Its orthogonal complement $V_0$ is also defined over $\Q$, because $q$ itself is defined over $\Q$. 
As a consequence,  $\Gamma\rest{V_0}$ preserves the  lattice $V_0\cap V_\Z$ and the negative definite
form $q\rest{ V_0}$; hence, it is finite. Thus $G\rest{ V_0}$ is trivial and the last assertion follows
from the above mentioned equality $G\rest{ V_+}=\SO(V_+;q\rest{ V_+})$.
\end{proof}

\begin{eg} \label{eg:square_free} The purpose of this example is to show that 
the existence of a parabolic element in $\Gamma$ is indeed necessary   
in Lemma~\ref{lem:decomposition_V+V0_rational}, even for a group of automorphisms of a K3 surface. 

Let $a$ be a positive square free integer, for instance $a=7$ or $15$. Let 
$\alpha$ be the positive square root $\sqrt{a}$,  $K$ be the quadratic field $\Q(\alpha)$,
and $\eta$ be the unique non-trivial automorphism of $K$, sending $\alpha$ to its
conjugate ${\overline{\alpha}}:=\eta(\alpha)=-\sqrt{a}$. We view $\eta$ as a second embedding 
of $K$ in $\C$. Let ${\mathcal O}_K$ 
be the ring of integers of $K$.

Let $\ell$ be an integer $\geq 2$.
Consider the quadratic form  in $\ell+1$ variables defined by 
\begin{equation}
q_\ell (x_0, x_1, \ldots, x_\ell) = \alpha x_0^2- x_1^2- \cdots - x_\ell^2.
\end{equation}
It is non-degenerate and its signature is $(1,\ell)$.
The orthogonal group $\O(q_\ell; {\mathcal O}_K)$ is a lattice in the real algebraic group $\O(q_\ell, \R)$. 
The conjugate quadratic form ${\overline{q_\ell}} = {\overline{\alpha}} x_0^2- x_1^2- \cdots - x_\ell^2$ 
is negative definite. 

Embed ${\mathcal O}_K^{\ell + 1}$ into $\R^{2\ell +2}$ by the map  $(x_i)\mapsto (x_i, \eta(x_i))$, 
to get a lattice $\Lambda \subset \R^{2\ell +2}$ and consider the quadratic form $Q_\ell:=q_\ell \oplus {\overline{q_\ell}}$.
Then embed $\O(q_\ell; {\mathcal O}_K)$ into $\O(Q_\ell;\R)$ by the homomorphism 
$A\in \O(q_\ell, {\mathcal O}_K)\mapsto A\oplus \eta(A)$; we denote its image by $\Gamma_\ell^*\subset \O(Q_\ell;\R)$.
It  is shown in ~\cite{Morris:Book-Lattices}, Chapter~6.4, that 
\begin{itemize}
\item $Q_\ell$ is defined over $\Z$ with respect to $\Lambda$, 
\item  $\Gamma_\ell^*\subset \O(Q_\ell; \Z)$ (with respect to this integral structure), 
\item  the group $G=\Zar(\Gamma_\ell^*)^\mathrm{irr}$ 
coincides with $\SO(q_\ell;\R)\times \SO^0({\overline{q_\ell}};\R)$ (and the group $\eta(\O(q_\ell; {\mathcal O}_K))$ is 
dense in the compact group $\O({\overline{q_\ell}};\R)$). 
\end{itemize}

Now, assume $2\leq \ell \leq 4$, so that $2\ell + 2 \leq 10$, and change $Q_\ell$ into $4Q_\ell$: 
it is an even quadratic form on the lattice $\Lambda \simeq \Z^{2\ell+2}$. According to 
\cite[Corollary~2.9]{Morrison:1984}, there is a complex projective K3 surface $X$ for which $(\NS(X;\Z), q_X)$ 
is isometric to $(\Lambda, 4Q_\ell)$. On such a surface, the self-intersection of every curve is
divisible by $4$ and consequently there is no $(-2)$-curve. So, by the Torelli theorem for K3 surfaces (see~\cite{BHPVDV}), 
$\Aut(X)^*_{\vert \NS(X;\Z)}$ has finite index in $\O(4Q_\ell;\Z)$. 

Since $\O(4Q_\ell;\Z)=\O(Q_\ell;\Z)$ we can view $\Gamma_\ell^*$ as a subgroup of $\O(4Q_\ell;\Z)$. Set $\Gamma^*=\Aut(X)^*\cap \Gamma_\ell^*$ 
and let $\Gamma$ denote its pre-image in $\Aut(X)$. Then, $\Gamma$ is a subgroup of $\Aut(X)$ for which 
the decomposition $\NS(X;\R)_+\oplus \NS(X;\R)_0$ is non-trivial (here, both have
dimension $\ell+1$) while the representation is irreducible over $\Q$. 
\end{eg}

\subsubsection{The hyperbolic space $\Hyp_X$}\label{par:hyp_X}
Let $X$ be a compact K\"ahler surface. 
By the Hodge index theorem, the intersection form on $H^{1,1}(X, \R)$ has signature $(1,h^{1,1}(X)-1)$. 
The hyperboloid $$\set{u\in H^{1,1}(X, \R), \ \langle u \,\vert\, u\rangle}=1$$ has two connected components, 
one of which intersecting the K\"ahler cone.  The 
hyperbolic space  $\Hyp_X$ is by definition   this connected component, which is thus a  model of 
 the   hyperbolic space of dimension $h^{1,1}(X)-1$. We denote by $d_\Hyp$ the hyperbolic distance, which is defined as before
 by $\cosh(d_\Hyp(u, v)) = \langle u \,\vert\, v\rangle$. From Lemma \ref{lem:cohomological_norm_estimates} and Corollary 
 \ref{cor:KAK} we see that if 
 $\norm{\cdot}$ is any norm on $H^*(X,\C)$, then 
 $\norm{f^*}\asymp \norm{(f^*)\inv} \asymp \langle [\kappa_0] \,\vert\, f^*[\kappa_0]\rangle$ (here $\kappa_0$ is the fixed K\"ahler form introduced in Section~\ref{par:cones_definition}).

According to the classification of  isometries of hyperbolic spaces, there are three types of automorphisms: \textbf{elliptic}, \textbf{parabolic} 
and \textbf{loxodromic}. An important fact for us is that the type of isometry is related to 
the dynamics on $X$; for instance, every parabolic automorphism preserves a genus~$1$ fibration, every loxodromic automorphism
has positive topological entropy (see~\cite{Cantat:Milnor} for  more details).   
A subgroup $\Gamma$ of $\Aut(X)$ is  
said to be {\bf{non-elementary}} if its action on $\Hyp_X$ is non-elementary. 
\begin{vcourte}
As we shall see below, the existence of such a subgroup forces $X$ to be projective; for expository reasons,  the proof of this result is postponed to \S\ref{subsub:X-is-projective}, Theorem~\ref{thm:X-is-projective2}.
\end{vcourte} 
\begin{vlongue}
As we shall see below, the existence of such a subgroup forces $X$ to be projective:
 
\begin{thm}\label{thm:X-is-projective1}
If $X$ is a compact K\"ahler surface such that $\Aut(X)$ is non-elementary, then $X$ is projective. 
\end{thm}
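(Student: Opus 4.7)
The plan is to show that $\NS(X;\R)$ contains a class of positive self-intersection; for a compact Kähler surface this standard criterion (Hodge index theorem together with Kodaira's embedding theorem, or equivalently the Moishezon + Kähler implies projective principle) immediately yields that $X$ is projective.

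Since $\Aut(X)$ is non-elementary, Proposition~\ref{pro:invariant_cohomological_decomposition} applied to its action on the Minkowski space $(H^{1,1}(X;\R),\langle\cdot\vert\cdot\rangle)$ yields an orthogonal, $\Aut(X)$-invariant decomposition $H^{1,1}(X;\R)=V_+\oplus V_0$, where $V_+$ has Minkowski signature and carries a strongly irreducible $\Aut(X)$-representation, while the action on $V_0$ factors through a compact orthogonal group. The Néron--Severi space $\NS(X;\R)$ is an $\Aut(X)$-invariant real subspace of $H^{1,1}(X;\R)$, so strong irreducibility forces $\NS(X;\R)\cap V_+\in\{\{0\},V_+\}$. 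If the intersection equals $V_+$, then $V_+\subset \NS(X;\R)$ and, since $V_+$ contains vectors of positive square, we are done.

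It thus suffices to rule out $\NS(X;\R)\cap V_+=\{0\}$. First, this assumption forces the inclusion $\NS(X;\R)\subset V_0$: the projection $\NS(X;\R)\to V_+$ along $V_0$ is $\Aut(X)$-equivariant, so its image is $\{0\}$ or $V_+$ by irreducibility; in the second case the exact sequence $0\to \NS\cap V_0\to \NS(X;\R)\to V_+\to 0$ exhibits $V_+$ as an $\Aut(X)$-equivariant quotient of $\NS(X;\R)$, yet $\Aut(X)$ preserves both the negative semi-definite form and the integer lattice on $\NS$ and therefore acts on $\NS(X;\R)$ through a virtually trivial group, contradicting the presence of loxodromic elements in $\GL(V_+)$. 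Now fix a loxodromic $f\in\Aut(X)$ with first dynamical degree $\lambda:=\lambda_1(f)>1$ and nef isotropic eigenvector $\theta_f^+\in V_+\cap \Kahbar(X)$; our assumption gives $\theta_f^+\notin \NS(X;\R)$, so by Proposition~\ref{pro:extremal_rays} the ray $\R_+\theta_f^+$ is extremal in $\Psef(X)$.

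The main obstacle---and the heart of the argument---is to contradict this transcendental, extremal scenario. The key input is integrality: $f^*$ preserves the lattice $H^2(X;\Z)$, so $\lambda$ is an algebraic integer whose minimal polynomial $P\in\Z[T]$ is reciprocal (its roots are symmetric under $\mu\mapsto 1/\mu$, with $\lambda$ and $1/\lambda$ the only roots off the unit circle, since $f^*$ is an isometry of a Minkowski lattice). The $\Q$-rational subspace $W:=\ker P(f^*)\subset H^2(X;\R)$ is non-zero, $f^*$-invariant, and decomposes as the direct sum of generalized eigenspaces for the roots of $P$. By Lemma~\ref{lem:unitary_on _H20}, the eigenvalues of $f^*$ on $H^{2,0}\oplus H^{0,2}$ have modulus one, and the action on $V_0$ is likewise through a compact group, so the generalized eigenspaces of $f^*$ for $\lambda$ and $1/\lambda$---which contain $\theta_f^{\pm}$---must lie entirely in $V_+$. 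In the generic case where no Galois conjugate of $\lambda$ on the unit circle coincides with an eigenvalue of $f^*$ on $V_0$ or on $(H^{2,0}\oplus H^{0,2})_\R$, each generalized eigenspace entering $W$ lies in $V_+$, so $W\subset V_+$ is a non-zero $\Q$-rational subspace, producing a non-zero class in $\NS(X;\R)\cap V_+$ and contradicting our assumption. The delicate point, requiring the most care, is the non-generic case where accidental coincidences between the roots of $P$ on the unit circle and the modulus-one eigenvalues of $f^*$ on the compact factors do occur; this is expected to be resolved either by replacing $f$ by a sufficiently high iterate (which perturbs the modulus-one eigenvalues on $V_0$ and $(H^{2,0}\oplus H^{0,2})_\R$ while preserving the Galois conjugates of $\lambda$), or by pitting two loxodromic elements of $\Aut(X)$ with disjoint fixed points against each other and applying Proposition~\ref{pro:extremal_rays} to the resulting isotropic classes in $V_+$.
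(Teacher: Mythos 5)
Your reduction to finding a rational $f^*$-invariant subspace of Minkowski type inside $H^{1,1}(X;\R)$ is sound, and the "generic case" of your eigenvalue argument is essentially the easy implication of Lemma~\ref{lem:proj_root}. The gap is that what you call the non-generic case is not a degeneracy to be perturbed away: it is the whole content of the theorem. When $h^{2,0}(X)=1$, the eigenvalue $J(f)$ of $f^*$ on $H^{2,0}(X;\C)$ is a unimodular algebraic number, and by the very equivalence of Lemma~\ref{lem:proj_root} it fails to be a root of unity — i.e.\ it \emph{is} a Galois conjugate of $\lambda$ — exactly when $X$ is not projective; non-projective K3 surfaces carrying loxodromic automorphisms do exist, so a single loxodromic element can never force projectivity and the coincidence cannot be dismissed as exceptional. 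Your two proposed fixes do not repair this: replacing $f$ by $f^n$ replaces $J(f)$ by $J(f)^n$, which is again a Galois conjugate of $\lambda^n$, so passing to iterates preserves the coincidence rather than perturbing it; and the extremality of $\R_+\theta_f^+$ in $\Psef(X)$ obtained from Proposition~\ref{pro:extremal_rays} is the expected behaviour, not a contradiction, so "pitting two loxodromic elements against each other" is not yet an argument. Two minor points: the worry about coincidences with eigenvalues on $V_0$ is vacuous, since $V_0\subset H^{1,1}(X;\R)$ — only the $H^{2,0}\oplus H^{0,2}$ part can push $\Ker P(f^*)$ out of $H^{1,1}$ — and the step "$\NS(X;\R)\subset V_0$ because the form on $\NS$ is negative semi-definite" is unjustified as written; it is cleaner to invoke Lemma~\ref{lem:restriction_isom}, which says $\NS(X;\R)$ is either of Minkowski type (done, by Kodaira) or negative definite, so the whole $V_\pm$ bookkeeping can be dispensed with.

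The missing idea, which is how the paper closes exactly this case, is to use non-elementarity group-theoretically rather than element by element. After reducing to the minimal model via Theorem~\ref{thm:existence_loxodromic} (and settling $h^{2,0}=0$ directly with the Hodge index theorem), the map $f\mapsto J(f)$ is a character of $\Aut(X)$ with values in the abelian group $\mathbb{U}_1$ (Remark~\ref{rem:volume_form}). A non-elementary $\Aut(X)^*$ contains a non-abelian free subgroup all of whose non-trivial elements are loxodromic, so a commutator $h=[f,g]$ of two such elements is loxodromic and satisfies $J(h)=1$; for this $h$ the eigenvalues of $h^*$ on $H^{2,0}\oplus H^{0,2}$ are roots of unity, hence $\Ker(S_h(h^*))$ is a rational subspace of Minkowski type contained in $H^{1,1}(X;\R)$, and your argument concludes. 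This is precisely the content of Lemmas~\ref{lem:proj_root} and~\ref{lem:virtually_cyclic_non_projective}; with that commutator step inserted in place of your genericity claim, the proof is complete.
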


For expository reasons,  the proof of this result is postponed to \S\ref{subsub:X-is-projective}, Theorem~\ref{thm:X-is-projective2}.
\end{vlongue} 
 
\subsubsection{Automorphisms and N\'eron-Severi groups}\label{par:dec_h11_Gamma} 
Let $X$ be a compact K\"ahler surface and $\Gamma$ be a non-elementary 
subgroup of $\Aut(X)$. 
Let $\Gamma^*_{p,q}$ be the image of $\Gamma$ in $\GL(H^{p,q}(X;\C))$, and $\Gamma^*$ be its image in 
$\GL(H^2(X;\C))$.
If we combine Proposition~\ref{pro:invariant_cohomological_decomposition}  together 
with Lemma~\ref{lem:unitary_on _H20} for $\Gamma^*_{1,1}$, we get an invariant decomposition
\begin{equation}
H^{1,1}(X;\R) = H^{1,1}(X;\R)_+\oplus H^{1,1}(X;\R)_0.
\end{equation}
Denote by  $H^2(X;\R)_0$ the direct sum of $H^{1,1}(X;\R)_0$ and  of the real part of $H^{2,0}(X;\C)\oplus H^{0,2}(X;\C)$; then
\begin{equation}
H^2(X;\R) = H^{1,1}(X;\R)_+\oplus H^2(X;\R)_0 
\end{equation}
and $\Gamma^*\rest{H^2(X;\R)_0}$ is contained in a compact group (see Lemma~\ref{lem:unitary_on _H20}).  
The N\'eron-Severi group is $\Gamma$-invariant, and since $X$ is projective 
it contains a vector with positive self-intersection.  Then 
 Proposition~\ref{pro:invariant_cohomological_decomposition} and Lemma~\ref{lem:restriction_isom} imply:

\begin{pro}\label{pro:NS_H11}
Let $X$ be a compact K\"ahler surface and  $\Gamma$ be a non-elementary subgroup of $\Aut(X)$. 
Then  $H^{1,1}(X;\R)_+=\NS(X;\R)_+$ is a Minkowski space, and the action of $\Gamma$ on this space is non-elementary
and strongly irreducible. 
\end{pro}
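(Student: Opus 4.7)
The plan is to apply Proposition~\ref{pro:invariant_cohomological_decomposition} twice, first to the $\Gamma$-action on the Minkowski space $H^{1,1}(X;\R)$ and then to its action on $\NS(X;\R)$, and finally to identify the two Minkowski factors so produced. The first of these two applications has already been carried out in \S\ref{par:dec_h11_Gamma}: combined with Lemma~\ref{lem:unitary_on _H20}, it yields the $\Gamma$-invariant splitting $H^{1,1}(X;\R) = H^{1,1}(X;\R)_+ \oplus H^{1,1}(X;\R)_0$, in which $H^{1,1}(X;\R)_+$ is of Minkowski type with a strongly irreducible, non-elementary $\Gamma$-action, while $H^{1,1}(X;\R)_0$ is negative definite with $\Gamma$-image contained in a compact group.

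For the second application I would invoke Theorem~\ref{thm:X-is-projective2} to conclude that $X$ is projective, so that $\NS(X;\R)$ contains an ample class and in particular a class of positive self-intersection. Since $\NS(X;\R)$ is $\Aut(X)$-invariant, Lemma~\ref{lem:restriction_isom}(1) applied with $W=\NS(X;\R)$ excludes the negative definite alternative and shows that $(\NS(X;\R), q)$ is itself a Minkowski space on which $\Gamma$ acts non-elementarily. A second application of Proposition~\ref{pro:invariant_cohomological_decomposition} to this restricted action then produces an orthogonal $\Gamma$-invariant splitting $\NS(X;\R) = \NS(X;\R)_+ \oplus \NS(X;\R)_0$, in which $\NS(X;\R)_+$ is of Minkowski type and carries a strongly irreducible, non-elementary $\Gamma$-action.

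The main step, and the one I expect to require the most care, is the identification $\NS(X;\R)_+ = H^{1,1}(X;\R)_+$ as subspaces of $H^{1,1}(X;\R)$. For this I would consider the $\Gamma$-equivariant projection $\pi_0 \colon H^{1,1}(X;\R) \to H^{1,1}(X;\R)_0$ associated to the splitting of the first step and restrict it to $\NS(X;\R)_+$; its kernel is a $\Gamma$-invariant subspace of $\NS(X;\R)_+$, hence trivial or the whole of $\NS(X;\R)_+$ by strong irreducibility. A trivial kernel would make the restriction a $\Gamma$-equivariant injection $\NS(X;\R)_+ \hookrightarrow H^{1,1}(X;\R)_0$, forcing the non-elementary (in particular unbounded in $\GL(\NS(X;\R)_+)$) $\Gamma$-action on $\NS(X;\R)_+$ to factor through the compact image of $\Gamma$ in $\GL(H^{1,1}(X;\R)_0)$, which is impossible. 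Hence $\pi_0$ vanishes on $\NS(X;\R)_+$, giving $\NS(X;\R)_+ \subseteq H^{1,1}(X;\R)_+$; a final appeal to the strong irreducibility of the $\Gamma$-action on $H^{1,1}(X;\R)_+$, together with the non-triviality of $\NS(X;\R)_+$, upgrades this inclusion to the desired equality.
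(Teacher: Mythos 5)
Your proposal is correct and follows essentially the same route as the paper: decompose $H^{1,1}(X;\R)$ via Proposition~\ref{pro:invariant_cohomological_decomposition} and Lemma~\ref{lem:unitary_on _H20}, use projectivity (Theorem~\ref{thm:X-is-projective2}) to get a class of positive self-intersection in the $\Gamma$-invariant subspace $\NS(X;\R)$, and then apply Lemma~\ref{lem:restriction_isom} and Proposition~\ref{pro:invariant_cohomological_decomposition} again. The equivariant-projection argument you give to identify $\NS(X;\R)_+$ with $H^{1,1}(X;\R)_+$ (trivial kernel would force the non-elementary action on $\NS(X;\R)_+$ to be bounded via the compact action on $H^{1,1}(X;\R)_0$) is exactly the step the paper leaves implicit in its terse conclusion, and it is sound.
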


Since non-elementary groups of  isometries of $\Hyp^m$ occur only for $m\geq 2$, we get: 
 
\begin{cor}\label{cor:pic_number_3}
Under the assumptions of Proposition \ref{pro:NS_H11}, 
the Picard number $\rho(X)$ is   greater than or equal to $3$. 
If   equality holds then $\NS(X;\R)_+=\NS(X;\R)$ and the
action of $\Gamma$ on $\NS(X;\R)$ is strongly irreducible. 
\end{cor}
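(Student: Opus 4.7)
The plan is to read off everything from Proposition~\ref{pro:NS_H11} together with the elementary fact, already recorded in \S2.3.1, that no subgroup of $\O^+_{1,m}(\R)$ can be non-elementary when $m\leq 1$. Proposition~\ref{pro:NS_H11} gives us the $\Gamma$-invariant orthogonal splitting $\NS(X;\R)=\NS(X;\R)_+\oplus \NS(X;\R)_0$, where $\NS(X;\R)_+$ is a Minkowski subspace of signature $(1, m)$ with $m=\dim \NS(X;\R)_+-1$, the restriction $q\rest{\NS(X;\R)_0}$ is negative definite, and $\Gamma$ acts non-elementarily and strongly irreducibly on $\NS(X;\R)_+$.

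First I would dispose of the dimension inequality. The action of $\Gamma$ on $\NS(X;\R)_+$ factors through $\O^+(q\rest{\NS(X;\R)_+})\cong \O^+_{1,m}(\R)$, and by assumption this action is non-elementary. A non-elementary subgroup must contain two loxodromic isometries with disjoint pairs of fixed points at infinity; in particular $\fr\Hyp^m$ must contain at least four distinct points, which forces $m\geq 2$. (When $m=0$ there is no hyperbolic space at all, and when $m=1$ the boundary $\fr\Hyp^1$ is just two points, so any loxodromic element shares its fixed-point set with every other loxodromic element.) Hence $\dim\NS(X;\R)_+\geq 3$, and consequently
\[
\rho(X)=\dim \NS(X;\R)\geq \dim \NS(X;\R)_+\geq 3.
\]

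When equality $\rho(X)=3$ holds, the inequality $\dim\NS(X;\R)_+\leq \rho(X)=3$ combined with the lower bound just established forces $\dim\NS(X;\R)_+=3$, so the negative definite summand $\NS(X;\R)_0$ is trivial and $\NS(X;\R)=\NS(X;\R)_+$. The strong irreducibility of the $\Gamma$-action on $\NS(X;\R)$ is then exactly the strong irreducibility of the $\Gamma$-action on $\NS(X;\R)_+$ furnished by Proposition~\ref{pro:NS_H11}.

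There is no real obstacle here: the corollary is a bookkeeping consequence of the structural result in Proposition~\ref{pro:NS_H11} and the trivial dimension count for hyperbolic spaces supporting non-elementary isometry groups. The only point worth stating carefully is the ``no non-elementary subgroup of $\O^+_{1,1}(\R)$'' remark, which justifies the jump from ``non-elementary'' to $m\geq 2$ and hence to $\rho(X)\geq 3$.
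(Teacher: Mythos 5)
Your proof is correct and follows essentially the same route as the paper, which deduces the corollary in one line from Proposition~\ref{pro:NS_H11} and the fact that non-elementary groups of isometries of $\Hyp^m$ exist only for $m\geq 2$. Your justification of that fact (four distinct boundary fixed points needed for two loxodromic elements with disjoint fixed-point sets) and the ensuing dimension count match the intended argument.
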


From now on we set: 
\begin{equation}\label{eq:piGamma_nu-def}
\Pi_\Gamma:=H^{1,1}(X;\R)_+=\NS(X;\R)_+.
\end{equation}
This is a Minkowski space on which $\Gamma$  acts strongly irreducibly; the intersection form is negative 
definite on the orthogonal complement 
\begin{equation}
\Pi_\Gamma^\perp\subset H^{1,1}(X;\R).
\end{equation} 
Moreover by Proposition~\ref{pro:invariant_cohomological_decomposition}.(2)  the group $G=\Zar(\Gamma)^\mathrm{irr}$ satisfies 
$G(\R)\rest{\Pi_\Gamma}=\SO(\Pi_\Gamma)$. If $\Gamma$ contains a parabolic element, then 
$\Pi_\Gamma$ is rational  with respect to the integral structures 
of $\NS(X;\Z)$ and $H^2(X;\Z)$, 
and 
 $G(\R)=\SO(\Pi_\Gamma)\times \{\id_{\Pi_\Gamma^\perp}\}$ (see Lemma~\ref{lem:decomposition_V+V0_rational}).
 
\subsubsection{Invariant algebraic curves I}\label{par:Invariant_curves_I}

Assume that $\Gamma$ is non-elementary and 
let $C\subset X$ be an irreducible algebraic curve with a finite $\Gamma$-orbit. 
Then the action of $\Gamma$ on  $\Vect_\Z\set{f^*[C]; f\in \Gamma}\subset \NS(X; \Z)$ factors through a finite group. From Propositions~\ref{pro:invariant_cohomological_decomposition} and~\ref{pro:NS_H11} 
we deduce that
the intersection form is negative definite on $\Vect_\Z(\Gamma\cdot [C])$, thus 
$\Vect_\R(\Gamma\cdot [C])$ is one of the irreducible factors of $\NS(X, \R)_0$. 
This argument, together with  Grauert's contraction theorem, leads to the following result (we refer to~\cite{Cantat:Milnor, Kawaguchi:AJM} for a proof; the result holds more generally for subgroups containing a loxodromic element):

\begin{lem}\label{lem:periodic_curves} Let $X$ be a compact Kähler surface and  
$\Gamma$ be a non-elementary group of automorphisms on $X$. 
 Then, there are at most finitely many 
$\Gamma$-periodic irreducible curves. The intersection form is negative definite on the subspace of $\NS(X; \Z)$ generated by 
the classes of these curves. There is a compact complex analytic surface $X_0$ and a  $\Gamma$-equivariant
bimeromorphic morphism $X\to X_0$ that contracts these curves and is an isomorphism in their complement. 
\end{lem}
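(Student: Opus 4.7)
The plan is to first show that every $\Gamma$-periodic irreducible curve has class in the orthogonal complement $\Pi_\Gamma^\perp \subset H^{1,1}(X;\R)$ (which by Proposition~\ref{pro:invariant_cohomological_decomposition}(1) carries a negative-definite intersection form), then deduce finiteness from a classical linear independence argument for effective classes in negative definite spaces, and finally invoke Grauert's contraction criterion for assertion~(3).

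For the first step, fix a periodic irreducible curve $C$ and set $W=\Vect_\R(\Gamma\cdot[C])$. Because the $\Gamma$-orbit of $[C]$ is finite, $W$ is finite-dimensional and $\Gamma$ acts on $W$ through a finite quotient. The intersection $W\cap\Pi_\Gamma$ is $\Gamma$-invariant; by Proposition~\ref{pro:NS_H11}, the action of $\Gamma$ on $\Pi_\Gamma$ is strongly irreducible, so $W\cap\Pi_\Gamma$ is either $0$ or all of $\Pi_\Gamma$. The latter would force $\Gamma\rest{\Pi_\Gamma}$ to be finite, contradicting that $\Gamma$ acts non-elementarily on $\Hyp_X$. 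Hence $W\subset\Pi_\Gamma^\perp$, so the intersection form is negative definite on $W$; in particular $[C]^2<0$. Assertion~(2) follows immediately, since the subspace spanned by all periodic classes, being the sum of such $W$'s, still lies in $\Pi_\Gamma^\perp$.

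For the finiteness in~(1), let $\{C_i\}_{i\in I}$ be the set of periodic irreducible curves. Distinct $C_i,C_j$ meet properly, so $[C_i]\cdot[C_j]\geq 0$, and combined with $[C_i]^2<0$ this forces $[C_i]\neq[C_j]$ whenever $i\neq j$. I claim the classes $[C_i]$ are linearly independent in $\Pi_\Gamma^\perp$. Given a relation $\sum_i a_i[C_i]=0$, split indices by the sign of $a_i$ and set $\alpha=\sum_{a_i>0}a_i[C_i]=\sum_{a_i<0}(-a_i)[C_i]$. Pairing the two expressions, only cross terms $[C_i]\cdot[C_j]$ with $i\neq j$ arise, all non-negative, so $\alpha^2\geq 0$; negative definiteness on $\Pi_\Gamma^\perp$ then forces $\alpha=0$. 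Expanding $\alpha^2=0$ from a single expression gives $\sum_i(a_i^+)^2[C_i]^2+2\sum_{i<j}a_i^+a_j^+[C_i]\cdot[C_j]=0$ with the first sum $\leq 0$ and the second $\geq 0$; hence each $a_i^+$ vanishes, and by symmetry each $a_i^-$ vanishes. Linear independence in the finite-dimensional space $\Pi_\Gamma^\perp\cap\NS(X;\R)$ yields $|I|\leq\dim(\Pi_\Gamma^\perp\cap\NS(X;\R))<\infty$.

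Since the Gram matrix of the finite family $C_1,\dots,C_N$ is negative definite, Grauert's contraction criterion produces a normal compact complex analytic surface $X_0$ and a bimeromorphic morphism $\pi\colon X\to X_0$ contracting $\bigcup_iC_i$ to finitely many points and restricting to an isomorphism on the complement. Because $\Gamma$ permutes $\{C_1,\dots,C_N\}$, and the contraction is characterized by its exceptional locus, every $f\in\Gamma$ descends to an automorphism of $X_0$, yielding the $\Gamma$-equivariance. The main conceptual hurdle is the first step: ruling out any contribution of periodic classes to the Minkowski factor $\Pi_\Gamma$ via the strong irreducibility of Proposition~\ref{pro:NS_H11}. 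Once this is in hand, the linear independence argument is a standard Mumford-style computation and Grauert's theorem applies as a black box.
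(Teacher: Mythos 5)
Your overall strategy is the same as the paper's (which only sketches this argument, via Lemma~\ref{lem:restriction_isom} and Propositions~\ref{pro:invariant_cohomological_decomposition}--\ref{pro:NS_H11}, and defers the details to the references): finite image of $\Gamma$ on the span of a periodic class, negative definiteness, linear independence of the classes, then Grauert plus descent. However, two of your inferences are invalid as written. First, from $W\cap\Pi_\Gamma=\{0\}$ you conclude $W\subset\Pi_\Gamma^\perp$; this does not follow, since a $\Gamma$-invariant subspace can meet $\Pi_\Gamma$ trivially without being contained in its orthogonal complement (think of a ``diagonal'' subspace relative to the two factors). The correct route is to apply strong irreducibility to the image of $W$ under the $\Gamma$-equivariant projection $p_+\colon H^{1,1}(X;\R)\to \Pi_\Gamma$ along $\Pi_\Gamma^\perp$: $p_+(W)$ is $\Gamma$-invariant, hence $0$ or $\Pi_\Gamma$; in the latter case $\Gamma\rest{\Pi_\Gamma}$ would be a quotient of the finite image of $\Gamma$ in $\GL(W)$, hence finite, contradicting non-elementarity; so $p_+(W)=0$, i.e.\ $W\subset\Pi_\Gamma^\perp$. (Equivalently, Lemma~\ref{lem:restriction_isom}(1) applied to $W$ gives at once that $q\rest{W}$ is negative definite, since $\Gamma\rest{W}$ is finite and therefore not non-elementary; but note that for assertion (2), and for your independence argument, you need all periodic classes to lie in a \emph{common} negative definite subspace, so the containment in $\Pi_\Gamma^\perp$ via the projection is the statement to prove.)

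Second, in the linear independence argument, after deriving $\alpha=\sum_{a_i>0}a_i[C_i]=0$ you expand $\alpha^2=0$ and argue that since the diagonal part is $\leq 0$ and the off-diagonal part is $\geq 0$, each $a_i^+$ must vanish. That inference is false: a nonpositive sum and a nonnegative sum can cancel without every term vanishing. The standard way to finish is to pair with the K\"ahler class: $0=\langle \alpha\,\vert\,[\kappa_0]\rangle=\sum_{a_i>0}a_i\langle[C_i]\,\vert\,[\kappa_0]\rangle$, and each $\langle[C_i]\,\vert\,[\kappa_0]\rangle>0$ because $C_i$ is a curve, so all $a_i^+$ (and symmetrically all $a_i^-$) vanish. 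With these two repairs — both short — your proof is correct and coincides in substance with the argument the paper has in mind; the remaining steps (pairwise nonnegative intersection of distinct irreducible curves, the bound by $\dim(\Pi_\Gamma^\perp\cap\NS(X;\R))$, Grauert's criterion, and the descent of automorphisms to $X_0$) are fine as stated.
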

The next result follows from \cite{diller-jackson-sommese}.

\begin{pro} \label{pro:diller-jackson-sommese}
Let $X$ be a compact Kähler surface and 
$\Gamma$ a non-elementary subgroup of $\Aut(X)$. Then any $\Gamma$-periodic curve has arithmetic genus 0 or 1. 
\end{pro}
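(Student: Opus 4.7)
The plan is to reduce to the main theorem of Diller, Jackson and Sommese and to do no extra work: their result asserts that an irreducible curve fixed (as a set) by a birational self-map of a smooth projective surface with first dynamical degree strictly greater than $1$ has arithmetic genus at most $1$. So everything is a question of producing such a self-map inside $\Gamma$ that preserves $C$.

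First, since $\Gamma$ is non-elementary, Lemma~\ref{lem:non-elementary_free_groups} (cited in \S\ref{par:intro_stiffness}) furnishes elements of $\Gamma$ that are loxodromic on $\Hyp_X$; equivalently they have first dynamical degree $\lambda_1(f)>1$ and positive topological entropy (see \S\ref{par:hyp_X}). Pick such an $f$. By Lemma~\ref{lem:periodic_curves}, the family of $\Gamma$-periodic irreducible curves is finite, so the $\langle f\rangle$-orbit of $C$ is finite as well. Replacing $f$ by a sufficiently large iterate $f^N$ (still loxodromic, since loxodromy is preserved under taking powers), I may therefore assume that $f(C)=C$ as a set. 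Next, Theorem~\ref{thm:X-is-projective2} (referenced in \S\ref{par:hyp_X}) guarantees that $X$ is projective, which is needed to put us in the exact setup of \cite{diller-jackson-sommese}. Applying their theorem to this $f$ and this $C$ yields $p_a(C)\leq 1$.

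The genuine content is of course hidden in the Diller--Jackson--Sommese theorem itself; once the reduction above is in place, no further work is required. In spirit, their argument combines the adjunction formula $2p_a(C)-2 = C\cdot(C+K_X)$ with the spectral theory of $f^*$ on $H^{1,1}(X;\R)$: the class $[C]$ is $f^*$-invariant and, by the negative definiteness of the intersection form on $\Pi_\Gamma^\perp$ established in Lemma~\ref{lem:periodic_curves} and Proposition~\ref{pro:NS_H11}, satisfies $C^2<0$; similarly $K_X$ is $\Gamma$-invariant, hence lies in $\Pi_\Gamma^\perp$ by strong irreducibility of the action on $\Pi_\Gamma$. The subtle point of \cite{diller-jackson-sommese}, which is what we would have to reprove in a self-contained treatment, is that one can in fact bound $C\cdot K_X$ in terms of $C^2$ tightly enough to force $C\cdot(C+K_X)\leq 0$; this is the main obstacle, and it is exactly what we outsource to their paper.
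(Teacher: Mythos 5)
Your proposal is correct and matches the paper's treatment: the paper gives no argument beyond the citation, simply stating that the result ``follows from \cite{diller-jackson-sommese}'', and your explicit reduction (pick a loxodromic $f\in\Gamma$, pass to an iterate fixing the curve setwise since its $\Gamma$-orbit is finite, invoke projectivity via Theorem~\ref{thm:X-is-projective2}, then quote Diller--Jackson--Sommese) is exactly the intended one. The only caveat is that your closing ``in spirit'' sketch is not really how \cite{diller-jackson-sommese} argue, but since you explicitly outsource that step, this does not affect the validity of the proof.
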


Note if $C$ is $\Gamma$-periodic, this result  applies to $\widetilde C = \Gamma\cdot C$, which is invariant. 
Then,  the normalization of any irreducible component of $\widetilde C$ has genus 0 or 1, 
and the incidence graph of the components of $\widetilde C$ obeys certain restrictions (see~\cite[\S 4.1]{Cantat:Milnor} for details). 
If furthermore $X$ is a K3 or Enriques surface, each component is a smooth rational curve of self-intersection $-2$.

\subsubsection{The limit set}\label{par:limit_set_I}
Let $\Gamma\subset \Aut(X)$ be non-elementary. The {\bf{limit set}} of $\Gamma$ 
is the closed subset $\Lim(\Gamma)\subset \partial \Hyp_X \subset \P\lrpar{H^{1,1}(X;\R)}$ defined
by one of the following equivalent assertions:
\begin{enumerate}
\item[(a)] $\Lim(\Gamma)$ is the smallest, non-empty, closed, and $\Gamma$-invariant subset of $\P(\overline{\Hyp_X})$;
\item[(b)] 
\begin{vcourte}
$\Lim(\Gamma)\subset \fr \Hyp_X$ is the closure of the set of fixed points of loxodromic elements of $\Gamma$
in $\fr\Hyp_X$;
\end{vcourte}
\begin{vlongue}
$\Lim(\Gamma)\subset \fr \Hyp_X$ is the closure of the set of fixed points of loxodromic elements of $\Gamma$
in $\fr\Hyp_X$ (these fixed points correspond to isotropic lines on which the loxodromic isometry act as a dilation or contraction);
\end{vlongue}
\item[(c)] $\Lim(\Gamma)$ is the accumulation set of any $\Gamma$-orbit $\Gamma(\P(v))\subset \P(H^{1,1}(X;\R))$, for any $v\notin \Pi_\Gamma^\perp$.
\end{enumerate}
We refer to \cite{kapovich, Ratcliffe}  for a study of such limit sets. From the second  
characterization we get:
\begin{lem}\label{lem:limitset_in_NS}
The limit set $\Lim(\Gamma)$ of a non-elementary group is contained in $\P(\Pi_\Gamma)\cap \partial\Hyp_X$.
\end{lem}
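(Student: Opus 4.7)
\medskip

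\noindent\textbf{Proof plan.} The cleanest route is to use characterization (c) of the limit set. The decomposition $H^{1,1}(X;\R)=\Pi_\Gamma\oplus\Pi_\Gamma^\perp$ from Proposition~\ref{pro:NS_H11} is $\Gamma$-invariant, and since the intersection form is non-degenerate (indeed of Minkowski type) on $\Pi_\Gamma$, we have $\Pi_\Gamma\cap\Pi_\Gamma^\perp=\{0\}$. Pick any nonzero $v\in\Pi_\Gamma$ with $\langle v\,\vert\, v\rangle>0$ (such a $v$ exists since $\Pi_\Gamma$ has Minkowski signature); in particular $v\notin\Pi_\Gamma^\perp$, so characterization~(c) applies and $\Lim(\Gamma)$ is the accumulation set of $\Gamma\cdot\P(v)$ in $\P(H^{1,1}(X;\R))$. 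But $\Gamma$-invariance of $\Pi_\Gamma$ gives $\Gamma\cdot v\subset\Pi_\Gamma$, hence $\Gamma\cdot\P(v)\subset\P(\Pi_\Gamma)$. Since $\P(\Pi_\Gamma)$ is closed, the accumulation set lies in $\P(\Pi_\Gamma)$, and by definition of the limit set it also lies in $\fr\Hyp_X$, so $\Lim(\Gamma)\subset\P(\Pi_\Gamma)\cap\fr\Hyp_X$, as desired.

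\medskip

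\noindent An equivalent argument, which may be more illuminating, uses characterization~(b): if $f\in\Gamma$ is loxodromic with attracting/repelling isotropic eigenlines $\R v^\pm$ and eigenvalues $\lambda^\pm$ satisfying $\lambda^+>1>\lambda^-$, then $f^*$ preserves both $\Pi_\Gamma$ and $\Pi_\Gamma^\perp$. By Lemma~\ref{lem:restriction_isom}(2), applied to $W=\Pi_\Gamma$ (which contains vectors of positive square), $f^*\rest{\Pi_\Gamma}$ is again loxodromic with the same eigenvalues $\lambda^\pm$. On the other hand, $f^*\rest{\Pi_\Gamma^\perp}$ preserves the negative definite form $q\rest{\Pi_\Gamma^\perp}$, so it lies in a compact group and all its eigenvalues have modulus~$1$. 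Writing $v^\pm=v^\pm_+\oplus v^\pm_0$ along the decomposition, the eigenvalue relation $f^*v^\pm=\lambda^\pm v^\pm$ forces $v^\pm_0=0$, hence $v^\pm\in\Pi_\Gamma$. Since $\P(\Pi_\Gamma)\cap\fr\Hyp_X$ is closed, taking the closure of the set of such fixed points yields the conclusion.

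\medskip

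\noindent Neither approach presents a real obstacle; the only subtlety is making sure the chosen $v$ in characterization~(c) is not in $\Pi_\Gamma^\perp$, which is automatic from the Hodge index theorem since $\Pi_\Gamma$ carries a Minkowski form and hence meets $\Pi_\Gamma^\perp$ only at $0$. I would write up the argument via characterization~(c) as it is a two-line observation once the invariant decomposition of Proposition~\ref{pro:NS_H11} is in place.
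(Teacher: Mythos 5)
Your proposal is correct. The paper's own proof is a one-liner invoking characterization (b): the limit set is the closure of the fixed points of loxodromic elements of $\Gamma$, these fixed points are the invariant isotropic lines, and by Lemma~\ref{lem:restriction_isom}(2) (applied to the $\Gamma$-invariant Minkowski subspace $\Pi_\Gamma$) those lines lie in $\Pi_\Gamma$; since $\P(\Pi_\Gamma)\cap\fr\Hyp_X$ is closed, the closure stays there. This is exactly your second argument — note only that Lemma~\ref{lem:restriction_isom}(2) already gives you that the invariant isotropic lines lie in $\Pi_\Gamma$, so your eigenvalue computation (and the claim about "the same eigenvalues", which is not literally part of that lemma but is justified by the computation) is a re-derivation rather than a needed supplement. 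Your preferred route via characterization (c) is a genuinely different, equally valid derivation: it uses only the $\Gamma$-invariance and closedness of $\P(\Pi_\Gamma)$ together with the fact that a positive-square vector of $\Pi_\Gamma$ is not in $\Pi_\Gamma^\perp$, bypassing Lemma~\ref{lem:restriction_isom}(2) altogether; it is in fact the same mechanism the paper uses right afterwards to prove Lemma~\ref{lem:limitset_in_nef} (taking $v=[\kappa]$ a K\"ahler class there). So either write-up is fine; the (c)-route is marginally more self-contained, the (b)-route is the one the paper tacitly intends.
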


  From the third characterization, $\Lim(\Gamma)$ is contained in 
 the closure of $\Gamma(\P([\kappa]))$ for every K\"ahler form $\kappa$ on $X$. Since $X$ must be
projective, we can chose $[\kappa]$ in $\NS(X;\Z)$. As a consequence, 
$\Lim(\Gamma)$ is contained in $\Nef(X)$: 

\begin{lem}\label{lem:limitset_in_nef}
Let $X$ be a compact K\"ahler surface. If $\Gamma$ is a non-elementary subgroup of 
$\Aut(X)$ its limit set satisfies $\Lim(\Gamma)\subset \P(\Nef(X)) \subset \P(\NS(X;\R))$.
\end{lem}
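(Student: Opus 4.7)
The plan is to combine characterization (c) of the limit set with the projectivity of $X$, which is guaranteed by the non-elementary hypothesis via Theorem~\ref{thm:X-is-projective2}. Since $X$ is projective, the ample cone $\Amp(X)=\NS(X;\R)\cap \Kah(X)$ is non-empty, so I can pick an ample class $[\kappa]\in \NS(X;\R)$ coming from a K\"ahler form $\kappa$.

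The next step is to verify that $[\kappa]$ is admissible in the characterization (c): we need $[\kappa]\notin \Pi_\Gamma^\perp$. This is immediate because the intersection form is negative definite on $\Pi_\Gamma^\perp$ (so every nonzero vector there has strictly negative self-intersection), whereas $[\kappa]^2=\int\kappa\wedge\kappa>0$. Hence by (c),
\[
\Lim(\Gamma) \;\subset\; \overline{\Gamma\cdot \P([\kappa])}
\]
in $\P(H^{1,1}(X;\R))$.

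Now I use that $\Nef(X)=\NS(X;\R)\cap \Kahbar(X)$ is a closed, $\Aut(X)$-invariant convex cone containing $\Amp(X)$ (see~\S\ref{par:cones_definition}), and that $\NS(X;\R)$ is $\Aut(X)$-invariant. Thus $[\kappa]\in \Amp(X)\subset \Nef(X)$ implies $f^*[\kappa]\in \Nef(X)$ for every $f\in \Gamma$, so the whole orbit $\Gamma\cdot[\kappa]$ sits inside $\Nef(X)$. Since $\Nef(X)$ is a closed cone, its projectivization $\P(\Nef(X))$ is a closed subset of $\P(\NS(X;\R))\subset\P(H^{1,1}(X;\R))$, and consequently
\[
\overline{\Gamma\cdot \P([\kappa])} \;\subset\; \P(\Nef(X)).
\]
Combining the two inclusions yields $\Lim(\Gamma)\subset \P(\Nef(X))\subset \P(\NS(X;\R))$, as desired.

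There is no real obstacle here: the argument is essentially a packaging of facts already established in the excerpt (characterizations of $\Lim(\Gamma)$, $\Aut(X)$-invariance of $\Nef(X)$, and projectivity of $X$). The only subtle point to verify carefully is the admissibility condition $[\kappa]\notin \Pi_\Gamma^\perp$, which is handled by the sign of $[\kappa]^2$.
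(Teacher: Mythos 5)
Your proof is correct and follows essentially the same route as the paper: invoke characterization (c) of the limit set, use projectivity of $X$ (forced by the non-elementary hypothesis) to choose an ample class $[\kappa]\in\NS(X;\R)$, and conclude via the closedness and $\Aut(X)$-invariance of $\Nef(X)$. Your explicit check that $[\kappa]\notin\Pi_\Gamma^\perp$ (positive self-intersection versus negative definiteness on $\Pi_\Gamma^\perp$) is a small detail the paper leaves implicit, but it is the right justification.
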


\subsection{Parabolic automorphisms}\label{par:parabolic_basics} 
We collect  a few basic facts on parabolic
automorphisms: they will be used in the next section to describe explicit examples, and then in Section~\ref{sec:stiffness}.

Let $f$ be a parabolic automorphism of a compact K\"ahler surface.  
Then $f^*$ preserves a unique point on $\fr \Hyp_X$, and 
$f$ preserves a unique genus $1$ fibration 
$\pi_f\colon X\to B$ onto some Riemann surface $B$. The fixed point of $f^*$ on $\fr \Hyp_X$ is
given by the class $[F]$ of any fiber of $\pi_f$ (see~\cite{Cantat:Milnor}). The fibers of $\pi_f$ 
are the elements of the linear system $\vert F\vert$, $\pi_f$ is uniquely determined by $[F]$, and 
 if $g$ is another automorphism of $X$ that preserves a smooth fiber of $\pi_f$ (resp. the point $\P[F]\in \P \NS(X;\R)$), 
then $g$ preserves the fibration and is either elliptic or parabolic. 

\begin{lem}\label{lem:charac_parabolic_on_K3}
 Let $X$ be a K3 or Enriques surface, and $\pi\colon X\to B$ be a genus $1$ fibration.
 If $g\in\Aut(X)$ maps some fiber $F$ of $\pi$ to a fiber of $\pi$, then $g$ preserves the fibration and
either $g$ is parabolic or it is periodic of order $\leq 66$.
\end{lem}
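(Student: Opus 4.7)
The plan is to first show that $g$ must permute the fibers of $\pi$, then use the fact that $g^*$ fixes the isotropic class $[F]$ to rule out loxodromic behavior, and finally invoke classification results for K3 and Enriques surfaces to get the order bound in the elliptic case.

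The starting observation is that all fibers of $\pi$ belong to the linear system $\vert F \vert$ and hence are cohomologous. So if $g(F)=F'$ is some fiber of $\pi$, then $g^*[F]=[F']=[F]$ in $H^{1,1}(X;\R)$. Since $[F]^2=0$, the fibration $\pi$ coincides (up to a reparametrization of $B$) with the rational map defined by the pencil $\vert F\vert$, and the equality $g^*\vert F\vert=\vert g^*F\vert=\vert F\vert$ implies that $g$ descends to an automorphism $\bar g$ of $B$ and maps every fiber of $\pi$ to a fiber. This establishes the first conclusion.

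Next, the class $[F]$ is nef, non-zero and isotropic, hence determines a point of $\partial\Hyp_X$ fixed by $g^*$ with eigenvalue $+1$. A loxodromic isometry of $\Hyp_X$ fixes exactly two boundary points, and the eigenvalues on the corresponding isotropic lines are $\lambda$ and $\lambda^{-1}$ with $\lambda>1$ equal to its dynamical degree (so in particular neither eigenvalue is $1$). Consequently $g$ is not loxodromic, and must be elliptic or parabolic in the sense of the action on $\Hyp_X$. If $g$ is parabolic we are done.

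It remains to treat the elliptic case. If $g$ is elliptic, $g^*$ fixes a class $v\in\Hyp_X$ with $v^2>0$. Decomposing $H^{1,1}(X;\R)=\R v\oplus v^\perp$, the intersection form is negative definite on $v^\perp$, so $g^*_{\vert H^{1,1}}$ lies in a compact subgroup of $\GL(H^{1,1}(X;\R))$. Combined with Lemma~\ref{lem:unitary_on _H20}, which says that $\Aut(X)^*_{\vert H^{2,0}\oplus H^{0,2}}$ is contained in a compact group, the operator $g^*$ lies in a compact subgroup of $\GL(H^2(X;\R))$. Because $g^*$ also preserves the lattice $H^2(X;\Z)$, it has finite order. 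On a K3 surface the homomorphism $\Aut(X)\to\GL(H^2(X;\Z))$ is injective by the strong Torelli theorem, and on an Enriques surface its kernel is finite; so $g$ itself has finite order.

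The main (and only really non-trivial) obstacle is the quantitative bound $\leq 66$, which we import from the classification of finite order automorphisms of K3 and Enriques surfaces (Nikulin, Mukai, Kondo, Keum and others): any such automorphism has order at most $66$, which is sharp for K3 and amply sufficient for Enriques surfaces. The rest of the argument is formal, using only the Minkowski structure of $\Hyp_X$, the nefness of $[F]$, and Lemma~\ref{lem:unitary_on _H20}.
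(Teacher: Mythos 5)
Your proof is correct and follows essentially the same route as the paper: use the linear system $\vert F\vert$ to see that $g$ permutes the fibers, observe that fixing the isotropic class $[F]$ rules out loxodromic behavior, deduce finite order in the elliptic case from compactness on cohomology plus finiteness of the kernel of $f\mapsto f^*$ on K3 and Enriques surfaces, and import the bound $66$ from Keum's classification. The only cosmetic difference is that you make the non-loxodromy explicit via the eigenvalues on isotropic eigenlines, where the paper invokes its earlier discussion of invariant fibrations; the substance is the same.
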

\begin{proof}  
Since $g$ maps $F$ to some fiber $F'$,  it maps the complete linear system $\vert F\vert$ to $\vert F'\vert$, 
but both linear systems are made of the fibers of $\pi$. So $g$ preserves the fibration and is not loxodromic.  
If $g$ is not parabolic it is elliptic, and its action on cohomology has finite order since it 
preserves $H^2(X, \Z)$.
 On a K3 or Enriques 
surface every holomorphic vector field vanishes identically, so $\Aut(X)^0$ is trivial and the kernel of the homomorphism $ \Aut(X)\ni f\mapsto f^*$ is finite (see~\cite[Theorem 2.6]{Cantat:Milnor}); as a consequence, any elliptic automorphism has finite order.   The upper
bound on the order of $g$ was obtained in~\cite{Keum:2016}.
\end{proof}

\begin{pro}\label{pro:parabolic_infinite}
Let $X$ be a compact K\"ahler surface and let $f$ be a parabolic automorphism of $X$, preserving the genus $1$ fibration $\tau\colon X\to B$.
Consider the group $\Aut(X;\tau):=\{ g\in \Aut(X)\; ; \; \exists g_B\in \Aut(B), \; \tau\circ g = g_B\circ \tau\}$, and assume that the 
image of the homomorphism $g\in \Aut(X;\tau)\to g_B\in \Aut(B)$ is infinite. Then, $X$ is a torus. 
\end{pro}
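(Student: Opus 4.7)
The plan is to combine the constraints coming from the infinite image in $\Aut(B)$ with the presence of a parabolic $f$, and then to invoke Kodaira's classification of isotrivial elliptic fibrations to force $X$ to be a torus.

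First I would pin down the base: $\Aut(B)$ is finite whenever $g(B)\geq 2$, so the hypothesis forces $g(B)\leq 1$. Next, each $g\in \Aut(X;\tau)$ restricts to isomorphisms $F_b\to F_{g_B(b)}$, hence the $j$-invariant of $\tau$, viewed as a meromorphic map $j\colon B\to \overline{\mathcal{M}_{1,1}}$, satisfies $j\circ g_B=j$. Since a non-constant meromorphic function on a compact Riemann surface has only finitely many symmetries (its generic fibers being finite and permuted by any $g_B$ preserving $j$), the infiniteness of the image forces $j$ to be constant: every smooth fiber is isomorphic to a fixed elliptic curve $E$.

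I would then control the special fibers. The discriminant locus $\Delta$ and the set of multiple-fiber locations are finite $\Aut(X;\tau)$-invariant subsets of $B$. Fixing $g\in \Aut(X;\tau)$ with $g_B$ of infinite order, two subcases arise: if $B$ is an elliptic curve, then $g_B$ is a translation of infinite order (any elliptic-curve automorphism with a fixed point has finite order), so no non-empty finite subset of $B$ is $g_B$-invariant, whence $\Delta=\emptyset$ and there are no multiple fibers; if $B=\P^1$, then $g_B\in \mathrm{PGL}_2(\C)$ has at most two fixed points, so $\Delta$ together with the multiple-fiber locations totals at most two points.

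Eliminating the $B=\P^1$ case is the step I expect to be the main obstacle. For K3 or Enriques surfaces, Lemma~\ref{lem:charac_parabolic_on_K3}, combined with the finiteness of $\Delta\subset \P^1$, forces every element of $\Aut(X;\tau)$ to induce a finite-order action on the base, contradicting infiniteness. The remaining Kähler surfaces carrying an elliptic fibration over $\P^1$ with constant $j$-invariant and at most two singular or multiple fibers reduce (up to logarithmic transforms) to $\P^1\times E$, for which $h^{1,1}(X)\leq 2$; but a parabolic isometry of $\Hyp_X$ requires $h^{1,1}(X)\geq 3$, so the existence of $f$ is incompatible with this subcase. Handling the logarithmic-transform variants in the Kähler range is precisely the technical check I expect to be the most delicate part of the argument.

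In the remaining situation, $B$ is an elliptic curve and $\tau$ is a smooth isotrivial elliptic bundle. By Kodaira's classification of such bundles, $X\simeq (B'\times E)/G$ with $B'\to B$ a finite étale cover and $G$ a finite group acting diagonally, by translations on $B'$ and by automorphisms of $E$. If every element of $G$ acts on $E$ by translations, then $G$ acts on the complex torus $B'\times E$ by translations and the quotient $X$ is itself a complex torus, as desired. Otherwise $X$ is a bielliptic surface; but then $\Aut(X)^0$ has finite index in $\Aut(X)$, which makes $\Aut(X)^*$ finite and precludes the existence of a parabolic automorphism, contradicting the hypothesis on $f$. Hence $X$ must be a torus.
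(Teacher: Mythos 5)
The paper does not actually prove this statement internally (it is quoted from the proof of Proposition~3.6 in Cantat--Favre), so your argument can only be judged on its own terms; and on its own terms it has a genuine gap in the case $B=\P^1$. First, your dismissal of K3 and Enriques surfaces via Lemma~\ref{lem:charac_parabolic_on_K3} is a non sequitur: that lemma says an automorphism preserving the fibration is either parabolic or of order at most $66$, but it says nothing about the order of the induced map $g_B$ when $g$ is parabolic -- which is exactly what must be controlled (indeed it is essentially the statement being proved). The correct way out for K3 is a count of singular fibers: with constant finite $j$ the singular fibers are additive with Euler number at most $10$, so $e(X)=24$ forces at least three of them, and an infinite subgroup of $\PGL_2(\C)$ cannot preserve three points; Enriques needs a separate argument (the two multiple fibers are distinguished, and a multiple fiber that is also singular would be of type $mI_n$, contradicting $j$ constant and finite). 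None of this is in your text.

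Second, and more seriously, the claim that the remaining Kähler surfaces in the $\P^1$ case ``reduce, up to logarithmic transforms, to $\P^1\times E$, for which $h^{1,1}\leq 2$'' is false. You have only bounded the number of bad points by two, not excluded genuinely singular fibers, and there are Kähler isotrivial elliptic surfaces over $\P^1$ with exactly two additive singular fibers which are not logarithmic transforms of $\P^1\times E$: for instance the relatively minimal rational elliptic surface with Weierstrass equation $y^2=x^3+t$ (fibers of type $II$ at $t=0$ and $II^*$ at $t=\infty$, $j\equiv 0$). It has $e=12$ and $h^{1,1}=10$, so your $h^{1,1}\leq 2$ argument does not touch it, and it even carries the automorphisms $(x,y,t)\mapsto(\lambda^2x,\lambda^3y,\lambda^6t)$, whose image in $\Aut(\P^1)$ is infinite -- so such surfaces can only be excluded by actually using the parabolic element $f$ (e.g.\ via Mordell--Weil/fiber-counting considerations), which your proposal never does. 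Two smaller points: an infinite image in $\Aut(B)$ need not contain an element of infinite order (infinite torsion subgroups of $\PGL_2(\C)$ and of an elliptic curve exist), so the invariant-finite-set arguments should be phrased for the whole infinite group rather than for a single $g_B$ of infinite order (this is easily repaired); and your ``Kodaira classification'' of smooth isotrivial bundles over an elliptic curve as diagonal quotients $(B'\times E)/G$ omits non-split torus extensions -- harmless here, since those are tori, but the cleaner route is $e(X)=0$, $\kappa(X)=0$, hence torus or bielliptic, with bielliptic excluded by $h^{1,1}=2$ as you do.
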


This result directly follows from the proof of Proposition~3.6 in~\cite{cantat-favre}.
In particular the automorphism $f_B\in \Aut(B)$ such that $\pi_f\circ f = f_B\circ \pi_f$ has 
finite order when $X$ is a K3, an Enriques,  or a rational surface. 
\begin{vlongue}
The dynamics of these automorphisms is described in Section~\ref{par:parabolic_automorphisms}.\romain{attention}\romain{lemme pas cité bougé dans vlongue?}

\begin{lem}\label{lem:pairs_of_twists} 
If $\Gamma$ is  a subgroup of $\Aut(X)$ containing a parabolic automorphism $g$, then  $\Gamma$ is non-elementary 
if and only if it contains another parabolic automorphism
 $h$ such that the invariant fibrations $\pi_g$ and $\pi_h$ are distinct.
Then, the tangency locus of the two fibrations is either empty or a curve, and there are positive integers 
$m$, $n$ such that $g^m$ and $h^n$ generate a free group of rank $2$.
\end{lem}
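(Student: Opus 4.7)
The plan is to treat the three assertions in order, using throughout the characterization from \S\ref{par:parabolic_basics}: a parabolic automorphism $g$ has a unique fixed point $\xi_g:=\P[F_g]\in\fr\Hyp_X$, and the invariant fibration $\pi_g$ is uniquely determined by~$\xi_g$.

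For the equivalence, the direction ``$\Leftarrow$'' follows from the standard fact that two parabolic isometries of a hyperbolic space with distinct fixed points generate a non-elementary subgroup: any finite $\langle g,h\rangle$-invariant subset of $\Hyp_X\cup\fr\Hyp_X$ would have to consist of the boundary fixed points of each element, forcing one of the parabolics to fix two distinct boundary points, which is impossible. For ``$\Rightarrow$'', since $\Gamma$ is non-elementary the point $\xi_g$ cannot be $\Gamma$-invariant, so I pick $f\in\Gamma$ with $f^*\xi_g\neq\xi_g$ and set $h:=fgf\inv$; then $h$ is parabolic with fixed point $f^*\xi_g$, and consequently $\pi_h\neq\pi_g$.

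For the tangency locus, the fibrations $\pi_g$ and $\pi_h$ induce two distinct singular holomorphic foliations on $X$. Locally, if $\omega_g$ and $\omega_h$ are 1-forms defining these foliations, the tangency locus is cut out by $\omega_g\wedge\omega_h$; globally this is a section of an appropriate line bundle (involving $K_X$ and the conormal bundles of the two foliations). Since the foliations differ, this section is not identically zero, so its zero set is a proper analytic subset of $X$ of pure codimension $1$, i.e., either empty or a curve.

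For the existence of a free group of rank $2$, I would apply the ping-pong lemma on $\fr\Hyp_X$. Fix disjoint closed neighborhoods $V_g$ of $\xi_g$ and $V_h$ of $\xi_h$. Since $g^*$ is parabolic with unique fixed point $\xi_g$ in $\overline{\Hyp_X}$, both forward and backward iterates satisfy $(g^{\pm k})^*(K)\subset V_g$ for any compact $K\subset\overline{\Hyp_X}\setminus\{\xi_g\}$ and all $|k|$ large enough; the analogous statement holds for $h^*$. Hence for $m,n$ large enough, $(g^m)^{\pm k}(V_h)\subset V_g$ and $(h^n)^{\pm k}(V_g)\subset V_h$ for every $k\geq 1$, so the ping-pong lemma gives that $\langle(g^m)^*,(h^n)^*\rangle$ is free of rank $2$ in $\O(H^{1,1}(X;\R))$. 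Since any relation in $\Aut(X)$ between $g^m$ and $h^n$ would map to a relation between their cohomological images, $\langle g^m,h^n\rangle$ is itself free of rank $2$ in $\Aut(X)$. The main technical subtlety lies in the tangency statement, namely in ruling out isolated-point contributions to the tangency locus, but this reduces to the standard principle that a non-zero section of a line bundle on a complex surface vanishes along a divisor.
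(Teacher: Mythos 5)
Your proof is correct and follows essentially the same route as the paper: conjugating $g$ by an element of $\Gamma$ that does not fix $\P[F_g]$ for the forward implication, a ping-pong argument at the two distinct parabolic fixed points on $\partial\Hyp_X$ for the free subgroup (hence non-elementarity), and deducing the tangency statement from the distinctness of the two fibrations. The only (harmless) slip is bookkeeping: with the pullback action on cohomology, the fixed point of $(fgf^{-1})^*$ is $(f^*)^{-1}\xi_g$ rather than $f^*\xi_g$, which does not affect the argument since $f^*\xi_g\neq\xi_g$ is equivalent to $(f^*)^{-1}\xi_g\neq\xi_g$.
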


\begin{proof} Let $F$ be a fiber of $\pi_g$. If $\Gamma$ is non-elementary, there is an element 
$f$ in $\Gamma$ that does not fix $[F]$; in particular $f$ does not preserve $\pi_g$. 
Then, $h:=f^{-1}\circ g\circ f$ is another parabolic automorphism with a distinct invariant fibration, namely $\pi_h=\pi_g\circ f$.
Being distinct, $\pi_g$ and $\pi_h$ have a tangency locus of codimension $\geq 1$. 

Conversely, if $\Gamma$ contains two parabolic automorphisms with distinct fixed point in $\fr \Hyp_X$, 
then the ping-pong lemma proves that there are powers $m$, $n\geq 1$ such that $\langle g^m, h^n\rangle$ 
is a free group of rank $2$; in particular, $\Gamma$ is non-elementary. (See~\cite{Cantat:Milnor} for more precise results.)
\end{proof}
\end{vlongue}

\section{Examples and classification}\label{par:Examples_Classification} 

This section may be skipped in a first reading. 
 It describes a few examples, and proves that a compact K\"ahler surface $X$ is projective when its   automorphism group  is non-elementary.

\subsection{Wehler surfaces (see~\cite{Cantat-Oguiso, Reschke, Wang:1995, Wehler})}\label{par:Wehler_surfaces_I}\label{par:Wehler} 

Consider the variety $M=\P^1\times \P^1\times \P^1$ and let $\pi_1$, 
$\pi_2$, and $\pi_3$ be the projections on the first, second, and third factor: $\pi_i(z_1,z_2,z_3)=z_i$.
Denote by $L_i$ the line bundle $\pi_i^*({\mathcal{O}} (1))$ and set
\begin{equation}
L =  L_1^2\otimes L_2^2\otimes L_3^2 = \pi_1^*({\mathcal{O}} (2))\otimes \pi_2^*({\mathcal{O}} (2))\otimes \pi_3^*({\mathcal{O}}(2)). 
\end{equation}
Since $K_{\P^1} = \mathcal {O}(-2)$, 
this line bundle $L$ is the dual of the canonical bundle $K_M$. By definition,  
$\vert L\vert\simeq \P(H^0(M, L))$ is the linear system of surfaces $X\subset M$  given by
the zeroes  of global sections $P\in H^0(M, L)$.  Using affine coordinates $(x_1,x_2,x_3)$ on $M=\P^1\times \P^1\times \P^1$,
such a surface is defined by a polynomial equation $P(x_1,x_2,x_3)=0$ whose degree with respect to each variable is $\leq 2$ (see~\cite{Cantat:Acta, McMullen:Crelle} for explicit examples). These surfaces will be referred to as 
 {\bf{Wehler surfaces}} or {\textbf{(2,2,2)-surfaces}}; modulo $\Aut(M)$, they form a family of dimension $17$.
 
 Fix $k\in \{1, 2, 3\}$ and denote by $i < j$ the other indices. If we project $X$ to $\P^1\times \P^1$ by  $\pi_{ij}=(\pi_i,\pi_j)$, 
we get a $2$ to $1$ cover (the generic fiber is made of two points, but some fibers may be rational curves). As soon as $X$ is smooth 
the involution $\sigma_k$ that permutes the two points in each (general) fiber of $\pi_{ij}$ is
an involutive automorphism of $X$; indeed $X$ is a K3 surface and any birational self-map of such a surface is an automorphism. 

\begin{pro}\label{pro:Wehler_surface_Pic}
There is a countable union of proper Zariski closed subsets $(W_i)_{i\geq 0}$ in  $\vert L\vert$ such that
\begin{enumerate}[\em (1)]
\item if $X$ is an element of $\vert L\vert\setminus W_0$, then $X$  is a smooth K3 surface and 
$X$ does not contain any fiber of the projections $\pi_{ij}$;
\item if $X$ is an element of $\vert L\vert\setminus (\bigcup_i W_i)$, the restriction morphism $\Pic(M)\to \Pic(X)$ 
is   surjective. In particular its Picard number is $\rho(X) = 3$. 
\end{enumerate}
\end{pro}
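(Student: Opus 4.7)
My plan is to treat the two assertions separately: defining $W_0$ explicitly, then using Noether--Lefschetz theory for the remaining $W_i$.

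First, I would set $W_0$ as the union of two closed loci in $\vert L\vert$: the discriminant of singular members of the linear system, and, for each choice of $(i,j)$ with $i<j$ and each $z \in \P^1\times \P^1$, the set of sections of $L$ vanishing identically on the fiber $\pi_{ij}\inv(z) \simeq \P^1$. Since $L = -K_M$ is very ample on $M = (\P^1)^3$, Bertini's theorem ensures the discriminant is a proper Zariski closed subset of $\vert L\vert$. The restriction $L\rest{\pi_{ij}\inv(z)}\simeq \mathcal{O}(2)$ admits nonzero sections, so the vanishing condition on one fiber cuts out a proper linear subspace of $H^0(M, L)$; as $(i,j,z)$ ranges over a compact parameter space, the union remains a proper Zariski closed subset of $\vert L\vert$. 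For smooth $X \in \vert L\vert \setminus W_0$, the adjunction formula yields $K_X = (K_M+L)\rest{X} = \mathcal{O}_X$, and the Lefschetz hyperplane theorem combined with $h^{0,1}(M)=0$ gives $h^{0,1}(X)=0$, so $X$ is a K3 surface; this proves (1).

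For assertion (2), the image of $\Pic(M) \to \Pic(X)$ is the rank-$3$ sublattice generated by the classes $F_i := L_i\rest{X}$, whose intersection matrix satisfies $F_i^2 = 0$ and $F_i\cdot F_j = 2$ for $i\neq j$, and is therefore non-degenerate. To show this inclusion is an equality for generic $X$, I would invoke Noether--Lefschetz theory for the universal family $\mathcal{X} \to U := \vert L\vert \setminus W_0$ of smooth Wehler surfaces. For each integral class $\alpha \in H^2(X_0, \Z)$ not lying in the span of the $F_i$, the locus $W_\alpha \subset U$ on which the flat transport of $\alpha$ remains of Hodge type $(1,1)$ is a Zariski closed subset. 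Letting $\set{W_i}_{i\geq 1}$ run through a countable enumeration of such $W_\alpha$, any $X$ in the complement of $\bigcup_{i\geq 0} W_i$ satisfies $\Pic(X) = \Pic(M)\rest{X}$, hence $\rho(X)=3$.

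The main obstacle is ensuring that each $W_\alpha$ is in fact a \emph{proper} subset of $U$. This is equivalent to exhibiting a single smooth Wehler surface with Picard rank exactly $3$, or equivalently, by Deligne's theorem of the fixed part, to showing that the monodromy representation on $R^2\pi_*\Q$ modulo the constant subsheaf $\langle F_1, F_2, F_3\rangle_\Q$ has no nonzero invariant vector. For $(2,2,2)$-surfaces this can be settled either by an infinitesimal computation in the Jacobian ring of the defining polynomial $P$ in the spirit of Griffiths--Voisin, showing the differential of the period map is surjective modulo the classes coming from $M$, or by an explicit specialization to a $(2,2,2)$-polynomial over a number field combined with a reduction-modulo-$p$ and Tate-conjecture-type rank count. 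Either route reduces the proposition to standard but nontrivial Hodge-theoretic input.
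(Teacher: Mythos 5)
Your treatment of assertion (1) is essentially the paper's: Bertini for smoothness, adjunction plus the Lefschetz hyperplane theorem for the K3 property, and the observation that containing a fiber of $\pi_{ij}$ is a proper Zariski closed condition (the paper checks this in coordinates: writing the equation as $A(x_1,x_2)x_3^2+B(x_1,x_2)x_3+C(x_1,x_2)$, containing $\pi_{12}^{-1}(a_1,a_2)$ means $A=B=C=0$ at $(a_1,a_2)$; your incidence-variety/linear-subspace argument, with the dimension count made explicit, is an acceptable substitute).

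The gap is in assertion (2). You correctly reduce it to the properness of each Noether--Lefschetz locus $W_\alpha$, but you then label this ``standard but nontrivial Hodge-theoretic input'' and stop: none of the routes you mention is carried out, and this properness is precisely the substance of the statement. Moreover, the routes are less routine than you suggest. The Griffiths--Voisin infinitesimal criterion via the Jacobian ring is set up for hypersurfaces in $\P^n$ and would need to be adapted to anticanonical hypersurfaces in $\P^1\times\P^1\times\P^1$; and exhibiting a single smooth $(2,2,2)$-surface of Picard number exactly $3$ by reduction modulo $p$ and a Tate-type point count is a genuinely delicate (van Luijk-style) computation. The paper avoids all of this by invoking the general Noether--Lefschetz theorem (Voisin, Th\'eor\`eme 15.33), whose hypotheses here are only that $L$ is very ample and that the vanishing cohomology of $X$ has nonzero $(2,0)$-part; the latter is checked by a one-line degeneration: specializing to the equation $(x_1^2-1)(x_2^2-1)(x_3^2-1)=0$, a union of six copies of $\P^1\times\P^1$, forces $H^{2,0}(X)\simeq\C$ to lie in the vanishing cohomology. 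To complete your argument you should either verify this hypothesis and cite the general theorem, or actually produce the monodromy/infinitesimal computation or an explicit Picard-rank-$3$ example, rather than asserting that one exists.
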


\begin{vcourte}
See \cite{Cantat-Oguiso} for the proof of this proposition, as well as that of Lemma \ref{lem:wehler_no_fiber_free}.
\end{vcourte}
From  the second assertion, we deduce that for a very general $X$, 
 $\Pic(X)$ is isomorphic to $\Pic(M)$: it is the free Abelian 
group of rank $3$, generated by the classes 
\begin{equation}
c_i:=[(L_i)_{\vert X}].
\end{equation} 
The elements of $\vert (L_i)_{\vert X} \vert $ are the curves of $X$ given by the equations $z_i=\alpha$ for some $\alpha \in \P^1$. The arithmetic
genus of these curves is equal to $1$: in other words the projection $(\pi_i)_{\vert X}\colon X\to \P^1$ is a genus $1$ fibration. Moreover, for a general choice of $X$ in $\vert L \vert$,  $(\pi_i)_{\vert X}$ has $24$ singular fibers of type ${\sf{I}}_1$, i.e. isomorphic to a rational curve with exactly one simple double point. 
The intersection form is given by $c_i^2=0$ and $\langle c_i\vert c_j \rangle= 2$ if $i\neq j$, so that its matrix is given by 
\begin{equation}\label{eq:intersection_matrix}
\left(\begin{array}{ccc} 0 & 2 & 2\\
2 & 0 & 2\\
2 & 2 & 0\end{array}\right).
\end{equation}

\begin{vlongue}
\begin{proof}[Proof of Proposition~\ref{pro:Wehler_surface_Pic}]  
By Bertini's theorem, $X$ is smooth as soon as it is in the complement of some proper Zariski closed subset $W_0\subset \vert L \vert$. 
Now, let us assume that $X$ is smooth. The adjunction formula implies that the canonical bundle $K_X$ is trivial. 
From the hyperplane section theorem of Lefschetz \cite{Milnor:Morse_Theory}, we know that $X$ is 
simply connected. So, $X$ is a K3 surface (see~\cite{BHPVDV}). Write the equation of $X$ as $A(x_1,x_2)x_3^2+B(x_1,x_2)x_3+C(x_1,x_2)=0$. 
Then, $X$ contains a fiber $\pi_{12}^{-1}(a_1,a_2)$ if and only if the three curves given by $A=0$, $B=0$, and $C=0$ contain the 
point $(a_1,a_2)$. This imposes a non-trivial algebraic condition on $X$; hence, enlarging $W_0$, the first assertion is satisfied. 

For the second assertion, we apply a general form of the Noether-Lesfchetz theorem \cite[Th\'eor\`eme 15.33]{Voisin:BookHodge}.
We know that $L$ is very ample, that $H^{2,0}(X)$ is isomorphic to $\C$. Indeed  $X$ is a K3 surface, and   $H^{2,0}(X)$ is contained in the vanishing 
cohomology since $X$ may degenerate on six copies of $\P^1\times \P^1$ (taking the equation $(x_1^2-1)(x_2^2-1)(x_3^2-1)=0$).
So, the Noether-Lefschetz theorem says precisely that the restriction morphism is surjective for a very general choice of 
$X\in \vert L\vert$. \end{proof}
\end{vlongue}

\begin{lem}\label{lem:wehler_no_fiber_free}
Assume that $X$ does not contain any fiber of the projection $\pi_{ij}$. Then, the involution $\sigma_k^*$ preserves the 
subspace $\Z c_1\oplus \Z c_2\oplus \Z c_3$ of $\NS(X;\Z)$ and 
\[
\sigma_k^*c_i=c_i,\;  \sigma_k^*c_j=c_j,\;  \sigma_k^*c_k=-c_k+2c_i+2c_j.
\]
Equivalently, the action of $\sigma_k^*$ on $\Vect_\R (c_1, c_2, c_3)$ preserves the classes $c_i$ and 
$c_j$ and acts as a  
reflection with respect to the hyperplane $\Vect(c_i, c_j)\subset \NS(X;\R)$.
In other words,  
setting $u_k=(c_1+c_2+c_3)-2c_k$, $\sigma_k(v)=v+\frac{1}{2} \langle v\vert  u_k\rangle u_k$ for all $v$ in $\Z c_1\oplus \Z c_2\oplus \Z c_3$.
\end{lem}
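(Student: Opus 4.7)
The plan is to exploit that $\sigma_k$ is the Galois involution of the degree-$2$ morphism $\pi_{ij}|_X\colon X\to\P^1\times\P^1$: the hypothesis that $X$ contains no fiber of $\pi_{ij}$ guarantees that $\pi_{ij}|_X$ has no positive-dimensional fibers, hence is a finite morphism of degree~$2$, so $\sigma_k$ is a genuine biregular involution and the standard double-cover formulas on divisor classes are at our disposal.

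Since $\pi_{ij}\circ\sigma_k=\pi_{ij}$, every class pulled back from $\P^1\times\P^1$ is $\sigma_k^*$-fixed. Writing $\ell_i,\ell_j$ for the two standard generators of $\Pic(\P^1\times\P^1)$, one has $c_i=(\pi_{ij}|_X)^*\ell_i$ and $c_j=(\pi_{ij}|_X)^*\ell_j$, giving $\sigma_k^* c_i=c_i$ and $\sigma_k^* c_j=c_j$. For $c_k$ I would invoke the identity $\pi^*\pi_*D=D+\sigma^*D$, valid for any divisor class $D$ on a finite degree-$2$ cover with Galois involution $\sigma$. Applied to $D=c_k$, represented by the curve $C_\alpha:=\{x_k=\alpha\}\cap X$ for generic $\alpha$, the restriction $\pi_{ij}|_{C_\alpha}$ is birational onto its image $\{P(x_i,x_j,\alpha)=0\}\subset\P^1\times\P^1$, which is a curve of bidegree $(2,2)$ since $P$ has degree $2$ in each variable. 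Hence $(\pi_{ij}|_X)_* c_k=2\ell_i+2\ell_j$, and pulling back yields $c_k+\sigma_k^* c_k=2c_i+2c_j$, i.e.\ $\sigma_k^* c_k=-c_k+2c_i+2c_j$, as asserted.

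The equivalent reflection formulation is then a one-line verification. Setting $u_k:=c_k-c_i-c_j$, the intersection matrix~\eqref{eq:intersection_matrix} gives $\langle u_k\vert u_k\rangle=-4$, $\langle c_i\vert u_k\rangle=\langle c_j\vert u_k\rangle=0$, and $\langle c_k\vert u_k\rangle=-4$, so that the map $v\mapsto v+\tfrac{1}{2}\langle v\vert u_k\rangle u_k$ reproduces on the basis $(c_1,c_2,c_3)$ precisely the three values just computed, and is visibly the orthogonal reflection across $\Vect(c_i,c_j)=u_k^\perp$. The only step that genuinely uses the fiber hypothesis is the finiteness of $\pi_{ij}|_X$, which legitimizes the Galois-cover identity $\pi^*\pi_*=\mathrm{id}+\sigma^*$; everything else reduces to a routine intersection-theoretic computation inside $\P^1\times\P^1\times\P^1$.
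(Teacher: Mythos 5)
Your proof is correct and follows essentially the same route as the paper: the paper also fixes $c_i,c_j$ because they are pulled back through $\pi_{ij}$, and computes $\sigma_k^*c_k$ from the fact that a fiber $C=\{x_k=\alpha\}\cap X$ maps to a $(2,2)$-curve in $\P^1\times\P^1$ whose full preimage is $C\cup\sigma_k(C)$ (this is exactly where the no-fiber hypothesis enters), your identity $\pi^*\pi_*=\mathrm{id}+\sigma_k^*$ being just a formal repackaging of that set-theoretic argument. The explicit check of the reflection formula with $u_k=c_k-c_i-c_j$ is also correct (the paper leaves it implicit).
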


\begin{vlongue}
\begin{proof}
Since $\sigma_k$ preserves $\pi_{ij}$ it preserves the fibers of $\pi_i$ and $\pi_j$, hence $\sigma_k^*$ fixes
$c_i$ and $c_j$. Now, consider a fiber $C=\{ z_k=w\}\subset X$ of $\pi_k$. Then, $\sigma_k(C)\cup C=\pi_{ij}^{-1}(\pi_{ij}(C))$ because there is no curve in the 
fibers of $\pi_{ij}$. On the other hand, $\pi_{ij}(C)\subset \P^1\times \P^1$ is a (2,2)-curve so it is  rationally equivalent to the union of 
two vertical and two horizontal projective lines. This gives $\sigma_k^*c_k=-c_k+2c_i+2c_j$. 
\end{proof}
\end{vlongue}

Combining this lemma with the previous proposition, we see that a very general  Wehler surface
has Picard number $3$, $\Hyp_X$ has dimension 2, $\NS(X;\Z)=\Vect_\Z(c_1,c_2,c_3)$ 
and the matrices of the $\sigma_i^*$  in the basis $(c_i)$ are 
\begin{equation}\label{eq:matrices_sigmai_wehler}
\sigma_1^*= \left(\begin{array}{ccc} -1 & 0 & 0\\
2 & 1 & 0\\
2 & 0 & 1\end{array}\right), \;\,
\sigma_2^*= \left(\begin{array}{ccc} 1 & 2 & 0\\
0 & -1 & 0\\
0 & 2 & 1\end{array}\right), \;\, 
\sigma_3^*= \left(\begin{array}{ccc} 1 & 0 & 2\\
0 & 1 & 2 \\
0 & 0 & -1\end{array}\right).
\end{equation}

\begin{pro}\label{pro:Wehler_generic}
If $X$ is a very general  Wehler surface  then: 
\begin{enumerate}[\em (1)]
\item $X$ is a smooth K3 surface with Picard number $3$;
\item $\Aut(X)$ is equal to $\langle \sigma_1,   \sigma_2,   \sigma_3  \rangle$, it is a free product of three copies of $\Z/2\Z$, and 
$\Aut(X)^*$ is a finite index subgroup in the group of integral isometries of $\NS(X;\Z)$;
\item $\Aut(X)^*$ acts strongly irreducibly on $\NS(X;\R)$;
\item $\Aut(X)$ does not preserve any algebraic curve $D\subset X$; 
\item the limit set of $\Aut(X)^*$ is equal to $\partial \Hyp_X$;
\item the compositions $\sigma_i\circ \sigma_j$ and $\sigma_i\circ \sigma_j\circ \sigma_k$ are respectively parabolic 
and loxodromic for every triple $(i,j,k)$ with $\set{ i, j, k }=\set{1,2,3}$.
\end{enumerate}
\end{pro}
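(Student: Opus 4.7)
Assertion~\textbf{(1)} is immediate from Proposition~\ref{pro:Wehler_surface_Pic}, so I assume from now on that $X$ is very general, with $\NS(X;\Z) = \Z c_1 \oplus \Z c_2 \oplus \Z c_3$, intersection matrix~\eqref{eq:intersection_matrix}, and the $\sigma_i^*$ given by~\eqref{eq:matrices_sigmai_wehler}. Assertion~\textbf{(6)} is then a direct matrix computation: the characteristic polynomial of $(\sigma_i\circ \sigma_j)^*= \sigma_j^*\sigma_i^*$ equals $(\lambda - 1)^3$ with a single Jordan block of size~$3$, so $\sigma_i\circ\sigma_j$ is a nontrivial unipotent isometry of the Minkowski space $\NS(X;\R)$, hence parabolic. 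Similarly, the characteristic polynomial of $(\sigma_i\circ\sigma_j\circ\sigma_k)^*$ factors as $(\lambda + 1)(\lambda^2 - 18\lambda + 1)$ with roots $-1$ and $9 \pm 4\sqrt{5}$; since the spectral radius is $9 + 4\sqrt{5} > 1$, this composition is loxodromic.

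From~(6), the group $\Gamma := \langle \sigma_1,\sigma_2,\sigma_3\rangle$ contains parabolic elements with distinct fixed points on $\partial \Hyp_X$, so $\Gamma$ is non-elementary. Corollary~\ref{cor:pic_number_3} then yields $\Pi_\Gamma = \NS(X;\R)$ together with strong irreducibility of the $\Gamma$-action, which is~\textbf{(3)}. For~\textbf{(4)}, Lemma~\ref{lem:periodic_curves} asserts that the classes of $\Gamma$-periodic irreducible curves lie in a subspace on which the intersection form is negative definite; but $\Pi_\Gamma^\perp = 0$, so no such curve exists. An invariant curve being a finite union of periodic irreducible components, $\Aut(X)$ preserves no algebraic curve. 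For~\textbf{(5)}, observe that the reflection vector of $\sigma_k$ is $u_k = c_i + c_j - c_k$ (with $\{i,j,k\}=\{1,2,3\}$), with $u_k^2 = -4$ and $\langle u_i \,\vert\, u_j\rangle = 4$ when $i \neq j$; the $2\times 2$ Gram matrix of $(u_i, u_j)$ is therefore degenerate, meaning that the reflection hyperplanes $u_i^\perp$ and $u_j^\perp$ meet at a single ideal point of $\partial \Hyp_X$. The three walls bound an ideal triangle $\Delta \subset \Hyp_X$, which by Poincar\'e's polyhedron theorem is a fundamental domain for $\langle \sigma_1^*,\sigma_2^*,\sigma_3^*\rangle$ of finite area $\pi$. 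Since any finite-covolume subgroup of $\Isom(\Hyp^2)$ has limit set all of $\partial \Hyp^2$, assertion~(5) follows.

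For~\textbf{(2)}, the same ideal triangle configuration yields, via Poincar\'e's theorem, the Coxeter-type presentation $\langle s_1,s_2,s_3 \mid s_i^2 = 1\rangle \cong \Z/2\Z * \Z/2\Z * \Z/2\Z$, because the ideal vertices prevent any finite relation $(s_is_j)^m = 1$ (equivalently $\sigma_i^*\sigma_j^*$ is of infinite order by~(6)). The finite covolume of this reflection subgroup inside $\Isom(\Hyp_X) \cong \Isom(\Hyp^2)$ forces it to be a lattice, hence to have finite index in $\O(\NS(X;\Z))$; a fortiori so does $\Aut(X)^*$. The delicate point is the equality $\Aut(X) = \langle \sigma_1,\sigma_2,\sigma_3\rangle$. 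Since $q(a,b,c) = 4(ab+ac+bc)$ never takes the value $-2$ on $\Z^3$, $\NS(X;\Z)$ contains no $(-2)$-class, so $X$ has no smooth rational curve and the K\"ahler cone coincides with one component of the positive cone $\{v : v^2 > 0\}$. The global Torelli theorem for K3 surfaces then identifies $\Aut(X)^*$ with the group of integral Hodge isometries of $H^2(X;\Z)$ preserving this cone, and for very general $X$ with $\rho = 3$ the kernel of $\Aut(X) \to \Aut(X)^*$ is trivial. The main obstacle is to exclude, for very general $X$, the ``exotic'' integer isometries of $\NS(X;\Z)$ (such as the $S_3$ action permuting the basis $(c_1,c_2,c_3)$) that preserve the positive cone but do not lie in the reflection subgroup: this is carried out by a Hodge-theoretic discriminant argument, using that for very general $X$ the only Hodge isometries of the transcendental lattice $T(X) = \NS(X;\Z)^\perp$ are $\pm \id$, so that the gluing criterion through the discriminant form of $\NS(X;\Z)$ forces any extendable lattice isometry to lie in $\langle \sigma_1^*,\sigma_2^*,\sigma_3^*\rangle$. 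The ``very general'' hypothesis is essential precisely at this last step.
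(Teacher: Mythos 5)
Your proposal is correct in substance, but it follows a genuinely different, more self-contained route than the paper. The paper disposes of (1)--(3) by citing Proposition~\ref{pro:Wehler_surface_Pic} together with \cite[\S 1.5]{Cantat:Acta} and \cite[Thm 3.6]{Cantat-Oguiso}, obtains (4) in one line (an invariant curve would give a nonzero fixed vector, hence an invariant line, in $\NS(X;\Z)$, contradicting (3)), deduces (5) from (2) because a lattice in $\Isom(\NS(X;\R))$ always has full limit set, and proves (6) by the same matrix computation you performed. You instead get (3) internally from (6) (two parabolics with distinct fixed classes $c_3$, $c_1$ give non-elementarity, then Corollary~\ref{cor:pic_number_3} with $\rho(X)=3$), and you derive the free-product structure, the finite-index claim in (2), and (5) from the explicit description of $\sigma_1^*,\sigma_2^*,\sigma_3^*$ as reflections in the walls $u_k^\perp$, $u_k=c_i+c_j-c_k$, bounding an ideal triangle of area $\pi$; your numerics ($u_k^2=-4$, $\langle u_i\,\vert\,u_j\rangle=4$, characteristic polynomials $(\lambda-1)^3$ with a single Jordan block and $(\lambda+1)(\lambda^2-18\lambda+1)$) all check out, and the covolume argument for finite index is sound. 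This buys a proof of (2)(partially), (5), (6) that does not lean on the external references, at the cost of invoking Poincar\'e's polyhedron theorem.

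Two caveats. In (4), the assertion ``$\Pi_\Gamma^\perp=0$'' is not literally true: $\Pi_\Gamma=\NS(X;\R)$ is $3$-dimensional while $h^{1,1}(X)=20$, so $\Pi_\Gamma^\perp\subset H^{1,1}(X;\R)$ is $17$-dimensional. What you need (and what Lemma~\ref{lem:periodic_curves} plus strong irreducibility gives) is that the span of the classes of $\Gamma$-periodic curves is a $\Gamma$-invariant negative-definite subspace of $\NS(X;\R)$, hence zero; or simply use the paper's one-line argument. More seriously, the genuinely delicate point, $\Aut(X)=\langle\sigma_1,\sigma_2,\sigma_3\rangle$, is only sketched: you correctly identify the mechanism (no $(-2)$-classes since $q\equiv 0\ \mathrm{mod}\ 4$, Torelli, $\pm\id$ on the transcendental lattice for very general $X$, and the discriminant-form gluing which must exclude, e.g., the coordinate permutations of $(c_1,c_2,c_3)$), but the decisive lattice computation is asserted rather than carried out. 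The paper does not carry it out either --- this is exactly the content of \cite[Thm 3.6]{Cantat-Oguiso} --- so your argument is acceptable only if that step is either executed explicitly or cited.
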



\begin{proof}
The first three assertions follow from Proposition~\ref{pro:Wehler_surface_Pic},  
\cite[\S 1.5]{Cantat:Acta} and 
\cite[Thm 3.6]{Cantat-Oguiso}.
For the fourth one, note that any invariant curve $D$ would yield a non-trivial fixed point $[D]$ in $\NS(X;\Z)$, contradicting assertion (3). 
The fifth one follows from the second because the limit set of a lattice in $\Isom(\NS(X;\R))$ is always equal to $\partial \Hyp_X$.
To prove the last assertion, it suffices to compute the corresponding product of matrices given in Equation~\eqref{eq:matrices_sigmai_wehler} (see~\cite{Cantat:Acta}).\end{proof}

\begin{rem}
In~\cite{Baragar:2016}, Baragar gives examples of smooth surfaces $X\in \vert L\vert$ for which $\rho(X)\geq 4$ and the limit
set of $\Aut(X)^*$ in $\partial \Hyp_X$ is a genuine fractal set. 
\end{rem}

\subsection{Pentagons}\label{par:pentagons}
The dynamics  on the space  of pentagons   with given side lengths, introduced in \S\ref{par:pentagons_intro},
shares important similarities with the dynamics on Wehler surfaces. A pentagon with side lengths $\ell_0, \ldots, \ell_4$ modulo translations of 
the plane is the same as the data of a 5-tuple of vectors $(v_i)_{i=0, \ldots , 4}$ in $\R^2$ (identified with $\C$)
of respective length $\ell_i$   such that $\sum_i v_i = 0$. Write $v_i   = \ell_i t_i$ with $\abs{t_i} = 1$. Then the action of $\SO_2(\R)$ can be identified to the diagonal multiplicative action of $\U_1=\{\alpha \in \C\; ; \; \abs{\alpha}=1\}$ on the $t_i$:
\begin{equation}
 \alpha\cdot (t_0, \ldots, t_4)=(\alpha t_0, \ldots \alpha t_4).
\end{equation} 
Now, following Darboux \cite{darboux}, we consider the surface $X$ in $\P^4_\C$  defined by the equations
\begin{equation}\label{eq:equation_pentagons}
\begin{cases} 
\ell_0z_0+ \ell_1 z_1+\ell_2z_2+\ell_3z_3+\ell_4 z_4 = 0\\
\ell_0/z_0+ \ell_1/ z_1+\ell_2/z_2+\ell_3/z_3+\ell_4/ z_4 = 0
\end{cases}
\end{equation}
where $[z_0:\ldots :z_4]$ is some fixed choice of homogeneous coordinates, and the second 
equation must be multiplied by $z_0z_1z_2z_3z_4$ to obtain a homogeneous equation of degree $4$.

\begin{rem}\label{rem:hessian} This surface is isomorphic to the Hessian of a cubic surface (see~\cite[\S 9]{Dolgachev:CAG}). More precisely, consider a cubic surface
$S\subset \P^3_\C$ whose equation  $F$ can be written in Sylvester's pentahedral form that is, 
as a sum $F=\sum_{i=0}^4 \lambda_i F_i^3$ for some complex numbers $\lambda_i$ and linear forms $F_i$ with $\sum_{i=0}^4 F_i=0$.
By definition, its Hessian surface $H_F$ is defined by $\det(\partial_i\partial_j F)=0$. Then, using the linear forms
$F_i$ to embed $H_F$ in $\P^4_\C$,  we obtain the surface defined by the pair of
equations $\sum_{i=0}^4 z_i=0$ and $\sum_{i=0}^4\frac{1}{\lambda_iz_i}=0$. Thus, $H_F$ is our surface $X$, 
for $\ell_i^2=\lambda_i$.
We refer to \cite{Dolgachev-Keum, Dardanelli-vanGeemen, Dolgachev:Salem, Rosenberg} for an introduction to 
these surfaces and their birational transformations.\end{rem}

\begin{vlongue}
For completeness, we prove some of its basic properties.
\end{vlongue}

\begin{lem}\label{lem:pentagon_smoothness}
Let $\ell=(\ell_0, \ldots, \ell_4)$ be an element of $(\C^*)^5$.
The surface $X\subset \P^4_\C$ defined by the system~\eqref{eq:equation_pentagons} has $10$ singularities 
at the points $q_{ij}$ determined by the system of equations 
$\ell_i z_i+\ell_j z_j=0$, $z_k=z_l=z_m=0$ with $i<j$ and $\{i,j,k,l,m\}=\{0,1,2,3,4\}$. 
In the complement of these ten isolated singularities, $X$ is smooth if and only if 
\begin{equation}\label{eq:pentagon_smoothness}
\sum_{i=0}^4 \e_i \ell_i \neq 0\quad \forall \e_i \in \set{\pm 1}.
\end{equation} 
\end{lem}

\begin{vcourte} 
The proof is elementary and left to the reader.
\end{vcourte}
\begin{vlongue}
\begin{proof}
We first look for singularities in the complement of the hyperplanes $z_i=0$, and work in the chart $z_0 = 1$. Then $z_4 = - (\ell_0+ \ell_1z_1+ \ell_2z_2 + \ell_3z_4)/\ell_4$ and we replace in the
second equation of \eqref{eq:equation_pentagons} to obtain an affine equation of $X$ in this chart, namely:
\begin{equation}\label{eq:equation_pentagons_affine}
\frac{\ell_1}{z_1} + \frac{\ell_2}{z_2} + \frac{\ell_3}{z_3} - \frac{\ell_4^2}{\ell_0+ \ell_1z_1+ \ell_2z_2+ \ell_3z_3} + \ell_0 = 0. 
\end{equation}
Singularities are determined by  the system of equations
$z_1^2 =z_2^2=z_3^2 = \ell_4^{-2}(\ell_0+ \ell_1z_1+ \ell_2z_2 + \ell_3z_3)^2$. So, by symmetry, at a singularity where none of the coordinates vanishes we must have 
$z_i=\varepsilon _i z$ for some $\varepsilon _i=\pm 1$ and a common factor $z\neq 0$; this is precisely Condition~\eqref{eq:pentagon_smoothness}.

Looking for singularities with one coordinate equal to $0$, say $z_1=0$ in the chart $z_0=1$, we obtain the system 
of equations 
\begin{equation}\label{eq:equation_pentagons_singularities}
\begin{cases} 
0= (\ell_0 z_2z_3+\ell_3 z_2+\ell_2z_3)(\ell_0 +\ell_2 z_2+\ell_3 z_3)+(\ell_1^2-\ell_4^2)z_2z_3\\
0=\ell_1 z_3(\ell_0+2\ell_2 z_2+\ell_3z_3)\\
0= \ell_1 z_2 (\ell_0+\ell_2z_2+2\ell_3 z_3)
\end{cases}
\end{equation}
together with $\ell_0+\ell_2 z_2+\ell_3z_3+\ell_4z_4=0$ and $\ell_1 z_2z_3z_4=0$ (in particular, $z_2$, $z_3$ or $z_4$
must vanish).
The solutions of this system are given by $z_1=z_2=z_3=0$, which gives the point $q_{04}=[\ell_4:0:0:0:-\ell_0]$,
or $z_1=z_2=0$ and $\ell_0+\ell_3z_3=0$, which corresponds to $q_{03}=[\ell_3:0:0:-\ell_0:0]$, or $z_1=z_3=0$
which gives $q_{02}$, or $z_1=z_4=0$ but then either $z_2=0$ or $z_3=0$ and we end up again with $q_{02}$ and $q_{03}$.
The result follows by symmetry.
\end{proof}
\end{vlongue}

\begin{lem}
If $\ell\in (\C^*)^5$ satisfies Condition~\eqref{eq:pentagon_smoothness}, then the ten singularities are simple nodes (Morse singularities) and 
the surface $X$ is a (singular) K3 surface: a minimal resolution $\hat X$  of $X$ is a K3 surface, which is  
 obtained by blowing-up its ten nodes, thereby creating ten rational $(-2)$-curves.
\end{lem}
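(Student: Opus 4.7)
My plan is to reduce the statement to a computation on a quartic surface in $\P^3$, verify nodality at one singular point (invoking the symmetry between the coordinates for the others), and then identify the minimal resolution as a K3 surface using the theory of Du Val singularities. First I would observe that $X$ lies in the hyperplane $H = \{\sum_{i=0}^4 \ell_i z_i = 0\} \cong \P^3$, and that inside $H$, after clearing denominators by multiplying through by $z_0 z_1 z_2 z_3 z_4$, the second equation of \eqref{eq:equation_pentagons} becomes a homogeneous quartic polynomial. Thus $X$ is (isomorphic to) a quartic surface in $\P^3$, and by the previous lemma combined with Condition~\eqref{eq:pentagon_smoothness} its singular locus is exactly the set $\{q_{ij}\}_{0\leq i<j\leq 4}$.

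Next I would check that each $q_{ij}$ is a Morse singularity. By the symmetry between the coordinates it suffices to work at $q_{04}$. In the affine chart $z_0=1$, eliminate $z_4 = -L/\ell_4$ with $L = \ell_0 + \ell_1 z_1 + \ell_2 z_2 + \ell_3 z_3$, starting from the affine equation~\eqref{eq:equation_pentagons_affine}. Clearing denominators produces a polynomial
\[
F(z_1,z_2,z_3) \;=\; L\bigl(\ell_1 z_2 z_3 + \ell_2 z_1 z_3 + \ell_3 z_1 z_2\bigr) - \ell_4^2\, z_1 z_2 z_3 + \ell_0\, z_1 z_2 z_3\, L,
\]
which vanishes at $q_{04} = (0,0,0)$. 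Reading off the quadratic part of $F$ at the origin yields $F_2(z) = \ell_0\bigl(\ell_1 z_2 z_3 + \ell_2 z_1 z_3 + \ell_3 z_1 z_2\bigr)$, whose symmetric matrix has determinant proportional to $\ell_0^3\,\ell_1\ell_2\ell_3\neq 0$. Hence the Hessian of $F$ is non-degenerate at the origin, so $q_{04}$ is an ordinary double point of $X$.

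The last step is to show that the minimal resolution $\hat X$ is a K3 surface. Each $q_{ij}$, being an $A_1$-singularity (rational double point), has a minimal resolution obtained by a single blow-up whose exceptional divisor is a smooth rational curve of self-intersection $-2$; so $\hat X$ is smooth and contains the claimed ten disjoint $(-2)$-curves. For the K3 property one needs $K_{\hat X}\cong \mathcal O_{\hat X}$ and $H^1(\hat X,\mathcal O_{\hat X})=0$. Adjunction on $\P^3$ gives trivial canonical bundle on a smooth quartic; since $A_1$-singularities are crepant, $K_{\hat X}=\pi^*K_X$ stays trivial through the resolution. Since $A_1$-singularities are rational, $R^i\pi_*\mathcal O_{\hat X}=0$ for $i>0$, so $H^1(\hat X,\mathcal O_{\hat X})=H^1(X,\mathcal O_X)$, which vanishes via the standard cohomology sequence $0\to \mathcal O_{\P^3}(-4)\to \mathcal O_{\P^3}\to \mathcal O_X\to 0$ on $\P^3$. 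The Enriques--Kodaira classification then identifies $\hat X$ as a K3 surface.

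The core computational input is the Hessian calculation at $q_{04}$, which must be carried out carefully but is essentially a polynomial expansion; the rest of the argument is assembled from standard facts about Du Val singularities and quartic K3 surfaces.
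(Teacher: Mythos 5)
Your proof is correct, and the first half coincides with the paper's own argument: the paper also works in the chart $z_0=1$, substitutes $z_4=-L/\ell_4$, and reads off the quadratic part $(-\ell_0/\ell_4)\,Q$ with $Q=\ell_1z_2z_3+\ell_2z_1z_3+\ell_3z_1z_2$, whose non-degeneracy (for $\ell\in(\C^*)^5$) gives the node; your $F$ is just $-\ell_4$ times that local equation, so the Hessian computation is the same, and the appeal to the coordinate symmetry to handle the other nine nodes is legitimate since Condition~\eqref{eq:pentagon_smoothness} and the family of surfaces are symmetric in the indices. Where you genuinely diverge is the K3 identification. The paper argues by hand: adjunction produces a holomorphic $2$-form on the regular part of $X$, which is shown (via the local model $(\C^2,0)/\eta$, Hartogs, and the explicit blow-up) to extend to a nowhere-vanishing $2$-form on $\hat X$; then $\hat X$ has trivial canonical bundle and, containing ten rational curves, cannot be a torus, hence is K3 by classification. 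You instead globalize: $X$ is a nodal quartic in the hyperplane $H\cong\P^3$, so $\omega_X\cong\mathcal{O}_X$ by adjunction for the (Gorenstein) hypersurface, crepancy of $A_1$-resolutions gives $K_{\hat X}\cong\mathcal{O}_{\hat X}$, and rationality of the double points plus the sequence $0\to\mathcal{O}_{\P^3}(-4)\to\mathcal{O}_{\P^3}\to\mathcal{O}_X\to 0$ gives $h^1(\mathcal{O}_{\hat X})=0$ directly, so $\hat X$ is K3 without needing to exclude the abelian case. Your route relies on standard Du Val machinery and yields $q(\hat X)=0$ cohomologically, while the paper's is more elementary and self-contained (explicit extension of the $2$-form through the quotient singularity); both are complete, modulo the minor remark that adjunction should be phrased for the singular Gorenstein quartic (or its dualizing sheaf) rather than "a smooth quartic", which is how your crepancy step actually uses it.
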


\begin{proof}
Working in the chart $z_0=1$ and replacing $z_4$ by $-(\ell_0+\ell_1z_1+\ell_2z_2+\ell_3z_3)/\ell_4$, 
the quadratic term 
of the equation of $X$ at the singularity $(z_1,z_2,z_3)=(0,0,0)$ is $(-\ell_0/\ell_4)Q$, where 
\begin{equation}\label{eq:pentagons_quadratic_Q}
Q(z_1,z_2,z_3)=\ell_1z_2z_3+\ell_2z_1z_3+\ell_3z_1z_2
\end{equation}
is a non-degenerate quadratic form (its determinant is $2\ell_1\ell_2\ell_3\neq 0$). So locally $X$ is holomorphically 
equivalent to the quadratic cone $\{Q=0\}$, hence to a quotient singularity $(\C^2,0)/\eta$ with $\eta(x,y)=(-x,-y)$. 
The minimal resolution of such a singularity is obtained by a simple blow-up of the ambient space, the exceptional 
divisor being a $(-2)$-curve in the smooth surface~$\hat{X}$.
The adjunction formula shows that there is a holomorphic $2$-form $\Omega_X$ on the regular part of $X$; locally, $\Omega_X$
lifts to an $\eta$-invariant form $\Omega_X'$ on $\C^2\setminus \{ 0\}$, which by Hartogs extends at the origin
to a non-vanishing $2$-form. To recover $\hat{X}$, one can first blow-up $\C^2$ at the origin and then take the quotient 
by (the lift of) $\eta$: a simple calculation shows that $\Omega_X'$ determines a non-vanishing $2$-form on $\hat{X}$.
After such a surgery is done at the ten nodes, $\hat{X}$ is a smooth surface with a non-vanishing section of $K_{\hat{X}}$; 
since it contains at least ten rational curves, it can not be an Abelian surface, so it must be a K3 surface. 
\end{proof}

\begin{rem}
Let $L_{ij}$ be the line defined by the  equations 
$z_i=0$, $z_j=0$, $\ell_0z_0+\cdots +\ell_4z_4=0$; each of these ten lines is contained in $X$, each of them contains
$3$ singularities of $X$ (namely $q_{kl}$, $q_{lm}$, $q_{km}$ with obvious notations), and each singularity is contained in 
three of these lines. If one projects them on a plane, the ten lines $L_{ij}$ form a Desargues configuration (see~\cite{Dolgachev:Salem, Dolgachev-Keum}).
\end{rem}

All this works for any choice of complex numbers $\ell_i\neq 0$. Now, since the $\ell_i$ are real, $X$ is endowed with two real structures.
First, one can consider the complex conjugation $c\colon [z_i]\mapsto [\overline{z_i}]$ on $\P^4(\C)$ and restrict it to $X$: this gives a first
antiholomorphic involution $c_X$. Another one is given by $s_X\colon [z_i]\mapsto [1/\overline{z_i}]$.
To be more precise, consider first, the quartic birational involution $J\in \Bir(\P^4_\C)$ defined by $J ([z_i]) =[1/z_i]$; 
$J$ preserves $X$, it determines a birational transformation $J_X\in \Bir(X)$, and on ${\hat{X}}$ it becomes an automorphism because every birational transformation of a K3 surface is regular. 
Thus, $s_X=J_X\circ c_X$ determines a second antiholomorphic involution $s_{{\hat{X}}}$ of ${\hat{X}}$. In what follows, we denote by $(X,s_X)$ this real structure (even if it would be better to study it on ${\hat{X}}$); 
its real part is the fixed point set of $s_X$, i.e. the set of points in $X(\C)$ with coordinates of modulus $1$: the real part does not contain any of the singularities of $X$, this is why we prefer to stay in $X$ rather than lift everything to ${\hat{X}}$. Thus, {\emph{with the 
real structure defined by $s_X$, the real part of $X$  coincides with $\mathrm{Pent}^0(\ell_0, \ldots , \ell_4)$}} if $(\ell_i)\in (\R_+^*)^5$.

\begin{rem}\label{rem:topology}
When $\ell_i>0$ for all indices $i\in \{0,\ldots, 4\}$,
a complete description of the possible homeomorphism types for  the real locus (in the smooth and singular cases) is given in 
\cite{Curtis-Steiner}: {\emph{in the smooth case, it is an  orientable surface of genus $g = 0, \ldots, 4$ or  the union of two tori}}. 
\end{rem} 

\begin{rem}\label{rem:pentagon_enriques}
The involution $J$ preserves $X$ and the two real structures $(X,c_X)$ and $(X,s_X)$. 
It lifts to a fixed point free involution ${\hat{J_X}}$ on $\hat{X}$, and $\hat{X}/{\hat{J_X}}$ is an Enriques surface. 
On pentagons, $J$ corresponds to the symmetry $(x,y)\in \R^2\mapsto (x,-y)$ that reverses orientation. Thus we see that 
the space of pentagons modulo affine isometries is an Enriques surface. 
When $X$  acquires an eleventh singularity which is fixed by $J_X$, then $\hat{X}/{\hat{J_X}}$ becomes a Coble surface: 
see~\cite[\S 5]{Dolgachev:Salem}
for nice explicit examples. This happens 
 for instance when all lengths are $1$, except one which is equal to  $2$ (this corresponds to $t=1/4$ in~\cite[\S 5.2]{Dolgachev:Salem}).
 \end{rem}

Finally, let us express the folding transformations in coordinates.
Given $i\neq j$ in $\set{0, \ldots, 4}$ (consecutive or not) we define an involution $(t_i, t_j)\mapsto (t_i', t_j')$ preserving the vector $\ell_i t_i+ \ell_j t_j$
by taking the  symmetric of  $t_i$ and $t_j$ 
with respect to the line directed by $\ell_i t_i+ \ell_j t_j$. In coordinates, 
$t'_k  = u / t_k$ for some $u$ of modulus 1, and equating   $\ell_i t_i+ \ell_j t_j =  \ell_i t'_i+ \ell_j t'_j$ one obtains 
\begin{equation}
(t_i',t_j') = \lrpar{\frac{u}{t_i}, \frac{u}{t_j}} \text{, with }  u  = \frac{\ell_i t_i + \ell_j t_j}{\ell_i t_i\inv + \ell_j t_j\inv}.
\end{equation}
Observe  that these computations also make sense when the $\ell_i$ are complex numbers, or when we
replace the $t_i$ by the complex numbers $z_i$.
This defines a birational involution $\sigma_{ij}: X\dasharrow X$, 
\begin{equation}
\sigma_{ij}[z_0: \ldots :z_4]=[z'_0:\ldots :z_4']
\end{equation}
with $z'_k=z_k$ if $k\neq i,j$, $z'_i=vz_j$, and $z_j'=vz_i$ with $v= (\ell_i z_i+\ell_jz_j)/(\ell_i z_j+\ell_j z_i)$. Again, since every birational self-map of a K3 surface is an automorphism, these involutions $\sigma_{ij}$ are elements of $\Aut({\hat{X}})$
that commute with the antiholomorphic involution $s_{{\hat{X}}}$; hence, they generate a subgroup of $\Aut({\hat{X}}; s_{{\hat{X}}})$.
Thus we have constructed a family of projective surfaces ${\hat{X}}$, depending on a parameter $\ell \in \P^4(\C)$, endowed with a 
group of automorphisms generated by involutions. Note that  this group can be elementary: for instance when the five lengths are all equal the group is finite
because in that case $(z_i', z_j') = (z_j, z_i)$. When $j=i+1$ modulo $5$, $\sigma_{ij}$ corresponds to the folding transformation described in the
introduction.

\begin{rem}\label{rem:sigma_geometric_pentagon}
Pick a singular point $q_{ij}$, and project $X$ from that point onto a plane, say the plane $\{z_i=0\}$ in the
hyperplane $P=\{\ell_0 z_0+ \cdots +\ell_4z_4=0\}$. One gets a $2$ to $1$ cover $X\to \P^2_\C$, ramified along a sextic curve (this curve is the union of two cubics, 
see~\cite{Rosenberg}). 
The involution $\sigma_{ij}$ permutes the points in the fibers of this $2$ to $1$ cover: if $x$ is 
a point of $X$, the line joining $q_{ij}$ and $x$ intersects $X$ in the third point $\sigma_{ij}(x)$.
The singularity $q_{ij}$ is an indeterminacy point, mapped by $\sigma_{ij}$ to the opposite line $L_{ij}$.
\end{rem}

\begin{pro}\label{pro:pentagons}
For a  general parameter  $\ell\in \P^4(\C)$:
\begin{enumerate}[\em (1)]
\item $X$ is a K3 surface with ten nodes, with two real structures $c_X$ and $s_X$ when $\ell\in \P^4(\R)$;
\item if $i$, $j=i+1$, $k=i+2$ are distinct consecutive indices (modulo $5$), then $\sigma_{ij}\circ\sigma_{jk}$ is a parabolic transformation on ${\hat{X}}$;
\item if $i$, $j$, $k$, and $l$ are four distinct indices (modulo $5$), then $\sigma_{ij}$ commutes to $\sigma_{kl}$.
\item the group $\Gamma$ generated by the   involutions  $\sigma_{ij}$  is a non-elementary 
subgroup of $\Aut({\hat{X}}; s_{\hat{X}})$ that does not preserve any algebraic curve. 
\end{enumerate}
\end{pro}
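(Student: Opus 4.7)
The plan is to treat the four assertions in increasing order of difficulty: (1) and (3) are essentially immediate from what precedes, (2) is the heart of the statement and (4) follows by combining (2)–(3) with earlier tools. Throughout I would exploit the fact that $\hat X$ is K3, so $K_{\hat X}=0$ and the adjunction formula reads $C^2 = 2g(C)-2$ for any irreducible curve $C\subset \hat X$.

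For (1): the genericity condition on $\ell$ forces \eqref{eq:pentagon_smoothness}, avoiding other proper subvarieties along which the rank of the system may drop, so by the preceding lemmas $X$ has exactly ten nodes, each a Morse singularity, and $\hat X$ is a smooth K3 surface. When $\ell\in\P^4(\R)$, ordinary coordinatewise conjugation $c$ restricts to an antiholomorphic involution $c_X$ of $X$; the birational involution $J$, being defined over $\R$, commutes with $c_X$, and since every birational self-map of $\hat X$ extends holomorphically, $s_X=J_X\circ c_X$ lifts to a second antiholomorphic involution $s_{\hat X}$. For (3): the explicit formula giving $\sigma_{ij}$ fixes all coordinates indexed outside $\{i,j\}$, so when $\{i,j\}\cap\{k,l\}=\emptyset$ the two involutions $\sigma_{ij}$ and $\sigma_{kl}$ act on disjoint pairs of coordinates and commute as rational self-maps of $X$, hence as automorphisms of $\hat X$.

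For (2) I would argue as follows. Let $\{l,m\}=\{0,\ldots,4\}\setminus\{i,j,k\}$. Both $\sigma_{ij}$ and $\sigma_{jk}$ preserve $z_l$ and $z_m$ up to common scalar, so both preserve the pencil
\begin{equation}
\pi\colon \hat X \dashrightarrow \P^1,\quad [z_0:\cdots:z_4]\longmapsto [z_l:z_m].
\end{equation}
The first step is to show $\pi$ extends to a holomorphic genus $1$ fibration on $\hat X$. The generic fiber is the intersection of $X$ with the hyperplane $bz_l-az_m=0$ in the $\P^3$ cut out by the first Darboux equation; this is a curve passing through the three nodes $q_{pq}$ for $\{p,q\}\subset\{i,j,k\}$. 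On $\hat X$ the class $[F]$ of a fiber satisfies $[F]^2=0$, so by adjunction on the K3 surface each irreducible component of $F$ has arithmetic genus one as soon as $F$ is irreducible, and a direct check on a smooth generic member shows it is. Now each $\sigma_{ij}$ fixes $\pi$ and restricts to an involution of a generic fiber having four fixed points; composing two such involutions on an elliptic curve yields a translation, which is of infinite order for generic $\ell$ (a sufficient obstruction to being torsion is the existence, for generic $\ell$, of a loxodromic element in the group generated by the $\sigma_{ij}$, as is visible from the reflection matrices on $\NS(\hat X, \R)$). The composition $\sigma_{ij}\circ\sigma_{jk}$ therefore preserves $\pi$ and acts with infinite order, so it is parabolic in the sense of \S\ref{par:parabolic_basics}. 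The main obstacle in this paragraph is the verification that the pencil defines a genuine genus $1$ fibration on $\hat X$ — more precisely, understanding how the three base nodes are absorbed in the resolution — but this is controlled by the adjunction argument above together with a direct examination of the cubic-times-quadric structure of the equation.

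For (4), note that two different choices of the triple of consecutive indices $\{i,j,k\}$ give rise to genus $1$ fibrations $\pi_{lm}$ and $\pi_{l'm'}$ with distinct classes $[F]\in\NS(\hat X, \Z)$, since they are determined by disjoint pairs of coordinates. By the second part of Lemma \ref{lem:pairs_of_twists}, $\Gamma$ contains two parabolic automorphisms with distinct invariant fibrations, hence is non-elementary. To rule out invariant curves, any $\Gamma$-periodic irreducible curve $D$ would produce, via Lemma \ref{lem:periodic_curves}, a non-trivial subspace of $\NS(\hat X, \R)$ on which the intersection form is negative definite and $\Gamma^\ast$ acts through a finite group; this would contradict the strong irreducibility of $\Gamma^\ast$ on $\Pi_\Gamma$ given by Proposition \ref{pro:NS_H11}, once one checks (by the matrix computation in the basis provided by the hyperplane class and the ten exceptional divisors) that the $\Pi_\Gamma$-factor for $\Gamma$ exhausts the span of the classes $[F]$ of the invariant fibrations. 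This completes the plan.
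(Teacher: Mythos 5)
Your reduction of (2) to showing that the fiberwise translation $\sigma_{ij}\circ\sigma_{jk}$ is non-torsion is the right framing, but the step you offer for non-torsionness is a genuine gap. You claim it suffices that $\Gamma$ contains a loxodromic element, ``as is visible from the reflection matrices on $\NS(\hat X,\R)$''. First, no such matrices are available here: the explicit reflections \eqref{eq:matrices_sigmai_wehler} concern Wehler surfaces, and the action of the $\sigma_{ij}$ on $\NS(\hat X)$ for the Darboux/Hessian surface is a nontrivial computation (due to Dolgachev) that you neither perform nor cite. Second, and more seriously, the implication is false: a group generated by involutions can perfectly well contain loxodromic elements while a prescribed product of two generators is elliptic of finite order (hyperbolic triangle and Coxeter groups are generated by reflections with finite-order pairwise products yet abound in loxodromic elements). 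There is also a latent circularity, since in (4) you use the parabolicity from (2) to produce non-elementarity, while in (2) you invoke a loxodromic element of $\Gamma$. The paper's route avoids all of this: by Lemma~\ref{lem:charac_parabolic_on_K3}, $\sigma_{ij}\circ\sigma_{jk}$ is either parabolic or periodic of uniformly bounded order (in fact $\leq 12$, as it preserves the fibration fiberwise), so ``finite order'' is a closed condition on $\ell$; one then exhibits parameters where the order is infinite by degenerating to the quadrilateral folding dynamics (cf.\ \cite{benoist-hulin}), and concludes parabolicity for general $\ell$. You also misdescribe the pencil: the hyperplane sections through $\{z_l=z_m=0\}$ cut $X$ in the fixed line $L_{lm}$ plus a moving cubic \eqref{eq:cubic_pilm}; it is the mobile part, not the full hyperplane section, that gives the genus~$1$ fibration.

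The argument you propose for the absence of invariant curves in (4) also fails. A $\Gamma$-periodic irreducible curve produces, via Lemma~\ref{lem:periodic_curves} and Proposition~\ref{pro:NS_H11}, classes lying in the negative-definite summand $\NS(\hat X;\R)_0$; this is in no conflict with the strong irreducibility of $\Gamma^*$ on $\Pi_\Gamma$, since the decomposition $\Pi_\Gamma\oplus\Pi_\Gamma^{\perp}$ allows $\Pi_\Gamma^{\perp}\neq 0$ (the paper even emphasizes this for Coble surfaces and in Example~\ref{eg:square_free}). Checking that $\Pi_\Gamma$ contains the span of the fiber classes would not help, because the class of a hypothetical periodic curve need not meet that span at all; to run your argument you would essentially need $\NS(\hat X;\R)_0=0$ for this $\Gamma$, a lattice-theoretic fact you do not establish and which your sketched ``matrix computation'' does not supply. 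The paper argues geometrically instead: if $E\subset\hat X$ is $\Gamma$-periodic, its image $F$ in $X$ is either a node (then applying $\sigma_{ij}$ replaces it by the line $L_{ij}$) or an irreducible curve; being periodic under each parabolic $g_i=\sigma_{ij}\circ\sigma_{jk}$, such a curve must be contained in fibers of every projection $\pi_{lm}$, which is impossible. Finally, a small slip: two consecutive triples of indices have complementary pairs that are distinct but not disjoint (e.g.\ $\{3,4\}$ and $\{4,0\}$); distinctness of the pairs is what you need for Lemma~\ref{lem:pairs_of_twists}, and that part of your (4) is fine once (2) is secured.
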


In~\cite{Dolgachev:Salem}, Dolgachev computes the action of $\sigma_{ij}$ on $\NS(\hat{X})$. This contains a proof of this 
proposition. He also describes, up to finite index, the Coxeter group generated by the $\sigma_{ij}$. The automorphism groups of 
$\hat{X}$ and of the Enriques surface ${\hat{X}}/{\hat{J_X}}$ are described in~\cite{Dolgachev-Keum} and \cite{Shimada}.

\begin{vlongue}
\begin{proof}
We already established   Assertion~(1) in the previous lemmas. For Assertion~(2), 
denote by $l,m$ the indices for which $\set{i,j,k,l,m} = \set{0, \ldots, 4}$, and consider the linear projection 
$\pi_{lm}\colon \P^5(\C)\dasharrow\P^1(\C)$ defined by 
$[z_0:\ldots :z_4]\mapsto [z_l:z_m]$. The fibers of $\pi_{lm}$  are the hyperplanes containing the plane
$\{z_l=z_m=0\}$, which intersects $X$ on the line $L_{l m}$. This line is a common component of the
pencil of curves cut out by the fibers of $\pi_{l m}$ on $X$, and the mobile part of this pencil determines 
a fibration $\pi_{l m\vert X}\colon X\to \P^1$ whose fibers are the plane cubics
\begin{equation}\label{eq:cubic_pilm}
(\ell_l z_l+\ell_m z_m)(\ell_m z_l+\ell_l z_m)z_i z_j z_k=z_l z_m (\ell_i z_jz_k+\ell_j z_iz_k+\ell_k z_iz_j)(\ell_i z_i+\ell_j z_j+\ell_k z_k),
\end{equation}
with $[z_l:z_m]$ fixed. The general member of this fibration is a smooth cubic, hence a curve of genus $1$. 

Then $\sigma_{ij}$ and $\sigma_{jk}$ preserve $\pi_{l m\vert X}$, and along the general fiber of 
$\pi_{l m\vert X}$ each of them is described by Remark~\ref{rem:sigma_geometric_pentagon}; for instance, $\sigma_{ij}(x)$
is the third  point of intersection of the cubic with the line $(q_{ij}, x)$. Thus, writing such a  cubic as $\C/\Lambda_{[z_l:z_m]}$, 
$\sigma_{ij}$ acts as $z\mapsto -z+b_{ij}$, for some $b_{ij}\in \C/\Lambda_{[z_l:z_m]}$ that depends on $[z_l:z_m]$ and the parameter $\ell$; it has four fixed points on the
cubic curve, which are the points of intersection of the cubic~\eqref{eq:cubic_pilm} with the hyperplanes $z_i=z_j$ and $z_i=-z_j$; equivalently, the
line $(q_{ij},x)$ is tangent to the cubic at these four points.

By Lemma~\ref{lem:charac_parabolic_on_K3}, either $\sigma_{ij}\circ\sigma_{jk}$
is of order $\leq 66$ (in fact of order $\leq 12$ because it preserves $\pi_{l m\vert X}$ fiber-wise), or it is parabolic. 
Due to this bound on the order, and the fact that there do exist pentagons for which $\sigma_{ij}\circ\sigma_{jk}$ is of infinite order
(indeed, this reduces to the corresponding fact for quadrilaterals, see the example below),
  $\sigma_{ij}\circ\sigma_{jk}$ is parabolic for general $\ell$.  
  
\begin{eg}
Take $\ell=1$ and $m=2$, and normalize our pentagons to assume that $t_0=1$, which means that the first vertices
are $a_0=(0,0)$ and $a_1=(\ell_0,0)$; in homogeneous coordinates this corresponds to the normalization 
$[1:z_1:z_2:z_3:z_4]$ with $z_i= t_i$. Now, the pentagon in a fiber of $\pi_{12\vert X}$ have three 
fixed vertices, namely $a_0$, $a_1$ and $a_2$. The remaining vertices $a_3$ and $a_4$ 
move on the circles centered at $a_2$ and $a_0$ and of respective radii $\ell_2$ and $\ell_4$, with the 
constraint $a_3a_4=\ell_3$. The circles are two conics, the fiber is a $2$ to $1$ cover of each of these two conics,
and  the automorphisms $\sigma_{23}$ and $\sigma_{34}$ preserve these fibers. Forgetting the vertex $a_1$, 
and looking at the quadrilateral $(a_0, a_2, a_3, a_4)$, one recovers the involutions described in \cite{benoist-hulin}.
The fixed points of $\sigma_{23}$ correspond to configurations with tangent circles, i.e. $a_3$ on the segment $[a_2,a_4]$.
\end{eg}

Assertion~(3) follows directly from the fact that $\sigma_{ij}$ changes the coordinates $z_i$ and $z_j$ but keeps the other three fixed. 

Finally, for a general parameter $\ell$, $\Gamma$ contains two such parabolics  associated to distinct fibrations 
$\pi_{lm}$ and $\pi_{l'm'}$ so it is non-elementary (see Lemma~\ref{lem:pairs_of_twists}). In addition $\Gamma$ does not preserve any curve 
in $\hat{X}$. Indeed, let $E\subset {\hat{X}}$ be a $\Gamma$-periodic irreducible curve, and denote by $F$ its image in 
$\P^4_\C$ under the projection ${\hat{X}}\to X$. If $F$ is  a point, it is one of the singularities $q_{ij}$, and changing $E$ into its image
under (the lift of) $\sigma_{ij}$ the curve $F$ becomes the line $L_{ij}$. So, we may assume that $F$ is an irreducible curve. 
Now, the orbit of $F$ is periodic under the action of the parabolic automorphisms $g_i=\sigma_{ij}\circ\sigma_{jk}$ with $k=j+1$ and $j=i+1$.
Since the invariant curves of a parabolic automorphisms are contained in the fibers of its invariant fibration, we deduce that $F$ is 
contained in the fibers of each of the projections $\pi_{lm}$; this is obviously impossible.\end{proof}
\end{vlongue}

\subsection{Enriques surfaces (see~\cite{Cossec-Dolgachev:book, Dolgachev:Kyoto})}\label{par:Enriques}
Enriques surfaces are quotients of K3 surfaces by fixed point free involutions. 
According to Horikawa and Kond\={o} (\cite{Horikawa:Enriques1,Horikawa:Enriques2, Kondo:1994}),
the moduli space ${\mathcal{M}}_E$ of complex Enriques surfaces is a rational quasi-projective variety of dimension 10.
An Enriques surface $X$ is nodal if it contains a smooth rational curve; such rational curves have self-intersection
$-2$, and are called nodal-curves or $(-2)$-curves. Nodal Enriques surfaces form a hypersurface in ${\mathcal{M}}_E$.

For any Enriques surface $X$, the lattice $(\NS(X;\Z), q_X)$ is isomorphic to the orthogonal direct sum $E_{10}=U \operp  E_8(-1)$, 
(\footnote{Here, $U$ is the standard $2$-dimensional Minkowski lattice, $(\Z^2, x_1x_2)$, and $E_8$ is the root lattice given by the 
corresponding Dynkin diagram; so $E_8(-1)$ is negative definite, and $E_{10}$ has signature $(1,9)$ (see \cite[Chap. II]{Cossec-Dolgachev:book}). Also, recall that in this  paper
$\NS(X;\Z)$ denotes the 
 torsion free part of the N\'eron-Severi group, which is sometimes  denoted by ${\mathsf{Num}}(X;\Z)$ in the literature on 
Enriques surfaces.}). 
Let $W_X\subset \O(\NS(X;\Z))$ be the subgroup generated by reflexions about classes $u$ such that  $u^2=-2$, 
 and   $W_X(2)$ be the subgroup of $W_X$  acting trivially on $\NS(X;\Z)$ modulo $2$. 
Both $W_X$ and $W_X(2)$ have  finite index in $\O(\NS(X;\Z))$. 
The following result is due independently to Nikulin and Barth and Peters (see \cite{Dolgachev:Kyoto} for details and references).

\begin{thm} 
If $X$ is an  Enriques surface which is not nodal, the homomorphism 
$ \Aut(X)\ni f \mapsto f^*\in \GL(H^2(X, \Z))$ is injective, 
and its image 
satisfies
$W_X(2) \subset \Aut(X)^* \subset W_X$.
\end{thm}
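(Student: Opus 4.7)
The plan is to pass to the universal K3 double cover $\pi \colon \tilde X \to X$, whose Galois involution $\iota$ (the Enriques involution) acts by $-1$ on $H^{2,0}(\tilde X)$, and reduce everything to the Torelli theorem for K3 surfaces. The integer cohomology of $\tilde X$ splits, up to $2$-torsion, into the $\iota^*$-invariant and anti-invariant sublattices, and pullback identifies $\NS(X;\Z)$ with the $\iota^*$-invariant part of $\NS(\tilde X;\Z)$ (up to index $2$).

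First I would dispatch injectivity of $f\mapsto f^*$. An Enriques surface satisfies $H^0(X,T_X)=0$, so $\Aut(X)^0$ is trivial and the kernel is finite; in particular, any $f$ in the kernel has finite order and lifts to an automorphism $\tilde f$ of $\tilde X$ (modulo $\iota$) which acts as the identity on $\pi^*H^2(X;\Z)$. Since $\iota^*$ acts as $-1$ on $H^{2,0}(\tilde X)$, the lift $\tilde f$ or $\iota\tilde f$ acts trivially on the transcendental lattice of $\tilde X$ and preserves the Hodge structure on $\NS(\tilde X;\Z)$; the strong Torelli theorem for K3 surfaces then forces it to be $\mathrm{id}$ or $\iota$, hence $f=\id$ on $X$.

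Next, for $\Aut(X)^*\subset W_X$, I would argue in two moves. Any $f^*$ is an integral isometry of $\NS(X;\Z)\cong E_{10}$ that preserves the K\"ahler cone, hence the connected component of the positive cone containing it; so $\Aut(X)^*$ lands in the index-two subgroup $\O^+(E_{10})$. The reflection group $W_X$ coincides with $\O^+(E_{10})$ up to finite index, and the quotient is detected by the way the nef cone sits inside a fundamental domain for $W_X$ on the positive cone: the walls of this fundamental domain are the hyperplanes orthogonal to $(-2)$-classes. When $X$ is non-nodal, a Riemann-Roch argument shows that no $(-2)$-class is effective, so every $(-2)$-class actually cuts the positive cone into two half-spaces not both intersecting the nef cone; combining with the fact that $-\id$ reverses the K\"ahler cone, one concludes that $\Aut(X)^*\subset W_X$.

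The hard part is the inclusion $W_X(2)\subset \Aut(X)^*$, and this is where the non-nodal hypothesis is essential. Given $w\in W_X(2)$, the goal is to construct an involution-compatible lift $\tilde w\in \O(H^2(\tilde X;\Z))$ that commutes with $\iota^*$ and respects the Hodge decomposition; the condition $w\equiv \id\pmod 2$ is precisely what is needed to extend $w$ by the identity on the anti-invariant lattice $H^2(\tilde X;\Z)^{-}$, because the discriminant form glueing the two sublattices is $2$-torsion. One then checks that $\tilde w$ preserves the K\"ahler cone of $\tilde X$ (here one uses that there are no $(-2)$-classes orthogonal to the nef cone of $X$, since $X$ is non-nodal, ruling out the only possible obstruction) and, by the strong Torelli theorem for K3 surfaces, realize $\tilde w$ as $\tilde f^*$ for some $\tilde f\in \Aut(\tilde X)$ commuting with $\iota$; the descent of $\tilde f$ to $X$ then gives the desired automorphism. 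The delicate point, and the main obstacle, is controlling both the effective/nef cone of $\tilde X$ and the glueing of Hodge structures simultaneously: the non-nodal hypothesis rules out the parasitic walls that would otherwise prevent $\tilde w$ from preserving the K\"ahler chamber.
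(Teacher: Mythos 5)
The paper does not actually prove this statement: it is quoted from Nikulin and Barth--Peters, with Dolgachev's survey cited for details, so there is no internal argument to compare yours with. Your outline follows the same strategy as those references (pass to the K3 cover $\pi\colon \tilde X\to X$, split $H^2(\tilde X;\Z)$ into the $\pm$-eigenlattices of the Enriques involution $\iota$, glue along the $2$-elementary discriminant form, and invoke the strong Torelli theorem), and the mechanism you isolate for the main inclusion is the right one: $w\equiv \id \pmod 2$ means $w$ acts trivially on the discriminant group of the invariant lattice $E_{10}(2)$, so $(w,\id)$ extends to an integral Hodge isometry of $H^2(\tilde X;\Z)$ commuting with $\iota^*$.

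There are, however, genuine gaps. First, your injectivity argument never uses the unnodal hypothesis, yet the statement is false without it: Mukai--Namikawa constructed (nodal) Enriques surfaces carrying cohomologically trivial automorphisms. Concretely, the step ``the lift $\tilde f$ (or $\iota\tilde f$) acts trivially on the transcendental lattice'' does not follow from triviality on the invariant lattice: the invariant lattice lies inside $\NS(\tilde X)$ and imposes no constraint on the anti-invariant side, so this key deduction is unjustified and cannot be repaired without bringing unnodality in. Second, the upper inclusion requires neither unnodality nor any wall analysis: $W_X=\O^+(\NS(X;\Z))$ because $E_{10}$ is reflective with fundamental chamber the $T_{2,3,7}$ simplex, whose Coxeter diagram has no symmetries; since $f^*$ preserves ample classes it lies in $\O^+$, and you are done -- your appeal to non-nodality there is spurious. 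Third, for the Kähler-chamber step (which you rightly flag as the delicate point) the correct use of unnodality is different from, and simpler than, what you sketch: unnodal means there is no effective class of negative square, so the ample cone of $X$ is the whole positive cone; hence $w$ sends ample classes to ample classes, and since $\pi$ is finite étale, $\pi^*$ of an ample class is a Kähler class on $\tilde X$. Thus the glued isometry $\tilde w$ maps a Kähler class to a Kähler class, hence preserves the Kähler chamber; strong Torelli produces $\tilde f$ with $\tilde f^*=\tilde w$, and uniqueness in Torelli forces $\tilde f\iota=\iota\tilde f$, so $\tilde f$ descends to the desired $f$ with $f^*=w$. Note in particular that $\tilde X$ may contain many $(-2)$-curves even when $X$ is unnodal, so ``ruling out parasitic walls'' upstairs, as you propose, is not the mechanism that makes the argument work.
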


In particular, for any unnodal Enriques surface, $\Aut(X)$ is non-elementary, contains parabolic elements, and acts irreducibly on 
$\NS(X;\R)$; thus, it does
not preserve any curve. 

\subsection{Examples on rational surfaces: Coble and Blanc}\label{par:Coble-Blanc}

Closely related to Enriques surfaces are the examples of Coble, obtained by blowing up
the ten nodes of a general rational sextic curve $C_0\subset \P^2$. The result is a rational surface 
$X$ with a large group of automorphisms. To be precise, consider the canonical class 
$K_X\subset \NS(X;\Z)$; its orthogonal complement $K_X^\perp$ is a lattice of dimension $10$, isomorphic
to $E_{10}$, and we define $W_X(2)$ exactly in the same way as for Enriques surfaces. 
Then, $\Aut(X)^*$ preserves the decomposition $K_X\oplus K_X^\perp$, and $\Aut(X)^*$ contains $W_X(2)$ when $X$ does not contain any smooth rational curve of self-intersection $-2$ (see~\cite{Cantat-Dolgachev}, Theorem~3.5).
Also, Coble surfaces may be thought of as degeneracies 
of Enriques surfaces: an interesting difference is that $[K_X]$ is non trivial; in particular, $\NS(X;\Z)_0$ is always 
non-trivial, for any $\Gamma\subset \Aut(X)$. There is a holomorphic  section of $-2K_X$
vanishing exactly along the strict transform $C\subset X$ of the rational sextic curve $C_0$; this means that there is
a meromorphic section $\Omega_X=\xi(x,y) (dx\wedge dy)^2$ of $K_X^{\otimes 2}$ that does not vanish and has
a simple pole along $C$. Thus, the formula
\begin{equation}
\vol_X(U)=\int_U
 \abs{\xi(x,y)} dx\wedge dy\wedge d{\overline{x}}\wedge d{\overline{y}} =\int_U  \abs{\xi(x,y)} ({\mathsf{i}}dx\wedge  d{\overline{x}}) \wedge ({\mathsf{i}} dy\wedge d{\overline{y}})
\end{equation}
determines a finite measure(\footnote{if locally $C=\{ x=0\}$  then $\xi(x,y)=\eta(x,y)/x$ where $\eta$ is regular; thus, $\abs{\xi} =\abs{\eta} \abs{x}^{-1}$ is locally integrable because $\frac{1}{r^\alpha}$ is integrable with respect to $rdrd\theta$ when $\alpha<2$}) $\vol_X$ $={\text{``}}\,\Omega_X^{1/2}\wedge {\overline{\Omega_X^{1/2}}}\,{\text{''}}$, 
which we may assume to be a probability after multiplying $\Omega_X$ by some
adequate constant;
\begin{vcourte}
this measure is $\Aut(X)$-invariant (because $\vol_X$ is uniquely determined by the complex structure).
\end{vcourte}
\begin{vlongue}
this measure is $\Aut(X)$-invariant (because $\vol_X$ is uniquely determined by the complex structure; see also Remark~\ref{rem:pm1}  below).
\end{vlongue}

Another family of examples has been described by Blanc in \cite{Blanc:Michigan}. One starts with a smooth cubic curve 
$C_0\subset \P^2$. If $q_1$ is a point of $C_0$, there is a unique birational involution $s_1$ of $\P^2$ that fixes $C_0$ pointwise
and preserves the pencil of lines through $q_1$. The indeterminacy points of $s_1$ are $q_1$ and the four tangency 
points of $C_0$ with this pencil (one of them may be ``infinitely near $q_1$’’ and in that case it corresponds to the tangent direction of $C_0$ at $q_1$); thus the indeterminacies of $s_1$ are resolved by blowing-up points of $C_0$ (or points of its strict transform). After such a sequence of blow-ups $s_1$ 
becomes an automorphism of a rational surface $X_1$ that fixes pointwise the strict transform of $C_0$. So, if we blow-up other points of this
curve, $s_1$ lifts to an automorphism of the new surface. In particular, we can start with a finite number of points $q_i\in C_0$, $i=1, \ldots, k$, and resolve simultaneously the indeterminacies of the involutions $s_i$ determined by the $q_i$. The result is a surface $X$, 
with a subgroup $\Gamma:=\langle s_1, \ldots, s_k\rangle$ of $\Aut(X)$.  Blanc proves that (1) there are no relations between these involutions, 
that is,  $\Gamma$ is a free product 
$\langle s_1, \ldots, s_k\rangle \simeq \bigast_{i=1}^{k} \Z/2\Z$, (2) the composition of two distinct involutions $s_i\circ s_j$ is parabolic, and (3) the composition of three distinct involutions is loxodromic. 
There is a meromorphic section $\Omega_X$ of $K_X$ with a simple pole along the strict transform of $C_0$, but 
the form $\vol_X:=\Omega_X\wedge{\overline{\Omega_X}}$ is  not integrable.

\begin{vlongue}
\begin{rem}\label{rem:pm1}
If $\Gamma\subset \Aut(X)$ is generated by involutions and there is a meromorphic form $\Omega$ such that $f^*\Omega=\xi(f)\Omega$ 
for every $f\in \Gamma$, then $\xi(f)=\pm 1$: 
this is the case for Blanc's examples or general Coble surfaces, since $W_X(2)$ is also 
generated by involutions (see \cite{Dolgachev:Kyoto}).  
\end{rem}
\end{vlongue}

\subsection{Real forms}\label{par:Examples-Real-Forms} 
For  each of the examples described in Sections~\ref{par:Wehler} to 
  \ref{par:Coble-Blanc}, we may ask for 
the existence of an additional real structure on $X$, and look at the group of automorphisms $\Aut(X_\R)$ that preserve the
real structure (automorphisms commuting with the anti-holomorphic involution describing the real structure). 
Note that if $X$ is a smooth projective variety with a real structure, then $X(\R)$ is either empty or a compact, smooth, and totally real surface in $X$. 

If $X$ is a Wehler surface defined by a polynomial equation $P(x_1,x_2,x_3)$ with real coefficients the $\sigma_i$ 
are automatically defined over~$\R$. If $X$ is a Blanc surface for which $C_0$ is defined over $\R$ and the points $q_i$ 
are chosen in $C_0(\R)$, then again $\langle s_1, \ldots, s_k\rangle\subset \Aut(X_\R)$. Real Enriques and Coble surfaces
provide also many examples for which $\Aut(X_\R)$ is non-elementary (see~\cite{Degtyarev-Itenberg-Kharlamov:LNM}).

\subsection{Surfaces admitting non-elementary groups of automorphisms} \label{subs:X-is-projective}%
\begin{vcourte}
All surfaces in the previous examples are projective. This is a general fact, which we prove in this paragraph: 
we rely on the Kodaira-Enriques classification  to describe compact K\"ahler surfaces which support  a non-elementary group
of automorphisms and prove Theorem \ref{thm:X-is-projective2}.
\end{vcourte}
\begin{vlongue}
The surfaces in the previous examples are all projective. This is a general fact, which we prove in this paragraph: we rely on the Kodaira-Enriques classification  to describe compact K\"ahler surfaces which support  a non-elementary group
of automorphisms and prove Theorem \ref{thm:X-is-projective1}.
\end{vlongue}

\subsubsection{Minimal models}\label{par:minimal_models} 
We refer to Theorem~10.1 of  \cite{Cantat:Milnor} for the following result:
\begin{thm}\label{thm:existence_loxodromic}
If $X$ is a compact K\"ahler surface with a loxodromic automorphism, then
\begin{itemize}
\item either $X$ is a rational surface, and there is a birational morphism $\pi\colon X\to \P^2_\C$;
\item or the Kodaira dimension of $X$ is equal to $0$, and there is an $\Aut(X)$-equivariant 
bimeromorphic morphism $\pi\colon X\to X_0$ such that $X_0$ is a compact torus, a K3 surface, 
or an Enriques surface. 
\end{itemize} 
In particular, $h^{2,0}(X)$ equals $0$ or $1$. 
\end{thm}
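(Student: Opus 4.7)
The plan is to run a case analysis on the Kodaira dimension $\kappa(X)\in\{-\infty,0,1,2\}$, using the Kodaira--Enriques classification of compact K\"ahler surfaces. Throughout, write $\lambda(f)>1$ for the spectral radius of the loxodromic $f^*$ on $H^{1,1}(X;\R)$; by Gromov--Yomdin the topological entropy satisfies $h_{\mathrm{top}}(f)=\log\lambda(f)>0$, so every situation that forces $\Aut(X)$ to act on $H^{1,1}(X;\R)$ through a group of zero entropy is immediately excluded.

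First, if $\kappa(X)=2$ then a sufficiently high pluricanonical map embeds $X$ birationally onto its image, realizing $\Aut(X)$ as a subgroup of the projective automorphism group of this image; in particular $\Aut(X)$ is finite, contradicting the existence of a loxodromic element. If $\kappa(X)=1$, the Iitaka fibration $\pi\colon X\to B$ is canonically attached to $X$, hence $\Aut(X)$-invariant; the fiber class $[F]$ is a non-zero $f^*$-fixed vector in $\Kahbar(X)$ with $[F]^2=0$. But a loxodromic isometry of $\Hyp_X$ has exactly two invariant isotropic directions, on which it acts by $\lambda^{\pm 1}\neq 1$, so no non-zero isotropic vector in $\Kahbar(X)$ can be fixed --- contradiction. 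The same Albanese-fibration argument rules out the case where $X$ is ruled over a base curve of genus $\geq 1$.

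We are thus reduced to either $X$ rational or $\kappa(X)=0$. In the rational case, a standard iterated contraction of $(-1)$-curves leads to $\P^2_\C$ (possibly via a Hirzebruch surface, which itself admits a morphism to $\P^2_\C$), giving the first alternative. If $\kappa(X)=0$, the minimal model $X_0$ is a torus, a K3, an Enriques, or a bielliptic surface; the bielliptic case is ruled out because any automorphism of such a surface lifts to the abelian cover $E_1\times E_2$ and is, up to finite index, a translation, so that $\Aut(X)^*$ acts on $H^{1,1}$ through a finite group and contains no loxodromic element. For $X_0\in\{$torus, K3, Enriques$\}$, the minimal model is unique, and the bimeromorphic morphism $X\to X_0$ is canonical: the curves it contracts form an intrinsically defined configuration (the maximal union of $(-1)$-curves together with their iterated contractions), therefore $\Aut(X)$-invariant. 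Hence the contraction is equivariant and descends to an action on $X_0$. The bound $h^{2,0}(X)\leq 1$ follows at once from the classification: rational and Enriques surfaces have $h^{2,0}=0$, tori and K3 surfaces have $h^{2,0}=1$, and $h^{2,0}$ is a birational invariant of smooth compact K\"ahler surfaces.

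The main obstacle is the Kodaira dimension $0$ case: excluding bielliptic surfaces requires a precise description of their automorphism groups via the finite group extension of their abelian cover, and establishing $\Aut(X)$-equivariance of the contraction $X\to X_0$ rests on the canonical nature of the collapsed configuration. The remaining steps --- finiteness of $\Aut(X)$ for general type surfaces, and the incompatibility of an invariant genus-one or ruling fibration with loxodromic action on cohomology --- are more routine once the dichotomy between isotropic fixed classes and loxodromic dynamics on $\Hyp_X$ is in place.
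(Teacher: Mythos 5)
Your overall route --- a case analysis on the Kodaira dimension via the Enriques--Kodaira classification, killing general type by finiteness of $\Aut(X)$, killing $\kappa(X)=1$ and irrational ruled surfaces because the canonical fibration would give a nonzero $f^*$-fixed isotropic nef class (impossible for a loxodromic isometry, whose only invariant isotropic directions carry the eigenvalues $\lambda^{\pm1}\neq 1$), excluding bielliptic surfaces in the $\kappa=0$ case, and descending equivariantly to the unique minimal model --- is exactly the classification argument behind the reference the paper cites for this theorem, and those steps are essentially correct as sketched. (Two small repairs in the $\kappa=0$ case: the contracted configuration is best seen to be $\Aut(X)$-invariant not as ``the maximal union of $(-1)$-curves and their iterated contractions'' but via uniqueness of the minimal model: $f_0:=\pi f\pi^{-1}$ is a birational self-map of the minimal $X_0$ with $\kappa\geq 0$, hence an automorphism, and $\pi\circ f=f_0\circ\pi$ by density; and to rule out the bielliptic case you also need that $f_0$ is still loxodromic, which follows from the $f^*$-invariant orthogonal splitting $\pi^*H^{1,1}(X_0;\R)\oplus(\text{exceptional classes})$.)

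The genuine gap is in the rational case. An arbitrary iterated contraction of $(-1)$-curves terminates at a minimal rational surface, i.e.\ at $\P^2_\C$ or a Hirzebruch surface $\mathbb{F}_n$, and your parenthetical claim that a Hirzebruch surface ``itself admits a morphism to $\P^2_\C$'' is false for every $n\neq 1$: $\mathbb{F}_n$ is minimal, so any birational morphism from it to a smooth surface is an isomorphism, and $\mathbb{F}_0,\mathbb{F}_2,\mathbb{F}_3,\dots$ are not $\P^2_\C$. Worse, one cannot fix this just by choosing the contractions more cleverly for a general rational surface: for instance $\mathbb{F}_3$ blown up at a point of the negative section has exactly two $(-1)$-curves, and every contraction sequence ends at $\mathbb{F}_3$ or $\mathbb{F}_4$, never at $\P^2_\C$. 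The existence of a birational morphism $X\to\P^2_\C$ in the statement genuinely uses the loxodromic automorphism: this is Nagata's theorem (a rational surface whose automorphism group acts with infinite order on cohomology dominates $\P^2_\C$), and its proof requires an extra argument --- e.g.\ using the automorphism to show one can always perform an elementary modification of the contraction sequence lowering $n$, or ruling out the rigid configurations (such as points infinitely near the negative section) that block descent to $\mathbb{F}_1\to\P^2_\C$, since those force $f^*$ to have finite order. As written, the first alternative of the theorem is not proved; you should either invoke Nagata's theorem explicitly or supply this argument.
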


\begin{rem}\label{rem:volume_form} If $X$ is a torus or K3 surface, there is a holomorphic $2$-form $\Omega_X$ on $X$ that does not
vanish and satisfies $\int_X\Omega_X\wedge {\overline{\Omega_X}} = 1$. It is unique up
to multiplication by a complex number of modulus $1$. A consequence of utmost importance to us is that  the volume form 
\begin{equation}
\Omega_X\wedge {\overline{\Omega_X}}
\end{equation}
is $\Aut(X)$-invariant.
Furthermore for every $f$ we can write  $f^*\Omega_X=J(f)\Omega_X$, where the Jacobian
$f\in \Aut(X)\mapsto J(f)\in {\mathbb{U}}_1$ is a unitary character on the group $\Aut(X)$. Since 
$H^{2,0}(X;\C)$ is generated by $[\Omega_X]$, we obtain
\begin{vcourte}
$f^*w=J(f)w \quad \forall w \in H^{2,0}(X;\C)$.
\end{vcourte}
\begin{vlongue}
\begin{equation}
f^*w=J(f)w \quad \forall w \in H^{2,0}(X;\C).
\end{equation} 
\end{vlongue}
 If $Y$ is 
an Enriques surface, and $X\to Y$ is its universal cover, then $X$ is a K3 surface: the volume
form $\Omega_X\wedge {\overline{\Omega_X}}$ is invariant under the group of deck transformations, 
and determines an $\Aut(Y)$-invariant volume form on $Y$. So, if $X$ is not rational, the dynamics of
$\Aut(X)$ is conservative: it preserves a {\bf{canonical volume form}} which is uniquely determined by the complex
structure of $X$. 
\end{rem}

It follows from Theorem~\ref{thm:existence_loxodromic}   that, in most   
cases, $\Aut(X)$ is countable (see \cite[Rmk 3.3]{Cantat:Milnor}).

\begin{pro}\label{prop:discrete}
Let $X$ be a compact K\"ahler surface.
If $\Aut(X)$ contains a loxodromic element, then the kernel of the homomorphism $\Aut(X)\to \Aut(X)^*\subset \GL(\NS(X;\Z))$ is 
finite unless $X$ is a torus.  So, if $\Aut(X)$ is non-elementary, then $\Aut(X)$ is discrete or $X$ is a torus.  
\end{pro}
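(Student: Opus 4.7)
The plan is to show that the kernel $K:=\ker\bigl(\Aut(X)\to \Aut(X)^*\bigr)$ reduces, up to finite index, to the identity component $\Aut(X)^0$, and then to use Theorem~\ref{thm:existence_loxodromic} to determine when $\Aut(X)^0$ is trivial. Since every element of $K$ fixes each class in $\NS(X;\Z)$, in particular an ample class, $K$ is contained in the stabilizer of a K\"ahler class; by the Fujiki--Lieberman theorem this stabilizer has finitely many connected components with identity component $\Aut(X)^0$. Hence $K$ is finite if and only if $\Aut(X)^0$ is trivial, i.e.\ $H^0(X,TX)=0$.

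Next I would apply Theorem~\ref{thm:existence_loxodromic} to get an $\Aut(X)$-equivariant bimeromorphic morphism $\pi\colon X\to X_0$, with $X_0$ a K3 surface, an Enriques surface, a compact complex torus, or $\P^2_\C$. Any holomorphic vector field on $X$ pushes forward to a holomorphic vector field on $X_0$ vanishing (with appropriate multiplicities) at each blown-up point. When $X_0$ is K3 or Enriques, $H^0(X_0,TX_0)=0$ and so $\Aut(X)^0$ is trivial, hence $K$ is finite. When $X_0$ is a torus, $H^0(X_0,TX_0)\simeq \C^2$ consists of translation-invariant fields, which vanish nowhere; so $\Aut(X)^0$ is trivial as soon as $\pi$ blows up at least one point, while when $X=X_0$ the full translation group of $X$ lies in $K$, giving the announced exceptional case. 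When $X_0=\P^2_\C$, $X$ must be a non-trivial blow-up (since $\Aut(\P^2_\C)=\PGL_3(\C)$ acts trivially on $H^2(\P^2_\C;\Z)$ and so contains no loxodromic), and the configuration of blown-up points has to be generic enough for the only element of $H^0(\P^2_\C,T\P^2_\C)\simeq \mathfrak{sl}_3(\C)$ vanishing at all of them to be zero; this uses the lower bound $\rho(X)\geq 11$ on rational surfaces carrying loxodromic automorphisms together with the absence of invariant pencils of lines or conics, both standard in the theory.

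For the second assertion, a non-elementary subgroup of $\Aut(X)$ contains loxodromic elements by definition, so the first part applies: either $X$ is a torus or $K$ is finite. In the latter case $\Aut(X)^*\subset \GL(\NS(X;\Z))$ is a subgroup of a discrete group, so $\Aut(X)$ is a finite extension of a discrete group and hence itself discrete. The main obstacle I anticipate is the rational subcase of the first assertion: checking that the blown-up configuration is generic enough to admit no nontrivial infinitesimal linear symmetry in $\mathfrak{sl}_3(\C)$ requires the structural results on loxodromic automorphisms of rational surfaces developed by McMullen, Bedford--Kim, and others. The other cases are essentially dictated by the vanishing of $H^0(TX_0)$, which makes the Fujiki--Lieberman reduction the main structural input.
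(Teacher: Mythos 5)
Your overall strategy is the standard one (and, for what it is worth, the paper itself gives no proof: it simply cites Remark~3.3 of \cite{Cantat:Milnor}, whose argument runs along exactly these lines): the kernel contains $\Aut(X)^\circ$, is contained in the stabilizer of a K\"ahler class, hence is a finite extension of $\Aut(X)^\circ$ by Fujiki--Lieberman, and one is reduced to showing $H^0(X,TX)=0$ via the equivariant reduction of Theorem~\ref{thm:existence_loxodromic}. Two caveats, one small and one serious. The small one: the hypothesis for the first assertion is only the existence of a loxodromic element, so $X$ need not be projective and $\NS(X;\Z)$ need not contain an ample -- or any K\"ahler -- class (there are non-projective K3 surfaces with Picard number $0$ carrying loxodromic automorphisms); to invoke Lieberman's theorem you should work with the kernel of the action on $H^2(X;\Z)$, which is what $\Aut(X)^*$ denotes in this paper, so that K\"ahler classes are genuinely fixed.

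The serious gap is the rational case. Your argument there is that the blown-up configuration is ``generic enough'' that no nonzero element of $\mathfrak{sl}_3(\C)$ vanishes at all the base points, justified by $\rho(X)\geq 11$ and the absence of invariant pencils of lines or conics. This is not a proof, and the genericity framing is misleading: in the known positive-entropy constructions the base points are highly non-generic (they lie on a cuspidal cubic, or form chains of infinitely near points), so one cannot conclude by general position. What is true is that the zero locus of a nonzero vector field on $\P^2$ is either at most three points or a line plus a point; hence if such a field survived on $X$, at least nine of the (possibly infinitely near) base points would sit on a pointwise-fixed line, and ruling this out requires an actual argument -- neither $\rho\geq 11$ nor the non-existence of invariant pencils does it directly. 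A cleaner route, which avoids discussing point positions altogether, is to use that $\Aut(X)^\circ$ is a normal, positive-dimensional linear algebraic subgroup of $\Aut(X)$ when $X$ is rational: the closures of the orbits of a one-parameter subgroup then yield either an $\Aut(X)$-invariant fibration by rational curves or an almost-homogeneous structure whose boundary curves are permuted by every automorphism, and both possibilities are incompatible with the existence of a loxodromic element. As written, this step of your proof would not survive scrutiny, even though you correctly identified it as the delicate point.
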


\subsubsection{Projectivity} \label{subsub:X-is-projective}
\begin{mthm}\label{thm:X-is-projective2} 
Let $X$ be a compact K\"ahler surface and $\Gamma$ be a non-elementary 
subgroup of $\Aut(X)$.  Then $X$ is   projective, and is birationally equivalent to a rational surface, an Abelian surface, 
a K3 surface, or an Enriques surface. 
\end{mthm}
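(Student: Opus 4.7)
Since $\Gamma$ is non-elementary it contains a loxodromic element, so Theorem~\ref{thm:existence_loxodromic} provides a $\Gamma$-equivariant bimeromorphic morphism $\pi\colon X\to X_0$ onto a minimal model $X_0$ which is either rational, a complex torus, a K3 surface, or an Enriques surface. This settles the birational classification part of the statement. Since projectivity is a bimeromorphic invariant of smooth compact K\"ahler surfaces (a blow-up of a projective surface is projective by correcting the pull-back of an ample class with a small negative multiple of the exceptional divisor, and conversely $X$ projective forces its blow-down $X_0$ to be projective by analyzing the self-intersection of the push-down of an ample class), it suffices to prove that $X_0$ is projective. Rational and Enriques surfaces are always projective, leaving the torus and K3 cases, in which $h^{2,0}(X_0)=1$ and $\Aut(X_0)$ acts on $H^{2,0}(X_0;\C)$ through a unitary character (Lemma~\ref{lem:unitary_on _H20}).

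Suppose, for contradiction, that $X_0$ is not projective. By Kodaira's criterion $\NS(X_0;\R)$ contains no class of positive self-intersection. In the K3 case this forces $\NS(X_0;\R)$ to be strictly negative definite: a non-zero isotropic integer class would be nef and, by Riemann--Roch, would produce an elliptic fibration together with a polarization. Consequently $\Aut(X_0)^*\vert_{\NS(X_0;\Z)}$ is a subgroup of the isometry group of a negative definite integral lattice, and therefore finite. In the torus case the step ``no positive class $\Rightarrow$ negative definite'' is more delicate, since isotropic classes in $\NS(X_0;\Z)$ of a torus need not yield polarizations; one passes to the quotient by the radical of the intersection form, on which the action is finite, and handles the radical separately using the integrality of $\Aut(X_0)^*$ combined with the transcendental rigidity stated below.

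The key remaining ingredient is finiteness of the action of $\Aut(X_0)^*$ on the transcendental lattice $T(X_0;\Z):=\NS(X_0;\Z)^\perp\subset H^2(X_0;\Z)$. On $T(X_0;\Z)$, $\Aut(X_0)$ acts by integral Hodge isometries of a polarized weight-$2$ Hodge structure with $h^{2,0}=1$, and the Hodge--Riemann bilinear relations endow $T(X_0;\C)$ with a positive definite Hermitian form invariant under Hodge isometries, so $\Aut(X_0)^*\vert_{T(X_0;\Z)}$ is simultaneously discrete and contained in a compact Lie group, hence finite. Using the rational decomposition $H^2(X_0;\Q)=\NS(X_0;\Q)\oplus T(X_0;\Q)$ together with the discreteness of $\Aut(X_0)^*\subset\GL(H^2(X_0;\Z))$, finiteness on both summands implies that $\Aut(X_0)^*$ itself is finite, contradicting the presence of a loxodromic element in $\Gamma^*\subset\Aut(X_0)^*$. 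The principal obstacle is the torus case, where the ``isotropic class $\Rightarrow$ elliptic fibration $\Rightarrow$ polarization'' route is unavailable and one must treat the radical of the intersection form on $\NS(X_0;\Z)$ by hand, though the Hodge--Riemann rigidity on the transcendental lattice applies uniformly in both minimal model types.
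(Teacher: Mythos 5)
Your reduction to the minimal model and the birational classification half are fine (this is exactly Theorem~\ref{thm:existence_loxodromic}, and projectivity is indeed a birational invariant of smooth compact K\"ahler surfaces), but the projectivity argument itself breaks down at two points. First, in the K3 case the implication ``no integral class of positive square $\Rightarrow \NS(X_0)$ negative definite'' is false: a primitive isotropic class in $\NS$ does give (after Riemann--Roch) a genus~$1$ fibration, but \emph{not} a polarization. K3 surfaces of algebraic dimension $1$, whose N\'eron--Severi lattice is negative semi-definite with a one-dimensional isotropic radical, are elliptic and non-projective; so the degenerate case you wave away in the torus setting is already present for K3 surfaces. Second, and more fundamentally, the claimed rigidity on the transcendental lattice is wrong. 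The Hodge--Riemann relations give positivity only on the real part of $H^{2,0}\oplus H^{0,2}$; when $X_0$ is \emph{not} projective, $\NS(X_0;\R)$ contains no positive class, so the positive directions of $H^{1,1}(X_0;\R)$ lie in $\NS^{\perp}$, and the intersection form on $T(X_0;\R)$ has signature $(3,\cdot)$ (for a K3), hence is indefinite. There is no invariant positive definite Hermitian form there, and the conclusion you are driving at --- ``$X_0$ non-projective $\Rightarrow \Aut(X_0)^*$ finite'' --- is simply false: non-projective K3 surfaces carrying loxodromic (positive entropy) automorphisms exist (McMullen's Siegel disk examples), in agreement with Lemma~\ref{lem:proj_root}, which characterizes them as the loxodromic maps whose action on $H^{2,0}$ is not by a root of unity. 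The correct general statement is only that $\Aut(X)^*$ is virtually cyclic when $X$ is not projective (Lemma~\ref{lem:virtually_cyclic_non_projective}), so the presence of a single loxodromic element yields no contradiction.

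The missing idea is that non-elementarity must be used beyond the mere existence of one loxodromic element. The paper's route: on a (minimal) torus or K3 surface the action on $H^{2,0}(X;\C)\simeq\C$ is by a unitary character $J$; a non-elementary group contains a free subgroup all of whose non-trivial elements are loxodromic, and a commutator $[f,g]$ of two such elements is loxodromic with $J([f,g])=1$. For such an element Lemma~\ref{lem:proj_root} applies: the characteristic polynomial on $H^2(X;\Z)$ splits as a Salem (or reciprocal quadratic) factor times cyclotomic factors, and since the eigenvalues on $H^{2,0}$ are roots of unity, the kernel of the Salem factor is a rational, $f^*$-invariant subspace of $H^{1,1}(X;\R)$ of Minkowski type; it therefore contains integral classes of positive self-intersection, and Kodaira's embedding theorem gives projectivity. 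Your transcendental-lattice finiteness argument cannot be repaired without some substitute for this step, because it is precisely the group hypothesis (not Hodge theory of a single automorphism) that rules out the non-projective examples.
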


From the discussion in \S\S \ref{par:Wehler}--\ref{par:Coble-Blanc} we see that 
there exist examples with a non-elementary group of automorphisms for each of these four classes
of surfaces. Theorem~\ref{thm:X-is-projective2} is a direct consequence of Theorem \ref{thm:existence_loxodromic}
and the following lemmas. 

\begin{lem}\label{lem:proj_root} Let $f$ be a loxodromic automorphism of a compact K\"ahler surface $X$. 
The following properties are equivalent:
\begin{enumerate}
\item on $H^{2,0}(X;\C)$, $f^*$ acts by multiplication by a root of unity;
\item $X$ is projective.
\end{enumerate}
\end{lem}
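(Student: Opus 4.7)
The plan is to split the argument according to the value of $h^{2,0}(X)\in\set{0,1}$ provided by Theorem~\ref{thm:existence_loxodromic}. If $h^{2,0}(X)=0$, any minimal model of $X$ is rational or Enriques, hence projective, while (1) is vacuous; the equivalence is then trivial. So from now on I assume $h^{2,0}(X)=1$, meaning $X$ is birational to a K3 surface or to a complex torus, and I set $\alpha := f^*\rest{H^{2,0}(X;\C)}\in\mathbb{U}_1$ (the unitarity comes from Lemma~\ref{lem:unitary_on _H20}).

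For (2)$\Rightarrow$(1), the key observation is that projectivity forces the transcendental part of $H^2$ to decompose into definite pieces. Precisely, if $X$ is projective then $\NS(X;\R)$ is of Minkowski type in $H^{1,1}(X;\R)$; setting $T(X):=\NS(X;\Z)^{\perp}\subset H^2(X;\Z)$, one gets an orthogonal decomposition
\[
T(X)\otimes\R \;=\; (H^{2,0}\oplus H^{0,2})_\R \;\oplus\; \bigl(\NS(X;\R)^{\perp}\cap H^{1,1}(X;\R)\bigr),
\]
with intersection form positive definite on the first summand (Hodge) and negative definite on the second (Hodge index theorem, applied in the Minkowski space $H^{1,1}(X;\R)$). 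Since $f^*$ preserves each summand and acts there as an isometry of a definite form, $f^*\rest{T(X)\otimes\R}$ sits inside a compact orthogonal group; combined with preservation of the lattice $T(X)$ this forces $f^*\rest{T(X)}$ to have finite order, whence $\alpha^N=1$ for some $N\geq 1$.

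For (1)$\Rightarrow$(2), assume $\alpha^N=1$. Loxodromy ensures that $f^*\rest{H^{1,1}(X;\R)}$ is diagonalizable over $\C$ (eigenvalues $\lambda(f),\lambda(f)\inv$ on a hyperbolic plane, unitary eigenvalues on the negative definite orthogonal complement), and $f^{*N}$ is the identity on $(H^{2,0}\oplus H^{0,2})_\R$, so $f^{*N}$ is semisimple on $H^2(X;\R)$. Factor its characteristic polynomial on $H^2(X;\Q)$ as $(t-1)^a Q(t)$ with $Q(1)\neq 0$, and set $V:=\ker Q(f^{*N})$. Then $V$ is $\Q$-rational, $f^*$-stable, and complements $\ker(f^{*N}-\id)\supset H^{2,0}\oplus H^{0,2}$, so $V\otimes\C\subset H^{1,1}(X;\C)$; by the Lefschetz theorem on $(1,1)$-classes, $V\subset\NS(X;\Q)$. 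The real eigenvectors $\theta^+,\theta^-\in H^{1,1}(X;\R)$ of $f^*$ attached to $\lambda(f)^{\pm 1}$ lie in $V$ (because $\lambda(f)^N\neq 1$) and can be normalized to belong to $\Kahbar(X)$; the reverse Schwarz inequality~\eqref{eq:reverse_CS} then yields $\langle\theta^+\vert\theta^-\rangle>0$, so that $\theta^++\theta^-\in\NS(X;\R)$ has positive self-intersection and $X$ is projective.

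The main step, in my mind, is producing the $\Q$-rational $f^*$-invariant complement $V$ and checking that $\theta^\pm$ actually belong to it: this is what converts the a priori transcendental Perron-Frobenius directions into algebraic classes, and it crucially uses both the semi-simplicity granted by loxodromy and the root-of-unity hypothesis $\alpha^N=1$. Everything else is bookkeeping with the Hodge decomposition.
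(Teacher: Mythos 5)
Your proof is correct, but it takes a genuinely different route from the paper's. The paper works with the integral characteristic polynomial of $f^*$ on $H^2(X;\Z)$: by Kronecker's theorem and the loxodromic hypothesis it factors as a Salem (or reciprocal quadratic) factor $S_f$ times cyclotomic factors, and both implications are run through the single rational invariant subspace $\Ker(S_f(f^*))$ — if (1) holds this kernel is a Minkowski-type rational subspace of $H^{1,1}(X;\R)$, giving projectivity by Kodaira, while if $X$ is projective one shows $\Ker(S_f(f^*))\subset \NS(X;\Q)$, so it avoids $H^{2,0}$, forcing the eigenvalues there to be roots of unity. You avoid the Salem/cyclotomic input entirely: for (2)$\Rightarrow$(1) you use the classical transcendental-lattice argument (the form on $\NS(X;\R)^\perp$ is a sum of definite pieces, so $f^*$ lies in a compact group there, and preservation of the lattice $T(X)$ forces finite order — in fact this proves more than (1), namely finiteness on all of $T(X)$); for (1)$\Rightarrow$(2) you split off the rational $f^*$-invariant subspace $V=\ker Q(f^{*N})$ complementary to the $f^{*N}$-fixed part, check it sits in $H^{1,1}$, hence in $\NS(X;\Q)$, and that it contains the isotropic eigenvectors $\theta^\pm$, whence a class of positive square and Kodaira. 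Two small elisions are worth tightening but are not gaps: the inference "$V$ complements $\ker(f^{*N}-\id)\supset H^{2,0}\oplus H^{0,2}$, so $V\otimes\C\subset H^{1,1}$" is not valid for an arbitrary complement — the correct one-line argument is that $Q(f^{*N})$ acts on $H^{2,0}\oplus H^{0,2}$ as $Q(1)\cdot\mathrm{id}\neq 0$ while it kills $V$, and the Hodge components of any $v\in V\otimes\C$ are separately killed, so $v^{2,0}=v^{0,2}=0$; and the final step needs a \emph{rational} class of positive self-intersection for Kodaira, which follows since positivity is open and $V$ is defined over $\Q$ (the same elision the paper makes). Also note that the nefness of $\theta^\pm$ (needed to apply the reverse Schwarz inequality in the correct half-cone) can be avoided: since a totally isotropic plane cannot exist in a Minkowski space, $\langle\theta^+\vert\theta^-\rangle\neq 0$, and then one of $\theta^+\pm\theta^-$ has positive square.
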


\begin{vlongue}
If $X$ supports a loxodromic automorphism, then $\dim(H^{2,0}(X;\C))\leq 1$; and 
with   notation as in  Remark~\ref{rem:volume_form}, the first assertion is equivalent to 
\begin{enumerate}
\item[{\em (1')}] {\emph{either $H^{2,0}(X;\C)=0$ or $J(f)$ is a root of unity}}. 
\end{enumerate}
\end{vlongue}

\begin{proof}[Proof of Lemma~\ref{lem:proj_root}]
The characteristic polynomial $\chi_f$ of $f^*\colon H^2(X;\Z)\to H^2(X;\Z)$ is  a monic polynomial with 
integer coefficients. Since $f$ is loxodromic, $f^*$ has a real eigenvalue $\lambda(f)>1$. 
Besides $\lambda(f)$ and $\lambda(f)\inv$, all other roots 
 of  $\chi_f$ have modulus $1$, so 
$\lambda(f)$ is a reciprocal quadratic integer or a Salem number
 (see \S~2.4.3 of~\cite{Cantat:Milnor} for more details).  Thus, the decomposition of $\chi_f$ into irreducible factors can be written as
\begin{equation}
\chi_f(t)=S_f(t) \times R_f(t) = S_f(t) \times \prod_{i=1}^m C_{f,i}(t) 
\end{equation}
where $S_f$ is a Salem polynomial or a reciprocal quadratic polynomial, and 
the $C_{f,i}$ are cyclotomic polynomials. 
In particular if $\xi$ is an eigenvalue  of $f^*$ and a root of unity, we see that 
 $\xi$ is a root of $R_f(t)$ but not of $S_f(t)$.
 
The subspace $H^{2,0}(\C)\subset H^2(X;\C)$ is $f^*$-invariant and, by Lemma~\ref{lem:unitary_on _H20}, all eigenvalues of $f^*$ on that subspace 
have modulus $1$; if an eigenvalue of $f^*\rest{H^{2,0}(X;\C)}$ is not a root of unity, then it is a root of $S_f$.

Assume that all eigenvalues of $f^*$ on $H^{2,0}(X;\C)$ are roots of unity. 
Then $\Ker(S_f(f^*))\subset H^2(X;\R)$ is a $f^*$-invariant 
subspace of $H^{1,1}(X;\R)$. This subspace is defined over $\Q$ and
is of Minkowski type; in particular, it contains integral classes of
positive self-intersection, and by the  Kodaira embedding
theorem, $X$ is projective.  Conversely, assume that $X$ is projective. The N\'eron-Severi group $\NS(X;\Q)\subset H^{1,1}(X;\R)$ is $f^*$-invariant and 
contains vectors of positive self-intersection, so by Proposition \ref{pro:invariant_cohomological_decomposition} it contains all 
isotropic lines associated to loxodromic automorphisms. 
Now any $f^*$ invariant subspace defined over $\Q$ and containing the eigenspace associated to 
$\lambda(f^*)$  contains $\Ker(S_f(f^*))$, so we deduce that  $\Ker(S_f(f^*))\subset \NS(X;\Q)$.  In particular, $\Ker(S_f(f^*))$ does not intersect $H^{2,0}(X;\C)$, which is invariant, 
 and we conclude that all eigenvalues 
 of $f^*$ on  $H^{2,0}(X;\C)$ are roots of unity. 
\end{proof}

\begin{lem}\label{lem:virtually_cyclic_non_projective}
Let $X$ be a  compact K\"ahler surface. If $X$ is not projective, then $\Aut(X)^*$ is virtually Abelian and if it contains a loxodromic element it is virtually cyclic.
\end{lem}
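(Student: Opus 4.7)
The plan is to argue in two steps. First, I will show that if $X$ is non-projective then $\Gamma := \Aut(X)^*$ is elementary as a subgroup of $\Isom(\Hyp_X)$. Second, I will use the classification of elementary isometry subgroups, together with the discreteness of $\Gamma$ (Proposition~\ref{prop:discrete}), to deduce that $\Gamma$ is virtually abelian in general and virtually cyclic if it contains a loxodromic element.

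For the first step I argue by contradiction. Suppose $\Gamma$ is non-elementary, and apply Proposition~\ref{pro:invariant_cohomological_decomposition} to the action of $\Gamma$ on the Minkowski space $H^{1,1}(X;\R)$: this yields a $\Gamma$-invariant orthogonal decomposition $H^{1,1}(X;\R) = V_+ \oplus V_0$ with $V_+$ of Minkowski type and $\Gamma$ acting strongly irreducibly on $V_+$. Since the intersection form has signature $(1, h^{1,1}(X)-1)$ on $H^{1,1}(X;\R)$, this $V_+$ is the unique $\Gamma$-invariant subspace carrying the positive direction. The key claim is that $V_+$ is defined over $\Q$: because $\Gamma$ preserves $H^2(X;\Q)$ together with the $\Q$-valued non-degenerate form $q$, the representation of $\Gamma$ on $H^2(X;\Q)$ is semisimple and decomposes orthogonally into $\Q$-irreducibles; inside any such $\Q$-summand the $\R$-irreducible components are permuted by $\mathrm{Gal}(\overline{\Q}/\Q)$, and since $q$ is Galois-invariant this action preserves signatures. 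Uniqueness of the Minkowski factor in $H^{1,1}(X;\R)$ then forces $V_+$ to be Galois-stable, hence $\Q$-rational. Consequently $V_+ \cap H^2(X;\Q)$ spans $V_+$ and, being inside $H^{1,1}(X;\R)$, is contained in $\NS(X;\Q)$ by the Lefschetz theorem on $(1,1)$-classes. It follows that $V_+ \subset \NS(X;\R)$, so $\NS(X;\R)$ contains a class of positive self-intersection, and by the Kodaira embedding theorem $X$ is projective---contradicting our assumption.

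For the second step, I invoke the classical trichotomy for elementary subgroups of $\Isom(\Hyp_X)$. If $\Gamma$ fixes a point of $\Hyp_X$, then $\Gamma$ sits inside a maximal compact subgroup of $\O(H^{1,1}(X;\R),q)$; being discrete, it is finite. If $\Gamma$ fixes a unique point of $\partial\Hyp_X$ and contains no loxodromic element, then $\Gamma$ lies in the Euclidean isometry group $\R^{m-1}\rtimes \O(m-1)$ stabilizing that boundary point, where $m = h^{1,1}(X)-1$; a standard argument shows that discrete subgroups of $\R^{m-1}\rtimes \O(m-1)$ are virtually abelian, because the rotational projection lands in a discrete, hence finite, subgroup of the compact group $\O(m-1)$, while the translational kernel is a free abelian group of finite rank. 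Finally, if $\Gamma$ contains a loxodromic element $f$ and is elementary, it preserves the pair of axis endpoints $\{\theta_f^+,\theta_f^-\}$ and lies in their stabilizer $\R \times \O(m-1) \times \Z/2\Z$; discreteness forces the axial translation factor to be cyclic and the compact factor to be finite, so $\Gamma$ is virtually cyclic.

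The main obstacle is the rationality assertion for $V_+$ in Step~1. While its uniqueness as the Minkowski $\Gamma$-invariant subspace is an immediate consequence of the Hodge index theorem, the passage to a $\Q$-structure rests on the Galois-equivariance of signatures; this step is delicate because the Hodge decomposition of $H^2(X;\C)$ is itself not defined over $\Q$, so one must work abstractly with the $\Gamma$-representation on $H^2(X;\Q)$ and analyze the interplay between its absolutely irreducible real forms and the Hodge filtration.
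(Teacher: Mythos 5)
Your Step 1 is where the argument breaks. It rests on the claim that, for a non-elementary $\Gamma=\Aut(X)^*$, the Minkowski factor $V_+$ of Proposition~\ref{pro:invariant_cohomological_decomposition} is automatically defined over $\Q$, deduced from ``Galois permutes the $\R$-irreducible components and preserves signatures''. Galois conjugation does not preserve positivity, and the paper's Example~\ref{eg:square_free} is an explicit counterexample to exactly this mechanism: there a non-elementary subgroup acts $\Q$-irreducibly on $\NS(X;\Q)\simeq(\Lambda,4Q_\ell)$ with $Q_\ell=q_\ell\oplus\overline{q_\ell}$, the two Galois-conjugate forms $q_\ell$ and $\overline{q_\ell}$ having signatures $(1,\ell)$ and $(0,\ell+1)$, so the Minkowski factor $\NS(X;\R)_+$ is a proper, non-rational subspace. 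Rationality of $V_+$ genuinely requires an extra hypothesis (in the paper, a parabolic element; see Lemma~\ref{lem:decomposition_V+V0_rational}), and your argument never uses non-projectivity to obtain it, so it cannot be repaired as written. In addition, the semisimplicity of the $\Gamma$-representation on $H^2(X;\Q)$ is asserted without proof: on all of $H^2$ the form has signature $(1+2h^{2,0},h^{1,1}-1)$, so Lemma~\ref{lem:restriction_isom} does not apply and invariant subspaces with degenerate restriction of $q$ are not excluded a priori.

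For comparison, the paper's proof goes through a quite different mechanism: assuming $\Aut(X)^*$ is not virtually Abelian (or contains a loxodromic element without being virtually cyclic), Theorem~3.2 of \cite{Cantat:Milnor} produces a free group all of whose non-trivial elements are loxodromic; then the classification (Theorem~\ref{thm:existence_loxodromic}) leaves two cases. If $h^{2,0}(X)=0$, the Hodge index theorem gives an integral class of positive self-intersection and Kodaira's theorem gives projectivity; otherwise one reduces to a minimal $X$ with $h^{2,0}=1$, uses the unitary character $J$ on $H^{2,0}$ (which kills commutators, so some loxodromic element acts on $H^{2,0}$ by a root of unity), and concludes by Lemma~\ref{lem:proj_root} --- where the needed rationality is extracted correctly from the integral characteristic polynomial of a single loxodromic element (its Salem factor), not from a Galois argument on the whole representation. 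A smaller issue in your Step 2: the claim that the rotation part of a discrete subgroup of $\R^{m-1}\rtimes\O(m-1)$ is discrete, hence finite, is false (a screw motion with irrational angle already defeats it); the conclusion that discrete subgroups of the Euclidean isometry group are virtually Abelian is true, but it needs a Bieberbach-type argument rather than that projection argument.
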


\begin{proof} 
Assume that $\Aut(X)^*$ is not virtually Abelian, or that it contains a loxodromic element without being virtually cyclic. 
According to Theorem~3.2 of \cite{Cantat:Milnor}, $\Aut(X)^*$  contains a non-Abelian free group $\Gamma$ such that all 
elements of $\Gamma\setminus \{\id\}$ are loxodromic; 
from Theorem~\ref{thm:existence_loxodromic},  either $h^{2,0}(X)=0$ or $X$ 
is the blow-up of a torus or a K3 surface. 
In the first case, $H^2(X;\R)=H^{1,1}(X;\R)$ so, by the Hodge index theorem, $H^{1,1}(X;\R)$ contains an integral class with positive self-intersection; then, the Kodaira 
embedding theorem shows that $X$ is projective.  
In the second case, by uniqueness of the minimal model, 
the morphism $X\to X_0$ onto the minimal model of $X$ is $\Aut(X)$-equivariant, so we can assume
that $X=X_0$ is minimal and  $h^{2,0}(X)=1$. 
Consider the homomorphism $J\colon \Aut(X)\to {\mathbb{U}}_1$, as in Remark~\ref{rem:volume_form}. 
Since ${\mathbb{U}}_1$ is Abelian  $\ker(J\rest{\Gamma})$ contains loxodromic elements: indeed if $f,g\in \Gamma$ 
and $f\neq g$ then 
$[f,g] = f g f\inv g\inv$ is loxodromic and  $J([f,g]) =1$. From 
Lemma~\ref{lem:proj_root} we deduce that $X$ is projective. 
\end{proof}

\section{Glossary of random dynamics, I}\label{sec:Glossary_I}

We now initiate the random iteration by introducing a probability measure on $\Aut(X)$. In this section we introduce a first set 
of   ideas from   the theory of  random dynamical systems, as well as some notation that will be used throughout the paper. 

\subsection{Random holomorphic dynamical systems}\label{par:Random_holomorphic_dynamical_systems}
Let  $X$ be a compact K\"ahler surface,  such that $\Aut(X)$ is non-elementary. 
Note that $\Aut(X)$ is locally compact for the topology of uniform convergence --in many interesting cases it is actually discrete (see Proposition \ref{prop:discrete})-- 
so it admits a natural Borel structure. We fix some Riemannian structure on $X$, for instance the one induced by the  K\"ahler form
$\kappa_0$. 
For $f\in \Aut(X)$, we denote by $\norm{ f}_{{{C}}^1}$ the maximum of $\norm{ Df_x}$ where the norm of 
$Df_x\colon T_xM\to T_{f(x)}M$ is computed with respect to this Riemannian metric. 

We consider  a probability measure $\nu$ on $\Aut(X)$  
satisfying the \textbf{moment condition} (or integrability condition)
\begin{equation}\label{eq:moment}
\int \lrpar{\log\norm{f}_{C^1(X)}+ \log\norm{f\inv}_{C^1(X)}} \, d\nu (f) < + \infty. 
\end{equation}
\begin{vcourte}
The finiteness of the integral in \eqref{eq:moment} does not depend on our choice of Riemannian metric. 
When the support of  $\nu$ is finite, the integrability~\eqref{eq:moment}, 
as well as stronger moment conditions which will appear later (see Conditions~\eqref{eq:exponential_moment} and~\eqref{eq:exponential_moment_cohomology}), are obviously satisfied. 
\end{vcourte}
\begin{vlongue}
The norm 
$\norm{\,\cdot\,}_{C^1(X)}$ is relative to our choice of Riemannian metric, 
but the finiteness of the integral in \eqref{eq:moment} does not depend on this choice. 
In many interesting situations the support of  $\nu$ will be finite, in which case the integrability~\eqref{eq:moment}, 
as well as stronger moment conditions which will appear later (see Conditions~\eqref{eq:exponential_moment} and~\eqref{eq:exponential_moment_cohomology}), are obviously satisfied. 
\end{vlongue}

\begin{vcourte}
\begin{lem}\label{lem:momentC1_to_Ck} The measure $\nu$ satisfies the moment condition \eqref{eq:moment} if and only if it satisfies
the higher moment conditions
\[
\int \lrpar{\log\norm{f}_{C^k(X)}+ \log\norm{f\inv}_{C^k(X)}} \, d\nu (f) <\infty,
\]
for all $k\geq 1$.
\end{lem}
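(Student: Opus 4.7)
The direction ($C^k$ moment $\Rightarrow$ $C^1$ moment) is trivial since $\norm{f}_{C^1(X)}\leq \norm{f}_{C^k(X)}$, and likewise for $f^{-1}$. For the converse, the key point is that, $f$ being holomorphic, Cauchy's inequality provides a polynomial bound
\[
\norm{f}_{C^k(X)} \;\leq\; C_k\, \max\bigl(1,\norm{f}_{C^1(X)}\bigr)^k,
\]
with $C_k$ depending only on $X$, $k$ and auxiliary choices (atlas, Riemannian metric). Combined with the analogous bound for $f^{-1}$, taking logarithms and integrating against $\nu$ immediately yields the higher moment conditions from \eqref{eq:moment}.

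To prove this bound, the plan is to first fix a finite atlas $\{(U_i,\phi_i)\}_{i\in I}$ of holomorphic charts $\phi_i\colon U_i\to V_i\subset \C^2$ together with relatively compact refinements $U_i'\Subset U_i$ still covering $X$, and choose a Lebesgue number $\delta>0$ so that every Riemannian ball $B(y,\delta)\subset X$ is contained in some chart $U_j$. Given $f\in\Aut(X)$ and $x\in X$, pick indices $i,j$ with $x\in U_i'$ and $f(x)\in U_j'$ and set $r:=\delta/\max(1,\norm{f}_{C^1(X)})$. The mean value inequality then yields $f(B(x,r))\subset B(f(x),\delta)\subset U_j$, so that the local representative $F:=\phi_j\circ f\circ \phi_i^{-1}$ is a holomorphic map defined on a polydisc of Euclidean radius $\asymp r$ around $\phi_i(x)$ and valued in the bounded open set $V_j\subset \C^2$.

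Applying Cauchy's inequality on this polydisc yields, for every multi-index $\alpha$ with $|\alpha|\leq k$,
\[
\bigl\lvert \partial^\alpha F(\phi_i(x))\bigr\rvert \;\leq\; \frac{\alpha!}{(cr)^{|\alpha|}}\,\sup_{V_j}\abs{F}\;\leq\; C'_k\,\max\bigl(1,\norm{f}_{C^1(X)}\bigr)^{|\alpha|},
\]
because $\sup_{V_j}\abs{F}$ is bounded by the (finite) Euclidean diameter of $V_j$. Since chart transitions on $X$ have uniformly bounded $C^k$ distortion, this estimate transfers back to $f$ and gives the sought polynomial bound on $\norm{f}_{C^k(X)}$. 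The same argument applied to $f^{-1}$ closes the proof. No step is a genuine difficulty: the only point that requires attention is the choice of the radius $r$, which must shrink inversely with the Lipschitz constant of $f$ in order to guarantee that the polydisc around $\phi_i(x)$ is mapped into a single target chart; this is precisely where holomorphy is crucial, because once this inclusion is secured the supremum of $F$ on the polydisc costs nothing in the Cauchy estimate, and the growth of $\norm{f}_{C^k(X)}$ with $\norm{f}_{C^1(X)}$ is only polynomial, hence harmless after taking logarithms.
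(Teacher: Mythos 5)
Your proposal is correct and follows exactly the route the paper intends: the paper's proof is the single remark that the lemma "follows from the Cauchy estimates," and your argument (shrinking the polydisc radius like $\delta/\max(1,\norm{f}_{C^1})$ so the image stays in one chart, then applying the Cauchy inequalities to get $\norm{f}_{C^k}\lesssim \max(1,\norm{f}_{C^1})^k$, which is harmless after taking logarithms) is precisely the standard way to implement that remark. The chart-distortion and bounded-chart-image points you gloss over are uniform over a finite atlas, so there is no gap.
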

\end{vcourte}
\begin{vlongue}
\begin{lem}\label{lem:momentC1_to_Ck}  The measure $\nu$ satisfies the moment condition \eqref{eq:moment} if and only if it satisfies
the higher moment conditions
\begin{equation}\label{eq:moment_Ck}
\int \lrpar{\log\norm{f}_{C^k(X)}+ \log\norm{f\inv}_{C^k(X)}} \, d\nu (f) <\infty,
\end{equation}
for all $k\geq 1$.
\end{lem}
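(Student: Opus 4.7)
The ``if'' direction is immediate since $\|f\|_{C^1(X)} \leq \|f\|_{C^k(X)}$ and $\|f^{-1}\|_{C^1(X)} \leq \|f^{-1}\|_{C^k(X)}$. For the converse, my plan is to establish a polynomial comparison of the form
\[
\|f\|_{C^k(X)} \leq A_k \bigl(1 + \|f\|_{C^1(X)}\bigr)^k, \qquad \forall f \in \Aut(X),
\]
with $A_k$ depending only on $k$, on the fixed Riemannian metric, and on a preferred finite holomorphic atlas. Once this is in hand, taking logarithms gives $\log \|f\|_{C^k(X)} \leq \log A_k + k\log(1 + \|f\|_{C^1(X)})$, and integrating against $\nu$ (applied first to $f$, then to $f^{-1}$) yields the higher moment condition directly from \eqref{eq:moment}.

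The proof of the polynomial bound is a genuinely holomorphic phenomenon: higher derivatives of a holomorphic map are controlled by the first derivative through Cauchy's integral formula. I would fix a finite cover of $X$ by holomorphic charts $\varphi_i \colon U_i \to B(0,3) \subset \C^2$ such that the sets $V_i := \varphi_i^{-1}(B(0,1))$ still cover $X$, and set $L := \max(1, \|f\|_{C^1(X)})$. For each $x \in X$, choose indices $i,j$ with $x \in V_i$ and $f(x) \in V_j$, and write $\tilde f := \varphi_j \circ f \circ \varphi_i^{-1}$ for the local representation. By compactness of the atlas, the Euclidean and Riemannian norms are uniformly comparable on each chart, so $\|D\tilde f\|_{\mathrm{eucl}} \leq C_0 L$ wherever $\tilde f$ is defined. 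Setting $r := (2 C_0 L)^{-1}$, a standard maximality/continuity argument combined with the mean value inequality forces $\tilde f$ to be defined on $B(\varphi_i(x), r)$ and to map this ball into $B(\varphi_j(f(x)), 1/2) \subset B(0, 3/2)$, safely inside the target chart.

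On this ball $\tilde f$ is holomorphic with image in a bounded Euclidean set, so the multivariable Cauchy integral formula applied at the center yields
\[
\bigl\|D^k \tilde f(\varphi_i(x))\bigr\|_{\mathrm{eucl}} \leq \frac{C_k'}{r^k}\, \sup_{B(\varphi_i(x), r)} |\tilde f| \leq C_k'' L^k.
\]
Transferring back to the Riemannian norm on $X$ produces $\|D^k f(x)\| \leq A_k L^k$ for every $x \in X$; combined with the trivial bound $\|f\|_{C^0(X)} \leq \diam(X)$, this establishes the desired polynomial comparison, and the symmetric estimate for $f^{-1}$ completes the proof. I do not anticipate a serious obstacle: the only step requiring mild care is the continuity argument used to extend $\tilde f$ across the ball $B(\varphi_i(x), r)$, which is routine given that the $C^1$ bound forces $\tilde f$ to remain in the interior of the target chart before Cauchy's estimate is invoked.
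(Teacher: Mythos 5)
Your proof is correct and follows essentially the same route as the paper, which simply remarks that the lemma ``follows from the Cauchy estimates'': the only substantive point is that the $C^1$ bound gives a lower bound of order $\norm{f}_{C^1}^{-1}$ on the radius of a chart ball on which the local representative of $f$ stays in a fixed target chart, after which the Cauchy integral formula yields the polynomial bound $\norm{f}_{C^k}\lesssim_k \norm{f}_{C^1}^k$ and hence the higher moment conditions. Your write-up supplies exactly these details, so no changes are needed.
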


Here the $C^k$ norm is relative to the expression of $f$ in a system of charts (we don't need to be precise here because only the finiteness in \eqref{eq:moment_Ck} matters). 
\end{vlongue}
This lemma follows from the Cauchy estimates.  In particular, if $\nu$ satisfies~\eqref{eq:moment}, then it satisfies a similar moment condition for the $C^2$ norm, a property required to apply Pesin's theory. 

Given $\nu$, we shall consider independent, identically distributed sequences $(f_n)_{n\geq 0}$
of random automorphisms of $X$ with distribution $\nu$, 
and study the dynamics of random compositions of the form $f_{n-1}\!\circ \cdots \circ \! f_0$. 
The data $(X, \nu)$ will 
be referred to as a {\bf{random holomorphic dynamical system}} on $X$. 
Many properties of  $(X, \nu)$
  depend on the properties of the subgroup
  \begin{equation}
  \Gamma = \Gamma_\nu:=
 \langle \supp(\nu)\rangle
 \end{equation} 
 generated by (the support of) $\nu$ in $\Aut(X)$. 
 If in addition $\Gamma_\nu$ is non-elementary, we say that 
$(X, \nu)$ is \textbf{non-elementary}.
 
\subsection{Invariant and stationary measures}
Let $G$ be a topological group and $\nu$ be a probability measure on $G$.
Consider a measurable action of $G$ on some measurable space $(M, \mathcal A)$. Every $f\in G$ determines 
a push-forward operator $\mu\mapsto f_\varstar\mu$, acting on positive (resp. probability) 
measures $\mu$ on $(M, \mathcal A)$. 
By definition, a probability measure $\mu$ on $(M, \mathcal A)$ is  {\bf{$\nu$-stationary}} if
\begin{equation} 
\int f_\varstar\mu \,  d\nu(f) = \mu,
\end{equation} 
and it is \textbf{$\nu$-almost surely invariant} if $f_\varstar\mu = \mu$ for $\nu$-almost every $f$. 
Let us stress that we only deal with probability measures in this definition;
slightly abusing terminology, most often we drop the mention to $\nu$ and the mention
that $\mu$ is a probability. 
A stationary  measure is  \textbf{ergodic} 
if it is an extremal point of the closed convex set of stationary measures (see~\cite[\S 2.1.3]{benoist-quint_book}). 

If $\mu$ is almost surely  invariant then it is stationary but the converse is generally false. 
If $M$ is compact, the action $G\times M\to M$ is continuous, and $\sA$ is the Borel $\sigma$-algebra, the Kakutani fixed point theorem implies 
the existence of at least one stationary measure. 
\begin{vcourte}
On the other hand the existence of an invariant measure is a very restrictive property (see Sections~\ref{par:intro_stiffness} and \ref{par:furstenberg_measure}). 
\end{vcourte}
\begin{vlongue}
On the other hand the existence of an invariant measure is a very restrictive property. 
For instance, proximal, strongly irreducible linear actions on 
projective spaces have no (almost surely) invariant probability measure (see Sections~\ref{par:intro_stiffness} and \ref{par:furstenberg_measure}). 
\end{vlongue}
Following Furstenberg \cite{furstenberg_stiffness} we say that 
an action is \textbf{stiff} (or $\nu$-stiff) if any $\nu$-stationary measure is $\nu$-almost surely invariant. 

We shall consider several measurable actions of  $\Aut(X)$:  its tautological 
 action  on $X$, but also its action on the projectivized tangent bundle $\P(TX)$, on cohomology groups of $X$ and their 
 projectivizations, on spaces of currents, etc. In all cases, $M$ will be a locally compact space and $\mathcal A$ its  Borel $
 \sigma$-algebra, which will be denoted by $\mathcal B(M)$.  

\begin{rem}\label{rem:discrete} 
Since $X$ is compact and the action $\Aut(X)\times X\to X$ is continuous,  a probability measure $\mu$ on $(X,\mathcal B(X))$  is $\nu$-almost surely invariant if and only if it is invariant under the action of the closure of $\Gamma_\nu$ in $\Aut(X)$; this follows from the dominated convergence theorem. 
\end{rem}


\subsection{Random compositions}\label{par:random_compositions1}
Set $\Omega = \Aut(X)^\N$, endowed with its product topology.
The associated Borel $\sigma$-algebra coincides with the product $\sigma$-algebra, 
and it is generated by cylinders 
(see \S~\ref{par:definition_skew_products}).  
We endow $\Omega$ with the product measure $\nu^\N$. Choosing  a random element in $\Omega$ with respect to $\nu^\N$ 
is equivalent to  choosing an independent and identically distributed random sequence of automorphisms in $\Aut(X)$ with   distribution 
$\nu$. For $\omega\in \Omega$, we let $f_\omega = f_0$ and denote by $f^n_\omega$   
the left composition of the $n$ first terms of $\omega$, that is 
\begin{equation}
f^n_\omega= f_{n-1}\circ \cdots \circ f_0
\end{equation}
for $n>0$.
By definition $f^0_\omega  = \mathrm{id}$.
Let us record for future reference the following consequence  of the Borel-Cantelli lemma. 
\begin{vcourte}
We denote by $\sigma\colon \Omega\to \Omega$ the unilateral shift.
\end{vcourte}
\begin{vlongue}
We denote by $\sigma\colon \Omega\to \Omega$ the unilateral shift, i.e. the continuous transformation 
defined by $\sigma(f_0, f_1, \ldots)=\sigma(f_1, f_2, \ldots)$.
\end{vlongue}
\begin{lem}\label{lem:borel-cantelli_moment}
If $(X, \nu)$ is a random dynamical system satisfying the moment condition \eqref{eq:moment}, then 
for $\nu^\N$-almost every sequence $\omega=(f_n)\in \Omega$,
\[
 \unsur{n} \lrpar{\log{\norm{f_n}_{ {C}^1}} +  \log{\norm{f_n\inv}_{ {C}^1}}}\tendvers 0. 
\]
\end{lem}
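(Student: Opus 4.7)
The plan is to apply the first Borel--Cantelli lemma to the nonnegative i.i.d.\ random variables $\psi(f_n) := \log\norm{f_n}_{C^1} + \log\norm{f_n^{-1}}_{C^1}$, which have finite mean by the moment condition~\eqref{eq:moment}.

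First I would verify that $\psi \geq 0$: at any point $x$, the chain rule applied to $f^{-1} \circ f = \mathrm{id}$ gives $\norm{Df^{-1}_{f(x)}} \cdot \norm{Df_x} \geq 1$, hence $\norm{f}_{C^1} \cdot \norm{f^{-1}}_{C^1} \geq 1$ and $\psi(f) \geq 0$. This will be convenient because it allows a direct tail estimate. Under $\nu^\N$, the sequence $(\psi(f_n))_{n\geq 0}$ is i.i.d.\ with common distribution that of $\psi(f_0)$, and $\ee[\psi(f_0)] = \int \psi \, d\nu < \infty$ by the hypothesis.

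Next, for any fixed $\e > 0$, I use the standard bound
\[
\sum_{n\geq 1} \nu^\N\lrpar{\psi(f_n) \geq \e n} \;=\; \sum_{n\geq 1} \nu\lrpar{\psi(f_0) \geq \e n} \;\leq\; \unsur{\e}\, \ee[\psi(f_0)] \;<\; +\infty,
\]
where the middle inequality comes from $\sum_{n\geq 1} \mathbf{1}_{\{t \geq \e n\}} \leq t/\e$ for $t \geq 0$, integrated against the law of $\psi(f_0)$. The Borel--Cantelli lemma then gives that $\nu^\N$-almost surely, only finitely many indices $n$ satisfy $\psi(f_n) \geq \e n$, so $\limsup_n \psi(f_n)/n \leq \e$ almost surely.

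Finally, applying this with $\e = 1/k$ for each $k \in \N^*$ and intersecting the resulting full-measure events yields $\psi(f_n)/n \to 0$ almost surely, which is exactly the statement, since $\log\norm{f_n}_{C^1} \geq 0$ and $\log\norm{f_n^{-1}}_{C^1} \geq 0$ (by the Lipschitz bound on a compact manifold we may assume $\norm{\cdot}_{C^1} \geq 1$ up to enlarging the constant in the Riemannian comparison, otherwise apply the same argument separately to the positive and negative parts of each summand). There is no real obstacle here; the only mild point is keeping track of signs, which is handled by observing that $\psi$ itself is nonnegative and dominates each summand up to a term that is also nonnegative.
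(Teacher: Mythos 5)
Your proof is correct and is exactly the argument the paper has in mind: the lemma is stated there as a direct "consequence of the Borel--Cantelli lemma," and your tail-sum estimate $\sum_n \nu(\psi\geq \e n)\leq \e^{-1}\ee[\psi]$ followed by Borel--Cantelli for $\e=1/k$ is the standard way to carry it out. The closing worry about signs is unnecessary: since $f$ and $f^{-1}$ are onto the compact manifold $X$, each of $\norm{f}_{C^1}$ and $\norm{f^{-1}}_{C^1}$ is already $\geq 1$ (a point the paper records in \S\ref{subs:moments_cohomology}), and in any case the statement only concerns the sum $\psi(f_n)$, which you have shown is nonnegative.
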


\begin{vlongue}
\begin{rem}\label{rem:general_RDS}
We are \textbf{not} considering the most general version of random holomorphic 
dynamical systems: one might  consider compositions  $f_{\vartheta^{n-1}(\xi)} \circ \cdots \circ f_{\vartheta(\xi)}\circ f_{\xi}$ where 
$\vartheta:\Sigma\to \Sigma$ is some  measure preserving transformation of a probability space and 
$\Sigma\ni\xi\mapsto f_\xi\in \Aut(X)$ is  measurable. The methods developed below do not apply to this more general setting.
\end{rem}
\end{vlongue}

\section{Furstenberg theory in $H^{1,1}(X;\R)$}\label{sec:furstenberg}
  
Consider  a non-elementary  random holomorphic dynamical system $(X,\nu)$ on a compact K\"ahler surface, satisfying the 
moment condition \eqref{eq:moment}. The main purpose of this section is to analyze the linear action 
of $(X, \nu)$ on $H^{1,1}(X, \R)$ by way of the   theory of random products of matrices.  Basic references for this
subject are the books by Bougerol and Lacroix~\cite{bougerol-lacroix} and by Benoist and Quint~\cite{benoist-quint_book}.
  
\subsection{Moments and cohomology}\label{subs:moments_cohomology}
\begin{vlongue}
We start with a general discussion on the dilatation of cohomology classes under smooth transformations. 
\end{vlongue}
Let $M$ be a compact connected manifold of dimension $m$, endowed with some 
Riemannian metric $\g$. If $f\colon M\to M$ is a smooth map, 
$\norm{f}_{C^1}$ denotes
the maximum norm of its tangent action, 
computed with respect to $\g$ (see Section~\ref{par:Random_holomorphic_dynamical_systems}). 
Thus, $f$ is a Lipschitz map with $\Lip(f)=\norm{f}_{C^1}$
for the distance determined by $\g$; in particular $\norm{f}_{C^1}\geq 1$ whenever $f$ is onto.
Fix a norm $\abs{ \cdot }_{H^k}$ on each cohomology group $H^k(M;\R)$, for 
$0\leq k\leq m$. 

\begin{lem} There is a constant $C>0$, that depends only on $M$, $\g$, and
the norms $\abs{  \cdot }_{H^k}$, such that 
$
\abs{ f^*[\alpha]}_{H^k} \leq C^k \Lip(f)^k \abs{ [\alpha]}_{H^k}
$ 
for every class $[\alpha]\in H^k(M;\R)$ and every map $f\colon M\to M$ of class $C^1$. 
In other
words, the operator norm $\norm{ f^*}_{H^k}$ is controlled by the Lipschitz constant:
\[ 
\norm{ f^*}_{H^k}\leq C^k \Lip(f)^k \leq  C^k \norm{ f}_{C^1}^k.
\]
\end{lem}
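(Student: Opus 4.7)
The plan is to reduce everything to an $L^2$-based norm on $H^k(M;\R)$ coming from Hodge theory, and then to exploit the pointwise bound $|f^*\alpha|_x \leq \|Df_x\|^k |\alpha|_{f(x)}$ valid for any smooth $k$-form $\alpha$ and any $C^1$ map $f$.

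First, since $H^k(M;\R)$ is finite-dimensional, all norms on it are equivalent, so it suffices to prove the estimate for one convenient norm; the constant $C$ will have to absorb the comparison with $\abs{\cdot}_{H^k}$. I would pick the norm coming from Hodge theory: the metric $\g$ induces an $L^2$ inner product on smooth forms, and by Hodge theory each class $a\in H^k(M;\R)$ has a unique harmonic representative $\alpha=\alpha_a$ minimizing the $L^2$ norm in its class. I set $\abs{a}_{L^2}:=\norm{\alpha_a}_{L^2}$, and recall that the harmonic projection $H\colon L^2\Lambda^k\to \mathcal{H}^k(M)$ is the $L^2$-orthogonal projection onto harmonic forms, hence $\norm{H(\beta)}_{L^2}\leq \norm{\beta}_{L^2}$ for every $L^2$ form $\beta$.

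Next, the pointwise estimate: if $\alpha$ is a smooth $k$-form and $f$ is $C^1$, then for every $x\in M$ and every $k$-tuple $(v_1,\ldots,v_k)$ of tangent vectors,
\[
(f^*\alpha)_x(v_1,\ldots,v_k)=\alpha_{f(x)}(Df_xv_1,\ldots,Df_xv_k),
\]
so $\abs{(f^*\alpha)_x}\leq \norm{Df_x}^k\,\abs{\alpha}_{f(x)}\leq \Lip(f)^k\abs{\alpha}_{f(x)}$, where $\abs{\cdot}$ is the norm induced by $\g$ on alternating $k$-forms. Now take $\alpha=\alpha_a$ harmonic and integrate against the Riemannian volume:
\[
\norm{f^*\alpha}_{L^2}^2\leq \Lip(f)^{2k}\int_M \abs{\alpha}_{f(x)}^2\,d\vol_\g(x)\leq \Lip(f)^{2k}\vol_\g(M)\,\norm{\alpha}_{L^\infty}^2.
\]
Because $\mathcal{H}^k(M)$ is finite-dimensional, there is a constant $A_k$ with $\norm{\alpha}_{L^\infty}\leq A_k\norm{\alpha}_{L^2}$ for all harmonic $\alpha$, which gives $\norm{f^*\alpha_a}_{L^2}\leq B_k\Lip(f)^k\abs{a}_{L^2}$ with $B_k=A_k\vol_\g(M)^{1/2}$. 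Since $f^*\alpha_a$ is a closed continuous form representing $f^*a$, applying the harmonic projection yields
\[
\abs{f^*a}_{L^2}=\norm{H(f^*\alpha_a)}_{L^2}\leq \norm{f^*\alpha_a}_{L^2}\leq B_k\Lip(f)^k\abs{a}_{L^2}.
\]

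To conclude, I convert back to the original norm $\abs{\cdot}_{H^k}$, which is equivalent to $\abs{\cdot}_{L^2}$ up to a constant $D_k$, and take $C:=\max_{0\le k\le m}\max(1,D_kB_k)^{1/\max(k,1)}$, so that $D_kB_k\leq C^k$ for every $k$ (the case $k=0$ is trivial since $M$ is connected and $f^*=\id$ on $H^0$). The main subtlety is mild: one must check that the pointwise inequality and the integration step go through when $f$ is merely $C^1$, so that $f^*\alpha$ is only continuous; but continuous forms are still in $L^2$ and still represent their de Rham class, and the harmonic projection is defined on all of $L^2\Lambda^k$, so there is no real obstacle — only the bookkeeping of constants across the finitely many degrees $k$.
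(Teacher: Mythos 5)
Your proof is correct, but it takes a genuinely different route from the paper. The paper's argument is purely a duality/pairing argument: it fixes a basis of $H_k(M;\R)$ represented by smoothly immersed compact $k$-manifolds $\iota_i\colon N_i\to M$ and a basis of $H^k(M;\R)$ by smooth forms $\alpha_j$, and bounds the pairings $\int_{N_i}\iota_i^*(f^*\alpha_j)$ by $C^k\norm{f}_{C^1}^k$ using exactly the same pointwise estimate $\abs{(f^*\alpha)_x(v_1,\dots,v_k)}\leq c\,\norm{f}_{C^1}^k\prod_\ell\abs{v_\ell}_\g$ that you use; the bound on the coefficients of $f^*[\alpha_j]$ in the dual basis then gives the operator norm bound, with no analysis beyond integrating a continuous form over a fixed compact cycle. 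Your version replaces the homology pairing by Hodge theory: harmonic representatives, the comparison of $L^\infty$ and $L^2$ norms on the finite-dimensional space of harmonic forms, and the contractivity of the harmonic projection. This buys a cleaner norm to compute with, but it imports the one genuine (if mild) technical point you flag at the end: for $f$ merely $C^1$, $f^*\alpha_a$ is only continuous, and you must justify that it is weakly closed and that $H(f^*\alpha_a)$ is indeed the harmonic representative of $f^*[\alpha_a]$ — e.g.\ by approximating $f$ in the $C^1$ topology by smooth maps $f_\e$ homotopic to $f$, noting $f_\e^*\alpha_a\to f^*\alpha_a$ in $L^2$ and that $H$ is $L^2$-continuous, or by an integration-by-parts argument against smooth coexact forms. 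The paper's pairing argument sidesteps this entirely (at the modest cost of invoking a basis of $H_k(M;\R)$ represented by immersed manifolds, or smooth cycles). Both proofs rest on the same key pointwise inequality, and your constant bookkeeping across the finitely many degrees $k$ is fine.
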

 
\begin{proof}  
Pick a basis of the homology group $H_k(M;\R)\simeq H^k(M;\R)^*$ given by smoothly immersed, compact, $k$-dimensional manifolds $\iota_i\colon N_i\to M$, 
and a basis of $H^k(M;\R)$ given by smooth $k$-forms $\alpha_j$. Then, the integral $\int_{N_i} \iota_i^*(f^*\alpha_j)$ is bounded from 
above by $C^k\norm{f}_{C^1}^k$ for some constant $C$, because 
\begin{equation}
\vert (f^*\alpha_j)_x(v_1, \ldots, v_k)\vert = \vert \alpha_j(f_*v_1, \ldots, f_* v_k) \vert \leq c_j \norm{f}_{C^1}^k \prod_{\ell=1}^k\abs{v_\ell}_{\g}
\end{equation}
for every point $x\in M$ and every $k$-tuple of tangent vectors 
$v_\ell\in T_xM$; here, $c_j$ is the supremum of the norm of the multilinear map $(\alpha_j)_x$ over $x\in M$. 
\end{proof} 
 
If $\nu$ is a probability measure on  $\Diff(M)$ satisfying the moment condition \eqref{eq:moment}, then   
\begin{equation} \label{eq:moment_Hk}
\forall 1\leq k \leq m, \quad \int_{\Diff(M)}   \log\lrpar{\norm{ f^*}_{H^k}} +\log \lrpar{\norm{(f^{-1})^*}_{H^k}} \, d\nu(f)<+\infty.
\end{equation}
If we specialize this to automorphisms of compact K\"ahler surfaces
we get 
\begin{equation} \label{eq:moment_H1_both_sides}
\int_{\Aut(X)}  \log\lrpar{\norm{ f^*}_{H^{1, 1}}} +\log \lrpar{\norm{(f^{-1})^*}_{H^{1, 1}}}  \; d\nu(f)<+\infty, 
\end{equation}
which is actually equivalent to~\eqref{eq:moment_Hk} by Lemma \ref{lem:cohomological_norm_estimates}.
We saw in \S\ref{par:hyp_X} that $\norm{f^*}_{H^{1, 1}} \asymp \norm{(f\inv)^*}_{H^{1, 1}}$, 
so this last condition is in turn  equivalent to 
\begin{equation} \label{eq:moment_H1_final}
\int_{\Aut(X)}  \log\lrpar{\norm{ f^*}_{H^{1, 1}}}  \; d\nu(f)<+\infty.
\end{equation}

\subsection{Cohomological Lyapunov exponent} \label{subs:furstenberg}
From now on we denote by $\abs{\cdot}$ a norm on $H^{1,1}(X, \R)$ and by $\norm{\cdot}$ the associated operator norm. 
The linear action induced by the random dynamical system $(X, \nu)$ on $H^{1,1}(X, \R)$ defines a random product of matrices.  
Since   the moment condition~\eqref{eq:moment_H1_final} is satisfied, we can define  the 
\textbf{upper Lyapunov exponent}
$\lambda_{H^{1,1}}$ (or $\lambda_{H^{1,1}}(\nu)$)  by  
\begin{align}\label{eq:def_LE}
\lambda_{H^{1,1}} &= \lim_{n\to +\infty} \frac{1}{n}\int \log(\norm{ (f^n_\omega)^*}) d\nu^\N(\omega)\\
& =  \lim_{n\to +\infty} \frac{1}{n}\log\norm{ (f^n_\omega)^*}\label{eq:def_LE2}
\end{align}
where the second equality holds almost surely, i.e. for $\nu^\N$-almost every $\omega\in \Omega$.
This  convergence  follows from Kingman's subadditive ergodic theorem, since 
$\norm{\cdot}$ being an operator norm, $(\omega, n)\mapsto \log(\norm{ (f^n_\omega)^*})$ defines a subadditive cocycle (see~\cite[Thm 4.28]{benoist-quint_book} or~\cite[Thm I.4.1]{bougerol-lacroix}). 
Note that $(f_\omega^n)^* = f_{0}^*\circ \cdots \circ f_{n-1}^*$, so we are dealing 
with right compositions instead of the usual 
left composition. However since $f_{0}^*\circ \cdots \circ f_{n-1}^*$ has the same 
distribution as $f_{n-1}^*\circ \cdots \circ f_{0}^*$, the Lyapunov exponent in \eqref{eq:def_LE} corresponds to the usual definition of the upper Lyapunov exponent of the  random product of matrices. We refer to~\cite{bougerol-lacroix, Ledrappier:SaintFlour}
for the definition and main properties of the subsequent Lyapunov exponents (see also~\cite[\S 10.5]{benoist-quint_book}).

\begin{pro}
Let  $(X, \nu)$ be  a non-elementary holomorphic dynamical system on a compact K\"ahler surface, satisfying 
the moment condition~\eqref{eq:moment}, or more generally~\eqref{eq:moment_H1_final}. 
Then the cohomological   Lyapunov exponent $\lambda_{H^{1,1}}$ is positive and
the other Lyapunov exponents of  the linear  
  action  on $H^{1,1}(X, \R)$ are $-\lambda_{H^{1,1}}$, with multiplicity $1$, and $0$, with multiplicity $h^{1,1}(X)-2$. 
\end{pro}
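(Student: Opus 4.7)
My plan is to split $H^{1,1}(X;\R)$ along the $\Gamma_\nu$-invariant decomposition $H^{1,1}(X;\R) = \Pi_\Gamma \oplus \Pi_\Gamma^\perp$ provided by Proposition~\ref{pro:NS_H11}, and analyze the Lyapunov spectrum on each summand separately. On $\Pi_\Gamma^\perp$ the intersection form restricts to a negative definite form preserved by $\Gamma_\nu$, so the image of $\Gamma_\nu$ in $\GL(\Pi_\Gamma^\perp)$ lies in a compact subgroup of $\GL(\Pi_\Gamma^\perp)$. Consequently the cocycle $(f^n_\omega)^*\rest{\Pi_\Gamma^\perp}$ is uniformly bounded in operator norm, and all Lyapunov exponents on $\Pi_\Gamma^\perp$ vanish, contributing $h^{1,1}(X) - \dim(\Pi_\Gamma)$ of the zero exponents.

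The core of the argument concerns $\Pi_\Gamma$. Setting $d = \dim(\Pi_\Gamma)$, the restriction $q\rest{\Pi_\Gamma}$ has signature $(1, d-1)$, and $\Gamma_\nu\rest{\Pi_\Gamma}$ lies in $\O^+(\Pi_\Gamma, q\rest{\Pi_\Gamma}) \cong \O^+_{1,d-1}(\R)$. By Lemma~\ref{lem:KAK}, any such isometry $\gamma$ has a KAK decomposition $\gamma = k_1 a k_2$ with $k_i$ in a fixed maximal compact subgroup and $a$ a Cartan element whose singular values are $e^{r(\gamma)}, 1, \ldots, 1, e^{-r(\gamma)}$ with $r(\gamma) = d_\Hyp(e_0, \gamma \cdot e_0)$. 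A direct computation on $a$ gives
\[
\norm{\Lambda^j a} = e^{r(\gamma)} \text{ for } 1 \leq j \leq d-1, \qquad \norm{\Lambda^d a} = 1,
\]
and since $k_1, k_2$ live in a fixed compact group the same identities hold for $\gamma$ up to a uniform multiplicative constant. Corollary~\ref{cor:KAK} combined with the moment condition~\eqref{eq:moment_H1_final} gives $\int r(\gamma) \, d\nu(\gamma) < \infty$, hence $\log\norm{\Lambda^j \gamma}$ has a finite first moment, and Kingman's subadditive ergodic theorem applied to the cocycles $n \mapsto \log\norm{\Lambda^j (f^n_\omega)^*\rest{\Pi_\Gamma}}$ shows that for $\nu^\N$-almost every $\omega$ this limit equals a constant $\lambda \geq 0$ for $1 \leq j \leq d-1$ and $0$ for $j = d$. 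Since the Lyapunov exponents $\lambda_1 \geq \cdots \geq \lambda_d$ of the cocycle on $\Pi_\Gamma$ satisfy $\lambda_1 + \cdots + \lambda_j = \lim \frac{1}{n}\log\norm{\Lambda^j (f^n_\omega)^*\rest{\Pi_\Gamma}}$, we deduce $\lambda_1 = \lambda$, $\lambda_2 = \cdots = \lambda_{d-1} = 0$, and $\lambda_d = -\lambda$.

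It remains to show that $\lambda > 0$. By Proposition~\ref{pro:NS_H11}, $\Gamma_\nu$ acts strongly irreducibly on $\Pi_\Gamma$, and it contains loxodromic elements whose action on $\Pi_\Gamma$ is proximal (the isotropic directions fixed in $\partial \Hyp_X \cap \P(\Pi_\Gamma)$ correspond to simple dominant eigenvalues). Furstenberg's positivity theorem (see \cite[Theorem~III.6.1]{bougerol-lacroix} or the corresponding statement in \cite{benoist-quint_book}) applies under these hypotheses together with the moment condition, yielding $\lambda > 0$. Combining with the analysis of $\Pi_\Gamma^\perp$, the full Lyapunov spectrum on $H^{1,1}(X;\R)$ is $\lambda$ (multiplicity $1$), $0$ (multiplicity $h^{1,1}(X) - 2$), and $-\lambda$ (multiplicity $1$), and $\lambda_{H^{1,1}} = \lambda > 0$. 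I expect the main technical point to be confirming that the Cartan-based computation of exterior norms faithfully controls the Lyapunov spectrum despite the compact factors $k_i$ and the choice of an auxiliary Euclidean structure on $\Pi_\Gamma$; these corrections are $O(1)$ in the logarithm and vanish after dividing by $n$, so the obstacle is largely bookkeeping rather than genuinely difficult.
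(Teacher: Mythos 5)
Your proposal is correct and follows essentially the same route as the paper: the invariant splitting $\Pi_{\Gamma_\nu}\oplus\Pi_{\Gamma_\nu}^\perp$ from Proposition~\ref{pro:NS_H11}, vanishing exponents on the negative-definite factor since the group acts there through a compact group, Furstenberg's positivity theorem on $\Pi_{\Gamma_\nu}$ (strong irreducibility plus unboundedness, under the moment condition), and the KAK decomposition to pin down the remaining spectrum. The paper leaves the exterior-power/KAK bookkeeping implicit, which you carry out explicitly; the only discrepancy is the citation (the paper points to \cite[Thm III.6.3]{bougerol-lacroix} rather than III.6.1), which is immaterial.
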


\begin{proof}
 Consider the $\Gamma_\nu$-invariant decomposition 
$\Pi_{\Gamma_\nu}\oplus \Pi_{\Gamma_\nu}^\perp$ given by Proposition \ref{pro:NS_H11} and Equation~\eqref{eq:piGamma_nu-def}.
Since  the intersection form is negative definite on $\Pi_{\Gamma_\nu}^\perp$, the group $\Gamma_\nu^*\rest{\Pi_{\Gamma_\nu}^\perp}$ is bounded and 
all Lyapunov exponents of $\Gamma_\nu^*\rest{\Pi_{\Gamma_\nu}^\perp}$ vanish.
The 
linear action of $\Gamma_\nu$ on $\Pi_{\Gamma_\nu}$ is strongly irreducible and non-elementary, hence not relatively compact. 
Therefore Furstenberg's theorem asserts that 
$\lambda_{H^{1,1}}>0$ (see e.g.~\cite[Thm III.6.3]{bougerol-lacroix} or~\cite[Cor 4.32]{benoist-quint_book}), and 
the remaining properties of the Lyapunov spectrum on $\Pi_{\Gamma_\nu}$  follow from
 the KAK decomposition in $\O^+_{1,m}(\R)$, with $1+m=\dim(\Pi_{\Gamma_\nu})$ (see Lemma~\ref{lem:KAK}). \end{proof}
 
\begin{lem}\label{lem:growth}
If $a\in H^{1,1}(X;\R)$ satisfies $a^2>0$, for instance if $a$ is a K\"ahler class,  then
\[
\lim_{n\to +\infty}\frac{1}{n}\log \abs{ (f_\omega^n)^*a} =\lambda_{H^{1,1}}
\]
for $\nu^\N$-almost every $\omega$.
\end{lem}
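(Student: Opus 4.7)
The plan is to prove the two inequalities $\limsup \leq \lambda_{H^{1,1}}$ and $\liminf \geq \lambda_{H^{1,1}}$ separately. The upper bound is immediate: $\abs{(f_\omega^n)^* a} \leq \norm{(f_\omega^n)^*}\cdot \abs{a}$, so the $\limsup$ bound follows from \eqref{eq:def_LE2} after taking $\frac{1}{n}\log$ on both sides.

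For the lower bound, the idea is to exploit the Minkowski structure on $H^{1,1}(X;\R)$ from \S\ref{par:hyp_X}. Since $a^2 > 0$, after replacing $a$ by $-a$ and rescaling (neither operation affects $\lim \frac{1}{n}\log\abs{(f_\omega^n)^* a}$) one may assume that $a \in \Hyp_X$. The intersection form is $(f_\omega^n)^*$-invariant, so $(f_\omega^n)^* a \in \Hyp_X$ and the adjoint of $(f_\omega^n)^*$ coincides with $((f_\omega^n)\inv)^*$. These two facts yield
\[
\langle (f_\omega^n)^* a \,\vert\, [\kappa_0]\rangle \;=\; \langle a \,\vert\, ((f_\omega^n)\inv)^*[\kappa_0]\rangle \;=\; \cosh d_\Hyp\bigl(a,\, ((f_\omega^n)\inv)^*[\kappa_0]\bigr).
\]
Combining the triangle inequality $d_\Hyp\bigl(a,\, ((f_\omega^n)\inv)^*[\kappa_0]\bigr) \geq d_\Hyp\bigl([\kappa_0],\, ((f_\omega^n)\inv)^*[\kappa_0]\bigr) - d_\Hyp(a, [\kappa_0])$ with Corollary~\ref{cor:KAK} and the KAK identity $\norm{((f_\omega^n)\inv)^*} = \norm{(f_\omega^n)^*}$, the right-hand side grows at exponential speed $\lambda_{H^{1,1}}$ almost surely by \eqref{eq:def_LE2}. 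Since $\langle y \,\vert\, [\kappa_0]\rangle \lesssim \abs{y}$ and $\log \cosh t \sim t$ as $t \to +\infty$, this gives $\liminf \frac{1}{n}\log\abs{(f_\omega^n)^* a} \geq \lambda_{H^{1,1}}$, completing the proof.

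No serious obstacle is expected: the required tools are all in place. An alternative, more algebraic route would be to decompose $a = a_+ + a_0$ along the invariant splitting $\Pi_{\Gamma_\nu} \oplus \Pi_{\Gamma_\nu}^\perp$ (noting that $a^2 > 0$ forces $a_+ \neq 0$, since $q$ is negative definite on $\Pi_{\Gamma_\nu}^\perp$), and then invoke Furstenberg's theorem on the strongly irreducible, proximal factor $\Pi_{\Gamma_\nu}$ to conclude $\frac{1}{n}\log\abs{(f_\omega^n)^* a_+} \to \lambda_{H^{1,1}}$ directly; the hyperbolic-geometric argument above is however shorter and sidesteps having to cite a form of Furstenberg's theorem with guaranteed convergence for every nonzero initial vector.
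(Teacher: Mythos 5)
Your proof is correct and is essentially the paper's argument: the paper simply notes that Corollary~\ref{cor:KAK} gives $\abs{f^*a}\asymp\norm{f^*}$ for $a\in\Hyp_X$ (constants depending only on $a$) and concludes by \eqref{eq:def_LE2}. Your adjoint/triangle-inequality computation is just a spelled-out derivation of that same two-sided comparison, so there is nothing genuinely different to flag.
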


\begin{proof}
Corollary \ref{cor:KAK} implies that if $a\in \Hyp_X$ then for every $f\in \Aut(X)$, 
$\abs{f^*a}\asymp \norm{f^*}$, where the implied constants depend only on $a$. Thus the result follows from Equation~\eqref{eq:def_LE2}. 
\end{proof}

\begin{rem}\label{rem:order_composition}
It is natural to expect that  Lemma \ref{lem:growth} holds for any $a\in \Pi_\Gamma\setminus \set{0}$; 
this is true under the more stringent moment assumption \eqref{eq:exponential_moment}  (see the proof 
of Proposition~\ref{pro:gouezel} below).
\end{rem}

If the order of compositions is reversed (which is less  natural from the point of view of iterated pull-backs), then Lemma \ref{lem:growth} indeed holds for any $a$ in $\Pi_{\Gamma_\nu}$ (see~\cite[Cor. III.3.4.i]{bougerol-lacroix}):

\begin{lem}\label{lem:growth2}
For any $a\in \Pi_{\Gamma_\nu}\setminus\set{0}$ and for $\nu^\N$-almost every $\omega=(f_n)_{n\geq 0} \in \Omega$ we have
$$
\lim_{n\to +\infty}\frac{1}{n}\log \abs{  f_n^*\cdots f_1^* a} =\lambda_{H^{1,1}}. 
$$
\end{lem}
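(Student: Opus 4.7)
The plan is to deduce the statement from the classical Furstenberg theorem on random matrix products, applied to the linear action of $\Gamma_\nu$ on the Minkowski factor $\Pi_{\Gamma_\nu}$. Since $a\in \Pi_{\Gamma_\nu}$ and this subspace is $\Gamma_\nu$-invariant, the vector $f_n^*\cdots f_1^* a$ stays in $\Pi_{\Gamma_\nu}$ for every $n$, so it is enough to analyze the random walk $g_n\cdots g_1$, where $g_k := f_k^*\rest{\Pi_{\Gamma_\nu}} \in \GL(\Pi_{\Gamma_\nu})$. The $g_k$ are i.i.d.\ with law given by the push-forward of $\nu$ under the restriction homomorphism $\Aut(X)\to \GL(\Pi_{\Gamma_\nu})$, and the moment condition~\eqref{eq:moment_H1_final} translates into $\int (\log\norm{g_1}+\log\norm{g_1^{-1}})\, d\nu < \infty$.

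By Proposition~\ref{pro:NS_H11}, the subgroup generated by the support of this law acts strongly irreducibly on $\Pi_{\Gamma_\nu}$ and is non-elementary as a subgroup of $\O(\Pi_{\Gamma_\nu})$; in particular it is unbounded and contains loxodromic elements, which are proximal with respect to the Minkowski structure. These are precisely the hypotheses of the classical Furstenberg theorem (Corollary~III.3.4.(i) of~\cite{bougerol-lacroix}), which then yields: for every nonzero $a\in \Pi_{\Gamma_\nu}$,
\[
\frac{1}{n}\log \abs{g_n\cdots g_1 a} \tendvers \lambda_1
\]
for $\nu^\N$-almost every $\omega$, where $\lambda_1$ denotes the top Lyapunov exponent of the random product $g_n\cdots g_1$.

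It remains to identify $\lambda_1$ with $\lambda_{H^{1,1}}$. The $\Gamma_\nu$-invariant decomposition $H^{1,1}(X;\R) = \Pi_{\Gamma_\nu} \oplus \Pi_{\Gamma_\nu}^\perp$ is preserved by each $f_k^*$, and on $\Pi_{\Gamma_\nu}^\perp$ the intersection form is negative definite and $\Gamma_\nu$-invariant, so $\Gamma_\nu^*\rest{\Pi_{\Gamma_\nu}^\perp}$ is relatively compact and all of its Lyapunov exponents vanish. Consequently the top Lyapunov exponent of the action on $H^{1,1}(X;\R)$ coincides with $\lambda_1$, and this is $\lambda_{H^{1,1}}$ by its very definition (note that $\norm{(f_\omega^n)^*}$ and $\norm{f_n^*\cdots f_1^*}$ have the same distribution).

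I do not anticipate a genuine obstacle: all the structural work has been carried out in Section~\ref{par:Hodge_Minkowski}, and the proof reduces to invoking the appropriate black box from the theory of random matrix products. The only delicate point worth emphasizing is the contrast with Lemma~\ref{lem:growth}, in which the pull-backs are composed in the reversed (chronological) order $(f_\omega^n)^* = f_0^*\circ\cdots\circ f_{n-1}^*$: in that setting the slowest-growth direction of the random product depends measurably on $\omega$, so the limit can fail for a fixed $a$ lying in this $\omega$-dependent subspace, which is why the hypothesis $a^2>0$ (or stronger moment assumptions) is needed there but not here.
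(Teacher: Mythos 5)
Your proposal is correct and follows essentially the same route as the paper, which proves this lemma simply by invoking Bougerol–Lacroix, Cor.~III.3.4(i), for the strongly irreducible, proximal restriction of the action to $\Pi_{\Gamma_\nu}$ (with strong irreducibility coming from Proposition~\ref{pro:NS_H11} and the moment hypothesis from~\eqref{eq:moment_H1_final}). Your extra identification of the top exponent with $\lambda_{H^{1,1}}$ via the boundedness of the action on $\Pi_{\Gamma_\nu}^\perp$ matches the argument the paper already gives when computing the Lyapunov spectrum on $H^{1,1}(X;\R)$.
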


\subsection{The measure $\mu_\partial$}\label{par:furstenberg_measure} 

By Furstenberg's theory the linear projective action of the random  dynamical system
$(X, \nu)$ on $\P\Pi_{\Gamma_\nu} \subset \P H^{1,1}(X;\R)$ admits a unique stationary measure $\mu_{\P\Pi_{\Gamma_\nu}}$; this measure does
not charge any proper  projective subspace of $\P\Pi_{\Gamma_\nu}$.     
Recall that the mass of a class $a$ is defined by $\M(a)=\langle a \vert [\kappa_0]\rangle$ (see \S~\ref{par:cones_definition}). 

\begin{lem}\label{lem:definition_e}
For $\nu^\N$-almost every $\omega$, there exists a unique nef class $e(\omega)$  such that $\M(e(\omega))=1$ and
\begin{equation}\label{eq:definition_e}
 \frac{1}{\M((f_\omega^n)^*a)} (f_\omega^n)^*a \underset{n\to\infty}\longrightarrow e(\omega)
\end{equation}
for any  pseudo-effective 
class $a$ with $a^2>0$ (in particular for any K\"ahler class). 
In addition, the class $e(\omega)$ is almost surely isotropic and  $\P(e(\omega))$ is a 
point  of the limit set $\Lim(\Gamma_\nu)\subset\partial\Hyp_X$. 
\end{lem}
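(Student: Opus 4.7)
The plan is to show that, for a fixed K\"ahler class $v$ with $\M(v) = 1$, the sequence $e_n(\omega) := (f_\omega^n)^* v / \M((f_\omega^n)^* v)$ is almost surely Cauchy in $H^{1,1}(X;\R)$. By the $\Gamma_\nu$-invariant decomposition $H^{1,1}(X;\R) = \Pi_{\Gamma_\nu} \oplus \Pi_{\Gamma_\nu}^\perp$ from~\S\ref{par:dec_h11_Gamma}, together with the fact that the intersection form is negative definite on $\Pi_{\Gamma_\nu}^\perp$ and that the action of $\Gamma_\nu$ on this factor is contained in a compact group, the component of $(f_\omega^n)^* v$ in $\Pi_{\Gamma_\nu}^\perp$ stays bounded. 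Meanwhile Lemma~\ref{lem:growth} yields $\M((f_\omega^n)^* v) \asymp e^{n\lambda_{H^{1,1}}}$ almost surely, so after normalization $e_n(\omega)$ is asymptotically supported on $\Pi_{\Gamma_\nu}$, and one may argue inside this Minkowski space.

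The key technical input is a cone-contraction estimate: if $g \in \O^+(\Pi_{\Gamma_\nu})$ has KAK decomposition $g = K_1 A_r K_2$ (with $r = d_\Hyp(e_0, g\cdot e_0)$ the hyperbolic displacement), and if $u_1, u_2$ lie in the open future cone at hyperbolic distance at most $D$ from $e_0$, then the projective (Fubini--Study) distance between $\P(g u_1)$ and $\P(g u_2)$ is bounded by $C e^{D-r}$ for some $C = C(e_0)$. This follows by diagonalizing $A_r$ on its invariant plane in the isotropic eigenbasis $(e_+, e_-)$ of eigenvalues $e^{\pm r}$: writing $K_2 u_i = c_+^i e_+ + c_-^i e_- + w_i$, one has $c_+^i \gtrsim e^{-D}$ (since $K_2 u_i$ lies in the future cone at hyperbolic distance $\leq D$ from $e_0$), so $A_r(K_2 u_i)$ is dominated by $c_+^i e^r e_+$ with relative error $O(e^{-r+D})$, and the isometry $K_1$ preserves projective distances.

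To apply this, write $e_{n+1}(\omega)$ as the mass-one normalization of $(f_\omega^n)^* (f_n^* v)$, using $(f_\omega^{n+1})^* = (f_\omega^n)^* \circ f_n^*$, and take $u_1 = v$, $u_2 = f_n^* v$, $g = (f_\omega^n)^*$. The moment condition~\eqref{eq:moment_H1_final} together with Lemma~\ref{lem:borel-cantelli_moment} gives $d_\Hyp(e_0, f_n^* v) = o(n)$ almost surely, while Lemma~\ref{lem:growth} and Corollary~\ref{cor:KAK} yield $r_n := d_\Hyp(e_0, (f_\omega^n)^* e_0) \sim n\lambda_{H^{1,1}}$. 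Hence
\[
d_\P\bigl(\P(e_n(\omega)), \P(e_{n+1}(\omega))\bigr) \lesssim e^{o(n) - n\lambda_{H^{1,1}}},
\]
which is summable; so $(\P(e_n(\omega)))$ is Cauchy almost surely, and since the mass-one section of the projection is continuous, $(e_n(\omega))$ itself converges to some $e(\omega) \in H^{1,1}(X;\R)$ with $\M(e(\omega)) = 1$. The same estimate with $v$ replaced by any pseudo-effective $a$ with $a^2 > 0$ shows convergence to the same limit, which delivers both~\eqref{eq:definition_e} and the uniqueness of $e(\omega)$. The class $e(\omega)$ is nef (a limit of K\"ahler classes) and isotropic because $e_n^2 = v^2/\M((f_\omega^n)^* v)^2 \to 0$; finally $\P(e(\omega))$ lies on $\partial \Hyp_X$, and since $\M((f_\omega^n)^* v) \to \infty$ prevents $\P(e(\omega))$ from being attained within the orbit, it is a genuine accumulation point of $\Gamma_\nu \cdot \P(v)$, hence lies in $\Lim(\Gamma_\nu)$ by characterization~(c) of~\S\ref{par:limit_set_I}. \emph{The main obstacle} is establishing the cone-contraction lemma with a dependence on $D$ tight enough that the error factor $e^{D_n}$ coming from the random hyperbolic displacement of the increment $f_n^*$ is absorbed by the Lyapunov contraction $e^{-r_n}$ in the one-step Cauchy estimate.
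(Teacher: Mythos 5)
Your proposal is correct in substance, but it follows a genuinely different route from the paper. The paper's proof invokes the contraction property of strongly irreducible, proximal random matrix products (Bougerol--Lacroix, Thm.~III.3.1) applied to $\Gamma_\nu^*\rest{\Pi_{\Gamma_\nu}}$: almost surely, every cluster value of $\norm{f_0^*\cdots f_n^*}^{-1}f_0^*\cdots f_n^*$ has rank one, its range defines $\R e(\omega)$, and the identification with normalized pull-backs then uses the comparability of mass and norm (Corollary~\ref{cor:KAK}, Equation~\eqref{eq:comparison_norm_mass}); classes $a\in\Hyp_X$ outside $\Pi_{\Gamma_\nu}$ are handled, as in your last step, by the constancy of hyperbolic distance under pull-back. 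You instead use only the positivity of $\lambda_{H^{1,1}}$ (itself Furstenberg's theorem, already proved in \S\ref{subs:furstenberg}) together with a deterministic hyperbolic-geometry fact: an isometry of a Minkowski space with large displacement $r$ maps any bounded region of the hyperbolic space into a projectively $O(e^{-r+O(D)})$-small neighbourhood of a single boundary direction. Combined with the Borel--Cantelli control $\log\norm{f_n^*}=o(n)$ and $\frac1n\log\norm{(f_\omega^n)^*}\to\lambda_{H^{1,1}}$ from~\eqref{eq:def_LE2}, this makes $(\P(e_n(\omega)))_n$ a Cauchy sequence with exponentially small increments. What each approach buys: yours is more self-contained and quantitative (it gives a speed of convergence of the directions and treats any $a$ with $a^2>0$ directly, without first descending to $\Pi_{\Gamma_\nu}$), while the paper's rank-one statement is exactly the input needed later for the Furstenberg boundary theory (Theorem~\ref{thm:def_stationary}) and avoids the bookkeeping below.

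Two small repairs, neither a real gap. First, your contraction lemma is stated for $g\in\O^+(\Pi_{\Gamma_\nu})$ but applied with $u_1=v$, $u_2=f_n^*v$, which need not lie in $\Pi_{\Gamma_\nu}$; either run the same KAK computation in the full Minkowski space $H^{1,1}(X;\R)$ (where $(f_\omega^n)^*$ lies in $\O^+$ and its displacement in $\Hyp_X$ is $\sim n\lambda_{H^{1,1}}$ by Corollary~\ref{cor:KAK}), or split $v=v_++v_0$ along $\Pi_{\Gamma_\nu}\oplus\Pi_{\Gamma_\nu}^\perp$, note $q(v_+)>0$, and use boundedness of the action on $\Pi_{\Gamma_\nu}^\perp$; also the honest bound is $Ce^{2D-r}$ rather than $Ce^{D-r}$ (the component orthogonal to the expanding plane is not contracted), which changes nothing since $D_n=o(n)$. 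Second, in the limit-set step, the fact that $\M((f_\omega^n)^*v)\to\infty$ does not by itself rule out projective equality $\P((f_\omega^n)^*v)=\P(e(\omega))$; the correct reason is the one you already have, namely $e(\omega)$ is isotropic while every orbit point $\P((f_\omega^n)^*v)$ has positive self-intersection, so $\P(e(\omega))$ is a limit of orbit points distinct from it and characterization~(c) of \S\ref{par:limit_set_I} applies.
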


Before starting the proof, note that $\Gamma_\nu^*\rest{\Pi_{\Gamma_\nu}}$ is proximal, in the sense of~\cite[\S 4.1]{benoist-quint_book}; 
equivalently, $\Gamma_\nu^*\rest{\Pi_{\Gamma_\nu}}$ is contracting, in the sense of~\cite[Def III.1.3]{bougerol-lacroix}.
In other words, there are sequences of elements $g_n\in \Gamma_\nu$ such that $\norm{g_n^*}^{-1}g_n^*\rest{\Pi_{\Gamma_\nu}}$ converges to a matrix of
rank $1$: for instance 
one can take $g_n=f^n$, where $f\in \Gamma_\nu$ is any loxodromic automorphism.

\begin{proof}
For $f\in \Aut(X)$,   we use 
the   notation   $\underline f^*$ for its action on    $\P H^{1,1}(X;\R) $. 
Since the action of $\Gamma_\nu$ on $\Pi_{\Gamma_\nu}$ is strongly irreducible and proximal, 
its projective action satisfies the following contraction property (see~\cite[Thm III.3.1]{bougerol-lacroix}): 
there is a measurable map $\omega\in \Omega\mapsto  \underline{e}(\omega)\in \P\Pi_{\Gamma_\nu}$ such that 
for almost every $\omega$, any cluster value $L(\omega)$ of 
\begin{equation}
\unsur{\norm {f_0^*\cdots   f_n^* }}f_0^*\cdots   f_n^*
\end{equation}
in $\End(\Pi_{\Gamma_\nu})$ is an endomorphism of rank $1$ whose range is equal to $\R\underline{e}(\omega)$.
 
Let $e(\omega)$ be the unique vector of mass $1$ in the line $\R\underline{e}(\omega)$. If $a\in \Pi_{\Gamma_\nu}$ satisfies $a^2>0$ and $\M(a)>0$, 
then any cluster value of $\M((f_\omega^n)^*a)^{-1}(f_\omega^n)^*a$ 
must coincide with $e(\omega)$ because by Corollary \ref{cor:KAK}
the mass $\M((f_\omega^n)^*a)$ is comparable to the norm $\norm {f_0^*\cdots   f_n^* }$. Thus, the convergence \eqref{eq:definition_e} is
satisfied. Furthermore  $e(\omega)$ is nef, because we can apply this convergence to a nef class $a$ and $\Aut(X)$ preserves the nef cone. Also,  $e(\omega)$ belongs to  $\Lim(\Gamma_\nu)$, hence it is isotropic.  
Now, let $a$ and $a'$ be two classes of $\Hyp_X$ with $a\in \Pi_{\Gamma_\nu}$. Since the hyperbolic distance between 
$ (f_\omega^n)^*(a)$ and $(f_\omega^n)^*(a')$ remains constant and  the convergence 
\eqref{eq:definition_e} holds for~$a$, it also holds for $a'$.
This concludes the proof, for every class with positive self-intersection is proportional to a unique class in $\Hyp_X$. 
\end{proof}
\begin{vlongue}
\begin{rem}
As in Remark \ref{rem:order_composition}, under the exponential moment condition \eqref{eq:exponential_moment},
the convergence in Equation~\eqref{eq:definition_e} holds for any $a\in \Pi_\Gamma\setminus \set{0}$ and almost every $\omega\in \Omega$; 
to be precise, $\frac{1}{\M((f_\omega^n)^*a)} (f_\omega^n)^*a$ converges towards $e(\omega)$ or its opposite.
Then, we actually get the convergence for any $a\in H^{1,1}(X;\R)\setminus \Pi_\Gamma^\perp$
(write $a = a_++a_0  $  and use  that   $\Gamma_\nu$ acts by isometries on $\Pi_\Gamma^\perp$)
\end{rem}
\end{vlongue}

Here is a summary of the properties of  the stationary measure $\mu_{\P\Pi_{\Gamma_\nu}}$; from now on, we view it as a measure on 
$\P H^{1,1}(X;\R)$ and rename it as $\mu_\fr$ because it is supported on $\fr \Hyp_X$.

\begin{thm}\label{thm:def_stationary} The probability  measure defined on  $\P H^{1,1}(X;\R)$ by
\begin{equation}\label{eq:representation_mufr} 
\mu_{\partial}=\int \delta_{\P(e(\omega))}\, d\nu^\N(\omega)
\end{equation}
is  $\nu$-stationary and  ergodic. 
It is the unique stationary measure on $\P H^{1,1}(X;\R) $ such that  
$\mu_\partial(\P (\Pi_{\Gamma_\nu}^\perp))=0$. The measure $\mu_\partial$ has no atoms  and is supported on   $\Lim(\Gamma_\nu)$; 
in particular, if  $\Lambda'\subset \Lim(\Gamma_\nu)$ is such that $\mu_\partial(\Lambda')>0$ then $\Lambda'$ is 
uncountable. 

The top Lyapunov exponent satisfies  the so-called Furstenberg formula:  
\begin{align}
\label{eq:Furstenberg_formula1}\lambda_{H^{1,1}} & =  \int \log\left( \frac{\abs{ f^*{\tilde{u}}}}{\abs{ {\tilde{u}}}}\right)\, d\nu(f)\, d\mu_\partial(u), 
\end{align}
where $\tilde u\in H^{1,1}(X, \R)\setminus\set{0}$ denotes any lift of $u\in \Lim(\Gamma_\nu) \subset \P H^{1,1}(X, \R)$. 
\end{thm}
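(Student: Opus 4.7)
The plan is to verify the statement piece by piece, relying on the construction of $e(\omega)$ in Lemma~\ref{lem:definition_e} together with classical Furstenberg theory on the strongly irreducible proximal factor $\Pi_{\Gamma_\nu}$.

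\emph{Stationarity, support, and uniqueness.} The first step is to derive the equivariance relation $e(\omega)=f_0^{*}e(\sigma\omega)/\M(f_0^{*}e(\sigma\omega))$, which follows directly by applying the limit in \eqref{eq:definition_e} to the identity $(f_\omega^{n})^{*}=f_0^{*}\circ (f_{\sigma\omega}^{n-1})^{*}$ (the mass denominator is continuous and non-vanishing, since $f_0^{*}e(\sigma\omega)$ lies in $\Lim(\Gamma_\nu)\subset \Kahbar(X)$). Projectivizing gives $\delta_{\P(e(\omega))}=(\underline{f_0^{*}})_{\varstar}\delta_{\P(e(\sigma\omega))}$, and stationarity of $\mu_\partial$ is obtained by integrating against $d\nu^\N(\omega)=d\nu(f_0)\,d\nu^\N(\sigma\omega)$ and applying Fubini. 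The inclusion $\supp(\mu_\partial)\subset \Lim(\Gamma_\nu)$ is immediate from Lemma~\ref{lem:definition_e}. For uniqueness, given any stationary $\mu'$ with $\mu'(\P(\Pi_{\Gamma_\nu}^{\perp}))=0$, consider the Furstenberg martingale $M_n(\omega)=(\underline{f_1^{*}\circ\cdots\circ f_n^{*}})_{\varstar}\mu'$, which has constant expectation $\mu'$ and converges $\nu^\N$-almost surely in the space of probability measures. Since the action on $\Pi_{\Gamma_\nu}$ is strongly irreducible and proximal (Proposition~\ref{pro:NS_H11} combined with the contraction property used in Lemma~\ref{lem:definition_e}, reversed in composition order), the classical argument (see \cite{bougerol-lacroix}, Chap.~III, or \cite{benoist-quint_book}, Ch.~4) shows that $M_\infty(\omega)$ is $\nu^\N$-almost surely a Dirac mass at a point $\tilde e(\omega)\in \Lim(\Gamma_\nu)$ with the same distribution as $e(\omega)$. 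Taking expectations gives $\mu'=\mu_\partial$.

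\emph{Atomlessness, ergodicity, and uncountable subsets.} If $\mu_\partial$ had an atom at $u_0$, then by stationarity and finiteness of masses of atoms, the orbit $\Gamma_\nu^{*}\cdot u_0$ would be finite, contradicting the fact that $\Gamma_\nu$ acts strongly irreducibly on the Minkowski space $\Pi_{\Gamma_\nu}$ of dimension $\geq 3$ (Corollary~\ref{cor:pic_number_3}); hence $\mu_\partial$ is atomless, and any Borel set $\Lambda'\subset \Lim(\Gamma_\nu)$ of positive measure is uncountable. Ergodicity then follows from uniqueness: the set of $\nu$-stationary measures supported off $\P(\Pi_{\Gamma_\nu}^{\perp})$ reduces to $\{\mu_\partial\}$, so $\mu_\partial$ is necessarily extremal.

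\emph{Furstenberg's formula and the main difficulty.} Consider the skew product $T(\omega,u)=(\sigma\omega,\underline{f_0^{*}}u)$ on $\Omega\times \P H^{1,1}(X;\R)$; stationarity of $\mu_\partial$ makes $\nu^\N\otimes \mu_\partial$ invariant under $T$, and ergodicity of $\mu_\partial$ together with the Bernoulli nature of $\sigma$ implies ergodicity of this skew product by a standard argument. Set $\Phi(\omega,u)=\log(|f_0^{*}\tilde u|/|\tilde u|)$; the moment condition \eqref{eq:moment_H1_final} guarantees $\Phi\in L^{1}(\nu^\N\otimes \mu_\partial)$. The Birkhoff sums $\frac{1}{n}\sum_{k=0}^{n-1}\Phi\circ T^{k}(\omega,u)$ telescope to $\frac{1}{n}\log(|f_{n-1}^{*}\cdots f_0^{*}\tilde u|/|\tilde u|)$, and since $\supp(\mu_\partial)\subset \P\Pi_{\Gamma_\nu}$, Lemma~\ref{lem:growth2} identifies the almost sure limit with $\lambda_{H^{1,1}}$. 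The expectation of $\Phi$ under $\nu^\N\otimes\mu_\partial$ is precisely the right-hand side of \eqref{eq:Furstenberg_formula1}, and equality of the two limits yields Furstenberg's formula. The trickiest point is the direction of composition: the Birkhoff telescoping produces $f_{n-1}^{*}\cdots f_0^{*}$ rather than $(f_\omega^{n})^{*}=f_0^{*}\cdots f_{n-1}^{*}$, and because the representatives $\tilde u$ of points in $\Lim(\Gamma_\nu)$ are isotropic, one cannot invoke Lemma~\ref{lem:growth} directly and must rely on Lemma~\ref{lem:growth2}, whose hypothesis fits the reverse composition automatically appearing in the cocycle.
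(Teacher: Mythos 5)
Most of your argument is sound and follows the paper's route: stationarity via the equivariance $f_0^*e(\sigma\omega)=\M(f_0^*e(\sigma\omega))\,e(\omega)$, atomlessness from strong irreducibility on $\Pi_{\Gamma_\nu}$, ergodicity deduced from uniqueness, and a direct Birkhoff/telescoping proof of Furstenberg's formula on the skew product, correctly resting on Lemma~\ref{lem:growth2} because of the order of composition (the paper simply cites Bougerol--Lacroix for ergodicity, atomlessness and the formula, so your direct arguments are legitimate substitutes).

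The uniqueness step, however, has a genuine gap. You invoke ``the classical argument'' to conclude that the martingale limit $M_\infty(\omega)$ is almost surely a Dirac mass, but that theorem concerns strongly irreducible, proximal actions on $\P(V)$, whereas your measure $\mu'$ lives on $\P H^{1,1}(X;\R)$, where the representation is \emph{reducible}: it splits as $\Pi_{\Gamma_\nu}\oplus\Pi_{\Gamma_\nu}^\perp$, and the hypothesis $\mu'(\P(\Pi_{\Gamma_\nu}^\perp))=0$ still allows $\mu'$ to charge ``mixed'' directions $[u_++u_0]$ with both components nonzero, to which the classical statement does not apply; your parenthetical appeal to Proposition~\ref{pro:NS_H11} and the contraction property only covers directions inside $\P\Pi_{\Gamma_\nu}$. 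The missing ingredient---exactly what the paper's proof supplies---is that $\Gamma_\nu^*\rest{\Pi_{\Gamma_\nu}^\perp}$ lies in a compact group, so $\norm{(f_\omega^n)^*\rest{\Pi_{\Gamma_\nu}^\perp}}$ stays bounded while $\norm{(f_\omega^n)^*}$ tends to infinity; this forces the images of $\mu'$-almost every direction to drift into $\P\Pi_{\Gamma_\nu}$, hence the martingale limits $\mu_\omega$ are supported on $\P\Pi_{\Gamma_\nu}$, hence $\mu'=\int\mu_\omega\,d\nu^\N$ gives full mass to $\P\Pi_{\Gamma_\nu}$, and only then does the classical uniqueness statement on $\P\Pi_{\Gamma_\nu}$ yield $\mu'=\mu_\partial$. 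As written, your claim that $M_\infty(\omega)=\delta_{\P(e(\omega))}$ assumes precisely the point that distinguishes this theorem from the textbook one; you should insert the reduction to $\P\Pi_{\Gamma_\nu}$ before citing the classical result.
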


\begin{proof}
The  ergodicity of $\mu_\fr=\mu_{\P\Pi_{\Gamma_\nu}}$ as well as its representation \eqref{eq:representation_mufr} 
follow from the properties of the action 
of $\Gamma_\nu$ on $\P (\Pi_\Gamma)$ (see \cite[Chap. III]{bougerol-lacroix}). Also, we know that $\lambda_{H^{1,1}}$ is equal 
to the top Lyapunov exponent of the restriction of the action to   $\P (\Pi_{\Gamma_\nu})$, so the 
formula  \eqref{eq:Furstenberg_formula1} follows from   the strongly irreducible case (see~\cite[Cor III.3.4]{bougerol-lacroix}). 

Let now $\mu$ be a stationary measure on $\P H^{1,1}(X;\R)$ such that $\mu(\P \Pi_{\Gamma_\nu}^\perp)=0$.  
A martingale convergence argument shows that
$(\underline f_\omega^n)^*\mu$ converges to some measure $\mu_\omega$  for almost every
$\omega$ (see \cite[Lem. II.2.1]{bougerol-lacroix}). 
Since $\Gamma_\nu$ preserves the decomposition $\Pi_{\Gamma_\nu} \oplus \Pi_{\Gamma_\nu}^\perp$ and 
$\norm{ (f_\omega^n)^*}$ tends to infinity while $\parallel (f_\omega^n)^*\rest{\Pi_{\Gamma_\nu}^\perp} \parallel$ 
stays uniformly bounded, 
we get that $(f_\omega^n)^*u$ converges to $\P \Pi_{\Gamma_\nu}$ for $\mu$-almost every $u$ and $\nu^\N$-almost every $\omega$; thus $\mu_\omega$ is almost surely supported on $\P \Pi_{\Gamma_\nu}$. Since by stationarity
 $\mu  = \int \mu_\omega d\nu^\N(\omega)$ we conclude that 
 $\mu$ gives full mass to $\P (\Pi_{\Gamma_\nu})$, hence $\mu = \mu_\fr$. 
%
\end{proof}

\begin{rem}
If $\supp(\nu)$ generates $\Gamma_\nu$ as a semi-group, then $\supp(\mu_\partial) = \Lim(\Gamma_\nu)$, 
otherwise the inclusion can be strict: take   a Schottky group $\Gamma = \langle f,g \rangle \subset \mathsf{PSL}(2, \R)$ and 
$\nu = (\delta_f+\delta_g)/2$.
\end{rem}

\begin{rem}\label{rem:lambda_with_log_mass}
Since $\Lim(\Gamma_\nu)\subset \Psef(X)$, for every $u\in \Lim(\Gamma_\nu)$ there exists a unique 
$\tilde u$ such that $\P \tilde u  = u$ and $\langle \tilde{u}\, \vert \, [\kappa_0]\rangle= \M (\tilde u) = 1$. Then the following 
formula holds: 
\begin{align}
\label{eq:Furstenberg_formula2}
\lambda_{H^{1,1}} =   \int \log   \lrpar{\M(f^*{\tilde{u}})} \, d\nu(f) \, d\mu_\partial(u) =  \int \log\left( \frac{\M(f^*{\tilde{u}})}{\M({\tilde{u}})}\right) \, d\nu(f) \, d\mu_\partial(u).    
\end{align}
Indeed set $r(w)=\M(w)/\abs{ w}$. On the limit set this function 
satisfies $1/C\leq r({\tilde{u}})\leq C$, where $C$ is the positive constant from Equation~\eqref{eq:comparison_norm_mass}. 
Then, for all $m\geq 1$,the stationarity 
of  $\mu_\partial$ implies   
\[
\int \log\left( \frac{r( f^*{\tilde{u}})}{r({\tilde{u}})} \right) \, d\nu (f) \, d\mu_\partial(u) = \\
\int \log\left( \frac{r( f_m^* \cdots f_0^*{\tilde{u}})}{r(f_{m-1}^* \cdots f_0^*{\tilde{u}})}\right)\, d\nu (f_{m})\cdots d\nu (f_0)\, d\mu_\partial(u).
\]
Summing from $m=0$ to $n-1$, 
 telescoping the sum, and   dividing by $n$ gives 
\[ 
 \int \log\left( \frac{r( f^*{\tilde{u}})}{r({\tilde{u}})} \right) \, d\nu (f) \, d\mu_\partial(u) =
 \frac1n \int \log\left( \frac{r( f_{n-1}^* \cdots f_0^*{\tilde{u}})}{r({\tilde{u}})}\right)\, d\nu (f_{n-1})\cdots d\nu (f_0)\, d\mu_\partial(u).
\] 
Finally since   $1/C\leq r \leq C$, the right hand side  
tends to zero as $n\to\infty$. Hence 
   the integral of $\log(r\circ f^*/r)$ 
vanishes, and \eqref{eq:Furstenberg_formula2} follows from Furstenberg's formula.\qed
\end{rem}

\begin{pro}\label{pro:extremality}
The  point $\P(e(\omega))$ is $\nu^\N$-almost surely   extremal in  $\P(\Kahbar(X))$ and in 
$\P(\Psef(X))$. 
\end{pro}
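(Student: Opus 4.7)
The plan is to combine Proposition~\ref{pro:extremal_rays} with the absence of atoms of the measure $\mu_\partial$. By Lemma~\ref{lem:definition_e}, the class $e(\omega)$ is $\nu^\N$-almost surely nef and isotropic, i.e.\ $e(\omega)\in\Kahbar(X)$ with $\langle e(\omega)\,\vert\, e(\omega)\rangle=0$. Proposition~\ref{pro:extremal_rays} then yields a dichotomy: either $\R_+ e(\omega)$ is an extremal ray of $\Psef(X)$, or $e(\omega)$ is proportional to an integral class $u'\in\NS(X;\Z)$.

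Next, I would rule out the second alternative. The set
\[
Q := \set{\P(u')\; ; \; u' \in \NS(X;\Z)\setminus\{0\}} \subset \P H^{1,1}(X;\R)
\]
is countable, since $\NS(X;\Z)$ is. By Theorem~\ref{thm:def_stationary}, the law of $\P(e(\omega))$ under $\nu^\N$ is $\mu_\partial$, which has no atoms. Hence $\mu_\partial(Q)=0$, so almost surely $e(\omega)$ is not proportional to any integral class, and the first alternative in the above dichotomy must hold: $\R_+ e(\omega)$ is almost surely an extremal ray of $\Psef(X)$.

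Finally, since every K\"ahler form is a closed positive current, we have $\Kahbar(X)\subset \Psef(X)$. Extremality of a ray in a larger convex cone implies extremality in any subcone containing it: indeed, if $e(\omega) = v_1+v_2$ with $v_1,v_2\in\Kahbar(X)$, then $v_1,v_2\in \Psef(X)$, and extremality of $\R_+ e(\omega)$ in $\Psef(X)$ forces $v_1,v_2\in \R_+ e(\omega)$. Thus the extremality in $\Kahbar(X)$ is automatic once it is established in $\Psef(X)$. No serious obstacle is expected in this argument, as every required input—the isotropy and nefness of $e(\omega)$, the atomlessness of $\mu_\partial$, and the structural Proposition~\ref{pro:extremal_rays}—is already in place.
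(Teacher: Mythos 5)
Your argument is correct, and its second half coincides with the paper's: extremality in $\P(\Psef(X))$ is obtained exactly as in the text, by combining the last assertion of Proposition~\ref{pro:extremal_rays} (an isotropic nef class whose ray is not extremal in $\Psef(X)$ must be proportional to an integral class) with the fact that the law of $\P(e(\omega))$ is $\mu_\partial$, which is atomless and hence gives no mass to the countable set of projectivized classes in $\NS(X;\Z)$. Where you diverge is the $\Kahbar(X)$ part: the paper proves it directly and \emph{deterministically}, using the equality case of the reverse Schwarz inequality~\eqref{eq:reverse_CS} (Hodge index theorem) to show that \emph{any} isotropic nef class of mass $1$ is extremal in $\P(\Kahbar(X))$, with no probabilistic input; you instead deduce it formally from the $\Psef$-extremality via the inclusion $\Kahbar(X)\subset\Psef(X)$ (a nontrivial decomposition inside the subcone is one inside the larger cone). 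Your deduction is valid and slightly more economical, since the Hodge-theoretic step is not needed; the trade-off is that your $\Kahbar$ statement is only almost sure and inherits the dependence on the atomlessness of $\mu_\partial$ and on Proposition~\ref{pro:extremal_rays}, whereas the paper's version isolates a stronger, pointwise fact about isotropic nef classes that is independent of the random dynamics. For the proposition as stated, both suffice.
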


\begin{proof}
The class $e(\omega)$ almost surely belongs to $\Kahbar(X)$ and to the isotropic cone. By the Hodge index theorem --more precisely, by the case of equality in the reverse Schwarz Inequality~\eqref{eq:reverse_CS}-- 
$e(\omega)$ cannot be a non-trivial convex combination of classes with non-negative intersection and mass $1$;  
so $\P(e(\omega))$ is  an extremal point of the convex set $\P(\Kahbar(X))\subset \P H^{1,1}(X;\R)$.  
  
From Proposition~\ref{pro:extremal_rays}, there are at most countably many points $\P(u)$ in $\P(\Kahbar(X))$ such that $u^2=0$ and $\P(u)$ is not extremal in $\P(\Psef(X))$. Therefore the second assertion follows from the fact that $\mu_\fr$ is atomless. 
\end{proof}

\subsection{Some estimates for random products of matrices}
The   
aim of this section is to establish some technical facts 
which will play a crucial role in our study of the
closed positive currents $T^s_\omega$ in Section~\ref{sec:currents}. The key results are Theorem~\ref{thm:GK_good_times}
and Lemma~\ref{lem:variant_furstenberg}.

\subsubsection{Sequences of good times}
We  now describe a theorem of Gou\"ezel and Karlsson, specialized to
 our specific context. Fix a base point $e_0$ in 
the hyperbolic space $\Hyp_X$, for instance $e_0=[\kappa_0]$ with $\kappa_0$ a fixed K\"ahler form (as in Section~\ref{par:cones_definition}). 
Consider the two functions of $(n,\omega)\in \N\times \Omega$ defined by 
\begin{equation}\label{eq:two_cocycles}
T(n,\omega)= d_\Hyp(e_0, (f_\omega^n)^*e_0), \quad N(n,\omega)=\log \norm{ (f_\omega^n)^*}.
\end{equation}
They satisfy the subadditive cocycle property
\begin{equation}
a(n+m, \omega)\leq a(n,\omega)+a(m,\sigma^n(\omega)),
\end{equation}
where $\sigma$ is the unilateral shift on $\Omega$ (see~\S~\ref{par:random_compositions1}).
Let $a(n,\omega)$ be such a subadditive cocycle;  if $a(1,\omega)$ is integrable the  asymptotic average is defined 
to be the limit  \begin{equation}
A=\lim_{n\to +\infty} \frac{1}{n}\int a(n,\omega)\, d\nu^\N(\omega);
\end{equation}
it exists in $[-\infty,+\infty)$, and we say it is finite if $A\neq -\infty$.
The functions $T$ and $N$ are examples of ergodic subadditive cocycles and from 
Theorem~\ref{thm:def_stationary}, Remark~\ref{rem:lambda_with_log_mass}, and Corollary~\ref{cor:KAK}, we deduce that
the asymptotic average of each of these cocycles is equal to $ \lambda_{H^{1,1}}$.

Following \cite{Gouezel-Karlsson}, we  say that   $a(n,\omega)$ is 
{\bf{tight along the sequence of positive integers $(n_i)$}} if there is a sequence of real numbers $(\delta_\ell) = (\delta_\ell(\omega))_{\ell\geq 0}$ 
such that 
\begin{itemize}
\item[(i)] $\delta_\ell$ converges to $0$ as $\ell$ goes to $+\infty$;
\item[(ii)] for every $i$, and for every $0\leq \ell \leq n_i$, 
$$
\abs{ a(n_i,\omega)-a(n_i-\ell,\sigma^\ell(\omega))-A\ell} \leq \ell \delta_\ell;
$$
\item[(iii)] for every $i$ and for every $0\leq \ell \leq n_i$ 
$$
a(n_i,\omega)-a(n_i-\ell,\omega)\geq (A-\delta_\ell)\ell.
$$
\end{itemize}

\begin{thm}[Gou\"ezel and Karlsson \cite{Gouezel-Karlsson}]\label{thm:GK_good_times} Let $a(n,\omega)$ be an ergodic subadditive cocycle, with 
a finite asymptotic average $A$. Then, for almost every $\omega$, the cocycle is tight along 
a  subsequence $(n_i(\omega))$ of positive upper density. 
\end{thm}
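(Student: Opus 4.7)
Set $c(n,\omega):=a(n,\omega)-An$. Kingman's subadditive ergodic theorem gives $c(n,\omega)/n\to 0$ almost surely. The goal is to extract a subsequence $(n_i)$ of positive upper density along which the subadditivity gaps in both splittings $n=\ell+(n-\ell)$ and $n=(n-\ell)+\ell$ are uniformly small in $\ell\leq n_i$.

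The upper half of (ii) is automatic from subadditivity:
\[
a(n,\omega)-a(n-\ell,\sigma^\ell\omega)-A\ell\leq a(\ell,\omega)-A\ell=c(\ell,\omega),
\]
and $c(\ell,\omega)/\ell\to 0$ almost surely, so a single $\omega$-measurable sequence $\delta_\ell\to 0$ controls this bound uniformly in $n$.

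The remaining inequalities both rearrange to the same type of statement: the lower half of (ii) becomes $c(n-\ell,\sigma^\ell\omega)\leq c(n,\omega)+\ell\delta_\ell$, and (iii) becomes $c(n-\ell,\omega)\leq c(n,\omega)+\ell\delta_\ell$. I would declare $n$ to be \emph{$\epsilon$-good} for $\omega$ if
\[
c(n-\ell,\omega)\leq c(n,\omega)+\epsilon\ell\quad\text{and}\quad c(n-\ell,\sigma^\ell\omega)\leq c(n,\omega)+\epsilon\ell\qquad\forall\,0\leq\ell\leq n,
\]
i.e.\ $c(n,\omega)$ is close to a ``backward running maximum'' in the two relevant senses. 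Intuitively, if $n$ is bad then there is $\ell=\ell(n)$ over which $c$ drops by at least $\epsilon\ell$ along one of the two comparisons; if a positive fraction of all times were bad, a Vitali-type covering argument would select disjoint such windows and assemble a nontrivial negative drift for $c(n,\omega)/n$, contradicting Kingman. Hence the $\epsilon$-good times have upper density at least $1-\eta(\epsilon)$ with $\eta(\epsilon)\to 0$, and a diagonal extraction along $\epsilon_k\downarrow 0$ delivers the desired subsequence $(n_i)$ together with a common sequence $\delta_\ell\to 0$ realizing (i)--(iii).

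\textbf{Main obstacle.} The technical heart is the two-sided covering/maximal lemma: one must simultaneously control anomalously small values of $c(n,\omega)$ relative to the past along $\omega$ (for (iii)) \emph{and} along the shifted trajectory $\sigma^\ell\omega$ (for the lower half of (ii)), with $\omega$-measurable rates $\delta_\ell$. Subadditivity only furnishes one-sided comparisons, so the two directions must be handled by different packing arguments and then reconciled; this is precisely where the analysis departs from the classical proof of Kingman's theorem and requires the more delicate input of Gou\"ezel and Karlsson.
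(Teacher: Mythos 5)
Your reduction of (ii)--(iii) to ``backward running maximum'' statements for $c(n,\omega)=a(n,\omega)-An$ is fine, and the upper half of (ii) is indeed immediate from subadditivity; but the core step of your plan --- that for fixed $\epsilon$ the $\epsilon$-good times have upper density $\geq 1-\eta(\epsilon)$ with $\eta(\epsilon)\to 0$, via a pathwise Vitali covering played against Kingman --- is a genuine gap, and in fact is false in the form you state it. With your definition the slack is $\epsilon\ell$ uniformly in $\ell$, including $\ell=1$. Take the additive (hence subadditive, ergodic) cocycle $a(n,\omega)=\sum_{k=0}^{n-1}g(\sigma^k\omega)$ with $g$ integrable and non-constant, so $A=\int g\,d\nu$: the requirement at $\ell=1$ reads $g(\sigma^{n-1}\omega)\geq A-\epsilon$, which by Birkhoff fails along a set of times of density $\nu\{g<A-\epsilon\}$, a quantity that is bounded away from $0$ (indeed increases) as $\epsilon\to 0$. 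So $\eta(\epsilon)\not\to 0$, and the subsequent diagonal extraction has nothing to feed on. The correct statement must allow the rate $\delta_\ell$ to be large for small $\ell$ and tend to $0$ only as $\ell\to\infty$; this order of quantifiers is exactly what makes the Gou\"ezel--Karlsson theorem delicate, and it is lost in your single-$\epsilon$ reformulation. Moreover, even granting a sensible formulation, the covering argument is not available for the shifted-orbit inequality $c(n-\ell,\sigma^\ell\omega)\leq c(n,\omega)+\ell\delta_\ell$: a violation compares the cocycle based at $\sigma^\ell\omega$ with the cocycle based at $\omega$, and disjoint ``bad windows'' of this type do not telescope into a negative drift of $n\mapsto c(n,\omega)$ along the single orbit, so no contradiction with Kingman arises pathwise. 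Controlling these shifted-orbit excursions requires measure-theoretic (maximal/ergodic-theorem) input over the ambient system --- which is precisely the content of Gou\"ezel--Karlsson, as your ``main obstacle'' paragraph in effect concedes.

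For comparison, the paper does not reprove that input: its argument is bookkeeping on top of the published results. Theorem 1.1 and Remark 1.2 of Gou\"ezel--Karlsson give, for each $\rho>0$, a set $\Omega_\rho$ of measure $\geq 1-\rho$ and an $\omega$-independent sequence $(\delta_\ell)$ for which (i)--(ii) hold along times of upper density $\geq 1-\rho$; their Lemma 2.3, applied to $a(n,\omega)$ itself (rather than to the time-reversed cocycle), gives (iii) outside a set of bad times of density $\leq \varepsilon$ on a set of $\omega$'s of measure $\geq 1-\varepsilon$; intersecting yields good times of upper density $\geq 1-\rho-\varepsilon$ on a set of measure $\geq 1-\rho-\varepsilon$, and letting $\rho,\varepsilon\to 0$ concludes. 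So the intended proof here is citation-plus-combination, whereas your proposal attempts to reprove the cited theorem and the attempted proof breaks down at the points above.
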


Recall that the     (asymptotic) {{upper density}} of a subset $S$ of $\N$ is
the non-negative number defined by
$ {\overline{\mathrm{dens}}}(S)=\limsup_{k\to +\infty} \left(\frac{1}{k}\vert S\cap [0,k-1]\vert \right)$. 
A sequence $(n_i)_{i\geq 0}$ is said to have positive upper density if the
set  of  its values $S=\{ n_i\; ; \; i\geq 0\}$ satisfies ${\overline{\mathrm{dens}}}(S) >0$.

\begin{proof} Let us explain how this result follows from \cite{Gouezel-Karlsson}. 
First, fix a small positive real number $\rho>0$, and apply 
Theorem~1.1 and Remark 1.2 of \cite{Gouezel-Karlsson} to get a set $\Omega_\rho$ of measure $1-\rho$ such that the first
two properties (i) and (ii) are satisfied for every $\omega\in \Omega_\rho$ with respect to a sequence $(\delta_\ell)$ that
does not depend on $\omega$, and for a sequence of times $(n_i(\omega))$ of upper density $\geq 1-\rho$.
To get (iii), we apply Lemma 2.3 of \cite{Gouezel-Karlsson} to the sub-additive cocycle $a(n,\omega)$ (not to the cocycle $b(n,\omega)= a(n, \sigma^{-n}(\omega))$
as done in \cite{Gouezel-Karlsson}). For every $\e >0$, there is 
a subset $\Omega'_\e\subset \Omega$ and a sequence $(\delta'_\ell)_{\ell \geq 0}$ such that
\begin{itemize}
\item[(a)] $\nu^\N(\Omega'_\e)>1-\e$, and $\delta'_\ell$ converges towards $0$ as $\ell$ goes to $+\infty$;
\item[(b)] for every $\omega\in \Omega'_\e$, there is a set of bad times $B(\omega)\subset \N$ such that for every $k\geq 0$
$\abs{ B(\omega)\cap [0,k-1]}\leq \e k$,  and  
for every $n\notin B(\omega)$ and every $0\leq \ell \leq n$,
$$ 
a(n,\omega)-a(n-\ell,\omega)\geq (A-\delta'_\ell) \ell.
$$ 
\end{itemize}
If $\omega$ belongs to $\Omega_\rho\cap \Omega'_\varepsilon $, the 
set of indices $i$ for which $n_i(\omega)\notin B(\omega)$ is infinite. More precisely, 
the set $S(\omega)=\{ n_j(\omega) \; ; \; n_j(\omega)\notin B(\omega)\}$ has asymptotic
upper density $\geq 1-\rho - \varepsilon $. Along this subsequence, the three properties (i), (ii), and (iii)
are satisfied. Since this holds for all $\omega \in \Omega'_\varepsilon \cap \Omega_\rho$ and
the measure of this set is $\geq 1-\rho-\varepsilon $, this holds for $\nu^\N$-almost every $\omega$. 
\end{proof}

\begin{cor}\label{cor:GK_good_times}
For $\nu^\N$-almost every $\omega\in \Aut(X)^\N$, there is an increasing sequence of integers $(n_i(\omega))$ going to $+\infty$
and a real number $A(\omega)$ such that 
\[
\sum_{j=0}^{n_i(\omega)} \frac{\big\|{ \big(f_\omega^{j}\big)^*}\big\|}{\big\|{ \big(f_\omega^{n_i(\omega)}\big)^*}\big\|} \leq A(\omega) \; {\text{ and }} \; 
\sum_{j=0}^{n_i(\omega)} \frac{\big\|{ \big(f_{\sigma^j(\omega)}^{n_i(\omega)-j}\big)^*}\big\|}{\big\|{ \big(f_\omega^{n_i(\omega)}\big)^*}\big\|} \leq A(\omega)
\]
for all indices $i\geq 0$. 
\end{cor}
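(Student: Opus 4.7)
The plan is to apply Theorem~\ref{thm:GK_good_times} (Gou\"ezel--Karlsson) to the subadditive cocycle $N(n,\omega) = \log\norm{(f_\omega^n)^*}$, whose finite asymptotic average is $A = \lambda_{H^{1,1}} > 0$ by the definition of the cohomological Lyapunov exponent in \S\ref{subs:furstenberg}. For $\nu^\N$-almost every $\omega$, the theorem furnishes a sequence $(n_i)=(n_i(\omega))$ of positive upper density (hence going to $+\infty$) and a sequence $\delta_\ell\to 0$ satisfying conditions (i)--(iii) of the tightness definition.

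The two estimates are then derived by combining these inequalities with the submultiplicativity of the operator norm under the decomposition $(f_\omega^{n_i})^* = (f_\omega^\ell)^* \circ (f_{\sigma^\ell(\omega)}^{n_i-\ell})^*$, i.e. $N(n_i,\omega)\leq N(\ell,\omega) + N(n_i-\ell,\sigma^\ell(\omega))$. More concretely, property (iii) applied with $\ell$ replaced by $n_i-j$ gives
\[
N(n_i,\omega) - N(j,\omega) \geq (\lambda_{H^{1,1}} - \delta_{n_i-j})(n_i-j),
\]
so that $\norm{(f_\omega^j)^*}/\norm{(f_\omega^{n_i})^*} \leq \exp\bigl(-(\lambda_{H^{1,1}}-\delta_{n_i-j})(n_i-j)\bigr)$, which handles the first sum after reindexing $\ell=n_i-j$. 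Similarly, property (ii) with $\ell=j$ yields
\[
N(n_i-j,\sigma^j(\omega)) - N(n_i,\omega) \leq -(\lambda_{H^{1,1}}-\delta_j)\,j,
\]
and the second sum is dominated by $\sum_{j\geq 0}\exp\bigl(-(\lambda_{H^{1,1}}-\delta_j)j\bigr)$.

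Both dominating series are independent of $i$ and converge because $\lambda_{H^{1,1}}>0$ and $\delta_\ell\to 0$: for $\ell$ larger than some $\ell_0(\omega)$ one has $\lambda_{H^{1,1}}-\delta_\ell \geq \lambda_{H^{1,1}}/2$, so the tail is a geometric series, while the finitely many initial terms contribute a finite constant depending on $\omega$. Setting $A(\omega)$ equal to the maximum of the two resulting bounds gives the required uniform control along the sequence $(n_i(\omega))$.

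There is essentially no obstacle here: the content of the corollary is entirely contained in Theorem~\ref{thm:GK_good_times}, and the only care needed is in bookkeeping between left and right compositions of the $(f_j)$ and in checking that the decay rate $\lambda_{H^{1,1}}-\delta_\ell$ stays bounded away from zero for large $\ell$, which is immediate from $\delta_\ell\to 0$.
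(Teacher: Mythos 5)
Your proposal is correct and follows essentially the same route as the paper: apply Theorem~\ref{thm:GK_good_times} to the subadditive cocycle $N(n,\omega)=\log\norm{(f_\omega^n)^*}$, whose asymptotic average is $\lambda_{H^{1,1}}>0$, then bound the first sum via tightness property (iii) (after the reindexing $\ell=n_i-j$) and the second via property (ii), obtaining the convergent dominating series $\sum_\ell e^{-\ell(\lambda_{H^{1,1}}-\delta_\ell)}$ independent of $i$. This matches the paper's argument, so there is nothing to add.
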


\begin{proof} Apply Theorem~\ref{thm:GK_good_times} to the subadditive cocyle $N(n,\omega)$ and 
note that 
\begin{equation}
\sum_{j=0}^{n_i(\omega)}  \frac{\big\|{ \big(f_\omega^{j}\big)^*}\big\|}{\big\|{ \big(f_\omega^{n_i(\omega)}\big)^*}\big\|}   = 
\sum_{\ell=0}^{n_i(\omega)}  \frac{\big\|{ \big(f_\omega^{n_i- \ell}\big)^*}\big\|}{\big\|{ \big(f_\omega^{n_i}\big)^*}\big\|}  = 
\sum_{\ell=0}^{n_i(\omega)}  
\frac{e^{N(n_i-\ell,\omega) }}{e^{ N(n_i, \omega)} }\leq 
\sum_{\ell=0}^{n_i(\omega)}    e^{-\ell (\lambda_{H^{1,1}}-\delta_\ell)}
\end{equation}
which is bounded as  $n_i(\omega)\to \infty$. The second estimate is similar. 
\end{proof}

\subsubsection{A mass estimate for pull-backs}
Assume that $(X, \nu)$ is non-elementary and satisfies the condition~\eqref{eq:moment}.
Recall from Lemma~\ref{lem:growth2} that $\M((f_\omega^n)^* a)\inv (f_\omega^n)^*a$ converges to the
pseudo-effective class $e(\omega)$ for almost every $\omega$ and every K\"ahler class
$a$. Thus, on a set of total $\nu^\N$-measure, 
this convergence holds for all $\sigma^k(\omega)$, $k\geq 0$. Since $\M(e(\omega)) = 1$, we obtain
\begin{equation}\label{eq:equivariance_e}
f_0^* e(\sigma\omega) = \M(f_0^*e(\sigma\omega)) e(\omega);
\end{equation}
more generally, for every $k\geq 1$,
 \begin{equation}
(f_\omega^k)^* e(\sigma^k\omega) = \M((f_\omega^k)^*e(\sigma^k\omega)) e(\omega).
\end{equation}

\begin{vcourte}
 \begin{lem}\label{lem:variant_furstenberg}
For $\nu^\N$-almost every $\omega$,  we have  
$
\frac1{n} \log \M((f_\omega^n)^*e(\sigma^n\omega))   \underset{n\to\infty}{\longrightarrow}\lambda_{H^{1,1}}.
$
\end{lem}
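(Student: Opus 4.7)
The plan is to reduce the claim to Birkhoff's ergodic theorem for the shift $\sigma$, combined with Furstenberg's formula~\eqref{eq:Furstenberg_formula2}. The first step is to iterate the equivariance identity~\eqref{eq:equivariance_e} to turn $\M\lrpar{(f_\omega^n)^* e(\sigma^n\omega)}$ into a multiplicative ergodic cocycle. Setting $e_k := e(\sigma^k\omega)$ and applying~\eqref{eq:equivariance_e} to $\sigma^k\omega$ yields $f_k^* e_{k+1} = \M(f_k^* e_{k+1})\, e_k$ for each $k\geq 0$. Since $(f_\omega^n)^* = f_0^*\circ f_1^*\circ\cdots\circ f_{n-1}^*$, telescoping from the innermost factor outward gives
\[
(f_\omega^n)^* e_n = \lrpar{\prod_{k=0}^{n-1} \M(f_k^* e_{k+1})}\, e_0,
\]
and, since $\M(e_0)=1$, taking mass on both sides produces
\[
\log \M\lrpar{(f_\omega^n)^* e(\sigma^n\omega)} = \sum_{k=0}^{n-1} \varphi(\sigma^k\omega), \quad \text{where } \varphi(\omega) := \log \M\lrpar{f_0^* e(\sigma\omega)}.
\]

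Next I would check that $\varphi \in L^1(\nu^\N)$. For the upper bound: the class $e(\sigma\omega)$ lies in $\Psef(X)$ and has mass $1$, so by~\eqref{eq:comparison_norm_mass} we have $\M(f_0^* e(\sigma\omega)) \leq C\abs{f_0^* e(\sigma\omega)} \leq C' \norm{f_0^*}$, whence $\varphi^+ \leq \log^+\norm{f_0^*} + O(1)$. For the lower bound, I would rewrite \eqref{eq:equivariance_e} as $(f_0\inv)^* e(\omega) = \M(f_0^* e(\sigma\omega))^{-1}\, e(\sigma\omega)$ and take masses, which gives
\[
\M(f_0^* e(\sigma\omega))^{-1} = \M\lrpar{(f_0\inv)^* e(\omega)} \leq C' \norm{(f_0\inv)^*},
\]
so $\varphi \geq -\log\norm{(f_0\inv)^*} - O(1)$. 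Both estimates are integrable by~\eqref{eq:moment_H1_both_sides}.

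Since the Bernoulli shift $\sigma$ is ergodic on $(\Omega, \nu^\N)$, Birkhoff's ergodic theorem then yields
\[
\frac{1}{n}\log \M\lrpar{(f_\omega^n)^* e(\sigma^n\omega)} \tendvers \int \varphi\, d\nu^\N \quad \text{for } \nu^\N\text{-a.e.}\ \omega.
\]
To identify this constant with $\lambda_{H^{1,1}}$, note that under $\nu^\N$ the coordinate $f_0$ has distribution $\nu$ and is independent of $\sigma\omega$; by Theorem~\ref{thm:def_stationary}, the map $\omega \mapsto \P(e(\sigma\omega))$ pushes $\nu^\N$ forward to $\mu_\partial$, and $e(\sigma\omega)$ is the unique lift of its projective class with mass $1$. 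Therefore
\[
\int \varphi\, d\nu^\N = \int \log \M(f^* \tilde u)\, d\nu(f)\, d\mu_\partial(u),
\]
which equals $\lambda_{H^{1,1}}$ by the variant~\eqref{eq:Furstenberg_formula2} of Furstenberg's formula.

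The main technical point to watch is the lower-bound integrability of $\varphi$: the mass $\M(f_0^* e(\sigma\omega))$ could a priori decay very rapidly, and ruling this out is exactly where the symmetric moment condition on $f\inv$ contained in~\eqref{eq:moment} becomes essential. The trick of inverting the equivariance relation, so that the offending mass is realized as a pull-back by $(f_0\inv)^*$, is precisely what lets the moment bound act on the correct side.
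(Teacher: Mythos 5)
Your proof is correct, and while it opens exactly like the paper's (the same multiplicative cocycle $\varphi(\sigma^k\omega)=\log\M(f_k^*e(\sigma^{k+1}\omega))$ obtained by telescoping \eqref{eq:equivariance_e}, the same two-sided integrability bound via \eqref{eq:comparison_norm_mass} and the moment condition, then Birkhoff), it identifies the limiting constant by a genuinely different and more direct route. The paper does not compute $\int\varphi\,d\nu^\N$ head-on: it first uses the Gou\"ezel--Karlsson good times (Theorem~\ref{thm:GK_good_times}) to get $\limsup_k \frac1k\log\zeta(k,\omega)\geq\lambda_{H^{1,1}}$, and then shows, via the invariant section $\omega\mapsto\P(e(\omega))$ of the cocycle $\Upsilon$ together with Ledrappier's exponent formula and Oseledets, that the Birkhoff limit must lie in $\{\pm\lambda_{H^{1,1}},0\}$, which combined with the inequality forces the value $\lambda_{H^{1,1}}$. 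You instead observe that $f_0$ is independent of $\sigma\omega$ under the product measure, that the law of $\P(e(\sigma\omega))$ is $\mu_\partial$ by \eqref{eq:representation_mufr} and shift-invariance, and that $e(\sigma\omega)$ is precisely the mass-one lift appearing in Remark~\ref{rem:lambda_with_log_mass}; Fubini then gives $\int\varphi\,d\nu^\N=\int\log\M(f^*\tilde u)\,d\nu(f)\,d\mu_\partial(u)=\lambda_{H^{1,1}}$ by \eqref{eq:Furstenberg_formula2}. This is shorter and avoids both Theorem~\ref{thm:GK_good_times} and the Oseledets/Ledrappier step (machinery the authors need elsewhere anyway, which is presumably why they reuse it), at the cost of leaning on the representation \eqref{eq:representation_mufr} and the mass variant of Furstenberg's formula — both of which are established before this point, so the argument is legitimate and not circular. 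Two small points worth making explicit: the telescoping needs the equivariance relation \eqref{eq:equivariance_e} simultaneously at all shifts $\sigma^k\omega$, which holds on a shift-saturated set of full measure (as the paper notes just before the lemma); and your inversion trick for the lower bound, $\M(f_0^*e(\sigma\omega))^{-1}=\M((f_0^{-1})^*e(\omega))\lesssim\norm{(f_0^{-1})^*}$, is the correct form of the two-sided estimate the paper states somewhat loosely.
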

\end{vcourte}
\begin{vlongue}
 \begin{lem}\label{lem:variant_furstenberg}
For $\nu^\N$-almost every $\omega$,  we have  
\[
\frac1{n} \log \M((f_\omega^n)^*e(\sigma^n\omega))   \underset{n\to\infty}{\longrightarrow}\lambda_{H^{1,1}}.
\]
\end{lem}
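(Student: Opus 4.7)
The plan is to rewrite $\log\M((f_\omega^n)^*e(\sigma^n\omega))$ as a Birkhoff sum for the Bernoulli shift $\sigma$ on $(\Omega,\nu^\N)$, and then identify the limit via Fubini and Furstenberg's formula \eqref{eq:Furstenberg_formula2}.

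First, I would iterate the equivariance relation \eqref{eq:equivariance_e}. Applying it at $\sigma^{n-1}\omega$ gives $f_{n-1}^*e(\sigma^n\omega)=\M(f_{n-1}^*e(\sigma^n\omega))\,e(\sigma^{n-1}\omega)$; applying it again at $\sigma^{n-2}\omega$, and so on, telescopes to
\[
(f_\omega^n)^* e(\sigma^n\omega)=\Bigl(\prod_{k=0}^{n-1}\M\bigl(f_k^* e(\sigma^{k+1}\omega)\bigr)\Bigr)\,e(\omega).
\]
Taking masses and using $\M(e(\omega))=1$, this yields
\[
\tfrac1n\log\M\bigl((f_\omega^n)^*e(\sigma^n\omega)\bigr)=\tfrac1n\sum_{k=0}^{n-1}\psi(\sigma^k\omega),\qquad \psi(\omega):=\log\M\bigl(f_0^*\,e(\sigma\omega)\bigr).
\]
So $\log\M((f_\omega^n)^*e(\sigma^n\omega))$ is an additive cocycle over $\sigma$, built from the single function $\psi$.

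Next, I would verify that $\psi\in L^1(\nu^\N)$. For the upper bound, \eqref{eq:comparison_norm_mass} together with $\M(e(\sigma\omega))=1$ gives $\M(f_0^*e(\sigma\omega))\leq C\,\norm{f_0^*}$, hence $\psi^+\leq \log\norm{f_0^*}+\mathrm{const}$. For the lower bound, since $f_0^*e(\sigma\omega)\in\Psef(X)$ and $\norm{v}\asymp\M(v)$ on $\Psef(X)$,
\[
\M(f_0^*e(\sigma\omega))\asymp\norm{f_0^*e(\sigma\omega)}\geq\norm{(f_0^*)^{-1}}^{-1}\norm{e(\sigma\omega)}\asymp\norm{(f_0^{-1})^*}^{-1},
\]
so $\psi^-\leq\log\norm{(f_0^{-1})^*}+\mathrm{const}$. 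Integrability is therefore a direct consequence of the cohomological moment condition \eqref{eq:moment_H1_both_sides}, which holds by Section~\ref{subs:moments_cohomology}. This is the only place where a mild estimate is needed; it should go through without difficulty.

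Finally, since $\sigma$ is a Bernoulli shift and hence ergodic on $(\Omega,\nu^\N)$, Birkhoff's theorem gives
\[
\tfrac1n\log\M\bigl((f_\omega^n)^*e(\sigma^n\omega)\bigr)\underset{n\to\infty}{\longrightarrow}\int\psi\,d\nu^\N\qquad \text{for }\nu^\N\text{-a.e. }\omega.
\]
Under $\nu^\N$, the coordinate $f_0$ and the shifted sequence $\sigma\omega$ are independent, with marginals $\nu$ and $\nu^\N$ respectively; by the representation $\mu_\partial=\int\delta_{\P(e(\omega))}\,d\nu^\N(\omega)$ from Theorem~\ref{thm:def_stationary}, the pushforward of $\nu^\N$ under $\omega\mapsto e(\omega)$ is the lift of $\mu_\partial$ to the section $\{\M(\tilde u)=1\}\subset\Pi_{\Gamma_\nu}$. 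Fubini therefore gives
\[
\int\psi\,d\nu^\N=\int\log\M(f^*\tilde u)\,d\nu(f)\,d\mu_\partial(u)=\lambda_{H^{1,1}},
\]
the last equality being Furstenberg's formula as stated in Remark~\ref{rem:lambda_with_log_mass}. The main (very modest) obstacle is the two-sided $L^1$ bound on $\psi$; everything else is a telescoping computation plus one application of Birkhoff's theorem.
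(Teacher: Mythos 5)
Your proof is correct, and it identifies the limit by a genuinely different route than the paper. The first half is the same: the paper also telescopes the equivariance \eqref{eq:equivariance_e} into a Birkhoff sum (there $\psi=\log\zeta(1,\cdot)$ for the multiplicative cocycle $\zeta(k,\omega)=\M((f_\omega^k)^*e(\sigma^k\omega))$), checks the two-sided $L^1$ bound from \eqref{eq:comparison_norm_mass} and the moment condition, and invokes Birkhoff; one should just note, as the paper does right before the lemma, that \eqref{eq:equivariance_e} holds simultaneously for all shifts $\sigma^k\omega$ on a set of full measure, so the telescoping is legitimate. The divergence is in evaluating $\int\psi\,d\nu^\N$. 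You do it directly: $f_0$ is independent of $\sigma\omega$, the law of $e(\cdot)$ is the mass-one lift of $\mu_\partial$ by Theorem~\ref{thm:def_stationary}, and Fubini reduces the integral to $\int\log\M(f^*\tilde u)\,d\nu(f)\,d\mu_\partial(u)$, which is $\lambda_{H^{1,1}}$ by the mass form of Furstenberg's formula in Remark~\ref{rem:lambda_with_log_mass}; this is valid because $\mu_\partial$ is stationary for the very action $u\mapsto f^*u$ whose norm growth defines $\lambda_{H^{1,1}}$, and the action on $\Pi_{\Gamma_\nu}$ is strongly irreducible, so the formula applies to this stationary measure. The paper instead never computes the integral in closed form: it obtains $\limsup_k\frac1k\log\zeta(k,\omega)\geq\lambda_{H^{1,1}}$ from the Gou\"ezel--Karlsson good times (Theorem~\ref{thm:GK_good_times}), and then uses the $\P\Upsilon$-invariant section $\omega\mapsto\P(e(\omega))$ of the pushforward cocycle, Oseledets' theorem and Ledrappier's formula to conclude that the Birkhoff limit lies in $\{-\lambda_{H^{1,1}},0,\lambda_{H^{1,1}}\}$, which together with the lower bound pins it down. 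Your argument is shorter and exhibits the lemma as Furstenberg's formula read along the equivariant family $e(\omega)$, at the price of leaning on \eqref{eq:Furstenberg_formula2}; the paper's version only needs the soft exponent dichotomy plus the good-times estimate (machinery it uses elsewhere anyway), and indeed the authors remark that other strategies could be applied -- yours is such a strategy.
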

\end{vlongue}

This does \emph{not} follow from Lemma~\ref{lem:growth} because $e(\sigma^n\omega)$ depends on $n$. %

 \begin{proof} 
\begin{vcourte}
For almost every $\omega$, for every $k\geq 1$,  and for  every K\"ahler class $a$,  $e(\sigma^k\omega)$ is the limit of 
$\M(f_k^*\cdots f_{n-1}^* a)^{-1} f_k^*\cdots f_{n-1}^* a$ as $n$ goes to $+\infty$. So
\begin{equation}\label{eq:zetakf}
f_0^*\cdots f_{k-1}^* e(\sigma^k(\omega))   = \left(\lim_{n\to\infty} \frac{\M( f_0^*\cdots f_{n-1}^*a)}{\M( f_k^*\cdots f_{n-1}^*a)}\right) e(\omega)
=: \zeta(k, \omega) e(\omega)
\end{equation}
where $\zeta(k, \omega)$ is both equal to $\M((f_\omega^k)^* e(\sigma^k(\omega)))$ and to the  limit 
\begin{equation}\label{eq:zetakf2}
\zeta(k, \omega) = \lim_{n\to\infty} \frac{\M( f_0^*\cdots f_{n-1}^*a)}{\M( f_k^*\cdots f_{n-1}^*a)}= \lim_{n\to\infty} \frac{\M( (f_\omega^n)^*a)}{\M( (f_{\sigma^k(\omega)}^{n-k})^*a)} .
\end{equation}
We want to show that,   $\nu^\N$-almost surely, 
$ (  1/k)\log \zeta(k,\omega)  $   converges to $\lambda_{H^{1,1}}$. 
\end{vcourte}
\begin{vlongue}
For almost every $\omega$, for every $k\geq 1$,  and for  every K\"ahler class $a$, we have
\begin{equation}
  e(\sigma^k\omega) = \lim_{n\to\infty} \frac{f_k^*\cdots f_{n-1}^* a}{\M(f_k^*\cdots f_{n-1}^* a)}.
  \end{equation}
So
\begin{equation}\label{eq:zetakf}
f_0^*\cdots f_{k-1}^* e(\sigma^k(\omega))   = \left(\lim_{n\to\infty} \frac{\M( f_0^*\cdots f_{n-1}^*a)}{\M( f_k^*\cdots f_{n-1}^*a)}\right) e(\omega)
=: \zeta(k, \omega) e(\omega)
\end{equation}
where $\zeta(k, \omega)$ is both equal to $\M((f_\omega^k)^* e(\sigma^k(\omega)))$ and to the  limit 
\begin{equation}\label{eq:zetakf2}
\zeta(k, \omega) = \lim_{n\to\infty} \frac{\M( f_0^*\cdots f_{n-1}^*a)}{\M( f_k^*\cdots f_{n-1}^*a)}= \lim_{n\to\infty} \frac{\M( (f_\omega^n)^*a)}{\M( (f_{\sigma^k(\omega)}^{n-k})^*a)} .
\end{equation}
We want to show that,   $\nu^\N$-almost surely, 
$ (  1/k)\log \zeta(k,\omega)  $   converges to $\lambda_{H^{1,1}}$. 
\end{vlongue}

Before starting the proof, note that $\zeta$ is a multiplicative cocycle: 
$ \zeta(k,\omega)= \prod_{\ell = 1}^k \zeta (1, \sigma^\ell\omega)$; in particular, $\log \zeta(k,\omega) $
is equal to the Birkhoff sum $\sum_{\ell=1}^k \log \zeta (1, \sigma^\ell\omega)$. Since 
\begin{equation} 
C\inv\norm{(f_0\inv)^*}_{H^{1,1}} \leq \M(f_0^*e(\sigma(\omega))) \leq C\norm{f_0^*}_{H^{1,1}} ,
\end{equation}
our moment condition shows 
that $\log(\zeta(1,\omega))$ is integrable. So, by the ergodic theorem of Birkhoff, $\lim_{k} \frac1{k} \log \zeta(k,\omega)$
exists $\nu^\N$-almost surely. 

Pick a sequence $(n_i)$ of good times for $\omega$, as in Theorem~\ref{thm:GK_good_times}. If we compute
the limit in Equation~\eqref{eq:zetakf2} along the subsequence $(n_i)$ we see that $\zeta(k,\omega) \geq C \exp((\lambda_{H^{1,1}}-\delta(k))k)$
for some constant $C>0$, and some sequence $\delta(k)$ converging to $0$ as $k$ goes to $+\infty$. This gives 
\begin{equation}\label{eq:ineq_zeta_lambda1}
 \limsup_{k\to +\infty} \frac1{k} \log \zeta(k,\omega)  \geq \lambda_{H^{1,1}}.
 \end{equation}
Now, consider the linear cocycle $\Upsilon: \Omega \times H^{1,1}(X, \R) \to  \Omega\times H^{1,1}(X, \R)$
defined by 
\begin{equation}
\Upsilon (\omega, u) = (\sigma(\omega), (f_\omega^1)_* u)
\end{equation}
and let $\mathbb P \Upsilon$ be the associated projective cocycle on $ \Omega\times \P H^{1,1}(X, \R)$. 
 The Lyapunov exponents of $\Upsilon$ 
 are $\pm\lambda_{H^{1,1}}$, each with multiplicity $1$, and $0$, with multiplicity $h^{1,1}(X)-2$.  
Since $\P((f_\omega^1)^*e(\sigma(\omega))) = \P(e(\omega))$, the measurable 
section $\set{(\omega, \P(e(\omega)))\; ; \;   \omega\in \Omega}$ is $\P\Upsilon$-invariant. 
Therefore, by ergodicity of $\sigma$ with respect to $\nu^\N$,   $m= \int \delta_{\P(e(\omega))} \, d\nu^{\N}(\omega)$ defines an invariant and ergodic measure 
for $\mathbb P \Upsilon$. It follows from the invariance of the decomposition into characteristic subspaces in 
Oseledets' theorem that  $e(\omega)$ is contained in a given  characteristic subspace  of the cocycle $\Upsilon$; thus, if 
$\lambda$ denotes the Lyapunov exponent  of $\Upsilon$ in that characteristic subspace, we get (as in Remark~\ref{rem:lambda_with_log_mass}) that
\begin{vcourte}
\begin{align*}
\lambda = \int \log \frac{\abs{(f_\omega^1)_* u}}{\abs{u}} \; dm(\omega, u) 
 & = \int \log  \frac{\M((f_\omega^1)_*(  e(\omega))}{\M(  e(\omega))}\, d\nu^{\N} (\omega)  = \int \log \zeta(1, \omega)\inv  \,  d\nu^{\N} (\omega) 
\end{align*}  
\end{vcourte}
\begin{vlongue}
\begin{align}
\lambda = \int \log \frac{\abs{(f_\omega^1)_* u}}{\abs{u}} \; dm(\omega, u) 
 & = \int \log  \frac{\M((f_\omega^1)_*(  e(\omega))}{\M(  e(\omega))}\, d\nu^{\N} (\omega)\\
&  = \int \log \zeta(1, \omega)\inv  \,  d\nu^{\N} (\omega) 
\end{align}  
\end{vlongue}
(see Ledrappier \cite[\S 1.5]{Ledrappier:SaintFlour}).
Birkhoff's  ergodic theorem    implies that $\lim_{k} \frac1{k} \log \zeta(k,\omega)  = -\lambda$, with 
 $ \lambda\in  \set{\pm\lambda_{H^{1,1}}, 0}$,  therefore 
     the Inequality~\eqref{eq:ineq_zeta_lambda1} concludes the proof. 
 \end{proof}

\subsubsection{Exponential moments}\label{subsub:exponential_moments} 
The result of this section will only be used in Theorem \ref{thm:holder} so this paragraph may be skipped on a first reading.  
Consider the exponential moment condition 
\begin{equation}\label{eq:exponential_moment}
\exists \tau>0, \ \int  \lrpar{{\norm{ f}_{ C^1}} + {\norm{f^{-1}}_{C^1}}}^\tau  \; d\nu(f)<+\infty.
\end{equation}
As in Section~\ref{subs:moments_cohomology}, this upper bound implies the cohomological moment condition
\begin{equation}\label{eq:exponential_moment_cohomology}
\exists \tau>0, \ \int  \lrpar{{\norm{ f^*}_{H^{1, 1}}} + {\norm{(f^{-1})^*}_{H^{1, 1}}}}^\tau  \; d\nu(f)<+\infty.
\end{equation}

\begin{pro}\label{pro:gouezel}
Assume that $\nu$ satisfies the Condition~\eqref{eq:exponential_moment}. Let 
$D\colon \Aut(X)\to \R_+$ be a measurable function such that $\int  {  D(f)^{\tau'}}d\nu(f)<\infty$ for some $\tau'>0$. 
Then, there is a measurable function $B\colon \Omega\to \R_+$ satisfying
\[  
\int  \log^+(B(\omega)) \,  d\nu^\N(\omega)<\infty  
\]
such that for $\nu^\N$-almost every $\omega = (f_n)$ and every $n\geq 0$
\[ 
 \sum_{j=1}^{n-1} D(f_{j-1})
  \frac{\big\| {f_j^*\cdots f_{n-1}^*} \big\| }{\norm{f_0^*\cdots f_{n-1}^*}}  \leq B(\omega), \;   \text{ and } \; 
  \sum_{j=1}^{n-1} D(f_j)\frac{\big\| {f_0^*\cdots f_{j-1}^*} \big\| }{\norm{f_0^*\cdots f_{n-1}^*}}   \leq B(\omega).
\] 
\end{pro}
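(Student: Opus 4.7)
My plan is to combine two quantitative estimates enabled by the exponential moment hypothesis \eqref{eq:exponential_moment_cohomology}: a Le Page--type large deviation principle for the norm cocycle $N(n,\omega) := \log\|(f^n_\omega)^*\|$, and an alignment (deviation) estimate saying that the random walk $Y_n := (f^n_\omega)^*[\kappa_0]$ in $\Hyp_X$ stays close to a geodesic. The moment condition on $D$ will then be absorbed via a Borel--Cantelli argument.

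First I will invoke the quantitative Furstenberg theory for random products of matrices: under \eqref{eq:exponential_moment_cohomology}, combined with the strong irreducibility and proximality of the $\Gamma_\nu$-action on $\Pi_{\Gamma_\nu}$ (Proposition~\ref{pro:NS_H11}), standard Le Page arguments give, for every small $\varepsilon > 0$, a constant $c_\varepsilon > 0$ with
\[
\nu^\N\bigl\{\omega : |N(n,\omega) - n\lambda_{H^{1,1}}| > \varepsilon n\bigr\} \leq C e^{-c_\varepsilon n}.
\]
Summing via Borel--Cantelli, the suprema $C_1(\omega) := \sup_{n \geq 1}\bigl((\lambda_{H^{1,1}} - \varepsilon)n - N(n,\omega)\bigr)^+$ and $C_2(\omega) := \sup_{n \geq 1}\bigl(N(n,\omega) - (\lambda_{H^{1,1}} + \varepsilon)n\bigr)^+$ will be $\nu^\N$-almost surely finite with exponentially decaying tails, and in particular $\log^+$-integrable.

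Next I will derive an alignment estimate: a nonnegative function $\Phi(j,n,\omega)$ with exponential tails uniform in $(j,n)$, such that almost surely
\[
N(n,\omega) \geq N(j,\omega) + N(n-j,\sigma^j\omega) - \Phi(j,n,\omega) \qquad \text{for all } 0 \leq j \leq n.
\]
Geometrically, this is the ``deviation inequality'' for the random walk on $\Hyp_X$, and under \eqref{eq:exponential_moment_cohomology} it should follow from a strengthening of Lemma~\ref{lem:definition_e} giving exponential-rate convergence of $(f^n_\omega)^*[\kappa_0]/\M((f^n_\omega)^*[\kappa_0])$ to $e(\omega)$. Combined, these estimates yield
\[
\frac{\|f_j^* \cdots f_{n-1}^*\|}{\|f_0^* \cdots f_{n-1}^*\|} = e^{N(n-j,\sigma^j\omega) - N(n,\omega)} \leq e^{-N(j,\omega) + \Phi} \leq e^{-(\lambda_{H^{1,1}} - \varepsilon)j + C_1(\omega) + \Phi(j,n,\omega)},
\]
and an analogous bound controls the second ratio with $N(n-j,\sigma^j\omega)$ playing the role of the large factor. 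Markov applied to $\nu\{D > M\} \leq M^{-\tau'}\int D^{\tau'}\,d\nu$ together with Borel--Cantelli gives $D(f_{j-1}) \leq e^{\varepsilon j}$ eventually $\nu^\N$-almost surely; the $\Phi(j,n,\omega)$ terms are absorbed using their uniform exponential tails in the same way. Geometric summation in $j$ then bounds each of the two series pointwise by a random constant $B(\omega)$, and the $\log^+$-integrability of $B$ follows from that of $C_1$, $C_2$ and the tail control on the individual summands.

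The hard part will be the alignment estimate in the second paragraph: proving that $\Phi(j,n,\omega)$ has exponential tails uniformly in $(j,n)$ requires the full quantitative theory of random walks on Gromov-hyperbolic spaces under an exponential moment assumption, going beyond the qualitative Gou\"ezel--Karlsson input of Theorem~\ref{thm:GK_good_times} (which only provides tightness along a subsequence of positive upper density). Once this alignment is available, the remainder is a routine combination of Borel--Cantelli with geometric summation.
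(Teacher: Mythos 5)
There is a genuine gap, and it sits exactly where you place the weight of the argument. Your reduction of the first sum to the ``alignment estimate'' $N(n,\omega)\geq N(j,\omega)+N(n-j,\sigma^j\omega)-\Phi(j,n,\omega)$ is fine as algebra, but the estimate you need is substantially stronger than what you state, and you neither prove it nor explain how it would be absorbed. To bound $\sum_j D(f_{j-1})e^{-(\lambda_{H^{1,1}}-\e)j+C_1(\omega)+\Phi(j,n,\omega)}$ by a single $B(\omega)$ \emph{for every} $n$, you need, almost surely, $\Phi(j,n,\omega)\leq \e j + C(\omega)$ simultaneously for all $0\leq j\leq n$ and all $n$, with $C(\omega)$ log-integrable. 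A tail bound $\P(\Phi(j,n,\cdot)>t)\leq Ce^{-ct}$ that is merely uniform in $(j,n)$ does not give this: for fixed $j$ there are infinitely many $n$, so the naive Borel--Cantelli over pairs $(j,n)$ diverges, and one needs a sup-over-$n$ deviation inequality (a statement about the whole trajectory staying $\e j$-aligned with the limit geodesic), which is precisely the hard quantitative input you defer to the literature. Saying it ``should follow from a strengthening of Lemma~\ref{lem:definition_e}'' is not a proof; nothing of this kind is established in the paper, and it is at least as deep as the proposition itself. The paper's own argument avoids alignment altogether by a conditioning trick: fixing $f_j,\ldots,f_{n-1}$, taking a unit vector $b$ in the image direction realizing $\norm{f_j^*\cdots f_{n-1}^*}$, and applying the fixed-direction lower large deviation bound $\P(\abs{(f_\omega^j)^*b}\leq e^{\e j})\leq Ce^{-cj}$ (legitimate because $b$ is independent of $f_0,\ldots,f_{j-1}$); this yields $\P\bigl(\norm{f_j^*\cdots f_{n-1}^*}/\norm{f_0^*\cdots f_{n-1}^*}>e^{-\e j}\bigr)\leq Ce^{-cj}$ directly, and the rest is the Markov/Borel--Cantelli bookkeeping you sketch (done carefully for the initial block $j\leq J$ to get $\log^+B\in L^1$).

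A second, independent problem is your one-line treatment of the second sum. There the ratio $\norm{f_0^*\cdots f_{j-1}^*}/\norm{f_0^*\cdots f_{n-1}^*}$ decays (at best) like $e^{-(\lambda_{H^{1,1}}-\e)(n-j)}$, i.e.\ in $n-j$, not in $j$, while your control on the weight is $D(f_j)\leq e^{\e j}$ eventually; for $j$ close to $n$ the product $e^{\e j}e^{-(\lambda_{H^{1,1}}-\e)(n-j)}$ is of order $e^{\e n}$, so the ``analogous'' argument does not close, and in addition the lower bound on $N(n-j,\sigma^j\omega)$ would involve the deviation constant of the shifted walk $\sigma^j\omega$, which is not uniform in $j$. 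The paper handles this asymmetry by applying the whole first estimate to the reversed system governed by $\check\nu\colon f\mapsto\nu(f^{-1})$ (using $\norm{\gamma}\asymp\norm{\gamma^{-1}}$ for the cohomological action), so that the exponential decay is again indexed by the number of removed \emph{initial} factors carrying the $D$-weights. As written, your proposal does not contain an argument for the second inequality, and the first rests on an unproved uniform alignment statement; both gaps are essential.
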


This is a refined version of Corollary \ref{cor:GK_good_times}.  

\begin{proof}  
We are grateful to  S\'ebastien Gou\"ezel for explaining this argument to us. We temporarily use the notation $\P(\cdot)$  for probability with respect to $\nu^n$ or $\nu^\N$ (so, here, $\P$ does not denote projectivisation). 

\smallskip

\noindent{\bf{First Estimate.--}} We start with the first estimate: $ \sum_{j=1}^{n-1} D(f_{j-1})
  \frac{\big\| {f_j^*\cdots f_{n-1}^*} \big\| }{\norm{f_0^*\cdots f_{n-1}^*}}  \leq B(\omega)$.

{\bf{Step 1.--}} For every $0<\e<\lambda_{H^{1,1}}$ there exists 
constants $c, C>0$ such that 
\begin{equation}
\P\lrpar{\abs{(f_\omega^n)^*b}\leq e^{\e n}} \leq Ce^{-cn}.
\end{equation}
for every $b\in \Pi_\Gamma$ with $\abs{ b}=1$.
This large deviation result, which is uniform in $n$ and $b$, follows from condition~\eqref{eq:exponential_moment_cohomology} (see for instance~\cite[\S V.6]{bougerol-lacroix}, and~\cite[\S 12]{benoist-quint_book}).

{\bf{Step 2.--}} Let us prove that
\begin{equation}\label{eq:proba_estimate}
\P\lrpar{\frac{\big\| {f_j^*\cdots f_{n-1}^*} \big\| }{\norm{f_0^*\cdots f_{n-1}^*}} > e^{-\e j} }\leq Ce^{-cj}.
\end{equation}
 
For this,  fix $f_j, \ldots, f_{n-1}$. Then, there is a point $a\in \Pi_\Gamma$  with $\abs{a}=1$ such  that  
$\norm{f_j^*\cdots f_{n-1}^*} = \abs{ f_j^*\cdots f_{n-1}^*a }$. Hence, if $\norm{f_0^*\cdots f_{n-1}^*} < \big\| {f_j^*\cdots f_{n-1}^*} \big\| e^{\e j}$, 
we infer that 
\begin{equation}
\abs{f_0^*\cdots f_{n-1}^*a}  < \big\| {f_j^*\cdots f_{n-1}^*} \big\| e^{\e j} = \big|{f_j^*\cdots f_{n-1}^*a} \big| e^{\e j}.
\end{equation}
Thus, if we set 
\begin{equation}
b = \unsur{\big|{f_j^*\cdots f_{n-1}^*a}\big| } f_j^*\cdots f_{n-1}^*a ,
\end{equation} 
we obtain that  $\abs{f_0^*\cdots f_{j-1}^*b} < e^{\e j}$; this happens with (conditional) 
probability $\leq Ce^{-cj}$ (relative to $\nu^{* j}$), for the uniform 
 constants given in Step~1. Averaging over  $f_j, \ldots, f_{n-1}$, we get the result.

{\bf{Step 3.--}} The moment condition satisfied by $D$ and Markov's inequality imply $\P(D>K)\leq C_1 K^{-\tau'}$ for some constant $C_1>0$.
Fix $\e\in \R_+^*$ small with respect to $\lambda_{H^{1,1}}$ and $\tau'$. Then, on a set $\Omega(\e,J)$ 
of measure  
\begin{equation}
\nu^\N(\Omega(\e,J))\geq 1-C_2(e^{-(\e \tau'/2) J}+e^{-\e c J}),
\end{equation} 
for some $C_2=C_2(\e)>0$, we have both $D(f_{j-1})\leq e^{\e j/2}$ and $\frac{\norm{f_j^*\cdots f_{n-1}^*}}{\norm{f_0^*\cdots f_{n-1}^*}} \leq e^{-\e j}$ for all $j\geq J$. 
For $\omega=(f_n)$ in $\Omega(\e,J)$, we get
  \begin{align}\label{eq:sum_gouezel}
 \sum_{j=1}^{n-1} D(f_{j-1})\frac{\big\| {f_j^*\cdots f_{n-1}^*} \big\| }{\norm{f_0^*\cdots f_{n-1}^*}}  
 &\leq  \sum_{j=1}^J D(f_{j-1}) \frac{\big\| {f_j^*\cdots f_{n-1}^*} \big\| }{\norm{f_0^*\cdots f_{n-1}^*}} +  \sum_{j=J+1}^{n-1} e^{-\e j/2}\\
 \notag&\leq   \sum_{j=1}^J D(f_{j-1}) \norm{(f_{j-1}\inv)^* \cdots  (f_0\inv)^*} + C_3 \\ 
 & \notag= C_3+ \sum_{j=0}^{J-1} \norm{f_0^*}\cdots \norm{f_j^*} D(f_j).
 \end{align}
The moment condition~\eqref{eq:exponential_moment} gives $\P(\norm{f^*}>K)\leq C_4K^{-\tau}$ and
as already noticed, we also have $\P(D(f)>K)\leq C_1 K^{-\tau'}$. So, with $\eta=\min(\tau, \tau')$, 
there is a set of probability at least
$1- C_5JK^{-\eta}$ on which
\begin{equation} 
\sum_{j=0}^{J-1}  D(f_j) \norm{f_0^*}\cdots \norm{f_j^*} \leq C_6J K^{J+2}.
\end{equation}
 Taking $K = J^{3/\eta}$, we have $JK^{-\eta}=J^{-2}$, and we obtain 
\begin{equation}\label{eq:J}
\P\lrpar{ \sum_{j=0}^{J-1} D(f_j)\norm{f_0^*}\cdots \norm{f_j^*}  > J^{1+3 (J + 2)/\eta}  }\leq C_7J^{-2}.
\end{equation}
Also, note that $J^{1+(3 J + 6)/\eta} \leq \exp\lrpar{CJ^{3/2}}$.

By the Borel-Cantelli lemma, the sum in \eqref{eq:sum_gouezel} is almost surely bounded  
by some constant $B(\omega)$
 which satisfies  $\P\lrpar{\log B > J^{3/2}}\leq CJ^{-2}$; in particular $\ee\lrpar{\log^+ B}<\infty$. 
 
\smallskip

{\bf{Second Estimate.--}} To obtain the second estimate of Proposition~\ref{pro:gouezel}, we apply the above proof 
to the reversed random dynamical system, induced by $\check{\nu}:f\mapsto \nu(f\inv)$. Indeed, the core of the 
argument is the inequality \eqref{eq:sum_gouezel} which is not sensitive to the order of compositions. 
\end{proof}


\section{Limit currents}\label{sec:currents}

Our goal in this section is to prove the counterpart of  the convergence \eqref{eq:definition_e} 
at the level of   closed positive currents on $X$. 
Throughout this section we fix a non-elementary random holomorphic dynamical system   
$(X, \nu)$ satisfying the moment condition \eqref{eq:moment}, so that all results of \S \ref{sec:furstenberg} apply. 
We refer the reader to \cite{Guedj-Zeriahi:Book} (in particular Chapter 8) for basics on pluripotential theory on compact K\"ahler manifolds (see also \cite{Demailly:book}).  

\subsection{Potentials and cohomology classes of positive closed currents}\label{par:Notations_kappa_i_potentials}
 
Let us fix once and for all a family of K\"ahler forms $(\kappa_i)_{1\leq i\leq h^{1,1}(X)}$ such that  $[\kappa_i]^2 = 1$ and 
the $[\kappa_i]$ form a basis of  $H^{1,1}(X;\R)$; in addition we require 
that the $\kappa_i$ satisfy  
\begin{equation}\label{eq:beta}
\kappa_0= \beta \sum_i\kappa_i
\end{equation}
for some $\beta >0$, where $\kappa_0$ is the K\"ahler form chosen in Section~\ref{par:cones_definition}  (note that necessarily $\beta <1$). 
We also fix  a smooth   volume form $\vol_X$ on $X$, normalized by $\int_X \vol=1$. 
On tori, K3 and Enriques surfaces, we choose $\vol_X$ to be the canonical $\Aut(X)$-invariant volume form (see Remark~\ref{rem:volume_form}). 
It is convenient to assume in all cases that $\vol_X$ is also the volume form associated with the K\"ahler metric $\kappa_0$ (up to 
scaling). 
\begin{vlongue}
On tori, K3 and Enriques surfaces this implies that $\kappa_0$ is the unique Ricci-flat K\"ahler metric in its K\"ahler class; its 
existence is guaranteed by Yau's theorem (see \cite{Filip-Tosatti} for the interest of such a choice in holomorphic dynamics).  
\end{vlongue} 

Unless otherwise specified, the currents we shall consider will be of type $(1,1)$.
The action of a current $T$ on a test form $\varphi$ will 
be denoted by $\langle T, \varphi\rangle$ or $\int T\wedge \varphi$. If $T$ is closed, we denote its cohomology class by $[T]$; so, if $\varphi$
is a closed form, $\langle T, \varphi\rangle=\langle [T] \, \vert\,   [\varphi]\rangle$.
By definition the {\bf{mass}} of a current is the quantity $\M(T) = \int T\wedge \kappa_0$; so $\M(T) = \langle [T]\vert [\kappa_0]\rangle$ when $T$ is closed. 

\subsubsection{Normalized potentials}\label{subs:normalized_potentials}
If $a$ is an element of $H^{1,1}(X;\R)$, we denote by $(c_i(a))_{1\leq i \leq h^{1,1}(X)}$ 
its coordinates
in the basis $([\kappa_i])$, so that  $a =\sum_i c_i(a)[\kappa_i]$. Then, we set
\begin{equation}
\Theta(a)=\sum_i c_i(a) \kappa_i.
\end{equation}
Likewise, given a closed $(1,1)$-form  $\alpha$ or a closed current of bidegree $(1,1)$, we set $c_i(\alpha)=c_i([\alpha])$ and
$\Theta(\alpha)=\Theta([\alpha])$; hence, $[\Theta(\alpha)]=[\alpha]$. 
It is worth keeping in mind that  some coefficients $c_i(\alpha)$ can be negative and
  $\Theta(\alpha)$ need not be semi-positive, even if $\alpha$ is a K\"ahler form.
If $T$ is a closed positive current of bidegree $(1,1)$ on $X$ we define its 
\textbf{normalized potential} to be the unique 
 function  $u_T\in L^1(X)$ such that 
\begin{equation}\label{eq:def_uT}
T=\Theta(T)+dd^c(u_T) \; \text{ and } \; \int_X u_T \; \vol=0
\end{equation}
 (see  \cite[\S 8.1]{Guedj-Zeriahi:Book}). 
The function $u_T$ is locally given as the difference $v-w$ of a psh potential $v$ of $T$ and a smooth potential $w$ 
of $\Theta(T)$. 

\begin{lem}\label{lem:uniform_Ak_psh} 
There is a  constant $A>0$ such that the following properties are satisfied for every 
closed positive current $T$ of mass $1$ 
\begin{enumerate}[{\em (1)}]
\item $-A\leq c_i(T) \leq A$ for all $1\leq i\leq h^{1,1}(X)$, and $-A\kappa_0 \leq \Theta(T)\leq A\kappa_0$.
\item  the function $u_T$ is $(A\kappa_0)$-psh: $dd^c(u_T)+A\kappa_0$ is a positive current.
\end{enumerate}
\end{lem}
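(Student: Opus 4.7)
The plan is to prove the two assertions in sequence, first controlling the cohomology class of $T$, then transferring that control to the representative form $\Theta(T)$, and finally deducing the quasi-pshness of the normalized potential $u_T$ essentially by inspection.

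First I would bound the cohomology coordinates $c_i(T)$. Since $T$ is a closed positive current of mass $1$, its class $[T]$ lies in the slice $\{a\in\Psef(X)\,;\,\M(a)=1\}$. By the comparison~\eqref{eq:comparison_norm_mass}, this slice is bounded in $H^{1,1}(X;\R)$ for any norm; choosing the norm $\abs{a}:=\max_i \abs{c_i(a)}$ associated with the basis $([\kappa_i])$ yields a constant $A_1>0$ such that $\abs{c_i(T)}\leq A_1$ for every $i$ and every such $T$.

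Next I would pass from the cohomological bound to a bound on the form $\Theta(T)=\sum_i c_i(T)\kappa_i$. Since $X$ is compact and each $\kappa_i$ is a smooth (K\"ahler) $(1,1)$-form, while $\kappa_0$ is a K\"ahler form that dominates any other smooth $(1,1)$-form up to a multiplicative constant (equivalently, $\kappa_0$ defines a Hermitian metric and the $\kappa_i$ have bounded coefficients in any local frame), there exists $C>0$ such that $-C\kappa_0\leq\pm\kappa_i\leq C\kappa_0$ for every $i$. Combining this with the bounds on $c_i(T)$ gives $-A\kappa_0\leq\Theta(T)\leq A\kappa_0$ with, say, $A:=A_1\, C\, h^{1,1}(X)$. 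This establishes (1).

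Finally, for (2), I would simply use the defining relation~\eqref{eq:def_uT}. Since $T\geq 0$ as a current and $dd^c u_T=T-\Theta(T)$, one has
\begin{equation}
dd^c u_T+A\kappa_0 \;=\; T+(A\kappa_0-\Theta(T)) \;\geq\; 0,
\end{equation}
where the second term is nonnegative by the upper bound $\Theta(T)\leq A\kappa_0$ obtained in Step~2. By definition this means that $u_T$ is $(A\kappa_0)$-psh, which is assertion (2).

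There is no real obstacle here: the argument is essentially a compactness statement on the normalized slice of $\Psef(X)$ combined with the elementary fact that any smooth $(1,1)$-form on a compact manifold is bounded by a multiple of a fixed K\"ahler form. The only minor point to watch is to enlarge $A$ once at the end so that the same constant works in both assertions, which is harmless.
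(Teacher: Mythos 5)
Your proof is correct and follows essentially the same route as the paper: a uniform bound on the coordinates $c_i(T)$ coming from the boundedness/compactness of the mass-one slice (the paper invokes compactness of currents and continuity of $T\mapsto c_i(T)$, you invoke the equivalent inequality~\eqref{eq:comparison_norm_mass}), then domination of $\Theta(T)$ by a multiple of $\kappa_0$ (the paper uses $\kappa_0=\beta\sum_i\kappa_i$ directly), and finally $dd^c u_T=T-\Theta(T)\geq -A\kappa_0$. The differences are cosmetic, so nothing further is needed.
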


\begin{proof} Since the coefficients $T\mapsto c_i(T)$ are continuous functions on the space of currents and closed positive currents
of mass $1$  form a compact set $K$, the functions $\abs{c_i}$ are bounded by some uniform constant $A'$ on $K$. 
Setting $A=A'\beta\inv$, with $\beta$ as in Equation~\eqref{eq:beta}, we get $-A\kappa_0 \leq \Theta(T)\leq A\kappa_0$ for all $T\in K$.  
Then  $dd^cu_T=T-\Theta(T)\geq   -A \kappa_0$ and (2) follows. \end{proof}

\begin{cor}\label{cor:compact} 
The set of potentials $\set{ u_T \; \vert \; T \, {\text{ is a closed positive current of mass}} \, 1\ \text{on } X}$ is  a compact subset of $L^{1}(X;\vol)$.
\end{cor}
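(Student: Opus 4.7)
The plan is to deduce the statement from the classical compactness theorem for quasi-plurisubharmonic functions on compact K\"ahler manifolds. Concretely, given the constant $A$ produced by Lemma~\ref{lem:uniform_Ak_psh}, every normalized potential $u_T$ belongs to the set
\[
\mathrm{PSH}(X, A\kappa_0) := \{\varphi \in L^1(X) \text{ u.s.c. } : dd^c\varphi + A\kappa_0 \geq 0\},
\]
and satisfies the additional normalization $\int_X u_T \, \vol = 0$. So the whole family lies inside a single family of $(A\kappa_0)$-psh functions with a fixed mean value.

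First, I would invoke the standard compactness result (see e.g.\ \cite[Prop.~8.5 and Prop.~8.7]{Guedj-Zeriahi:Book}) stating that, for any K\"ahler form $\omega$ on a compact K\"ahler manifold, the subset of $\mathrm{PSH}(X, \omega)$ consisting of functions $\varphi$ with $\sup_X \varphi = 0$ is compact in $L^1$; equivalently, any sequence of $\omega$-psh functions whose sup-norms stay bounded admits a subsequence converging in $L^1$ either to an $\omega$-psh function or to $-\infty$ uniformly. I would then transfer this from the normalization by $\sup_X \varphi$ to the normalization by the mean $\int_X \varphi \, \vol$: since the map $\varphi \mapsto \sup_X \varphi - \int_X \varphi \, \vol$ is uniformly bounded on $\mathrm{PSH}(X, A\kappa_0)$ (this is the elementary ``sub-mean-value'' inequality together with Hartogs-type control, again~\cite[Ch.~8]{Guedj-Zeriahi:Book}), the two normalizations differ by a uniformly bounded constant, so either produces a relatively compact family in $L^1$.

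Next, I would verify that the limit of a convergent sequence $u_{T_n} \to u$ in $L^1$ is again of the form $u_T$ for some closed positive current $T$ of mass $1$. By the compactness of the set of closed positive currents of mass $1$ (Banach-Alaoglu together with Lemma~\ref{lem:uniform_Ak_psh}(1), which bounds the cohomology classes), we may extract a subsequence $T_{n_k} \to T$ weakly with $T$ closed, positive, and of mass $1$. The continuity of the coefficients $c_i$ gives $\Theta(T_{n_k}) \to \Theta(T)$ as smooth forms, so $dd^c u_{T_{n_k}} = T_{n_k} - \Theta(T_{n_k})$ converges weakly to $T - \Theta(T) = dd^c u_T$; combined with $L^1$-convergence and the normalization $\int u_{T_{n_k}}\vol = 0$, we obtain $u = u_T$.

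The main point that requires a little care is the interplay between the two normalizations (by sup or by mean), but this is a routine application of the basic estimates in Guedj-Zeriahi; no new idea is needed, and the whole argument is essentially the standard quasi-psh compactness packaged with the uniform quasi-positivity provided by Lemma~\ref{lem:uniform_Ak_psh}.
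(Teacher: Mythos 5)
Your proposal is correct and follows essentially the same route as the paper: the paper also reduces to the fact (Lemma~\ref{lem:uniform_Ak_psh}) that all normalized potentials are $(A\kappa_0)$-psh with mean zero and then cites the compactness of such normalized families from Guedj--Zeriahi (Prop.~8.5 and Rem.~8.6). Your additional steps (comparing the sup and mean normalizations, and checking that an $L^1$-limit is again of the form $u_T$ for a mass-one closed positive current) just spell out details the paper leaves to the cited reference.
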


\begin{proof}
Since this is a set of $(A\kappa_0)$-psh functions which are normalized with respect to a smooth 
volume form,  the result follows from Proposition 8.5 and Remark 8.6 in \cite{Guedj-Zeriahi:Book}. 
\end{proof}

\begin{rem}\label{rem:normalization}
Another usual normalization  imposes the condition $\sup_{x\in X} u_T(x)=0$; by compactness this would only change 
$u_T$ by some uniformly bounded constant. However since many of our dynamical examples preserve a natural volume form 
it is more convenient for us to normalize as in \eqref{eq:def_uT}. 
\end{rem}

\subsubsection{The diameter of a pseudo-effective class}
For a class $a\in \Psef(X)$ we define  
\begin{equation}
\Cur(a)=\{ T\; ; \; T\, {\text{is a closed positive current with}}\; [T]=a\},
\end{equation}
 This is a compact convex subset of the space of currents.
If $S$ and $T$
are two elements of $\Cur(a)$, then $\Theta(S)=\Theta(T)=\Theta(a)$ and $T-S=dd^c(u_T-u_S)$. We set 
\begin{equation}
\dist(S,T)=\int_X\vert u_S-u_T\vert\,  \vol \; . 
\end{equation}
This is a distance that metrizes the weak topology on $\Cur(a)$: this follows for instance from the fact that 
by Corollary \ref{cor:compact}  $(\Cur(a), \dist)$ is compact. 
By definition, the \textbf{diameter} of  $a$ is 
\begin{equation}
\diam(a)=\diam(\Cur(a))= \sup\{\dist(S,T)\; ; \; S, \, T \, {\text{in}}\; \Cur(a)\},
\end{equation}
If $a\in \Psef(X)$, then $\diam(a)$  is a non-negative real number which is finite by Corollary~\ref{cor:compact}. 
If $\Cur(a)= \emptyset$, we set $\diam(a)=-\infty$.  Note that $\diam$ is 
homogeneous of degree~$1$: $\diam(t a)=t\diam(a)$ for every $a\in \Psef(X)$ and $t>0$. 

\begin{vlongue}  
\begin{eg}
Let $\pi\colon X\to B$ be a fibration of genus $1$. Let $a$ be the cohomology class of any fiber $X_w=\pi\inv(w)$, $w\in B$. 
Then, to every probability measure $\mu_B$ on $B$ corresponds  a closed positive current $T_{\mu_B}\in \Cur(a)$, defined 
by $\langle T_{\mu_B}, \varphi\rangle = \int_B \int_{X_w} \varphi d\mu_B(w)$, and any closed positive current in $\Cur(a)$ is
of this form. In this case $\diam(a)>0$. 
Now, assume that $f$ is a loxodromic automorphism of $X$, and denote by $\theta_f$ the unique $(1,1)$-class
of mass $1$ that satisfies $f^*\theta_f=\lambda_f \theta_f$, where $\lambda_f$ is the spectral radius of $f^*\in \GL(H^{1,1}(X;\R))$; 
then $\Cur(\theta_f)$ is represented by a unique closed positive current $T_f^+$ and $\diam(\theta_f)=0$. For generic Wehler surfaces, these two types of
classes, given by eigenvectors of loxodromic automorphisms and classes of genus $1$ fibrations, are dense in the boundary of 
$\Hyp_X\cap \NS(X;\R)$  (see~\cite{Cantat:Milnor}).
\end{eg}
\end{vlongue}
\begin{lem}\label{lem:diam_measurable}
On $\Psef(X)$, $a\mapsto \diam(a)$ is upper semi-continuous, hence measurable. 
\end{lem}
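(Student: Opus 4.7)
The plan is as follows. Let $(a_n)$ be a sequence in $\Psef(X)$ with $a_n \to a$, and set $L := \limsup_n \diam(a_n)$; the goal is $L \leq \diam(a)$. After passing to a subsequence I may assume $\diam(a_n) \to L$, and pick $S_n, T_n \in \Cur(a_n)$ with $\dist(S_n, T_n) \geq \diam(a_n) - 1/n$. The masses $\M(S_n) = \M(a_n)$ stay bounded, so by weak compactness of positive currents of bounded mass one can extract further subsequences so that $S_n \to S$ and $T_n \to T$ weakly. For any closed smooth form $\varphi$, $\langle S_n, \varphi\rangle = \langle a_n \,\vert\, [\varphi]\rangle \to \langle a \,\vert\, [\varphi]\rangle$, so $[S] = [T] = a$ and $S, T \in \Cur(a)$.

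The core of the argument is then to show that $u_{S_n} \to u_S$ in $L^1(X, \vol)$, and likewise for $T_n$. Continuity of the coordinate functionals $c_i$ yields $\Theta(S_n) = \Theta(a_n) \to \Theta(a) = \Theta(S)$ as smooth forms, so $dd^c u_{S_n} = S_n - \Theta(S_n)$ converges weakly to $dd^c u_S$. By Lemma~\ref{lem:uniform_Ak_psh} applied to the mass-$1$ rescalings of the $S_n$, the potentials $u_{S_n}$ form a uniformly $A\kappa_0$-psh family (for some $A$ depending only on the mass bound) with $\int u_{S_n}\,\vol = 0$; the compactness argument of Corollary~\ref{cor:compact} then gives relative compactness of $(u_{S_n})$ in $L^1(X, \vol)$. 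Any cluster value $u$ satisfies $dd^c(u - u_S) = 0$, hence $u - u_S$ is pluriharmonic on the compact K\"ahler surface~$X$, therefore constant; the normalization forces this constant to vanish, so $u = u_S$.

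Once $u_{S_n} \to u_S$ and $u_{T_n} \to u_T$ in $L^1(X, \vol)$, the triangle inequality gives $|u_{S_n} - u_{T_n}| \to |u_S - u_T|$ in $L^1$, so
\[
\dist(S_n, T_n) = \int_X |u_{S_n} - u_{T_n}|\,\vol \longrightarrow \int_X |u_S - u_T|\,\vol = \dist(S, T) \leq \diam(a),
\]
yielding $L \leq \diam(a)$. Measurability then follows from upper semi-continuity. The delicate point is the $L^1$ convergence of normalized potentials under simultaneous weak convergence of currents and convergence of their cohomology classes; this is standard pluripotential theory, but must be handled with care because the reference forms $\Theta(a_n)$ themselves vary with $n$, and because the normalization adopted in \S\ref{subs:normalized_potentials} is by integral rather than supremum (see Remark~\ref{rem:normalization}).
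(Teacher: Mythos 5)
Your proof is correct and follows essentially the same route as the paper's: choose pairs $(S_n,T_n)$ nearly realizing $\diam(a_n)$, use the uniform mass bound and the compactness of normalized potentials (Corollary~\ref{cor:compact}, via Lemma~\ref{lem:uniform_Ak_psh}) to extract limits $S,T\in\Cur(a)$ with $u_{S_n}\to u_S$, $u_{T_n}\to u_T$ in $L^1$, and pass to the limit in $\dist(S_n,T_n)$. The only difference is that you make explicit the identification of the $L^1$-limit of the potentials with $u_S$ (via $dd^c(u-u_S)=0$, pluriharmonicity, and the normalization), a step the paper leaves implicit in its appeal to Corollary~\ref{cor:compact}.
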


\begin{proof}
Let $(a_n)$  be a sequence of  pseudo-effective classes converging to $a$.
For every $n$    we choose a pair 
of currents $(S_n,T_n)$ in $\Cur(a_n)^2$ such that $\dist(S_n,T_n)\geq \diam(a_n)-1/n$. The masses of
$S_n$ and $T_n$ are uniformly bounded because they depend only on $a_n$. By 
Corollary~\ref{cor:compact}, we can extract a subsequence such that   $S_n$ and $T_n$
converge towards closed positive currents $S$, $T\in \Cur(a)$, and   $u_{S_n}$ 
and $u_{T_n}$ converge towards their respective  potentials $u_S$ and $u_T$ in $L^1(X,\vol)$. Then, 
$\dist(S,T)=\int_X\vert u_S-u_T\vert \vol =\lim_n \dist(S_n,T_n)$, which  shows that 
$ \diam(a)\geq \limsup_{n}\left( \diam(a_n)\right)$.
\end{proof}
 
\subsection{Action of $\Aut(X)$}

\subsubsection{A volume estimate}
Let $X$ be a compact, complex manifold, and let $\vol$ be a $C^0$-volume form on $X$
with $\vol(X)=1$. If $f$ is an automorphism of $X$,  let $\Jac(f)\colon X\to \R$ denote its Jacobian determinant
with respect to the volume form $\vol$: $f^*\vol= \Jac(f)\vol$. The following lemma is a variation 
on well-known ideas in holomorphic dynamics (see for instance~\cite{Guedj:Fourier}). 

\begin{lem}\label{lem:volume-estimate} Let $\kappa$ be a hermitian form on $X$.
Let $h$ be a $\kappa$-psh function on $X$ such that $\int_X h\, \vol =0$, and let $f$ be an automorphism of $X$. Then, 
\[
\int_X \abs{ h\circ f} \, \vol \leq C\log(C\norm{ \Jac(f^{-1}) }_\infty) 
\]
for some positive constant $C$ that depends on $(X,\kappa)$ but neither on $f$ nor on $h$. 
\end{lem}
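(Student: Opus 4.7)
The plan is to combine a change of variables with the Fenchel--Young inequality and the uniform exponential integrability of normalized $\kappa$-psh functions.

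First I would set $J := \norm{\Jac(f^{-1})}_\infty$ and apply the change of variables formula $(f^{-1})^* \vol = \Jac(f^{-1})\, \vol$ to rewrite
\begin{equation*}
\int_X \abs{h\circ f} \, \vol \,=\, \int_X \abs{h}\, \Jac(f^{-1})\, \vol.
\end{equation*}
Integrating the Jacobian gives $\int_X \Jac(f^{-1})\, \vol = \vol(X) = 1$, which forces $J\geq 1$ and hence $\log J \geq 0$; this is what will let me absorb additive constants into a single logarithm at the end.

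Next I would exploit the Fenchel--Young inequality associated to the Legendre-dual pair $(t\log t - t,\ e^s)$, namely $ab \leq a\log a - a + e^b$ for all $a > 0$ and $b\in \R$. Applying this pointwise with $a=\Jac(f^{-1})(x)$ and $b = \alpha \abs{h(x)}$ for a fixed $\alpha>0$ to be chosen, and integrating against $\vol$, yields
\begin{equation*}
\alpha \int_X \abs{h}\, \Jac(f^{-1})\, \vol \,\leq\, \int_X \Jac(f^{-1})\log \Jac(f^{-1}) \, \vol \,-\, 1 \,+\, \int_X e^{\alpha\abs{h}}\, \vol.
\end{equation*}
Since $\Jac(f^{-1})\log \Jac(f^{-1}) \leq \Jac(f^{-1})\log J$ pointwise (here is where $J \geq 1$ is used) and $\int \Jac(f^{-1})\vol = 1$, the first term on the right is at most $\log J$.

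The remaining issue is to bound $\int_X e^{\alpha\abs{h}} \vol$ uniformly over all $\kappa$-psh functions $h$ normalized by $\int_X h\, \vol = 0$. Since the paper's running setting is a compact K\"ahler surface, I would dominate $\kappa$ by $C'\kappa_0$ for some $C'>0$, so that every $\kappa$-psh function is also $(C'\kappa_0)$-psh; the normalized family of such functions is compact in $L^1(X,\vol)$ (this is exactly the content of Corollary~\ref{cor:compact}, hence Proposition~8.5 in Guedj--Zeriahi \cite{Guedj-Zeriahi:Book}). Compactness gives a uniform upper bound $\sup_X h \leq C_1$, and the uniform version of the Skoda--Zeriahi integrability theorem furnishes constants $\alpha_0 > 0$ and $C_2 > 0$ such that $\int_X e^{-\alpha_0 h}\, \vol \leq C_2$ for all $h$ in the family. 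Using $e^{\alpha\abs{h}} \leq e^{\alpha h} + e^{-\alpha h}$ and choosing $\alpha = \alpha_0$ then yields $\int_X e^{\alpha\abs{h}}\vol \leq e^{\alpha C_1}+C_2$, a constant independent of $h$ and $f$. Injecting these estimates into the inequality above gives $\alpha \int \abs{h\circ f}\, \vol \leq \log J + C_3$, and enlarging the constant once more produces a bound of the form $C\log(CJ)$, as desired.

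The only non-elementary input is the uniform exponential integrability of normalized $\kappa$-psh functions, which is the main potential obstacle; however, it is a well-known consequence of Skoda's integrability theorem in its uniform form, so in practice the proof reduces to the Fenchel--Young trick plus a citation to pluripotential theory.
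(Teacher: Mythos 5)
Your proof is correct. It rests on the same pluripotential input as the paper's argument, but the mechanics are different: the paper never performs the change of variables, instead writing $\int_X \abs{h\circ f}\,\vol=\int_0^\infty \vol\lrpar{f^{-1}\{\abs{h}\geq t\}}\,dt$, bounding $\vol(f^{-1}A)\leq \norm{\Jac(f^{-1})}_\infty\,\vol(A)$, invoking the exponential decay $\vol\{\abs{h}\geq t\}\leq c e^{-t/c}$ (the same Guedj--Zeriahi uniform Skoda-type statement you cite, in distribution-function form), and then optimizing the truncation level $s$ by hand. Your route replaces the layer-cake truncation by the convex duality $ab\leq a\log a-a+e^b$ applied to $a=\Jac(f^{-1})$, $b=\alpha\abs{h}$, i.e.\ a Donsker--Varadhan-type bound $\int\abs{h}\,d\mu\lesssim \mathrm{Ent}(\mu\,\vert\,\vol)+\log\int e^{\alpha\abs{h}}\vol$ with $\mu=\Jac(f^{-1})\vol$; the two are essentially dual formulations of the same estimate, and both ultimately only use $\norm{\Jac(f^{-1})}_\infty$ through its logarithm. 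The paper's version is marginally more elementary (no convex duality, no change of variables), while yours isolates the slightly sharper fact that the relative entropy $\int \Jac(f^{-1})\log\Jac(f^{-1})\,\vol$, rather than $\log\norm{\Jac(f^{-1})}_\infty$, is what really controls the left-hand side --- a refinement the paper does not need, since it immediately bounds the entropy by $\log\norm{\Jac(f^{-1})}_\infty$ anyway. Your handling of the normalization (passing from $\int_X h\,\vol=0$ to a uniform bound on $\sup_X h$ via compactness before applying uniform exponential integrability to $e^{-\alpha_0 h}$) matches what the paper implicitly does when it invokes the same references, so there is no gap there.
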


\begin{proof}
We first observe that there is a constant $c>0$ such that $\vol\{\vert h\vert\geq t\}\leq c \exp(-t/c)$; this follows 
from Lemma 8.10 and Theorem 8.11 in \cite{Guedj-Zeriahi:Book}, together with Chebychev's inequality 
(see Remark \ref{rem:normalization} for the normalization).  
Then, we get 
\begin{eqnarray}
\int_X \vert h\circ f\vert \, \vol & = & \int_0^\infty \vol\{ \vert h\circ f\vert \geq t\} dt \\
&\notag = & \int_0^\infty \vol(f^{-1}\{ \vert h\vert \geq t\}) dt \\
&\notag \leq & \int_0^s \vol(X) dt + \norm{ \Jac(f^{-1}) }_\infty \int_s^\infty c\exp(-t/c)dt \\
& \label{eq:vol_lastline} \leq & s \, \vol(X)+ \norm{ \Jac(f^{-1}) }_\infty  c^2\exp(-s/c)
\end{eqnarray}
where the inequality in the third line follows from the change of variable formula. 
Now, we minimize   \eqref{eq:vol_lastline} by choosing 
$s=c\log (c\norm{ \Jac(f^{-1}) }_\infty/\vol(X))$
and we infer that   
\begin{equation}
\int_X \vert h\circ f\vert \, \vol\leq c\, \vol(X)\left(1+\log\left( \frac{c\norm{ \Jac(f^{-1}) }_\infty}{\vol(X)}\right)\right).
\end{equation}
Since the total volume is invariant, $\norm{ \Jac(f) }_\infty\geq 1$, and the  asserted estimate follows. 
\end{proof}

\subsubsection{Equivariance}
Let us come back to the study of $(X,\nu)$. 
If $f$ is an automorphism of $X$, then $f^*\Cur(a)=\Cur(f^*(a))$ for every class $a\in H^{1,1}(X, \R)$. 
If $a\in \Psef(X)$  and $T\in \Cur(a)$, then $T=\Theta(a)+dd^c(u_T)$ and  
\begin{equation}\label{eq:f*T}
f^*T=f^*\Theta(a)+dd^c(u_T\circ f)= \Theta(f^*a) + dd^c(u_{f^*\Theta(a)}  + u_T\circ f).
\end{equation}
This shows that the normalized potential of $f^*T$ is given by 
\begin{equation}\label{eq:ufT_uT}
u_{f^*T}= u_{f^*\Theta(a)} + u_T\circ f +E(f,T)
\end{equation}
where $E(f,T)\in \R$ is the constant for which the integral of $u_{f^*T}$ vanishes; since $u_{f^*\Theta(a)}$ has mean $0$, we get
\begin{equation}\label{eq:formula-E(f,T)}
E(f,T)=-\int_X \lrpar{u_{f^*\Theta(a)} + u_T\circ f} \, \vol=-\int_X  u_T\circ f \, \vol.
\end{equation}

\begin{rem}  
If $\vol$ is $f$-invariant, for instance if it is the canonical volume on a K3 or Enriques surface, 
then $E(f,T)=0$, which simplifies a little bit the analysis of the potentials below.
\end{rem}

\begin{lem}\label{lem:EfT}
On the set of closed positive currents of mass $1$, the function $(f,T)\mapsto E(f,T)$ satisfies  
$$
\vert E(f,T)\vert    \leq C \log\left( C\norm{ \Jac(f^{-1})}_\infty \right)
$$
where the implied positive constant  $C$   depends neither  on $f$ nor on  $T$.
\end{lem}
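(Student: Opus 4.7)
The plan is to reduce the bound to a direct application of Lemma~\ref{lem:volume-estimate}, using the formula $E(f,T) = -\int_X u_T\circ f \, \vol$ from~\eqref{eq:formula-E(f,T)} together with the uniform $\kappa_0$-plurisubharmonicity of $u_T$ coming from Lemma~\ref{lem:uniform_Ak_psh}.

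More precisely, I would first invoke Lemma~\ref{lem:uniform_Ak_psh} to obtain a constant $A>0$, depending only on $X$ and the reference data $(\kappa_i,\kappa_0,\vol)$, such that for every closed positive current $T$ of mass $1$ the normalized potential $u_T$ satisfies $dd^c u_T + A\kappa_0 \geq 0$. Setting $h_T := u_T/A$, the function $h_T$ is then $\kappa_0$-psh and still has zero mean: $\int_X h_T\,\vol = 0$, thanks to the normalization in~\eqref{eq:def_uT}.

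Next I would apply Lemma~\ref{lem:volume-estimate} with $\kappa = \kappa_0$ and $h = h_T$. Since $h_T$ is $\kappa_0$-psh with vanishing mean and since the constant produced by Lemma~\ref{lem:volume-estimate} depends only on $(X,\kappa_0)$ and not on the particular psh function, we obtain
\[
\int_X |h_T\circ f|\,\vol \;\leq\; C'\log\!\bigl(C'\|\Jac(f^{-1})\|_\infty\bigr)
\]
for some uniform $C'>0$. Multiplying by $A$ yields the same bound (with constant $AC'$) for $\int_X |u_T\circ f|\,\vol$.

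Finally, from~\eqref{eq:formula-E(f,T)} and the triangle inequality,
\[
|E(f,T)| \;=\; \Bigl|\int_X u_T\circ f\,\vol\Bigr| \;\leq\; \int_X |u_T\circ f|\,\vol \;\leq\; AC'\log\!\bigl(C'\|\Jac(f^{-1})\|_\infty\bigr),
\]
which gives the desired estimate after enlarging the constant $C$ to absorb the factor $A$ and to allow the form $C\log(C\|\Jac(f^{-1})\|_\infty)$. There is no real obstacle here: the only point to check carefully is that the constant in Lemma~\ref{lem:volume-estimate} is independent of $h$ (it depends on the exponential decay of $\vol\{|h|\geq t\}$, which by Lemma~8.10 and Theorem~8.11 of~\cite{Guedj-Zeriahi:Book} is uniform over normalized $\kappa_0$-psh functions), and that Lemma~\ref{lem:uniform_Ak_psh} provides a constant $A$ uniform in $T$. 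Both facts are already in place, so the lemma follows immediately.
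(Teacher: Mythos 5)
Your proof is correct and follows essentially the same route as the paper: Lemma~\ref{lem:uniform_Ak_psh} gives that the normalized potentials $u_T$ are uniformly $(A\kappa_0)$-psh with zero mean, and then Equation~\eqref{eq:formula-E(f,T)} combined with Lemma~\ref{lem:volume-estimate} (applied with the fixed Hermitian form $A\kappa_0$, or equivalently after your rescaling $h_T=u_T/A$) yields the bound with a constant independent of $f$ and $T$. Your extra remark on the uniformity of the constant in Lemma~\ref{lem:volume-estimate} is exactly the point that makes the argument work, and it is already guaranteed by the compactness/uniform exponential integrability of normalized quasi-psh functions.
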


\begin{proof} 
From Lemma~\ref{lem:uniform_Ak_psh}, the potentials $u_T$ are uniformly $(A\kappa_0)$-psh, so the conclusion follows from Equation~\eqref{eq:formula-E(f,T)} and Lemma~\ref{lem:volume-estimate}.
\end{proof}
 
\begin{lem}\label{lem:diam_f*a} 
There exists a constant $C$ such that if $a$ is any pseudo-effective class  
of mass $1$, 
and $f$ is any automorphism of $X$, then
$$
\diam(f^*a)\leq   C\log\left( C\norm{\Jac (f^{-1})}_\infty\right).
$$
\end{lem}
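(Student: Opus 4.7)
The plan is to reduce the statement to Lemma~\ref{lem:volume-estimate} applied to the normalized potentials of currents of mass $1$. Since $f$ is an automorphism, the pull-back $f^{\varstar}\colon \Cur(a)\to \Cur(f^{\varstar}a)$ is a bijection, so every pair of currents in $\Cur(f^{\varstar}a)$ is of the form $(f^{\varstar}S, f^{\varstar}T)$ with $S,T\in \Cur(a)$. It therefore suffices to bound $\dist(f^{\varstar}S, f^{\varstar}T)$ uniformly in $S,T\in \Cur(a)$, with the bound depending only on $\norm{\Jac(f^{-1})}_\infty$.

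From the equivariance formula~\eqref{eq:ufT_uT}, the normalized potentials satisfy
\[
u_{f^{\varstar}T}-u_{f^{\varstar}S} = (u_T-u_S)\circ f + \bigl(E(f,T)-E(f,S)\bigr),
\]
where the constant $E(f,T)-E(f,S)$ is precisely the one that makes the right-hand side have zero mean with respect to $\vol$. Since adding such a constant can at most double the $L^1$-norm (the mean-zero normalization is optimal up to a factor $2$), I would bound
\[
\dist(f^{\varstar}S,f^{\varstar}T)\;=\;\int_X\bigl|u_{f^{\varstar}T}-u_{f^{\varstar}S}\bigr|\,\vol\;\leq\; 2\int_X|u_T\circ f|\,\vol + 2\int_X|u_S\circ f|\,\vol.
\]

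Now $a$ has mass $1$, so Lemma~\ref{lem:uniform_Ak_psh} provides a uniform constant $A$ such that both $u_T$ and $u_S$ are $(A\kappa_0)$-psh functions with vanishing mean. Applying Lemma~\ref{lem:volume-estimate} with $\kappa=A\kappa_0$ to each of $u_T$ and $u_S$ yields
\[
\int_X|u_T\circ f|\,\vol + \int_X|u_S\circ f|\,\vol \;\leq\; 2C\log\bigl(C\norm{\Jac(f^{-1})}_\infty\bigr),
\]
with $C$ depending only on $(X,\kappa_0)$. Combining the two displays gives the desired uniform bound on $\diam(f^{\varstar}a)$, after adjusting the constant $C$. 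No step looks delicate: the argument is purely a matter of combining the equivariance of normalized potentials with the uniform $(A\kappa_0)$-psh estimate and the volume lemma. The only place where one needs a bit of care is the trivial but crucial observation that subtracting the mean does not cost more than a factor~$2$ in $L^1$, which avoids having to estimate the constants $E(f,S)$ and $E(f,T)$ separately.
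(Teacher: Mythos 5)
Your proof is correct and follows essentially the same route as the paper: pull back a pair of currents in $\Cur(a)$, use the equivariance formula \eqref{eq:ufT_uT} for normalized potentials, and control the resulting $L^1$-norms via Lemma~\ref{lem:uniform_Ak_psh} and Lemma~\ref{lem:volume-estimate}. The only (harmless) difference is that instead of invoking Lemma~\ref{lem:EfT} to bound the normalization constants $E(f,S)$, $E(f,T)$, you absorb them by the mean-zero observation at the cost of a factor $2$ — which amounts to the same thing, since Lemma~\ref{lem:EfT} is itself an immediate consequence of Lemma~\ref{lem:volume-estimate} and Equation~\eqref{eq:formula-E(f,T)}.
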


 \begin{proof}
 Indeed, if $S$ and $T $ belong to $\Cur(a)$, by Equation~\eqref{eq:ufT_uT}  we have  
$ u_{f^*T} - u_{f^*S}   = (u_T-u_S)\circ f + E(f,T)- E(f,S)
$, so 
 \begin{equation}
 \dist(f^*T,f^*S) \leq \int\abs{ u_T\circ f } \, \vol + \int\abs{ u_S\circ f } \, \vol + \abs{E(f,T)}+\abs{E(f,S)};
 \end{equation}
 and the result follows from Lemmas \ref{lem:volume-estimate} and \ref{lem:EfT}, since 
 $u_S$ and $u_T$ are uniformly $(A\kappa_0)$-psh. 
 \end{proof}

\subsubsection{An estimate for canonical potentials}

 \begin{lem}\label{lem:utheta}
 For any K\"ahler form $\kappa $ on $X$ 
there exists  a positive constant $C(\kappa)$ such that for every $f\in \mathrm{Aut}(X)$,   
$$
\norm{ u_{f^*\kappa}}_{C^1} \leq C(\kappa) \norm{ f}_{C^1}^2  .
$$
In addition   $C(\kappa)\leq C'\norm{\kappa}_\infty$, where $\norm{\kappa}_\infty$ is  the sup norm of the coefficients of $\kappa$ in 
a system of coordinate charts, and $C'$ depends only on $X$ (and the choice of these coordinate charts). 
\end{lem}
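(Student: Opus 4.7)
The strategy is to solve the defining equation $dd^c u_{f^*\kappa} = f^*\kappa - \Theta(f^*\kappa)$ by elliptic theory. Both sides are smooth $(1,1)$-forms with the same cohomology class, and the normalization $\int u_{f^*\kappa}\, \vol = 0$ picks out the unique solution. So I would first bound the right-hand side in sup norm, then invoke elliptic regularity to recover $u_{f^*\kappa}$ in $C^1$.

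Step 1: a pointwise bound on $f^*\kappa$. In any fixed atlas of coordinate charts, $f^*\kappa$ at a point $x$ reads as a $(1,1)$-form whose coefficients are quadratic expressions in $Df_x$ applied to the coefficients of $\kappa$ at $f(x)$. Consequently
\[
\norm{f^*\kappa}_\infty \leq C_1 \norm{\kappa}_\infty \norm{f}_{C^1}^2,
\]
where $C_1$ depends only on $X$ (and the chosen atlas). Integrating against $\kappa_0$ gives
\[
\M(f^*\kappa) = \int_X f^*\kappa \wedge \kappa_0 \leq C_2 \norm{\kappa}_\infty \norm{f}_{C^1}^2.
\]

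Step 2: control the smooth representative $\Theta(f^*\kappa)$. Applying Lemma~\ref{lem:uniform_Ak_psh} to the normalized current $\M(f^*\kappa)^{-1} f^*\kappa$ (which is a closed positive current of mass~$1$) shows that the coefficients $c_i(f^*\kappa)$ are bounded in absolute value by $A \cdot \M(f^*\kappa)$. Consequently
\[
\norm{\Theta(f^*\kappa)}_\infty \leq \sum_i \abs{c_i(f^*\kappa)} \norm{\kappa_i}_\infty \leq C_3\, \M(f^*\kappa) \leq C_4 \norm{\kappa}_\infty \norm{f}_{C^1}^2.
\]
Setting $\alpha := f^*\kappa - \Theta(f^*\kappa)$, we obtain $\norm{\alpha}_\infty \leq C_5 \norm{\kappa}_\infty \norm{f}_{C^1}^2$ with $C_5$ depending only on $X$ and $\kappa_0$.

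Step 3: elliptic regularity. Tracing the identity $dd^c u_{f^*\kappa} = \alpha$ against $\kappa_0$ yields $\Delta u_{f^*\kappa} \cdot \kappa_0^2 = c\, \alpha\wedge\kappa_0$ for some fixed constant~$c$, so $\norm{\Delta u_{f^*\kappa}}_\infty \lesssim \norm{\alpha}_\infty$. Since $u_{f^*\kappa}$ has vanishing mean, the Calder\'on--Zygmund / $W^{2,p}$ estimate for the Green operator on $(X,\kappa_0)$ gives $\norm{u_{f^*\kappa}}_{W^{2,p}} \leq C_p \norm{\Delta u_{f^*\kappa}}_{L^p}$ for every $p<\infty$; fixing any $p > 4 = \dim_\R X$, the Sobolev embedding $W^{2,p}\hookrightarrow C^1$ then yields
\[
\norm{u_{f^*\kappa}}_{C^1} \leq C_6 \norm{\alpha}_\infty \leq C_7 \norm{\kappa}_\infty \norm{f}_{C^1}^2,
\]
with $C_7$ depending only on $X$ and the fixed data $\kappa_0$, $(\kappa_i)$, $\vol$. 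This is the required bound, and the announced dependence $C(\kappa) \leq C'\norm{\kappa}_\infty$ is built into the argument.

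There is no real obstacle here: the proof is a routine combination of the pointwise pullback estimate, the uniform mass-to-coefficient control of Lemma~\ref{lem:uniform_Ak_psh}, and standard elliptic regularity. The only mild care needed is to ensure that the constants produced by the elliptic estimates depend solely on the background geometry $(X,\kappa_0)$ and not on $f$ or $\kappa$.
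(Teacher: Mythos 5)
Your proposal is correct, but it follows a genuinely different route from the paper. The paper does not pass through elliptic $L^p$ theory: it constructs an explicit solution $\phi$ of $dd^c\phi=f^*\kappa-\Theta(f^*\kappa)$ by integrating against the Bost--Gillet--Soul\'e ``Green current'' $K$ on $X\times X$ (following Dinh--Sibony), using the pointwise singularity estimates $K(x,y)=O\lrpar{\log\abs{x-y}/\abs{x-y}^2}$ and $\nabla K(x,y)=O\lrpar{\log\abs{x-y}/\abs{x-y}^3}$, so that $K\in L^p_{\loc}$ for $p<2$ and $\nabla K\in L^p_{\loc}$ for $p<4/3$; the $C^0$ and $C^1$ bounds then come from H\"older's inequality applied to this kernel, with the same input you use in your Steps 1--2, namely $\norm{f^*\kappa}_\infty,\norm{\Theta(f^*\kappa)}_\infty\leq C\norm{\kappa}_\infty\norm{f}_{C^1}^2$. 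Your argument instead traces the equation against $\kappa_0$ to reduce to the scalar Poisson equation for the mean-zero function $u_{f^*\kappa}$, and invokes the $W^{2,p}$ Calder\'on--Zygmund estimate plus the Sobolev embedding $W^{2,p}\hookrightarrow C^1$ for $p>4$; this is perfectly rigorous (the traced equation does hold, the mean-zero normalization kills the kernel of $\Delta$, and the constants depend only on $(X,\kappa_0)$ and the fixed data), and it yields the same linear dependence on $\norm{\alpha}_\infty$, hence the stated bound $C(\kappa)\leq C'\norm{\kappa}_\infty$. What each approach buys: yours is softer and relies only on standard elliptic machinery, while the paper's kernel construction is more explicit and self-contained (no appeal to $L^p$ regularity theory) and matches the Dinh--Sibony formalism used elsewhere in that circle of ideas; for the purposes of this lemma the two are interchangeable. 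One small point worth stating explicitly in your write-up is the identity $dd^c u\wedge\kappa_0=c\,(\Delta_{\kappa_0}u)\,\kappa_0^2$ on a K\"ahler surface, which is what legitimizes replacing the $(1,1)$-form equation by its trace.
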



Recall the choice of K\"ahler forms $(\kappa_i)$ from \S~\ref{par:Notations_kappa_i_potentials} and the definition of 
$\Theta(\cdot)$ from \S~\ref{subs:normalized_potentials}.

\begin{cor}\label{cor:utheta}
If $\kappa= \sum_i  c_i\kappa_i$ in Lemma \ref{lem:utheta},  then the constant $C(\kappa)$ satisfies 
$C(\kappa)\leq C''\M(\kappa)$. 
Likewise,  $\norm{u_{f^*\Theta(a)} }_{C^1}\leq C'''\M(a) \norm{f}_{C^1}^2$ for all  $a\in \Psef(X)$.
\end{cor}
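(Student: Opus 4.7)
The plan is to deduce both estimates from Lemma~\ref{lem:utheta} by decomposing in the fixed basis $([\kappa_i])$ and exploiting two elementary observations. First, on the finite-dimensional space $H^{1,1}(X;\R)$, the coordinate functionals $a\mapsto c_i(a)$ are continuous, hence controlled by any norm; and on pseudo-effective classes the mass $\M(\cdot)$ is equivalent to any norm by \eqref{eq:comparison_norm_mass}. Thus $|c_i(a)|\lesssim \M(a)$ uniformly for $a\in\Psef(X)$, with implicit constant depending only on $X$ and on the chosen basis $([\kappa_i])$. Second, the assignment $T\mapsto u_T$ is $\R$-linear: since $\Theta$ is linear on $H^{1,1}(X;\R)$ and the normalization condition $\int u_T\,\vol=0$ is linear, uniqueness in \eqref{eq:def_uT} forces $u_{T_1+T_2}=u_{T_1}+u_{T_2}$ and $u_{\lambda T}=\lambda u_T$.

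For the first assertion, I would write $\kappa=\sum_i c_i\kappa_i$ and combine the pointwise bound
\[
\norm{\kappa}_\infty\le \sum_i|c_i|\,\norm{\kappa_i}_\infty\le C_1 \max_i|c_i|
\]
(with $C_1$ depending only on the fixed family $(\kappa_i)$) with the fact that $[\kappa]\in\Kah(X)\subset\Psef(X)$, which by \eqref{eq:comparison_norm_mass} yields $\max_i|c_i|\lesssim \M(\kappa)$. Injecting this into the inequality $C(\kappa)\leq C'\norm{\kappa}_\infty$ from Lemma~\ref{lem:utheta} gives the bound $C(\kappa)\leq C''\M(\kappa)$.

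For the second assertion, the subtle point is that $\Theta(a)=\sum_i c_i(a)\kappa_i$ need not be semi-positive, since the $c_i(a)$ can have any sign; consequently Lemma~\ref{lem:utheta} cannot be applied to $\Theta(a)$ as a single Kähler form. Linearity provides the remedy: since $f^*\Theta(a)=\sum_i c_i(a)\, f^*\kappa_i$, the linearity observation above gives
\[
u_{f^*\Theta(a)}=\sum_i c_i(a)\, u_{f^*\kappa_i}.
\]
Applying Lemma~\ref{lem:utheta} to each of the finitely many basis forms $\kappa_i$ produces a uniform constant $K$ with $\norm{u_{f^*\kappa_i}}_{C^1}\leq K\norm{f}_{C^1}^2$, and therefore
\[
\norm{u_{f^*\Theta(a)}}_{C^1}\le K\,\norm{f}_{C^1}^2\sum_i |c_i(a)|\lesssim \M(a)\,\norm{f}_{C^1}^2,
\]
where the last inequality uses $|c_i(a)|\lesssim \M(a)$ for $a\in\Psef(X)$.

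There is no genuine obstacle here: the argument is essentially bookkeeping, once one notes that $T\mapsto u_T$ is linear. The only place that requires care is the observation that $\Theta(a)$ generally fails to be positive, which forces one to apply Lemma~\ref{lem:utheta} to each of the basis forms $\kappa_i$ separately rather than directly to $\Theta(a)$.
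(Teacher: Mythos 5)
Your proof is correct and follows essentially the same route as the paper: decompose in the fixed basis $(\kappa_i)$, use the linearity of the normalized potential to write $u_{f^*\Theta(a)}=\sum_i c_i(a)\,u_{f^*\kappa_i}$, and control $\sum_i|c_i(a)|$ by $\M(a)$ via the norm--mass comparison \eqref{eq:comparison_norm_mass}. Your explicit remark that $\Theta(a)$ need not be semi-positive, which forces the termwise application of Lemma~\ref{lem:utheta}, is exactly the point implicit in the paper's brief justification.
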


Indeed   $C(\kappa)\leq C'  \norm{\kappa}_\infty \leq C'' \sum_i \abs{c_i}$ and  
$u_{f^*\Theta(a)} = \sum c_i(a) u_{f^*\kappa_i}$.

 \begin{proof}[Proof of Lemma~\ref{lem:utheta}] 
By definition 
 $f^*\kappa   - \Theta(f^*\kappa) = dd^c \lrpar{u_{f^*\kappa}}$. The desired estimate  will be obtained 
 by constructing a solution $\phi$ to the equation 
 \begin{equation}\label{eq:ddcphi}
 dd^c \phi = f^*\kappa   - \Theta(f^*\kappa)
 \end{equation} which satisfies 
 $\norm{ \phi}_{C^1} \leq C \norm{ f}_{C^1}^2  $. Then, since $u_{f^*\kappa}$ and $\phi$ 
 differ by a constant and $u_{f^*\kappa}$ is known to vanish at some point, it follows that $u_{f^*\kappa}$ satisfies the same estimate. 
To construct the potential $\phi$, we follow the method of  Dinh and Sibony \cite[Prop. 2.1]{dinh-sibony_jams} which is itself 
based on \cite{bost-gillet-soule}  (we keep the notation from 
\cite{dinh-sibony_jams}). Let $\alpha $ be a  closed $(2,2)$-form on $X\times X$ 
which is cohomologous to the diagonal   $\Delta$.
In \cite{bost-gillet-soule}, Bost, Gillet and Soul\'e  construct an explicit   $(1,1)$-form $K$ on $X\times X$ such 
that $dd^cK = [\Delta]-\alpha$; they refer to it as the ``Green current''. 
It is $C^\infty$ outside the diagonal, and along 
$\Delta$, it satisfies the estimates  
\begin{equation}\label{eq:K}
K(x,y)  = O\lrpar{ \frac{\log \abs{x-y}}{\abs{x-y}^2}}\;  \text{ and } \; \nabla K(x,y)  = O\lrpar{ \frac{\log \abs{x-y}}{\abs{x-y}^3}}
\end{equation} 
(here we mean that these estimates hold for the  coefficients of $K$ and $\nabla K$  in local coordinates).
These estimates are    easily deduced from the explicit 
expression of $K$ as $\pi_*(\widehat \varphi \eta - \beta)$ given in the proof of Proposition~2.1 of \cite{dinh-sibony_jams}, where 
$\pi:\widehat{X\times X}\to X\times X$ is the blow-up of the diagonal, $\eta$ and $\beta$ are smooth (1,1) forms on 
$\widehat{X\times X}$ and $\widehat \varphi $ is a function with logarithmic singularities along the proper transform 
of $\Delta$ in $X\times X$. 
It is shown in \cite[Prop. 2.1]{dinh-sibony_jams} that a solution to Equation~\eqref{eq:ddcphi} is given by 
\begin{equation}
\phi(x) = \int_{y\in X} K(x,y) \wedge \lrpar{f^*\kappa (y) -  \Theta(f^*\kappa)(y) }
\end{equation}
(in the notation of  \cite{dinh-sibony_jams}, $f^*\kappa$ and $\Theta(f^*\kappa)$ correspond
to $\Omega^+$ and $\Omega^-$ respectively). 
The coefficients of the smooth $(1,1)$-forms 
$f^*\kappa$   and $ \Theta(f^*\kappa)$ have their uniform norms bounded by  $C \norm{ f}_{C^1}^2$, where 
$C = C(\kappa) \leq C'\norm{\kappa}_\infty$. 
The first estimate in \eqref{eq:K} 
implies that the coefficients of $K$ belong to $L^{p}_{\mathrm{loc}}$ for $p<2$, so it follows from the H\"older inequality that 
$\norm{\phi}_{C^0}\leq C''\norm{\kappa}_\infty \norm{ f}_{C^1}^2 $  (for some constant $C''$ depending only on $X$). 
A similar estimate for $ \nabla \phi$ is obtained from derivation under the integral sign 
and the fact that $ \nabla K \in L^{p}_{\mathrm{loc}}$ for $p<4/3$. This concludes the proof.
\end{proof}

\subsection{Convergence  and extremality}   

\begin{thm}\label{thm:uniq+extremal}
Let $(X,\nu)$ be a non-elementary random holomorphic dynamical system on a compact K\"ahler surface $X$,
satisfying the moment condition \eqref{eq:moment}. Then for 
  $\mu_\fr$-almost every point $\underline a\in \Lim(\Gamma)$, the following properties hold:
\begin{enumerate}[{\em (1)}]
\item there is a unique nef and isotropic class $ {a} \in H^{1,1}(X;\R)$ of mass $1$ with $\P( {a})=\underline a$;
\item the convex set $\Cur({a})$ is a singleton $\{ T_{ {a}}\}$; 
\item the class $\underline a$ is an extremal point of $\P(\Kahbar(X))$ and of $\P(\Psef(X))$; 
\item the current $T_{{a}}$ is   extremal   in the convex set of closed positive currents of mass $1$.
\end{enumerate}
\end{thm}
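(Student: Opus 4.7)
The plan is to reduce everything to pointwise statements along $\omega \in \Omega$ via the disintegration $\mu_{\partial}=\int\delta_{\P(e(\omega))}\,d\nu^{\N}(\omega)$ from Theorem~\ref{thm:def_stationary}, so it suffices to establish the four assertions for $\underline{a}=\P(e(\omega))$ on a full $\nu^{\N}$-measure set of $\omega$. Assertion~(1) is then essentially formal: by Lemma~\ref{lem:limitset_in_nef} the point $\underline{a}\in\Lim(\Gamma_{\nu})$ lies in $\P(\Nef(X))\cap\partial\Hyp_X$, so it admits at least one nef isotropic lift of mass~$1$, namely $e(\omega)$; and any two such lifts would be collinear and carry the same mass, hence coincide. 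Assertion~(3) is nothing but Proposition~\ref{pro:extremality}, transferred from $\nu^{\N}$-almost every $\omega$ to $\mu_{\partial}$-almost every $\underline{a}$ through the above disintegration.

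The crux of the argument is~(2): I will show that $\diam(e(\omega))=0$ for almost every~$\omega$. The key input is the equivariance
\[
(f_\omega^n)^{*}\,e(\sigma^{n}\omega)=\zeta(n,\omega)\,e(\omega),\qquad \zeta(n,\omega)=\M\bigl((f_\omega^n)^{*}e(\sigma^{n}\omega)\bigr),
\]
combined with the diameter control of Lemma~\ref{lem:diam_f*a}: since $\diam$ is $1$-homogeneous, one gets
\[
\zeta(n,\omega)\,\diam(e(\omega))=\diam\bigl((f_\omega^n)^{*}e(\sigma^{n}\omega)\bigr)\leq C\log\bigl(C\,\bigl\|\Jac((f_\omega^n)^{-1})\bigr\|_{\infty}\bigr).
\]
It remains to show that the right-hand side is $o(\zeta(n,\omega))$. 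On the one hand, in complex dimension~$2$ we have $|\Jac(f^{-1})|\leq \|f^{-1}\|_{C^{1}}^{4}$, and by sub-multiplicativity of the $C^{1}$ norm under composition together with the moment condition~\eqref{eq:moment} and Birkhoff's ergodic theorem, $\log\|(f_\omega^n)^{-1}\|_{C^{1}}$ is almost surely bounded by $\sum_{j<n}\log\|f_j^{-1}\|_{C^{1}}$, which grows at most linearly in~$n$. On the other hand, Lemma~\ref{lem:variant_furstenberg} gives $\tfrac{1}{n}\log\zeta(n,\omega)\to\lambda_{H^{1,1}}>0$, so $\zeta(n,\omega)$ grows exponentially and overwhelms the at-most-linear numerator. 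Letting $n\to\infty$ yields $\diam(e(\omega))=0$, i.e. $\Cur(e(\omega))=\{T_{e(\omega)}\}$.

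Finally, assertion~(4) is a routine consequence of~(2) and~(3): if $T_{a}=\alpha S+(1-\alpha)T$ is a convex combination by closed positive currents of mass~$1$, then in cohomology $a=\alpha[S]+(1-\alpha)[T]$ is a convex decomposition in $\P(\Psef(X))$, and extremality from~(3) forces $[S]=[T]=a$; part~(2) then gives $S=T=T_{a}$. The only real difficulty of the argument is the contraction step in the second paragraph, whose success rests on the fact that $\lambda_{H^{1,1}}$, which controls the exponential growth of $\zeta(n,\omega)$, is strictly positive — this is where the non-elementary hypothesis ultimately enters — and that the derivative cocycle $\log\|(f_\omega^n)^{-1}\|_{C^{1}}$ is merely integrable, hence subexponential, which is exactly what the moment condition~\eqref{eq:moment} delivers.
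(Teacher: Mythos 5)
Your proof is correct, but the heart of it --- assertion (2) --- follows a genuinely different route from the paper. The paper does not use the equivariance $(f_\omega^n)^*e(\sigma^n\omega)=\zeta(n,\omega)\,e(\omega)$ nor Lemma~\ref{lem:variant_furstenberg} here: it iterates the stationarity of $\mu_\partial$ to write $\int \diam \, d\mu_\partial = \int \diam\lrpar{\underline{f}_n^*\cdots \underline{f}_1^*(\underline a)}\,d\nu(f_1)\cdots d\nu(f_n)\,d\mu_\partial(\underline a)$, bounds the integrand pointwise by $C\sum_i\log\norm{f_i^{-1}}_{C^1}/\M(f_n^*\cdots f_1^*a)$ exactly as you do, and then invokes Lemma~\ref{lem:growth2} --- the reversed-order composition, valid for \emph{every} $a\in\Pi_{\Gamma}$ under the weak moment condition --- together with upper semicontinuity of $\diam$ (hence boundedness on $\Lim(\Gamma)$) and dominated convergence to conclude $\diam=0$ $\mu_\partial$-almost surely. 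Your pointwise argument replaces the stationarity/dominated-convergence step by the forward equivariance of $\omega\mapsto e(\omega)$ combined with the exponential growth of $\zeta(n,\omega)=\M\lrpar{(f_\omega^n)^*e(\sigma^n\omega)}$ supplied by Lemma~\ref{lem:variant_furstenberg}; this is legitimate (both ingredients are established in Section~\ref{sec:furstenberg}, before the theorem, with no circularity) and arguably more direct, but it leans on heavier machinery, since Lemma~\ref{lem:variant_furstenberg} rests on the Gou\"ezel--Karlsson theorem, whereas the paper's order-reversal trick is designed precisely to avoid any growth estimate along the forward cocycle at an $\omega$-dependent class, which under the mere condition \eqref{eq:moment} is delicate (compare Remark~\ref{rem:order_composition}). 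Your treatment of (1) and (3) (transfer of Lemma~\ref{lem:definition_e} and Proposition~\ref{pro:extremality} through the disintegration of $\mu_\partial$, with the measurability of the relevant sets guaranteed by Lemma~\ref{lem:diam_measurable}) and of (4) (extremality of the class plus uniqueness of the current in its class) coincides with the paper's.
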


Combining this result with Lemma \ref{lem:definition_e}  and Equation \eqref{eq:representation_mufr}   we obtain 
the first and second assertions of the following corollary; the third assertion follows from the first one and the equivariance relation~\eqref{eq:equivariance_e}.

\begin{cor}\label{cor:Tsomega}
The following properties are satisfied for $\nu^\N$-almost every $\omega$:
\begin{enumerate}[\em (1)]
\item there exists a unique closed positive  current $T_\omega^s$ in the cohomology class $e(\omega)$;  
\item for every K\"ahler form $\kappa$,
$$
  \unsur{\M\lrpar{ (f_\omega^n)^* \kappa }} (f_\omega^n)^* \kappa \tendvers T_\omega^s. 
$$
\item the currents $T^s_\omega$ satisfy the equivariance property
$$
  (f_\omega)^*T_{\sigma(\omega)}^s=\frac{\M((f_\omega)^*T_{\sigma(\omega)}^s)}{\M(T_\omega^s)}T_\omega^s= {\M((f_\omega)^*T_{\sigma(\omega)}^s)}T_\omega^s.
$$
\end{enumerate}
\end{cor}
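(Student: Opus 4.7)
The plan is to deduce Corollary~\ref{cor:Tsomega} as an essentially direct consequence of Theorem~\ref{thm:uniq+extremal} and Lemma~\ref{lem:definition_e}, together with the equivariance identity~\eqref{eq:equivariance_e}. The bridge between the two almost-sure settings (the $\mu_\partial$-almost sure statements in Theorem~\ref{thm:uniq+extremal} and the $\nu^\N$-almost sure statements we need here) is Equation~\eqref{eq:representation_mufr}: the pushforward of $\nu^\N$ under $\omega \mapsto \P(e(\omega))$ is exactly $\mu_\partial$. Hence any property holding for $\mu_\partial$-a.e.~$\underline a \in \Lim(\Gamma_\nu)$ holds for the class $\P(e(\omega))$ for $\nu^\N$-a.e.~$\omega$. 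I would make this reduction explicit at the outset and then treat the three assertions in turn.

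For assertion~(1), Lemma~\ref{lem:definition_e} already provides a nef, isotropic class $e(\omega)$ of mass~$1$ with $\P(e(\omega)) \in \Lim(\Gamma_\nu)$ almost surely. Applying Theorem~\ref{thm:uniq+extremal}(2) to the point $\underline{a} = \P(e(\omega))$ and using uniqueness of the mass-$1$ representative from assertion~(1) of that theorem, the convex set $\Cur(e(\omega))$ is almost surely a singleton, whose unique element we label $T^s_\omega$.

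For assertion~(2), set $T_n = \M((f_\omega^n)^*\kappa)^{-1}(f_\omega^n)^*\kappa$. These are closed positive $(1,1)$-currents of mass~$1$, hence lie in a weakly compact set. Let $T_\infty$ be any weak cluster value of $(T_n)$. Since the cohomology class map $T \mapsto [T]$ is continuous on currents of bounded mass, Lemma~\ref{lem:definition_e} gives $[T_\infty] = \lim [T_{n_k}] = e(\omega)$. By assertion~(1) we must have $T_\infty = T^s_\omega$, so the whole sequence converges to $T^s_\omega$.

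For assertion~(3), note first that $\M(T^s_\omega) = \langle e(\omega)\,\vert\,[\kappa_0]\rangle = 1$ since $e(\omega)$ has mass~$1$, so the claimed identity reduces to $(f_\omega)^*T^s_{\sigma(\omega)} = \M((f_\omega)^*e(\sigma(\omega)))\,T^s_\omega$. By~\eqref{eq:equivariance_e} the pullback $(f_\omega)^*T^s_{\sigma(\omega)}$ is a closed positive current whose cohomology class equals $\M((f_\omega)^*e(\sigma(\omega)))\,e(\omega)$; so is the right-hand side. Applying assertion~(1) together with the positive homogeneity of $\Cur(\cdot)$ forces equality. The only mild subtlety in the whole argument — and the step I would verify most carefully — is that the full-measure set on which Theorem~\ref{thm:uniq+extremal} applies via~\eqref{eq:representation_mufr} is preserved under the shift $\sigma$, so that $T^s_{\sigma(\omega)}$ is almost surely well-defined and the equivariance identity can be combined with uniqueness; this is handled by intersecting the $\sigma$-invariant full-measure sets provided by Lemma~\ref{lem:definition_e} and by the almost-sure uniqueness from part~(1).
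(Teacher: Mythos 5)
Your proposal is correct and follows essentially the same route as the paper, which deduces (1) and (2) from Theorem~\ref{thm:uniq+extremal}, Lemma~\ref{lem:definition_e} and the representation~\eqref{eq:representation_mufr}, and (3) from assertion~(1) combined with the equivariance relation~\eqref{eq:equivariance_e}; your compactness argument for (2) and the shift-invariance remark simply make explicit what the paper leaves implicit.
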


\begin{proof}[Proof of Theorem \ref{thm:uniq+extremal}]
The first and third properties were already established, respectively in 
Lemma~\ref{lem:limitset_in_NS} and~\ref{lem:limitset_in_nef} and Proposition \ref{pro:extremality}.  
Property (4) follows from (2) and (3). It remains to prove (2). 
For this, we denote by $\underline{f}^*$ the projective action of $f^*$  on $\P H^{1,1}(X;\R)$.
For $\underline a\in \Lim(\Gamma)$, let us  set ${\mathrm{diam}}\lrpar{\underline a}=\diam(a)$, 
where $a$ is the unique pseudo-effective class of mass~$1$ such that  $\P( {a})=\underline a$; 
this defines a measurable function on $\Lim(\Gamma)$, by Lemma \ref{lem:diam_measurable}. 
Our purpose is to show that ${\mathrm{diam}}\lrpar{\underline a} =0$ for $\mu_\fr$-almost every $\underline{a}$.
The stationarity of $\mu_\fr$ reads 
\begin{equation}
\int {\mathrm{diam}}\lrpar{\underline {a}} \, d\mu_\partial\lrpar{\underline a}   = 
 \int\!\! \int   {\mathrm{diam}}\lrpar{
{\underline{f}}^*\lrpar{\underline {a}}} \, d\nu(f) d\mu_\partial\lrpar{\underline a} 
\end{equation} 
and iterating  
this relation gives
\begin{equation}
\int {\mathrm{diam}}\lrpar{\underline {a}} \, d\mu_\partial\lrpar{\underline a} 
= \int {\mathrm{diam}}\lrpar{
\underline{f}_n^*\cdots  \underline{f}_1^*\lrpar{\underline {a}}} \, d\nu(f_1)\cdots d\nu(f_n) d\mu_\partial\lrpar{\underline a} 
\end{equation} 
(notice the order of compositions chosen here). Since the  diameter is upper-semicontinuous it is 
 uniformly bounded on $\Lim(\Gamma)$.  So, if we prove that  
 \begin{equation}\label{eq:lim_diameters}
\lim_{n\to +\infty} {\mathrm{diam}}\big({
 \underline{f}_n^*\cdots  \underline{f}_1^*\lrpar{\underline {a}}}\big) = 0
\end{equation} 
 for $\nu^\N$-almost every $(f_n)$ and every  $\underline{a}$, then we can apply the dominated convergence theorem to infer that 
 ${\mathrm{diam}}\lrpar{\underline a} =0$ $\mu_\fr$-almost surely.  To derive the convergence~\eqref{eq:lim_diameters},
note that
\begin{equation}\label{eq:diam_jac} 
{\mathrm{diam}}\lrpar{
\underline{f}_n^*\cdots  \underline{f}_1^*\lrpar{\underline {a}}} = \frac{\diam\lrpar{f_n^*\cdots f_1^* a}}
{\M\lrpar{{f_n^*\cdots f_1^* a}}} 
\end{equation}
because $\diam$ is homogeneous.
Applying Lemma \ref{lem:diam_f*a}  and the multiplicativity of the Jacobian we  get that 
\begin{equation}
{\mathrm{diam}}\lrpar{\underline{f}_n^*\cdots  \underline{f}_1^*\lrpar{\underline {a}}}
 \leq \frac{C\log \lrpar{C \norm{\jac(f_1\circ \cdots \circ f_n)\inv}_\infty}}{\M\lrpar{{f_n^*\cdots f_1^* a}}} \leq  C \frac{ \sum_{i=0}^{n-1} \log{\norm{f_i\inv}_{C^1}}}{\M(f_n^*\cdots f_1^* a)}. 
\end{equation}
We  conclude with two remarks. Firstly, the moment condition~\eqref{eq:moment} implies that the sequence $\unsur{n} \sum_{i=0}^{n-1} \log{\norm{f_i\inv}_{C^1}}$ is almost surely bounded. Secondly,  Lemma \ref{lem:growth2} shows that
 $\M(f_n^*\cdots f_1^* a)$ goes exponentially fast to infinity for $\nu^\N$-almost every $\omega=(f_n)$ (this is where the order of compositions matters). 
 Thus ${\mathrm{diam}}\big({
\underline{f}_n^*\cdots  \underline{f}_1^*\lrpar{\underline {a}}}\big) \to 0$ almost surely, and we are done. 
\end{proof}

\begin{rem}\label{rem:measurability}
The uniqueness of $T_a$ in its cohomology class implies that $T_a$ depends measurably on $a$.
 Indeed there is a set  $E\subset \Lim(\Gamma)$ of full 
measure along which the map $\underline a\mapsto T_a$ 
is continuous (recall that the space $\Cur_1(X)$ of positive closed currents of mass 1 on $X$ is a 
compact metrizable space). This implies that $\underline a\mapsto T_a$
is a measurable map from $\Lim(\Gamma)$, endowed with 
the $\mu_\fr$-completion of the Borel $\sigma$-algebra,   to 
$\Cur_1(X)$, endowed with its Borel $\sigma$-algebra. 
\end{rem}

\subsection{Continuous potentials}

We now study the limit currents $T^s_\omega$ introduced in Corollary \ref{cor:Tsomega}. 

\begin{thm} \label{thm:Tsomega_continuous}
Let $(X,\nu)$ be a non-elementary random holomorphic dynamical system on a compact K\"ahler surface $X$,
satisfying the moment condition~\eqref{eq:moment}. Then for $\nu^\N$-a.e. $\omega$ the current 
$T^s_\omega$ has continuous potentials. 
\end{thm}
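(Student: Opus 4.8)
The plan is to establish continuity of the potentials by running essentially the same averaging argument that proved Theorem~\ref{thm:uniq+extremal}, but now promoting the $L^1$-convergence of normalized potentials to uniform convergence. Recall from Corollary~\ref{cor:Tsomega}.(2) that
$\M((f_\omega^n)^*\kappa_0)^{-1}(f_\omega^n)^*\kappa_0 \to T^s_\omega$ weakly; writing $a_n(\omega)=(f_\omega^n)^*[\kappa_0]$, the normalized potential of the $n$-th term is, by the equivariance formula~\eqref{eq:ufT_uT}--\eqref{eq:formula-E(f,T)} applied iteratively,
a telescoping sum of the ``canonical potentials'' $u_{(f^j_\omega)^*\Theta(\cdot)}$ divided by $\M(a_n(\omega))$, plus normalizing constants. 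The key quantitative input is Corollary~\ref{cor:utheta}, which gives $\norm{u_{f^*\Theta(a)}}_{C^1}\leq C\,\M(a)\,\norm{f}_{C^1}^2$. Combined with the Borel--Cantelli estimate of Lemma~\ref{lem:borel-cantelli_moment} (so that $\norm{f_j}_{C^1}^2 \leq e^{\e j}$ eventually, for any $\e>0$) and the exponential growth $\M((f^j_\omega)^*[\kappa_0])\asymp e^{(\lambda_{H^{1,1}}+o(1))j}$ coming from Lemma~\ref{lem:growth}, each term in the telescoped sum is dominated by $C e^{-(\lambda_{H^{1,1}}-\e)(n-j)}$ in $C^1$ norm. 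Hence the partial sums form a uniformly (in fact $C^1$-) Cauchy sequence, and their limit $u_{T^s_\omega}$ is continuous — indeed the convergence to $T^s_\omega$ happens in the space of currents with continuous (even H\"older-with-respect-to-a-weak-norm) potentials.

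More precisely, here are the steps I would carry out. First, fix $\omega$ in the full-measure set where Lemma~\ref{lem:borel-cantelli_moment} holds, where $\frac1n\log\M((f^n_\omega)^*[\kappa_0])\to\lambda_{H^{1,1}}>0$ (Lemma~\ref{lem:growth}), and where $T^s_\omega$ exists and is the unique current in its class (Corollary~\ref{cor:Tsomega}). Second, unwind the recursion: using $(f_\omega^{n})^* = f_0^*\circ\cdots\circ f_{n-1}^*$, write the normalized potential $v_n$ of $\M(a_n(\omega))^{-1}(f^n_\omega)^*\kappa_0$ as
$v_n = \sum_{j=0}^{n-1} \M(a_n(\omega))^{-1}\,\bigl(u_{f_0^*\cdots f_{j-1}^*\,\Theta(a'_j)}\bigr)\circ(\text{nothing})$ up to additive constants and pull-backs — concretely, iterate~\eqref{eq:ufT_uT} with $T$ running over the currents $\M(a_j)^{-1}(f^j_\omega)^*\kappa_0$. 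Third, bound each summand: by Corollary~\ref{cor:utheta}, $\norm{u_{f_j^*\Theta(b)}}_{C^1}\leq C\,\M(b)\,\norm{f_j}_{C^1}^2$, and since composition with a fixed diffeomorphism multiplies the $C^1$ norm of a function by at most $\norm{f}_{C^1}$, one propagates this through the chain; the ratio $\M(a_{j})/\M(a_n)$ is exponentially small in $n-j$ by Lemma~\ref{lem:growth} (applied both to $\omega$ and, for the intermediate factors, using submultiplicativity of the mass cocycle as in the proof of Lemma~\ref{lem:variant_furstenberg}), while the accumulated $C^1$-factors $\prod \norm{f_i}_{C^1}^2$ grow only subexponentially along good times, or more robustly are absorbed by $e^{\e(n-j)}$. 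Fourth, conclude that $(v_n)$ is Cauchy in $C^0(X)$; its uniform limit is a continuous function which must coincide (as an $L^1$ limit, by Corollary~\ref{cor:compact}) with the normalized potential $u_{T^s_\omega}$ of $T^s_\omega$.

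The main obstacle I anticipate is the bookkeeping of the constants $E(f_j,\cdot)$ and the $C^1$-norm accumulation along the composition: naively, propagating a $C^1$ estimate through $f_0^*\circ\cdots\circ f_{j-1}^*$ costs a factor $\prod_{i<j}\norm{f_i}_{C^1}$, which is $e^{O(j)}$ and a priori could swamp the exponential gain $\M(a_j)/\M(a_n)\approx e^{-\lambda_{H^{1,1}}(n-j)}$ when $j$ is comparable to $n$. The resolution is that we do not need a $C^1$ bound on the \emph{sum} — only a $C^0$ bound suffices for continuity — and the $C^0$ norm of $u_{f_j^*\Theta(b)}\circ(\text{composition})$ equals that of $u_{f_j^*\Theta(b)}$ itself (precomposition by a diffeomorphism does not change sup norms), so only a \emph{single} factor $\norm{f_j}_{C^1}^2$ appears per summand, controlled by $e^{\e j}\ll e^{(\lambda_{H^{1,1}}-\e)(n-j)}$ for $j$ bounded away from $n$, and by the uniform bound of Lemma~\ref{lem:uniform_Ak_psh} plus Lemma~\ref{lem:volume-estimate} for the finitely many terms with $j$ near $n$ (using that $\M(a_j)/\M(a_n)$ is then still bounded and the potentials are uniformly $(A\kappa_0)$-psh, hence uniformly in $L^1$, and one exploits a compactness/equicontinuity property). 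This is precisely the place where the continuity of potentials — rather than mere $L^1$ convergence — is extracted, and it parallels the classical Dinh--Sibony argument for continuity of Green currents of loxodromic automorphisms; I would model the final estimate on that. (If one wants H\"older continuity, as hinted in Theorem~\ref{mthm:currents} and the forthcoming Theorem~\ref{thm:holder}, one upgrades the moment hypothesis to~\eqref{eq:exponential_moment} and replaces Lemma~\ref{lem:borel-cantelli_moment} by Proposition~\ref{pro:gouezel} to get summable geometric-type series with explicit rates — but for the present statement the soft argument above is enough.)
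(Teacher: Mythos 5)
Your boundedness step — telescoping the normalized potential, bounding each term in $C^0$ via Corollary~\ref{cor:utheta} and the observation that precomposition by a diffeomorphism preserves sup-norms, then summing with the aid of Gou\"ezel--Karlsson — is essentially the paper's Lemma~\ref{lem:bounded_potentials_subsequence}. However there are two problems in the passage from boundedness to continuity.

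First, a bookkeeping error with consequences: the ratio that the Gou\"ezel--Karlsson tightness condition controls is $\norm{G_{j+1,n_i}^*}/\norm{G_{0,n_i}^*}$, and it decays like $e^{-(\lambda_{H^{1,1}}-\delta_j)j}$, that is exponentially in $j$, the number of iterates dropped from the \emph{front} of the composition, not in $n-j$ as you write. This comes from tightness condition~(ii) in Theorem~\ref{thm:GK_good_times} (comparing $a(n,\omega)$ with $a(n-\ell,\sigma^\ell\omega)$) and is a different statement from the decay of $\M(a_j)/\M(a_n)$ which you cite from Lemma~\ref{lem:growth}; neither is free under \eqref{eq:moment} alone, and both only hold along the good subsequence $(n_i)$.

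Second, and more substantially, there is an unaddressed gap between ``the normalized potentials $v_{n_i}$ are uniformly bounded along good times'' and ``$(v_{n_i})$ is $C^0$-Cauchy.'' The $j$-th summand $\M(a_n)^{-1}\,u_{f_j^*\Theta(G_{j+1,n}^*\kappa)}\circ G_{0,j}$ depends on $n$ through both the class $G_{j+1,n}^*[\kappa]$ and the normalization $\M(a_n)$, so estimating a tail does not give a Cauchy estimate; you would need quantitative, $j$-summable control on the convergence of $G_{j+1,n}^*[\kappa]/\M(a_n)$ as $n\to\infty$, and none of the available lemmas (in particular Lemma~\ref{lem:growth2} and Lemma~\ref{lem:definition_e}, which are purely qualitative) provide a rate. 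The paper avoids all of this with a short structural trick: once $u_{T^s_\omega}$ is known to be bounded — which does follow from your estimate plus the $L^1$-compactness of normalized quasi-psh potentials (Corollary~\ref{cor:compact}) — it introduces the pullback-invariant functional $\mathrm{Jump}(T)$, notes that $\mathrm{Jump}(f^*T)=\mathrm{Jump}(T)$ because the canonical potential $u_{f^*\Theta([T])}$ is continuous, and combines the exact equivariance $T^s_\omega=\M((f_\omega^n)^*T^s_{\sigma^n\omega})^{-1}(f_\omega^n)^*T^s_{\sigma^n\omega}$ with the almost sure divergence $\M((f_\omega^n)^*T^s_{\sigma^n\omega})\to\infty$ (Lemma~\ref{lem:variant_furstenberg}) to force $\mathrm{Jump}(T^s_\omega)=0$. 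This is the missing idea, and it is what makes the soft moment hypothesis \eqref{eq:moment} suffice. The Dinh--Sibony-style uniform-convergence argument you have in mind works smoothly for a single loxodromic automorphism, where the growth rate $\lambda^n$ is deterministic and geometric; in the random setting the erratic intermediate growth is precisely the obstruction, which is why the paper resorts to the Jump device rather than a direct $C^0$-Cauchy estimate.
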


 
\begin{lem}\label{lem:bounded_potentials_subsequence}
Let $\kappa$ be any K\"ahler form on $X$.  For $\nu^\N$-almost every $\omega$, there exists an increasing   sequence of integers 
$(n_i)_{i\geq 0}=(n_i(\omega))$ 
such that 
\begin{enumerate}[\em (1)]
\item the potentials of the pull-back currents
$\M((f_\omega^{n_i})^*\kappa)^{-1}u_{\lrpar{f_\omega^{n_i}}^*\kappa}$ are uniformly bounded;
\item the same holds for the push-forward  currents
$\M((f_\omega^{n_i})_*\kappa)^{-1}u_{(f_\omega^{n_i})_*\kappa}$. 
\end{enumerate}
If the exponential moment condition \eqref{eq:exponential_moment} holds, these assertions hold for all 
$n$ (i.e. extracting a subsequence $(n_i)$ is not necessary); 
in addition  the function $\omega\mapsto \log^+\norm{u_{T^s_\omega}}_\infty $ is $\nu^\N$-integrable. 
 \end{lem}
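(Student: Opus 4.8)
The plan is to iterate the transformation rule~\eqref{eq:ufT_uT} for normalized potentials along the orbit, bound each resulting term with Corollary~\ref{cor:utheta}, and control the resulting series using the good–times theorem of Gou\"ezel and Karlsson together with the almost sure sub\-exponentiality of $\norm{f_n}_{C^1}$. Concretely, writing $f^n_\omega=f^{\,n-1}_{\sigma\omega}\circ f_0$ and applying \eqref{eq:ufT_uT} inductively gives, for a K\"ahler form $\kappa$,
\[
u_{(f^n_\omega)^*\kappa}=\Phi_n(\omega)+C_n(\omega),\qquad
\Phi_n(\omega)=\sum_{j=0}^{n-1}u_{f_j^*\Theta(a_{j+1})}\circ f^j_\omega+u_\kappa\circ f^n_\omega,\quad a_{j+1}:=\bigl(f^{\,n-j-1}_{\sigma^{j+1}\omega}\bigr)^*[\kappa],
\]
where $C_n(\omega)\in\R$ is forced by $\int u_{(f^n_\omega)^*\kappa}\,\vol=0$, so that $\abs{C_n(\omega)}\le\norm{\Phi_n(\omega)}_\infty$ (using $\int\vol=1$). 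Since composing by a diffeomorphism preserves the sup norm, Corollary~\ref{cor:utheta} yields $\norm{u_{f_j^*\Theta(a_{j+1})}\circ f^j_\omega}_\infty\le C\,\M(a_{j+1})\,\norm{f_j}_{C^1}^2$, and $\M(a_{j+1})\asymp\norm{(f^{\,n-j-1}_{\sigma^{j+1}\omega})^*}$ by Corollary~\ref{cor:KAK} and Lemma~\ref{lem:cohomological_norm_estimates}. Dividing by $m_n:=\M((f^n_\omega)^*\kappa)\asymp\norm{(f^n_\omega)^*}$ therefore reduces everything to bounding
\[
\Sigma_n(\omega):=\sum_{j=0}^{n-1}\frac{\norm{(f^{\,n-j-1}_{\sigma^{j+1}\omega})^*}}{\norm{(f^n_\omega)^*}}\,\norm{f_j}_{C^1}^2 .
\]
(The bound $\sup\tfrac1{m_n}u_{(f^n_\omega)^*\kappa}\le \mathrm{const}$ is in any case automatic from Corollary~\ref{cor:compact}; the content is the lower bound, i.e.\ the boundedness of $\Sigma_n$.)

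Under the moment condition~\eqref{eq:moment} I would apply Theorem~\ref{thm:GK_good_times}/Corollary~\ref{cor:GK_good_times} to the subadditive cocycle $N(n,\omega)=\log\norm{(f^n_\omega)^*}$, whose asymptotic average is $\lambda_{H^{1,1}}>0$: along a subsequence $(n_i)$ of positive upper density the tightness property (ii) gives $\norm{(f^{\,n_i-j-1}_{\sigma^{j+1}\omega})^*}/\norm{(f^{n_i}_\omega)^*}\le e^{-\lambda_{H^{1,1}}(j+1)+(j+1)\delta_{j+1}}$ with $\delta_\ell\to0$. Since $\log\norm{f_j}_{C^1}=o(j)$ almost surely by Lemma~\ref{lem:borel-cantelli_moment}, the $j$‑th term of $\Sigma_{n_i}$ is $\le e^{-\lambda_{H^{1,1}}(j+1)(1-o(1))}$, so $\Sigma_{n_i}$ is bounded by a finite $\omega$‑dependent constant, uniformly in $i$; together with $\norm{u_\kappa}_\infty/m_{n_i}\to0$ this gives~(1). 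Assertion~(2) is obtained by the same expansion applied to the inverse maps, using $(f^n_\omega)_*=(f_{n-1})_*\circ\cdots\circ(f_0)_*$ and $(f_k)_*=(f_k\inv)^*$; the relevant series becomes $\sum_k \tfrac{\norm{(f^k_\omega)^*}}{\norm{(f^n_\omega)^*}}\norm{f_k\inv}_{C^1}^2$, which now matches the \emph{first} estimate of Corollary~\ref{cor:GK_good_times} (tightness property (iii)). The hard part here is that the exponential cohomological decay is indexed by $n-k$ while the factors $\norm{f_k\inv}_{C^1}$ sit at index $k$, so the two are no longer aligned and the terms with $k$ close to $n$ are not controlled by sub\-exponentiality alone; I expect this to be the main obstacle, to be overcome by further thinning $(n_i)$, keeping only good times at which the most recent maps $f_{n_i-1},f_{n_i-2},\dots$ have controlled $C^1$‑norms — a positive‑density constraint intersecting the good‑times set.

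Finally, under the exponential moment condition~\eqref{eq:exponential_moment} one replaces Corollary~\ref{cor:GK_good_times} by Proposition~\ref{pro:gouezel}, applied with $D(f)=\norm{f}_{C^1}^2$ for the first series and $D(f)=\norm{f\inv}_{C^1}^2$ for the second (each has a finite $\tau'$‑moment): this bounds both series for \emph{every} $n$ by a single function $B(\omega)$ with $\int\log^+B\,d\nu^\N<\infty$, with no thinning, so $\tfrac1{m_n}\norm{u_{(f^n_\omega)^*\kappa}}_\infty\le CB(\omega)+\norm{u_\kappa}_\infty$ uniformly in $n$. To conclude the integrability of $\log^+\norm{u_{T^s_\omega}}_\infty$, note that $\tfrac1{m_n}(f^n_\omega)^*\kappa$ is a closed positive current of mass $1$ converging weakly to $T^s_\omega$ by Corollary~\ref{cor:Tsomega}; by Corollary~\ref{cor:compact} its normalized potential $\tfrac1{m_n}u_{(f^n_\omega)^*\kappa}$ ranges in a compact subset of $L^1(X;\vol)$, and by uniqueness of $T^s_\omega$ in its cohomology class (Theorem~\ref{thm:uniq+extremal}) it converges in $L^1$ to $u_{T^s_\omega}$. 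Passing to an almost everywhere convergent subsequence, the uniform $L^\infty$ bound transfers to the limit, whence $\norm{u_{T^s_\omega}}_\infty\le CB(\omega)+\norm{u_\kappa}_\infty$ and $\omega\mapsto\log^+\norm{u_{T^s_\omega}}_\infty$ is $\nu^\N$‑integrable.
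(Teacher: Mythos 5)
Your argument for assertion~(1), for the exponential-moment addendum, and for the integrability of $\log^+\norm{u_{T^s_\omega}}_\infty$ follows the paper's route exactly: iterate \eqref{eq:ufT_uT} along the orbit, bound each term with Corollary~\ref{cor:utheta}, divide by $\M((f_\omega^n)^*\kappa)\asymp\norm{(f_\omega^n)^*}$, and control the resulting series by the Gou\"ezel--Karlsson good times, respectively by Proposition~\ref{pro:gouezel} with $D(f)=\norm{f}_{C^1}^2$ for the first series and $D(f)=\norm{f^{-1}}_{C^1}^2$ for the second. Your side remark that the upper bound on $\sup u$ is automatic, so the content lies in the lower bound, is also correct, and the extraction of the integrability of $\log^+\norm{u_{T^s_\omega}}_\infty$ via $L^1$-compactness of normalized quasi-psh potentials is the right argument.

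Your worry about assertion~(2) under the bare moment condition~\eqref{eq:moment} is well placed and actually points at a gap in the paper's one-line treatment. After the expansion and the identity $\norm{g^*}\asymp\norm{(g^{-1})^*}$ (applied to $f_0^{-1}\circ\cdots\circ f_{m-1}^{-1}=(f_\omega^m)^{-1}$), the series to control is $\sum_{m}\norm{f_m^{-1}}_{C^1}^2\,\norm{(f_\omega^m)^*}/\norm{(f_\omega^n)^*}$; this matches the \emph{first} estimate of Corollary~\ref{cor:GK_good_times}, governed by tightness property~(iii), and not the second as the paper asserts. And, as you observe, tightness~(iii) gives decay $e^{-(\lambda_{H^{1,1}}-\delta_{n-m})(n-m)}$ indexed by $n-m$, whereas Lemma~\ref{lem:borel-cantelli_moment} controls $\norm{f_m^{-1}}_{C^1}^2$ only sub-exponentially in $m$: the two indices are misaligned, and the terms with $m$ close to $n$ are not controlled along a bare Gou\"ezel--Karlsson subsequence. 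For finitely supported $\nu$ the $C^1$-norms are bounded and the issue evaporates, but the statement is for general $\nu$ satisfying~\eqref{eq:moment}.

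The thinning you anticipate does close this gap. Fix $0<c<\lambda_{H^{1,1}}$, and consider on the two-sided shift the function $\xi\mapsto\sup_{\ell\geq1}\norm{f_{-\ell}^{-1}}_{C^1}^2\,e^{-c\ell}$; it is $\nu^\Z$-almost surely finite by Lemma~\ref{lem:borel-cantelli_moment}, so by the Birkhoff ergodic theorem the set of $n$ at which its value along $\vartheta^n\xi$ is $\leq M$ has asymptotic density $\geq 1-\rho$ for $M$ large. Membership in this set forces $\norm{f_{n-\ell}^{-1}}_{C^1}^2\leq Me^{c\ell}$ for all $1\leq\ell\leq n$, a condition that involves only $f_0,\dots,f_{n-1}$. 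Intersecting this set with the set of Gou\"ezel--Karlsson good times, whose density can also be made $\geq 1-\rho$, yields a positive-density subsequence $(n_i)$ along which both $\sum_\ell\norm{f_{n_i-\ell}^{-1}}_{C^1}^2\,e^{-(\lambda_{H^{1,1}}-\delta_\ell)\ell}$ and the series from assertion~(1) are bounded; since $c<\lambda_{H^{1,1}}$ this is exactly what is needed.
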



\begin{proof}[Proof of the Lemma] Recall the  notation $\omega=(f_n)_{n\geq 0}$. First, 
\begin{align}
 f_{n-1}^*\kappa & =   f_{n-1}^*\Theta(\kappa)+dd^c\left( u_\kappa\rond f_{n-1}\right) \\
&\notag =  \Theta(f_{n-1}^*\kappa)+dd^c\left( u_{f_{n-1}^*\Theta(\kappa)}+u_\kappa\rond f_{n-1} \right)
\end{align}
(For the moment, we do not introduce the constants $E(f_n; \kappa)$ in the computation). We obtain 
\begin{align*}
f_{n-2}^*f_{n-1}^*\kappa  &= f_{n-2}^*\Theta(f_{n-1}^*\kappa)+dd^c\left( u_{f_{n-1}^*\Theta(\kappa)}\rond f_{n-2}+u_\kappa\rond (f_{n-1}\rond f_{n-2}) \right)  \\
&=  \Theta(f_{n-2}^*f_{n-1}^*\kappa)+dd^c\left( u_{ f_{n-2}^*\Theta(f_{n-1}^*\kappa)}+u_{f_{n-1}^*\Theta(\kappa)}  \rond f_{n-2}+u_\kappa\rond (f_{n-1}\rond f_{n-2}) \right)  .
\end{align*}
Setting $G_{j,k}=f_{k-1}\!\circ \cdots \circ \! f_j$, for $ j\leq k-1$, (so in particular $G_{0,j} = f_\omega^j$ for all $j\geq 1$) 
and $G_{j,j}=\id_X$,  we get 
\begin{align}
(f_\omega^n)^*\kappa &= \Theta((f_\omega^n)^*\kappa) + dd^c\left( u_\kappa\rond f_\omega^n+ \sum_{j=0}^{n-1} u_{f_j^*\Theta(G_{j+1,n}^*\kappa)}\circ G_{0,j} \right).
\end{align}
Let $u_n$ denote the function in the parenthesis. We want to estimate the sup-norm $\norm{ u_n}_\infty$. 
Lemma \ref{lem:utheta} and Corollary \ref{cor:utheta}  provide successively the following upper bounds
\begin{align}
\norm{ u_{f_j^*\Theta(G_{j+1,n}^*\kappa)}}_\infty    
 \leq C \norm{ f_j }_{C^1}^2\M(G_{j+1,n}^*\kappa) 
  \leq  C  \M(\kappa) \norm{ f_j}_{C^1}^2 \norm{ G_{j+1,n}^*}, 
\end{align}
\begin{equation} \label{eq:sum_potential}
 \norm{ \unsur{\M((f_\omega^{n})^*\kappa)}u_{n}}_\infty 
  \leq \frac{\norm{ u_\kappa}_\infty}{\M((f_\omega^{n})^*\kappa)}+ C\M(\kappa) 
  \sum_{j=0}^{n-1}\norm{ f_j}_{C^1}^2
  \frac{\norm{ G_{j+1,n}^*}}{\M((f_\omega^{n})^*\kappa)}.
\end{equation}
To estimate this sum  we apply  Theorem \ref{thm:GK_good_times}  to the subadditive cocycle 
$N(n,\omega)=\log \norm{ (f_\omega^n)^*}$, as we did for
Corollary~\ref{cor:GK_good_times}:  there exists a sequence $(\delta_j)$ 
of positive  numbers converging to $0$, an increasing
 sequence $n_i=n_i(\omega)$ of integers,  and a constant $C'(\omega)$ 
 such that 
\begin{equation}
\frac{\big\|{ G_{j+1,n_i}^*}\big\|}{\M((f_\omega^{n_i})^*\kappa)}\asymp \frac{\big\|{ f_{j+1}^*  \cdots   f_{n_i-1}^*}\big\|}{\norm{ f_{0}^*  \cdots   f_{n_i-1}^*}}\leq C' \exp(-(\lambda_1-\delta_j)j)
\end{equation}
for all $i\geq 1$ and all $0\leq j\leq n_i$.
 Fix any real number $\e$ with $0< \e < \lambda_1$. 
Then from Lemma~\ref{lem:borel-cantelli_moment}, 
we know that,  for almost every $\omega$, there is a constant $C''(\omega)$ 
such that $\norm{ f_j}_{C^1}^2\leq C''\exp(\e j)$.
So from \eqref{eq:sum_potential} we get 
\begin{equation}  
 \norm{ \unsur{\M((f_\omega^{n_i})^*\kappa)}u_{n_i}}_\infty    \leq \frac{\norm{ u_\kappa}_\infty}{\M((f_\omega^{n_i})^*\kappa)}+ 
  C'''(\omega)  \M(\kappa)\sum_{j=0}^{n_i-1}\exp(-(\lambda_1-\e-\delta(j))j)  
\end{equation}
This inequality shows that  $\norm{ \M((f_\omega^{n_i})^*\kappa)^{-1}u_{n_i}}_\infty $ 
is uniformly bounded. 

Now,  note that $u_{(f_\omega^n)^*\kappa}=u_n+E_n$ with $E_n   = - \int u_n \vol$. 
Since $\norm{ \M((f_\omega^{n_i})^*\kappa)^{-1}u_{n_i}}_\infty $ is uniformly bounded, 
so is $\M((f_\omega^{n_i})^*\kappa)^{-1} E_{n_i}$, and the first assertion of the lemma is established. 

The second assertion is proved exactly in the same way, except that the  expressions
 of the form ${f_j^*\Theta(G_{j+1,n}^*\kappa)}$ must be replaced 
by ${(f_{n-j}\inv)^*\Theta( (f_0\inv\rond \cdots \rond f_{n-j-1}\inv)^*\kappa)}$; then we use
the second estimate in Corollary~\ref{cor:GK_good_times}, and 
the fact that for every $f\in \Aut(X)$, $\norm{f^*}\asymp \norm{(f\inv)^*}$.

If  the exponential moment condition \eqref{eq:exponential_moment} holds, we follow the same 
argument and apply Proposition 
\ref{pro:gouezel} -- instead of Theorem \ref{thm:GK_good_times} --
 to~\eqref{eq:sum_potential}, with $D(f) = \norm{f}_{C^1} ^2$. \end{proof}

\begin{proof}[Proof of Theorem~\ref{thm:Tsomega_continuous}] 
First, we prove that {\emph{the normalized potential $u_{T_\omega^s}$ is bounded, for $\nu^\N$-almost every $\omega$}}.
To see this, recall that $\M((f_\omega^n)^*\kappa)^{-1} (f_\omega^n)^*\kappa$ converges to $T_\omega^s$ as $n\to \infty$. 
From Lemma~\ref{lem:bounded_potentials_subsequence}, we know that the normalized potentials 
$\M((f_\omega^{n})^*\kappa)^{-1}u_{\lrpar{f_\omega^{n}}^*\kappa}$ of the currents $\M((f_\omega^n)^*\kappa)^{-1} \lrpar{f_\omega^n}^*\kappa$ are uniformly bounded along some subsequence $n_i=n_i(\omega)$. These potentials are 
$A\kappa_0$-psh functions on $X$ so, by compactness, they converge to  $u_{T_\omega^s}$ in $L^1(X;\vol)$. Thus, 
$u_{T_\omega^s}$ is essentially bounded. We conclude that  $u_{T_\omega^s}$ is bounded because quasi-plurisubhar\-monic functions are  
upper semi-continuous and have a value (in $\R\cup\{-\infty\}$) at every point.

Now,  we show  that {\emph{$u_{T_\omega^s}$ is continuous}}. 
Here, the argument is similar to the one used to prove Theorem \ref{thm:uniq+extremal}.
If $T$ is a positive closed current with bounded potential on $X$, we define 
\begin{equation}
{\mathrm{Jump}}(T)=\max_{x\in X} \left(\limsup_{y\to x} u_{T}(y)- \liminf_{y\to x} u_{T}(y)\right).  
\end{equation}
Then  $0\leq {\mathrm{Jump}}(T)\leq 2\norm{ u_{T}}_\infty$, and 
$\mathrm{Jump}(T)=0$ if and only if $u_{T}$ is continuous. In addition 
$\mathrm{Jump}(f^*T) = \mathrm{Jump}(T)$ for every $f\in \Aut(X)$
because $f^*T=\Theta(f^*a) + dd^c(u_{f^*\Theta(a)}  + u_T\circ f)$ and  $u_{f^*\Theta([T])}$ is continuous (see Equation~\eqref{eq:f*T}). From the equivariance relation 
\begin{equation}\label{eq:equivariance_Tsn}
T^s_\omega = \unsur{\M\lrpar{{\lrpar{f_\omega^n }^* T^s_{\sigma^n\omega}}}} T^s_{\sigma^n\omega},
\end{equation}
which follows from the third assertion of Corollary~\ref{cor:Tsomega}, we get  
\begin{equation}
{\mathrm{Jump}}\lrpar{T^s_\omega}=  \unsur{\M\lrpar{{\lrpar{f_\omega^n }^* T^s_{\sigma^n\omega}}}} {\mathrm{Jump}}\lrpar{T^s_{\sigma^n\omega}}. 
\end{equation}
Remark \ref{rem:measurability} says that $\omega\mapsto T^s_\omega$ is measurable; hence, $\omega\mapsto u_{T^s_\omega}$ is measurable. If $C$ is  large enough,  the first step of the proof gives a subset $\Omega_C\subset \Omega$ such that $\nu(\Omega_C)>0$ 
and $\norm{ u_{T^s_\omega}}_\infty\leq C$   for all $\omega\in \Omega_C$. 
By ergodicity of the shift, $\sigma^n\omega\in \Omega_C$ for almost every $\omega$ and infinitely many $n$; for such an $n$,  $\big\Vert u_{T^s_{\sigma^n\omega}} \big\Vert_\infty\leq C$ and ${\mathrm{Jump}}\lrpar{T^s_{\sigma^n\omega}}\leq 2C$. 
By Lemma \ref{lem:variant_furstenberg}, 
$\M\lrpar{{\lrpar{f_\omega^n }^* T^s_{\sigma^n\omega}}}$ goes to infinity almost surely.
So,  ${\mathrm{Jump}}\lrpar{T^s_\omega}=0$, and the proof is complete.\end{proof}


\begin{thm}\label{thm:holder}
Let $(X,\nu)$ be a non-elementary random holomorphic dynamical system on a compact K\"ahler surface $X$,
satisfying the exponential moment condition \eqref{eq:exponential_moment}. Then there exists $\theta>0$ such that 
for $\nu^\N$-almost every $\omega$ the potential  $u_{T^s_\omega}$ is H\"older  continuous of exponent~$\theta$. 
\end{thm}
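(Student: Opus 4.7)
The plan is to turn the qualitative continuity argument of Theorem~\ref{thm:Tsomega_continuous} into a quantitative H\"older estimate, exploiting the sharper form of Lemma~\ref{lem:bounded_potentials_subsequence} which is available under the exponential moment hypothesis~\eqref{eq:exponential_moment}.

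Setting $\M_n(\omega) := \M\bigl((f_\omega^n)^* T^s_{\sigma^n\omega}\bigr)$ and combining the equivariance relation~\eqref{eq:equivariance_Tsn} with the change of variable formula~\eqref{eq:ufT_uT} for normalized potentials, one arrives at
\[
\M_n(\omega)\, u_{T^s_\omega}(x) = u_{(f_\omega^n)^* \Theta(e(\sigma^n\omega))}(x) + u_{T^s_{\sigma^n\omega}}\bigl(f_\omega^n(x)\bigr) + E_n(\omega),
\]
with $E_n(\omega)$ an $x$-independent constant. Subtracting the values at $x$ and $y$, bounding the smooth term via Corollary~\ref{cor:utheta} (using $\M(e(\sigma^n\omega))=1$), and using the trivial oscillation bound for the second term, I obtain
\[
\abs{u_{T^s_\omega}(x) - u_{T^s_\omega}(y)} \leq \M_n(\omega)^{-1}\Bigl(C \norm{f_\omega^n}_{C^1}^2\, \dist(x,y) + 2\norm{u_{T^s_{\sigma^n\omega}}}_\infty\Bigr).
\]
This is a family of estimates indexed by $n$, which will be optimised as a function of $\dist(x,y)$.

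The next step is to establish almost-sure asymptotic bounds on the three ingredients under~\eqref{eq:exponential_moment}. Fix $\e > 0$. Then for $\nu^\N$-almost every $\omega$ and all $n$ exceeding some random threshold $N(\omega,\e)$: one has $\M_n(\omega) \geq \exp\bigl(n(\lambda_{H^{1,1}} - \e)\bigr)$ by Lemma~\ref{lem:variant_furstenberg}; $\norm{f_\omega^n}_{C^1} \leq \exp\bigl(n(\Lambda + \e)\bigr)$, where $\Lambda \in [0,+\infty)$ is the almost-sure limit $\lim_n n^{-1}\log\norm{f_\omega^n}_{C^1}$ provided by Kingman's subadditive ergodic theorem (integrability coming from~\eqref{eq:exponential_moment}); and $\norm{u_{T^s_{\sigma^n\omega}}}_\infty \leq e^{n\e}$, obtained by applying Birkhoff's ergodic theorem to $\log^+\norm{u_{T^s_\omega}}_\infty$, which is $L^1(\nu^\N)$ by the last assertion of Lemma~\ref{lem:bounded_potentials_subsequence}. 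Moreover, the comparison $\norm{f^*}_{H^{1,1}} \leq C\norm{f}_{C^1}^2$ recalled in Section~\ref{subs:moments_cohomology} gives $\lambda_{H^{1,1}} \leq 2\Lambda$, so in particular $\Lambda > 0$.

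Inserting these bounds and balancing the two contributions by choosing $n \sim -\log\dist(x,y)/(2\Lambda+\e)$, one obtains
\[
\abs{u_{T^s_\omega}(x) - u_{T^s_\omega}(y)} \leq C(\omega)\, \dist(x,y)^{\theta}
\]
with $\theta = (\lambda_{H^{1,1}} - 2\e)/(2\Lambda+\e) > 0$, whenever $\dist(x,y)$ is small enough (depending on $\omega$ via $N(\omega,\e)$). Larger distances are absorbed into $C(\omega)$ by compactness of $X$ and the almost-sure finiteness of $\norm{u_{T^s_\omega}}_\infty$ (Theorem~\ref{thm:Tsomega_continuous}). Letting $\e \downarrow 0$, one may take any positive $\theta$ strictly smaller than $\lambda_{H^{1,1}}/(2\Lambda)$, establishing the claim.

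The main obstacle is controlling the growth of $\norm{u_{T^s_{\sigma^n\omega}}}_\infty$ along the orbit: without integrability of $\log^+\norm{u_{T^s_\omega}}_\infty$, that second term could outpace $\M_n(\omega)$ and the optimisation would collapse. This integrability is precisely the strengthening of Lemma~\ref{lem:bounded_potentials_subsequence} made possible by the exponential moment assumption, which is why~\eqref{eq:exponential_moment} is used in place of the bare~\eqref{eq:moment}.
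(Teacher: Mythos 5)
Your proof is correct, and it follows the same overall strategy as the paper's: both rest on the identity expressing $\M_n(\omega)\,u_{T^s_\omega}$ via pull-back of potentials, the lower bound $\M_n(\omega)\gtrsim e^{n(\lambda_{H^{1,1}}-\e)}$ from Lemma~\ref{lem:variant_furstenberg}, the integrability of $\log^+\norm{u_{T^s_\omega}}_\infty$ supplied by Lemma~\ref{lem:bounded_potentials_subsequence} under \eqref{eq:exponential_moment}, and the same optimisation over $n$ balancing the Lipschitz term against the oscillation term. The one genuine difference is in how the smooth part is handled: the paper iterates the one-step formula into a telescoped sum $\sum_{j=0}^{n-1} u_{f_j^*\Theta(G_{j+1,n}^*T^s_{\sigma^n\omega})}\circ f_\omega^j$ and bounds each term by $C\prod_{\ell=0}^{n-1}\norm{f_\ell}_{C^1}^2$, controlling the product via Birkhoff applied to $\log\norm{f_0}_{C^1}$, whereas you apply \eqref{eq:ufT_uT} once with $f=f_\omega^n$ and invoke Corollary~\ref{cor:utheta} directly for $u_{(f_\omega^n)^*\Theta(e(\sigma^n\omega))}$, controlling $\norm{f_\omega^n}_{C^1}$ by Kingman's theorem. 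This is legitimate since Corollary~\ref{cor:utheta} holds for an arbitrary automorphism, and it is slightly cleaner; it even gives a formally better (deterministic) exponent, since the Kingman constant $\Lambda$ satisfies $\Lambda\leq\int\log\norm{f}_{C^1}\,d\nu$, while the paper's exponent involves $\log M$ with $M$ close to $\exp\bigl(\int\log\norm{f}_{C^1}\,d\nu\bigr)$. Your closing remarks (the exponent $\theta$ is deterministic, large distances absorbed into $C(\omega)$, monotonicity in the exponent when letting $\e\downarrow 0$) are all fine.
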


\begin{vlongue}
The proof is a variation on the following well-known fact, applied to $u  = u_{T_\omega^s}$: let $u_n$ be a sequence of continuous functions converging uniformly to $u:M\to \R$ on some metric space $M$. If $\norm{u_n-u}_\infty\leq A^n$ and $\Lip(u_n)\leq B^n$
with $A<1<B$, then $u$ is a H\"older continuous function for the exponent $\alpha = -\log(A)/(\log(B)-\log(A))$. 
\end{vlongue}

\begin{proof}
The initial computations are similar (but not identical)  to those used to reach
Lemma~\ref{lem:bounded_potentials_subsequence}. 
Keeping the notation $G_{j,n}=f_{n-1}\circ \cdots \circ f_j$, a descending 
induction starting from 
\begin{equation}
f_{n-1}^* T_{\sigma^n \omega}^s   = \Theta(f_{n-1}^*T_{\sigma^n \omega}^s)
+dd^c\lrpar{u_{ f_{n-1}^* \Theta(T_{\sigma \omega}^s)} + u_{T_{\sigma^n \omega}^s} \circ f_{n-1}}
\end{equation}
yields
 \begin{align}
 (f_\omega^n)^*T_{\sigma^n \omega}^s &= 
 \label{eq:enorme}  \Theta\lrpar{(f_\omega^n)^*T_{\sigma^n \omega}^s}  +  dd^c\lrpar{\sum_{j=0}^{n-1} u_{f_j^*\Theta(G_{j+1, n}^* T_{\sigma^n \omega}^s)} \circ f_\omega^{j} 
 +  u_{T_{\sigma^n \omega}^s} \circ f_\omega^{n}
  }. 
\end{align}
Thus, there is a constant of normalization $E=E(\omega; n)$ such that 
\begin{equation} \label{eq:uTs_final}
u_{T^s_\omega} = \unsur{\M((f_\omega^n)^*( T_{\sigma^n\omega}^s))} \lrpar{\sum_{j=0}^{n-1} u_{f_j^*\Theta(G_{j+1, n}^* T_{\sigma^n \omega}^s)} \circ f_\omega^{j} 
 +  u_{T_{\sigma^n \omega}^s} \circ f_\omega^{n}
  }+ E.
\end{equation} 
Note that the additional term $E$ does not affect the modulus of continuity of $u_{T^s_\omega}$.
Since $\Lip(f_j)\leq \norm{f_j}_{C^1}$ for all $j$,  Lemma~\ref{lem:utheta}  and Corollary~\ref{cor:utheta} imply 
$\Lip (u_{f_j^*\Theta(a)})\leq  C  \norm{f_j}_{C^1}^2 \M(a)$ for every class $a\in \Psef(X)$; hence 
\begin{align}\Lip\lrpar{u_{f_j^*\Theta(G_{j+1, n}^* T_{\sigma^n \omega}^s)}}& \leq C  \norm{f_j}_{C^1}^2 \M(G_{j+1, n}^* T_{\sigma^n \omega}^s) \leq 
C  \norm{f_j}_{C^1}^2 \norm{G_{j+1, n}^* } \\&\leq C  \norm{f_j}_{C^1}^2 \prod_{\ell=j+1}^{n-1} \norm{f_\ell^*}_{H^{1, 1}}
\leq C   \prod_{\ell=j}^{n-1} \norm{f_\ell}^2_{C^1} .
\end{align}
Finally, since $1\leq \Lip(f_j)$ for every $0\leq j\leq n-1$, we obtain
\begin{align}
\Lip\lrpar{u_{f_j^*\Theta(G_{j+1, n}^* T_{\sigma^n \omega}^s)}\circ f_\omega^j }   
& \leq \Lip\lrpar{u_{f_j^*\Theta(G_{j+1, n}^* T_{\sigma^n \omega}^s)}} \prod_{\ell=0}^{j-1} \Lip (f_\ell)  \leq  C \prod_{\ell=0}^{n-1} \norm{f_\ell}_{C^1}^2. 
 \end{align}
Denoting  the modulus of continuity by $\moco(u,r) = \sup_{d(x,x')\leq r} \abs{u(x)  - u(x')}$, 
we infer from Equation~\eqref{eq:uTs_final} that 
\begin{equation}
\moco({u_{T^s_\omega}, r} )\leq    \unsur{\M\lrpar{(f_\omega^n)^*( T_{\sigma^n\omega}^s)}}
 \lrpar{  C n  \prod_{\ell=0}^{n-1} \norm{f_\ell}_{C^1}^2 \cdot r+ \big\|{u_{T_{\sigma^n \omega}^s}}\big\|_\infty}. 
\end{equation}
To ease notation set $\lambda = \lambda_{H^{1,1}}$. Fix a small $\e>0$. 
By Lemma \ref{lem:variant_furstenberg},   
for almost every $\omega$ there exists $C  = C_\e(\omega)$ such that 
$\M\lrpar{(f_\omega^n)^*( T_{\sigma^n\omega}^s)}^{-1} \leq C e^{-n(\lambda-\e)}$ for every $n$. 
Fix $M$ larger than but close to $\exp\lrpar{\ee \lrpar{\log\norm{f}_{C^1}}}$.
Applied to the $\nu^\N$-integrable function $\omega=(f_n)\mapsto \log\norm{f_0}_{C^1}$, the 
Birkhoff ergodic theorem  gives 
\begin{equation}
\prod_{\ell=0}^{n-1} \norm{f_\ell}_{C^1}^2 \leq CM^n  
\;  \text{ as  well as } \; 
n\prod_{\ell=0}^{n-1} \norm{f_\ell}_{C^1}^2 \leq CM^n
\end{equation} 
for some $C=C_M(\omega)$ (increase $M$ to deduce the  second inequality from the first). Thus,
\begin{equation}
\moco( {u_{T^s_\omega}, r}) \leq  C_1   e^{-n(\lambda -\e)} \lrpar{M^n r+ 
\big\|{u_{T_{\sigma^n \omega}^s}}\big\|_\infty}
\end{equation}
for some $C_1>0$.
By Lemma~\ref{lem:bounded_potentials_subsequence}, $\omega\mapsto \log^+\norm{u_{T^s_\omega}}_\infty$ 
is integrable,  so for almost every $\omega$ there exists $C_2= C_\e(\omega)$ 
such that    $\big\|{u_{T_{\sigma^n \omega}^s}}\big\|_\infty \leq C_2  e^{\e n}$ holds for all $n$,
 and we infer that 
\begin{equation}
\moco({u_{T^s_\omega}, r}) \leq  C_3  e^{-n  (\lambda- \e)} (M^nr+e^{\e n})  = 
C_3  e^{-n (\lambda- 2\e)}\lrpar{(Me^{-\e})^nr+1}.
\end{equation}
Choosing $n$ so that 
   $r \asymp  (Me^{-\e})^{-n}$ we get $\moco({u_{T^s_\omega}, r}) \leq  C_4  r^\theta$
with $\theta = \frac{\lambda-2\e}{ \log M+\e}$ and 
the  proof of the  theorem  is complete.
\end{proof}

\section{Glossary of random dynamics, II}\label{sec:Glossary_II}

 In this section we consider a random holomorphic dynamical system $(X, \nu)$ on a compact K\"ahler surface, 
satisfying the moment  condition \eqref{eq:moment}. 
Our goal is to collect a number of facts from the ergodic theory of random dynamical systems, including the construction of 
associated skew products, fibered entropy and Lyapunov exponents of stationary measures, stable and unstable manifolds, 
and various measurable partitions.  Here the group $\Gamma_\nu$ may a priori be elementary; 
also, the compactness assumption on   $X$ can be dropped  in most of these results 
(in this case  \eqref{eq:moment} should be strengthened to a $C^2$-moment condition).
Since some subsequent arguments   rely on the work \cite{br} of Brown and Rodriguez-Hertz,  
we have tried to make notation consistent with  that paper as much as possible.  

\subsection{Skew products and stationary measures associated to $(X, \nu)$}\label{par:definition_skew_products}
Define:
\begin{itemize}
\item  $\Omega = \Aut(X)^\N$,  whose  elements are denoted by $\omega =  (f_n)_{n\geq 0}$. 
On $\Omega$, the one-sided  shift is denoted by $\sigma\colon \Omega \to \Omega$. 
\item   $\Sigma = \Aut(X)^\Z$,  whose  elements are denoted by $\xi =  (f_n)_{n\in \Z}$. On $\Sigma$, the two-sided shift 
 is denoted by $\vartheta\colon \Sigma\to \Sigma$. 
\item $\X =  \Sigma  \times X$ and $\X_+  = \Omega  \times X$, whose  elements are denoted by 
$\cx = (\xi, x)$ and  $\cx = (\omega, x)$ respectively. The natural projections are  denoted by
$\pi_\Sigma:\X\to \Sigma$ (resp. $\pi_\Omega:\X_+\to \Omega$) and $\pi_X:\X \to X$ (resp. $\pi_X:\X_+\to X$, using the same notation).
\end{itemize}
Recall that the product $\sigma$-algebra on $\Omega$ (resp. $\Sigma$) is generated by {\bf{cylinders}} (\footnote{Cylinders are   
products $C=\prod C_j$ of Borel sets, all of which are equal to $\Aut(X)$ except finitely many of them. For simplicity, we denote a cylinder by $C=\prod_{j=0}^N C_j$ if $C_k=\Aut(X)$ for $\vert k\vert >N$.}), and that it 
coincides with the Borel $\sigma$-algebra $\mathcal B(\Omega)$ (resp. $\mathcal B(\Sigma)$) (see \cite[Lem. 6.4.2]{bogachev_book}).

\subsubsection{Skew products} For $\omega\in \Omega$ and $n\geq 1$,  $f^n_\omega$   
is the left composition  $f^n_\omega= f_{n-1}\circ \cdots \circ f_0$;  in 
particular, $f^1_\omega=f_0$ (see \S~\ref{par:random_compositions1}).  For  $n=0$, 
we set $f^0_\omega  = \mathrm{id}$. This is consistent with the notation used in the previous sections. 
The same notation $f_\xi^n$ is used for $\xi\in \Sigma$ and $n\geq 0$. When
  $n<0$, we set 
 $f^n_\xi=(f_{n})\inv \circ\cdots \circ (f_{-1})\inv$. With this definition the cocycle formula 
 $f^{n+m}_\xi  =  f^n_{\vartheta^m\xi}\circ f^m_\xi$ holds for all $(m,n) \in \Z^2$ and $\xi\in \Sigma$. 
By definition, the skew products induced by the random dynamical system $(X, \nu)$ 
are the  transformations $ \F_+\colon  \X_+  \to \X_+$ and $\F \colon  \X   \to \X $ defined by 
\begin{vcourte}
 \begin{equation}
F_+\colon (\omega, x) \longmapsto (\sigma\omega, f^1_\omega(x)) \quad \quad {\text{and}}  \quad \quad
 \F \colon \; (\xi, x) \longmapsto (\vartheta\xi, f^1_\xi(x)). 
\end{equation} 
\end{vcourte}
\begin{vlongue}
 \begin{align}
F_+\colon (\omega, x) &\longmapsto (\sigma\omega, f^1_\omega(x)) \\
 \F \colon \; (\xi, x) &\longmapsto (\vartheta\xi, f^1_\xi(x)). 
\end{align} 
\end{vlongue}
If $\varpi\colon \X \to \X_+$ denotes the natural projection,  
then $\varpi\circ \F = \F_+ \circ \varpi$.
Note that $F$ is invertible, with $F^{-1}(\cx)=(\vartheta^{-1}\xi, f^{-1}_{\vartheta^{-1}\xi}(x))$, but $F_+$ is not;
 indeed 
{\emph{$(\X, \F)$ is the natural extension of $(\X_+, \F_+)$}}.

\begin{lem} \label{lem:stationary}
The   measure $\mu$ on $X$ is  stationary if and only if 
 the product measure \[\m_+:= \nu^\N\times \mu\] on $\X_+$ is invariant under $\F_+$. 
\end{lem}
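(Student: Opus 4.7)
The plan is to verify the equivalence by computing the push-forward $(F_+)_*\m_+$ and testing it against functions of a suitably simple form. The key structural fact is that under $\nu^\N$ the coordinate $f_0$ is independent of the shifted sequence $\sigma\omega=(f_1,f_2,\ldots)$, with $\sigma\omega$ again distributed as $\nu^\N$; this is what allows stationarity of $\mu$ to be converted into invariance of the product measure.

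For the forward direction, I would fix a bounded measurable test function of product form $\varphi(\omega,x)=\psi(\omega)\chi(x)$, with $\psi$ depending only on the coordinates $f_1,f_2,\ldots$ (such functions separate the product $\sigma$-algebra on $\X_+$ together with their linear combinations, so it suffices to check invariance against them). Then
\[
\int \varphi\circ F_+\, d\m_+ \;=\; \int_{\Aut(X)}\!\int_\Omega\!\int_X \psi(\sigma\omega)\,\chi(f_0(x))\, d\mu(x)\, d\nu^{\N}(\sigma\omega)\, d\nu(f_0),
\]
where I have used Fubini together with the product decomposition $\nu^\N=\nu\otimes\nu^\N$ in the coordinates $(f_0,\sigma\omega)$. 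The integral in $\sigma\omega$ factors out as $\int\psi\,d\nu^\N$, and what remains is $\int\int \chi(f_0(x))\,d\mu(x)\,d\nu(f_0)=\int\chi\,d\bigl(\!\int (f_0)_\varstar\mu\,d\nu(f_0)\bigr)$. Stationarity of $\mu$ turns this into $\int\chi\,d\mu$, so the full integral equals $\int\psi\,d\nu^\N\cdot\int\chi\,d\mu=\int\varphi\,d\m_+$, proving $(F_+)_\varstar\m_+=\m_+$.

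For the converse, assume $(F_+)_\varstar\m_+=\m_+$ and test against functions $\varphi(\omega,x)=\chi(x)$ depending only on $x$. Invariance yields
\[
\int_X\chi\,d\mu \;=\; \int \chi(f_0(x))\,d\nu(f_0)\,d\mu(x) \;=\; \int_X\chi\,d\!\Bigl(\int (f_0)_\varstar\mu\,d\nu(f_0)\Bigr),
\]
and since this holds for every bounded Borel $\chi$, we conclude $\int f_\varstar\mu\,d\nu(f)=\mu$, i.e.\ $\mu$ is $\nu$-stationary.

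There is no real obstacle here: the only point requiring a little care is the justification that product-form test functions $\psi(\omega)\chi(x)$ with $\psi$ measurable with respect to the shifted coordinates suffice to determine a measure on $\X_+$, which follows from the fact that the Borel $\sigma$-algebra on $\Omega$ coincides with the product $\sigma$-algebra generated by cylinders (as noted in \S\ref{par:definition_skew_products}), together with a standard monotone class argument.
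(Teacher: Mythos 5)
Your converse direction is fine, and the mechanism of your forward direction (decompose $\nu^{\N}=\nu\otimes\nu^{\N}$ in the coordinates $(f_0,\sigma\omega)$, apply Fubini, and invoke stationarity) is exactly the paper's argument, which checks the equality $\m_+(\F_+^{-1}(C\times A))=\m_+(C\times A)$ on cylinders $C\subset\Omega$ and Borel sets $A\subset X$ and reduces it to one application of Fubini. However, your reduction to test functions has a genuine flaw: functions $\varphi(\omega,x)=\psi(\omega)\chi(x)$ with $\psi$ measurable with respect to the coordinates $f_1,f_2,\ldots$ only generate the sub-$\sigma$-algebra $\sigma(f_1,f_2,\ldots)\otimes\mathcal{B}(X)$ of $\mathcal{B}(\X_+)$; they are blind to the coordinate $f_0$, so two probability measures on $\X_+$ can integrate all such functions equally without being equal. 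As written, your computation only shows that $(\F_+)_\varstar\m_+$ and $\m_+$ agree on this strictly smaller $\sigma$-algebra, and the justification you give at the end (cylinders generate $\mathcal{B}(\Omega)$, plus a monotone class argument) does not apply, because the cylinders you allow yourself omit the $0$-th coordinate.

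The repair is immediate, and then your proof coincides with the paper's. Take $\psi$ an \emph{arbitrary} bounded measurable function on $\Omega$ (equivalently, test on indicators of all cylinders times Borel sets, as the paper does). The same computation goes through verbatim: $\varphi\circ\F_+(\omega,x)=\psi(\sigma\omega)\chi(f_0(x))$, and under $\m_+=\nu^{\N}\times\mu$ the variable $\sigma\omega$ has law $\nu^{\N}$ and is independent of $(f_0,x)$, so the integral factors as $\bigl(\int\psi\,d\nu^{\N}\bigr)\cdot\bigl(\int\!\!\int\chi(f(x))\,d\nu(f)\,d\mu(x)\bigr)$, which equals $\int\varphi\,d\m_+$ by stationarity. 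Since products $\psi(\omega)\chi(x)$ with unrestricted $\psi$ do generate $\mathcal{B}(\X_+)$ (cylinders generate $\mathcal{B}(\Omega)$), the monotone class argument now legitimately closes the forward direction; the converse needs no change.
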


\begin{vlongue}
\begin{proof}[Proof of Lemma~\ref{lem:stationary}]
The invariance of $\m_+$ is equivalent to the equality
\begin{equation} \label{eq:stationary}
\m_+(\F_+\inv (C\times A))  = \m_+ (C\times A)  = \left( \prod_{j=0}^N \nu(C_j) \right) \cdot  \mu(A),
\end{equation} 
for all cylinders  $C = C_0\times \cdots \times C_N$ in $\Omega$ and 
Borel sets $A\subset X$.
By definition 
\begin{equation}
\F_+\inv (C\times A)  = \set{(\omega, x)\in \Omega\times X\; ;  \ f_{N+1}\in  C_{N} , \ldots , f_1\in C_{0}, \romain{typos ici}
f_0(x)\in A}, 
\end{equation}
so  clearly it is enough to check \eqref{eq:stationary} for $N=1$.\romain{en fait on fait plutôt $N=-1$ ici, i.e.$C = \Omega$}  Now by Fubini's theorem 
\begin{align} \label{eq:fubini}
(\nu\times \mu)\lrpar{\set{ (f_0, x) \; ; \  f_0(x)\in A }} 
&\notag= \int\int \mathbf{1}_{f_0\inv (A)}(x) \, d\nu(f_0)\, d\mu(x)  \\ &= \int\int \mu(f_0\inv (A))\, d\nu(f_0) 
\end{align} 
and the result follows. 
\end{proof}
\end{vlongue}

A stationary measure is said
{\bf{ergodic}} if it is an extremal point in the convex set  of stationary measures; hence, $\mu$ is ergodic if and only if
$\m_+$ is $\F_+$-ergodic. Actually  $\mu$ is ergodic if and only if every $\nu$-almost surely invariant measurable  subset $A\subset X$  (that is a measurable subset such that for $\nu$-almost every $f$, $\mu(A\Delta f\inv(A)) =0$) 
has measure $\mu(A)=0$ or 1. This is by no means 
 obvious since 
$\F_+$-invariant sets have no reason to be of product type. This statement is part of
 the so-called \textbf{random ergodic theorem} (see Propositions 1.8 and 1.9 in \cite{benoist-quint_book}). 





\begin{pro}\label{pro:m} 
There exists a unique $\F$-invariant probability measure $\m$ on $\X$ projecting on $\m_+$ under the natural 
projection $\X\to \X_+$. Moreover,
\begin{enumerate}[\em (1)]
\item the measure $\m$ is equal to the weak-$\star$ limit 
\[
\m  = \lim_{n\to\infty} (\F^n)_\varstar \lrpar{\nu^\Z\times \mu}.
\]

\item the projections $(\pi_\Sigma)_*\m$ and $(\pi_X)_*\m$ are respectively equal to $\nu^\Z$ and $\mu$;

\item the equality $\m = \nu^\Z\times \mu $ holds if and only if 
$\mu$ is   $f$-invariant for $\nu$-almost every $f$;

\item $(\X, \F, \m)$ is ergodic if and only if $(\X_+, \F_+, \m_+)$ is. 
\end{enumerate}
\end{pro}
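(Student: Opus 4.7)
The plan is to construct $\m$ explicitly as a limit of finite-dimensional distributions, verify its properties on cylinders, and then derive items (1)--(4) in turn.

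\textbf{Existence and uniqueness.} I will use the natural extension point of view. Let $\mathcal G_k$ denote the sub-$\sigma$-algebra of $\mathcal B(\X)$ generated by the coordinates $(f_i)_{i\geq -k}$ together with $x\in X$; then $\mathcal G_0 = \varpi^{-1}\mathcal B(\X_+)$, $\mathcal G_k \subset \mathcal G_{k+1}$, and $\bigcup_k \mathcal G_k$ generates $\mathcal B(\X)$. For a cylinder $B = \prod_{|i|\leq N} C_i\times A$, with $C_i\in \mathcal B(\Aut(X))$ and $A\in \mathcal B(X)$, one checks by the definition of $\F^{-n}$ and Fubini that for every $n\geq N$,
\[
(\F^n)_\varstar(\nu^\Z\times \mu)(B) = \Big(\prod_{|i|\leq N}\nu(C_i)\Big)\cdot \mu(A),
\]
because the events $\{f_{i+n}\in C_i\}_{|i|\leq N}$ depend only on indices $\geq n-N\geq 0$, hence are independent of the condition $f_\xi^n(x)\in A$, and $\mu$ is stationary. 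This value is independent of $n$ once $n\geq N$, so the sequence of measures $(\F^n)_\varstar(\nu^\Z\times \mu)$ is Cauchy on the algebra of cylinders. The resulting pre-measure extends to a probability measure $\m$ on $\mathcal B(\X)$ by Carathéodory, and by construction $(\F^n)_\varstar(\nu^\Z\times \mu)\to \m$ weakly, which proves assertion~(1). The measure $\m$ is $\F$-invariant since the value of $\m$ on $\mathcal G_k$-measurable sets coincides with $(\F^{n})_\varstar(\nu^\Z\times \mu)$ for $n\geq k$, and these measures differ by one iteration of $\F$. Finally $\varpi_\varstar \m = \m_+$: on a cylinder $B_+ = \prod_{0\leq i\leq N}C_i\times A$ of $\X_+$ one gets $\m(\varpi^{-1}B_+) = \prod\nu(C_i)\mu(A) = \m_+(B_+)$. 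Uniqueness is immediate: if $\m'$ is another $\F$-invariant measure with $\varpi_\varstar\m' = \m_+$, then for a cylinder $B\in\mathcal G_N$, $\F$-invariance gives $\m'(B) = \m'(\F^N(B))$, and $\F^N(B)\in \mathcal G_0 = \varpi^{-1}\mathcal B(\X_+)$ so $\m'(\F^N(B)) = \m_+(\varpi(\F^N(B)))$ is determined; hence $\m' = \m$.

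\textbf{Projections and the product case.} For (2), on a cylinder $C\subset \Sigma$ one has $\m(C\times X) = \nu^\Z(C)$ by the explicit formula above, so $(\pi_\Sigma)_\varstar \m = \nu^\Z$; for the $X$-factor, $(\pi_X)_\varstar\m = (\pi_X)_\varstar \varpi_\varstar \m = (\pi_X)_\varstar \m_+ = \mu$. For (3), if $\mu$ is $\nu$-almost surely invariant, then for every cylinder $B = C\times A$ with $C$ a cylinder in $\Sigma$ a direct Fubini computation (using $\mu(f_0^{-1}A)=\mu(A)$ for $\nu$-a.e.\ $f_0$ and the $\vartheta$-invariance of $\nu^\Z$) gives $(\nu^\Z\times\mu)(\F^{-1}B)=(\nu^\Z\times\mu)(B)$, so $\nu^\Z\times \mu$ is $\F$-invariant and the uniqueness statement yields $\m = \nu^\Z\times \mu$. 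Conversely, if $\m = \nu^\Z\times\mu$, evaluating $\F$-invariance on the two-variable events $B = \{f_0\in C,\ f_0(x)\in A\}$ yields $\int_C \mu(f^{-1}A)\, d\nu(f) = \nu(C)\mu(A)$ for all Borel $C\subset \Aut(X)$ and $A\subset X$; since $\mathcal B(X)$ is countably generated (as $X$ is a compact metric space), this forces $f_\varstar \mu = \mu$ for $\nu$-a.e.\ $f$, i.e.\ $\mu$ is almost surely invariant.

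\textbf{Ergodicity.} For (4), since $\varpi\circ\F = \F_+\circ\varpi$, the pullback $g\mapsto g\circ\varpi$ sends $\F_+$-invariant $L^2(\m_+)$ functions to $\F$-invariant $L^2(\m)$ functions. Conversely, let $h\in L^2(\m)$ be $\F$-invariant. Then $h$ is measurable with respect to $\bigcup_k \mathcal G_k$, hence is a limit (in $L^2$) of $\mathcal G_k$-measurable functions $h_k$. Since $h = h\circ \F^{-k}$ and $\F^{-k}\mathcal G_k = \mathcal G_0$, the function $h\circ\F^{-k}$ is $\mathcal G_0$-measurable, so $h$ itself is $\mathcal G_0$-measurable, i.e.\ $h = \tilde h\circ \varpi$ for some $\tilde h\in L^2(\m_+)$, which is automatically $\F_+$-invariant. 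The correspondence is a bijection between invariant $L^2$ functions, so the two systems are simultaneously ergodic.

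The only mildly delicate step is the bookkeeping that identifies $(\F^n)_\varstar(\nu^\Z\times\mu)$ on cylinders (and in particular checks that the value stabilizes for $n\geq N$); everything else is a straightforward unwinding of the definitions of skew products and of stationarity.
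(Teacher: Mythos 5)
Your construction follows the same route as the paper's direct verification (stabilization of $(\F^n)_*\lrpar{\nu^\Z\times\mu}$ on cylinders, then natural-extension arguments for uniqueness and for item (4)), but the key identity in your first step is false, and it would in fact contradict the assertion (3) that you prove afterwards. The two families of constraints defining $\F^{-n}(B)$, with $B=\prod_{|i|\leq N}C_i\times A$, are \emph{not} independent: the events $\{f_{i+n}\in C_i\}$ with $-N\leq i\leq -1$ concern the maps $f_{n-N},\dots,f_{n-1}$, which are exactly the last $N$ maps entering the composition $f^n_\omega=f_{n-1}\circ\cdots\circ f_0$, hence they are correlated with the event $\{f^n_\omega(x)\in A\}$ unless $\mu$ is invariant. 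Conditioning on these $N$ maps and applying stationarity to the first $n-N$ ones gives, once $n>N$,
\[
(\F^n)_*\lrpar{\nu^\Z\times\mu}(B)=\prod_{i=0}^{N}\nu(C_i)\cdot\int_{C_{-N}\times\cdots\times C_{-1}}\mu\lrpar{(g_{-1}\circ\cdots\circ g_{-N})\inv A}\,d\nu(g_{-N})\cdots d\nu(g_{-1}),
\]
and not $\prod_{|i|\leq N}\nu(C_i)\,\mu(A)$. Your formula, if it were true, would force $\m=\nu^\Z\times\mu$ for \emph{every} stationary $\mu$, contradicting both item (3) and Lemma~\ref{lem:conditionals} (the conditionals $\m_\xi$ are $\lim_n(f_{-1}\circ\cdots\circ f_{-n})_*\mu$ — for instance Dirac masses in the proximal case — not $\mu$). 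The redeeming feature is that the corrected value is still independent of $n$ for $n>N$, so stabilization — which is all you actually need — survives, and with it your Carath\'eodory/weak-$\star$ construction of $\m$, its $\F$-invariance, and the projection identities of (2) (take $A=X$, resp.\ $C=\Sigma$, in the corrected formula).

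Apart from this, your plan is sound and in places more self-contained than the paper's, which delegates existence, uniqueness and (4) to the natural-extension formalism of \cite{kifer}: your uniqueness argument (push a cylinder of $\mathcal G_N$ into the future $\sigma$-algebra and use $\varpi_*\m'=\m_+$), your $L^2$ argument for (4), and your proof of (3) — which avoids the paper's reduction to discrete $\Gamma_\nu$ — are all essentially correct in outline. Two small repairs are needed there: by invariance you should use $\m'(B)=\m'\lrpar{\F^{-N}(B)}$ with $\F^{-N}(B)\in\mathcal G_0$, since $(\F^{N})\inv(\mathcal G_N)=\mathcal G_0$ while $\F^{N}(\mathcal G_N)=\mathcal G_{2N}$, so the direction written in your text is reversed; and in the ergodicity step one composes the approximations with the dynamics (e.g.\ $h_k=\ee(h\mid\mathcal G_k)$ and $h_k\circ\F^{k}\to h$ in $L^2(\m)$, each $h_k\circ\F^k$ being $\mathcal G_0$-measurable), rather than composing the already invariant function $h$ itself.
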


The existence and uniqueness of $\m$, as well as the characterization of its ergodicity, follow from the fact that 
$(\X, \F)$ is the natural extension of $(\X_+, \F_+)$ (see~\cite[\S 1.2]{kifer} for a detailed explanation). 
\begin{vcourte}
See \cite[\S I.1]{liu-qian} for the proof of Assertions (1), (2) and (3).
\end{vcourte}
\begin{vlongue}
\begin{proof}[Proof of (1), (2), (3)] Let us prove directely that the limit in (1) does exist, and show that this limit $\m$ satisfies (2) and (3). 
Since $\varpi_\varstar\lrpar{\nu^\Z\times \mu}  =  \nu^\N\times \mu = \m_+$ and
$\varpi\circ \F = \F_+ \circ \varpi$, the $\F_+$-invariance of  $ \m_+$ gives
$\varpi_\varstar(\F^n)_\varstar \lrpar{\nu^\Z\times \mu} = \m_+$ for every $n\in \Z$. 
So if we prove that the limit $\lim_{n\to\infty} (\F^n)_\varstar \lrpar{\nu^\Z\times \mu} $ exists, then this limit $\m$
will be an $\F$-invariant probability measure 
projecting on $\m_+$ under $\varpi$; hence it will coincide with the invariant measure $\m$. 
  
To prove  this convergence, we consider a cylinder 
$C = \prod_{j=-N}^N C_j$ in $\Sigma$ and a Borel set $A\subset X$, and we
show that $(\nu^\Z\times \mu )(\F^{-n}(C\times A))$ 
stabilizes for $n>N$. Arguing as in Lemma~\ref{lem:stationary}, we see that 
the set $\F^{-n}(C\times A)$ is equal to the set of points $\cx = (\xi, x)$ satisfying the constraints $(\theta^n\xi)_j\in C_j$ for $-N\leq j\leq N$ and 
$x\in (f_\xi^n)^{-1}(A)$; for $n>N$, these constraints are independent, \romain{remarque Johan: non}
and $\lrpar{\nu^\Z\times \mu}(\F^{-n}(C\times A))$ 
is equal to 
\begin{align}
\nu^\Z(\theta^{-n}(C))\times  \lrpar{ \nu^n\times\mu}\lrpar{
\set{ (f_0, \ldots , f_{n-1}, x) \; ;  \; \ f_{n-1}\circ \cdots  \circ f_0 (x)\in A }}.
\end{align}
Then the invariance of $\nu^\Z$ under the shift and the the stationarity of $\mu$ give (see Equation~\eqref{eq:fubini})
\begin{align}
\lrpar{\nu^\Z\times \mu}(\F^{-n}(C\times A)) &=  \nu^\Z(C) \times \int \mu\lrpar{f_0\inv\circ \cdots \circ f_{n-1}\inv A} \nu(f_0) \cdots \nu(f_{n-1}) \\
&= \notag\nu^\Z(C) \times \mu(A). 
\end{align}\romain{non il y a un problème: ça stabilise mais pas à $\nu^\Z(C) \times \mu(A)$ sinon on aurait 
$\sm  = \nu^\Z\times \mu$!  Le truc c'est que la partie négative du cylindre sélectionne un itinéraire précis (contrairement à la partie positive qui prend toutes les branches}
This proves Assertions~(1) and~(2).
For Assertion~(3) it will be  enough for us to consider the case  where $\Gamma$ is discrete. By Assertion~(1) we see that 
$\nu^\Z\times \mu$ is $F$-invariant if and only if $\m = \nu^\Z\times \mu$.
Now assume  $\m = \nu^\Z\times \mu $ and let us show that $\mu$ is $\Gamma_\nu$-invariant. The reverse implication is similar.
Fix $f_0 \in \supp(\nu)$  and consider the cylinder $C = C_0  = \set{f_0}$ (in $0^{\mathrm{th}}$ position). If $A\subset X$ is a Borel subset we have 
\begin{equation}\label{eq:CtimesA}
\lrpar{\nu^\Z\times \mu}(\F(C\times A)) = \lrpar{\nu^\Z\times \mu}(C\times A) = \nu\lrpar{C_0} \times \mu(A).
\end{equation} On the other hand 
$\F(C\times A) 
= \vartheta (C) \times f_0(A)$ so  the left hand side of \eqref{eq:CtimesA} is equal to $ \nu\lrpar{C_0} \times \mu(f_0(A))$. 
Thus,  $\mu(f_0(A)) = \mu(A)$, which proves that $\mu$ is $\Gamma_\nu$-invariant.  
\end{proof}
\end{vlongue}

\subsubsection{Past, future, and partitions}\label{par:past_future_partitions}
Let $\mathcal F$ denote the $\sigma$-algebra on $\cX$ obtained by taking the $\m$-completion of 
$\mathcal{B}(\Sigma)\otimes \mathcal{B}(X)$. 
It will often be important to detect objects depending only on the ``future'' or  on the ``past''. To formalize this, we define 
two $\sigma$-algebras on $\Sigma$: 
\begin{itemize}
\item $\hat{ \mathcal{F}}^+$ is the $\nu^\Z$-completion of the $\sigma$-algebra generated by the cylinders $C = \prod_{j=0}^N C_j$.
\item $\hat{\mathcal {F}}^-$ is the $\nu^\Z$-completion of the $\sigma$-algebra generated by the cylinders $C = \prod_{j=-N}^{-1} C_j$.  
\end{itemize}
To formulate it differently, we define {\bf{local stable and unstable sets}}  for the 
shift $\vartheta$:  
\begin{equation}\label{eq:definition_sigma_s_u}
\Sigma^s_\loc(\xi) = \set{\eta \in \Sigma\, ;  \ \forall i\geq 0, \ \eta_i = \xi_i} \; \text{ and } \; 
\Sigma^u_\loc(\xi) = \set{\eta \in \Sigma\, ;  \ \forall i< 0, \ \eta_i = \xi_i}. 
\end{equation}
Then a   subset of 
 $ \Sigma$ is $\hat{\mathcal {F}}^+$-measurable (resp. $\hat{\mathcal {F}}^-$ measurable) if, up to a set of zero 
$\nu^\Z$-measure, it is Borel and  saturated by local stable sets $\Sigma^s_\loc(\xi)$ (resp. unstable sets $\Sigma^u_\loc(\xi)$). 
The $\sigma$-algebra $\mathcal{F}^+$  on $\X$ will be the 
$\m$-completion of $\hat{ \mathcal{F}}^+\otimes \mathcal B(X)$.
An $\mathcal{F}^+$-measurable object should be understood
as  ``depending only on the future'', thus it  makes  sense on $\X$ and on $\X_+$. 
Actually $\mathcal{F}^+$ coincides with the completion of the pull-back of $\mathcal{B}(\X_+)$ under 
$\varpi:\X\to \X_+$. 
The $\sigma$-algebra $\mathcal F^-$ of ``objects depending only on the past'' is defined analogously. 
Consider the partition into the subsets $\mathcal F^-(\cx):=\Sigma^u_\loc(\xi)\times \set{x}$
(each of them can be naturally identified to~$\Omega$).
Then, modulo $\m$-negligible sets, the elements of  $\mathcal F^-$ are saturated by this partition.

For   $\xi\in \Sigma$ we set $X_\xi = \set{\xi}\times X =  \pi_\Sigma\inv(\xi)$, which can be  naturally identified with $X$ via 
$\pi_X$.  The disintegration of the probability measure $\m$ with respect to the partition into fibers of $\pi_\Sigma$ gives rise to 
a family of conditional probabilities $\m_\xi$ such that $\m=\int \m_\xi \, d\nu^\Z(\xi)$, because $(\pi_\Sigma)_*\m=\nu^\Z$.

\begin{lem}\label{lem:conditionals}
The conditional measure $\m_\xi$ on $X_\xi$ satisfies $\nu^\Z$-almost surely  
\begin{equation}\notag
\m_\xi = \lim_{n\to+\infty} (f_{-1}\circ \cdots \circ  f_{-n})_\varstar \mu = \lim_{n\to+\infty} (f^{n}_{\vartheta^{-n}\xi})_\varstar \mu.
\end{equation}
In particular, the family of measures $\xi\mapsto \m_\xi$ is $\mathcal{F}^-$-measurable.  
\end{lem}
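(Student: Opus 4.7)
The strategy is to explicitly identify the conditional measures $\m_\xi$ by exploiting the convergence $F^n_\varstar(\nu^\Z\times\mu)\to \m$ from Proposition~\ref{pro:m}(1) together with a bounded martingale argument. The key observation is that, because $\vartheta$ preserves $\nu^\Z$, the pushforward $F^n_\varstar(\nu^\Z\times \mu)$ still projects to $\nu^\Z$ under $\pi_\Sigma$, and its fiber-wise decomposition is explicit. Indeed, $F^n(\xi,x)=(\vartheta^n\xi,f^n_\xi(x))$, so a direct unwinding shows that its conditional measure on $X_\eta$ is $(f^n_{\vartheta^{-n}\eta})_\varstar\mu$; for $\eta=(f_j)_{j\in\Z}$ this equals $(f_{-1}\circ\cdots\circ f_{-n})_\varstar\mu$, which depends only on the coordinates $\eta_{-n},\ldots,\eta_{-1}$.

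Next, I would fix a continuous function $\varphi\in C(X)$ and set
\begin{equation*}
g_n(\eta):=\int_X\varphi\,d(f^n_{\vartheta^{-n}\eta})_\varstar\mu,
\end{equation*}
which is measurable with respect to the $\sigma$-algebra $\mathcal G_n$ generated by $\eta_{-n},\ldots,\eta_{-1}$. The core computation is the martingale identity $\ee[g_{n+1}\mid\mathcal G_n]=g_n$: conditioning on $\mathcal G_n$ amounts to averaging over $\eta_{-n-1}$ with respect to $\nu$, and the $\nu$-stationarity of $\mu$ (namely $\int f_\varstar\mu\,d\nu(f)=\mu$) yields exactly this equality. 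Since $|g_n|\leq\|\varphi\|_\infty$, Doob's convergence theorem produces a limit $g_\infty$, both $\nu^\Z$-almost surely and in $L^1(\nu^\Z)$, measurable with respect to $\bigvee_n\mathcal G_n$, which modulo $\nu^\Z$-null sets coincides with $\hat{\mathcal F}^-$.

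The limit $g_\infty$ is then identified as follows. For any bounded measurable $\psi:\Sigma\to\R$, Fubini and the description of the disintegration of $F^n_\varstar(\nu^\Z\times\mu)$ give
\begin{equation*}
\int\psi(\eta)g_n(\eta)\,d\nu^\Z(\eta)=\int(\psi\circ\pi_\Sigma)(\varphi\circ\pi_X)\,dF^n_\varstar(\nu^\Z\times\mu).
\end{equation*}
Proposition~\ref{pro:m}(1) shows that the right-hand side converges to $\int\psi(\eta)\bigl(\int\varphi\,d\m_\eta\bigr)d\nu^\Z(\eta)$, while $L^1$-convergence of $g_n$ makes the left-hand side converge to $\int\psi\cdot g_\infty\,d\nu^\Z$. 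Hence $g_\infty(\eta)=\int\varphi\,d\m_\eta$ for $\nu^\Z$-a.e.~$\eta$.

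Finally, applying the two previous steps to a countable dense family in $C(X)$ (recall that $X$ is compact metrizable) produces a $\nu^\Z$-full measure subset of $\Sigma$ on which $(f^n_{\vartheta^{-n}\eta})_\varstar\mu$ converges weakly to a probability measure that integrates every continuous $\varphi$ to $\int\varphi\,d\m_\eta$, and hence equals $\m_\eta$. The $\hat{\mathcal F}^-$-measurability of $\xi\mapsto\m_\xi$ follows automatically from the $\hat{\mathcal F}^-$-measurability of each $g_\infty$. The proof is essentially routine; the only mild subtlety is the verification of the martingale property, which reduces to a single application of stationarity.
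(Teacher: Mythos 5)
Your proposal is correct and takes essentially the same route as the paper: the paper likewise identifies the fiber conditionals of $(F^n)_*(\nu^\Z\times\mu)$ as $(f_{-1}\circ\cdots\circ f_{-n})_*\mu$, invokes the martingale convergence theorem for these measures (citing Benoist--Quint and Bougerol--Lacroix, whose proof is exactly the Doob argument you spell out for $g_n$), and identifies the limit with $\m_\xi$ using the convergence of Proposition~\ref{pro:m}(1) and the fact that $(\pi_\Sigma)_*\m=\nu^\Z$. One small point: in your identification step you should test against continuous $\psi$ (or indicators of cylinders, for which the convergence in Proposition~\ref{pro:m}(1) even stabilizes) rather than arbitrary bounded measurable $\psi$, since that proposition is a weak-$\star$ statement; this is enough to conclude $g_\infty(\eta)=\int\varphi\,d\m_\eta$ almost everywhere.
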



\begin{vlongue}
\begin{proof} It follows from the martingale convergence theorem that the limit 
\begin{equation}
\tilde\mu_\xi:= \lim_{n\to+\infty} (f_{-1}\circ \cdots \circ  f_{-n})_\varstar \mu
\end{equation} 
exists almost surely
  (see e.g. \cite[\S 2.5]{benoist-quint_book} or \cite[\S II.2]{bougerol-lacroix}).  Now 
$\F^n$ maps $X_{\vartheta^{-n}\xi}$ to $X_\xi$ and  $F^n\rest {X_{\vartheta^{-n}\xi}} = f_{-1}\circ \cdots \circ  f_{-n}$, so 
\begin{equation}
\lrpar{(\F^n)_\varstar (\nu^\Z\times \mu)}(\; \cdot\; \vert X_\xi)
= (f_{-1}\circ \cdots \circ  f_{-n})_\varstar \mu. 
\end{equation} 
Identify $\tilde\mu_\xi$ with a measure 
on $X_\xi$. For every continuous function $\phi$ on $\X$ the dominated convergence theorem gives
\begin{align}
\lrpar{ (\F^n)_\varstar (\nu^\Z\times \mu)} (\varphi)
& = \int \lrpar{\int_{X_\xi}\varphi(x) \;  d(f_{-1}\circ \cdots \circ  f_{-n})_\varstar \mu(x)} d\nu^\Z(\xi) \\
& \tendvers 
\int \lrpar{\int_{X_\xi}\varphi(x) \; d\tilde\mu_\xi(x)}d\nu^\Z(\xi).
\end{align}
But $\lrpar{ (\F^n)_\varstar (\nu^\Z\times \mu)} (\varphi)$ converges to $\m(\varphi)$, and the marginal of $\m$ with respect to the projection $\pi_\Sigma\colon \X\to \Sigma$ is $\nu^\Z$, so we get the result. \end{proof}
\end{vlongue}

\begin{vcourte}
Indeed, the convergence is a consequence of  the martingale convergence theorem (see   \cite[\S 2.5]{benoist-quint_book} for details) 
and the  second assertion easily follows.
\end{vcourte}

 Since $\xi \mapsto \m_\xi$ is  $\mathcal F^-$-measurable, the conditional measures of $\m$ 
on the atoms ${\mathcal F^-}(\cx)=\Sigma^u_\loc(\xi)\times \set{x}$ of the partition generating $\mathcal F^-$ are induced by 
the lifts of the conditionals of $\nu^\Z$ on the  $\Sigma^u_\loc(\xi)$, via the natural projection $\pi_\Sigma:\X\to \Sigma $. In addition we can simultaneously identify $\Sigma^u_\loc(\xi)$  to $\Omega$   
and    $\nu^\Z(\; \cdot\; \vert \;   \Sigma^u_\loc)$   to $\nu^\N$. In this way  we get 
\begin{equation}\label{eq:conditional}
\m(\; \cdot\; \vert \;   \mathcal F^-(\cx))  \; =  \;  {\nu^\Z(\; \cdot\; \vert \;  \Sigma^u_\loc(\xi))}\times \delta_x 
\; \simeq  \; \nu^\N
\end{equation}
for $\m$-almost every $\cx = (\xi, x)\in \X$. This corresponds to Equation~(9)  in \cite{br}. By \cite[Prop. 4.6]{br}, this implies 
that  $\mathcal F^+\cap \mathcal F^-$ is 
 equivalent, modulo $\m$-negligible sets, to $\set{\emptyset, \Sigma}\otimes \mathcal B(X)$.
 
\subsection{Lyapunov exponents} \label{subs:lyapunov}
Let $\mu$ be a stationary measure for $(X, \nu)$; assume that $\mu$ (or equivalently $\m$ or $\m_+$) is ergodic.
The 
upper and lower Lyapunov exponents $\lambda^+\geq \lambda^-$ are respectively defined by the almost sure  limits 
\begin{equation}
\lambda^{+}   = \lim_{n\to\infty} \unsur{n} \log \norm{D_xf_\omega ^n} \; \text{ and } \; \lambda^{-}   =  
\lim_{n\to\infty} \unsur{n} \log \norm{\lrpar{D_xf_\omega^{n}}\inv }\inv;
\end{equation}
the existence of these limits is guaranteed by Kingman's subadditive ergodic theorem,
thanks to the moment condition~\eqref{eq:moment}, and the convergence also holds on average.
Let us now apply  the Oseledets theorem successively to the tangent  cocycle defined by the fiber dynamics 
$(\X_+, F_+, \m_+)$, and then to the cocycle associated to $(\X, F, \m)$. 

\subsubsection{The non-invertible setting} 
Define the  tangent bundles  $T\X_+ := \Omega \times TX$  and $T\X  := \Sigma  \times TX$, and
denote by $DF$ and $DF_+$ the natural tangent maps, that is $D_{(\xi, x)}F: \set{\xi}\times T_xX\to 
\set{\vartheta \xi}\times T_{f_\xi(x)}X$ is induced by $D_xf^1_\xi$:
\begin{equation}
D_{(\xi, x)}F(v)=D_xf_\xi^1(v) \quad (\forall v \in T_xX_\xi=T_xX)
\end{equation}
For the non-invertible dynamics on 
$\X_+$, the Oseledets theorem gives:  
for $\m_+$-almost every $(\omega, x)$, there exists 
a non-trivial complex subspace  $V^-(\omega, x)$ 
of $\set{\omega}\times T_xX$ such that 
\begin{align}
\forall v\in  V^-(\omega, x)\setminus\{0\}, \; \;  & \lim_{n\to + \infty}  \unsur{n}\log \norm{D_x f^n_\omega (v)}  = \lambda^{-} \\
\forall  v\notin V^-(\omega,x), \; \;   & \lim_{n\to + \infty}  \unsur{n}\log \norm{D_x  f^n_\omega (v)}  = \lambda^{+}. 
\end{align}
The field of subspaces $V^-$ is measurable and almost surely invariant. 
Two cases can occur: either $\lambda^{-}<\lambda^{+}$ and $V^-(\omega, x)$ is almost surely a complex line, or 
$\lambda^{-} = \lambda^{+}$ and $V^-(\omega, x)=\set{\omega}\times T_xX$. 
 
\subsubsection{The invertible setting} 
For the dynamical system $\F\colon \X\to \X$, the statement is: 
\begin{itemize}
\item if $\lambda^{-} = \lambda^{+}$ then for $\m$-almost every $\cx = (\xi,x)$, for  every non-zero $v\in  T_xX_\xi\simeq T_xX$, 
\begin{equation}
\lim_{n\to\pm \infty}  \unsur{n}\log \norm{D_x  f^n_\xi (v)}  = \lambda^{-};
\end{equation}
\item if $\lambda^{-} < \lambda^{+}$ then for $\m$-almost every $\cx$ there exists a 
decomposition  $ T_xX_\xi=E^-(\xi, x) \oplus  E^+(\xi, x)$ such that for $\star\in \set{-, +}$ and
 every $v\in E ^\star(\xi, x)\setminus \set{0}$,  
\begin{equation}
\lim_{n\to \pm\infty} \unsur{n}\log \norm{D_xf^n_\xi(v)}  = \lambda^\star.
\end{equation}
Furthermore the line fields $E^{\pm}$ are measurable and  invariant, and $\log\abs{\angle (E^-, E^+)}$ is integrable
(here, the ``angle'' $\angle(E^-(\cx),E^+(\cx))$ is the distance between the two lines $E^-(\cx)$ and $E^+(\cx)$ in $\P(T_{\cx}\X)$).  
\end{itemize}

\subsubsection{Hyperbolicity}\label{par:hyperbolicity_of_mu}
It can happen that $\lambda^{-}$ and $\lambda^{+}$ have the same sign. 
If $\lambda^-$ and $\lambda^+$ are both negative,  the conditional measures $\m_\xi$ are atomic: 
this can be shown by adapting   a classical Pesin-theoretic argument  (see e.g. \cite[Cor. S.5.2]{katok-hasselblatt})
to the fibered dynamics of $F$ on $\cX$ 
 (see \cite[Prop. 2]{lejan} for a  direct
  proof and an example where the  $\m_\xi$ have several atoms). 
Such random dynamical systems are called \textbf{proximal}. 
For instance, generic  random products of automorphisms of 
$\P^2(\C)$, that is of matrices in $\PGL(3, \C)$, are proximal; 
in such examples the stationary measure is   not invariant. Other examples are given by  contracting iterated 
function systems. 
 
When  $\lambda^+$ and $\lambda^-$ are both non-negative, 
we have the so-called  \textbf{invariance principle}:
  
\begin{thm}\label{thm:invariance_principle}
Let $(X, \nu)$ be a random holomorphic dynamical system satisfying the integrability condition 
\eqref{eq:moment}, and let $\mu$ be an ergodic stationary  measure.  If $\lambda^+(\mu) \geq \lambda^-(\mu)\geq 0$ 
then $\mu$ is almost surely invariant. 
\end{thm}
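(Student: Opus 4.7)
The plan is to recast $\nu$-almost sure invariance of $\mu$ in terms of the natural extension and then derive it from the Margulis--Ruelle inequality. By Proposition~\ref{pro:m}(3), $\mu$ is $\nu$-almost surely invariant if and only if $\m = \nu^\Z \times \mu$, which by Lemma~\ref{lem:conditionals} is equivalent to the equality $\m_\xi = \mu$ for $\nu^\Z$-almost every $\xi \in \Sigma$. The problem thus reduces to showing that the conditional measures on fibers of $\pi_\Sigma$ are almost surely $\mu$.

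The first step is to prove that the fiber entropy $h_\m(F \,\vert\, \pi_\Sigma)$ vanishes. Since $F$ is a measurable isomorphism of the standard Borel space $\X$, one has $h_\m(F \,\vert\, \pi_\Sigma) = h_\m(F^{-1} \,\vert\, \pi_\Sigma)$. Applying the Margulis--Ruelle inequality for random $C^2$ dynamical systems (see, e.g., Bahnm\"uller--Liu or Liu--Qian's monograph) to the invertible skew product $F^{-1}$, whose fiber Lyapunov exponents are $-\lambda^+(\mu) \leq -\lambda^-(\mu) \leq 0$ by hypothesis, yields
\begin{equation}
h_\m(F^{-1} \,\vert\, \pi_\Sigma) \;\leq\; \sum_{\lambda_i(F^{-1})>0}\lambda_i(F^{-1}) \;=\; 0.
\end{equation}
The moment condition~\eqref{eq:moment} together with Lemma~\ref{lem:momentC1_to_Ck} provides the $C^2$ integrability needed for this inequality. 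Hence $h_\m(F \,\vert\, \pi_\Sigma) = 0$.

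The second step is to deduce from this vanishing that $\m_\xi = \mu$ almost surely. The $\mathcal{F}^-$-measurable random measure $\xi \mapsto \m_\xi$ from Lemma~\ref{lem:conditionals} encodes the dependence of the fiber distribution on the past of the driving shift. Zero fiber entropy for the invertible skew product means that no information is produced beyond what is already encoded in the base, so this random measure must be invariant under the Pinsker $\sigma$-algebra of $F$ modulo $\pi_\Sigma$; concretely, $\xi \mapsto \m_\xi$ is also $\mathcal{F}^+$-measurable. Since the Bernoulli shift $(\Sigma,\vartheta,\nu^\Z)$ has trivial Pinsker $\sigma$-algebra, the remark following Equation~\eqref{eq:conditional} shows that $\mathcal{F}^+ \cap \mathcal{F}^-$ is equivalent, modulo $\m$-negligible sets, to $\{\emptyset,\Sigma\} \otimes \mathcal{B}(X)$, so $\m_\xi$ is $\nu^\Z$-essentially constant. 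That constant value is forced to be $(\pi_X)_\varstar \m = \mu$ by Proposition~\ref{pro:m}(2), giving the desired invariance.

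The main technical obstacle is the rigorous implementation of the ``zero relative entropy $\Rightarrow$ $\mathcal{F}^+$-measurability of $\m_\xi$'' step, which mixes the Rokhlin--Sinai theory of Pinsker algebras with the skew-product structure and the splitting of $\mathcal{F}$ into the past/future $\sigma$-algebras $\mathcal{F}^{\pm}$ of the driving Bernoulli system. This type of argument is classical in the ergodic theory of random dynamical systems (Crauel, Kifer, Ledrappier, Liu--Qian) and the plan is to adapt it to our setting.
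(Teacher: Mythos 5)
Your Step 2 is fine and mirrors how the paper itself handles entropy for the reversed system in Proposition~\ref{pro:margulis-ruelle} (Ruelle's inequality applied to $F^{-1}$, with the caveat that $(F^{-1},\m)$ is not literally an RDS in the paper's sense). But Step 3 contains a genuine gap, and it is not merely a technical one: the implication ``zero fiber entropy $\Rightarrow$ $\xi\mapsto\m_\xi$ is $\mathcal F^+$-measurable, hence constant, hence $\mu$ invariant'' is false. Your argument uses the hypothesis $\lambda^-\geq 0$ only to produce $h_\m(F\,\vert\,\pi_\Sigma)=0$, so if it worked it would show that \emph{every} ergodic stationary measure with vanishing fiber entropy is invariant. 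A proximal system gives a counterexample: take a non-elementary subgroup of $\PGL_2(\C)$ acting on $\P^1(\C)$ with the Furstenberg measure $\mu$. Its single fiber exponent is negative, so by Ruelle the fiber entropy vanishes; yet $\m_\xi=\delta_{z(\xi)}$ with $z(\xi)$ genuinely depending on the past (it is $\mathcal F^-$- but not $\mathcal F^+$-measurable), and $\mu$ is not invariant. Zero fiber entropy only says that the conditionals of $\m$ on unstable Pesin atoms are trivial (Corollary~\ref{cor:rokhlin_zero_entropy}); it puts no constraint on which sub-$\sigma$-algebra of $\mathcal B(\Sigma)$ the map $\xi\mapsto\m_\xi$ is measurable with respect to, so the Pinsker-algebra step has no traction. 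The dependence of $\m_\xi$ on the past is governed by \emph{contraction}, i.e.\ by negative exponents, and the real content of the invariance principle is precisely that non-invariance forces a negative exponent; this requires an argument (Ledrappier's, as implemented by Crauel, or Avila--Viana's invariance principle) that uses $\lambda^-\geq 0$ well beyond the entropy bound.

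For comparison: the paper does not reprove this statement at all; it invokes Crauel (Theorem 5.1, Corollary 5.3, Remark 5.6 of \cite{crauel}), building on Ledrappier's ideas, and \cite[Thm B]{avila-viana}. So the expected ``proof'' here is a correct citation or a correct reproduction of that argument; your entropy computation is a true statement under the hypotheses, but it cannot be the engine of the proof, and the final deduction as written does not hold.
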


This result was proven by Crauel, 
building on ideas  of Ledrappier  \serge{Ici j'ai juste enlevé la référence interne à l'ancienne section 11}
(see Theorem 5.1, Corollary 5.3 and  Remark 5.6 in \cite{crauel}, 
and also Avila-Viana~\cite[Thm B]{avila-viana}). 
 
\begin{rem}\label{rem:invariance_principle_strict}
{\textit{If   $\lambda^-$ and $\lambda^+$ are both positive then $\mu$ is atomic}}.
Indeed, since $\mu$ is almost surely invariant we get $\m = \nu^\Z\times \mu$. Reversing time, the
Lyapunov exponents of $\m$ become negative, so
as explained above  the measures $\m_\xi$ are atomic. By invariance $\m_\xi = \mu$, so $\mu$ is atomic too.  
\end{rem} 

By definition, $\mu$ is {\bf hyperbolic} if $\lambda^{-} <0<\lambda^{+}$. In this case
we rather use the conventional superscripts $s/u$ instead of $-/+$ for stable and unstable objects. 
We also have $E^s = V^s$ in this case (and more generally when $\lambda^{-}<\lambda^{+}$); 
so, it follows that the complex line field $E^s$  on $T\X$ 
is $\mathcal F^+$-measurable. Conversely the unstable line field $E^u$ is $\mathcal F^-$-measurable. 

\subsection{Invariant volume forms} 
Let us  start with a well-known result. 

\begin{lem}\label{lem:sum_exponents}
Let $(X, \nu)$ be a random holomorphic dynamical system satisfying the integrability condition 
\eqref{eq:moment}, and $\mu$ be an ergodic   stationary probability measure.  Then 
$$
\lambda^{-}+\lambda^{+} = \int \log\abs{\jac f(x)} d\mu(x)d\nu(f),
$$
where $\jac$ denotes the Jacobian determinant relative to any smooth volume form on $X$.
\end{lem}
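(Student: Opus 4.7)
The plan is to recognize $\log\abs{\jac f^n_\omega(x)}$ as both a Birkhoff sum for the skew product $F_+$ and as the asymptotic growth of the determinant of the tangent cocycle, and then to combine the Birkhoff and Oseledets theorems.

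First I would check that the function $\phi \colon \X_+ \to \R$ defined by $\phi(\omega, x) := \log\abs{\jac f_0(x)}$ lies in $L^1(\m_+)$. The pointwise bounds $\abs{\jac f(x)} \leq \norm{f}_{C^1}^{\dim_\C X}$ and $\abs{\jac f(x)}\inv = \abs{\jac f\inv(f(x))} \leq \norm{f\inv}_{C^1}^{\dim_\C X}$ transfer the $\nu$-integrability of $\log\norm{f}_{C^1}$ and $\log\norm{f\inv}_{C^1}$ (guaranteed by \eqref{eq:moment}) to $\m_+$-integrability of $\phi$. The chain rule then gives $\jac f^n_\omega(x) = \prod_{i=0}^{n-1}\jac f_i(f^i_\omega(x))$, which rewrites as
\[
\log\abs{\jac f^n_\omega(x)} = \sum_{i=0}^{n-1} \phi(F_+^i(\omega, x)),
\]
so Birkhoff's ergodic theorem, combined with the ergodicity of $(\X_+, F_+, \m_+)$ supplied by Proposition \ref{pro:m}, yields almost sure convergence of $\frac{1}{n}\log\abs{\jac f^n_\omega(x)}$ to $\int \phi\, d\m_+ = \int\int \log\abs{\jac f(x)}\, d\mu(x)\, d\nu(f)$.

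The final step identifies this limit with $\lambda^+ + \lambda^-$ by applying Oseledets' theorem to the $\C$-linear tangent cocycle of $F_+$: the logarithmic growth of $\abs{\jac f^n_\omega(x)}$ is the sum of the Lyapunov exponents of this cocycle. There is no genuine obstacle here; the argument is a standard combination of the chain rule, Birkhoff's theorem, and Oseledets' theorem, the only verification specific to our setting being the $L^1$ bound on $\phi$ from the moment hypothesis.
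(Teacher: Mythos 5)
Your proposal is correct and is essentially the paper's argument: the paper omits a direct proof and instead deduces the lemma from Proposition \ref{pro:sum_exponents}, whose proof is exactly this combination of a (volume-compatible, bounded) measurable trivialization of $TX$, the Oseledets theorem identifying the growth of the determinant with $\lambda^{-}+\lambda^{+}$, and the Birkhoff ergodic theorem applied to the multiplicative Jacobian cocycle, with integrability supplied by the moment condition \eqref{eq:moment}. The only extra ingredient in the paper's version (a recurrence argument controlling the density of a meromorphic form) is not needed in the smooth-volume case you treat, so your route coincides with the specialization of their proof.
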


We omit the proof, since this result is a corollary of Proposition  \ref{pro:sum_exponents} below.
When $X$ is an Abelian, or K3, or Enriques surface, Remark~\ref{rem:volume_form} provides an $\Aut(X)$-invariant volume form on $X$.
Thus, we obtain: 

\begin{cor}\label{cor:sum_exponentsK3}
Assume that $X$ is an Abelian, or K3, or Enriques surface. Let $\nu$ be a probability measure on $\Aut(X)$  
satisfying the integrability condition \eqref{eq:moment}, and $\mu$ be an ergodic $\nu$-stationary measure. 
Then $\lambda^{-}+\lambda^{+}=0$. 
\end{cor}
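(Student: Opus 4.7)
The plan is to deduce this corollary as an immediate consequence of Lemma~\ref{lem:sum_exponents}, using the existence of a canonical invariant volume form guaranteed by Remark~\ref{rem:volume_form}. First I would invoke Remark~\ref{rem:volume_form} to fix a smooth $\Aut(X)$-invariant probability volume form $\vol_X$ on $X$: on a torus or K3 surface, this is $\Omega_X\wedge\overline{\Omega_X}$, while on an Enriques surface it is the form descended from the K3 double cover. The key point is that $\vol_X$ is genuinely $\Aut(X)$-invariant (not merely quasi-invariant), a consequence of the fact that $f^*\Omega_X = J(f)\Omega_X$ with $|J(f)|=1$.

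Next, I would apply Lemma~\ref{lem:sum_exponents} with this specific choice of volume form. Since $f^*\vol_X=\vol_X$ for every $f\in\Aut(X)$, the Jacobian determinant $\jac f(x)$ computed with respect to $\vol_X$ satisfies $|\jac f(x)|\equiv 1$, so that $\log|\jac f(x)|\equiv 0$ pointwise. Consequently,
\begin{equation}
\lambda^{-}+\lambda^{+} \;=\; \int \log\abs{\jac f(x)}\, d\mu(x)\, d\nu(f) \;=\; 0,
\end{equation}
which is exactly the desired identity.

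There is no real obstacle here beyond verifying that Lemma~\ref{lem:sum_exponents} may indeed be applied with $\vol_X$ in place of a generic smooth volume form, which is built into the statement (``any smooth volume form''). One small bookkeeping point I would include, for the Enriques case, is to note that since $\vol_X$ is defined as the push-forward of an $\Aut(\widetilde X)$-invariant form from the K3 universal cover $\widetilde X\to X$, the invariance under $\Aut(X)$ follows from the standard lifting of automorphisms of an Enriques surface to its covering K3 (each $f\in\Aut(X)$ lifts to two automorphisms of $\widetilde X$, both preserving $\Omega_{\widetilde X}\wedge\overline{\Omega_{\widetilde X}}$).
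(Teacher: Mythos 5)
Your proof is correct and is exactly the paper's argument: the paper deduces the corollary from Lemma~\ref{lem:sum_exponents} by invoking the canonical $\Aut(X)$-invariant volume form of Remark~\ref{rem:volume_form} (including the descent from the K3 cover in the Enriques case), so that the Jacobian integral vanishes identically. Nothing is missing.
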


 Let $\eta$ be a non-trivial meromorphic $2$-form on the surface $X$. There is a cocycle $\jac_\eta$, with values in  the multiplicative group $\Mer(X)^\times$ of non-zero meromorphic functions, such that 
\begin{equation}
f^*\eta=\jac_\eta(f)\eta
\end{equation}
for every $f\in \Aut(X)$. 
We say that $\eta$ is {\bf{almost invariant}}  
if $\abs{\jac_\eta(f)(x)}=1$ for every $x\in X$ and $\nu$-almost every $f\in \Aut(X)$ (in particular $\jac_\eta(f)$ is a constant).
We refer to  \S\ref{par:Coble-Blanc} for examples with an invariant meromorphic 2-form.

\begin{pro}\label{pro:sum_exponents}
Let $(X, \nu)$ be a random holomorphic dynamical system satisfying the integrability condition 
\eqref{eq:moment}, and $\mu$ be an ergodic stationary measure.  
Let $\eta$ be a non-trivial meromorphic $2$-form on $X$ such that 
\begin{enumerate}[\em (i)]
\item $\displaystyle \int \log^+\vert \jac_\eta(f)(x)\vert d\mu(x) d\nu(f)<+\infty$;
\item $\mu$ gives zero mass to the set of  zeroes and poles of $\eta$.
\end{enumerate}
Then 
\[
\lambda^{-}+\lambda^{+} = \int \log(\abs{\jac_\eta f (x)}^2) d\mu(x)d\nu(f);
\]
in particular $\lambda^{-} + \lambda^{+} = 0$ if $\eta$  is almost invariant. 
\end{pro}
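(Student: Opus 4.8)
The strategy is to reduce to the case of a smooth invariant volume form already covered by Lemma~\ref{lem:sum_exponents}, by using $\eta$ and its modulus-$\jac_\eta$ cocycle to manufacture a measurable ``volume density'' along the orbit. Concretely, write $|\eta|^2 = \eta\wedge\bar\eta$, which is a (possibly singular) volume form on $X$; away from the divisor of zeros and poles of $\eta$ it is a genuine smooth volume form, and by hypothesis~(ii) this divisor is $\mu$-null. Comparing $|\eta|^2$ with a fixed smooth volume form $\mathrm{vol}_X$, there is a measurable function $h$ on $X$, finite and positive $\mu$-almost everywhere, with $|\eta|^2 = h\cdot \mathrm{vol}_X$ on the complement of the divisor. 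The transformation rule $f^*\eta = \jac_\eta(f)\eta$ gives $f^*|\eta|^2 = |\jac_\eta(f)|^2 |\eta|^2$, hence the ordinary Jacobian $\jac_{\mathrm{vol}_X}(f)$ satisfies
\[
\log|\jac_{\mathrm{vol}_X}(f)(x)| = \log|\jac_\eta(f)(x)|^2 + \log h(f(x)) - \log h(x)
\]
for $\mu$-almost every $x$ and $\nu$-almost every $f$.

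Next I would integrate this identity against $d\mu(x)\,d\nu(f)$. The point is that the coboundary term $\log h\circ f - \log h$ integrates to zero: by stationarity of $\mu$, $\int\!\!\int \log h(f(x))\,d\nu(f)\,d\mu(x) = \int \log h(x)\,d\mu(x)$, provided $\log h$ is $\mu$-integrable — or, if it is not a priori integrable, one runs the standard telescoping/truncation argument (exactly as in Remark~\ref{rem:lambda_with_log_mass}) applied to $\log h$ composed with the random orbit, using that $\frac1n(\log h(f_\omega^n(x)) - \log h(x))\to 0$ almost surely because $\log h$ is $\mu$-a.e. finite and $\mu$ is $F_+$-ergodic. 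The integrability needed to make $\log|\jac_{\mathrm{vol}_X}(f)|$ itself integrable is supplied by the moment condition~\eqref{eq:moment} (which controls $\log\|f\|_{C^1}$ and $\log\|f^{-1}\|_{C^1}$, hence $\log|\jac_{\mathrm{vol}_X}(f)|$) together with hypothesis~(i) on $\jac_\eta$. Combining, $\int\!\!\int \log|\jac_{\mathrm{vol}_X}(f)(x)|\,d\mu\,d\nu = \int\!\!\int \log|\jac_\eta(f)(x)|^2\,d\mu\,d\nu$.

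Finally, apply Lemma~\ref{lem:sum_exponents} with the smooth volume form $\mathrm{vol}_X$: its left-hand side is $\lambda^-+\lambda^+$, and by the previous paragraph its right-hand side equals $\int\!\!\int \log|\jac_\eta(f)(x)|^2\,d\mu(x)\,d\nu(f)$, which is the asserted formula. The particular case follows since almost invariance of $\eta$ means $|\jac_\eta(f)(x)|=1$ identically, so the integrand vanishes.

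\textbf{Main obstacle.}
The delicate point is the handling of the coboundary $\log h$, which need not be bounded or even $\mu$-integrable: $h$ can blow up or vanish as one approaches the divisor of $\eta$, and hypothesis~(ii) only guarantees this locus is $\mu$-null, not that $\log h\in L^1(\mu)$. The resolution is the telescoping argument over a good sequence of times — one does not need $\log h$ integrable, only that $\frac1n\log h(f_\omega^n(x))\to 0$ $\m_+$-almost surely, which follows from $\mu$-a.e. finiteness of $\log h$, ergodicity, and the fact that $f_\omega^n(x)$ equidistributes in a suitable averaged sense; alternatively one truncates $\log h$ at level $\pm N$, passes to the limit, and checks the error terms vanish using dominated convergence and hypothesis~(i). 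Everything else is a routine bookkeeping of Jacobians and an appeal to Lemma~\ref{lem:sum_exponents}.
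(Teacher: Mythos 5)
Your overall plan is the same as the paper's: write $|\eta|^2 = h\cdot\vol_X$, pass from the $\jac_\eta$-cocycle to the ordinary Jacobian via the coboundary $\log h$, and then appeal to the smooth-volume case (Lemma~\ref{lem:sum_exponents}, or directly Oseledets). The sign of the coboundary in your displayed identity is flipped (one has $\jac_{\vol}(f) = |\jac_\eta(f)|^2\cdot h/(h\circ f)$, not the other way), but that is immaterial to the structure of the argument.

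The genuine gap is in how you dispose of the coboundary term, and it is precisely the obstacle you single out. You assert that $\tfrac1n\bigl(\log h(f^n_\omega(x)) - \log h(x)\bigr)\to 0$ almost surely \emph{because} $\log h$ is $\mu$-a.e.\ finite and $\m_+$ is ergodic. This is false in general: for an ergodic measure-preserving system and a non-negative measurable $g$ that is finite a.e.\ but \emph{not} integrable, one only has $\liminf_n g(T^n x)/n = 0$ (Poincaré recurrence); the $\limsup$ can be $+\infty$. (A simple example: irrational rotation of the circle with $g(x)=|x|^{-3/2}$ near $0$.) Finiteness a.e.\ plus ergodicity do not give sublinear growth of $g$ along the orbit. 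Your appeal to Remark~\ref{rem:lambda_with_log_mass} also does not transfer, because the telescoping argument there relies crucially on the fact that the function $r$ appearing in the ratio is bounded above \emph{and} below by positive constants on the limit set, whereas $\log h$ is unbounded near $\mathrm{div}(\eta)$. Similarly, integrating the identity directly against $d\mu\,d\nu$ and invoking stationarity requires the coboundary to be quasi-integrable, which has not been established at that point.

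The paper sidesteps both difficulties by never integrating the coboundary and never claiming full-sequence convergence. Instead it works \emph{pointwise}: $\tfrac1n\log\chi^\pm_n\to\lambda^\pm$ on the full sequence by Oseledets, $\tfrac2n\log|\jac_\eta f^n_\xi(x)|$ converges on the full sequence by Birkhoff (hypothesis~(i) gives the needed one-sided integrability), and then, using that $\mu$ is ergodic and does not charge $\mathrm{div}(\eta)$, one extracts a \emph{subsequence} $(n_j)$ along which $f^{n_j}_\xi(x)$ stays at positive distance from the divisor. Along that subsequence $\log\bigl(\varphi(f^{n_j}_\xi(x))/\varphi(x)\bigr)$ stays bounded, so divided by $n_j$ it tends to $0$, and one concludes by comparing limits along $(n_j)$. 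If you replace your incorrect full-sequence claim with this subsequence extraction, the rest of your outline goes through.
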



\begin{proof}   
Fix a trivialization of the tangent bundle $TX$, given by a measurable family of linear isomorphisms 
$L(x)\colon T_xX\to \C^2$ such that (a) $\det(L(x))=1$ and (b) $1/C\leq \norm{L(x)}+\norm{L(x)^{-1}}\leq C$, for some constant $C>1$; here, 
the determinant is relative to the volume form $\vol$ on $X$ and the standard volume form $dz_1\wedge dz_2$ on $\C^2$, and  the 
norm is with respect to the K\"ahler metric $(\kappa_0)_x$ 
on $T_xX$ and the standard euclidean metric on $\C^2$.  For
$(\xi, x)\in \X$ and $n\geq 0$, the differential $D_xf^n_\xi$ is expressed in this trivialization as a
matrix  $A^{(n)}(\xi, x)=L(f^n_\xi(x)) \circ D_xf^n_\xi\circ L(x)^{-1}$. Let $\chi^{-}_n(\xi, x)\leq \chi^{+}_n(\xi, x)$  be the  singular values of   
$A^{(n)}(\xi, x)$. Then $\m$-almost surely, $\unsur{n}\log \chi^{\pm}_n(\xi, x) \to \lambda^{\pm}$ as $n\to +\infty$. 

The form $\eta\wedge{\overline{\eta}}$ can be written $\eta\wedge {\overline{\eta}}=\varphi(x) \vol$ for some
function  $\varphi\colon X\to [0,+\infty]$. Locally, one can write $\eta=h(x) dx_1\wedge dx_2$ where $(x_1,x_2)$ 
are local holomorphic coordinates  and $h$ is a meromorphic function; then $\varphi(x)\vol= \abs{h(x)}^2 dx_1\wedge dx_2\wedge d{\overline{x_1}}\wedge d\overline{x_2}$. The jacobian $\jac_\eta$ satisfies 
\begin{equation}
\vert \jac_\eta(f)(x)\vert^2=\frac{\varphi(f(x))}{\varphi(x)}\jac_\vol(f)(x)
\end{equation}
for every $f\in \Aut(X)$ and $x\in X$.
Using $\det(L(x))=1$, we get 
\begin{equation}\label{eq:determinant}
\det (A^{(n)}(\xi, x)) = \jac_\vol(f^n_\xi)(x),
\end{equation}
and then  
\begin{equation}\label{eq:jacobian}
\unsur{n}\log \chi^{-}_n(\xi, x) +  \unsur{n}\log \chi^{+}_n(\xi, x)  =  \frac{2}{n}\log \abs{\jac_\eta f^n_\xi(x)} - \unsur{n}\log(\varphi(f^n_\xi(x))/\varphi(x)).
\end{equation}
By the Oseledets theorem, the left hand side of \eqref{eq:jacobian} converges almost surely to $\lambda^{-}+\lambda^{+}$.
Since the Jacobian $\jac_\eta$ is multiplicative along orbits, i.e. $\jac_\eta f^n_\xi(x)  = \prod_{k=0}^{n-1} 
\jac_\eta f_{\vartheta^k\xi}(f^k_\xi x)$, the integrability condition and the ergodic theorem imply that, almost surely, 
\begin{align}
 \lim_{n\to\infty} \frac{1}{n} \log\abs{\jac_\eta f_\xi^n(x)}   
&=  \int \log\abs{\jac_\eta f_\xi^1 (x)} d\m(\xi, x) \\
&\notag =  \int \log\abs{\jac_\eta f_\omega^1 (x)} d\m_+(\omega, x)  \\
&\notag  =  \int \log\abs{\jac_\eta f(x)} d\mu(x)d\nu(f).
\end{align} 
Let ${\mathrm{div}}(\eta)$ be the set of zeroes and poles of $\eta$. Since $\mu$ is ergodic and does not charge ${\mathrm{div}}(\eta)$, we deduce that for $\m$-almost every $(\xi, x)$, 
there is a sequence $(n_j)$  such that $f^{n_j}_\xi(x)$ stays at  
positive distance from ${\mathrm{div}}(\eta)$; along such a sequence, $\log\vert \varphi(f^{n_j}_\xi(x))/\varphi(x)\vert$ 
stays bounded, and the right hand side of \eqref{eq:jacobian} tends to 
$2\int \log\abs{\jac_\eta f(x)} d\mu(x)d\nu(f)$. This concludes the proof. 
\end{proof}



\subsection{Intermezzo: local complex geometry}\label{subs:local}
Recall that $X$ is endowed with a Riemannian structure, hence a distance, induced by the K\"ahler metric $\kappa_0$. 
For $x\in X$, we denote by $\euc_x$ the translation-invariant Hermitian metric on $T_xX$ (which is considered here
as a manifold in its own right) associated to   the 
Riemannian structure induced by  $(\kappa_0)_x$. 
Given  any orthonormal basis $(e_1,e_2)$ of $T_xX$ for this metric, we obtain 
a linear isometric isomorphism from $T_xX$ to $\C^2$, endowed respectively with $\euc_x$ and the standard euclidean metric; 
we shall implicitly use such identifications in what follows.  

We denote by $\disk(z;r)$ the disk of radius $r$ around $z$ in $\C$, and set $\disk(r)=\disk(0;r)$.

\subsubsection{Hausdorff and $C^1$-convergence}\label{par:Hausdorff_C1}
Let $U\subset \C$ be a domain. If $\gamma\colon U\to X$ is a holomorphic curve, we can lift it canonically to a curve $\gamma^{(1)}\colon U\to TX$
by setting $\gamma^{(1)}(z)=(\gamma(z),\gamma'(z))\in T_{\gamma(z)}X$, where $\gamma'(z)$ denotes the velocity of $\gamma$ at $z$. 
The   
Kähler form $\kappa_0$ induces a Riemannian metric and therefore a distance $\dist_{TX}$ on $TX$. We say that
two parametrized curves $\gamma_1$ and $\gamma_2$ are $\delta$-close in the $C^1$-topology if  $\dist_{TX}(\gamma_1^{(1)}(z),\gamma_2^{(1)}(z))\leq \delta$ uniformly on $U$. This implies that $\gamma_1(U)$ and $\gamma_2(U)$ are $\delta$-close  in the Hausdorff sense, 
but the converse does not hold (take $U=\disk(1)$, $\gamma_1(z)=(z,0)$, and $\gamma_2(z)=(z^k, \e z^\ell)$ with $k$ and $\ell$ large while $\e$ is small). 

\subsubsection{Good charts}\label{par:good_charts} Let $R_0$ be the injectivity radius of $\kappa_0$.
We fix once and for all a family of charts $\hexp_x\colon U_x\subset T_xX\to X$ with the following properties 
(for some constant $C_0$): 
\begin{enumerate}[(i)]
\item $\hexp_x(0)=x$ and $(D\hexp_x)_0=\id_{T_xX}$; 
\item $\hexp_x$ is a holomorphic diffeomorphism from its domain of definition $U_x$ to an open subset $V_x$ contained in 
the ball of radius $R_0$ around $x$; 
\item on $U_x$, the Riemannian metrics $\euc_x$ and $\hexp_x^*\kappa_0$ satisfy $C_0\inv \leq \euc_x / \hexp_x^*\kappa_0 \leq C_0$;
\item the family of maps $\hexp_x$ depends continuously on $x$.
\end{enumerate}
With $r_0\leq R_0/(\sqrt{2}C_0)$, we can add: 
\begin{enumerate}[(i)]
\item[(v)] for every orthonormal basis $(e_1,e_2)$ of $T_xX$, the bidisk $\disk(r_0)e_1+ \disk(r_0)e_2$ is contained
in $U_x$; in particular, the ball of radius $r_0$ centered at the origin for $\euc_x$ is contained in $U_x$.
\end{enumerate}
To make assertion~(iv) more precise, fix a continuous family of orthonormal basis $(e_1(x),e_2(x))$ on some open set $V$ 
of $X$:  Assertion~(iv) means that, if we compose $\hexp_x$ with the linear isomorphism $(z_1,z_2)\in \C^2\mapsto z_1e_1(x)+z_2e_2(x)\in T_xX$ 
we obtain a continuous family of maps. If needed, we can also add the following property (see \cite[pp. 107-109]{griffiths-harris}): 
\begin{enumerate}[(i)]
\item[(iii')] $\euc_x$ osculates $\hexp_x^*\kappa_0$ up to order 2 at $x$.
\end{enumerate}

\subsubsection{Families of disks}\label{par:disks_size}
A holomorphic disk $\Delta\subset X$ containing $x$ is said to be a disk {\bf{of size (at least)  $r$ at $x$}} (resp. {\bf{of size exactly $r$ at $x$}}),  for some $r < r_0$,
if there is an orthonormal basis $(e_1,e_2)$ of $T_xX$ such that $\hexp_x^{-1}(\Delta)$ contains  (resp. is) the graph 
$\{ ze_1+\varphi(z)e_2\; ; \; z\in \disk(r)\}$ for some holomorphic map $\varphi\colon \disk(r)\to \disk(r)$. 
By the Koebe distortion theorem  if $\Delta$ has size $r$ at $x$, then its geometric characteristics around $x$ at scale smaller than 
$r/2$, say,  are  comparable to that of a flat disk. 
An alternative definition for the concept of disks of size $\geq r$ could be that $\Delta$ contains the image of an injective holomorphic map $\gamma\colon \disk(r)\to X$
such that  $\gamma(\fr \disk(r))\subset X\setminus B_{X}(x;r)$ and $\norm{\gamma'}\leq D$, for some fixed   constant $D$. 
Then, if $\Delta$ contains a disk of 
size $r$ for one of these definitions, it contains a disk of size $\e_0 r$ for the other definition, for some uniform $\e_0>0$;
in particular, there is a  constant $C$ depending only on $(X, \kappa_0)$ such that 
a disk of size $r$ at $x$ contains an embedded submanifold of $B_X(x; Cr)$.  



Let $(x_n)$ be a sequence converging to $x$ in $X$, and let $r$ be smaller than the radius $r_0$ introduced in Assertion~{(v)}, \S~\ref{par:good_charts}.
Let $\Delta_n$ be a family of disks of size at least 
$r$ at $x_n$ and $\Delta$ be a disk of size at least $r$ at $x$. We 
say that $\Delta_n$ {\bf{converges towards}} 
$\Delta$ {\emph{as a sequence of disks of size $r$}}, 
if there is an orthonormal basis $(e_1,e_2)$ of $T_xX$ for $\euc_x$ such that 
\begin{enumerate}[(i)]
\item $\hexp_x^{-1}(\Delta)$ contains the graph $\{z e_1+\varphi(z)e_2; z\in \disk(r)\}$ for some holomorphic function $\varphi\colon \disk(r)\to \disk(r)$; 
\item for every $s<r$, if $n$ is large enough, the disk $\hexp_x^{-1}(\Delta_n)$ contains the graph  
$\{ze_1+\varphi_n(z)e_2; z\in \disk(s)\}$ of a holomorphic function $\varphi_n\colon \disk(s)\to \disk(r)$;
\item for every $\e>0$, we have  $\abs{\varphi(z)-\varphi_n(z)}<\e$ on $\disk(s)$  if $n$ is large enough.
\end{enumerate}
By the Cauchy estimates, the 
convergence then holds in the $C^1$-topology (see~\S~\ref{par:Hausdorff_C1}). It follows
from the usual compactness criteria for holomorphic functions 
 that the space of disks of size $r$ on $X$ is compact (for the topology induced by the Hausdorff topology in $X$).
 Likewise, if  a sequence  of disks of size $r$ converges in the Hausdorff sense, then it also 
converges in the $C^1$ sense, at least as disks of size $s<r$, because two holomorphic functions $\varphi$ and $\psi$ from 
$\disk(r)$ to $\disk(r)$ whose graphs are $\e$-close are also $\e (r-s)^{-1}$-close in the $C^1$-topology.

It may also   be the case that  
the $\Delta_n$ are contained in different fibers $X_{\xi_n}$ of $\X$. By definition, we say that the sequence $\Delta_n$ converges 
to $\Delta\subset X_\xi$ if $\xi_n$ converges to $\xi$ and the projections of $\Delta_n$ converge to $\Delta$ in $X$. 

\subsubsection{Entire curves}  An {\bf{entire curve}} in $X$ is, by definition, a holomorphic map $\psi\colon \C\to X$. The curve is 
immersed if its velocity $\psi'$ does not vanish. Our main examples of immersed curves will, in fact, be injective and immersed entire curves. 
If $\psi_1$ and $\psi_2$  are two immersed entire curves with the same image, 
there exists a holomorphic diffeomorphism of $\C$, i.e. a non-constant affine map $A\colon z\mapsto az+b$, such 
that $\psi_2=\psi_1\circ A$. 
If $\psi$ is an immersed entire 
curve and $\abs{\psi'}\geq \eta$ on $\disk(z_0,s)$, its image contains a disk of size $Cs$ at $\psi(z_0)$, for some 
$C>0$ that depends only on $\eta$ and $\kappa_0$. 

\subsection{Stable and unstable manifolds}\label{subs:pesin}
By Lemma~\ref{lem:momentC1_to_Ck}, Condition~\eqref{eq:moment} implies similar moment conditions for higher derivatives, so Pesin's theory applies. 
The following proposition summarizes the main properties of Pesin local stable and unstable manifolds. Recall that a function $h$ is 
\textbf{$\e$-slowly varying}, relatively to some dynamical system $g$, if $e^{-\e}\leq h(g(x))/h(x)\leq e^\e$ for every $x$.  
We view the stable manifold of 
$\cx= (\xi,x)$ as contained in $X_\xi$; it can also be viewed as a subset  of $X$: whether we consider one or the other point of view should be clear 
from the context. 
If $\cx=(\xi,x)$ and $\cy=(\xi,y)$ are points of the same fiber $X_\xi$, we denote by $\dist_X(\cx,\cy)$ the Riemannian distance between $x$ and $y$ computed in~$X$.

\begin{pro}\label{pro:pesin} Let  $(X, \nu)$ be  a random holomorphic dynamical system, and $\mu$ be an ergodic and hyperbolic stationary measure. 
Then, for every $\delta>0$, there exists measurable 
positive $\delta$-slowly varying functions $r$ and $C$ on $\X$ (depending on $\delta$) 
and, for $\m$-almost every $\cx=(\xi,x) \in \X$, local stable and unstable manifolds 
 $ W^{s}_{r(\cx)}(\cx)$ and $W^{u}_{r(\cx)}(\cx)$ in $X_\xi$ 
 such that $\m$-almost surely:
\begin{enumerate}[{\em (1)}]
\item $W^{s}_{r(\cx)}(\cx)$ and $W^u_{r(\cx)}(\cx)$ are holomorphic disks of size at least $ 2 r(\cx)$ at $\cx$ 
 respectively tangent to $E^s(\cx)$ and $E^u(\cx)$;
\item for every $\mathscr{y}\in W^{s}_{r(\cx)}(\cx)$ and every $n\geq 0$,  
$$\dist_X(F^n(\cx), F^n(\mathscr{y})) \leq C(\cx) \exp ((\lambda^- +\delta) n);$$ likewise for every $\mathscr{y}\in W^{u}_{r(\cx)}(\cx)$ and every $n\geq 0$
$$\dist_X(F^{-n}(\cx), F^{-n}(\mathscr{y})) \leq C(\cx) \exp (-(\lambda^+ -\delta) n);$$
\item  
$F( W^{s}_{r(\cx)}(\cx)) \subset W^{s}_{r(F(\cx))}(F(\cx)) $ and $F\inv( W^{u}_{r(F(\cx))}(F(\cx)) )\subset W^{u}_{r(\cx)}(\cx) $.
\end{enumerate}
\end{pro}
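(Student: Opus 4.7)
The plan is to reduce this to the classical Pesin stable/unstable manifold theorem in the random setting, applied to the (invertible) skew product $F: \X \to \X$. Two preliminary observations make this possible. First, Lemma~\ref{lem:momentC1_to_Ck} upgrades the moment condition~\eqref{eq:moment} to analogous $C^k$ moment conditions for all $k\geq 1$; in particular the $C^2$ condition needed to run Pesin's graph-transform machinery is satisfied by the i.i.d.\ sequence $(f_n)$. Second, hyperbolicity of $\mu$ together with the Oseledets theorem for $F$ (as recalled in~\S\ref{subs:lyapunov}) provides, on a set of full $\m$-measure, a measurable $DF$-invariant splitting $T_xX_\xi = E^s(\cx)\oplus E^u(\cx)$ with Lyapunov exponents $\lambda^s<0<\lambda^u$ and with $\log\abs{\angle(E^s,E^u)}$ integrable.

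Working in the good exponential charts $\hexp_x$ of~\S\ref{par:good_charts}, I would first introduce Pesin (Lyapunov) norms $\|\cdot\|_\cx$, adapted to the splitting, in which $D_xf^1_\xi$ contracts $E^s(\cx)$ by a factor $\leq e^{\lambda^s+\delta/2}$ and expands $E^u(\cx)$ by a factor $\geq e^{\lambda^u-\delta/2}$, while distorting Euclidean norms only by a $\delta/2$-slowly varying tempered factor $C_0(\cx)$. Rescaling the charts by $C_0(\cx)\inv$ turns the sequence of local expressions of $f^1_{\vartheta^n\xi}$ into a sequence of uniformly hyperbolic $C^2$-perturbations of linear maps on a bidisk of fixed size. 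At this point the standard Hadamard--Perron graph transform in the random setting (see e.g.\ Liu--Qian or the fibered Pesin theory used in~\cite{br}) produces, as a fixed point of the graph transform acting on holomorphic graphs over $E^s(\cx)$, a holomorphic disk $W^s_{r(\cx)}(\cx)\subset X_\xi$ tangent to $E^s(\cx)$ of size comparable to $C_0(\cx)\inv$; this is where holomorphy is crucial and automatic, because each $f_i$ is a biholomorphism so the graph transform preserves the closed set of holomorphic graphs (which is compact by Montel). Choosing $r(\cx)=\frac12 C_0(\cx)\inv$ one gets a $\delta$-slowly varying size; property~(2) with some $\delta$-slowly varying $C(\cx)$ is exactly the contraction statement for the graph transform, and property~(3) is built into the construction.

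The construction of $W^u_{r(\cx)}(\cx)$ is symmetric: apply the same argument to the inverse skew product $F^{-1}$, which is again a random holomorphic dynamical system (with distribution $\check\nu(f)=\nu(f^{-1})$) satisfying~\eqref{eq:moment} by the symmetry of the moment condition in $f$ and $f^{-1}$; the roles of stable and unstable directions, and of past and future, are then exchanged. Replacing $r(\cx)$ by $\min$ of the two $\delta$-slowly varying sizes obtained for $W^s$ and $W^u$ yields a single $\delta$-slowly varying function~$r$ for which both~(1) and the size conclusion hold.

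The only real subtlety, which is standard but deserves care, is the construction of the slowly varying Pesin norms in the random (non-autonomous) setting: this requires the tempered behaviour of Oseledets splittings along $F$-orbits, which is guaranteed by the moment condition via the Borel--Cantelli observation of Lemma~\ref{lem:borel-cantelli_moment} applied to $\log\norm{f_n}_{C^2}$ and $\log\norm{f_n^{-1}}_{C^2}$. Everything else is a direct transcription of Pesin's theorem into the fibered dynamical system $(\X,F,\m)$, and I would simply invoke the corresponding statement in~\cite{liu-qian} or in~\cite[\S3]{br}, noting that holomorphy of the local manifolds is preserved by the graph transform because the data $(f_n)$ are holomorphic.
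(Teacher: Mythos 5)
Your proposal matches the paper's treatment: Proposition~\ref{pro:pesin} is stated there as a direct application of fibered Pesin theory (as in \cite{liu-qian} and \cite{br}), made legitimate by the $C^2$ moment condition supplied by Lemma~\ref{lem:momentC1_to_Ck}, with holomorphy of the local manifolds coming for free from the graph transform acting on holomorphic graphs — exactly the route you sketch. The only caveat, harmless here, is that $(F^{-1},\m)$ is not literally a random dynamical system in the paper's sense (cf.\ the remark after Proposition~\ref{pro:margulis-ruelle}); but two-sided Pesin theory only needs an invertible ergodic system with the integrability of $\log^{+}\norm{Df^{\pm 1}}$, so your construction of the unstable manifolds goes through.
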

  
By Lusin's theorem, for every $\e>0$  we can select 
a compact subset $\mathcal R_\e\subset \X$ with $\m(\mathcal R_\e)>1-\e$, on which 
$r(\cx)$ and $C(\cx)$ can be replaced by uniform constants
(respectively denoted by $r$ and $C$) and the following additional property holds:
\begin{enumerate}[{\em (1)}]
\item[(4)] \emph{on $\mathcal R_\e$ the  local stable and unstable manifolds $W^{s/u}_r(\cx)$ 
vary continuously for the  $C^1$-topology  (in the sense of \S~\ref{par:Hausdorff_C1} and~\ref{par:disks_size}).}
\end{enumerate}
The subsets $\mathcal R_\e$ are usually called {\bf{Pesin sets}}, or regular sets.
We  also denote  the local stable or unstable manifolds by $W^{s/u}_\loc(\cx)$, or by $W^{s/u}_r(\cx)$
when $\cx$ is in a Pesin set on which $r(\cdot)\geq r$. On several occasions we will have to deal with measurability issues for $W^{s/u}_\loc(\cx)$ as a function of $\cx$: this will be done 
by exhausting $\X$ by Pesin sets  and using their continuity on $\mathcal R_\e$.
 
The global stable and unstable manifolds of $\cx$ are respectively defined by the following increasing unions:
\begin{equation}\label{eq:union}
W^s(\cx) = \bigcup_{n\geq 0} F^{-n} \lrpar{W^s_{r(\cx)}(F^n(\cx))} \; \text{ and } \; W^u(\cx) = \bigcup_{n\geq 0} F^{n} \lrpar{W^u_{r(\cx)}(F^{-n}(\cx))}.
\end{equation}
In particular, they are injectively immersed holomorphic curves in $X_\xi$. Pesin theory shows that: 
\begin{align}
W^s(\cx)    & = \set{(\xi,y) \in X_\xi \; ; \ \limsup_{n\to\infty}\unsur{n}\log \dist_X(F^n(\xi,y), F^n(\xi,x)) <0}  \\    W^u (\cx)    &  = \set{ (\xi, y) \in X_\xi\; ; \ \limsup_{n\to-\infty}\unsur{\abs{n}}\log \dist_X(F^n(\xi,y), F^n(\xi,x)) <0}.
\end{align} 

\begin{pro}
Under the assumptions of Proposition \ref{pro:pesin},
  $W^s(\cx)$ and $W^u(\cx)$ are biholomorphic to $\C$ for $\m$-almost every $\cx$. 
\end{pro}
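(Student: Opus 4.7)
The argument is a standard one in Pesin theory adapted to the random holomorphic setting, relying on uniformization together with the Kobayashi pseudometric. I will write the proof for $W^s(\cx)$; the case of $W^u(\cx)$ follows by time-reversal, that is, by replacing $F$ with $F^{-1}$.

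First, I will check that $W^s(\cx)$ is a simply connected, non-compact Riemann surface, so that by uniformization it is biholomorphic to $\C$ or to the unit disk $\disk$. Indeed, from Proposition~\ref{pro:pesin}(3), writing $\Delta_n:=F^{-n}(W^s_{r(F^n\cx)}(F^n\cx))$ gives an increasing sequence of embedded holomorphic disks in the fiber $X_\xi$, whose union equals $W^s(\cx)$ (see \eqref{eq:union}). Any loop in $W^s(\cx)$ is contained in some $\Delta_n$, hence is null-homotopic, so $W^s(\cx)$ is simply connected; it is non-compact because it is a strictly increasing union of disks. The goal is therefore to rule out the possibility that $W^s(\cx)$ is biholomorphic to $\disk$.

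To this end, I will produce holomorphic embeddings $\tilde\psi_n\colon \disk(R_n)\to W^s(\cx)$ with $\tilde\psi_n(0)=\cx$, $\abs{\tilde\psi_n'(0)}_X=1$, and $R_n\to +\infty$. By Proposition~\ref{pro:pesin}(1), the local stable manifold $W^s_{r(F^n\cx)}(F^n\cx)$ has size at least $2r(F^n\cx)$ and is tangent to $E^s(F^n\cx)$, so using the good charts of \S\ref{par:good_charts} there exists a holomorphic embedding $\phi_n\colon \disk(r(F^n\cx))\to X_{\vartheta^n\xi}$ with $\phi_n(0)=F^n\cx$, $\phi_n'(0)\in E^s(F^n\cx)$, $\abs{\phi_n'(0)}_X\asymp 1$, and image contained in $W^s_{r(F^n\cx)}(F^n\cx)$. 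Setting $\psi_n:=F^{-n}\circ\phi_n$ yields a holomorphic embedding of $\disk(r(F^n\cx))$ into $W^s(\cx)$ with $\psi_n(0)=\cx$. Rescaling by $\alpha_n:=1/\abs{\psi_n'(0)}_X$ gives $\tilde\psi_n(z):=\psi_n(\alpha_n z)$, defined on $\disk(R_n)$ with $R_n=r(F^n\cx)\cdot\abs{\psi_n'(0)}_X$, and satisfying $\abs{\tilde\psi_n'(0)}_X=1$.

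The Pesin estimates now give the desired growth of $R_n$. Fix a small $\delta>0$. Since $r$ is $\delta$-slowly varying we have $r(F^n\cx)\geq r(\cx) e^{-\delta n}$. On the other hand, since $\phi_n'(0)\in E^s(F^n\cx)$ with $\abs{\phi_n'(0)}_X\asymp 1$ and $\lambda^s<0$, applying the Oseledets theorem along the orbit of $\cx$ yields
\[
\abs{\psi_n'(0)}_X=\abs{DF^{-n}(\phi_n'(0))}_X\geq C(\cx)^{-1}e^{(|\lambda^s|-\delta)n}
\]
for $\m$-almost every $\cx$. Thus $R_n\geq C(\cx)^{-1} r(\cx)\, e^{(|\lambda^s|-2\delta)n}\to +\infty$ provided $\delta<|\lambda^s|/2$.

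Finally, I will conclude via the Kobayashi pseudometric $k_Y$. For a holomorphic map $\psi\colon \disk(R)\to Y$, one has $k_Y(\psi(0),\psi'(0))\leq k_{\disk(R)}(0,1)=1/R$. Applied to $\tilde\psi_n$, this gives $k_{W^s(\cx)}(\cx,\tilde\psi_n'(0))\leq 1/R_n\to 0$. Since the vectors $\tilde\psi_n'(0)$ lie in the unit sphere of the one-dimensional space $T_\cx W^s(\cx)\subset T_\cx X$, a subsequence converges to some $v$ with $\abs{v}_X=1$, and by continuity of the Kobayashi pseudometric $k_{W^s(\cx)}(\cx,v)=0$. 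This forces $W^s(\cx)$ to be non-hyperbolic in the sense of Kobayashi, ruling out the disk. Hence $W^s(\cx)\simeq \C$.

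The main obstacle is the combination of the two exponential/subexponential scales: the slowly varying size $r(F^n\cx)$ shrinks at rate at most $e^{-\delta n}$, while the backward expansion in $E^s$ grows at rate $e^{|\lambda^s|n}$. Their product determines $R_n$, and one must choose $\delta$ small compared to $|\lambda^s|$ to ensure $R_n\to+\infty$. This is a routine application of Oseledets, but it is the key quantitative input of the argument.
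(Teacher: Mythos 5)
Your proof is correct, but it takes a genuinely different route from the one in the paper. You argue via the Kobayashi metric: pulling back the local stable disk at $F^n(\cx)$ by $F^{-n}$, and using the Oseledets expansion of $DF^{-n}$ along $E^s$ together with the $\delta$-slow variation of $r$, you produce holomorphic maps $\disk(R_n)\to W^s(\cx)$ centered at $\cx$ with unit derivative and $R_n\gtrsim e^{(|\lambda^s|-2\delta)n}\to\infty$, which forces the Kobayashi metric of $W^s(\cx)$ to degenerate at $\cx$ and rules out the disk. The paper instead runs a conformal-modulus argument: using only the distance contraction of Proposition~\ref{pro:pesin}(2) and Poincar\'e recurrence to a Pesin set $A$ (where $r\geq r_0$, $C\leq C_0$), it exhibits infinitely many disjoint nested annuli of modulus $\geq m_0$ in $W^s(\cx)\setminus W^s_r(\cx)$, namely $F^{-k_{j+1}}\bigl(W^s_r(F^{k_{j+1}}\cx)\setminus F^{k_{j+1}-k_j}(W^s_r(F^{k_j}\cx))\bigr)$, so the surface is parabolic. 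Your version is more quantitative (it gives an explicit exponential lower bound on the inner radius of $W^s(\cx)$ with respect to the ambient metric, which can be useful elsewhere) and needs no recurrence, but it does require the infinitesimal Oseledets/Pesin estimate along $E^s$ and the good-chart bookkeeping to ensure $|\phi_n'(0)|_X\asymp 1$ uniformly (your appeal to the Schwarz lemma for the graph function and to the chart comparability is the right justification, just stated briskly); the paper's argument is softer, using only the contraction of distances and recurrence, with no derivative estimates or charts. Both first steps (increasing union of disks, hence $\C$ or $\disk$) coincide.
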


More precisely, $W^s(\cx)$
is parametrized by an injectively immersed  entire curve    
$\psi^s_\cx: \C\to X$  such that $\psi^s_\cx(0) = x$ 
and this parametrization is  unique, up to an homothety 
$z\mapsto az$  of $\C$. Likewise, $W^u(\cx)$  
is parametrized by such an entire curve $\psi^u_\cx$.  
 
\begin{proof}  
By \eqref{eq:union} and Proposition~\ref{pro:pesin}.{(3)},   $W^s(\cx)$ is an increasing union of disks and is therefore a Riemann surface homeomorphic to $\R^2$; so, it is biholomorphic to $\C$ or $\disk$.
Let $A\subset \X$ be a set of  positive measure   on which $r\geq r_0$ and $C\leq C_0$. By Proposition \ref{pro:pesin}.{(2)}, 
there exists $n_0\in \N$ and  $m_0>0$ such that if $n\geq n_0$ and if
$\cx $ and  $F^{n}(\cx)$ belong to $ A$, then
$W^{s}_r(F^{n}( \xi,x))\setminus \lrpar{F^{n} W^s_{r}( \xi,x)}$ 
is an annulus of modulus $\geq m_0$. 
Now for $\m$-almost every $\cx\in \X$ there is  an  infinite sequence  $(k_j)$ such that 
$F^{k_j}(\cx) \in A$
and $k_{j+1} - k_j> n_0$.  For such an $\cx$, 
$W^s(\cx)\setminus W^s_r(\cx)$ contains an infinite  nested sequence of annuli of modulus at least $m_0$, namely the $F^{-k_{j+1}}(W^s_r(F^{k_{j+1}}(\cx))\setminus F^{k_{j+1}-k_j}(W^s_r(F^{k_j}(\cx))$. Thus, $W^s(\cx)$ is biholomorphic to  $\C$.
\end{proof}

If we are only interested in stable manifolds, there is a simplified version of Proposition 
\ref{pro:pesin} which takes place   on $X$: 
 
 \begin{pro}\label{pro:pesin_stable}
Let $(X, \nu)$ be a random holomorphic dynamical system and $\mu$ an ergodic stationary measure, whose Lyapunov exponents
satisfy $\lambda^{-} < 0 \leq \lambda^{+}$. Then for $\m_+$-almost every  $(\omega, x)$ the stable set 
\[
W^s (\omega, x) = \set{y \in X\; ; \ \ \limsup_{n\to\infty}  \frac{1}{n} \log \dist_X(f_\omega^n(y), f_\omega^n(x)) < 0}
\]
is an injectively immersed entire curve in $X$.  
\end{pro}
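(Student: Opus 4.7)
The plan is to reproduce the construction of global stable manifolds carried out in Proposition~\ref{pro:pesin}, but to work directly on the non-invertible skew product $F_+\colon \X_+\to \X_+$. The key observation is that the hypothesis $\lambda^-<0\leq \lambda^+$ already forces $\lambda^-<\lambda^+$, so the Oseledets subspace $V^-(\omega,x)$ is a genuine complex line, and only the negative exponent enters Pesin's stable manifold theorem. Moreover the contraction condition defining the stable manifold refers only to the forward dynamics, so it depends on $(\omega,x)\in \X_+$ alone, with no need to choose a bi-infinite extension $\xi\in\Sigma$.

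First, I would invoke Pesin's stable manifold theorem for $F_+$: for every small $\delta>0$ there exists a measurable $\delta$-slowly varying function $r\colon \X_+\to (0,\infty)$ such that, $\m_+$-a.e., there is a local stable manifold $W^s_{r(\omega,x)}(\omega,x)\subset X$, realized as a holomorphic disk of size $\geq 2r(\omega,x)$ at $x$, tangent to $V^-(\omega,x)$, on which $f_\omega^n$ contracts at rate at most $\exp((\lambda^-+\delta)n)$, and with $f_\omega^1\bigl(W^s_{r(\omega,x)}(\omega,x)\bigr)\subset W^s_{r(F_+(\omega,x))}(F_+(\omega,x))$. Since each $f_\omega^n$ is a biholomorphism of $X$, I can then set
\[
\widetilde W^s(\omega,x):=\bigcup_{n\geq 0} (f_\omega^n)^{-1}\bigl(W^s_{r(F_+^n(\omega,x))}(F_+^n(\omega,x))\bigr),
\]
which is an increasing union of embedded holomorphic disks in $X$, hence an injectively immersed complex curve biholomorphic either to $\C$ or to $\disk$.

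Second, I would check that $\widetilde W^s(\omega,x)$ coincides with the set $W^s(\omega,x)$ of the statement. The inclusion $\widetilde W^s\subset W^s$ is immediate from the exponential contraction. Conversely, if $y\in W^s(\omega,x)$, fix $\delta>0$ small enough that $\lambda^-+2\delta<0$; the $\delta$-slow variation of $r$ along $F_+$-orbits gives $r(F_+^n(\omega,x))\geq r(\omega,x)\,e^{-\delta n}$, while $\dist_X(f_\omega^n y,f_\omega^n x)\leq e^{(\lambda^-+\delta)n}$ eventually, so $f_\omega^n(y)\in W^s_{r(F_+^n(\omega,x))}(F_+^n(\omega,x))$ for large $n$, whence $y\in\widetilde W^s(\omega,x)$.

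Finally, to upgrade biholomorphism to $\C$ (ruling out $\disk$), I would transpose the nested annuli argument from Proposition~\ref{pro:pesin}. Choose a Pesin set $\mathcal R_\e\subset \X_+$ with $\m_+(\mathcal R_\e)>0$ on which $r\geq r_0$ and the local stable disks depend continuously on the base point in the $C^1$-topology; pick $n_0\geq 1$ and $m_0>0$ such that whenever $(\omega,x),F_+^n(\omega,x)\in\mathcal R_\e$ and $n\geq n_0$, the set $W^s_{r_0}(F_+^n(\omega,x))\setminus f_\omega^n\bigl(W^s_{r_0}(\omega,x)\bigr)$ contains an annulus of modulus $\geq m_0$. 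By Poincar\'e recurrence applied to $F_+$ on $\mathcal R_\e$, $\m_+$-a.e.\ point has an infinite sequence of return times $(k_j)$ with $k_{j+1}-k_j\geq n_0$; pulling back by $(f_\omega^{k_{j+1}})^{-1}$ produces infinitely many nested annuli of modulus $\geq m_0$ inside $\widetilde W^s(\omega,x)$, precluding the disk case. The main obstacle will be the first step: justifying that Pesin's stable manifold theorem applies cleanly in this partially hyperbolic regime (where possibly $\lambda^+=0$) and that the stable disks are canonically defined over $\X_+$; once this is granted, the rest is essentially a transcription of the argument already recorded in the invertible setting.
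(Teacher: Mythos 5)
Your overall route is the same as the paper's: the paper disposes of this proposition by remarking that stable manifolds come from a purely one-sided Pesin construction (citing Liu--Qian, Chap.~III) and by transporting the nested-annuli argument used just before for the invertible setting; your first and third steps are faithful transcriptions of that. The problem is your second step, where you identify the increasing union $\widetilde W^s(\omega,x)$ of pulled-back local stable disks with the set $W^s(\omega,x)$ defined by $\limsup\frac1n\log\dist_X(f_\omega^n y,f_\omega^n x)<0$. From that hypothesis you only get decay at \emph{some} rate $e^{-\e_0 n}$ with $\e_0>0$ depending on $y$; it does not follow that $\dist_X(f_\omega^n y,f_\omega^n x)\leq e^{(\lambda^-+\delta)n}$ eventually, since $\e_0$ may be much smaller than $|\lambda^-+\delta|$. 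Worse, even granting fast decay, the inference ``$f_\omega^n(y)$ is within $r(F_+^n(\omega,x))$ of $f_\omega^n(x)$, hence lies on $W^s_{r(F_+^n(\omega,x))}(F_+^n(\omega,x))$'' is not valid: the local stable manifold is a one-complex-dimensional disk, not a neighborhood, and the version of the local theorem you quoted only asserts that points \emph{on} the disk contract; it gives no converse.

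To close the gap one needs the converse characterization contained in the stable manifold theorem itself: working in Lyapunov charts adapted to a $\delta$ chosen smaller than the (point-dependent) decay rate $\e_0$, the component of the displacement along the center/unstable Oseledets direction can shrink at most like $e^{-\delta n}$ per the cocycle estimates (here one uses $\lambda^+\geq 0$), so exponential approach at rate $\e_0>\delta$ forces that component to vanish, i.e.\ $f_\omega^n(y)$ lies on the local stable disk for large $n$; one must also check that the resulting global union is independent of the auxiliary $\delta$. This is exactly the content of the one-sided theorem the paper cites (Liu--Qian, Chap.~III, which states that the set defined by the $\limsup$ condition \emph{is} the immersed manifold obtained as the increasing union of local disks), so either quote that statement in full or supply the chart argument; as written, your equality $\widetilde W^s=W^s$ is not justified. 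The remaining ingredients --- that $\lambda^-<0\leq\lambda^+$ gives $V^-$ of complex dimension one, the $\mathcal F^+$-measurability, and the Poincar\'e-recurrence/nested-annuli argument ruling out the disk and yielding an injectively immersed copy of $\C$ --- are fine and match the paper.
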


Indeed, stable manifolds can be obtained from  a purely ``one-sided'' construction, that is, by considering only positive iterates (see \cite[Chap. III]{liu-qian}). This also shows that local stable manifolds in $\cX$ are 
$\mathcal F^+$-measurable, and may be viewed as living in $\cX_+$.


\subsection{Fibered entropy} Here we recall the definition of the \textbf{metric fibered entropy} 
of a stationary measure $\mu$ (see~\cite[\S 2.1]{kifer} or \cite[Chap. 0 and I]{liu-qian} for more details). 
If $\eta$ is a finite measurable partition of
$X$, its entropy relative to $\mu$ is $H_\mu(\eta) = -\sum_{C\in \eta} \mu(C) \log\mu(C)$. Then, we set
\begin{align}
h_{\mu}(X, \nu; \eta) & = \lim_{n\to\infty} \unsur{n}\int H_\mu\lrpar{\bigvee_{k=0}^{n-1} \lrpar{f^k_\xi}\inv(\eta) } d\nu^\N(\xi), \\
h_{\mu}(X, \nu) & = \sup\set{h_{\mu}(X, \nu; \eta) \; ; \;  \eta\text{ a finite measurable partition  of  }X}.
\end{align}
Actually 
$h_{\mu}(X, \nu; \eta)$ can be interpreted as a conditional (or fibered) entropy for the skew-products $F_+$   
on   $\X_+$  and $F$ on $\X$. Indeed,  the so-called Abramov-Rokhlin formula holds~\cite{bogenschutz-crauel}:
\begin{align}
h_{\mu}(X, \nu) & = h_{\nu^\N\times \mu}(F_+\vert \eta_\Omega) = h_{\m_+}(F_+) - h_{\nu^\N}(\sigma) \\
& = h_{\m}(F\vert \eta_\Sigma)  \;   =  \; h_\m(F)  - h_{\nu^\Z}(\vartheta),
\end{align}
where $\eta_\Omega$ (resp. $\eta_\Sigma$) denotes the partition into fibers of the 
first projection $\pi_\Omega\colon \X_+\to \Omega$ (resp.
 $\pi_\Sigma\colon \X \to \Sigma$)  and  in the second and fourth equalities we assume   
$h_{\nu^\N} (\sigma)= h_{\nu^\Z}(\vartheta)<\infty$. 
The next result is the fibered version of the \textbf{Margulis-Ruelle inequality}. 

\begin{pro}\label{pro:margulis-ruelle}
Let $(X, \nu)$ be a random holomorphic dynamical system satisfying the moment condition \eqref{eq:moment}
and $\mu$ be an ergodic stationary measure. If $h_{\mu}(X, \nu)>0$ then $\mu$ is hyperbolic and 
$\min(\lambda^+,  - \lambda^-) \geq \unsur{2} h_{\mu}(X, \nu)$.
\end{pro}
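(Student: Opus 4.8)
The plan is to adapt the classical proof of the Margulis--Ruelle inequality to the fibered setting, working with the skew-product $(\X_+,F_+,\m_+)$ (or equivalently $(\X,F,\m)$). First I would reduce to a statement about the fibered entropy $h_{\m_+}(F_+\mid \eta_\Omega)$, which by the Abramov--Rokhlin formula recalled above equals $h_\mu(X,\nu)$. The point is that, just as in the deterministic case, for $\m_+$-almost every $\cx=(\omega,x)$ the local expansion of $f^n_\omega$ at $x$ is governed by the positive Lyapunov exponents; since we are on a complex surface, the sum of the positive Lyapunov exponents (counted with multiplicity) is at most $2\max(\lambda^+,0)$ if $\lambda^+>0$, and is $0$ otherwise. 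So the Ruelle inequality will give $h_\mu(X,\nu)\le 2\max(\lambda^+,0)$, and similarly, applying the same argument to the inverse skew-product $(\X,F^{-1},\m)$, $h_\mu(X,\nu)\le 2\max(-\lambda^-,0)$. Combining these two bounds with the hypothesis $h_\mu(X,\nu)>0$ forces $\lambda^+>0$ and $\lambda^-<0$, i.e. $\mu$ is hyperbolic, and yields $\min(\lambda^u,-\lambda^s)\ge \tfrac12 h_\mu(X,\nu)$.

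The key steps, in order, are as follows. First, fix $\e>0$ and a fine finite partition $\eta$ of $X$ into small pieces, and observe that $h_\mu(X,\nu;\eta)$ can be estimated fiberwise: $\int H_\mu\big(\bigvee_{k=0}^{n-1}(f^k_\xi)^{-1}\eta\big)\,d\nu^\N(\xi)$ is controlled, for $n=1$ and after refining, by the number of pieces of $\eta$ that a single image $f_\xi(C)$ can meet. Second, using the good charts $\hexp_x$ of \S\ref{par:good_charts} and Lusin/Egorov to get uniform Oseledets estimates on a set of measure $>1-\e$, bound the number of $\delta$-cubes needed to cover $f_\xi(C)$ for a $\delta$-cube $C$: a holomorphic map whose derivative has singular values $\chi^-_1\le\chi^+_1$ maps a cube of side $\delta$ into a set coverable by roughly $\max(1,\chi^+_1/\delta)^2$ (in real dimension; i.e. $\le C\max(1,\|Df\|)^2$ because the map is holomorphic so the real differential is the complexification of a $\C$-linear map) cubes of side $\delta$. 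This is where holomorphy is used: a general $C^1$ surface diffeomorphism would give the exponent $\lambda^1+\lambda^2$ of the two real positive Lyapunov exponents, but holomorphy collapses the two real exponents into twice a single complex exponent, $2\lambda^+$. Third, integrate the logarithm of this counting bound, divide by $n$, let $n\to\infty$ using subadditivity and the moment condition \eqref{eq:moment} (to control the exceptional set and the tails of $\log\|f\|_{C^1}$), then let $\delta\to 0$ and $\e\to0$ to obtain $h_\mu(X,\nu)\le 2\max(\lambda^+,0)$.

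Fourth, apply the identical argument to the reversed random dynamical system determined by $\check\nu:f\mapsto\nu(f^{-1})$, whose skew product is conjugate to $F^{-1}$ and whose stationary measure is still $\mu$ when $\m=\nu^\Z\times\mu$; more robustly, one works directly with $(\X,F^{-1},\m)$ and uses that the fibered entropy is insensitive to time reversal (the Abramov--Rokhlin formula and the fact that $h_\m(F)=h_\m(F^{-1})$). This gives $h_\mu(X,\nu)\le 2\max(-\lambda^-,0)$. Finally, if $h_\mu(X,\nu)>0$, both right-hand sides must be positive, so $\lambda^+>0>\lambda^-$; writing $\lambda^u=\lambda^+$ and $\lambda^s=\lambda^-$ we read off $\min(\lambda^u,-\lambda^s)\ge\tfrac12 h_\mu(X,\nu)$.

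I expect the main obstacle to be the careful bookkeeping in the counting/covering step: making precise, with uniform constants on a large Pesin-type set, the claim that a holomorphic map with $C^1$-norm $M$ sends a $\delta$-cube into $O(M^2)$ $\delta$-cubes, while simultaneously handling the pieces of the partition that sit in the bad set (of small but positive measure) and the pieces near the boundary of charts. Controlling the contribution of the bad set requires the integrability in \eqref{eq:moment} so that $\int \log\|f\|_{C^1}\,d\nu<\infty$ and the exceptional contributions vanish in the limit; this is standard but must be done with care in the fibered setting. Everything else — the Abramov--Rokhlin reduction, the choice of charts, the subadditive limit — is routine given the material already assembled in \S\ref{sec:Glossary_II}. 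Indeed this is why the statement is quoted here with only a reference-style proof: it is the exact fibered analogue of the classical Margulis--Ruelle inequality (see \cite{liu-qian, kifer}), the only twist being the use of holomorphy to replace the sum of the two positive real exponents by $2\lambda^+$.
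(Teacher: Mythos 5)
Your overall strategy --- reducing to the fibered entropy via the Abramov--Rokhlin formula, invoking the fibered Margulis--Ruelle inequality, applying it a second time to $F^{-1}$, and noting the delicacy that $(F^{-1},\m)$ is not literally a random dynamical system --- is precisely the paper's strategy, which cites \cite{bahnmuller-bogenschutz} and \cite[Chap.~II]{liu-qian} for the forward bound and \cite[I.4.2]{liu-qian} for $h_\m(F\mid\eta_\Sigma)=h_\m(F^{-1}\mid\eta_\Sigma)$. Your ``more robust'' suggestion, to work directly with $(\cX,F^{-1},\m)$ rather than with the reversed measure $\check\nu$, is the right one and is exactly what the paper does.

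There is, however, a genuine flaw in your covering estimate and in the form of the Ruelle inequality you extract from it. You claim that a holomorphic map with (complex) singular values $\chi^-\le\chi^+$ sends a $\delta$-cube into a set coverable by $O\lrpar{\max(1,\chi^+)^2}$ $\delta$-cubes, and deduce $h_\mu(X,\nu)\le 2\max(\lambda^+,0)$. This is wrong when $\chi^->1$: the real differential has singular values $\chi^-,\chi^-,\chi^+,\chi^+$, so the covering number is $O\lrpar{\max(1,\chi^-)^2\max(1,\chi^+)^2}$, and the fibered Ruelle inequality on a complex surface only gives $h_\mu(X,\nu)\le 2\max(\lambda^+,0)+2\max(\lambda^-,0)$. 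Your stronger bound happens to hold a posteriori once one knows $\lambda^-\le 0$ (e.g.\ via Theorem~\ref{thm:invariance_principle} and Remark~\ref{rem:invariance_principle_strict}), but it is not a consequence of the covering argument and cannot be asserted as one. The fix is painless and, pleasantly, does not require the invariance principle at all: use the correct Ruelle bound for both $F$ and $F^{-1}$, namely $h_\mu\le 2\max(\lambda^+,0)+2\max(\lambda^-,0)$ and $h_\mu\le 2\max(-\lambda^-,0)+2\max(-\lambda^+,0)$. If $h_\mu>0$, the second inequality forces $\lambda^-<0$ (otherwise both terms vanish since $\lambda^+\ge\lambda^-\ge 0$), and the first forces $\lambda^+>0$; then they reduce to $h_\mu\le 2\lambda^+$ and $h_\mu\le -2\lambda^-$, which together give $\min(\lambda^u,-\lambda^s)\ge\tfrac12 h_\mu(X,\nu)$. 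So the conclusion is correct and the time-reversal structure of your proof is sound; only the intermediate covering claim needs to be replaced by the honest one.
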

 
\begin{proof} 
See   \cite{bahnmuller-bogenschutz} or \cite[Chap. II]{liu-qian} for the  inequality 
$\lambda^+  \geq \unsur{2} h_{\mu}(X, \nu)$. For
$- \lambda^-\geq \unsur{2} h_{\mu}(X, \nu)$, we use the fact that 
$h_m(F\vert \eta_\Sigma) = h_m(F\inv\vert \eta_\Sigma)$ 
(see e.g. 
\cite[I.4.2]{liu-qian}) and apply the Margulis-Ruelle inequality to $F\inv$. Beware that there is a slightly delicate point here: 
$(F\inv, \m)$ is not associated to  a random dynamical system in our sense; fortunately,    
the statement of the Margulis-Ruelle inequality in \cite{bahnmuller-bogenschutz} (see also 
\cite[Appendix A]{liu-qian})   covers this situation. 
\end{proof}

\subsection{Unstable conditionals and entropy} \label{subs:conditionals} 
Assume $\mu$ is ergodic and hyperbolic. 
By definition, an \textbf{unstable Pesin partition} $\eta^u$ on $\X$
is a measurable partition of $(\X, \mathcal{F}, \mu)$ with the following properties: 
\begin{itemize}
\item $\eta$ is increasing: $F\inv \eta^u$ refines $\eta^u$; 
\item  for $\m$-almost every $\cx$, $\eta^u(\cx)$ is an open subset of 
$W^u(\cx)$ and  
\begin{equation}\label{eq:exhaustion}
\bigcup_{n\geq 0} F^n\lrpar{\eta^u (F^{-n} (\cx))} = W^u(\cx);
\end{equation}
\item $\eta^u$ is a generator, {i.e.} $\bigvee_{n=0}^\infty F^{-n}(\eta^u)$  coincides $\m$-almost surely with the partition into points.
\end{itemize}
Here, as usual, $\eta^u(\cx)$ denotes the atom of $\eta^u$ containing $\cx$, and  $F\inv\eta^u$ is the partition defined by $(F\inv\eta^u)(\cx)   = F\inv(\eta^u(F(\cx)))$.
The definition of a  \textbf{stable Pesin partition} $\eta^s$ is   similar.  
A neat proof of the existence of such a partition is given by Ledrappier and Strelcyn in
\cite{ledrappier-strelcyn}, which easily adapts to the random setting 
\begin{vcourte}
(see \cite[\S IV.2]{liu-qian}, and \cite{cantat-dujardin:vlongue}). 
\end{vcourte}
\begin{vlongue}
(see \cite[\S IV.2]{liu-qian}). 
\end{vlongue}

\begin{lem} \label{lem:stable_partition} 
There exists a stable (resp. unstable) Pesin partition whose atoms  are 
$\mathcal F^+$-measurable (resp. $\mathcal F^-$-measurable), that is,  saturated by local stable (resp. unstable) sets $\Sigma^s_\loc\times\{ x \}$ (resp. $\Sigma^u_\loc\times\{ x \}$).
\end{lem}

\begin{vlongue}
\begin{proof}
To justify the existence of such a partition, we briefly review the proof  of Ledrappier and Strelcyn  
\cite{ledrappier-strelcyn} and show that it can be rendered   $\mathcal F^+$-measurable. 
Let   $E$ be a set of positive measure in $\X$ such that (a) $\pi_X(E)$ is contained in a ball of radius 
$r_0$, (b) for every $\cx= (\xi,x)\in E$, and every $0<r\leq 2r_0$,  $W^s(\cx)$ 
contains a disk  of size  exactly  $r$ at $\cx$, denoted by $\Delta^s(\cx, r)$
 and  (c) for every $0<r\leq 2r_0$, $E\ni\cx\mapsto \Delta^s(\cx, r)$ is continuous for the $C^1$ topology. 
 Then for $0<r< r_0$ we define a measurable partition $\eta_r$ 
whose atoms are the $\Delta^s(\cx, r)$ for $x\in E$ as well as  $\X\setminus \bigcup_{\cx\in E} \Delta^s(\cx, r)$. 
Since stable manifolds are $\mathcal F^+$-measurable, we can further require that for every $\xi' \in \Sigma^s_\loc(\xi)$, 
  with  $\cx'  = (\xi', x)$, we have
$\Delta^s(\cx', r) = \Delta^s(\cx, r)$.
The argument of \cite{ledrappier-strelcyn} shows  
that for Lebesgue-almost every $r\in [0, r_0]$, the partition 
$\eta^s  = \bigvee_{n=0}^\infty F^{-n}(\eta_r)$ is a Pesin stable partition. Thus with $\cx$ and $\cx'$ as above we infer that 
 \begin{equation}
 \eta^s(\cx') =  \bigcap_{n\geq 0} F^{-n} \eta_r(F^n(\cx'))  = \bigcap_{n\geq 0} F^{-n} \eta_r(F^n(\cx)) =\eta^s(\cx)
 \end{equation}
 where the middle equality comes from the fact that $\vartheta^n\xi'\in \Sigma^s_\loc(\vartheta^n\xi)$, and we are done. 
 \end{proof}
\end{vlongue}
 
%

 The existence of   unstable partitions enables us to give a meaning to the \textbf{unstable conditionals} of $\m$. Indeed, first observe that if $\eta^u$ and $\zeta^u$ are two  unstable Pesin partitions, then $\m$-almost surely $\m(\cdot \vert \eta^u)$ and $\m(\cdot \vert
 \zeta^u)$ coincide up to a multiplicative factor on $\eta^u(\cx)\cap \zeta^u(\cx)$. Furthermore, there exists a sequence of unstable partitions 
 $\eta^u_n$ such that for almost every $\cx$, 
if $K$ is a compact subset of $W^u(\cx)$ for the  intrinsic topology (i.e. the topology induced by the 
biholomorphism $W^u(\cx)\simeq \C$) then $K\subset \eta_n^u(\cx)$ for sufficiently large $n$: indeed 
by \eqref{eq:exhaustion}, the sequence of partitions $F^{n}\eta^u$ does the job. Hence almost surely the conditional measure of 
$\m$ on $W^u(\cx)$  is well-defined up to scale; we define $\m^u_\cx$ by normalizing so that 
$\m^u_\cx(\eta^u(\cx)) = 1$. 

The next proposition is known as the (relative) \textbf{Rokhlin entropy formula}, stated here in our specific context. 
 
\begin{pro}\label{pro:rokhlin_formula}
Let $(X, \nu)$ be a random holomorphic dynamical system satisfying the moment condition~\eqref{eq:moment},
and $\mu$ be an ergodic and hyperbolic stationary measure. Let $\eta^u$ be an 
unstable Pesin partition. Then
\[
h_\mu(X, \nu) = H_\m(F\inv\eta^u\vert \, \eta^u) :=  \int \log J_{\eta^u}(\cx) d\m(\cx),
\]
where $J_{\eta^u}(\cx)$ is the ``Jacobian'' of $F$ relative to $\eta^u$, that is 
\[
J_{\eta^u}(\cx) = \m\left(F\inv \left(\eta^u(F(\cx))\right)\vert\, \eta^u(\cx)\right)\inv.
\]
 \end{pro}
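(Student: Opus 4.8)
\textbf{Proof strategy for the Rokhlin entropy formula (Proposition \ref{pro:rokhlin_formula}).}

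The plan is to deduce this from the Abramov--Rokhlin formula recorded just above, namely $h_\mu(X,\nu)=h_\m(F\mid \eta_\Sigma)$, together with the fact that a Pesin unstable partition is a generator for $F^{-1}$ in the relevant conditional sense. First I would recall the abstract Rokhlin theory for the conditional entropy of an invertible measure preserving transformation $F$ of $(\X,\mathcal F,\m)$ relative to a sub-$\sigma$-algebra, specialized to increasing partitions: if $\eta$ is a measurable partition with $F^{-1}\eta\preceq\eta$ and $\bigvee_{n\ge0}F^{-n}\eta$ equals the point partition (mod $\m$), while $\bigwedge_{n\ge0}F^{n}\eta$ is, mod $\m$, the partition into fibers $X_\xi$ of $\pi_\Sigma$, then
\[
h_\m(F\mid \eta_\Sigma)=H_\m\!\left(F^{-1}\eta\mid \eta\right).
\]
This is the standard computation $h_\m(F\mid\eta_\Sigma)=\lim_n\frac1n H_\m(\bigvee_{k=0}^{n-1}F^{k}\eta\mid\eta_\Sigma\vee\text{(something))}$ telescoping via the increasing property; I would cite Ledrappier--Strelcyn or \cite[\S IV.2]{liu-qian} rather than reprove it. So the content is: (i) check that the unstable Pesin partition $\eta^u$ of \S\ref{subs:conditionals} has these three properties, and (ii) identify $H_\m(F^{-1}\eta^u\mid\eta^u)$ with $\int\log J_{\eta^u}\,d\m$.

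For (i): the increasing property $F^{-1}\eta^u\preceq\eta^u$ and the generator property $\bigvee_{n\ge0}F^{-n}\eta^u=\varepsilon$ (point partition) are built into the definition of an unstable Pesin partition. The remaining point is that $\bigwedge_{n\ge0}F^{n}\eta^u$ coincides mod $\m$ with $\eta_\Sigma$: the inclusion $\eta_\Sigma\preceq\bigwedge_n F^n\eta^u$ holds because each atom $\eta^u(\cx)$ lies in a single fiber $X_\xi$ and this is preserved under the dynamics; for the reverse inclusion one uses the exhaustion property \eqref{eq:exhaustion}, which says $\bigcup_n F^n(\eta^u(F^{-n}\cx))=W^u(\cx)$, so two points in the same atom of $\bigwedge_n F^n\eta^u$ that lie in the same fiber $X_\xi$ must lie on a common unstable manifold and, by hyperbolicity and the contraction in backward time along $W^u$ being impossible, actually coincide — here I would invoke ergodicity of $\m$ and the fact that unstable manifolds partition $\m$-a.e. fiber into sets on which $\m_\xi$ is non-atomic, so distinct points are separated. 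This matching of $\bigwedge_n F^n\eta^u$ with $\eta_\Sigma$ is the step I expect to require the most care; it is essentially the statement that $\eta^u$ is a "relative generator from the past", and it is exactly where the geometry of Pesin theory (Proposition \ref{pro:pesin}) enters.

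For (ii): once the abstract formula $h_\m(F\mid\eta_\Sigma)=H_\m(F^{-1}\eta^u\mid\eta^u)$ is in hand, I would unwind the definition of conditional entropy of a countable partition given another. Since $F^{-1}\eta^u$ refines $\eta^u$, the conditional entropy is $H_\m(F^{-1}\eta^u\mid\eta^u)=-\int \log \m\!\big((F^{-1}\eta^u)(\cx)\mid \eta^u(\cx)\big)\,d\m(\cx)$, and $(F^{-1}\eta^u)(\cx)=F^{-1}(\eta^u(F(\cx)))$ by definition, so the integrand is precisely $\log\m\big(F^{-1}(\eta^u(F(\cx)))\mid\eta^u(\cx)\big)^{-1}=\log J_{\eta^u}(\cx)$. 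Combining with the Abramov--Rokhlin identity $h_\mu(X,\nu)=h_\m(F\mid\eta_\Sigma)$ already stated in the excerpt finishes the proof. The only analytic input beyond bookkeeping is finiteness of the entropy, which follows from the Margulis--Ruelle inequality (Proposition \ref{pro:margulis-ruelle}) together with the moment condition \eqref{eq:moment}, guaranteeing $h_\mu(X,\nu)\le 2\lambda^u<\infty$ so that all the conditional entropies above are finite and the telescoping argument is legitimate.
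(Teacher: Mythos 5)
The reduction to the Abramov--Rokhlin identity and the identification of $H_\m(F^{-1}\eta^u\mid\eta^u)$ with $\int\log J_{\eta^u}\,d\m$ are both fine and follow the paper. The gap lies in the abstract theorem you invoke and in your attempted verification of its hypotheses. You want to apply a Rokhlin-type statement of the form: if $\eta$ is increasing, $\bigvee_{n\ge0}F^{-n}\eta$ is the point partition, and $\bigwedge_{n\ge0}F^{n}\eta=\eta_\Sigma$ mod $\m$, then $h_\m(F\mid\eta_\Sigma)=H_\m(F^{-1}\eta\mid\eta)$. But the third condition simply does not hold for an unstable Pesin partition. By the exhaustion property \eqref{eq:exhaustion} the atoms of $F^n\eta^u$ grow to $W^u(\cx)$ as $n\to\infty$, so the tail partition $\bigwedge_{n\ge0}F^n\eta^u$ is the measurable hull of the foliation of $\X$ by global unstable manifolds. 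This is in general \emph{strictly finer} than $\eta_\Sigma$ (unstable manifolds are far smaller than entire fibers $X_\xi$), and the discrepancy between these two $\sigma$-algebras is precisely what makes the Rokhlin formula for Pesin partitions non-trivial.

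Your attempted verification of the tail condition is also internally inconsistent. You assert that two points of the same $\bigwedge_n F^n\eta^u$-atom lying in the same fiber ``must lie on a common unstable manifold and $\ldots$ actually coincide.'' If that were true, the intersection of the tail with $\eta_\Sigma$ would be the point partition, i.e.\ the tail atoms would be singletons; but the tail is coarser than $\eta^u$, whose atoms are open disks in unstable manifolds. The exhaustion property in fact guarantees that points on a common global unstable leaf are \emph{never} separated by the tail, so the tail atoms are at least whole unstable manifolds. No backward-contraction argument can shrink them.

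The paper's proof isolates exactly this difficulty: the non-trivial step is the equality $h_\m(F^{-1}\mid\eta_\Sigma)=h_\m(F^{-1}\mid\eta^u\vee\eta_\Sigma)$ (equation \eqref{eq:entropy_generator}), which says that passing to the unstable $\sigma$-algebra loses no fiberwise entropy. This is the content of Ledrappier--Young \cite[Cor.\ 5.3]{ledrappier-young1}, adapted to random dynamical systems in \cite[Cor.\ VI.7.1]{liu-qian}; it does not follow from the bookkeeping of abstract Rokhlin theory precisely because the two-sided generator hypothesis fails. Your proposal would need to replace the incorrect tail condition by an invocation of that result, which is what the paper does.
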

 
\begin{proof}[Sketch of proof] The argument  is based on the following sequence of equalities, in which  $\eta_\Sigma $ is the partition into fibers of $\pi_\Sigma$, as before:
 \begin{align}
\notag h_\mu(X, \nu) &= h_\m (F\vert {\eta_\Sigma} ) = h_\m (F\inv \vert {\eta_\Sigma} )\\
 &= h_\m (F\inv \vert  \eta^u \vee {\eta_\Sigma} ) \label{eq:entropy_generator}\\
&\notag := H_\m(\eta^u\vert F\eta^u \vee  \eta_\Sigma) 
 =H_\m(\eta^u\vert F\eta^u)  = H_\m(F\inv\eta^u\vert \eta^u)
 \end{align}
The equalities in the first and last line follow 
from the general properties of conditional entropy: see \cite[Chap. 0]{liu-qian} for a presentation adapted to our context (note 
that the conditional entropy would be  denoted by $h^{\eta_\Sigma}_\m$ there) or 
Rokhlin \cite{rokhlin} for a thorough treatment. On the other hand the equality \eqref{eq:entropy_generator} is non-trivial. 
 If $\eta^u$ were  of the form $\bigvee_{n=0}^{+\infty} \eta$, where $\eta$ is a 2-sided generator with finite entropy, this equality 
 would follow from the general theory. For a Pesin unstable partition the result   was 
 established for diffeomorphisms in \cite[Cor 5.3]{ledrappier-young1} 
and adapted to random dynamical systems in \cite[Cor. VI.7.1]{liu-qian}. 
\end{proof}
 
 \begin{rem}\label{rem:stable_rokhlin}  
 It is customary to  present the Rokhlin entropy formula using unstable partitions, mostly 
 because entropy  is associated to expansion. Nonetheless,  a similar formula holds in the stable direction:
 $$h_\mu(X, \nu)  =  \int \log J_{\eta^s}(\cx) d\m(\cx)\text{ where }
 J_{\eta^s}(\cx) = \m\left(F  \left(\eta^s(F\inv(\cx))\right)\vert\, \eta^s(\cx)\right)\inv.$$
The proof is identical to that of Proposition~\ref{pro:rokhlin_formula}, applied to $F\inv$, 
with however the same caveat as in 
Proposition~\ref{pro:margulis-ruelle}: $(F\inv, \m)$ is not associated to 
 a random dynamical system in our sense. The only non-trivial point is to 
  check that the key equality~\eqref{eq:entropy_generator} holds in this case. Fortunately, 
 the main purpose of  \cite{bahnmuller-liu} is to explain how to 
adapt \cite[Chap. VI]{liu-qian}, hence the equality~\eqref{eq:entropy_generator}, 
 to a more general notion of ``random dynamical system'' which covers the case 
of $(F\inv, \m)$ (see in particular the last lines of  \cite[\S5]{bahnmuller-liu} for a short 
 discussion of the Rokhlin formula).
 \end{rem}

The following consequence of the Rokhlin  formula will play an important role in Section~\ref{sec:No_Invariant_Line_Fields}.  

\begin{cor} \label{cor:rokhlin_zero_entropy} 
Under the assumptions of the previous proposition, 
the following assertions are equivalent:
\begin{enumerate}[\em (a)]
\item $h_\mu(X, \nu) = 0$;
\item  $\m(\cdot \vert \eta^u(\cx)) = \delta_\cx$ for $\m$-almost every $\cx$;
\item   
$\m(\cdot \vert \eta^u(\cx))$ is atomic for $\m$-almost every $\cx$. 
\end{enumerate}
The same result holds for the stable Pesin partition $\eta^s$.  
\end{cor}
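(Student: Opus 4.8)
The equivalence of (a), (b), (c) rests on the Rokhlin entropy formula from Proposition~\ref{pro:rokhlin_formula}, so the plan is to exploit the fact that $h_\mu(X,\nu)=\int \log J_{\eta^u}(\cx)\,d\m(\cx)$ together with $J_{\eta^u}(\cx)\geq 1$ $\m$-a.e. First I would establish the implication (b)$\Rightarrow$(c), which is trivial since a Dirac mass is atomic. Then I would prove (a)$\Rightarrow$(b): since $\eta^u$ is increasing and generating, the partitions $F^{-n}\eta^u$ increase to the partition into points; by the martingale convergence theorem the conditional measures $\m(\cdot\mid F^{-n}\eta^u(\cx))$ converge to $\delta_\cx$. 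The Jacobian cocycle relation $J_{F^{-n}\eta^u}(\cx)=\prod_{k=0}^{n-1}J_{\eta^u}(F^k(\cx))$ combined with the Rokhlin formula applied to the refined partitions shows that $H_\m(F^{-n}\eta^u\mid \eta^u)=n\,h_\mu(X,\nu)$, which vanishes when $h_\mu(X,\nu)=0$; hence for every $n$, $\m(\cdot\mid\eta^u(\cx))$ is supported on the single atom $F^{-n}\eta^u(\cx)$ up to $\m$-null sets, and letting $n\to\infty$ gives $\m(\cdot\mid\eta^u(\cx))=\delta_\cx$.

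The substantive implication is (c)$\Rightarrow$(a). Here the plan is as follows. Suppose $\m(\cdot\mid\eta^u(\cx))$ is atomic $\m$-a.e.; I want to deduce $h_\mu(X,\nu)=0$, equivalently $J_{\eta^u}(\cx)=1$ $\m$-a.e. The key observation is that $F$ maps $W^u(\cx)$ biholomorphically onto $W^u(F(\cx))$ and $\eta^u$ is increasing, so $F$ transports the (normalized) unstable conditional $\m^u_\cx$ to a measure equivalent to $\m^u_{F(\cx)}$ with Radon--Nikodym derivative controlled by $J_{\eta^u}$. An atomic conditional measure $\m^u_\cx$ has a well-defined ``heaviest atom'' of mass $p(\cx)>0$; by the equivariance of the conditionals under $F$ and the fact that $F$ is a measure-space isomorphism of $(W^u(\cx),\m^u_\cx)$ onto a sub-$\sigma$-algebra of $(W^u(F(\cx)),\m^u_{F(\cx)})$, the function $\cx\mapsto p(\cx)$ satisfies $p(F(\cx))\geq J_{\eta^u}(\cx)\,p(\cx)$ up to the normalization, and iterating along a Birkhoff-generic orbit forces $\int\log J_{\eta^u}\,d\m=0$, since otherwise $p(F^n(\cx))$ would grow exponentially, contradicting $p\leq 1$. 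Here I would use ergodicity of $(\X,F,\m)$ and the Birkhoff ergodic theorem applied to the integrable function $\log J_{\eta^u}$ (integrability being part of Proposition~\ref{pro:rokhlin_formula}).

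The stable case follows verbatim by applying the argument to $(F^{-1},\m)$ and the stable Pesin partition $\eta^s$, using the stable Rokhlin formula recorded in Remark~\ref{rem:stable_rokhlin}; the only point to check is that the entropy $h_\mu(X,\nu)$ is symmetric under time reversal at the level of the skew product, which is the identity $h_\m(F\mid\eta_\Sigma)=h_\m(F^{-1}\mid\eta_\Sigma)$ already invoked in the proof of Proposition~\ref{pro:margulis-ruelle}.

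\textbf{Main obstacle.} The delicate step is (c)$\Rightarrow$(a): passing from ``atomic'' to ``single atom of full mass'' requires showing that the atomic part cannot be spread over infinitely many atoms of comparable size in a way that is invariant under the dynamics. The clean way to handle this is to note that if $\m^u_\cx$ is atomic then, because $F^{-n}\eta^u$ generates, the conditional measures $\m(\cdot\mid F^{-n}\eta^u(\cx))$ are eventually Dirac masses for $n$ large (as the atoms shrink to points and the measure is atomic on each $W^u$); then $H_\m(\eta^u\mid F\eta^u)=0$ because conditioning further on the finer partition $F\eta^u$ already pins down the atom, and the Rokhlin formula gives $h_\mu=0$. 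Making ``eventually Dirac'' precise—i.e. that an atomic measure conditioned on a decreasing sequence of partitions separating points becomes Dirac—is a standard measure-theoretic fact (a consequence of the martingale theorem for the reversed filtration $F^{-n}\eta^u$), but it must be applied fiberwise with the requisite measurability of $\cx\mapsto\m^u_\cx$, which is where the continuity of Pesin data on the sets $\mathcal R_\e$ from Proposition~\ref{pro:pesin} comes in.
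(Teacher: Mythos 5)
Your treatment of (b)$\Rightarrow$(c) and of (a)$\Leftrightarrow$(b) is correct and matches the paper's argument exactly: both rest on the Rokhlin formula $h_\mu(X,\nu)=\int\log J_{\eta^u}\,d\m$ with $J_{\eta^u}\geq 1$, the scaling $H_\m(F^{-n}\eta^u\mid\eta^u)=n\,h_\mu(X,\nu)$, and the generating property of $(F^{-n}\eta^u)$. The stable version via Remark~\ref{rem:stable_rokhlin} is also correctly identified.

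The gap is in (c)$\Rightarrow$(a), which is precisely the step the paper calls ``slightly more delicate'' and outsources to~\cite{viana-yang}. You define $p(\cx)$ as the mass of the \emph{heaviest} atom of $\m(\cdot\mid\eta^u(\cx))$ and assert $p(F\cx)\geq J_{\eta^u}(\cx)\,p(\cx)$; this inequality is not established. The heaviest atom of $\m(\cdot\mid\eta^u(\cx))$ sits at some point $\cz\in\eta^u(\cx)$, generically distinct from $\cx$, and the transport rule for conditional atom masses carries the atom at $\cz$ to an atom of $\m(\cdot\mid\eta^u(F\cz))$ with multiplicative factor $J_{\eta^u}(\cz)$ --- not $J_{\eta^u}(\cx)$, and landing in the cell $\eta^u(F\cz)$, which need not coincide with $\eta^u(F\cx)$ since $F\eta^u$ is only coarser than $\eta^u$. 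So your relation compares the wrong quantities and the Birkhoff iteration that follows does not close. The way to make the argument work is to use instead $p(\cx):=\m(\{\cx\}\mid\eta^u(\cx))$, the mass of the atom at the base point $\cx$ itself. Because $\cx\in(F^{-1}\eta^u)(\cx)$ and $F\bigl((F^{-1}\eta^u)(\cx)\bigr)=\eta^u(F\cx)$, stacking of conditionals gives the exact cocycle identity $p(F\cx)=J_{\eta^u}(\cx)\,p(\cx)$. Since $J_{\eta^u}\geq1$ one gets $p\circ F\geq p$ with both in $[0,1]$, and $F$-invariance of $\m$ forces $p\circ F=p$ a.e., hence $J_{\eta^u}=1$ on $\{p>0\}$. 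Atomicity of the conditionals, a disintegration argument, and the a.e.\ $F$-invariance of $\{p>0\}$ give $p>0$ $\m$-a.e., so $h_\mu=0$. Your alternative sketch in the ``main obstacle'' paragraph is also not workable as written: $\m(\cdot\mid F^{-n}\eta^u(\cx))$ converges to $\delta_\cx$ but is in general not a Dirac mass for any finite $n$ (atomic does not mean finitely supported), and $F\eta^u$ is \emph{coarser} than $\eta^u$, not finer, so ``conditioning further on $F\eta^u$'' does not pin anything down.
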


\begin{proof} In view of the 
definition of $J_{\eta^u}$, the entropy vanishes if and only if for $\m$-almost every $\cx$, $\m(\cdot \vert \eta^u(\cx))$ is carried by a single atom 
of the finer partition $F\inv \eta^u$. Now since  $H_\m(F\inv\eta^u\vert \, \eta^u) = \unsur{n}  H_\m(F^{-n}\eta^u\vert \, \eta^u)$, the same is true  
for $F^{-n}\eta^u$, and finally since $(F^{-n}\eta^u)$ is generating,  we conclude that (a)$\Leftrightarrow$(b). 
That (c) implies (a) follows from the same ideas but it is slightly more delicate, see \cite[\S 2.1-2.2]{viana-yang} for a 
clear exposition in the case of the iteration a single diffeomorphism, which readily adapts  to our setting.

The result for the stable Pesin partition $\eta^s$ follows by changing $F$ to $F\inv$ 
 (see however  Remark~\ref{rem:stable_rokhlin}).
\end{proof}

\begin{vlongue}
A further result is that  if the fiber entropy vanishes  there is a  set of full $\m$-measure   
which intersects any  global unstable leaf  in only one point.  
This was originally shown for individual diffeomorphisms  in \cite[Thm. B]{ledrappier-young1}. 
\end{vlongue}

\section{Stable manifolds and limit currents}\label{sec:nevanlinna}

Let as before $(X, \nu)$ be a non-elementary random holomorphic dynamical 
system on a compact K\"ahler (hence projective) 
surface, and assume $\mu$ is an ergodic stationary measure admitting exactly one negative 
Lyapunov exponent, as in Proposition \ref{pro:pesin_stable}.
 Our purpose in this section is to relate the   stable manifolds   $W^s(\omega, x)$  to the stable currents $T_\omega^s$ constructed in \S \ref{sec:currents}. 
According to Proposition~\ref{pro:pesin_stable},  the stable manifolds are parametrized by injective entire curves;  
the link between these curves and the stable currents will be given by the well-known 
 Ahlfors-Nevanlinna construction of positive closed currents associated to entire curves.
 
\subsection{Ahlfors-Nevanlinna currents}\label{par:Ahlfors-Nevanlinna-Construction} 
We denote by $\set{V}$ the integration current on a (possibly non-closed, or singular) curve $V$. 
Let $\phi:\C\to X$ be an entire curve. By definition, if $\alpha$ is a test 2-form, 
$\langle \phi_*\set{\disk(0,t)}, \alpha\rangle=  \int_{\disk(0,t)} \phi^*\alpha$, which accounts for possible multiplicities
coming from the lack of injectivity of $\phi$; $\phi_*\set{\disk(0,t)}=\set{\phi(\disk(0,t))}$ when $\phi$ is injective. 
Set 
\begin{equation}
A(R) =  \int_{\disk(0, R)}\phi^*\kappa_0 \;  \text{ and } \; T(R) = \int_0^R A(t) \frac{dt}{t}. 
\end{equation}
for $R>0$. When $\phi$ is an immersion, $A(R)$ is the area of $ \phi(\disk(0, R))$; in all cases,
$A(R)$ is the mass of  
$\phi_*\{(\disk(0, R)) \}$. 
  
\begin{pro}[see Brunella  {\cite[\S 1]{Brunella}}]\label{pro:brunella}
If $\phi:\C\to X$ is  a non-constant entire curve, there exist  sequences of radii 
$(R_n)$ increasing to infinity such that the sequence of currents 
\[
N(R_n)  = \unsur{T(R_n)} \int_0^{R_n} \phi_*\set{\disk(0,t)} \frac{dt}{t}
\]
 converges to a closed positive current $T$. 
If furthermore $\phi(\C)$ is Zariski dense, and $T$ is such a closed current, the class
$[T]\in H^{1,1}(X, \R)$ is nef. In particular $\langle [T] \, \vert\, [T] \rangle\geq 0$  and
 $\langle [T] \, \vert\, [C] \rangle\geq 0$ for every algebraic curve $C\subset X$. 
\end{pro}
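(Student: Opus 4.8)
\textbf{Plan of proof of Proposition~\ref{pro:brunella}.}
The statement is essentially the content of Brunella's note \cite{Brunella}, adapted to our setting, so I will sketch the standard Ahlfors--Nevanlinna argument and then explain the cohomological positivity. The first step is the \emph{First Main Theorem / length--area estimate} that produces the radii $(R_n)$. The key quantitative input is the classical Ahlfors lemma: for the characteristic $T(R)=\int_0^R A(t)\,\frac{dt}{t}$ and any fixed smooth closed $(1,1)$-form $\beta$ on $X$, the defect
\[
\abs{\bra{N(R),\beta}-\bra{[\beta]\,\vert\,\text{(class of }N(R))}}
\]
is controlled by a \emph{boundary term} which, after integration in $R$, is $o(T(R))$ along a sequence $R_n\to\infty$ of ``good radii''; this is where one invokes the standard calculus lemma stating that $A(R)/T(R)$ and the logarithmic-derivative-type error terms are small outside an exceptional set of finite logarithmic measure. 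Concretely one writes, for the currents $N(R)$, the identity $dd^c$ of a potential equals (a multiple of) $\phi_*\{\disk(0,R)\}/T(R)$ minus a smoothing term, uses Stokes/Jensen on $\disk(0,R)$, and estimates the boundary contribution by $\frac{A(R)}{T(R)}+\frac{R\,A'(R)}{T(R)}$ in mean. Since the masses $\M(N(R))=1$ by construction (we normalized by $T(R)$, and $\bra{\kappa_0}$ is calibrated so that $A(R)=\int_{\disk(0,R)}\phi^*\kappa_0$ makes $T'(R)\cdot R=A(R)$), the family $\{N(R)\}_{R>1}$ is relatively compact in the weak topology on currents; extracting a weak limit $T$ along a good sequence $(R_n)$ gives a positive current, and the defect estimate shows $dd^c$ of its local potentials vanishes in the limit, i.e. $T$ is closed. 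This is the part requiring genuine care but it is entirely classical.

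The second step is the \emph{nefness of $[T]$ when $\phi(\C)$ is Zariski dense}. Here one argues exactly as Brunella: by the duality \eqref{eq:cone_duality}, it suffices to show $\bra{[T]\,\vert\,[D]}\geq 0$ for every irreducible curve $D\subset X$, since $\Psef(X)$ is generated (as a closed cone) by classes with non-negative self-intersection together with classes of negative curves, and the constraints that matter in \eqref{eq:cone_duality} come from $v\in\Psef(X)$ with $v^2<0$, which by Proposition~\ref{pro:extremal_rays} are proportional to classes $[D]$ of irreducible curves with $D^2<0$. Fix such a $D$. If $\phi(\C)\not\subset D$, then $\phi^{-1}(D)$ is a discrete subset of $\C$, and the intersection number $\bra{N(R_n),[D]}=\frac{1}{T(R_n)}\int_0^{R_n}\#(\phi^{-1}(D)\cap\disk(0,t))\,\frac{dt}{t}\geq 0$; passing to the limit gives $\bra{[T]\,\vert\,[D]}\geq 0$. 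If instead $\phi(\C)\subset D$, then the Zariski closure of $\phi(\C)$ would be contained in the curve $D$, contradicting Zariski density (a non-constant entire curve in a curve forces that curve to be rational, but in any case its Zariski closure is $\leq 1$-dimensional). Hence $\bra{[T]\,\vert\,[D]}\geq 0$ for all irreducible $D$, so $[T]$ pairs non-negatively against all of $\Psef(X)$, i.e. $[T]\in\Kahbar(X)$ by \eqref{eq:cone_duality}; in particular $[T]^2\geq 0$ by the inclusion $\Kahbar(X)\subset\{v^2\geq 0\}$, and $\bra{[T]\,\vert\,[C]}\geq 0$ for every algebraic curve since $[C]\in\Psef(X)$.

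The main obstacle is the first step: making the boundary-term estimate rigorous so as to extract good radii $R_n$ along which $N(R_n)$ converges to a \emph{closed} current. The cleanest route is simply to quote \cite[\S 1]{Brunella} verbatim, since Brunella works on an arbitrary compact complex surface and the normalization conventions match ours once we note that $A(R)$ is the mass of $\phi_*\{\disk(0,R)\}$ with respect to $\kappa_0$; the only thing to verify is that our $\kappa_0$ (with $[\kappa_0]^2=1$) is an admissible choice of reference K\"ahler form in his construction, which it is, any K\"ahler form works. I will therefore present the proof as: (i) recall the construction and the convergence along good radii, citing Brunella; (ii) give the short self-contained argument above for nefness via intersection with curves and Proposition~\ref{pro:extremal_rays}; (iii) conclude $[T]^2\geq 0$ and $\bra{[T]\,\vert\,[C]}\geq 0$ from membership in $\Kahbar(X)$.
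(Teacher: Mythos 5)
The paper does not actually prove Proposition~\ref{pro:brunella}; it records it as a citation to Brunella, so your sketch is supplementary rather than an alternative to an internal argument. Your plan tracks Brunella's proof faithfully, and you have correctly isolated the two genuinely delicate points (choice of good radii, nefness via curve intersections). Two remarks on precision are worth recording.

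First, the displayed identity $\bra{N(R_n),[D]}=\frac{1}{T(R_n)}\int_0^{R_n}\#(\phi^{-1}(D)\cap\disk(0,t))\,\frac{dt}{t}$ is not exact: if $\Theta_D$ is a smooth closed form in the class $[D]$, Jensen's formula (the First Main Theorem) gives
\[
\bra{N(R),\Theta_D}=\frac{N_\phi(R,D)}{T(R)}+\frac{m_\phi(R,D)}{T(R)}+\frac{O(1)}{T(R)},
\]
where $N_\phi$ is the counting function and $m_\phi$ the proximity function. The argument closes because one may normalize the metric on $\mathcal{O}_X(D)$ so that $m_\phi\geq 0$, hence $\bra{N(R_n),\Theta_D}\geq O(1)/T(R_n)\to 0$, and since $T$ is closed, $\bra{[T]\,\vert\,[D]}=\lim\bra{N(R_n),\Theta_D}\geq 0$. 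As stated, your formula silently drops the proximity term.

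Second, the reduction of nefness to curve intersections is valid but your justification is mildly circular: the remark in \S\ref{par:cones_definition} that ``the most important constraints come from $v^2<0$'' presupposes $u^2\geq 0$ and $\bra{u\,\vert\,[\kappa_0]}\geq 0$ for the class $u$ being tested, whereas $[T]^2\geq 0$ is precisely what you want to conclude. The non-circular route uses Proposition~\ref{pro:extremal_rays} and its proof: an extremal ray of $\Psef(X)$ with $v^2\geq 0$ has trivial negative part in its Zariski decomposition and hence is nef, so $\bra{[T]\,\vert\,v}\geq 0$ follows from the nef--pseudoeffective duality~\eqref{eq:cone_duality} applied with the roles swapped (a nef class pairs non-negatively with any pseudoeffective class, and $[T]\in\Psef(X)$). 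Combined with $\bra{[T]\,\vert\,[D]}\geq 0$ for curves $D$ with $D^2<0$, this gives $\bra{[T]\,\vert\,v}\geq 0$ on all extremal rays, hence on all of $\Psef(X)$, hence $[T]\in\Kahbar(X)$; the inclusion $\Kahbar(X)\subset\{v^2\geq 0\}$ then gives $[T]^2\geq 0$ as a consequence, not as a hypothesis.
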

Such limit currents $T$ will be referred to as \textbf{Ahlfors-Nevanlinna currents} associated to the entire curve $\phi\colon \C\to X$. 
If  $\phi(\C)$  is not  Zariski dense then the closure $\overline{\phi(\C)}$  (for the euclidean topology) is a (possibly singular) 
curve of genus $0$ or $1$; if $\phi$ is injective, then $\overline{\phi(\C)}$ is rational.

\subsection{Equidistribution of stable manifolds} If $\mu$ is hyperbolic,  or more generally if it admits exactly one 
 negative Lyapunov exponent, then, for $\m_+$-almost every $\cx=(\omega, x)\in \X_+$, the   stable manifold $W^s(\cx)$, which   
 is viewed here 
 as a subset of $X$ as in Proposition \ref{pro:pesin_stable}, is parametrized by an injectively immersed 
entire curve. Then we can  relate the Ahlfors-Nevanlinna currents to the limit currents $T^s_\omega$;
here are the three main results that will be proved in this section.  
 
\begin{thm}\label{thm:nevanlinna}
Let $(X, \nu)$ be a non-elementary random holomorphic dynamical 
system on a compact K\"ahler surface, satisfying \eqref{eq:moment}. Let $\mu$ be an ergodic stationary measure such that $\lambda^-(\mu)<0\leq \lambda^+(\mu)$. 
Then  exactly one of the following alternative holds.
\begin{enumerate}[{\em (a)}]
\item For $\m_+$-almost every $\cx$, the stable manifold $W^s(\cx)$
 is not Zariski dense. Then $\mu$ is supported on  a 
$\Gamma_\nu$-invariant curve  
$Y\subset X$ and for $\m_+$-almost every $\cx$, $W^s(\cx) \subset Y$. In addition   every component of $Y$ is 
a rational curve, and the intersection form is negative definite on the subspace of $H^{1,1}(X;\R)$ generated by the  classes of components of $Y$.
\item For $\m_+$-almost every $\cx$ the stable manifold $W^s(\cx)$
 is  Zariski dense and the only normalized Ahlfors-Nevanlinna  current associated to 
 $W^s(\cx)$ is $T^s_\omega$.  
\end{enumerate}
\end{thm}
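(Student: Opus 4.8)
The dichotomy is driven by whether the stable manifolds $W^s(\cx)$ are Zariski dense. First I would set up the measurable family of parametrizations: by Proposition~\ref{pro:pesin_stable}, for $\m_+$-almost every $\cx = (\omega,x)$ the stable set $W^s(\cx)$ is an injectively immersed entire curve $\psi^s_\cx \colon \C \to X$, and one checks that the Zariski density of $W^s(\cx)$ is a $\Gamma_\nu$-equivariant property in the appropriate sense: the relation $f_\omega(W^s(\cx)) = W^s(F_+(\cx))$ (from Proposition~\ref{pro:pesin}.(3), pushed to $\X_+$) shows that $\cx \mapsto (\text{$W^s(\cx)$ is Zariski dense})$ is an $F_+$-invariant event, hence has $\m_+$-measure $0$ or $1$ by ergodicity of $\m_+$. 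This gives the clean alternative.

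\textbf{Case (a): stable manifolds not Zariski dense.} Here $\overline{W^s(\cx)}$ is, for a.e.\ $\cx$, an irreducible (possibly singular) curve of genus $0$ or $1$; since $\psi^s_\cx$ is injective, Proposition~\ref{pro:brunella} forces it to be rational. Let $Y_\cx = \overline{W^s(\cx)}$. I would argue that $\cx \mapsto [Y_\cx] \in \NS(X;\Z)$ takes countably many values (the irreducible curves of bounded degree form a countable set, and the degree $\langle [Y_\cx]\,\vert\,[\kappa_0]\rangle$ is controlled), and that this class is $F_+$-equivariant up to the cohomological action; combined with ergodicity, the \emph{family} $\{Y_\cx\}$ is finite modulo $\m_+$-null sets, and its union $Y$ is a $\Gamma_\nu$-invariant algebraic curve. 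Since $\mu = (\pi_X)_*\m_+$ is carried by $\bigcup W^s(\cx) \subset Y$ (using that $x \in W^s(\cx)$), we get $\supp(\mu)\subset Y$. The statements that each component of $Y$ is rational and that the intersection form is negative definite on $\Vect(\text{components of }Y)$ then follow from Proposition~\ref{pro:diller-jackson-sommese} and Lemma~\ref{lem:periodic_curves} (the components of $Y$ are $\Gamma_\nu$-periodic curves, since $\Gamma_\nu$ permutes them).

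\textbf{Case (b): stable manifolds Zariski dense.} This is the substantive case. Fix a generic $\cx = (\omega,x)$. Let $T$ be any Ahlfors--Nevanlinna current associated to $\psi^s_\cx$, obtained as a limit $T = \lim_n N(R_n)$ with $[T]$ nef of mass normalized to $1$ (Proposition~\ref{pro:brunella}; one normalizes by $\M$). I would show $[T] = e(\omega)$ in $H^{1,1}(X;\R)$ and then $T = T^s_\omega$ by the uniqueness in Corollary~\ref{cor:Tsomega}.(1). To identify the class: the key is that the stable manifold is exponentially contracted, i.e.\ $\operatorname{diam}(f_\omega^n(W^s_{\mathrm{loc}}(\cx))) \to 0$, while more globally $f_\omega^n$ acts on $H^{1,1}$ with the contraction/expansion governed by $\lambda_{H^{1,1}}$. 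Concretely, pull back: $(f_\omega^n)^*$ sends a Kähler class $\kappa$ to something comparable to $\M((f_\omega^n)^*\kappa)\, e(\omega)$ after normalization (Corollary~\ref{cor:Tsomega}.(2)), while the area growth of $W^s(\cx)$ under forward iteration is subexponential relative to $e^{n\lambda_{H^{1,1}}}$ because of stable contraction. Pairing the Ahlfors current with $(f_\omega^n)^*\kappa$ and using the Nevanlinna-type estimates (the defect relation / the fact that $A(R_n)/T(R_n) \to 0$ along a good subsequence, combined with the H\"older continuity of the potentials of $T^s_\omega$ from Theorem~\ref{thm:Tsomega_continuous}), one shows $\langle [T] \,\vert\, v \rangle = \langle e(\omega) \,\vert\, v\rangle$ for all $v$ in a spanning set, using the reverse Schwarz inequality~\eqref{eq:reverse_CS} and the extremality of $e(\omega)$ in $\P(\Kahbar(X))$ from Proposition~\ref{pro:extremality} to pin down the class uniquely among nef isotropic classes of mass $1$. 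Since by Theorem~\ref{thm:uniq+extremal}.(2) the class $e(\omega)$ supports a \emph{unique} positive closed current $T^s_\omega$, we conclude $T = T^s_\omega$.

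\textbf{Main obstacle.} The delicate point is the quantitative comparison in case (b): controlling the Ahlfors--Nevanlinna averaging process (choice of the radii $R_n$, the logarithmic derivative / first main theorem estimates) against the cohomological contraction rate $\lambda_{H^{1,1}}$ coming from Furstenberg theory, \emph{uniformly enough} along the random orbit to conclude that the limiting class is exactly $e(\omega)$ and not merely some nef class in the same face of $\Kahbar(X)$. This requires marrying the laminar/geometric intersection estimates available on a projective surface (which is why projectivity, established in Theorem~\ref{thm:X-is-projective2}, is used) with the Pesin-theoretic slowly-varying bounds of Proposition~\ref{pro:pesin} and the good-times estimates of Corollary~\ref{cor:GK_good_times}; the bookkeeping of how the area of $f_\omega^n(W^s_{\mathrm{loc}}(\cx))$ spreads out in $X$ is where the real work lies.
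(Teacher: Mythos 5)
Your proposal has the right overall target (identifying $[T]$ with $e(\omega)$ via the Hodge index theorem plus the uniqueness of the current in that class from Theorem~\ref{thm:uniq+extremal}), and your opening observation that Zariski density of $W^s(\cx)$ is an $F_+$-invariant event is a fine way to frame the dichotomy. But both branches of your argument are missing the single lemma that makes the whole proof go, namely the vanishing
\[
T^s_\omega \wedge \{\Delta\} = 0 \quad\text{for every disk } \Delta\subset W^s(\cx),
\]
for $\m_+$-a.e. $\cx=(\omega,x)$. The paper proves this (Lemma~\ref{lem:stable_intersection_Ts}) by combining the equivariance $(f^n_\omega)^*T^s_{\sigma^n\omega} = \M((f^n_\omega)^*T^s_{\sigma^n\omega})\,T^s_\omega$, the continuity of the potentials $u_{T^s_{\sigma^n\omega}}$ (Theorem~\ref{thm:Tsomega_continuous}), the exponential shrinking of $f^n_\omega(\Delta)$ from Pesin theory, and the exponential growth of $\M((f^n_\omega)^*T^s_{\sigma^n\omega})$ from Lemma~\ref{lem:variant_furstenberg}. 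This is a direct dynamical argument on $X$, not a Nevanlinna--type defect estimate.

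In your case (a) there is a concrete gap: you try to deduce countability of the family $\{Y_\cx\}$ from the countability of $\NS(X;\Z)$ and a bound on the degree $\langle[Y_\cx]\,\vert\,[\kappa_0]\rangle$. That is not enough: an elliptic fibration already provides uncountably many irreducible rational curves in a single bounded class. What rescues the paper's argument is that the wedge vanishing above forces $\langle e(\omega)\,\vert\,[D(\cx)]\rangle = 0$; since $e(\omega)$ is a.s.\ irrational (Theorem~\ref{thm:def_stationary}), the Hodge index theorem then gives $[D(\cx)]^2<0$, and an irreducible curve of negative self-intersection is \emph{rigid}, i.e.\ uniquely determined by its class. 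Only then does countability, hence measurability of the level sets and the Fubini--Lemma~\ref{lem:mu_zariski} step, go through.

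In your case (b), you correctly want to show $[N]=e(\omega)$ and you correctly flag that projectivity, laminar currents and the continuous potentials of $T^s_\omega$ are relevant. But the route you sketch (pairing $N$ with $(f^n_\omega)^*\kappa$, a defect relation, logarithmic derivative estimates, Gou\"ezel--Karlsson good times) is not what carries the argument and would not by itself yield the needed inequality $\langle[N]\,\vert\,e(\omega)\rangle=0$. The actual mechanism is geometric: $N$ is an injective Ahlfors current, hence strongly approximable laminar (Lemma~\ref{lem:ahlfors_current}), so $N\wedge T^s_\omega$ is the increasing limit of $N_r\wedge T^s_\omega$, and each $N_r$ is built from disks inside $W^s(\cx)$ along which the wedge with $T^s_\omega$ vanishes by Lemma~\ref{lem:stable_intersection_Ts}. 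So $N\wedge T^s_\omega=0$, hence $\langle[N]\,\vert\,e(\omega)\rangle=0$, and the reverse Schwarz inequality~\eqref{eq:reverse_CS} applied to the two nef classes $[N]$ and $e(\omega)$ (both of self-intersection~$\geq 0$, in fact $e(\omega)^2=0$ and hence $[N]^2=0$ too) forces $[N]=e(\omega)$; uniqueness of the current in that class then gives $N=T^s_\omega$. In short: the proof does not need any comparison of area growth against $\lambda_{H^{1,1}}$; it only needs the wedge vanishing and geometric intersection theory of laminar currents.
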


\begin{cor}\label{cor:nevanlinna}
Under the assumptions of Theorem~\ref{thm:nevanlinna}, if in addition 
 $\mu$ is hyperbolic and non-atomic, then the Alternative~\emph{(b)} is equivalent to 
 \begin{enumerate}[{\em (b')}]
 \item[{\emph{ (b')}}] $\mu$ is not supported on a $\Gamma_\nu$-invariant curve. 
 \end{enumerate} 
 \end{cor}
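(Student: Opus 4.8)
The implication (b) $\Rightarrow$ (b') is immediate: Alternative (b) of Theorem~\ref{thm:nevanlinna} asserts that for $\m_+$-almost every $\cx$ the stable manifold $W^s(\cx)$ is Zariski dense, while Alternative (a) produces a $\Gamma_\nu$-invariant curve $Y$ carrying $\mu$; so if $\mu$ is not supported on a $\Gamma_\nu$-invariant curve, Alternative (a) cannot hold, hence Alternative (b) does. The real content is the converse (b') $\Rightarrow$ (b), or equivalently that under the extra hypotheses ``$\mu$ hyperbolic and non-atomic'', Alternative (a) \emph{forces} $\mu$ to be supported on a $\Gamma_\nu$-invariant curve that is in fact one of the finitely many $\Gamma_\nu$-periodic curves. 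By Theorem~\ref{thm:nevanlinna} again, in case (a) we already know $\mu$ is supported on a $\Gamma_\nu$-invariant curve $Y$ whose components are rational and span a subspace on which the intersection form is negative definite; so what must be ruled out is precisely the degenerate possibility hidden in case (a) when $\mu$ happens to be non-atomic and hyperbolic.

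\textbf{Key steps.} First I would invoke Theorem~\ref{thm:nevanlinna}: either (b) holds and we are done, or (a) holds and $\mu(Y)=1$ for a $\Gamma_\nu$-invariant algebraic curve $Y$ with the stated negativity property. Second, I would argue that a $\Gamma_\nu$-invariant curve is automatically a finite union of $\Gamma_\nu$-periodic irreducible curves: by Lemma~\ref{lem:periodic_curves} there are only finitely many $\Gamma_\nu$-periodic irreducible curves, and the intersection form being negative definite on their span (via Propositions~\ref{pro:invariant_cohomological_decomposition} and~\ref{pro:NS_H11}, as in \S\ref{par:Invariant_curves_I}) means the classes $[D_i]$ of the components of $Y$ cannot be permuted in an infinite orbit; hence each component is periodic and $Y$ is one of these finite configurations. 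Third, and this is the point where the hypotheses ``hyperbolic'' and ``non-atomic'' enter: on such a curve $Y$ (a finite union of rational curves, possibly singular, meeting in a restricted incidence pattern by Proposition~\ref{pro:diller-jackson-sommese} and \cite[\S4.1]{Cantat:Milnor}), the induced random dynamical system is a random dynamical system on a finite union of copies of $\P^1$ (after normalization). An ergodic stationary measure for the $\Gamma_\nu$-action on $Y$ that is non-atomic and gives no mass to the finitely many singular/nodal points pushes down to a non-atomic stationary measure on $\P^1$ for a group acting by M\"obius transformations; one then checks that this is consistent with hyperbolicity, so that Alternative (a) does occur in a genuine way — i.e. (a) with non-atomic hyperbolic $\mu$ is \emph{not} vacuous but corresponds exactly to the situation ``$\mu$ supported on a $\Gamma_\nu$-invariant curve''. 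Thus the equivalence (b) $\Leftrightarrow$ (b') is simply the statement that (a) and (b) are mutually exclusive and, under the stated hypotheses, (a) is \emph{equivalent} to the negation of (b').

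\textbf{The main obstacle.} The only subtlety is the logical bookkeeping around Alternative (a): Theorem~\ref{thm:nevanlinna} phrases (a) as ``$W^s(\cx)$ not Zariski dense $\Rightarrow$ $\mu$ supported on an invariant curve'', and one must be careful that the converse direction — $\mu$ supported on an invariant curve forces non-Zariski-density of $W^s(\cx)$ — genuinely holds, which uses that $W^s(\cx)\subset \supp(\mu)$ up to measure zero (because stable manifolds of a hyperbolic measure are contained in the support, an application of Poincaré recurrence and the local product structure on Pesin sets $\mathcal R_\e$). Granting this, (a) $\Leftrightarrow$ ``$\mu$ on an invariant curve'' $\Leftrightarrow$ $\neg$(b'), and since (a) and (b) exhaust the alternatives, (b) $\Leftrightarrow$ (b'). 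I expect this recurrence/support argument to be the one place requiring a genuine (if short) proof rather than a citation; everything else is assembly of Theorem~\ref{thm:nevanlinna}, Lemma~\ref{lem:periodic_curves}, and Proposition~\ref{pro:diller-jackson-sommese}.
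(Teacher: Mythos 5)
You correctly identify the crux in your final paragraph: Theorem~\ref{thm:nevanlinna} gives the implication (a)~$\Rightarrow$~[$\mu$ supported on an invariant curve], whereas the corollary needs the converse, namely that if $\mu$ is carried by a $\Gamma_\nu$-invariant curve $C$, then $W^s(\cx)\subset C$ for a.e.\ $\cx$, so that alternative (a) holds. But the argument you propose for this step --- that $W^s(\cx)\subset\supp(\mu)$ by Poincar\'e recurrence and local product structure on Pesin sets --- is not correct. For a general ergodic hyperbolic measure, stable manifolds are \emph{not} contained in the support: $W^s(\cx)$ is a disk through $x$ tangent to the stable Oseledets direction $E^s(\cx)$, and knowing $\supp(\mu)\subset C$ tells you nothing a priori about whether $E^s(\cx)$ is tangent or transverse to $C$. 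If $E^s$ were transverse to $T_xC$, the local stable manifold would be a disk transverse to $C$ --- exactly the configuration you need to rule out, and your proposed argument does not do so. (Poincar\'e recurrence and product structure are tools for SRB/Gibbs-type measures, not arbitrary ergodic hyperbolic measures; also, the implications in your opening are labelled backwards --- the immediate one is (b')~$\Rightarrow$~(b), and your discussion of periodic curves and M\"obius actions is not needed.)

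The ingredient you are missing is a Lyapunov exponent computation along $C$, and this is precisely where non-atomicity enters. Since $\mu$ is non-atomic it gives full mass to $\mathrm{Reg}(C)$, so the tangent bundle to $\mathrm{Reg}(C)$ is a $DF$-invariant line bundle and, by Oseledets, $T_xC$ is one of the two Oseledets directions, with exponent $\lambda^+$ or $\lambda^-$. If it were $\lambda^+>0$, then the one-dimensional random dynamical system induced on $\mathrm{Reg}(C)$ would have a single, positive Lyapunov exponent, and the invariance principle (Theorem~\ref{thm:invariance_principle}) would make $\mu$ almost surely invariant; reversing time then gives a one-dimensional RDS on $C$ with negative exponent, whose conditionals are atomic (\S\ref{par:hyperbolicity_of_mu}, Remark~\ref{rem:invariance_principle_strict}), forcing $\mu$ itself to be atomic --- a contradiction. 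Hence the tangent exponent is $\lambda^-<0$, so $E^s(\cx)=T_xC$ a.e.; the local stable manifold then coincides with a neighborhood of $x$ in $C$, and $W^s(\cx)\subset C$ by the $\Gamma_\nu$-invariance of $C$. This places $\mu$ in case (a) of Theorem~\ref{thm:nevanlinna} and closes the proof.
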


\begin{cor}\label{cor:HD_stable}
Under the assumptions of Theorem~\ref{thm:nevanlinna},  assume furthermore that $\nu$ satisfies the exponential moment condition \eqref{eq:exponential_moment}. Then in Alternative~(b)   there exists $\theta>0$ such that for 
$\m_+$-almost every $\cx\in \X_+$ the Hausdorff dimension of $\overline{W^s(\cx)}$ equals  $2+\theta$.  \end{cor}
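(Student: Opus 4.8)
\textbf{Proof plan for Corollary~\ref{cor:HD_stable}.}
The plan is to combine the H\"older regularity of the potential of $T^s_\omega$ (Theorem~\ref{thm:holder}) with the identification, from Theorem~\ref{thm:nevanlinna}(b), of $T^s_\omega$ as the unique normalized Ahlfors--Nevanlinna current attached to the stable manifold $W^s(\cx)$. The strategy splits the computation of $\dim_H\overline{W^s(\cx)}$ into an upper bound and a lower bound, both reducing to the regularity of the trace measure $\sigma_\omega := T^s_\omega\wedge\kappa_0$ (equivalently the mass distribution of $T^s_\omega$), and both exploiting that $\overline{W^s(\cx)}$ supports this current. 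First I would record that since $u_{T^s_\omega}$ is $\theta$-H\"older (Theorem~\ref{thm:holder}), the current $T^s_\omega = \Theta(T^s_\omega) + dd^c u_{T^s_\omega}$ has, locally, a quasi-psh potential of class $C^\theta$; a classical local estimate for the Monge--Amp\`ere/Laplacian mass of such potentials gives $\sigma_\omega(B(z,r)) \lesssim r^{2+\theta'}$ for an explicit $\theta'$ comparable to $\theta$ (this is the standard fact that a $C^\theta$ function has distributional Laplacian putting mass $O(r^{n-2+\theta})$ on balls, here $n=4$ real dimensions, giving $r^{2+\theta}$). The mass lower bound $\sigma_\omega(B(z,r)) \gtrsim r^{2}$ near the support is automatic because $T^s_\omega$ has continuous — indeed bounded — local potentials and is a nonzero positive $(1,1)$-current, so by the Chern--Levine--Nirenberg / Lelong-type lower bounds its trace mass on balls centered on $\supp(T^s_\omega)$ is at least $c\,r^{2}$ (comparison with the Lelong number, which is finite and in fact $0$ by continuity of the potential, forces the order to be exactly $2+\theta$ rather than exactly $2$, and the sharp value of $\theta$ must be tracked here).

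The link to Hausdorff dimension is then the mass distribution principle (Frostman): the upper bound $\sigma_\omega(B(z,r))\lesssim r^{2+\theta}$ forces $\dim_H(\supp\sigma_\omega)\ge 2+\theta$, while a covering argument using that $\overline{W^s(\cx)}$ is, away from a Zariski-closed set, a local immersed complex curve \emph{together with} the fact that every Ahlfors--Nevanlinna current of $W^s(\cx)$ equals $T^s_\omega$ (so $\overline{W^s(\cx)}=\supp T^s_\omega$ in the Zariski-dense case) gives the matching upper bound $\dim_H \overline{W^s(\cx)}\le 2+\theta$. Here one uses that $W^s(\cx)$ itself is a smooth (hence $2$-dimensional) Riemann surface, so the "extra" dimension $\theta$ comes entirely from the accumulation set $\overline{W^s(\cx)}\setminus W^s(\cx)$; the Ahlfors--Nevanlinna current precisely quantifies how $W^s(\cx)$ fills up its closure, and the H\"older modulus of its potential controls the box dimension of that filling. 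I would make $\theta$ the same exponent produced by Theorem~\ref{thm:holder} (or a fixed fraction of it), and check that it is uniform in $\omega$ on a full-measure set, which follows since in Theorem~\ref{thm:holder} the exponent $\theta>0$ is deterministic (only the H\"older constant $C_\e(\omega)$ is random, and a random constant does not affect the dimension).

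The main obstacle, I expect, is the \textbf{upper bound} $\dim_H\overline{W^s(\cx)}\le 2+\theta$: one must go from "$T^s_\omega$ has a $C^\theta$ potential and its support is $\overline{W^s(\cx)}$" to a genuine covering of $\overline{W^s(\cx)}$ by $N(r)\lesssim r^{-(2+\theta)}$ balls of radius $r$. The delicate point is that $\supp T^s_\omega$ could a priori be much larger than $\overline{W^s(\cx)}$; this is exactly why the uniqueness clause in Theorem~\ref{thm:nevanlinna}(b) — "$T^s_\omega$ is the \emph{only} normalized Ahlfors--Nevanlinna current associated to $W^s(\cx)$" — is needed: it pins the mass of $T^s_\omega$ onto the closure of the leaf and lets one transfer the local mass estimates into a geometric covering count of $\overline{W^s(\cx)}$ via a Besicovitch/Vitali argument. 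A secondary technical point is handling the finitely many points where $W^s(\cx)$ may fail to be immersed and the (measure-zero, lower-dimensional) locus where the potential estimate degenerates; these contribute nothing to the dimension and can be absorbed. Once the covering count and the Frostman lower bound are in place with matching exponent, the equality $\dim_H\overline{W^s(\cx)}=2+\theta$ follows on a set of full $\m_+$-measure, completing the proof.
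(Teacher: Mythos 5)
Your lower-bound idea is essentially the paper's argument, but your route to the \emph{exact} value $2+\theta$ rests on a misreading of the statement, and the step you yourself flag as the main obstacle would in fact fail. The corollary does not assert that $\mathrm{HDim}\,\overline{W^s(\cx)}$ equals $2+\theta_0$ with $\theta_0$ the H\"older exponent from Theorem~\ref{thm:holder}; it asserts only that there \emph{exists} some $\theta>0$ for which $\mathrm{HDim}\,\overline{W^s(\cx)}=2+\theta$ almost surely. The content is thus: (i) $\mathrm{HDim}\,\overline{W^s(\cx)}$ is $\m_+$-almost surely constant, and (ii) that constant exceeds $2$. The H\"older estimate is needed only for (ii): since $u_{T^s_\omega}$ is $\theta_0$-H\"older, $T^s_\omega$ gives zero mass to sets of Hausdorff dimension $<2+\theta_0$ (this is \cite[Thm 1.7.3]{sibony}, which is morally the Frostman argument you outline), and since $\supp T^s_\omega\subset\overline{W^s(\cx)}$ by Theorem~\ref{thm:nevanlinna}(b), one gets $\mathrm{HDim}\,\overline{W^s(\cx)}\geq 2+\theta_0>2$.

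The upper bound $\mathrm{HDim}\,\overline{W^s(\cx)}\leq 2+\theta_0$ that you try to extract is both unnecessary and unavailable. H\"older continuity of the potential controls the local mass of the trace measure from \emph{above}, which is exactly what a Frostman-type argument needs to bound the dimension of the support from \emph{below}; it gives no covering count and no upper bound for the dimension. Indeed a closed positive $(1,1)$-current with H\"older continuous local potential can have support of full topological dimension (take $dd^c u$ for a H\"older quasi-psh $u$ whose Laplacian is absolutely continuous on an open set). And the uniqueness clause in Theorem~\ref{thm:nevanlinna}(b) only yields the inclusion $\supp T^s_\omega\subset\overline{W^s(\cx)}$, which is the direction used for the lower bound; nothing forces equality, nor would equality help with a covering count.

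What your sketch is actually missing is the argument for (i). The function $\cx\mapsto\mathrm{HDim}\,\overline{W^s(\cx)}$ is $F_+$-invariant, since $f^1_\omega$ is a diffeomorphism carrying $\overline{W^s(\cx)}$ to $\overline{W^s(F_+\cx)}$ and Hausdorff dimension is a diffeomorphism invariant; so by ergodicity of $\m_+$ it suffices to prove this function is measurable. The paper does this in two steps: first, $\cx\mapsto\overline{W^s(\cx)}$ is a Borel map from the Pesin regular set into the space $\mathcal K(X)$ of compact subsets of $X$ with the Hausdorff topology (on a Pesin block $\mathcal R_\e$ local stable manifolds vary continuously, hence $\cx\mapsto F^{-n}\bigl(\overline{W^s_r(F^n(\cx))}\bigr)$ is continuous and these converge in the Hausdorff topology to $\overline{W^s(\cx)}$); second, $K\mapsto\mathrm{HDim}(K)$ is Borel on $\mathcal K(X)$ by \cite[Thm 2.1]{mattila-mauldin}. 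Once constancy holds, the H\"older lower bound shows the constant exceeds $2$, and one simply \emph{defines} $\theta$ to be that constant minus $2$; it need not coincide with $\theta_0$.
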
 

\subsection{Proof of Theorem~\ref{thm:nevanlinna} and  its corollaries} 
  
We work under the assumptions of Theorem~\ref{thm:nevanlinna}.  

\begin{lem}\label{lem:mu_zariski}
If there exists a proper Zariski closed subset of $X$ with positive $\mu$-measure, then:
\begin{itemize}
\item either $\mu$  is the uniform counting measure on a finite orbit of $\Gamma_\nu$;
\item or $\mu$ has no atom and it is supported on a $\Gamma_\nu$-invariant algebraic curve, which is the 
$\Gamma_\nu$-orbit of an irreducible  algebraic curve.
\end{itemize}
\end{lem}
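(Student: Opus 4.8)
Let $Z\subset X$ be a proper Zariski closed subset with $\mu(Z)>0$. The first step is to pass to the minimal such $Z$, or rather to understand the structure of the family of such sets. Since $X$ is a projective surface, any proper Zariski closed subset is a finite union of points and irreducible curves. The plan is to consider the collection $\mathcal{C}$ of irreducible curves $C$ with $\mu(C)>0$ and the set $Z_0$ of atoms of $\mu$. First I would observe that, because distinct irreducible curves meet in finitely many points and $\mu$ of a finite set is a (possibly zero) sum of atom masses, the curves in $\mathcal{C}$ with $\mu(C)$ exceeding any fixed $\varepsilon>0$ are finite in number; hence $\mathcal{C}$ is at most countable, and similarly the atoms carry at most countable mass. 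Writing $\mu = \mu_{\mathrm{at}} + \mu_{\mathrm{cur}} + \mu_{\mathrm{diff}}$, where $\mu_{\mathrm{at}}$ is the atomic part, $\mu_{\mathrm{cur}}$ is carried by $\bigcup_{C\in\mathcal C} C$ minus the atoms, and $\mu_{\mathrm{diff}}$ gives no mass to any proper Zariski closed set, the hypothesis says $\mu_{\mathrm{at}}+\mu_{\mathrm{cur}}\neq 0$.

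The second step is to exploit stationarity together with ergodicity. For a stationary measure, the key mechanism is that the ``Zariski support'' is essentially invariant. Concretely, I would argue as follows. Let $m = \sup\{\mu(C) : C\in\mathcal C\}$ if $\mathcal C\neq\emptyset$, and let $\mathcal C_{\max}$ be the (finite, nonempty) set of curves realizing this supremum; stationarity $\mu = \int f_*\mu\, d\nu(f)$ forces, for $\nu$-a.e. $f$, that $f$ permutes $\mathcal C_{\max}$ (since $f_*\mu(f(C)) = \mu(C)$ and masses can only redistribute among maximal curves under averaging — here one uses that $f_*\mu$ is again a probability and the max is preserved). Thus $Y_{\max} := \bigcup_{C\in\mathcal C_{\max}} C$ is $\Gamma_\nu$-invariant. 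Iterating this idea down through the (finitely many) possible mass values that occur, or more cleanly running it on the finite set of curves of mass $\geq\varepsilon$ for each $\varepsilon$ in a decreasing sequence, shows that $Y := \bigcup_{C\in\mathcal C} C$ is $\Gamma_\nu$-invariant, and likewise $Z_0$, the set of atoms, is $\Gamma_\nu$-invariant (atoms of maximal mass form a finite $\Gamma_\nu$-invariant set, etc.).

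The third step is to use ergodicity of $\mu$ (as a stationary measure) to conclude that $\mu$ is \emph{carried} by one of these invariant pieces and is of a pure type. Since $Y$ and $Z_0$ are $\Gamma_\nu$-invariant Borel sets, by the random ergodic theorem (cited in \S\ref{par:definition_skew_products}, via $\mathcal F_+$-invariance of $\pi_X^{-1}$ of an a.s.-invariant set) each has $\mu$-measure $0$ or $1$; combined with $\mu_{\mathrm{at}}+\mu_{\mathrm{cur}}\neq 0$ we get $\mu(Y\cup Z_0)=1$. If $\mu(Z_0)=1$ then $\mu$ is purely atomic and supported on a $\Gamma_\nu$-invariant countable set $S$; the atoms of maximal mass form a finite $\Gamma_\nu$-invariant orbit, and ergodicity forces $\mu$ to be equidistributed on a single finite orbit (any proper invariant subset would split $\mu$). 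Otherwise $\mu(Y)=1$ and $\mu(Z_0)=0$, so $\mu$ is non-atomic and carried by the $\Gamma_\nu$-invariant curve $Y$. Finally, to see $Y$ can be taken to be the $\Gamma_\nu$-orbit of a single irreducible curve: the irreducible components of $Y$ that carry positive mass form a $\Gamma_\nu$-invariant set, the ones of maximal mass form a finite $\Gamma_\nu$-invariant set $\{C_1,\dots,C_k\}$, hence $\Gamma_\nu\cdot C_1$ is a finite union of curves each of mass $m$; by ergodicity again $\mu$ must be carried by $\Gamma_\nu\cdot C_1$ (its complement in $Y$ is invariant), so we may replace $Y$ by this orbit.

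\textbf{Main obstacle.} The delicate point is the ``mass can only redistribute among maximal curves'' argument that makes $Y_{\max}$ invariant: one must check carefully that if $\mu=\int f_*\mu\,d\nu(f)$ and $C$ is a curve of maximal mass $m$, then for $\nu$-a.e.\ $f$ the preimage $f^{-1}(C)$ is again a curve of mass exactly $m$ (not, say, $C$ together with some contracted piece, which cannot happen for automorphisms, but the bookkeeping with finitely many curves of equal mass needs the pigeonhole/extremality argument). This is where one uses that the $f$'s are automorphisms (so they genuinely permute curves) and that there are only finitely many curves of mass $\geq m$. Once that invariance is in hand, everything else is a routine application of the ergodic dichotomy for stationary measures.
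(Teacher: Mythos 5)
Your overall strategy is the same as the paper's: look at the extremal mass among atoms (resp. among curves), use stationarity to show the finite set of maximizers is $\Gamma_\nu$-invariant, and invoke ergodicity. But there is a genuine gap in the order in which you run the argument, and it matters.

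You begin by declaring $\mathcal{C}$ to be the set of all irreducible curves with $\mu(C)>0$, and you assert that the curves with $\mu(C)\ge\varepsilon$ are finitely many "because distinct irreducible curves meet in finitely many points and $\mu$ of a finite set is a sum of atom masses." This is false when $\mu$ has an atom: if $\mu(\{p\})=\varepsilon>0$, then \emph{every} curve through $p$ has $\mu$-mass $\ge\varepsilon$, and there are uncountably many of them. So $\mathcal{C}$ need not be countable, $\bigcup_{C\in\mathcal C}C$ can be all of $X$, your decomposition $\mu=\mu_{\mathrm{at}}+\mu_{\mathrm{cur}}+\mu_{\mathrm{diff}}$ is not well-defined, and the finiteness of $\mathcal C_{\max}$ — which is what the whole invariance/ergodicity mechanism rests on — is unproved (and generally false) in the presence of atoms. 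The sentence "Iterating this idea down through the (finitely many) possible mass values" does not repair this, since the problem occurs already at the top mass value.

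The fix is exactly what the paper does, and it must be done in the right order: first consider $\delta^0_{\max}(\mu)=\max_x\mu(\{x\})$. If it is positive, the set $F$ of atoms of maximal mass is finite, $\Gamma_\nu$-invariant by stationarity, and ergodicity forces $\mu$ to be the uniform measure on $F$ (a single finite orbit). Only in the remaining, atomless case does one turn to curves; then, precisely because $\mu$ charges no finite set, two distinct curves of mass $\ge\delta^1_{\max}$ overlap in a $\mu$-null set, the maximizers are indeed finitely many, and the same stationarity-plus-ergodicity argument applies. Your "main obstacle" paragraph correctly identifies the extremality/pigeonhole mechanism as the crux, but the real subtlety you overlooked is the one sentence earlier: that this mechanism is only available for curves once atoms have been eliminated.
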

\begin{proof}
Consider the real number $\delta_{\mathrm{max}}^0(\mu)=\max_{x\in X} \mu\lrpar{\{x\}}$. If $\delta_{\mathrm{max}}^0(\mu)>0$, 
there is a non-empty finite set $F\subset X$ for which $\mu\lrpar{\{x\}} = \delta_{\mathrm{max}}^0(\mu)$. 
By stationarity, $F$ is $\Gamma_\nu$-invariant, and by ergodicity  $\mu$ is the uniform measure  on $F$.
Now, assume that  $\mu$ has no atom. Let $\delta_{\mathrm{max}}^1(\mu)$ be the maximum of $\mu(D)$
among all irreducible curves $D\subset X$. If $\mu(Z)>0$ for some proper Zariski closed subset $Z\subset X$, then
$\delta_{\mathrm{max}}^1(\mu)>0$. Since two distinct irreducible curves intersect in at
most finitely many points and $\mu$ has no atom, there are only finitely many irreducible curves $E$ such 
that $\mu(E)=\delta_{\mathrm{max}}^1(\mu)$. To conclude, we argue as in the zero dimensional case.
\end{proof}


If $V\subset X$ is a smooth curve, possibly with boundary, if $T$ is a  closed  positive $(1,1)$-current on $X$ with a continuous 
normalized potential $u_T$ (as in~\S~\ref{subs:normalized_potentials}), then  by definition 
\begin{equation}
\langle T\wedge \set{V}, \varphi\rangle  = \int_V \varphi \, \Theta(T) + \int_V \varphi \, dd^c(u_T\rest{V}),
\end{equation}
for every test function $\varphi$. Here is the key relation  between  stable manifolds and limit currents:

\begin{lem}\label{lem:stable_intersection_Ts}
For $\m_+$-almost every  $\cx = (\omega, x)$, if 
$\Delta$ is a disk contained in $W^s(\cx)$, then $T_\omega^s \wedge \{ \Delta\} = 0$. 
\end{lem}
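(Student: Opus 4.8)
The plan is to exploit the equivariance relation between the stable currents $T^s_\omega$ (Corollary~\ref{cor:Tsomega}) and the contraction along stable manifolds (Proposition~\ref{pro:pesin}). Fix a Pesin set $\mathcal R_\e$ of positive $\m$-measure on which the local stable manifolds $W^s_r(\cx)$ have size at least $r$ and vary continuously in the $C^1$-topology, and on which the potentials $u_{T^s_\omega}$ are bounded by a uniform constant (this last point uses Theorem~\ref{thm:Tsomega_continuous} together with Lusin's theorem). For $\cx = (\omega,x)$ with $\sigma^n\omega$ (more precisely $F^n(\cx)$) returning to $\mathcal R_\e$ along a sequence $n_j\to\infty$, and for a disk $\Delta\subset W^s(\cx)$, the forward images $f^{n_j}_\omega(\Delta)$ shrink exponentially: by Proposition~\ref{pro:pesin}.(2), $\diam(f^{n_j}_\omega(\Delta))\leq C(\cx)\exp((\lambda^s+\delta)n_j)\to 0$. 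The idea is to transport the computation of $T^s_\omega\wedge\{\Delta\}$ forward under $f^{n_j}_\omega$ and show that the resulting quantity tends to $0$.

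First I would set up the change of variables. By the equivariance property in Corollary~\ref{cor:Tsomega}.(3), $(f^n_\omega)^*T^s_{\sigma^n\omega} = \M((f^n_\omega)^*T^s_{\sigma^n\omega})\,T^s_\omega$, so
\[
T^s_\omega\wedge\{\Delta\} = \frac{1}{\M((f^n_\omega)^*T^s_{\sigma^n\omega})}\,(f^n_\omega)^*T^s_{\sigma^n\omega}\wedge\{\Delta\} = \frac{1}{\M((f^n_\omega)^*T^s_{\sigma^n\omega})}\,(f^n_\omega)_*\big(T^s_{\sigma^n\omega}\wedge\{f^n_\omega(\Delta)\}\big)
\]
as measures on $X$ (pushing forward the $(1,1)$-current $\wedge$ test-function pairing; here $f^n_\omega(\Delta)$ is a disk in $X$). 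Testing against a fixed smooth function $\varphi$, the right-hand side is controlled by $\M((f^n_\omega)^*T^s_{\sigma^n\omega})^{-1}$ times the mass of $T^s_{\sigma^n\omega}\wedge\{f^n_\omega(\Delta)\}$. The denominator grows exponentially by Lemma~\ref{lem:variant_furstenberg} (since $e(\sigma^n\omega)$ is the class of $T^s_{\sigma^n\omega}$ and $\M((f^n_\omega)^*e(\sigma^n\omega))$ grows at rate $\lambda_{H^{1,1}}>0$). So everything reduces to bounding the mass $\M\big(T^s_{\sigma^n\omega}\wedge\{f^n_\omega(\Delta)\}\big)$ sub-exponentially, in fact by something like $\mathrm{area}(f^n_\omega(\Delta)) + \mathrm{const}$.

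The main work is the mass bound, and it is where the continuity of potentials is essential. Writing $T^s_{\sigma^n\omega} = \Theta(T^s_{\sigma^n\omega}) + dd^c u_{T^s_{\sigma^n\omega}}$ and using the definition of $T\wedge\{V\}$ recalled just before the lemma, $\langle T^s_{\sigma^n\omega}\wedge\{f^n_\omega(\Delta)\},\varphi\rangle$ splits into a smooth part $\int_{f^n_\omega(\Delta)}\varphi\,\Theta(T^s_{\sigma^n\omega})$, bounded by $\|\varphi\|_\infty\cdot A\cdot \mathrm{area}(f^n_\omega(\Delta))$ via Lemma~\ref{lem:uniform_Ak_psh}, and a term $\int_{f^n_\omega(\Delta)}\varphi\,dd^c(u_{T^s_{\sigma^n\omega}}|_{f^n_\omega(\Delta)})$. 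For the latter I would take $n=n_j$ along returns to $\mathcal R_\e$, so that $\|u_{T^s_{\sigma^{n_j}\omega}}\|_\infty\leq C$ uniformly; then integrate by parts twice on the disk $f^{n_j}_\omega(\Delta)$ (Stokes), moving the $dd^c$ onto $\varphi$, at the cost of a boundary term. The interior term is bounded by $C\|\varphi\|_{C^2}\cdot\mathrm{area}(f^{n_j}_\omega(\Delta))\to 0$; the boundary term, an integral over $\partial(f^{n_j}_\omega(\Delta))$ of $u_{T^s_{\sigma^{n_j}\omega}}$ against $d^c\varphi$ (and of $\varphi\,d^c u$, which needs a little more care since $u$ is only continuous — one approximates $\varphi$ or uses that the current $dd^c u\wedge\{\Delta\}$ is a well-defined measure with locally bounded mass by Bedford--Taylor, controlled by $\|u\|_\infty$ and the geometry of $\partial\Delta$), is bounded by $C\|\varphi\|_{C^1}$ times the length of $\partial(f^{n_j}_\omega(\Delta))$, which also goes to $0$ by the exponential contraction in Proposition~\ref{pro:pesin}.(2) applied to the boundary circle. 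I expect this boundary analysis — making rigorous the pairing of a merely continuous-potential current with the integration current on a shrinking disk, uniformly in $n_j$ — to be the main obstacle; one clean way around it is to observe that since $\Delta\subset W^s(\cx)$ is relatively compact one may shrink $\Delta$ slightly and use that $f^{n_j}_\omega(\overline\Delta)$ has both area and boundary length tending to $0$, so the total mass of the measure $T^s_{\sigma^{n_j}\omega}\wedge\{f^{n_j}_\omega(\Delta)\}$ (a positive measure once $u$ is continuous, by Bedford--Taylor, of mass $\leq \int_{f^{n_j}_\omega(\Delta)}\Theta + \oint_{\partial} |d^c u| \lesssim \mathrm{area} + \mathrm{const}\cdot\mathrm{length}$ via a standard estimate) tends to $0$. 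Combining: $\langle T^s_\omega\wedge\{\Delta\},\varphi\rangle$ is, up to the exponentially small factor $\M((f^{n_j}_\omega)^*T^s_{\sigma^{n_j}\omega})^{-1}$, bounded by something going to $0$, hence $T^s_\omega\wedge\{\Delta\}=0$ for $\m_+$-almost every $\cx$. Finally, since $W^s(\cx)$ is an increasing union of such disks $\Delta$, the vanishing extends from disks to all of $W^s(\cx)$, completing the proof.
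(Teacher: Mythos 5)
Your strategy is the paper's: transport the wedge product forward by $f^n_\omega$, use the equivariance of the stable currents and Lemma~\ref{lem:variant_furstenberg} to extract the exponentially growing factor $\M((f^n_\omega)^*T^s_{\sigma^n\omega})$, and control the mass of $T^s_{\sigma^n\omega}\wedge\{f^n_\omega(\Delta)\}$ along a subsequence of Pesin returns. But there is a gap exactly where you flag "the main obstacle."

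You try to integrate by parts on the shrinking disk $f^{n_j}_\omega(\Delta)$, which produces boundary terms $\oint_{\partial}\varphi\,d^c u$ and $\oint_\partial u\,d^c\varphi$. The first of these is not meaningful with $u$ only continuous: $d^c u$ has no pointwise trace on a curve, so the "standard estimate" $\oint_\partial |d^c u|\lesssim\mathrm{const}\cdot\mathrm{length}$ you invoke is unavailable. Bedford--Taylor and Chern--Levine--Nirenberg bound the mass of $dd^c u$ on a compact subset in terms of $\|u\|_\infty$ and the geometry of a \emph{larger} fixed domain; they do not give you a length-of-boundary bound, and they give a \emph{bound}, not decay. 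The paper's fix is simpler and avoids the shrinking boundary entirely: pick a fixed test function $\chi$ on $W^s_{r}(F^n_+(\cx))$ with $\chi\equiv 1$ on $W^s_{r/2}(F^n_+(\cx))$. Along Pesin returns $n_j$, the image $f^{n_j}_\omega(\Delta)$ eventually lies in $W^s_{r/2}$, so $\mathbf{1}_{f^{n_j}_\omega(\Delta)}\leq\chi$; integrating by parts against $\chi$ (compactly supported in a disk of \emph{fixed} size $r(\e)$) gives $\int\chi\,dd^c u=\int u\,dd^c\chi\leq C(r)\|\chi\|_{C^2}\|u_{T^s_{\sigma^{n_j}\omega}}\|_\infty$ with \emph{no boundary term}. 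This yields only that $\M(T^s_{\sigma^{n_j}\omega}\wedge\{f^{n_j}_\omega(\Delta)\})$ is \emph{bounded} along the subsequence, not that it tends to $0$ --- but boundedness is all that is needed, since $\M((f^{n_j}_\omega)^*T^s_{\sigma^{n_j}\omega})\to\infty$ exponentially. Trying to prove the mass tends to $0$ is unnecessary and is what forces you into the problematic boundary analysis. With this correction your write-up closes the gap and matches the paper's proof.
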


\begin{proof}   
With no loss of generality we assume 
that the boundary of  the disk $\Delta$ in $W^s(\cx)\simeq \C$  is smooth. 
We consider points  $\cx=(\omega, x)\in \X_+$ which are generic in the following sense: 
they are regular from the point of view of Pesin's theory, and 
 $T^s_\omega $ satisfies the conclusions of \S \ref{sec:currents}. 

By Pesin's theory, for every $\e>0$, there is a set $A_\e\subset \mathbf N$ of density 
larger than  $1-\e$, such that for $n$ in $A_\e$, the local stable manifold  $W^s_{r}(F_+^n(\cx))$ 
is a disk of size $r=r(\e)$ at $f_\omega^n(x)$  
and $f_\omega^n(\Delta)$ is a disk contained in an   exponentially small 
neighborhood of $f_\omega^n(x)$.  We  have 
\begin{equation}\label{eq:TnsDelta}
\M(T_{\sigma^n\omega}^s\wedge \{f_\omega^n(\Delta)\}) 
= \int_{ W^s_{r} (F_+^n(\cx))}\! \mathbf{1}_{f_\omega^n(\Delta)} \, \Theta({T^s_{\sigma^n\omega}})
+  \int_{ W^s_{r}(F_+^n(\cx))}\!  \mathbf{1}_{f_\omega^n(\Delta)}
dd^c u_{T_{\sigma^n\omega}^s}.
\end{equation}
Since $\M(T_{\sigma^n\omega}^s)=1$,  Lemma \ref{lem:uniform_Ak_psh} shows that $\Theta({T^s_{\sigma^n\omega}}) $ is bounded by $A\kappa_0$; so 
the first integral on the right hand side of \eqref{eq:TnsDelta}  is bounded by a constant times the area of 
$f_\omega^n(\Delta)$, which is exponentially small.
By ergodicity, there exists $A'_\e \subset A_\e$ of density at least 
$1-2\e$ such that if $n\in A'_\e$,  $\| u_{T_{\sigma^n\omega}^s}\| _\infty$  is  bounded by 
some contant $D_\e>0$.  For such an $n$, let $\chi$ be a test function in $W^s_r (F_+^n(\cx))$ 
such that $\chi=1$ in $W^s_{r/2} (F_+^n(\cx))$, and vanishing near $\fr W^s_{r} (F_+^n(\cx))$. Note that since 
$W^s_r (F_+^n(\cx))$  is of size $r$, the $C^2$-norm of $\chi$
  depends only on $r$.  We write  
\begin{align}
\notag\int_{ W^s_{r}(F_+^n(\cx))}\!  \mathbf{1}_{f_\omega^n(\Delta)}
dd^c u_{T_{\sigma^n\omega}^s} 
&\leq \int_{ W^s_{r}(F_+^n(\cx))}\!  \chi dd^c u_{T_{\sigma^n\omega}^s} \\ 
&=  \int_{ W^s_{r}(F_+^n(\cx))}\!   
u_{T_{\sigma^n\omega}^s} dd^c\chi \\ \notag
& \leq C(r) \norm{\chi}_{C^2} \big\|{u_{T_{\sigma^n\omega}^s} } \big\|_\infty
\end{align}
where $C(r)$ bounds the area of $W^s_{r}(F_+^n(\cx))$; this last term is uniformly bounded because $n\in A'_\e$.
Thus we conclude that $\M(T_{\sigma^n\omega}^s\wedge\{f_\omega^n(\Delta)\})$ is bounded along such a subsequence. 
  
 On the other hand, the relation $(f_\omega^n)^* T_{\sigma^n\omega}^s =  \M((f_\omega^n)^*T_{\sigma^n\omega}^s)  T_{\omega}^s$ gives 
\begin{equation}
 T_{\sigma^n(\omega)}^s\wedge \{ f_\omega^n(\Delta)\} = \M\lrpar{(f_\omega^n)^*T_{\sigma^n(\omega)}^s}
  (f_\omega^n)_* (T_\omega^s\wedge \{ \Delta\}).
\end{equation} 
The mass $\M((f_\omega^n)_* (T_\omega^s\wedge \{ \Delta\}))$ is constant, equal to the mass of the
measure $T_\omega^s\wedge \{ \Delta\}$; so
\begin{equation}
\M\lrpar{T_{\sigma^n(\omega)}^s\wedge \{ f_\omega^n(\Delta)\} } = \M((f_\omega^n)^*T_{\sigma^n(\omega)}^s) \M(T_\omega^s\wedge \{\Delta\}).
\end{equation}
By Lemma \ref{lem:variant_furstenberg}, $\M((f_\omega^n)^*T_{\sigma^n(\omega)}^s) $ goes exponentially fast  to infinity. Since the left
hand side is bounded, this shows that $\M(T_\omega^s\wedge \{\Delta\})=0$, as desired.
\end{proof}

With Lemma~\ref{lem:periodic_curves}, the following statement takes care of the first alternative in Theorem \ref{thm:nevanlinna}. 

\begin{lem}\label{lem:stable_zariski_dense}
If there is a Borel 
subset $A\subset \X_+$ of positive measure  such that 
 for every $\cx \in A$, the stable manifold $W^s(\cx)$ is contained in an algebraic curve, 
then $\mu$ is supported on a $\Gamma_\nu$-invariant algebraic curve. 
In addition, for $\m_+$-almost every $\cx$,  $\overline{W^s(\cx)}$ is an irreducible rational curve of negative self-intersection. 
\end{lem}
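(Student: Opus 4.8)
\textbf{Proof strategy for Lemma~\ref{lem:stable_zariski_dense}.}
The plan is to show first that some irreducible curve has positive $\mu$-measure, then invoke Lemma~\ref{lem:mu_zariski} to promote this to a $\Gamma_\nu$-invariant algebraic curve, and finally use Lemma~\ref{lem:periodic_curves} together with Proposition~\ref{pro:diller-jackson-sommese} to obtain the rationality and negativity statements. For the first step, I would argue as follows. For $\cx=(\omega,x)\in A$, the stable manifold $W^s(\cx)$ is a Zariski-dense-in-its-closure injectively immersed entire curve contained in an algebraic curve; since an entire curve $\C\to X$ with image contained in an algebraic curve must have image equal to (the smooth locus of a branch of) an \emph{irreducible} rational or elliptic curve, and since $W^s(\cx)\simeq\C$ is parametrized by an injective entire map, the Zariski closure $\overline{W^s(\cx)}$ is in fact an irreducible \emph{rational} curve $C_\cx$ (an entire curve cannot be dense in a curve of positive genus and still be injective, as $\C$ carries no nonconstant bounded holomorphic functions but a genus $1$ curve minus a point does not receive a dominant map from $\C$ either — more simply, the normalization of $C_\cx$ receives a dominant map from $\C$, hence is $\P^1$).

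Next I would extract a measure-theoretic consequence. The assignment $\cx\mapsto C_\cx$ (equivalently $\cx\mapsto[C_\cx]\in\NS(X;\Z)$) is measurable on $A$, and since $\NS(X;\Z)$ is countable there is a single class $c_0$ and a positive-measure subset $A_0\subset A$ on which $[C_\cx]=c_0$. The curves $\{C_\cx\}_{\cx\in A_0}$ all lie in the linear system $|c_0|$ (or rather share the same cohomology class); since two distinct irreducible curves meet in finitely many points, and the $W^s(\cx)$ for distinct $\cx$ in the same fiber $X_\xi$ are either disjoint or equal, I would like to conclude that in fact $C_\cx$ takes only finitely many values on a positive measure set, hence one value $C^\ast$ on a positive measure set $A_1$. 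This requires a little care: one should use that $\pi_X(A_1)$ has positive $\mu$-measure (because $\m_+=\nu^\N\times\mu$ projects to $\mu$), and that $\pi_X(A_1)$ is contained in $\bigcup_{\cx\in A_1}C_\cx$; if infinitely many distinct irreducible curves of class $c_0$ occurred, their pairwise intersections being finite would force $\mu$ to be supported on a countable set meeting each such curve, contradicting that the union of the curves themselves (a $2$-real-dimensional set fibered over a subset of $|c_0|$) carries positive $\mu$-mass unless the family is finite — the cleanest route is: the function $\cx\mapsto\mu(C_\cx)$ is positive on $A_1$, only countably many irreducible curves $E$ have $\mu(E)>0$ (distinct ones meet finitely and $\mu$ has no atoms, or if $\mu$ has atoms we are in the finite-orbit case of Lemma~\ref{lem:mu_zariski}), so after a further countable decomposition $C_\cx\equiv C^\ast$ on a positive measure set. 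Then $\mu(C^\ast)>0$, so by Lemma~\ref{lem:mu_zariski} either $\mu$ is supported on a finite $\Gamma_\nu$-orbit (then trivially on a $\Gamma_\nu$-invariant curve, or on finitely many points — in which case the statement about $W^s$ is vacuous or $W^s$ is a point, but hyperbolicity forbids $W^s$ being a point on a positive measure set, so this sub-case is excluded), or $\mu$ is supported on the $\Gamma_\nu$-orbit $Y$ of an irreducible curve, which is the desired $\Gamma_\nu$-invariant algebraic curve.

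Finally, for the last sentence: once $\mu(Y)=1$ with $Y=\Gamma_\nu\cdot C^\ast$ a finite union of irreducible curves permuted by $\Gamma_\nu$, $\m_+$-almost every $\cx$ has $x\in Y$, and since $W^s(\cx)$ is connected, contains $x$, and (being a stable manifold of a hyperbolic measure carried by $Y$) stays near $Y$ under forward iteration, one shows $W^s(\cx)\subset Y$; concretely, $W^s(\cx)$ is an injective entire curve and if it were not contained in $Y$ it would be Zariski dense, contradicting that $\cx\in A$ (up to replacing $A$ by the full-measure set where this holds). Hence $\overline{W^s(\cx)}$ is one of the finitely many irreducible components of $Y$, which is rational by the argument above, and Lemma~\ref{lem:periodic_curves} gives that the intersection form is negative definite on the span of the classes of components of $Y$, so each component — in particular $\overline{W^s(\cx)}$ — has negative self-intersection. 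The main obstacle I anticipate is the countable-decomposition step isolating a \emph{single} invariant curve class and ruling out a continuum of fibers each contributing its own rational curve; this is where one must combine the absence of atoms (or the finite-orbit dichotomy) with the finiteness of pairwise intersections of irreducible curves, essentially re-running the proof of Lemma~\ref{lem:mu_zariski} at the level of the skew product.
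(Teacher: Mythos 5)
There is a genuine gap in the middle of your argument, precisely at the step you yourself flag as requiring ``a little care.'' You extract a positive-measure set $A_0$ on which the class $[C_\cx]=c_0$ is constant (fine, since $\NS(X;\Z)$ is countable and the assignment is measurable by continuity of local stable manifolds on Pesin sets), and you then want to conclude that only finitely, or countably, many distinct curves $C_\cx$ occur. Your ``cleanest route'' asserts that $\mu(C_\cx)>0$ on a positive-measure set, but this is essentially what the lemma is asking us to prove, and you give no independent justification. Your alternative argument via finiteness of pairwise intersections does not rule out a \emph{continuous} family of irreducible rational curves sharing the class $c_0$: if $c_0^2\geq 0$ such families can exist (e.g.\ a pencil of rational curves on a rational surface, or more generally a moving linear system), in which case $\bigcup_{\cx\in A_0} C_\cx$ can have positive $\mu$-measure while each individual $C_\cx$ is null. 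Nothing in your proposal excludes this scenario.

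The missing ingredient is the negativity of self-intersection, and this must be established \emph{before} the counting, not afterwards. The paper gets it from Lemma~\ref{lem:stable_intersection_Ts}: since $T^s_\omega\wedge\{\Delta\}=0$ for every disk $\Delta\subset W^s(\cx)$ and $T^s_\omega$ has continuous potentials, one obtains $T^s_\omega\wedge\{D(\cx)\}=0$, hence $\langle e(\omega)\,\vert\,[D(\cx)]\rangle=0$. By the Hodge index theorem this forces either $[D(\cx)]^2<0$ or $[D(\cx)]$ proportional to $e(\omega)$; the latter is excluded because $e(\omega)$ is almost surely irrational (Theorem~\ref{thm:def_stationary}). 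Once $[D(\cx)]^2<0$, an irreducible curve of negative self-intersection is uniquely determined by its cohomology class, so there are at most countably many possibilities $(D_k)$, and the passage to a single $D_k$ of positive $\mu$-measure, then to Lemma~\ref{lem:mu_zariski}, goes through as you describe. In short, you correctly identify the end-game (Lemma~\ref{lem:mu_zariski}, Lemma~\ref{lem:periodic_curves}, rationality of the curve), but you are missing the crucial use of the stable current $T^s_\omega$ — the very object Sections~\ref{sec:currents} and~\ref{sec:nevanlinna} were built to supply — which is what turns ``$C_\cx$ has a fixed class'' into ``$C_\cx$ is one of countably many rigid curves.''
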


\begin{proof}
For $\cx\in A$, let  $D(\cx)$ be the Zariski closure of $W^s(\cx)$. Discarding a set of measure zero if needed, 
$W^s(\cx)$ is biholomorphic to $\C$ so $D(\cx)$ is a (possibly singular) irreducible rational curve, 
and $D(\cx)\setminus W^s(\cx)$ is reduced to a point.
By Lemma 
\ref{lem:stable_intersection_Ts}, $T^s_\omega\wedge \set{\Delta} = 0$ for every disk $\Delta \subset W^s(\cx)$. Since  
$T^s_\omega$ has continuous potentials, $T^s_\omega\wedge \set{D(\cx)}$ gives no mass to points (see 
e.g. \cite[Lem. 10.13]{cantat-dupont} for the singular case). 
It follows that $T^s_\omega\wedge \set{D(\cx)}=0$, hence $\langle e(\omega) \, \vert \, [D(\cx)]\rangle= 0$. 

By the Hodge index theorem, either $[D(\cx)]^2<0$ or $[D(\cx)]$ is proportional to $e(\omega)$, however 
 this latter case would contradict the 
fact that $e(\omega)$ is $\nu^\N$-almost surely irrational (see Theorem~\ref{thm:def_stationary}; one could also use that $\Cur(e(\omega))$ is reduced to $T^s_\omega$). Thus, $[D(\cx)]^2<0$.

An irreducible curve with negative self-intersection is uniquely determined by its cohomology class; since $\NS(X; \Z)$ is countable,  there are only countably many irreducible curves $(D_k)_{k\in \N}$ with  negative self intersection.
Since $W^s_\loc(\cx)\subset D_k$ if and only if  $D(\cx) = D_k$, and 
since local stable manifolds vary continuously on the Pesin regular set $\mathcal R_\e$
 for every $\e>0$, we infer that 
$\set{\cx \in A\; ; \;  \ D(\cx) = D_k}$ is measurable for every $k$. Hence there exists an 
index $k$ such that $\m_+ \lrpar{ \set{ \cx\in A\; ;  \;  [D(\cx)]=[D_k]}}>0$.  Since $x$ belongs to $W^s_\loc(\cx)$, Fubini's theorem 
implies that $\mu(D_k)>0$, and  Lemma \ref{lem:mu_zariski} shows that  $\mu$ is 
supported on the $\Gamma_\nu$-orbit of 
$D_k$. 

Finally, this argument shows that the 
property   $W^s_\loc(\cx)\subset \bigcup_{k\in \N} D_k$, or equivalently that 
$W^s_\loc(\cx)$ is contained in a rational curve of negative self intersection, is invariant and measurable, 
so by ergodicity of $\m_+$ it is of full measure. The proof is complete.
\end{proof}

We are now ready to conclude the proof of Theorem~\ref{thm:nevanlinna}. 
Let $A$ be the set of Pesin regular points such that $W^s(\cx)$ is contained in an algebraic curve. 
From the proof 
of Lemma \ref{lem:stable_zariski_dense},  $\cx$ belongs to $A$ if and only if $W^s_\loc(\cx)$ 
 is contained in one of the countably many irreducible curves 
 $D_k\subset X$ of negative self-intersection. This  
   condition   determines a countable union of closed subsets in the Pesin sets ${\mathcal{R}}_\e$, hence
   $A$ is  Borel measurable.  
%
%
By Lemma \ref{lem:stable_zariski_dense}, if  $A$ has positive $\m_+$-measure 
 then   Alternative~{{(a)}} holds. So,  if (a) is not satisfied, $W^s(\cx)$ is almost surely Zariski dense. 
Pick such a generic $\cx$, which further satisfies the conclusion of Lemma~\ref{lem:stable_intersection_Ts},
and let $N$ be 
  an Ahlfors-Nevanlinna current  associated to $W^s(\cx)$. By Proposition 
\ref{pro:brunella}, $[N]$ is a nef class so $[N]^2 \geq 0$. Thus, if we are able to show that 
 $\langle [N] \, \vert \, [T^s_\omega] \rangle = 0$, 
we deduce from the Hodge index theorem and $\M(N)=1$ that 
$[N] =[T^s_\omega]  = e(\omega)$, hence $N = T^s_\omega$ by Theorem \ref{thm:uniq+extremal}. 
So, it   only remains to prove that $\langle [N] \, \vert \, [T^s_\omega] \rangle = 0$, or equivalently 
\begin{equation}\label{eq:NT=0}
N\wedge T^s_\omega = 0.
\end{equation} 
This is intuitively clear because $N$ is an Ahlfors-Nevanlinna current
associated to the entire curve $W^s(\cx)$ and  $T^s_\omega\wedge \set{\Delta}=0$ for every bounded  disk 
$\Delta\subset  W^s(\cx)$. However, there is a technical difficulty to derive \eqref{eq:NT=0} from $T^s_\omega\wedge \set{\Delta}=0$, even if $W^s(\cx)$ is an increasing union of such disks~$\Delta$. 

At least two methods were designed to deal with this situation: the first one uses the geometric intersection  
theory of laminar currents (see \cite{bls, isect}), and the second one was developed by Dinh and Sibony in the preprint 
version of \cite{dinh-sibony_jams} (details are published in~\cite[\S 10.4]{cantat-dupont}). Unfortunately these papers 
only deal with the case of currents of the form 
$\lim_{n} \unsur{A(R_n)}{\phi(\disk(0, R_n))}$, instead of the Ahlfors-Nevanlinna currents introduced in 
Section~\ref{par:Ahlfors-Nevanlinna-Construction}, which were designed to get the nef property 
stated in Proposition~\ref{pro:brunella}. So, we have to explain how to adapt the formalism of 
\cite{bls, isect} to the Ahlfors-Nevanlinna currents of Proposition \ref{pro:brunella}.

Following \cite{duval} we say that $T$ is an \textbf{Ahlfors current} if there exists a sequence $(\Delta_n)$ of \emph{unions}
of smoothly bounded holomorphic disks such that $\length(\fr \Delta_n)   = o\lrpar{\M(\Delta_n)}$  and 
$T$ is the limit as $n\to\infty$ of   the sequence of normalized integration currents
$\unsur{\M(\Delta_n)}\set{\Delta_n}$; here,  $\length(\fr \Delta_n)$ is by definition the sum of the lengths
  of the boundaries of the disks constituting $\Delta_n$,  lengths which are computed with respect to the Riemannian 
metric induced by $\kappa_0$.  We say furthermore that $T$ is an \textbf{injective Ahlfors current} if 
the disks constituting  $\Delta_n$ are disjoint or intersect along 
subsets with relative non-empty interior.
By discretizing the integral defining the currents $N(R_n)$ in Proposition~\eqref{pro:brunella} we see that any Ahlfors-Nevanlinna current 
is an injective Ahlfors current.  

{\bf{Strongly approximable}} laminar currents are a class of positive currents introduced in \cite{isect} with geometric properties 
which are well suited for geometric intersection theory. 
In a nutshell, a current $T$ is a strongly approximable laminar current if for every $r>0$, there exists a uniformly laminar 
current $T_r$ (non closed in general) made of disks of size $r$, and such that 
$\M(T-T_r)  = O(r^2)$.  This mass estimate is  crucial for the  geometric understanding  of 
wedge products of such currents.  
Since these notions have been studied in a number of papers, we refer to \cite{bls, isect, Cantat:Milnor} 
for definitions, the basic properties of these currents, and technical details. This presentation in terms of disks of size $r$ is from 
\cite[\S 4]{fatou}.
The next lemma is a mild generalization of  the methods of \cite[\S 7]{bls},  \cite[\S 4.3]{Cantat:Acta} and  \cite[\S 4]{isect}. 
For completeness we provide the details in 
  Appendix~\ref{par:appendix_ahlfors}.

\begin{lem}\label{lem:ahlfors_current}
Any injective Ahlfors current $T$ on a projective surface $X$ is a strongly approximable laminar current: if 
$T= \lim_n \unsur{\M(\Delta_n)}\set{\Delta_n}$ where the disks $\Delta_n$ 
have smooth boundaries and $\length(\fr \Delta_n)   = o\lrpar{\M(\Delta_n)}$,
one can construct a family of uniformly laminar currents $T_r$, 
whose constitutive disks are limits of pieces of the $\Delta_n$, and such that  if $S$ is any closed positive current with continuous potential on $X$, 
then $S\wedge T_r$ increases to $S\wedge T$ as $r$  decreases to $0$. 
 \end{lem}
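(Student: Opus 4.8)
The strategy is to reduce the statement to the already–developed theory of strongly approximable laminar currents and their intersection theory, the only new point being that the approximating objects here are \emph{unions} of disks with small total boundary length (injective Ahlfors currents) rather than the image of a single entire curve with the specific averaging over radii. First I would recall from \cite{isect} (see also \cite{bls, fatou}) the construction of the approximating uniformly laminar currents: given a family of disks $\Delta_n$ with $\length(\fr\Delta_n) = o(\M(\Delta_n))$, one subdivides $X$ by a generic grid of bidisks at scale $r$, keeps only the ``good'' connected components of $\Delta_n$ inside these bidisks, i.e.\ those that are graphs of size $r$ over one of the coordinate directions, and discards the components that meet the boundary of the grid or are not graphs. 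The key geometric input is the \emph{isoperimetric-type estimate} (Lelong–Wirtinger plus the small boundary hypothesis) showing that the total mass of the discarded pieces is $O(r^2) \M(\Delta_n)$, uniformly in $n$: a disk piece which is not a graph of size $r$ over either direction must have area bounded below by $cr^2$, and the number of grid cells it meets is controlled; together with $\length(\fr\Delta_n) = o(\M(\Delta_n))$ this bounds the bad mass. Passing to a weak limit of the good parts (using compactness of disks of size $r$, \S\ref{par:disks_size}) yields a uniformly laminar current $T_r \leq T$ with $\M(T - T_r) = O(r^2)$, and by construction $T_r$ increases as $r \downarrow 0$.

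Next I would establish the intersection statement $S \wedge T_r \uparrow S \wedge T$ for $S$ a closed positive current with continuous potential. Since $T_r$ is uniformly laminar, $S \wedge T_r$ is defined \emph{leafwise}: on each plaque $\Delta$ of $T_r$ one sets $S\wedge\{\Delta\} = \Theta_S\rest{\Delta} + dd^c(u_S\rest{\Delta})$ (this uses continuity of $u_S$, exactly as in the display preceding Lemma~\ref{lem:stable_intersection_Ts}), and then integrates against the transverse measure. That these leafwise products glue to a well-defined positive measure, and that $S\wedge T_r$ is increasing in the laminar order with $\M(S\wedge T_r) \to \M(S\wedge T) = \langle[S]\,\vert\,[T]\rangle$, is precisely the content of \cite[\S 4]{isect} (continuity of potentials makes the wedge product continuous under the monotone approximation, cf.\ also \cite[\S 4.3]{Cantat:Acta}). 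The mass estimate $\M(T-T_r) = O(r^2)$ guarantees that no mass of $S\wedge T$ escapes in the limit, because $S$ has bounded potential and hence $S\wedge(T-T_r)$ has mass $O(r^2)$ as well (by the Chern–Levine–Nirenberg inequality applied with the uniform bound on $u_S$). Finally I would note that the limit current so obtained has the correct cohomology class, since $[T_r] \to [T]$ in $H^{1,1}(X;\R)$.

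The main obstacle is the first step: verifying that the $O(r^2)$ mass estimate for the discarded pieces survives when the approximants are finite unions of disks glued along sets with nonempty relative interior (the ``injective'' condition), rather than the connected pieces coming from a single immersed $\C$. Two disks of the union may overlap along a sub-arc or sub-domain, so a naive count of plaques per grid cell could double-count; the injectivity hypothesis is exactly what prevents this from inflating the transverse measure, and one must check that the overlapping parts either coincide (and are counted once) or are transverse of controlled total mass. This is a bookkeeping refinement of \cite[\S 7]{bls} and \cite[\S 4]{isect} rather than a new idea, but it is where care is needed; I would carry it out in detail in Appendix~\ref{par:appendix_ahlfors} as announced, discretizing the averaged integral $N(R_n) = T(R_n)^{-1}\int_0^{R_n}\phi_*\{\disk(0,t)\}\,\frac{dt}{t}$ into a sum $\sum_k c_k \phi_*\{\disk(0,t_k)\}$ and observing that after the standard symmetrization the resulting union of disks has boundary length $o$ of its mass, so that it is indeed an injective Ahlfors current and the argument above applies verbatim.
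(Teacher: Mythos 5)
Your proposal gets the high-level structure right (build $T_r$ from graphical pieces, estimate $\M(T-T_r)=O(r^2)$, then compare the leafwise and pluripotential wedge products), but two of the central steps do not go through as written.

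\textbf{The $O(r^2)$ mass estimate.} You attribute the bound $\M(T-T_r)=O(r^2)$ to ``an isoperimetric-type estimate (Lelong--Wirtinger plus the small boundary hypothesis)'' together with the claim that a piece which is not a graph of size $r$ has area $\geq c r^2$. A lower bound on the area of a bad piece gives control on the \emph{number} of bad pieces, not on their \emph{total area}, which is the quantity one has to bound from above; indeed a non-graphical component (multiple sheets, branch points) can have area far larger than $r^2$, so this route stalls. What the argument actually requires is Ahlfors' theorem of covering surfaces: partitioning the grid $\qq_j$ into four non-overlapping sub-grids $\qq^\ell$ and applying Ahlfors' inequality disk-by-disk over $\Delta_n$ gives
\begin{equation*}
N(\qq^\ell) \;\geq\; (q-4)\,\area_{\pu}(\Delta_n)\;-\;h\,\length_{\pu}(\fr\Delta_n),
\end{equation*}
where $N(\qq^\ell)$ counts the good graphical components over squares of $\qq^\ell$. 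Dividing by $\area_{\pu}(\Delta_n)$, using $\length_{\pu}(\fr\Delta_n)=o(\area_{\pu}(\Delta_n))$, and summing the four sub-grids is precisely what produces the $(q-4)r^2$-versus-total-area comparison. This covering-theory input is not a ``bookkeeping refinement'' of the single-disk case; it is the engine of the estimate, and it is absent from your sketch. (Note also that the paper projects along pencils of genus $\geq 2$ curves rather than a coordinate grid of bidisks; this gives Montel-type normality of the family of graphs globally on the projective surface, which is needed to pass to the limit in $n$ and extract $T_{\qq_j,k}$.)

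\textbf{The intersection statement.} You invoke Chern--Levine--Nirenberg ``with the uniform bound on $u_S$'' to claim $\M(S\wedge(T-T_r))=O(r^2)$. This does not apply: $T_r$ is a uniformly laminar but \emph{non-closed} current (the boundaries of the constitutive plaques contribute), so $T-T_r$ is positive but not closed, and CLN in the form you cite is unavailable. Moreover, even granting a bound on the total mass difference, that would only control $\M(S\wedge T_r)\to\M(S\wedge T)$ and not the monotone convergence of measures the lemma asserts. The working estimate is of a different shape: after offsetting the grid so that little $T\wedge S$-mass sits near cell boundaries, one performs an integration by parts on each cube $C(\lambda Q,\lambda Q')$ of size $r$ to obtain
\begin{equation*}
\int_{C(\lambda Q,\lambda Q')}(T-T_r)\wedge S\;\leq\;C(\lambda)\,\moco(u_S,r)\,\frac{1}{r^2}\,\M\bigl((T-T_r)\rest{C(Q,Q')}\bigr),
\end{equation*}
so that summing over cells and using $\M(T-T_r)=O(r^2)$ yields $O(\moco(u_S,r))\to 0$. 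The factor $1/r^2$, which is cancelled precisely by the mass estimate from the first step, and the role of the modulus of continuity of $u_S$ (rather than its sup-norm) are essential, and do not emerge from a CLN-type bound.

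Your closing remark about the injectivity hypothesis --- that two disks of $\Delta_n$ either coincide on an open set or are disjoint, so that multiplicities do not accumulate --- correctly identifies where care is needed when summing Ahlfors' inequality over the disks of $\Delta_n$, and the discretization of $N(R_n)$ into a finite union of disks is the right way to see that Ahlfors--Nevanlinna currents fall under the injective-Ahlfors umbrella. But without the covering-surface inequality for the first step and the integration-by-parts estimate with $\moco(u_S,r)$ for the second, the proof as sketched does not close.
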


With this lemma at hand, let us conclude the proof of Theorem~\ref{thm:nevanlinna}. Since $X$ is projective, we can apply the 
previous lemma to  any Ahlfors-Nevanlinna current $N$ associated to $W^s(\cx)$. 
In this way we get a family of currents $N_r$ such that  $N_{r}\wedge T^s_\omega$ increases to $N\wedge T^s_\omega$
as $r$  decreases to~$0$. 
On the other hand, by Lemma~\ref{lem:stable_intersection_Ts}, the intersection of $T^s_\omega$ with 
every disk contained 
in $W^s(\cx)$ vanishes, so again using the fact that $T^s_\omega$ has a continuous potential, 
we infer that if $\Delta$ is any disk subordinate to $N_r$, $T^s_\omega\wedge \set{\Delta} =0$. Hence 
$N_{r}\wedge T^s_\omega = 0$ for every $r>0$, and finally  $N\wedge   T^s_\omega=0$, as desired.  
\qed

\begin{proof}[Proof of Corollary \ref{cor:nevanlinna}] 
Since (b') and (a) are contradictory, (b') implies (b). Conversely assume that $\mu$ is hyperbolic, non atomic 
and supported on a $\Gamma_\nu$-invariant curve $C$. Since $\mu$ has no atom, it gives full mass to the regular set of 
$C$, hence $\Sigma\times T(\mathrm{Reg}(C))$ defines a $DF$-invariant bundle, and by the Oseledets theorem    the ergodic random dynamical system $(C, \nu, \mu)$ must either have a positive or a negative Lyapunov exponent. If this 
exponent were
positive then $\mu$ would be atomic, as observed in Section~\ref{par:hyperbolicity_of_mu}.
Hence, the Lyapunov exponent tangent to $C$ is negative
and $W^s(\cx)$ is contained in $C$ for $\m_+$-almost every $\cx$. So (b) implies (b').
\end{proof}

\begin{proof}[Proof of Corollary \ref{cor:HD_stable}]
Since $\nu$ satisfies an exponential moment condition, Theorem \ref{thm:holder} provides a $\theta>0$ such 
that $u_{T^s_\omega}$ is  H\"older continuous of exponent  $\theta$ for $\nu^\N$-almost every~$\omega$. This implies
that $T^s_\omega$ gives mass $0$ to sets of Hausdorff dimension $<2 + \theta$  (see \cite[Thm 1.7.3]{sibony}). 
Since for $\m_+$-almost every $x$, $\supp(T^s_\omega)\subset \overline{ W^s(\cx)}$, we infer that  
$\mathrm{HDim}\big({\overline{ W^s(\cx)}}\big)\geq 2+\theta$. 

To conclude the proof it is enough to show that $\cx\mapsto \mathrm{HDim}\big({\overline{ W^s(\cx)}}\big)$ 
is constant on a set of full $\m_+$-measure. Indeed,  $\cx\mapsto \mathrm{HDim}\big({\overline{ W^s(\cx)}}\big)$ 
defines an  $F_+$-invariant function, defined on the full measure set $\mathcal R$ 
of Pesin regular points. If we show that this function is measurable, then the result follows 
by ergodicity. This is a consequence of the following two facts:
\begin{enumerate}
\item  the assignment  
$\cx\mapsto   {\overline{ W^s(\cx)}}$ defines a Borel map from $\mathcal R$ to the space $\mathcal{K}(X)$ 
of compact subsets of $X$; 
\item the function $\mathcal{K}(X)\ni K \mapsto \mathrm{HDim}(K)$ is Borel (see \cite[Thm 2.1]{mattila-mauldin}).
\end{enumerate}
In both cases $\mathcal{K}(X)$ is endowed with the topology induced by the Hausdorff metric. 
For the first point, observe that $\mathcal R$ is the increasing union 
of the compact sets $\mathcal R_\e$ so it is Borel; then, on a Pesin set $\mathcal R_\e$, 
$\cx\mapsto   {\overline{ W^s_r(\cx)}}$   is continuous, so $\cx\mapsto  F^{-n} \big(\overline{ W^s_r(F^n(\cx))}\big)$ is continuous as well. 
Since $F^{-n} \big(\overline{ W^s_r(F^n(\cx))}\big)$ converges to  ${\overline{ W^s(\cx)}}$ in the Hausdorff topology, 
we infer that $\cx\mapsto   {\overline{ W^s (\cx)}}$  is a pointwise limit of continuous maps on $\mathcal R_\e$, hence Borel, 
and finally $\cx\mapsto   {\overline{ W^s (\cx)}}$ is Borel on $\mathcal R$, as claimed.   
\end{proof} 

\section{No invariant line fields}\label{sec:No_Invariant_Line_Fields}

As above, let  $(X, \nu)$ be a random holomorphic dynamical system satisfying the moment condition~\eqref{eq:moment}, 
and $\mu$ be an ergodic hyperbolic stationary measure. 
From \S\ref{subs:lyapunov} and \S\ref{subs:pesin}, the local stable manifolds  and stable Oseledets directions are 
$\mathcal F^+$-measurable; so, $E^s(\xi, x)$ is naturally identified to 
$E^s(\omega, x)$ under the  projection $(\xi, x) \in\X \mapsto (\omega,x)\in \X_+$, and  the same property holds for stable manifolds.  
Then,  $\m_+$-almost every $\cx \in \X_+$
has a Pesin stable manifold $W^s(\cx)$ (resp. direction $E^s(\cx)$). 
Let $V (\cx)=V(\omega,x)$ be such a measurable family of objects (stable manifolds, or stable directions, etc); we say that 
$V (\cx)$ is \textbf{non-random} if for  $\mu$-almost every $x$, $V(\omega,x)$ 
does not depend on 
$\omega$, that is,  there exists $V(x)$ such that $V({\omega},x) = V(x)$  for  $\nu^\N$-almost every $\omega$. 
If $V$ is not non-random,  
we say that $V$ \textbf{depends non-trivially on the itinerary}. 
 Since stable directions depend only on the future,  the random versus non-random dichotomy can be analyzed in $\X_+$ or in $\X$.
Our purpose in this section is to establish the following result.

\begin{thm}\label{thm:alternative_stable}
Let $(X, \nu)$ be a non-elementary  random holomorphic dynamical system on a compact K\"ahler surface 
satisfying the Condition \eqref{eq:moment}. 
Let $\mu$ be an ergodic and hyperbolic  stationary measure,  not supported on a $\Gamma_ \nu$-invariant curve.  Then the 
following alternative holds:
\begin{enumerate}[{\em (a)}]
\item
either the Oseledets stable directions depend non-trivially on the itinerary;
\item
or $\mu$ is $\nu$-almost surely invariant and  $h_\mu(X, \nu) = 0$. 
\end{enumerate}
\end{thm}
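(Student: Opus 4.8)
The plan is to prove Theorem~\ref{thm:alternative_stable} by contradiction: assume that the Oseledets stable direction $E^s$ is non-random, i.e. there is a measurable $\Gamma_\nu$-invariant line field $x\mapsto E^s(x)$ on $T_xX$ defined $\mu$-almost everywhere, and deduce that $\mu$ is $\nu$-almost surely invariant with $h_\mu(X,\nu)=0$. The first reduction is to observe that non-randomness of the \emph{direction} forces non-randomness of the whole local stable manifold $W^s_\loc$: indeed, the stable manifold is obtained from the stable direction by a graph transform construction driven by $(f_0,f_1,\dots)$, but if $E^s(\cx)=E^s(x)$ is independent of $\omega$, then a standard argument (a local complex-geometry lemma in the spirit of the one from \cite{bls} announced in the introduction) shows that the family of stable manifolds through $x$ tangent to the fixed line $E^s(x)$ and satisfying the Pesin contraction estimate is actually \emph{unique}, hence also non-random. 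This is the place where working on the complex surface, rather than merely a real surface, is used: a holomorphic disk tangent to a prescribed complex line and invariant under forward contraction is rigid, whereas in the real-smooth category one only controls a jet.

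Once $W^s_\loc$ is non-random, I would feed this into Theorem~\ref{mthm:currents}/Corollary~\ref{cor:Tsomega} and Theorem~\ref{thm:nevanlinna}. Since $\mu$ is hyperbolic and not supported on a $\Gamma_\nu$-invariant curve, Corollary~\ref{cor:nevanlinna} puts us in Alternative~(b) of Theorem~\ref{thm:nevanlinna}: for $\m_+$-a.e.\ $\cx$ the stable manifold $W^s(\cx)$ is Zariski dense and its unique normalized Ahlfors--Nevanlinna current is $T^s_\omega$, whose cohomology class is $e(\omega)$. But $W^s_\loc(\cx)$, hence the entire curve $W^s(\cx)=\bigcup_n f_{\sigma^n\omega}^{-n}\big(W^s_\loc(F^n\cx)\big)$ \dots no: the global stable manifold is built by pushing local pieces \emph{forward-backward} and in any case the Ahlfors--Nevanlinna current it carries is $T^s_\omega$, which by Theorem~\ref{thm:def_stationary} and Furstenberg's description of $\mu_\partial$ genuinely depends on $\omega$. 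If instead $W^s_\loc(\cx)$ is non-random, then so is its Zariski closure and so is the Ahlfors--Nevanlinna current attached to $W^s(\omega,x)$ as a function of $\omega$ for fixed $x$ — contradicting the nontrivial dependence of $e(\omega)$ on $\omega$, \emph{unless} the measure of the set of such $x$ where the construction applies is handled carefully. To make this airtight one shows: non-randomness of $E^s$ $\Rightarrow$ non-randomness of $W^s$ $\Rightarrow$ for $\mu$-a.e.\ $x$ the current $N_\omega(x)$ (the AN current through $W^s(\omega,x)$) is $\nu^\N$-a.s.\ equal to a fixed current $N(x)$; but $N_\omega(x)=T^s_\omega$ a.s., so $T^s_\omega$ would be non-random, contradicting $[T^s_\omega]=e(\omega)$ and Proposition/Theorem~\ref{thm:def_stationary} (the measure $\mu_\partial$ is non-atomic, so $e(\omega)$ is not a.s.\ constant).

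This contradiction already rules out non-randomness of $E^s$ outright, which would seemingly prove the theorem without ever reaching conclusion~(b) — so the subtle point is that the dichotomy in the statement is genuinely a dichotomy only after one also imports the converse mechanism from \cite{br}: when $E^s$ \emph{is} non-random, Brown--Rodriguez-Hertz's machinery (their measure-rigidity/invariance results, applied to the skew product $(\X_+,F_+,\m_+)$ with the extra input that the $\mathcal F^+$-measurable stable line field descends to a $\Gamma_\nu$-invariant line field on $X$) forces $\mu$ to be $\nu$-almost surely invariant and to have zero fiber entropy $h_\mu(X,\nu)=0$. Concretely, once $\mu$ is invariant one has $\m=\nu^\Z\times\mu$ by Proposition~\ref{pro:m}(3), and then the vanishing of fiber entropy follows from Corollary~\ref{cor:rokhlin_zero_entropy}: the unstable conditionals $\m^u_\cx$ must be trivial because a non-random stable direction combined with the Pesin/Rokhlin formalism and the invariance principle (Theorem~\ref{thm:invariance_principle}, via the argument of Ledrappier recalled in the glossary) leaves no room for positive entropy — positive entropy would by Proposition~\ref{pro:margulis-ruelle} force hyperbolicity with a genuinely random unstable direction, and by symmetry (reversing time) a random stable direction, contradicting our hypothesis.

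The main obstacle I anticipate is precisely the first implication — rigidity of the stable manifold given rigidity of its tangent line — and the bookkeeping needed to transfer "non-random" statements between $\X_+$ and $\X$ and between line fields, local manifolds, global manifolds, Zariski closures, and Ahlfors--Nevanlinna currents, all while only objects "depending on the future" are legitimately functions on $\X_+$. I would structure the proof as: (i) a lemma from local complex geometry (the $\cite{bls}$-type lemma) giving uniqueness of the holomorphic stable graph with prescribed tangent; (ii) the deduction that non-random $E^s$ $\Rightarrow$ non-random $W^s_\loc$ $\Rightarrow$ non-random $T^s_\omega$, contradicted by Furstenberg theory, hence if we are \emph{not} in case~(a) the stable direction is non-random and we instead run the \cite{br} argument; (iii) invoking Brown--Rodriguez-Hertz plus the invariance principle and the Rokhlin formula to conclude $\nu$-a.s.\ invariance and $h_\mu(X,\nu)=0$. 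Steps (ii)'s contradiction-extraction and (iii)'s entropy vanishing are where the real work lies; the rest is assembling already-established facts.
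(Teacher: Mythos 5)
The decisive gap is in your ``first reduction'': non-randomness of the stable \emph{direction} $E^s$ does \emph{not} imply non-randomness of the local stable \emph{manifold} $W^s_\loc$, and no ``rigidity of the holomorphic graph given its tangent'' can save this. The local stable manifold $W^s_\loc(\omega,x)$ is determined by the full sequence of future iterates $f_0, f_1, \dots$ through the graph transform; two distinct itineraries $\omega,\omega'$ with $E^s(\omega,x)=E^s(\omega',x)$ will in general produce distinct local stable manifolds that merely share a tangent line. In fact, in the regime of this theorem this must happen: $\mu$ is not supported on an invariant curve and hyperbolic, so by Theorem~\ref{thm:nevanlinna} the Ahlfors--Nevanlinna current of $W^s(\omega,x)$ is $T^s_\omega$, whose class $e(\omega)$ depends non-trivially on $\omega$ by Furstenberg theory (Theorem~\ref{thm:def_stationary}). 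Thus $W^s(\omega,x)$ \emph{always} genuinely depends on $\omega$, whereas the claim of the theorem is about the direction field. The alternative (b) is precisely the case where the stable manifolds all share a common tangent line (generically to some finite order $k_0\ge 2$) yet are pairwise distinct. Once your first implication collapses, the ``contradiction with Furstenberg'' you extract is vacuous, and the argument for (b) you sketch in step~(iii) is also circular: you invoke the fact that positive entropy forces a random unstable direction, but that is exactly the content of Corollary~\ref{cor:positive_entropy}, which depends on this theorem. Moreover, the invariance principle (Theorem~\ref{thm:invariance_principle}) is stated under $\lambda^+\ge\lambda^-\ge 0$, so it does not apply to the hyperbolic $\mu$ at hand.

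The paper's actual proof takes a genuinely different geometric route. It first establishes (Lemma~\ref{lem:generic_multiplicity}) that there is a well-defined generic order of tangency $k_0<\infty$ between $W^s_\loc(\omega,x)$ and $W^s_\loc(\omega',x)$ for independent $(\omega,\omega')$; finiteness of $k_0$ is exactly where Theorem~\ref{thm:nevanlinna} and the non-atomicity of $\mu_\partial$ are used, since they force $W^s(\omega_1,x)\neq W^s(\omega_2,x)$ for a positive-measure set of pairs. If $k_0=1$ we are in case~(a). If $k_0\ge 2$, the paper uses a \emph{quantitative} version of the BLS lemma (Lemma~\ref{lem:BLS_intersection}/Corollary~\ref{cor:bls_quantitative}): if $\Delta_1,\Delta_2$ are tangent to order $k$ with osculation bounded below, and $\Delta_3$ is $\delta$-close to $\Delta_1$ but disjoint from it, then $\Delta_3$ crosses $\Delta_2$ transversally in $k$ points. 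Feeding into this a sequence of points $x_j\to x$ on the same global stable manifold, all with local stable manifolds (for a common itinerary $\zeta$) tangent to a fixed $W^s_\loc(\xi,x)$ to order $k_0$, gives disjoint disks accumulating on a disk they should cross transversally --- a contradiction that forces the stable conditionals of $\m$ to be atomic (hence $h_\mu=0$ by Corollary~\ref{cor:rokhlin_zero_entropy}) and, via a Pesin stable partition and \cite[Prop.~11.1]{br}, $\nu$-a.s.\ invariance. That quantitative tangency/perturbation lemma --- not a uniqueness-of-graphs statement --- is the ``lemma from local complex geometry inspired by \cite{bls}'' alluded to in the introduction; your proposal misidentifies its role.
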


In fact, 
our stiffness theorems imply that often, $\mu$ is also invariant in case {(a)} (see \S \ref{sec:stiffness}). In {(b)}, the almost-sure invariance
implies that $\mu$ is in fact $\Gamma_\nu$-invariant (see Remark~\ref{rem:discrete}). 
It turns out that   {(a)} and  {(b)} are mutually exclusive. Indeed
the main argument of  \cite{br} 
(\footnote{This actually requires checking that the whole proof of  \cite{br}
can be reproduced   in our complex setting: we will come back to this issue  in a forthcoming paper. 
 Since we are just using this remark here in Corollary \ref{cor:positive_entropy} we take the liberty to anticipate on that research.})
 implies that the fiber entropy is positive if the  Oseledets stable directions depend non-trivially on the itinerary
 (see \cite[Rmk 12.3]{br}). So we get the following:

\begin{cor}\label{cor:positive_entropy}
Let $(X, \nu, \mu)$ be as in Theorem \ref{thm:alternative_stable}. If 
$\mu$ is not $\nu$-almost surely invariant, then its fiber entropy is positive.
\end{cor}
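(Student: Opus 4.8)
The plan is to obtain the corollary as an essentially immediate consequence of Theorem~\ref{thm:alternative_stable} combined with the main line of argument in \cite{br}. First I would note that the two alternatives in Theorem~\ref{thm:alternative_stable} are mutually exclusive, and that alternative~(b) contains, among its conclusions, the statement that $\mu$ is $\nu$-almost surely invariant. Hence, assuming that $\mu$ is \emph{not} $\nu$-almost surely invariant, alternative~(b) is ruled out; and since by hypothesis $\mu$ is ergodic, hyperbolic, and not supported on a $\Gamma_\nu$-invariant curve, Theorem~\ref{thm:alternative_stable} forces alternative~(a): the Oseledets stable directions $E^s(\omega,x)$ genuinely depend on the itinerary $\omega$.

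Next I would invoke the implication extracted from \cite{br}: whenever the field of stable directions depends non-trivially on the future, the fiber entropy $h_\mu(X,\nu)$ is strictly positive. Conceptually, non-triviality of $\omega\mapsto E^s(\omega,x)$ is precisely the input required to run the exponential-drift mechanism of Eskin--Mirzakhani and Brown--Rodriguez-Hertz, which produces extra invariance along unstable manifolds; such additional invariance is incompatible with the unstable conditionals of $\m$ being atomic, and by Corollary~\ref{cor:rokhlin_zero_entropy} (the Rokhlin entropy formula) non-atomic unstable conditionals are equivalent to $h_\mu(X,\nu)>0$. In practice one simply cites \cite[Rmk~12.3]{br}, where this implication is recorded. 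Combining the two steps yields $h_\mu(X,\nu)>0$, which is the assertion of the corollary.

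The main --- and essentially only --- obstacle is that \cite{br} is formulated for $C^2$ diffeomorphisms of compact \emph{real} surfaces, whereas here the skew product $F_+$ acts fiberwise by automorphisms of the complex surface $X$. One must therefore check that the portion of \cite{br} producing the implication ``stable directions depend non-trivially on the itinerary $\Rightarrow$ $h_\mu(X,\nu)>0$'' carries over to the holomorphic category, in particular that the construction of the extra unstable invariance and its clash with atomicity of the unstable conditionals still work when the leaves are complex curves. This verification is deferred to a forthcoming paper; as indicated in the footnote to Theorem~\ref{thm:alternative_stable}, we anticipate it here, the corollary being used only as an auxiliary remark.
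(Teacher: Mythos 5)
Your proposal is correct and follows essentially the same route as the paper: rule out alternative~(b) of Theorem~\ref{thm:alternative_stable} using the non-invariance hypothesis, conclude alternative~(a), and invoke \cite[Rmk~12.3]{br} to pass from ``stable directions depend non-trivially on the itinerary'' to $h_\mu(X,\nu)>0$, flagging the same caveat about verifying the argument of \cite{br} in the holomorphic setting (which the paper defers to forthcoming work in its footnote). Nothing to add.
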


\begin{vlongue}
To motivate the following pages, let us
 give a heuristic explanation  for the fact that 
 $h_\mu(X,\nu)=0$ when the stable 
directions are non-random. Fix a stable Pesin partition $\eta^s$; according to Corollary~\ref{cor:rokhlin_zero_entropy},  we have to 
show that the conditional measures $\m(\cdot \vert \eta^s(\cx))$ are atomic. 
Since the stable directions are non-random, 
the stable manifolds $W^s_\loc(\xi,x)$ and $W^s_\loc(\xi’,x)$ are generically tangent at $x$. 
For simplicity, assume 
that they are tangent for $\mu$-almost all $x$ and {\textit{for all pairs $(\xi,\xi’)$}, and that 
\textit{$W^s_\loc(\xi,x)$ 
depends continuously on $(\xi,x)$}}. Take such a generic point $x$; if $\m(\cdot \vert \eta^s(\xi,x))$ is not atomic, there
is a sequence of generic points $x_j\in W^s_\loc(\xi,x)$ converging to $x$ in $X\simeq X_\xi$. Fix $\xi’\neq \xi$. 
Then by continuity $W^s_\loc(\xi’,x_j)$ converges towards $W^s_\loc(\xi’,x)$,  is disjoint 
from $W^s_\loc(\xi’,x)$, and is tangent to $W^s(\xi, x)$ at $x_j$. 
This contradicts the following local geometrical result: if $C$ and $D$ 
are local smooth irreducible curves through the origin in $\disk^2$, with an order of contact equal to $k$, 
and if $D_n\subset \disk^2$ is a sequence of curves such that $D_n\cap D=\emptyset$ 
but $D_n$ converges towards $D$ in $\disk^2$, then for $n$ sufficiently large, $D_n$ intersects $C$ transversally in $k$ points.  
\end{vlongue}

\subsection{Intersection multiplicities}
\begin{vlongue}
Let us start with some basics on intersection multiplicities for curves. 
\end{vlongue} 
If  $V_1$ and $V_2$ are germs of  curves at $0\in \C^2$, with an isolated intersection at 0,
 the \textbf{intersection multiplicity}  $\inter_0(V_1, V_2)$ is, by definition, 
the number of intersection points of $V_1$ and $V_2+u$ in $N$ for small generic $u\in \C^2$, where $N$ is a neighborhood 
of 0 such that $V_1\cap V_2\cap N = \set{0}$ (see \cite[\S 12]{chirka}).
It is a positive integer, and 
$\inter_0(V_1, V_2)=1$ if and only if $V_1$ and $V_2$ are transverse at 0. 
We extend this definition by setting $\inter_0(V_1, V_2) = 0$ 
if $V_1$ or $V_2$ does not contain 0 and  $\inter_0(V_1, V_2)= \infty$ if 0 is not an isolated point of $V_1\cap V_2$, that is locally
$V_1$ and $V_2$  share an  irreducible component. The intersection multiplicity  extends to analytic cycles (that is, formal integer combinations of analytic curves).

\begin{lem}\label{lem:upper_sc_inter} 
The multiplicity of intersection $\inter_0(\cdot, \cdot)$ is upper semi-continuous for the Hausdorff topology 
on analytic cycles. 
\end{lem}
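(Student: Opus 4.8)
The statement to prove is that $\inter_0(\cdot,\cdot)$ is \emph{upper} semi-continuous: if $V_1^{(n)}\to V_1$ and $V_2^{(n)}\to V_2$ as analytic cycles in a neighborhood of $0$ (Hausdorff topology, with multiplicities), then $\inter_0(V_1,V_2)\geq \limsup_n \inter_0(V_1^{(n)},V_2^{(n)})$. The basic principle is conservation of intersection number under degeneration: intersection points of the limiting cycles can only be \emph{created} in the limit (several intersection points of the $V_i^{(n)}$ can collide at $0$), never destroyed. So the plan is to count, for the approximating cycles, all intersection points lying near $0$ and show that their total intersection number is at most $\inter_0(V_1,V_2)$, which forces $\inter_0(V_1^{(n)},V_2^{(n)})\leq \inter_0(V_1,V_2)$ eventually.

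First I would reduce to the case where $0$ is an isolated point of $V_1\cap V_2$, since otherwise $\inter_0(V_1,V_2)=\infty$ and there is nothing to prove. Choose a small bidisk (or ball) $N$ around $0$ such that $\overline{V_1\cap V_2\cap N}=\{0\}$, and such that on $\partial N$ the cycles $V_1$ and $V_2$ meet properly and at positive distance apart; this is possible by the local parametrization theorem and the fact that $0$ is isolated in $V_1\cap V_2$. Since $V_i^{(n)}\to V_i$ in the Hausdorff topology on cycles, for $n$ large the cycles $V_1^{(n)}$ and $V_2^{(n)}$ are uniformly close to $V_1$, $V_2$; in particular $V_1^{(n)}\cap V_2^{(n)}\cap \partial N=\emptyset$ for $n$ large (they stay within a small distance of the disjoint traces $V_1\cap\partial N$ and $V_2\cap\partial N$), and all intersection points of $V_1^{(n)}$ and $V_2^{(n)}$ inside $N$ must accumulate only at $0$ as $n\to\infty$ — again because $V_1\cap V_2\cap N=\{0\}$. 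Then I invoke the standard fact that the intersection number $\sum_{p\in V_1\cap V_2\cap N}\inter_p(V_1,V_2)$, regarded as the degree of a suitable map (or via the cohomological formula $\int_N [V_1]\wedge[V_2]$, or Rouché-type stability of the number of solutions of a holomorphic system counted with multiplicity), is constant under the deformation: for $n$ large,
\[
\sum_{p\in V_1^{(n)}\cap V_2^{(n)}\cap N}\inter_p\big(V_1^{(n)},V_2^{(n)}\big)\;=\;\inter_0(V_1,V_2).
\]
Since every term on the left is a nonnegative integer and $0$ contributes one of them (when $0\in V_1^{(n)}\cap V_2^{(n)}$), we get $\inter_0(V_1^{(n)},V_2^{(n)})\leq \inter_0(V_1,V_2)$ for all large $n$, whence $\limsup_n \inter_0(V_1^{(n)},V_2^{(n)})\leq \inter_0(V_1,V_2)$. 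The boundary cases are immediate: if $0\notin V_1$ or $0\notin V_2$ then the same is true for $V_i^{(n)}$ near $0$ and both sides are eventually $0$.

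The main obstacle is the conservation-of-intersection-number input, i.e. the precise justification that no intersection mass escapes through $\partial N$ under the degeneration. This requires the Hausdorff convergence of cycles to be strong enough to guarantee both $V_1^{(n)}\cap V_2^{(n)}\cap\partial N=\emptyset$ for large $n$ and the equality of total intersection numbers in $N$; the cleanest route is to represent the intersection number as $\int_N \theta_1^{(n)}\wedge\theta_2^{(n)}$ for smooth closed forms $\theta_i^{(n)}$ representing the currents $[V_i^{(n)}]$ relative to a fixed boundary datum, or — more elementarily — to present $V_i$ locally as zero sets of holomorphic functions $g_i$ and $V_i^{(n)}$ as zero sets of $g_i^{(n)}\to g_i$ uniformly, then apply the argument-principle stability of the number of common zeros of $(g_1,g_2)$ in $N$. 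I would cite \cite{chirka} for the relevant continuity/conservation statement (see \S 12 therein) rather than reprove it. With that in hand the rest is routine.
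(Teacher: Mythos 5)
Your proposal is correct and follows essentially the same route as the paper: reduce to the case $\inter_0(V_1,V_2)<\infty$ (so $0$ is isolated in $V_1\cap V_2$), fix a neighborhood $N$ with $V_1\cap V_2\cap N=\{0\}$, and invoke Chirka's stability of proper intersections to conclude that no intersection multiplicity is gained at $0$ in the limit. The extra Rouché/degree discussion is just an unpacking of the cited conservation statement, which the paper also simply quotes from \cite[Prop.~2, p.~141]{chirka}.
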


In our situation we will only apply this result to holomorphic disks with multiplicity 1, in which case the topology is just the usual local Hausdorff topology.

\begin{proof} Assume $\inter_0(V_1, V_2)=k$ and $V_{1,n} \to V_1$ (resp. $V_{2,n} \to V_2$)
 as cycles; we have to show that $\limsup  \inter_0(V_{1,n}, V_{2,n}) \leq k$. If $k = \infty$ there is nothing to 
 prove. Otherwise, $\set{0}$ is isolated in $V_1\cap V_2$, so we can fix a neighborhood $U$ of $0$ such that 
 $V_1\cap V_2\cap U  =\set{0}$; then, the result follows from \cite[Prop 2 p.141]{chirka} (stability of proper intersections). 
\end{proof}

\subsection{Generic intersection multiplicity of stable manifolds}\label{subs:generic_intersection}
Recall from \S\ref{subs:pesin} that for $\m$-almost every $\cx = (\xi,x)\in \X$ there exists a local stable manifold $W^s_{r(\cx)}(\cx) \subset 
X_\xi\simeq X$, depending measurably on $\cx$; we might simply denote it by $W^s_\loc(\cx)$.  

Let us cover a subset of full measure in $\X$ by Pesin subsets $\mathcal R_{\e_n}$. Take a point $x\in X$, and consider
the set of points $((\xi, x), (\zeta, x))\in {\mathcal R_{\e_n}}\times {\mathcal R_{\e_m}}$, for some fixed pair of indices $(n,m)$; 
Lemma~\ref{lem:upper_sc_inter} shows that the 
intersection multiplicity $\inter_x \lrpar{W^s_\loc(\xi, x), W^s_\loc(\zeta, x)}$ is an 
 upper semi-continuous function of $((\xi, x), (\zeta, x))$
on that compact set. Thus, the intersection multiplicity $\inter_x \lrpar{W^s_\loc(\xi, x), W^s_\loc(\zeta, x)}$ 
is a measurable function of  $(\xi,\zeta)$.  
Recall that 
\begin{itemize}
\item the $\sigma$-algebra $\mathcal F^-$ on $\cX$ is generated, modulo $\m$-negligible sets, by 
the partition into subsets of the form
$\Sigma^u_\loc(\xi)\times \set{x}$ (see \S~\ref{par:definition_skew_products}, Equation~\eqref{eq:definition_sigma_s_u});
\item  $\xi \mapsto \m_\xi$ is  $\mathcal F^-$-measurable, {i.e} $\m_\xi = \m_\zeta$ almost surely when $\zeta\in \Sigma^u_\loc(\xi)$;
\item the conditional measures of $\m$ with respect to this partition satisfy  (see Equation~\eqref{eq:conditional})
\begin{equation}\label{eq:conditional_Part_8}
\m(\; \cdot\; \vert \;   \mathcal F^-(\cx)) =    {\nu^\Z(\; \cdot\; \vert \;  \Sigma^u_\loc(\xi))}\times \delta_x.
\end{equation}
\end{itemize}
 
 The next lemma can be seen  as a complex analytic version of   \cite[Lemma 9.9]{br}.  

\begin{lem}\label{lem:alternative_multiplicity}
Let $k\geq 1$ be an integer. Exactly one of the following assertions holds:  
\begin{enumerate}[\em (a)]
\item for $\m$-almost every $\cx=(\xi,x)$ and for $\m(\; \cdot\; \vert \;  \mathcal F^-(\xi,x))$-almost every $\eta$
\[
\inter_x\lrpar{W^s_\loc(\xi, x),  W^s_\loc(\eta, x)}\geq  k+1;
\]
\item  for $\m$-almost every $\cx=(\xi,x)$ and for $\m(\; \cdot\; \vert \;  \mathcal F^-(\xi,x))$-almost every $\eta$ 
\[ 
\inter_x\lrpar{W^s_\loc(\xi, x),  W^s_\loc(\eta, x)}\leq  k.
\]
\end{enumerate}
\end{lem}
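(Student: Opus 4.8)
The plan is to establish a zero-one law for the event that the generic intersection multiplicity exceeds $k$, exploiting the structure of the $\sigma$-algebra $\mathcal F^-$ and the invariance of the stable directions under the dynamics. First I would fix $k\ge 1$ and define, for $\m$-almost every $\cx=(\xi,x)$, the measurable function
\[
m_k(\cx)=\m\left(\left\{\eta\in\Sigma^u_\loc(\xi)\;;\;\inter_x\bigl(W^s_\loc(\xi,x),W^s_\loc(\eta,x)\bigr)\geq k+1\right\}\,\middle|\,\mathcal F^-(\cx)\right),
\]
which makes sense because, as noted in the paragraph preceding the statement, $(\xi,\eta)\mapsto\inter_x(W^s_\loc(\xi,x),W^s_\loc(\eta,x))$ is measurable (upper semicontinuous on the Pesin sets $\mathcal R_\e$), and because of the explicit description \eqref{eq:conditional_Part_8} of the conditional measure. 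Assertion (a) is equivalent to $m_k=1$ a.e., assertion (b) to $m_k=0$ a.e.; so the whole point is to show $m_k\in\{0,1\}$ almost surely.

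The key observation is that $m_k$ is essentially $\mathcal F^+$-measurable as well as $\mathcal F^-$-measurable, and hence constant. For the $\mathcal F^-$-measurability: the integrand only involves the local stable manifold at $x$ along the fiber $X_\xi$, and $W^s_\loc(\xi,x)=W^s_\loc(\eta,x)$ as subsets of $X$ whenever $\eta\in\Sigma^s_\loc(\xi)$ (stable manifolds depend only on the future); on the other hand the conditioning is exactly on the partition element $\mathcal F^-(\cx)=\Sigma^u_\loc(\xi)\times\{x\}$. So $m_k(\cx)$ depends on $\cx$ only through $(\text{the tail of }\xi\text{ in negative times})$ and $x$ — that is, it is $\mathcal F^-$-measurable. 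Wait, this needs care: the value $\inter_x(W^s_\loc(\xi,x),W^s_\loc(\eta,x))$ genuinely depends on the future of $\xi$ (through $W^s_\loc(\xi,x)$) and the future of $\eta$. What is truly $\mathcal F^-$-measurable is the function $\cx\mapsto m_k(\cx)$ after integrating out $\eta$, because the integrand is a measurable function of $(\xi,\eta)$ and the conditional law of $\eta$ given $\mathcal F^-(\cx)$ is $\nu^\Z(\cdot\mid\Sigma^u_\loc(\xi))$, which is $\mathcal F^-(\cx)$-measurable. So $m_k$ is $\mathcal F^-$-measurable by construction. For the $\mathcal F^+$-measurability: this is where the dynamics enters. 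One uses the equivariance $f^n_\xi(W^s_\loc(\xi,x))\subset W^s_\loc(F^n(\xi,x))$ and the fact that intersection multiplicity is a biholomorphic invariant, together with the invariance of $\m$ under $F$, to show that $m_k\circ F=m_k$ $\m$-a.e.; so $m_k$ is $F$-invariant, hence (by ergodicity of $\m$) constant. Actually the cleanest route avoids proving full $\mathcal F^+$-measurability: show directly that $m_k$ is $F$-invariant. Indeed $F$ maps $\Sigma^u_\loc(\xi)\times\{x\}$ into $\Sigma^u_\loc(\vartheta\xi)\times\{f^1_\xi(x)\}$ up to a set of measure zero, it pushes the conditional measure forward correctly because $\m$ is $F$-invariant, and $\inter_{f^1_\xi(x)}(f^1_\xi W^s_\loc(\xi,x),f^1_\xi W^s_\loc(\eta,x))=\inter_x(W^s_\loc(\xi,x),W^s_\loc(\eta,x))$ because $f^1_\xi$ is a local biholomorphism near $x$ mapping the germs of $W^s_\loc(\xi,x),W^s_\loc(\eta,x)$ onto germs of $W^s_\loc(\vartheta\xi,f^1_\xi x),W^s_\loc(\vartheta\eta,f^1_\xi x)$. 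Since $\m$ is ergodic under $F$, the $F$-invariant function $m_k$ is $\m$-a.e. constant.

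It then remains to rule out an intermediate constant value $c=m_k\in(0,1)$. If $c>0$ then on a positive-measure set of $\cx$ the relation $\inter_x(W^s_\loc(\xi,x),W^s_\loc(\eta,x))\ge k+1$ holds for a positive-measure set of $\eta$; conditioning is by a product measure $\nu^\Z(\cdot\mid\Sigma^u_\loc)\times\delta_x\simeq\nu^\N$ (in the $\eta$-variable, identifying $\Sigma^u_\loc(\xi)$ with $\Omega$), and the integrand does not change when $\xi$ is replaced by any $\xi'\in\Sigma^u_\loc(\xi)$, because $W^s_\loc(\xi,x)$ depends only on the positive-time coordinates, which are free within $\Sigma^u_\loc(\xi)$. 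So both the "base point past" $\xi$ and the "competitor" $\eta$ range, independently, over $\nu^\N$-distributed futures while sharing the same $x$; the event $\{\inter_x\ge k+1\}$ is thus a symmetric event of the form $G\times G$-type in $(\xi,\eta)$... concretely, the conditional law of $(\xi_{|\ge 0},\eta_{|\ge 0})$ given $\mathcal F^-(\cx)$ is $\nu^\N\times\nu^\N$, and $\inter_x(W^s_\loc(\xi,x),W^s_\loc(\eta,x))=\inter_x(\psi(\xi_{|\ge0}),\psi(\eta_{|\ge0}))$ for a measurable map $\psi$ into germs of curves. By Fubini, $c=\int m_k = (\nu^\N\times\nu^\N)(\{\inter_x(\psi(\cdot),\psi(\cdot))\ge k+1\})$ and by the diagonal symmetry and upper semicontinuity one shows this double-integral event is trivial unless $c\in\{0,1\}$: more precisely, arguing as in \cite[Lemma 9.9]{br}, one invokes the fact that for a symmetric measurable relation $R(u,v)$ on a probability space with $\mathbb P(R)=c$ and the additional structure coming from $F$-ergodicity propagated to the conditional system, $c$ must be $0$ or $1$. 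The main obstacle I anticipate is precisely this last step — making the zero-one argument rigorous, i.e.\ verifying that the $F$-invariance of $m_k$ together with the product structure \eqref{eq:conditional_Part_8} forces $m_k\in\{0,1\}$ rather than merely "$m_k$ is constant" — and checking all the measurability claims (upper semicontinuity of $\inter_x$ across the countable family of Pesin sets, Borel measurability of $\cx\mapsto W^s_\loc(\cx)$, measurable dependence of the conditional measures) so that Fubini and the ergodic theorem genuinely apply. I would handle it by following the template of \cite[Lemma 9.9]{br} verbatim, replacing their real-analytic intersection-multiplicity input with Lemma~\ref{lem:upper_sc_inter} above.
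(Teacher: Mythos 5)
Your overall strategy is the same as the paper's: formulate the conclusion as a zero-or-one statement for the conditional mass $m_k(\cx)=\m(\mathcal Q_k(\cx)\mid\mathcal F^-(\cx))$, where $\mathcal Q_k$ is the partition by the relation ``same $x$ and order of contact $\ge k+1$'', verify the measurability, establish some $F$-equivariance, and invoke the zero-one law of~\cite[Lem.~9.9]{br}. The paper does exactly this and the final step is identical to yours.

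There are, however, two concrete errors in the intermediate reasoning you sketch, and they are precisely the points the paper is careful to avoid asserting.

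First, $m_k$ is not $\mathcal F^-$-measurable. You write that $m_k$ is ``$\mathcal F^-$-measurable by construction'' because the conditional law of $\eta$ given $\mathcal F^-(\cx)$ is an $\mathcal F^-$-measurable function of $\cx$. This ignores the dependence of the \emph{integrand} on $\cx$: the set $\mathcal Q_k(\cx)$ depends on $W^s_\loc(\xi,x)$, hence on the \emph{future} $(\xi_i)_{i\ge 0}$, which is exactly the part of $\xi$ that is free within the atom $\mathcal F^-(\cx)$. So $m_k$ is $\mathcal F^+$-measurable, not $\mathcal F^-$-measurable.

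Second, and more seriously, the claim that ``$F$ maps $\Sigma^u_\loc(\xi)\times\{x\}$ into $\Sigma^u_\loc(\vartheta\xi)\times\{f^1_\xi(x)\}$ up to a set of measure zero'' is false. For $\eta\in\Sigma^u_\loc(\xi)$ the coordinate $\eta_0$ is unconstrained, and $F(\eta,x)=(\vartheta\eta,\eta_0(x))$: as $\eta_0$ varies, the $X$-component $\eta_0(x)$ varies, so $F$ scatters the atom $\mathcal F^-(\cx)$ across many different atoms of $\mathcal F^-$, not into the single atom $\mathcal F^-(F\cx)$. The correct relation goes the \emph{other} way: $F^{-1}(\mathcal F^-(F\cx))$ is the proper subset $\{\eta : \eta_j=\xi_j\ \text{for}\ j\le 0\}\times\{x\}$ of $\mathcal F^-(\cx)$, i.e.\ $F^{-1}\mathcal F^-$ \emph{refines} $\mathcal F^-$. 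Consequently $m_k\circ F$ is not equal to $m_k$; one instead has a martingale relation $\ee[m_k\circ F\mid\mathcal F^-]=m_k$ (because $F^{-1}\mathcal Q_k(F\cx)\cap F^{-1}\mathcal F^-(F\cx)=\mathcal Q_k(\cx)\cap F^{-1}\mathcal F^-(F\cx)$), and it is this martingale structure --- not $F$-invariance plus ergodicity --- that drives the proof of~\cite[Lem.~9.9]{br}. The paper explicitly flags this, noting that the assertion ``says more than the mere ergodicity of $\m$.'' Your proposal eventually acknowledges that the zero-one step is the substance and defers it to~\cite[Lem.~9.9]{br}, which is correct; but the detour through ``$m_k$ is $F$-invariant, hence constant by ergodicity'' would not actually survive scrutiny and should simply be removed: once you are following~\cite[Lem.~9.9]{br} verbatim, the only genuinely new input you need to supply is the measurability of the partition $\mathcal Q_k$ (which the paper constructs via the map $\cx\mapsto(\hexp_{x_0}^{-1}(\pi_X(\cx)),(\psi^s_\cx)'(0),\dots,(\psi^s_\cx)^{(k)}(0))$ on Pesin sets) and the biholomorphic invariance of the intersection multiplicity.
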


\begin{proof} 
The   relation defined on $\X$ by  $(\xi, x)\simeq_{k} (\eta, y)$ if 
$x=y$ and $W^s_\loc(\xi, x)$ and $W^s_\loc(\eta, y)$ have order of contact at least $k+1$ at $x$ 
is an equivalence relation which defines a partition $\mathcal Q_k $  of $\X$. We shall see  below that $\mathcal Q_k $
is a measurable partition.
Since $F\colon \X\to \X$ acts by diffeomorphisms on the fibers $X$ of $\X$, we get that 
$F(\mathcal Q_k (\cx)) = \mathcal Q_k(F(\cx))$ for almost every $\cx\in \X$. Then, the proof of \cite[Lemma 9.9]{br}
applies
 verbatim to show that if 
\begin{equation}
\m\lrpar{\set{\cx \; ; \;  \m(\mathcal Q_k  (\cx)\vert \mathcal F^-(\cx))>0 }} >0,
\end{equation}
then 
 \begin{equation}
\m\lrpar{\set{\cx \; ; \;  \m(\mathcal Q_k (\cx)\vert \mathcal F^-(\cx))=1 }} =1.
\end{equation} 
This is exactly the desired statement. (This assertion says more than the mere ergodicity of $\m$, which  
 only implies that 
$\m\lrpar{\set{\cx; \ \m(\mathcal Q_k (\cx)\vert \mathcal F^-(\cx))>0 }} =1$.)  

It remains to explain why $\mathcal Q_k$ is a measurable partition. For this, we have to express the atoms 
of  $\mathcal Q_k$ as the fibers of a measurable map to a Lebesgue space. 
As for the measurability of the intersection multiplicity, we consider an exhaustion of $\X$ by countably many Pesin sets; then, it
is sufficient to work in restriction to some compact set $\mathcal K \subset \X$ on which local stable manifolds have uniform size and vary continuously. 
Taking a finite cover of $X$ by good charts (see~\S~\ref{par:good_charts}), and restricting $\mathcal K$ again 
to keep only those local stable manifolds which are graphs over some fixed direction, we can also assume that 
$\pi_X(\mathcal K)$ is contained in the image of a chart $\hexp_{x_0}\colon U_{x_0}\to V_{x_0}\subset X$ and 
there is an orthonormal basis $(e_1,e_2)$ such that for every $\cy\in \mathcal K$ the local stable 
manifold $\pi_X(W^s_\loc(\cy))$ is a graph $\{z e_1+\psi^s_\cy(z)e_2\}$ in this chart, for some holomorphic function $\psi^s_\cy$ on $\disk(r)$.
Now the map from $\mathcal K $ to $ \C^2\times \C^k$ defined by 
\begin{equation}
\cx\longmapsto \lrpar{\hexp_{x_0}^{-1}(\pi_X(\cx)), (\psi^s_\cx)'(0), \ldots , (\psi^s_\cx)^{(k)}(0)}
\end{equation}
is continuous.  Since the fibers of this map are precisely the (intersection with $\mathcal K$ of the) atoms of $\mathcal Q_k$, we are done. 
\end{proof}

The previous lemma is stated on $\X$ because its proof relies on the ergodic properties of~$F$. 
However, since stable manifolds depend only on the future, it admits the following more elementary formulation on $X$:
 
 \begin{cor}\label{cor:alternative_multiplicity}
 Let $k\geq 1$ be an integer. Exactly one of the following assertions holds: 
\begin{enumerate}[\em (a)]
\item for $\mu$-almost every $x\in X$ and $(\nu^\N) ^2$-almost every $(\omega, \omega')$, 
\[
\inter_x\lrpar{W^s_\loc(\omega, x),  W^s_\loc(\omega', x)}\geq k+1;
\]
\item or  for $\mu$-almost every $x\in X$ and $(\nu^\N) ^2$-almost every $(\omega, \omega')$, 
\[
\inter_x\lrpar{W^s_\loc(\omega, x),  W^s_\loc(\omega', x)}\leq k.
\]
 \end{enumerate}
 \end{cor}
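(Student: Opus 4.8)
The plan is to reduce Corollary~\ref{cor:alternative_multiplicity} to Lemma~\ref{lem:alternative_multiplicity} by transporting the dichotomy from the two-sided system $(\X,F,\m)$ back to the surface $X$, using that stable manifolds depend only on the future. First I would recall, as established in~\S\ref{subs:lyapunov}--\S\ref{subs:pesin} and summarized at the beginning of~\S\ref{sec:No_Invariant_Line_Fields}, that the local stable manifold $W^s_\loc(\xi,x)$ is $\mathcal F^+$-measurable, so it depends only on the ``future part'' $\omega$ of $\xi=(f_n)_{n\in\Z}$; thus the germ $W^s_\loc(\omega,x)$ is well-defined for $\m_+$-almost every $(\omega,x)\in\X_+$, and for $\m$-almost every $\cx=(\xi,x)$ it coincides with $W^s_\loc(\pi_\Omega(\varpi(\cx)),x)$. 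In particular the integer-valued quantity $\inter_x\!\lrpar{W^s_\loc(\omega,x),W^s_\loc(\omega',x)}$ makes sense for $\mu$-almost every $x$ and $(\nu^\N)^2$-almost every $(\omega,\omega')$, and its measurability follows from the same chart-and-Pesin-set argument as in the proof of Lemma~\ref{lem:alternative_multiplicity} (one exhausts $\X_+$ by Pesin sets on which $W^s_\loc$ varies continuously, works in a good chart of~\S\ref{par:good_charts} where stable manifolds are graphs, and reads off the jets $(\psi^s_\cx)^{(j)}(0)$).

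Next I would identify the relevant conditional measures. By Lemma~\ref{lem:conditionals} the family $\xi\mapsto\m_\xi$ is $\mathcal F^-$-measurable, and Equation~\eqref{eq:conditional_Part_8} says that the conditional of $\m$ on the atom $\mathcal F^-(\cx)=\Sigma^u_\loc(\xi)\times\{x\}$ is $\nu^\Z(\cdot\mid\Sigma^u_\loc(\xi))\times\delta_x$, which under the identification $\Sigma^u_\loc(\xi)\simeq\Omega$ is just $\nu^\N$. Concretely: fixing $x$ and the past coordinates $(f_n)_{n<0}$ of $\xi$, the conditional law of $\eta$ appearing in Lemma~\ref{lem:alternative_multiplicity} is the law of an i.i.d.\ future itinerary $\omega'\sim\nu^\N$, glued to the same past; and since $W^s_\loc(\eta,x)$ depends only on the future of $\eta$, the intersection multiplicity $\inter_x\lrpar{W^s_\loc(\xi,x),W^s_\loc(\eta,x)}$ depends only on $(x,\omega,\omega')$ where $\omega=\pi_\Omega(\varpi(\xi))$. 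Therefore the two alternatives (a) and (b) of Lemma~\ref{lem:alternative_multiplicity}, after disintegrating $\m=\int\m_\xi\,d\nu^\Z(\xi)$ and using that the marginal of $\m$ on $X$ is $\mu$ (Proposition~\ref{pro:m}.(2)), translate verbatim into the two alternatives of the corollary: (a) of the lemma gives ``$\inter_x\geq k+1$ for $\mu$-a.e.\ $x$ and $(\nu^\N)^2$-a.e.\ $(\omega,\omega')$'', and likewise for (b). The exclusivity of the two cases is inherited directly from Lemma~\ref{lem:alternative_multiplicity}.

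I would carry this out in the order: (i) recall $\mathcal F^+$-measurability of $W^s_\loc$ and well-definedness of $W^s_\loc(\omega,x)$ on $\X_+$; (ii) recall the description~\eqref{eq:conditional_Part_8} of $\m(\cdot\mid\mathcal F^-(\cx))$ and rewrite it, via the identifications $\Sigma^u_\loc(\xi)\simeq\Omega$ and $\nu^\Z(\cdot\mid\Sigma^u_\loc)\simeq\nu^\N$, as sampling an independent future $\omega'\sim\nu^\N$; (iii) observe the intersection multiplicity factors through $(x,\omega,\omega')$; (iv) integrate the two cases of Lemma~\ref{lem:alternative_multiplicity} against $d\m=\int\m_\xi\,d\nu^\Z(\xi)$, using that $(\pi_X)_*\m=\mu$, to obtain statements over $\mu\otimes(\nu^\N)^2$. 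The only genuine subtlety — and the place I expect to spend the most care — is the bookkeeping in step (ii)--(iii): making the identification $\mathcal F^-(\cx)\leftrightarrow\nu^\N$ precise and checking that ``$\m$-almost every $\cx$ and $\m(\cdot\mid\mathcal F^-(\cx))$-almost every $\eta$'' is genuinely equivalent to ``$\mu$-almost every $x$ and $(\nu^\N)^2$-almost every $(\omega,\omega')$'', rather than merely implied in one direction. This hinges on the fact that the event in question is $\mathcal F^-$-measurably-parametrized and depends on $\eta$ only through its future, so Fubini over the product structure $\m=\nu^\Z\!\text{-past}\times\nu^\N\!\text{-future}\times\mu$ (in the appropriate disintegrated sense) applies; everything else is routine.
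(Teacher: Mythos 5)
Your proof is correct and fills in precisely what the paper's one-sentence remark before the Corollary intends: stable manifolds are $\mathcal F^+$-measurable, the conditional of $\m$ on each $\mathcal F^-$-atom is $\nu^\N$ on the future (Equation~\eqref{eq:conditional_Part_8}), the intersection multiplicity factors through $(x,\omega,\omega')$, and Fubini transfers the dichotomy of Lemma~\ref{lem:alternative_multiplicity} to one over $\mu\otimes(\nu^\N)^2$. One notational caution at the place you flagged: the shorthand ``$\m=\nu^\Z\text{-past}\times\nu^\N\text{-future}\times\mu$'' is not a literal product decomposition (that would force $\m=\nu^\Z\times\mu$, which by Proposition~\ref{pro:m}(3) holds only when $\mu$ is invariant); the precise facts you need, and do invoke, are that the future is conditionally $\nu^\N$ given $(\text{past},x)$ and that $\varpi_*\m=\m_+=\nu^\N\times\mu$, so the marginal of $(x,\omega)$ under $\m$ is $\mu\times\nu^\N$ — your hedge ``in the appropriate disintegrated sense'' is the right instinct and the argument goes through.
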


 Combined with results from the previous sections,  this alternative leads to the existence of a  
finite order of contact $k_0$ between generic stable manifolds $W^s_\loc(\omega,x)$ and $W^s_\loc(\omega', x)$:

\begin{lem}\label{lem:generic_multiplicity}
There exists a   unique 
{\emph{finite}} integer $1\leq k_0 < +\infty$ such that for 
$\mu$-almost every $x\in X$ and $(\nu^\N) ^2$-almost every pair $(\omega, \omega')$, 
$
\inter_x\lrpar{W^s(\omega, x),  W^s(\omega', x)}= k_0.
$
\end{lem}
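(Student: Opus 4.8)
The plan is to combine the two sources of input assembled so far: the dichotomy of Corollary~\ref{cor:alternative_multiplicity}, which for each fixed $k$ separates the pairs of stable germs into "order of contact $\geq k+1$" versus "order of contact $\leq k$", and the fact from Theorem~\ref{thm:nevanlinna} that generic stable manifolds are Zariski dense entire curves carrying a unique Ahlfors--Nevanlinna current, namely $T^s_\omega$. First I would record that, since $\mu$ is not supported on a $\Gamma_\nu$-invariant curve, Lemma~\ref{lem:mu_zariski} and the analysis of Lemma~\ref{lem:stable_zariski_dense} rule out Alternative~(a) of Theorem~\ref{thm:nevanlinna}; hence for $\m_+$-almost every $\cx$ the stable manifold $W^s(\cx)$ is Zariski dense. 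In particular, for a $\mu$-generic point $x$ and $\nu^\N$-generic $\omega$, $W^s(\omega,x)$ is not contained in any algebraic curve, so the local germs $W^s_\loc(\omega,x)$ and $W^s_\loc(\omega',x)$ at $x$ cannot share an irreducible component for $(\nu^\N)^2$-generic $(\omega,\omega')$ (a common component would be a piece of a curve $C\subset X$ containing both entire curves, forcing $W^s(\omega,x)\subset C$ and contradicting Zariski density). Therefore the intersection multiplicity $\inter_x(W^s_\loc(\omega,x),W^s_\loc(\omega',x))$ is a \emph{finite} positive integer on a set of full measure.

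Next I would run the dichotomy of Corollary~\ref{cor:alternative_multiplicity} over $k=1,2,3,\dots$. For each $k$ exactly one of the two alternatives holds; by the previous paragraph the multiplicity is almost surely finite, so Alternative~(a) (multiplicity $\geq k+1$ almost everywhere) must fail for all sufficiently large $k$. Let $k_0$ be the smallest integer for which Alternative~(b) holds, i.e.\ the multiplicity is $\leq k_0$ almost everywhere. If $k_0=1$ we are done with the value $k_0=1$ (transversality a.e., so the common value is $1$). If $k_0\geq 2$, then by minimality Alternative~(a) holds for $k=k_0-1$, meaning the multiplicity is $\geq k_0$ almost everywhere; combined with $\leq k_0$ this pins the multiplicity to the constant value $k_0$ on a set of full $\mu\times(\nu^\N)^2$-measure. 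Finally I would upgrade the statement from local stable manifolds $W^s_\loc$ to global stable manifolds $W^s$: the intersection multiplicity at $x$ depends only on the germs at $x$, and $W^s(\omega,x)$ and $W^s_\loc(\omega,x)$ have the same germ at $x$ by the exhaustion~\eqref{eq:union}, so $\inter_x(W^s(\omega,x),W^s(\omega',x))=\inter_x(W^s_\loc(\omega,x),W^s_\loc(\omega',x))=k_0$ almost surely. Uniqueness of $k_0$ is immediate since the almost-sure value of a function is unique.

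The step I expect to be the only real subtlety is the finiteness of the intersection multiplicity, i.e.\ ensuring that two generic stable germs through the same point $x$ do not share a local branch. The clean way to see this is: if $W^s_\loc(\omega,x)$ and $W^s_\loc(\omega',x)$ had a common irreducible local component $\gamma$ at $x$, then by analytic continuation along the entire curves the Zariski closures of $W^s(\omega,x)$ and $W^s(\omega',x)$ would both contain the irreducible algebraic curve determined by $\gamma$; but $W^s(\omega,x)$ is Zariski dense, so its Zariski closure is $X$, and then $\gamma$ would be a local piece of a curve lying inside the Zariski-dense $W^s(\omega,x)$, which is absurd unless $W^s(\omega,x)$ itself is algebraic. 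Alternatively one can invoke Lemma~\ref{lem:stable_intersection_Ts}: since $T^s_\omega$ has continuous potential and $T^s_\omega\wedge\{W^s(\omega,x)\}=0$, a shared branch would force a positive-mass intersection of $T^s_\omega$ with an algebraic curve, contradicting $\langle e(\omega)\,\vert\,e(\omega)\rangle=0$ and the irrationality of $e(\omega)$ exactly as in the proof of Lemma~\ref{lem:stable_zariski_dense}. Either route makes finiteness a routine consequence of what has already been established, and the rest of the argument is the elementary extraction of the constant $k_0$ from the family of dichotomies.
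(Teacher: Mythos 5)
Your plan of running the dichotomy of Corollary~\ref{cor:alternative_multiplicity} over $k=1,2,\ldots$ and extracting the first $k$ for which alternative~(b) holds is exactly what the paper does, and the extraction of $k_0$ from the family of dichotomies is correct. The gap is in the step you yourself flag as ``the only real subtlety'': your argument for finiteness of the generic intersection multiplicity does not work.

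You claim that if $W^s_\loc(\omega,x)$ and $W^s_\loc(\omega',x)$ shared a local component $\gamma$, then $\gamma$ would ``determine'' an irreducible algebraic curve and force $W^s(\omega,x)$ into a curve. This is false: $\gamma$ is a germ of a transcendental analytic disk (a piece of an entire curve with dense image), so there is no algebraic curve attached to it. Sharing a germ simply means that the two stable manifolds coincide near $x$ --- which is precisely the phenomenon (non-random stable directions and manifolds) that the whole section is set up to handle, not something you can dismiss as absurd. Your second route via Lemma~\ref{lem:stable_intersection_Ts} has the same flaw: there is no ``algebraic curve'' being intersected, and $T^s_\omega\wedge\{\Delta\}=0$ for $\Delta\subset W^s(\omega,x)$ gives no contradiction if $W^s(\omega,x)=W^s(\omega',x)$.

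What the paper actually uses to get finiteness is the non-elementarity of $\Gamma_\nu$, through the non-atomicity of the boundary measure $\mu_\partial$: the cohomology classes $e(\omega)$, and hence the currents $T^s_\omega$, genuinely vary with $\omega$. Since Theorem~\ref{thm:nevanlinna} identifies $T^s_\omega$ as \emph{the} Ahlfors--Nevanlinna current of $W^s(\omega,x)$, one can pick a pair $(\omega_1,\omega_2)$ in a large Pesin slice over $x$ with $[T^s_{\omega_1}]\neq[T^s_{\omega_2}]$, which forces $W^s(\omega_1,x)\neq W^s(\omega_2,x)$ and therefore a finite contact order $k_1$ at $x$. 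By upper semi-continuity of intersection multiplicity on the Pesin set (Lemma~\ref{lem:upper_sc_inter}), nearby pairs have multiplicity $\leq k_1$, which puts one on the positive-measure side of alternative~(b) for $k=k_1$; the rest of your extraction of $k_0$ then goes through. So your high-level structure is right, but the finiteness input must come from the non-trivial $\omega$-dependence of $T^s_\omega$ (i.e.\ from Theorems~\ref{thm:def_stationary} and~\ref{thm:uniq+extremal}), not from the Zariski density of a single stable manifold.
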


\begin{proof}
Fix a small   $\e>0$ and consider a compact set $\mathcal R_\e\subset \X_+$ with $\m_+(\mathcal R_\e)\geq 1-\e$, 
along which local stable manifolds have size at least $r(\e)$ and vary continuously.
Since by Theorem \ref{thm:nevanlinna} 
for $\m_+$-a.e. $\cx$, the only Nevanlinna current associated to $W^s(\cx)$ is $T^s_\omega$, we 
  can further assume that this property holds  for every $\cx\in \mathcal R_\e$. 
  Let $A\subset X$ be a  subset of full $\mu$-measure 
on which the alternative of Corollary \ref{cor:alternative_multiplicity} holds for every $k\geq 1$. 
In $\X_+$, consider  the measurable partition into fibers of the form $\Omega\times \set{x}$; it  corresponds to the partition $\mathcal F^-$ in 
Lemma \ref{lem:alternative_multiplicity}.  Then, the associated conditional measures 
$\m_+(\, \cdot \,  \vert \, \Omega\times \set{x})$ are naturally identified with $\nu^\N$. 
Fix  $x\in A$     such that  $\m_+(\mathcal R_\e\vert \Omega\times \set{x})>0$. 
Since $(X,\nu)$ is non-elementary, 
Theorems~\ref{thm:def_stationary} and~\ref{thm:uniq+extremal} provide pairs
$(\omega_1, \omega_2)$ in  $(\pi_\Omega(\mathcal R_\e))^2$
for which the currents   $T^s_{\omega_1}$ and $T^s_{\omega_2}$ are not cohomologous. 
By Theorem \ref{thm:nevanlinna} these currents describe respectively the asymptotic distribution of 
  $W^s(\omega_1, x)$  and  $W^s(\omega_2, x)$    so we infer that 
$W^s(\omega_1, x)\neq W^s(\omega_2, x)$ and by the analytic continuation  principle 
it follows that  $W_\loc^s(\omega_1, x)\neq W_\loc^s(\omega_2, x)$. 
Let $k_1<\infty$ 
be the intersection multiplicity of these manifolds at $x$. Since the intersection multiplicity is upper semi-continuous, we infer that for 
$\omega'_j\in \mathcal R_\e$ close to $\omega_j$, $j=1,2$, $\inter_x(W_\loc^s(\omega'_1, x), W_\loc^s(\omega'_2, x))\leq k_1$. Thus for $k=k_1$ 
we are in case (b) of the alternative of   Corollary \ref{cor:alternative_multiplicity}. 
Applying then Corollary \ref{cor:alternative_multiplicity} 
successively for $k=1, \ldots , k_1$, there is a first integer $k_0$ for which case (b) holds, and since 
(a) holds for $k_0-1$, we conclude that generically  $\inter_x\lrpar{W_\loc^s(\omega, x),  W^s_\loc(\omega', x)} = k_0$.
\end{proof}

\subsection{Transversal perturbations} 
The key ingredient  in the proof of Theorem \ref{thm:alternative_stable}  is the following 
basic geometric lemma, which is a quantitative refinement of  \cite[Lemma 6.4]{bls}. 
  
\begin{lem}\label{lem:BLS_intersection}
Let $k$ be a positive integer.  If $r$ and $c$ are positive real numbers, then there are two positive real numbers $\delta=\delta(k,r,c)$ 
and $\alpha=\alpha(k, r,c)$ with the
following property. Let $M_1$ and $M_2$ be two complex analytic curves in $\disk(r)\times \disk(r)\subset \C^2$ such that 
\begin{enumerate}[\em (i)]
\item $M_1$ and $M_2$ are graphs $\{(z,f_j(z))\; ; \; w\in \disk_r\}$ of holomorphic functions $f_j\colon\disk(r)\to \disk(r)$;
\item $M_1\cap M_2=\{(0,0)\}$,  and $\inter_{(0,0)}(M_1, M_2) = k$;
\item the $k$-th derivative satisfies $\abs{(f_1-f_2)^{(k)}(0)} \geq c$.
\end{enumerate}
If $M_3\subset \disk(r)\times \disk(r)$ is a complex curve that does not intersect $M_1$ but is $\delta$-close to $M_1$ in the $C^1$-topology , then 
$M_2$ and $M_3$ have exactly $k$ transverse 
intersection points in $\disk(\alpha r)\times \disk(\alpha r)$ (i.e. with multiplicity~$1$).
\end{lem}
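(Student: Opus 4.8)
The plan is to reduce the statement to a counting problem for the zeros of one holomorphic function of one variable, solve the counting by repeated use of Rouché's theorem, and then spend the bulk of the work on the transversality (= simplicity of those zeros), which is the genuinely delicate point.

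\textbf{Setting up.} First I would note that, after replacing $r$ by a slightly smaller radius and taking $\delta$ small (depending only on $k,r,c$), the $C^1$-closeness of $M_3$ to $M_1=\{(z,f_1(z)):z\in\disk(r)\}$ forces $M_3$ to be, over some disk $\disk(r')$ with $r'\geq r/2$, the graph of a holomorphic $f_3\colon\disk(r')\to\disk(r)$ with $\norm{f_3-f_1}_{C^1(\disk(r'))}\leq C\delta$. Since $M_1$ and $M_3$ are disjoint graphs, $\psi:=f_1-f_3$ is a \emph{nowhere vanishing} holomorphic function on $\disk(r')$ with $\norm{\psi}_{C^1}\leq C\delta$. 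Writing $g:=f_1-f_2$, hypotheses (ii)--(iii) together with $f_1(0)=f_2(0)=0$ say that $g$ vanishes to order exactly $k$ at $0$ and nowhere else on $\disk(r)$, with $\abs{g^{(k)}(0)}\geq c$; hence $g(z)=z^k h(z)$ with $h$ holomorphic and nowhere vanishing on $\disk(r)$, $\abs{h(0)}\geq c/k!$, and $\norm{h}$ bounded on $\disk(3r/4)$ in terms of $k,r$ (using $\abs{g}\leq 2r$ and the maximum principle). The intersection points of $M_2$ and $M_3$ lying over a point $z$ are exactly the zeros of $F:=g-\psi$, and the local intersection multiplicity there equals the multiplicity of the zero; so the lemma amounts to showing that $F$ has exactly $k$ zeros in a small disk $\disk(\alpha r)$ and that they are all simple.

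\textbf{Counting.} A Jensen/Harnack argument for the nowhere vanishing function $h$ (whose value at $0$ and whose sup norm are both controlled by $k,r,c$) gives a lower bound $\abs{h}\geq c_1(k,r,c)>0$ on $\disk(r/2)$, and after shrinking, $\abs{h}\geq c/(2\cdot k!)$ together with a bound on $h'/h$ on a disk $\disk(\rho)$, $\rho=\rho(k,r,c)$. I would then fix $\delta$ so small that $\abs{\psi}<\tfrac12\abs{g}$ on the circles $\abs{z}=r/2$ and $\abs{z}=\rho$. Rouché on $\abs{z}=r/2$ shows $F$ has exactly $k$ zeros (with multiplicity) in $\disk(r/2)$; Rouché on $\abs{z}=t$ for suitable $t$ with $t^k$ a little above $C\delta/\min_{\disk(\rho)}\abs{h}$ confines these $k$ zeros to a tiny disk $\disk(\rho_1)$, $\rho_1=O(\delta^{1/k})$. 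This yields the first half of the conclusion once one checks, routinely, that the corresponding points of $M_2\cap M_3$ stay inside the target bidisk: here $f_2(0)=0$ and a Cauchy estimate gives $\abs{f_2(z)}\leq 2\abs{z}$, so taking $\alpha r$ a suitable fraction of $\rho$ keeps both coordinates small.

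\textbf{Transversality, the main obstacle.} Ruling out a multiple zero of $F$ is the hard part, precisely because near $0$ both $g$ and $g'$ are small, so a small $\psi$ could a priori create one, and $\psi(0)=f_1(0)-f_3(0)$ is not bounded below. My plan: (1) straighten $g$ by the biholomorphic change of variable $w=\phi(z)=z\,h(z)^{1/k}$, univalent on $\disk(\rho)$ for $\rho=\rho(k,r,c)$ small with image containing a disk $\disk(\rho_2)$, $\rho_2=\rho_2(k,r,c)>0$; then $g\circ\phi\inv(w)=w^k$, $\psi_1:=\psi\circ\phi\inv$ is still nowhere vanishing on $\disk(\rho_2)$ with $\norm{\psi_1}_{C^0}\leq C\delta$, and zeros of $F$ correspond to zeros of $P(w)=w^k-\psi_1(w)$, all lying in a tiny disk. (2) Suppose $P$ has a multiple zero $w_0$, so $w_0^k=\psi_1(w_0)$ and $k\,w_0^{k-1}=\psi_1'(w_0)$. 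Set $t:=\abs{\psi_1(0)}/(C\delta)\in(0,1]$. Harnack's inequality for the positive harmonic function $\log(C\delta)-\log\abs{\psi_1}$ on $\disk(\rho_2)$ gives $\abs{\psi_1}\leq C\delta\,t^{1/3}$ on $\disk(\rho_2/2)$, whence by Rouché $\abs{w_0}\leq (C\delta)^{1/k}t^{1/(3k)}$; Borel--Carathéodory applied to $\log\psi_1$ gives $\abs{\psi_1'/\psi_1}\leq C_5\log(1/t)/\rho_2$ near $0$, which combined with $\psi_1'(w_0)/\psi_1(w_0)=k/w_0$ gives $\abs{w_0}\geq k\rho_2/(C_5\log(1/t))$. (3) Chaining these, $(C\delta)^{1/k}\geq (k\rho_2/C_5)\cdot\bigl(\log(1/t)\,t^{1/(3k)}\bigr)\inv$, and since $\log(1/t)\,t^{1/(3k)}\leq 3k/e$ for all $t\in(0,1]$, this forces $\delta\geq\delta_0(k,r,c)>0$. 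Hence, for $\delta<\delta_0$, $P$ — and therefore $F$ — has no multiple zero, so the $k$ intersection points are transverse; adjusting $\alpha$ and carrying along the second-coordinate estimate from the counting step finishes the proof. The step I expect to require the most care is (3), together with the bookkeeping that every auxiliary constant ($c_1$, $\rho$, $\rho_2$, $C_5$, $\delta_0$) really depends only on $k,r,c$ — in particular the Jensen/Harnack lower bound for $\abs{h}$, which is what prevents $g$ from being too small away from the origin.
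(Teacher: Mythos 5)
Your proof is correct, and it rests on the same key idea as the paper's argument — the Harnack inequality applied to the logarithm of the nowhere-vanishing function $f_1-f_3$ — but the implementation is genuinely different. The paper keeps the original coordinate $z$ throughout: it establishes two-sided polynomial bounds $|f_1-f_2|\asymp|z|^k$ and $|(f_1-f_2)'|\asymp|z|^{k-1}$ directly from the Cauchy estimates, applies Harnack to $\log|f_1-f_3|$ to get $\delta_0^{(1+\alpha)/(1-\alpha)}\leq|f_1-f_3|\leq\delta_0^{(1-\alpha)/(1+\alpha)}$ (with $\delta_0=|f_3(0)|$) and a Cauchy bound on $(f_1-f_3)'$, and then exploits the identity $(f_1-f_3)(z_0)=(f_1-f_2)(z_0)$ at an intersection point $z_0$ to get a lower bound on $|z_0|$, hence on $|(f_2-f_1)'(z_0)|$, which beats the Harnack upper bound on $|(f_3-f_1)'(z_0)|$ once a certain exponent $\beta(\alpha)$ is positive. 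You instead normalize $g=f_1-f_2$ to $w^k$ by the change of variable $w=z\,h(z)^{1/k}$, and then rule out a multiple zero of $w^k-\psi_1(w)$ by combining Harnack (to bound $|\psi_1|$ and hence $|w_0|$ from above) with Borel--Carath\'eodory (to bound $|\psi_1'/\psi_1|$ and hence $|w_0|$ from below via $\psi_1'/\psi_1=k/w_0$), closing with the elementary optimization $\sup_{t\in(0,1]}t^{1/(3k)}\log(1/t)=3k/e$. Your route costs a bit more machinery (quantitative univalence of $\phi$, Borel--Carath\'eodory), but it buys a cleaner separation of roles — counting and location of zeros on one side, transversality via a logarithmic-derivative argument on the other — and the final contradiction is an explicit one-variable inequality rather than a comparison of $\delta_0$-exponents. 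One small point worth recording for completeness: when $t=1$ (i.e.\ $|\psi_1(0)|$ equals the sup bound $C\delta$), the maximum principle forces $\psi_1$ constant, whence $\psi_1'\equiv 0$ and $w_0=0$, contradicting $w_0^k=\psi_1(w_0)\neq 0$; so $t<1$ strictly and the $\log(1/t)$ in your chain is genuinely positive.
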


\begin{proof}
Without loss of generality we may assume that   $\delta <1$.   

\smallskip

{\bf Step 1.-- }
We claim that there exists $\alpha_1 = \alpha_1 (k,r,c)$ such that 
for every $\alpha\leq \alpha_1$ and every  $z\in \disk (\alpha r)$ the following estimates hold: 
\begin{align}\label{eq:f12k}
\frac{1}{2 } \frac{\abs{(f_1-f_2)^{(k)}(0)}}{k!} \abs{z}^k & \leq \abs{f_1(z)- f_2(z)} \leq \frac{3}{2 }\frac{ \abs{(f_1-f_2)^{(k)}(0)} }{k!} \abs{z}^k \\
\frac{1}{2} \frac{\abs{(f_1-f_2)^{(k)}(0)} }{(k-1)!}\abs{z}^{k-1} & \leq \abs{ f_1'(z) - f_2'(z) } \leq \frac{3}{2}  \frac{\abs{(f_1-f_2)^{(k)}(0)} }{(k-1)!}\abs{z}^{k-1}. \label{eq:derivative_f_12}
\end{align}

Indeed put  $g=f_1-f_2=\sum_{m\geq k} g_m z^m$. Assumptions (i) and (iii) give$\abs{g(z)}\leq 2r$ on $\disk(r)$, 
and $ g^{(k)}(0)\neq 0$. By the Cauchy estimates, $\abs{g_n}\leq 2r^{1-n}$ for all $n\geq 0$.  
Then on $\disk(\alpha r)$ we get 
\begin{align} 
 \notag  \abs{g(z)-\frac{g^{(k)}(0)}{k!} z^k}   &\leq 2 r \left( \frac{\abs{z}}{r}\right)^{k+1} 
\left( 1-\frac{\abs{z}}{r}\right)^{-1} \leq 2r^{1-k} \frac{\alpha }{1-\alpha}   \abs{z}^k.
\end{align}  
There exists $\alpha_1(k,r,c)$ such that as soon as $\alpha \leq \alpha_1$, the right hand side 
 of this inequality is smaller than  
 $c\abs{z}^k/(2 k!)$; hence Estimate~\eqref{eq:f12k} follows. The same argument applies for 
 \eqref{eq:derivative_f_12} because  
\begin{align} 
\notag  \abs{g'(z)- \frac{g^{(k)}(0)}{(k-1)!}z^{k-1}}  &\leq 4 (k+1) \left( \frac{\abs{z}}{r}\right)^k \left( 1-\frac{\abs{z}}{r}\right)^{-2} 
\leq 4 (k+1) r^{1-k} \frac{\alpha}{ (1-\alpha)^{2}}\abs{z}^{k-1}.  
\end{align}

\smallskip

{\bf Step 2.--} For every $\alpha\leq \alpha_1$, if  $\delta < c (\alpha r)^k/ 2k!$, 
$M_2$ and $M_3$ have exactly $k$   
intersection points, counted with multiplicities,  in $\disk(\alpha r)\times \disk(\alpha r)$. 

Indeed, the intersection points of $M_3$ and $M_2$ correspond to the solutions of the equation $f_3=f_2$. 
To locate its roots, note that on the circle $\fr \disk(\alpha r)$, the Inequality~\eqref{eq:f12k} implies 
\begin{equation}\label{eq:rouche}
 \abs{f_1 - f_2} \geq \frac12 \frac{c}{k!} (\alpha r)^k. 
\end{equation}
Since $\abs{f_1-f_3}< \delta$, the choice $\delta < c (\alpha r)^k/ 2k!$ is tailored to assure that the hypothesis of the Rouch\'e theorem 
is satisfied  in $\disk (\alpha r)$; so, counted with multiplicities, there
are $k$ solutions to the equation $f_3=f_2$ on that disk. Furthermore by the Schwarz lemma 
$\abs{f_2}< \alpha r$ on $\disk(\alpha r)$ so the corresponding intersection points between $M_2$ and $M_3$ 
are contained in  $\disk(\alpha r)\times \disk(\alpha r)$.

If $k=1$ the proof is already complete at this stage, so from now on we assume $k\geq 2$. 

\smallskip

{\bf Step 3.--} Set $\delta_0=\abs{f_3(0)}$, and note that $\delta_0\leq \delta$. 
Then for every $\alpha \leq 1/2$, in $\disk (\alpha r)$  we have 
\begin{align}\label{eq:harnack}
\delta_0^{\frac{1+\alpha}{1-\alpha}}\leq  \abs{f_1(z)  - f_3(z)} &\leq \delta_0^{\frac{1-\alpha}{1+\alpha}} \\
 \label{eq:harnack_derivee}
\abs{f_1'(z)  - f_3'(z)} &\leq \unsur{\alpha r} \delta_0^{\frac{1-2\alpha}{1+2\alpha}}. 
\end{align}
For this, recall the Harnack inequality: for any negative harmonic function in $\disk $
\begin{equation} 
\frac{1- \abs{\zeta}}{1+\abs{\zeta}}\leq  \frac{u(\zeta)}{u(0)}\leq \frac{1+ \abs{\zeta}}{1-\abs{\zeta}}. 
\end{equation}
Since $f_1-f_3$ does not vanish and $\abs{f_1-f_3}\leq \delta<1$  in $\disk(r)$, the function 
$\log\abs{f_1-f_3}$ is harmonic and negative there. Thus for $\alpha \leq 1/2$, the Harnack inequality can be applied 
to $\zeta\mapsto (f_1-f_3)(r\zeta)$ in $\disk$: this  gives \eqref{eq:harnack}. Likewise, 
 we infer that 
\begin{equation}
\delta_0^{\frac{1+2\alpha}{1-2\alpha}} \leq \abs{f_1(z) - f_3(z)} \leq \delta_0^{\frac{1-2\alpha}{1+2\alpha}} 
\end{equation}
 in $\disk (2\alpha r)$,  and~\eqref{eq:harnack_derivee} follows from the Cauchy estimate 
$\norm{g'}_{\disk(\alpha r)} \leq (\alpha r)\inv 
\norm{g}_{\disk(2\alpha r)}$.

\smallskip

{\bf Step 4.--} We now conclude the proof.  Fix $\alpha  =  \alpha (k,r,c)$ such that $\alpha  \leq \alpha_1$ and
\begin{equation}
\beta(\alpha):=\frac{1-2\alpha }{1+2\alpha } -\frac{k-1}{k}\times  \frac{1+ \alpha }{1- \alpha } >0.
\end{equation} (This will be our final choice for $\alpha$.)
 Fix $\delta  <  c (\alpha  r)^k/ 2k!$ and consider a solution $z_0$ of the equation 
$f_2(z) = f_3(z)$ in $\disk(\alpha r)$ provided by Step 2. The transversality of $M_2$ and $M_3$ at 
$(z_0, f_2(z_0))$ is equivalent to $f_3'(z_0)\neq f_2'(z_0)$, so we only need 
\begin{equation}
\abs{(f_3-f_1)'(z_0)}< \abs{(f_2-f_1)'(z_0)}.
\end{equation}
Since $(f_1- f_3)(z_0)  = (f_1-f_2)(z_0)$, combining the right hand side of Inequality~\eqref{eq:f12k} and the left hand side of 
Inequality \ref{eq:harnack}, we get that 
\begin{equation}
\frac{3}{2}\frac{ \abs{(f_1-f_2)^{(k)}(0)} }{k!} \abs{z_0}^k \geq \delta_0^{\frac{1+\alpha}{1-\alpha}}.
\end{equation}
thus 
\begin{equation}
\abs{z_0} \geq \delta_0^{\unsur{k}  \frac{1+\alpha}{1-\alpha}} \lrpar{\frac{2k!}{3  }}^{\unsur{k}} \abs{(f_1-f_2)^{(k)}(0)}^{-\unsur{k}} .
\end{equation}
Hence by \eqref{eq:derivative_f_12} we get that 
\begin{align}
 \abs{(f_2-f_1)'(z_0)} &\geq \unsur{2(k-1)!}  \lrpar{\frac{2k!}{3}}^\frac{k-1}{k}  \delta_0^{\frac{k-1}{k}  \frac{1+\alpha}{1-\alpha}}
 \abs{(f_1-f_2)^{(k)}(0)}^{\unsur{k}} \\
  &\notag \geq  \unsur{2(k-1)!}\lrpar{\frac{2k!}{3}}^\frac{k-1}{k}  \delta_0^{\frac{k-1}{k}  \frac{1+\alpha}{1-\alpha}}  c^{\unsur{k}}. 
 \end{align}
 On the other hand by 
Estimate \eqref{eq:harnack_derivee}
\begin{equation}
\abs{(f_3-f_1)'(z_0)}  \leq \unsur{\alpha r} \delta_0^{\frac{1-2\alpha}{1+2\alpha}}
 \end{equation}
Since $\delta_0 \leq \delta$, we only need to impose one more constraint on $\delta$ (together with $\delta < c (\alpha r)^k/ 2k!$), namely
\begin{equation}
\delta^{\beta(\alpha)}<  \unsur{2(k-1)!}\lrpar{\frac{2k!}{3}}^\frac{k-1}{k}    c^{\unsur{k}} r \alpha,
\end{equation} 
to get the desired inequality $\abs{(f_3-f_1)'(z_0)} < \abs{(f_2-f_1)'(z_0)}$.
 \end{proof} 
 

\begin{rem}\label{rem:BLS_intersection} 
Lemma~\ref{lem:BLS_intersection} does not hold in the real analytic setting. Indeed, take an integer $n\equiv 1 \mod[4]$ and 
consider the $n$-th Chebychev polynomial $T_n$, 
defined by $T_n(\cos\theta) = \cos(n\theta)$; it satisfies $\abs{T_n}\leq 1$ on $[-1,1]$,  
$\abs{T_n'}\leq 2n$ on $[-1/2, 1/2]$, and   $T_n'(0)=n$. Then, set  $P_n(x)  = \frac{10}{n^2}T\lrpar{x-\frac{5}{n}} + \frac{25}{n^2}$.
This function satisfies $P_n'(5/n) = 10/n$, $P_n(5/n) = (5/n)^2$, and $15/n^2 \leq P_n\leq 35/n^2$ on $[-1, 1]$. Now, if $n$ is large, 
 $M_1 = \set{y=0}$, $M_2 =\set{y=x^2}$ and $M_3 =  \set{y=P_n(x)}$ are three smooth algebraic curves in  
$(-1,1)^2\subset \R^2$ such that $M_3$ is disjoint from $M_1$ but close to it in the $C^1$ topology, and $M_3$ is tangent to $M_2$ at 
$(5/n, 25/n^2)$. Similar arguments can be used to show 
 that the semi-continuity of Lemma~\ref{lem:upper_sc_inter} fails for real analytic curves (though Corollary~\ref{cor:alternative_multiplicity} may still be valid for real analytic random dynamical systems). \end{rem}

Let $\Delta_1$ and $\Delta_2$ be two disks of size $r$ at $x\in X$, which are tangent at $x$; let $e_1\in T_xX$
be a unit vector in $T_x\Delta_1=T_x\Delta_2$ and $e_2$ a unit vector orthogonal to $e_1$ for $\kappa_0$.
Then, in the chart $\hexp_x$, 
$\Delta_1$ and $\Delta_2$ are graphs $\{ze_1+\psi_i(z)e_2\}$ of 
holomorphic functions $\psi_i\colon \disk(r)\to \disk(r)$, $i=1$, $2$, such that $\psi_i(0)=0$ and $\psi_i'(0)=0$.
If $\inter_x(\Delta_1, \Delta_2) =  k$, then 
for $j = 1, \ldots , k-1$ one has $\psi^{(j)}_1(0) = \psi^{(j)}_2(0)$ and $\psi^{(k)}_1(0) \neq  \psi^{(k)}_2(0)$.
We define the \textbf{$k$-osculation} of $\Delta_1$ and  $\Delta_2$ at $x$ to be 
\begin{equation}
\osc_{k,x,r}(\Delta_1, \Delta_2) = \abs{\psi^{(k)}_1(0) - \psi^{(k)}_2(0)}. 
\end{equation}
If $s\leq r$ and we consider $\Delta_1$ and $\Delta_2$ as disks of size $s$, then $\osc_{k,x,s}(\Delta_1, \Delta_2) =\osc_{k,x,r}(\Delta_1, \Delta_2)$.
Thus, $\osc_{k,x,r}(\Delta_1, \Delta_2)$ does not depend on $r$,
so we may denote this
osculation number by $\osc_{k,x}(\Delta_1, \Delta_2)$. 
With this terminology,  Lemma~\ref{lem:BLS_intersection}  directly implies the following corollary.

\begin{cor}\label{cor:bls_quantitative}
Let $k$ be a positive integer, and $r$ and $c$ be   positive real numbers. Then, there are two positive 
real numbers $\delta$ and $\alpha$, depending on $(k,r,c)$, satisfying the following property. 
Let $\Delta_1$ and $\Delta_2$ be two holomorphic disks of size $r$ through $x$, such that $\inter_x(\Delta_1, \Delta_2)=k$
and $\osc_{k,x}(\Delta_1, \Delta_2))\geq c$.  Let $\Delta_3$ be a holomorphic disk of size $r$  such that 
$\Delta_3$ is $\delta$-close to $\Delta_1$ in the $C^1$-topology but  $\Delta_3\cap \Delta_1 = \emptyset$. Then 
 $\Delta_3$ intersects $\Delta_2$ transversely in exactly  $k$ points in $B_X(x, \alpha r)$. 
\end{cor}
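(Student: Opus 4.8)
The statement is just Lemma~\ref{lem:BLS_intersection} read in the intrinsic language of disks of size $r$ in $X$, so the plan is to transport the data into a good chart $\hexp_x$ (see \S\ref{par:good_charts}) and apply that lemma. First I would dispose of the transverse case $k=1$ separately: then $\Delta_1$ and $\Delta_2$ meet transversely at $x$, and since $\Delta_3$ is $C^1$-$\delta$-close to $\Delta_1$ and disjoint from it (hence passes within $O(\delta)$ of $x$), the openness of transversality gives, for $\delta$ small, exactly one transverse intersection point of $\Delta_2$ and $\Delta_3$ in any fixed small ball around $x$. So from now on I assume $k\geq 2$; then $\inter_x(\Delta_1,\Delta_2)\geq 2$ forces $T_x\Delta_1=T_x\Delta_2$.

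Next I would set up coordinates. Pick a unit vector $e_1\in T_x\Delta_1=T_x\Delta_2$ and a unit vector $e_2\perp e_1$ for $(\kappa_0)_x$. By the Koebe distortion theorem a disk of size $r$ at $x$ is, in the chart $\hexp_x$, a graph over its own tangent direction at $x$, over a disk of radius comparable to $r$ and with controlled derivatives; hence, after shrinking $r$ by a universal factor, $\Delta_1$ and $\Delta_2$ become graphs $\{ze_1+\varphi_i(z)e_2\,;\,z\in\disk(r)\}$ with $\varphi_i\colon\disk(r)\to\disk(r)$ holomorphic, $\varphi_1(0)=\varphi_1'(0)=\varphi_2(0)=\varphi_2'(0)=0$, $\inter_0(\mathrm{graph}\,\varphi_1,\mathrm{graph}\,\varphi_2)=k$, and $\abs{(\varphi_1-\varphi_2)^{(k)}(0)}=\osc_{k,x}(\Delta_1,\Delta_2)\geq c$. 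This is exactly hypotheses (i)--(iii) of Lemma~\ref{lem:BLS_intersection} for $M_1=\mathrm{graph}\,\varphi_1$ and $M_2=\mathrm{graph}\,\varphi_2$. By property~(iii) of the good charts, $\kappa_0$ and the flat metric $\euc_x$ are comparable on $U_x$ with a universal constant $C_0$; this gives, on the one hand, that a $C^1$-$\delta$-perturbation $\Delta_3$ of $\Delta_1$ in the sense of $\dist_{TX}$ is a curve $M_3$ (still not meeting $M_1$) that is $C_0\delta$-close to $M_1$ in the flat $C^1$-topology, and on the other hand that bidisks in the chart and balls $B_X(x,\cdot)$ are nested up to the factor $C_0$.

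Finally I would apply Lemma~\ref{lem:BLS_intersection} to $(M_1,M_2,M_3)$ with its parameters $r$ and $c$: it yields $\delta_0=\delta_0(k,r,c)$ and $\alpha_0=\alpha_0(k,r,c)$ such that if the flat $C^1$-distance from $M_3$ to $M_1$ is $<\delta_0$, then $M_2$ and $M_3$ have exactly $k$ transverse intersection points in $\disk(\alpha_0 r)\times\disk(\alpha_0 r)$. Taking $\delta=\delta_0/C_0$ ensures the hypothesis, and choosing $\alpha$ with $\hexp_x(\disk(\alpha_0 r)\times\disk(\alpha_0 r))\subset B_X(x,\alpha r)$ (e.g. $\alpha=C_0\alpha_0$) transports the conclusion back to $X$. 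Keeping track of the universal shrinkings above, $\delta$ and $\alpha$ depend only on $(k,r,c)$ and on the fixed chart constants, as required. There is no genuine obstacle here: the analytic content is entirely in Lemma~\ref{lem:BLS_intersection}; the only points to watch — and the fiddliest is the metric dictionary — are the harmless universal rescalings of $r$ needed to put the pair $(\Delta_1,\Delta_2)$ in graph form over a common direction, the passage (via property~(iii), and~(iii$'$) if one wants the osculation at $x$ computed flatly) between $C^1$-closeness for $\dist_{TX}$ and closeness of the graphing functions, and the separate, elementary treatment of $k=1$, which is not covered by the definition of $k$-osculation.
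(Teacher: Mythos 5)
Your proposal is correct and follows the paper's intended route: the paper states that the corollary is a direct consequence of Lemma~\ref{lem:BLS_intersection}, and your transport through the chart $\hexp_x$ (Koebe re-graphing of $\Delta_1,\Delta_2,\Delta_3$ over the common tangent direction after a universal shrinking of $r$, and comparability of $\kappa_0$ with $\euc_x$ via property~(iii) of the good charts) simply fills in the bookkeeping that the paper leaves implicit, so the analytic content is unchanged.

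One small point deserves a flag, namely your separate treatment of $k=1$. An appeal to ``openness of transversality'' alone cannot yield a $\delta$ depending only on $(k,r,c)$: the admissible $\delta$ must shrink with the angle $\angle(T_x\Delta_1,T_x\Delta_2)$, and for $k=1$ the number $c$ controls nothing about that angle, so the argument as written would purport to prove a uniform statement that is actually false. This is harmless in context, because the hypothesis $\osc_{k,x}(\Delta_1,\Delta_2)\geq c$ presupposes $T_x\Delta_1=T_x\Delta_2$, hence $k\geq 2$ is implicit (and the corollary is only invoked in Section~\ref{sec:No_Invariant_Line_Fields} with $k_0\geq 2$). If one truly wanted a $k=1$ version, the right move is to apply Lemma~\ref{lem:BLS_intersection} directly with $k=1$, interpreting $c$ as a lower bound on $\abs{\psi_1'(0)-\psi_2'(0)}$ for the graphing functions of $\Delta_1,\Delta_2$ over some common direction; that is the quantitative transversality datum the lemma needs and the paper's notion of osculation does not supply.
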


The following   lemma follows directly    from the first step of the proof of Lemma \ref{lem:BLS_intersection}.  
  
  \begin{lem}\label{lem:unique_intersection}
Let $k$ be a positive integer, and $r$ and $c$ be   positive real numbers.
Then there exists a constant $\beta$ depending only on $(r,k,c)$ such that if 
  $\Delta_1$ and $\Delta_2$ are two holomorphic disks of size $r$ through $x$, such that $k=\inter_x(\Delta_1, \Delta_2)$
and $\osc_{k,x}(\Delta_1, \Delta_2))\geq c$, then $x$ is  the only point of intersection between 
$\Delta_1$ and $\Delta_2$ in the ball $B_X(x,\beta r)$. 
   \end{lem}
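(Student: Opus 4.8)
The plan is to extract the required statement directly from Step~1 of the proof of Lemma~\ref{lem:BLS_intersection}, which already contains the needed lower bound on the separation between two tangent disks of size $r$ with prescribed order of contact and osculation. First I would set up coordinates exactly as in the paragraph preceding Corollary~\ref{cor:bls_quantitative}: pick a unit vector $e_1\in T_x\Delta_1 = T_x\Delta_2$, a unit vector $e_2$ orthogonal to $e_1$ for $\kappa_0$, and write $\Delta_i$ in the good chart $\hexp_x$ as the graph $\{ze_1 + \psi_i(z)e_2 \; ; \; z\in\disk(r)\}$ with $\psi_i(0)=0$, $\psi_i'(0)=0$. Since $\inter_x(\Delta_1,\Delta_2)=k$ we have $\psi_1^{(j)}(0)=\psi_2^{(j)}(0)$ for $1\leq j\leq k-1$ and, by hypothesis, $\abs{(\psi_1-\psi_2)^{(k)}(0)} = \osc_{k,x}(\Delta_1,\Delta_2)\geq c$.

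Next I would apply the estimate \eqref{eq:f12k} from Step~1 of the proof of Lemma~\ref{lem:BLS_intersection} to the function $g=\psi_1-\psi_2$. That step produces a constant $\alpha_1 = \alpha_1(k,r,c)$ such that for all $\alpha\leq\alpha_1$ and all $z\in\disk(\alpha r)$,
\[
\abs{\psi_1(z) - \psi_2(z)} \;\geq\; \frac{1}{2}\,\frac{\abs{(\psi_1-\psi_2)^{(k)}(0)}}{k!}\,\abs{z}^k \;\geq\; \frac{c}{2\,k!}\,\abs{z}^k \;>\;0
\]
for $z\neq 0$. In particular $\psi_1(z)\neq\psi_2(z)$ for every $z\in\disk(\alpha_1 r)\setminus\{0\}$, so the graphs $\Delta_1$ and $\Delta_2$ do not meet over the punctured disk $\disk(\alpha_1 r)\setminus\{0\}$, and they meet over $z=0$ only at the point $x$. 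It remains to translate this statement about the chart domain into a statement about a metric ball $B_X(x,\beta r)$: by property~(iii) of the good charts in \S\ref{par:good_charts}, the euclidean metric $\euc_x$ and $\hexp_x^*\kappa_0$ are comparable on $U_x$ with a uniform constant $C_0$, so there is a constant $\beta = \beta(r,k,c)$ (one may take $\beta$ a fixed multiple of $\alpha_1(k,r,c)$, small enough that $B_X(x,\beta r)$ is contained in $\hexp_x$ of the bidisk $\disk(\alpha_1 r)e_1 + \disk(\alpha_1 r)e_2$, using property~(v) and the size-$r$ assumption to guarantee that $\Delta_1\cap\Delta_2$ inside this ball is carried by the graphs over $\disk(\alpha_1 r)$). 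Then $x$ is the only intersection point of $\Delta_1$ and $\Delta_2$ in $B_X(x,\beta r)$.

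There is essentially no obstacle here beyond bookkeeping: the analytic heart of the argument is already done in Step~1 of Lemma~\ref{lem:BLS_intersection}, and the only care needed is in the passage from the chart coordinates to the intrinsic ball $B_X(x,\beta r)$ — one must make sure that $\beta$ depends only on $(r,k,c)$ and not on the disks themselves, which is automatic since $\alpha_1$ and the chart constant $C_0$ have that property. If one prefers, the entire lemma can simply be stated as an immediate corollary of the displayed inequality above, with the remark that $\beta$ is chosen so small that $B_X(x,\beta r)$ lies inside the coordinate bidisk.
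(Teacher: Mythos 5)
Your proof matches the paper's: the authors simply remark that Lemma~\ref{lem:unique_intersection} ``follows directly from the first step of the proof of Lemma~\ref{lem:BLS_intersection},'' which is precisely the lower bound in~\eqref{eq:f12k} that you invoke with $f_i=\psi_i$. Your expansion of that one-line remark — including the bookkeeping needed to pass from the chart coordinates to the intrinsic ball $B_X(x,\beta r)$ via properties~(iii) and~(v) of the good charts, and the observation that $\beta$ inherits its dependence on $(r,k,c)$ from $\alpha_1$ and the fixed chart constants — is careful and correct.
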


\subsection{Proof of Theorem  \ref{thm:alternative_stable}}
 
Before starting the proof, we record the following two   facts from elementary measure theory:
 
\begin{lem}\label{lem:trivial}
Let $(\Omega, \mathcal F, \pp)$ be a probability space, and $\delta\in (0,1)$. 
\begin{enumerate}[\em(1)]
\item If $\varphi$ is a measurable function with values in $[0, 1]$ and such that $\int \varphi \; d\pp\geq 1-\delta$, then 
\[
\pp\lrpar{\set{ x \; ; \;  \varphi(x)\geq 1-\sqrt\delta} }\geq 1-\sqrt\delta.
\]
\item  If $A_j$ is a sequence of measurable subsets   such that 
$\pp(A_j)\geq 1-\delta$ for every $j$, then $\pp(\limsup A_j)\geq 1-\delta$. 
\end{enumerate}
\end{lem}

Let us now prove Theorem  \ref{thm:alternative_stable}.
If the integer $k_0$  of Lemma \ref{lem:generic_multiplicity} is equal to 1, then Pesin stable manifolds 
corresponding to different itineraries at a $\mu$-generic point $x \in X$ are 
generically transverse; hence, we are in case {(a)} of the theorem --note that the conclusion is actually stronger than mere non-randomness. 
So, we now assume $k_0>1$ and we prove that $\mu$ is almost surely invariant and that its entropy is equal to zero. 

\smallskip

{\bf{Step 1.--}} First, we construct a subset $\mathcal G_\e$ of ``good points'' in $\X$.  

As described in Section~\ref{par:past_future_partitions}, the atoms of $\mathcal F^-$ are the sets
$\mathcal F^-(\cx)=\Sigma^u_\loc(\xi)\times \set{x}$ and the measures $\m(\, \cdot \, \vert \mathcal F^-(\cx))$  
can be naturally identified to $\nu^\N$  under the natural projections  
 $\mathcal F^-(\cx) \overset{\sim}\to  \Sigma^u_\loc(\xi) \overset{\sim}\to  \Omega$.
 For notational simplicity we denote these measures 
by $\m^{\mathcal F^-}_\cx$. 

For a small $\e>0$, let  $\mathcal R_\e\subset \X $ be a compact subset 
with $\m(\mathcal R_\e)> 1-\e$, along which local stable manifolds have size at least $2r(\e)$ and vary continuously.
Since $\int \m^{\mathcal F^-}_\cx (\mathcal R_\e) \, d\m(\cx) \geq 1-\e$, by Lemma \ref{lem:trivial}~(1)
we can select a compact subset $\mathcal R_\e'\subset \mathcal R_\e$ with 
$\m(\mathcal R'_\e)\geq 1-\sqrt \e$ such that for every $\cx\in \mathcal R'_\e$ 
one has $\m^{\mathcal F^-}_\cx (\mathcal R_\e) \geq 1-\sqrt \e$. 

By assumption,  
$
\inter_x(W^s_\loc(\cy_1), W^s_\loc(\cy_2))= k_0
$ 
for $\m$-almost every $\cx = (\xi, x)\in \mathcal R'_\e$ and for  $(\m^{\mathcal F^-}_\cx\otimes \m^{\mathcal F^-}_\cx)$-almost every
pair of points $(\cy_1, \cy_2)\in (\mathcal F^-(\cx)\cap   \mathcal R_\e)^2$. Then there exists  $\mathcal R''_\e \subset \mathcal R'_\e$
of measure at least $1-2\sqrt{\e}$ and a constant $c(\e)>0$  such that 
\begin{equation}
\osc_{k_0,x, r(\e)} (W^s_\loc(\cy_1), W^s_\loc(\cy_2)) \geq c(\e)
\end{equation}
for every $\cx=(\xi,x)\in \mathcal R''_\e$ and all pairs $(\cy_1,\cy_2)$ in a subset  
$A_{\e, \cx} \subset (\mathcal F^-(\cx)\cap   \mathcal R_\e)^2$ depending measurably on $\cx$ and of measure
\begin{equation}
(\m^{\mathcal F^-}_\cx\otimes \m^{\mathcal F^-}_\cx)(A_{\e, \cx} )\geq 1-4\sqrt{\e}
\end{equation}
(we just used  
$(\m^{\mathcal F^-}_\cx\otimes \m^{\mathcal F^-}_\cx)((\mathcal F^-(\cx)\cap   \mathcal R_\e)^2)  \geq (1-\sqrt \e)^2 > 1- 2\sqrt{\e}$).
Finally, Fubini's theorem and Lemma \ref{lem:trivial}~(1)  
provide a set   $\mathcal G_\e \subset\mathcal R''_\e $ such that 
\begin{enumerate}[(a)]
\item  $\m(\mathcal G_\e)\geq 1-2\e^{1/4}$ 
\item for every 
$\cx\in \mathcal G_\e$, $W^s_\loc(\cx)$ has size $2r(\e)$;
\item for every 
$\cx\in \mathcal G_\e$,  there exists a measurable set  
$\mathcal{G}_{\e, \cx} \subset \mathcal F^-(\cx)$ with 
 $ \m^{\mathcal F^-}_\cx(\mathcal{G}_{\e, \cx}) \geq 1-2\e^{1/4}$  such that  for every $\cy$ in $\mathcal{G}_{\e, \cx}$, 
$W^s_\loc(\cy)$ has size $\geq r(\e)$ and, viewed as a subset of $X$,
\begin{itemize} 
 \item  it is tangent to $W^s_\loc(\cx)$  to order $k_0$ at $x$, 
\item    $\osc_{k_0,x, r(\e)} (W^s_\loc(\cx), W^s_\loc(\cy)) \geq c(\e)$.
\end{itemize}
\end{enumerate} 
Note that $\cx\notin \mathcal G_{\e,\cx}$: indeed, when the local stable manifolds vary continuously, 
one can think of $A_{\e, \cx}$ as the complement of a small neighborhood of the diagonal in $\Omega\times \Omega$.
 

\begin{figure}[h]
\input{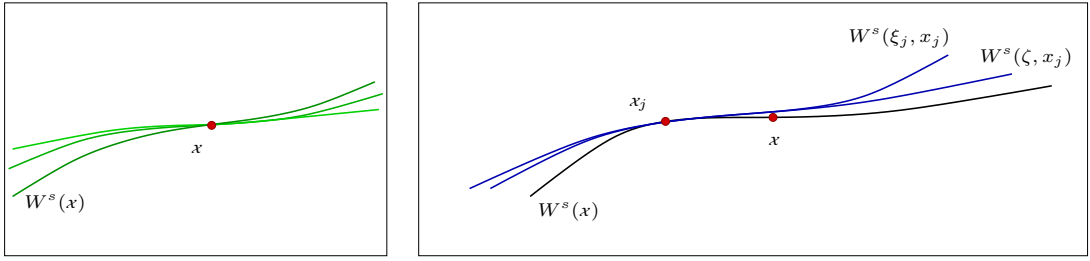}
\caption{
{\small{On the left, a generic point $\cx$ with the local stable manifolds $W^s_\loc(\xi_i,x)$ for distinct $(\xi_i)_{i\geq 0}$ (see Step 1). 
On the right, the choice of the sequence $(\zeta, x_j)$ gives a family of local stable manifolds (see Step 2).}}}
\end{figure}

\smallskip 

{\bf{Step 2.--}} To make the argument more transparent, we  first show that the fiber  entropy 
vanishes. 

Let $\eta^s$ be a Pesin partition subordinate to 
local stable manifolds in $\cX$. By 
Corollary \ref{cor:rokhlin_zero_entropy} it is enough to show that for $\m$-almost every $\cx$, 
$\m(\cdot \vert \eta^s(\cx))$ is atomic (hence  concentrated at $x$).  
Assume by contradiction that this is not the case. Therefore for 
$\e>0$ small enough there exists $\cx  = (\xi, x)\in \mathcal G_\e$ 
such that $\m(\cdot \vert \eta^s(\cx))_{\vert \eta^s(\cx)\cap \mathcal G_\e}$ is non-atomic, and
there exists an infinite  sequence of  
pairwise distinct points $\cx_j=(\xi, x_j)$  in $\mathcal G_\e\cap \eta^s(\cx)$ converging to $\cx$. 
Then with $\mathcal{G}_{\e, \star}$ as in Property~(c) of the  definition of  $\mathcal G_\e$, we have 
$\m^{\mathcal F^-}_{\cx_j}(\mathcal{G}_{\e, \cx_j})\geq 1-2{\e}^{1/4}$ for every $j$. 

Identifying all $\mathcal {F}^-(\cx_j) $  with $\Sigma^u_\loc(\xi)$, by Lemma \ref{lem:trivial}~(2) we can find 
$\zeta\in \Sigma^u_\loc(\xi)$  
such that $(\zeta, x_j)$   belongs to $\mathcal G_{\e,(\zeta, x_j)}$ for infinitely many $j$'s. 
Along this subsequence  the local stable manifolds $W^s_\loc(\zeta, x_j)$ form a sequence of disks of uniform size $r = 2r(\e)$ at $x_j$. 
Two such local stable manifolds are either pairwise disjoint or coincide along an open subset 
because they are associated to the same itinerary $\zeta$. 

Let us now use the notation from Corollary~\ref{cor:bls_quantitative} and Lemma \ref{lem:unique_intersection}.
We know that   $W^s_{r(\e)}(\zeta, x_j)$ is    tangent to $W^s_{r(\e)}(\xi, x)$ at $x_j$ to order $k_0$, with 
$\osc_{k_0,x_j, r(\e)} (W^s_{r(\e)}(\cx), W^s_{r(\e)}(\zeta, x_j)) \geq c(\e)$; so, by Lemma \ref{lem:unique_intersection},
$W^s_{r(\e)}(\zeta, x_j)$ and $W^s_{r(\e)}(\zeta, x_{j'})$ are disjoint as soon as $\dist_X(x_{j}, x_{j'}) < \beta r(\e)$. Finally, 
if $j$ and $j'$ are large enough, then $\dist_X(x_{j}, x_{j'}) < \alpha r(\e)$ and the $C^1$ distance 
between $W^s_{r(\e)}(\zeta, x_j)$ and $W^s_{r(\e)}(\zeta, x_{j'})$  is smaller than $\delta$; thus, 
Corollary~\ref{cor:bls_quantitative} asserts that  $W^s_{r(\e)}(\zeta, x_j)$ and $W^s_{r(\e)}(\zeta, x_{j'})$ cannot both 
be tangent to  $W^s_{r(\e)}(\xi, x)$. This is a contradiction, and we conclude  that 
the fiber entropy of  $\m$  vanishes.
  
\smallskip 

{\bf{Step 3.--}} We now prove the almost sure  invariance.

As in \cite[Eq. (11.1)]{br}  we consider a measurable partition 
$\mathcal P$ of $\cX$ with the property that  for $\m$-almost every $(\xi, x)$, 
\begin{equation}
\Sigma^s_\loc(\xi) \times W^s_{r(\xi, x)}(\xi, x) \subset \mathcal P (\xi, x) \subset \Sigma^s_\loc(\xi) \times W^s (\xi, x).
\end{equation} 
The existence of such a partition is guaranteed, for instance, by Lemma \ref{lem:stable_partition}. 
By  \cite[Prop 11.1]{br}(\footnote{Brown and Rodriguez-Hertz make it clear that this result holds  for an arbitrary smooth random dynamical system on a compact manifold.}),  to show that $\mu$ is almost surely invariant it is enough to prove that: 
\begin{equation}\label{eq:contradict}
\text{for }\m\text{ almost every }\xi, \ \m(\, \cdot\,  \vert \mathcal P(\xi, x)) \text{ is  
concentrated on  }\Sigma^s_\loc(\xi) \times \set{x}.
\end{equation}
By contradiction, assume that~\eqref{eq:contradict} fails. By contraction along the stable leaves, it follows that 
almost surely  $\Sigma^s_\loc(\xi) \times \set{x}$ is contained in
\begin{equation}
\supp\lrpar{\m(\cdot \vert \mathcal P(\xi, x))_{\vert \mathcal P(\xi, x) \setminus \Sigma^s_\loc(\xi) \times \set{x}}}
\end{equation}
(this  is identical to the argument of Corollary \ref{cor:rokhlin_zero_entropy}). 
In particular  for small $\e$ we can find $\cx = (\xi,x)\in \mathcal G_\e$ and a sequence of points 
$ \cx_j = (\xi_j, x_j)\in \mathcal G_\e$ such that 
$\cx_j$ belongs to $\mathcal P(\cx) \cap \mathcal G_\e$, $x_j\neq x$ and 
$(x_j)$ converges to $x$ in $X$. We can also assume that the $x_j$ are all distinct. 
 By definition of $\mathcal G_\e$,  $\m^{\mathcal F^-}_{\cx_j}\lrpar{\mathcal{G}_{\e, \cx_j}}\geq 1-2{\e}^{1/4}$  for every $j$. 
For $(\xi, \zeta)\in \Sigma^2$, set 
\begin{equation}
[\xi, \zeta] = \Sigma^u_\loc(\xi)\cap \Sigma^s_\loc(\zeta);
\end{equation} 
that is, $[\xi, \zeta] $ is the itinerary with the same past as $\xi$ and the same future as $\zeta$. As above, 
identifying the atoms of the partition $\mathcal F^-$ with $\Omega$,  
Lemma \ref{lem:trivial}~(2) provides an infinite subsequence   $(j_\ell)$ and for every $\ell$ an itinerary
$\zeta_{j_\ell}\in \Sigma^u_\loc (\xi_{j_\ell})$ such that 
$\cy_{j_\ell}:= 
(\zeta_{j_\ell}, x_{j_\ell})$ belongs to $\mathcal{G}_{\e, \cx_{j_\ell}}$ and all the 
$\zeta_{j_\ell}$ have the same future, that is 
$\zeta_{j_\ell}$ is of the form $[\xi_{j_\ell}, \zeta]$ for a fixed $\zeta$. 
By definition,   
\begin{align}\label{eq:osc}
\inter_{x_{j_\ell}}(W^s_\loc(\cx_{j_\ell}), W^s_\loc(\cy_{j_\ell}) )& = k_0 \\
\osc_{k_0, x_{j_\ell}, r(\e)} (W^s_\loc(\cx_{j_\ell}), W^s_\loc(\cy_{j_\ell})) & \geq c(\e).
\end{align}
In addition the disks $\pi_X(W^s_\loc(\cy_{j_\ell}))$ are pairwise disjoint or locally coincide  
because the $x_{j_\ell}$ are distinct 
and the $\zeta_{j_\ell}$ have the same future. 
Moreover, since $\cx_{j_\ell}$ belongs to  $\mathcal P(\cx)$, 
 $W^s(\cx_{j_\ell})$      coincides with  $W^s(\cx)$.   
Therefore, the $\pi_X(W^s_\loc(\cy_{j_\ell}))$ form a sequence of disjoint
disks of size $2r(\e)$ at $x_j$, all tangent to $\pi_X(W^s_\loc(\cx))$ to order $k_0$,   with osculation bounded 
from below by $c(\e)$. Since this sequence of disks is continuous and $(x_j)$ converges towards $x$,    Lemma 
\ref{lem:unique_intersection}  and  Corollary~\ref{cor:bls_quantitative} provide a contradiction,
 exactly as in Step 2. This completes the proof of the theorem.    \qed


\section{Stiffness}\label{sec:stiffness}

Here we study Furstenberg's stiffness  property for automorphisms of compact K\"ahler surfaces, thereby proving  Theorem \ref{mthm:stiffness}. 
Our first results in \S\ref{subs:stiffness_elementary}  
deal  with  elementary subgroups of $\Aut(X)$. The argument relies on the classification of such elementary groups together with
general group-theoretic criteria for stiffness; these criteria are recalled in 
\S~\ref{subs:stiffness_generalities} and~\ref{subs:inducing}. 
Theorem~\ref{thm:stiffness_real}  concerns the much more interesting case of non-elementary subgroups; 
its proof combines all results of the previous sections with the work of Brown and Rodriguez-Hertz \cite{br}.  

\subsection{Stiffness}\label{subs:stiffness_generalities}
Following Furstenberg \cite{furstenberg_stiffness},
a random dynamical system $(X, \nu)$ is  \textbf{stiff} if any $\nu$-stationary measure is almost surely invariant; equivalently, 
every ergodic stationary measure is almost surely invariant. 
This property can conveniently be expressed in terms of $\nu$-harmonic functions on $\Gamma$. 
Indeed if $\xi\colon X\to \R$ is a continuous function and $\mu$ is $\nu$-stationary, then 
$\Gamma\ni g\mapsto \int_X \xi(gx) \, d\mu(x)$ is a bounded, continuous, right $\nu$-harmonic function on $\Gamma$; thus 
proving that $\mu$ is invariant amounts to proving that such harmonic functions are constant. 
Stiffness can also be defined for group actions: a group 
$\Gamma$ \textbf{acts stiffly} on $X$ if and only if  $(X, \nu)$ is stiff for every probability measure $\nu$ on 
$\Gamma$ whose support generates $\Gamma$; in this definition, the measures $\nu$ can also be restricted to specific
families, for instance symmetric finitely supported measures, or measures satisfying some moment condition.
There  are some general criteria ensuring stiffness directly  from the properties of $\Gamma$. 
A first case is when $G$ is a topological  group acting continuously on $X$ and 
$\Gamma\subset G$ is     relatively compact. Then $\Gamma$ acts stiffly on $X$: this follows from the maximum principle for harmonic functions on $\overline \Gamma$ (see also \cite[Thm 3.5]{furstenberg_stiffness}). Another important case for us is that of Abelian and nilpotent groups.

\begin{thm}\label{thm:raugi}
Let $G$ be a locally compact, second countable, topological group. Let $\nu$ be a probability measure on 
$G$. If $G$ is nilpotent of class $\leq 2$, then any measurable, $\nu$-harmonic, and bounded function $\varphi\colon G\to \R$ is 
constant; thus, every measurable action of such a group is stiff. 
\end{thm}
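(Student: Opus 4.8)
The statement is a classical result (essentially due to Raugi, following the Choquet--Deny circle of ideas), so the plan is to reduce the nilpotent case of class $\leq 2$ to the abelian Choquet--Deny theorem via a standard induction on the structure of the group. The key observation is that if $\varphi\colon G\to\R$ is bounded, measurable and $\nu$-harmonic, then for every $g\in G$ the translate $\varphi_g\colon h\mapsto \varphi(gh)$ is again bounded, measurable and $\nu$-harmonic (harmonicity is a condition on the right, so left translations preserve it). Thus the space $\mathcal{H}$ of bounded measurable $\nu$-harmonic functions carries a left $G$-action, and one wants to show it consists of constants. The second ingredient is that, by a martingale/maximum-principle argument, any $\varphi\in\mathcal{H}$ is invariant under any $g$ lying in the group generated by $\supp(\nu)\,\supp(\nu)^{-1}$; more precisely the bounded martingale $\varphi(X_0X_1\cdots X_n)$ converges a.s., and harmonicity forces $\varphi$ to be constant along cosets of the closed subgroup generated by $\supp(\nu)$ once one quotients appropriately.

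\textbf{Key steps.} First I would treat the abelian case $G$ abelian: this is exactly the Choquet--Deny theorem, so it can be quoted. Second, suppose $G$ is nilpotent of class $2$, so that the commutator subgroup $[G,G]$ is central (and closed, after passing to closures as needed). Consider the quotient $\pi\colon G\to G/[G,G]=:A$, an abelian locally compact second countable group, and push $\nu$ forward to $\bar\nu=\pi_*\nu$ on $A$. Given $\varphi\in\mathcal H(G,\nu)$, one analyzes its restriction to cosets of $[G,G]$. The central point is: because $[G,G]$ is central, conjugation is trivial on it, and the fibered problem over each coset becomes a harmonic-function problem on the abelian group $[G,G]$ with a measure obtained by an averaging/disintegration procedure; here one uses that for $z\in[G,G]$ and any $g$, $zg=gz$, so the translation structure along the fibers is genuinely that of an abelian group. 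Applying the abelian Choquet--Deny theorem fiberwise (with a measurability argument for the disintegration) shows $\varphi$ is constant on each coset $g[G,G]$, i.e. $\varphi$ descends to a bounded measurable $\bar\nu$-harmonic function $\bar\varphi$ on $A$. Third, apply Choquet--Deny on $A$ to conclude $\bar\varphi$, hence $\varphi$, is constant on cosets of the closed subgroup of $A$ generated by $\supp(\bar\nu)$; combined with the fiberwise statement and the fact that $\supp(\nu)$ generates (the relevant part of) $G$, this forces $\varphi$ to be globally constant. Finally, the passage from ``bounded measurable $\nu$-harmonic functions on $G$ are constant'' to ``every measurable action is stiff'' is the standard duality recalled just before the theorem: if $\mu$ is $\nu$-stationary and $\xi$ is a bounded measurable function on $X$, then $g\mapsto\int\xi(g^{-1}x)\,d\mu(x)$ is a bounded $\nu$-harmonic function, hence constant, which gives $g_*\mu=\mu$ for $\nu$-a.e.\ $g$.

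\textbf{Main obstacle.} The genuinely delicate point is the fiberwise reduction in the second step: one must set up the disintegration of the harmonic-function equation along cosets of the central subgroup $[G,G]$ in a measurably consistent way, and verify that the resulting fiber measures make the restricted functions honestly harmonic for an abelian convolution, so that Choquet--Deny applies. This requires care with the second-countability and local-compactness hypotheses (to guarantee the existence of regular conditional probabilities and of Haar systems), and with the fact that $\nu$ need not be supported on a compact set unless a moment condition is imposed. The rest --- the abelian Choquet--Deny input and the harmonic-function-to-stiffness duality --- is routine. Since this theorem is quoted from the literature (Raugi), in the paper itself it would suffice to cite the reference and sketch only this reduction; I would present the class-$2$ nilpotent reduction as above and refer to the abelian case as classical.
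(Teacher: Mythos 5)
The paper does not prove Theorem~\ref{thm:raugi} at all: it is quoted from the literature, with the reference given as Guivarc'h~\cite{Guivarch:1973}, and a footnote warns that Raugi's later proof~\cite{Raugi:2004} contains a flawed lemma, the overall argument surviving only when $\nu$ has countable support or the nilpotency class is $\leq 2$. So the relevant comparison is between your sketch and the argument of Guivarc'h, and here your key reduction step has a genuine gap. The harmonicity equation is $\varphi(x)=\int_G\varphi(xg)\,d\nu(g)$; for $\nu$-typical $g$ the point $xg$ lies in a \emph{different} coset of $[G,G]$ than $x$ does, so this equation does not decouple into a family of equations on the individual cosets $x[G,G]$. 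Consequently there is no natural probability measure on the abelian group $[G,G]$ for which the restriction of $\varphi$ to a coset becomes harmonic, and ``Choquet--Deny applied fiberwise'' is not a well-posed step. Centrality of $[G,G]$ does give you something real --- for $z$ central, $\psi_z(x):=\varphi(xz)-\varphi(x)$ is again bounded, measurable and $\nu$-harmonic on all of $G$ --- but concluding $\psi_z\equiv 0$ is precisely the content of the theorem, not a consequence of the abelian case. Once one grants constancy on $[G,G]$-cosets, your descent to $G/[G,G]$ followed by Choquet--Deny is fine; the hard work is entirely in establishing that constancy, and that is exactly where Raugi's general nilpotent argument breaks down. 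A correct proof in class $\leq 2$ uses a finer device (exploiting the bilinearity of the commutator map $G\times G\to[G,G]$ together with the martingale structure of $\varphi(xS_n)$), and you should either cite Guivarc'h's argument or work out that computation; the disintegration you invoke does not exist.
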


We only stated the simplest result sufficient for our paper, but this theorem holds for most nilpotent groups without any assumption 
on the nilpotent class.  It is due to Dynkin and Malyutov for any finitely generated nilpotent group, and to Guivarc'h for a large class of locally compact nilpotent groups;  the case of Abelian groups is the famous Blackwell-Choquet-Deny theorem. We refer to~\cite{Guivarch:1973} for a proof 
(\footnote{The proof in~\cite{Raugi:2004} is not correct (Lemma 2.5 there is  false) but it
works perfectly, and is quite short, if the support of $\nu$ is countable 
or if the nilpotency class is $\leq 2$. See the introduction of~\cite{Raugi:2004} for a summary of previous results.}). 
We shall apply Theorem~\ref{thm:raugi} to subgroups $A\subset \Aut(X)$; what we implicitly do is first replace $A$ by
its closure in $\Aut(X)$ to get a locally compact group, and then apply the theorem to this group.

\subsection{Subgroups and hitting measures}\label{subs:inducing}
A basic tool 
is the {\bf hitting measure} on a subgroup, which we briefly introduce now (see 
\cite[Chap. 5]{benoist-quint_book} 
for details). 
Let $G$ be a locally compact second countable topological  group. 
A notion of length can be defined in this context as follows: given
 a neighborhood $U$ of the unit element, for any $g\in G$, $\length_U(g)$ is the least integer $n\geq 1$ 
 such that $g\in U^n$. By definition  a probability measure $\nu$ on $G$ 
  has a finite first moment (resp. a finite exponential moment)  if 
 $\int \length_U(g) \, d\nu(g)<\infty$ (resp. if  $\int \exp(\alpha \length_U(g)) \, d\nu(g)<\infty$ for some $\alpha >0$). This condition does not depend on the choice of $U$. 

Let $\nu$ be a probability measure on $G$, and consider the left random walk on $G$ governed by  $\nu$. 
Given a subgroup  $H\subset G$, for  $\omega=(g_i)\in G^\N$, define the  hitting time 
\begin{equation}
T (\omega) = T_{H}(\omega) := \min\set{n\geq 1\; ; \; g_n\cdots g_1\in H}.
\end{equation} 
If $T$ is almost surely finite we say that $H$ is \textbf{recurrent}
  and  the distribution of $g_{T(\omega)}\cdots g_1$ is by definition 
the {\bf{hitting measure}} of $\nu$ on $H$, which will be denoted $\nu_{H}$. 
The key  property of $\nu_{H}$ is that if 
$\varphi: G\to \R$ is a $\nu$-harmonic function, then $\varphi\rest{H}$ 
is also $\nu_H$-harmonic.
Therefore, if  $\mu$ is a $\nu$-stationary measure, then it is also $\nu_{H}$-stationary. 
Conversely, any bounded $\nu_{H}$-harmonic function $h$ on $H$ admits a unique extension 
$\widetilde h$ to a bounded $\nu$-harmonic function on $G$; this extension is defined by the formula 
\begin{equation}\label{eq:doob}
\widetilde h(x) = \ee_x(h(g_{T_{x,H}(\omega)}\cdots g_1x))  = \int h(g_{T_{x,H}(\omega)}\cdots g_1x)\, d\nu^\N(\omega)
\end{equation}
 where the stopping time $T_{x, H}$ is defined by 
$T_{x, H}(\omega) =    \min\set{n\geq 0 \; ; \;  g_n\cdots g_1x\in H}$. 
The uniqueness comes from Doob's optional stopping theorem, which 
asserts that if $(M_t)_{t\geq 0}$ is a bounded martingale and $T$ is a stopping time which is almost surely finite then $\ee(M_T) = \ee(M_0)$. 
Thus,   any bounded $\nu$-harmonic function $h$ on $G$ satisfies Formula~\eqref{eq:doob}. 

If $[G:H]<\infty$ then $H$ is recurrent and its stopping time admits an exponential moment. It 
follows that $\nu_{H}$ has a finite first (resp.  exponential) moment  
if and only if $\nu$ does. 

Likewise, assume that   $H$ is a normal subgroup of $G$ with 
$G/H$ isomorphic to $\Z$,  
and  that  $\nu$ is symmetric with a finite first moment.   
Then, the projection ${\overline{\nu}}$ of $\nu$ on $G/H$ is  symmetric with a finite first moment,   
so   the random 
walk governed by ${\overline{\nu}}$ on $G/H\simeq \Z$ 
is recurrent (see the Chung-Fuchs Theorem in~\cite[\S 5.4]{Durrett:BookV5} or \cite{Chung-Fuchs}) 
and $H$ is recurrent. 
 
\begin{lem}  \label{lem:stiff_finite_extension} 
Let $\nu$ be a probability measure on $\Aut(X)$ and 
$\Gamma'$ be a closed  subgroup which is recurrent for
the random walk induced by $\nu$. Let $\nu'$ be the induced measure on $\Gamma'$.  
If  $(X, \nu')$ is stiff then $(X, \nu)$ is stiff as well. This holds in particular if:
\begin{enumerate}[\em (i)]
\item either $[\Gamma_\nu:\Gamma']<\infty$ 
\item or $\Gamma'$ is a normal subgroup of $\Gamma_\nu$ with $\Gamma_\nu/\Gamma'$ isomorphic to $\Z$, and $\nu$ is symmetric with a finite first moment.
\end{enumerate}
\end{lem}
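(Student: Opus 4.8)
The plan is to reduce the stiffness of $(X,\nu)$ to that of $(X,\nu')$ via the hitting/induced measure formalism recalled in \S\ref{subs:inducing}, and then to verify that the two group-theoretic hypotheses (i) and (ii) both produce a recurrent subgroup $\Gamma'$ for which $\nu'$ is a genuine probability measure with the moment properties needed to run the argument.

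First I would treat the general statement. Assume $\Gamma'$ is a closed subgroup of $\Gamma_\nu$ which is recurrent for the $\nu$-random walk, so that the hitting measure $\nu'=\nu_{\Gamma'}$ is a well-defined probability measure on $\Gamma'$. Let $\mu$ be an ergodic $\nu$-stationary measure on $X$. As recalled in \S\ref{subs:inducing}, for every continuous $\xi\colon X\to\R$ the function $h(g)=\int_X\xi(gx)\,d\mu(x)$ is a bounded $\nu$-harmonic function on $\Gamma_\nu$, hence its restriction to $\Gamma'$ is $\nu'$-harmonic; equivalently $\mu$ is $\nu'$-stationary. Since $(X,\nu')$ is stiff, $\mu$ is $\nu'$-almost surely invariant, i.e.\ $g_*\mu=\mu$ for $\nu'$-a.e.\ $g\in\Gamma'$. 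In particular $h$ is constant on the support of $\nu'$, hence (being $\nu'$-harmonic and bounded) constant on all of $\Gamma'$ — here I use that $\mathrm{supp}(\nu')$ generates $\Gamma'$, which follows from $\mathrm{supp}(\nu)$ generating $\Gamma_\nu$ together with the description of $\nu'$ as the law of the first return $g_{T}\cdots g_1$. It then remains to propagate invariance from $\Gamma'$ to $\Gamma_\nu$. For this I would invoke the uniqueness in Doob's optional stopping theorem as stated in \S\ref{subs:inducing}: the bounded $\nu$-harmonic function $g\mapsto h(g)$ must satisfy Formula~\eqref{eq:doob}, expressing $h(g)$ as an average of values $h(g_{T_{g,\Gamma'}(\omega)}\cdots g_1 g)$. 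Since each such value lies in $\Gamma'$-translated data and $h$ is constant on $\Gamma'$... more precisely, applying \eqref{eq:doob} with the stopping time $T_{g,\Gamma'}$ shows $h(g)=h(e)$ for every $g\in\Gamma_\nu$, because the stopped walk lands in $\Gamma'$ where $h$ is already known to be constant. As $\xi$ was an arbitrary continuous function, $g_*\mu=\mu$ for every $g\in\mathrm{supp}(\nu)$, hence $\mu$ is $\nu$-almost surely invariant. This proves stiffness of $(X,\nu)$.

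Next I would check cases (i) and (ii). In case (i), $[\Gamma_\nu:\Gamma']<\infty$, so $\Gamma'$ is automatically recurrent and, as noted in \S\ref{subs:inducing}, $\nu'$ inherits finite first (resp.\ exponential) moments from $\nu$; moreover $\Gamma'$ being finite index in the finitely generated $\Gamma_\nu$ is itself finitely generated and $\mathrm{supp}(\nu')$ generates it. So the general statement applies. In case (ii), $\Gamma'\trianglelefteq\Gamma_\nu$ with $\Gamma_\nu/\Gamma'\cong\Z$, and $\nu$ is symmetric with finite first moment. Pushing $\nu$ forward to $\Gamma_\nu/\Gamma'\cong\Z$ gives a symmetric probability measure $\overline\nu$ on $\Z$ with finite first moment; by the Chung--Fuchs theorem (cited in \S\ref{subs:inducing}) the $\overline\nu$-random walk on $\Z$ is recurrent, hence $\Gamma'$ is recurrent for the $\nu$-walk, and again $\nu'$ has a finite first moment. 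This reduces case (ii) to the general statement.

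The main obstacle I anticipate is the propagation step from $\Gamma'$-invariance to $\Gamma_\nu$-invariance via \eqref{eq:doob}: one must be careful that the optional-stopping identity is being applied to the correct bounded martingale $M_n=h(g_n\cdots g_1 g)$ with the almost-surely-finite stopping time $T_{g,\Gamma'}$, and that recurrence of $\Gamma'$ (not merely of $\nu'$ as a probability measure) is what guarantees $T_{g,\Gamma'}<\infty$ a.s.\ for every starting point $g$ — this is exactly where the hypothesis that $\Gamma'$ is recurrent enters a second time. A minor subtlety worth spelling out is that, strictly speaking, one works inside the closure of $\Gamma_\nu$ in $\Aut(X)$ (a locally compact second countable group, by the remarks in \S\ref{par:Random_holomorphic_dynamical_systems} and Proposition~\ref{prop:discrete}) so that the results of \S\ref{subs:inducing} apply verbatim; but since in all our applications $\Gamma_\nu$ is discrete this is harmless. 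Everything else is a direct assembly of the statements already recalled in \S\ref{subs:inducing}.
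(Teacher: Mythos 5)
Your proposal is correct and follows the same approach as the paper's own proof: pass to $\nu'$-stationarity, use stiffness of $(X,\nu')$ to get $\Gamma'$-invariance, then propagate to $\Gamma_\nu$-invariance by the uniqueness of bounded $\nu$-harmonic extension from a recurrent subgroup (Doob's optional stopping, formula~\eqref{eq:doob}), with cases (i) and (ii) supplying recurrence of $\Gamma'$ via the finite-index observation and the Chung--Fuchs theorem respectively. The paper states this in a couple of lines — "by stiffness it is $\Gamma'$-invariant; the function $g\mapsto\mu(g^{-1}B)$ is bounded $\nu$-harmonic and constant on $\Gamma'$, so by uniqueness of harmonic extension it is constant" — and you have simply unfolded the same argument in more detail, correctly identifying where recurrence is used (to make $T_{g,\Gamma'}$ a.s.\ finite) and where Remark~\ref{rem:discrete} is needed (to upgrade $\nu'$-a.s.\ invariance to $\Gamma'$-invariance).
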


\begin{proof}
Let $\mu$ be a $\nu$-stationary measure on $X$. Then $\mu$ is $\nu'$-stationary, hence by stiffness it is $\Gamma'$-invariant. 
Therefore  for every Borel set $B\subset X$, the function 
$\Gamma\ni g \mapsto \mu(g\inv B)$ is a bounded $\nu$-harmonic function which is constant on $\Gamma'$ so by the 
uniqueness of harmonic extension  it is constant, and $\nu$ is $\Gamma$-invariant.
\end{proof}

\subsection{Elementary groups}\label{subs:stiffness_elementary}
Recall that $\Aut(X)$ is a topological group for the topology of uniform convergence and is 
in fact a complex Lie group (with possibly infinitely many connected components).
Let $\Aut(X)^\circ$ be the connected component of the identity in $\Aut(X)$ and 
\begin{equation}
\Aut(X)^\# = \Aut(X)/\Aut(X)^\circ.
\end{equation}
Let $\rho: \Aut(X)\to \GL(H^*(X;\Z))$ be the natural homomorphism; its image is  $\Aut(X)^* = \rho(\Aut(X))$  (see~\S~\ref{par:Hodge_decomposition});
is kernel contains $\Aut(X)^\circ$ and a theorem of Lieberman~\cite{lieberman} shows that
  $\Aut(X)^\circ$ has finite index in $\ker(\rho)$.
If $\Gamma$ is a subgroup of $\Aut(X)$, we set $\Gamma^* = \rho(\Gamma)$. 

\begin{thm}\label{thm:stiffness_elementary_groups}
Let $X$ be a compact K\"ahler surface. Let  $\nu$ be a symmetric probability measure on $\Aut(X)$ satisfying the moment condition~\eqref{eq:moment}. 
If $\Gamma_\nu$ is elementary and  $\Gamma_\nu^*$ is infinite, then $(X, \nu)$ is stiff. 
\end{thm}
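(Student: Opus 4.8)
The plan is to exploit the structure theory of elementary subgroups of $\Aut(X)$ together with the classical stiffness criteria recalled in \S\ref{subs:stiffness_generalities}--\S\ref{subs:inducing}. Recall that $\Gamma_\nu$ elementary means that its image $\Gamma_\nu^*$ in $\GL(H^2(X;\Z))$ is virtually abelian; equivalently, its action on $\Hyp_X$ is elementary, so it fixes a finite subset of $\Hyp_X\cup\partial\Hyp_X$. Since $\Gamma_\nu^*$ is assumed infinite, the three cases to distinguish are: (i) $\Gamma_\nu^*$ fixes a point in $\Hyp_X$ (so $\Gamma_\nu^*$ is, up to finite index, a group of elliptic isometries, hence relatively compact in $\O(H^{1,1}(X;\R))$, and being infinite and discrete this forces\dots in fact $\Gamma_\nu^*$ infinite and fixing an interior point still lands in a compact group); (ii) $\Gamma_\nu^*$ fixes a point of $\partial\Hyp_X$ and is, up to finite index, generated by parabolic elements sharing that fixed point, hence virtually abelian of a restricted type; (iii) $\Gamma_\nu^*$ preserves a pair of points of $\partial\Hyp_X$ and contains a loxodromic element, so up to index $2$ it preserves each of the two fixed points and is virtually $\Z\times(\text{finite or compact})$. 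After passing to the finite-index subgroup $\Gamma'\subset\Gamma_\nu$ with $\Gamma'^*$ of the appropriate restricted form (and invoking Lemma~\ref{lem:stiff_finite_extension}(i) to reduce to $\Gamma'$, using that the hitting measure $\nu'$ still satisfies \eqref{eq:moment} since the index is finite), the goal is to show $(X,\nu')$ is stiff.

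\textbf{Key steps.} First I would reduce, via Lieberman's theorem, to controlling the action modulo $\Aut(X)^\circ$: since $\Aut(X)^\circ$ has finite index in $\ker(\rho)$ and $\Aut(X)^\circ$ is a connected complex Lie group acting on $X$, the part of $\Gamma$ inside $\ker(\rho)$ is, up to finite index, contained in $\Aut(X)^\circ$, a group whose action one handles by a separate (easier) argument --- but here the hypothesis $\Gamma_\nu^*$ \emph{infinite} means precisely that $\Gamma_\nu$ is not virtually contained in $\ker(\rho)$, so $\Gamma_\nu^*$ genuinely reflects the dynamics. Second, in case (i): a discrete subgroup of a compact group is finite, contradicting $\Gamma_\nu^*$ infinite unless $\Aut(X)$ is non-discrete, which by Proposition~\ref{prop:discrete} happens only when $X$ is a torus; on a torus one checks directly that the relevant group is virtually abelian and then applies the Choquet-Deny/Blackwell-Choquet-Deny theorem (Theorem~\ref{thm:raugi}). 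Third, in cases (ii) and (iii): after passing to a finite-index subgroup one obtains a short exact sequence $1\to N\to\Gamma'\to A\to 1$ where $A\cong\Z$ or $\Z^k$ (the image of the "translation length" or the parabolic translation part) and $N$ maps into a compact group $K\subset\O(H^{1,1}(X;\R))$. One then wants to apply Theorem~\ref{thm:raugi}: the obstruction is that $\Gamma'$ need not itself be nilpotent of class $\leq 2$ --- $N$ could be large. To circumvent this, use the recurrence of $N$: since $\Gamma'/N\cong\Z^k$ and $\nu'$ is symmetric with finite first moment (which follows from \eqref{eq:moment} and the finite-index reduction), the Chung-Fuchs theorem gives that $N$ is recurrent for the induced walk, so by Lemma~\ref{lem:stiff_finite_extension}(ii) (applied iteratively, one $\Z$-quotient at a time) it suffices to prove $(X,\nu_N)$ is stiff. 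Now $\Gamma_\nu'\cap N$ has bounded image in cohomology, hence (being discrete, again using Proposition~\ref{prop:discrete} outside the torus case, or handled directly on the torus) is finite or relatively compact, and a relatively compact group acts stiffly by the maximum principle. Assembling these reductions yields stiffness of $(X,\nu)$.

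\textbf{Main obstacle.} The delicate point is the bookkeeping in cases (ii)--(iii): one must verify that the kernel $N$ of the map $\Gamma'\to A$ really does have relatively compact (equivalently finite, off the torus) image in $\GL(H^{1,1}(X;\R))$, which requires Lemma~\ref{lem:restriction_isom} applied to the $\gamma$-invariant isotropic line(s) --- a parabolic or loxodromic isometry of $\Hyp^m$ acts with bounded orbits on the complement of its fixed point data only after quotienting by the translation part, so one needs the invariant flag structure $0\subset \R u\subset u^\perp\subset H^{1,1}$ and the fact that $N$ acts trivially on the graded pieces up to a compact factor. A second technical nuisance is ensuring the moment conditions propagate through the two reduction lemmas: finite index preserves both finite first moment and \eqref{eq:moment}, while the $\Z$-extension case needs symmetry (used exactly to invoke Chung-Fuchs for recurrence of $N$) --- this is why symmetry of $\nu$ is hypothesized. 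Once these structural facts are in place, the stiffness input is entirely soft: Theorem~\ref{thm:raugi} for the abelian/nilpotent quotients and the maximum principle for the compact kernel, glued by Lemma~\ref{lem:stiff_finite_extension}.
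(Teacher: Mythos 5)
Your high-level plan---classify the elementary subgroup, reduce to finite-index pieces, apply the Choquet--Deny/nilpotent stiffness criterion and the recurrence lemma for $\Z$-quotients---is in the right circle of ideas, and you correctly locate where the symmetry hypothesis is used (the Chung--Fuchs argument). But there are two genuine gaps.

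\emph{Gap 1: the iterated $\Z$-reduction does not propagate the moment condition.} When $A^*$ is free Abelian of rank $k\geq 2$ (parabolic case), you propose applying Lemma~\ref{lem:stiff_finite_extension}(ii) ``one $\Z$-quotient at a time''. The first application needs $\nu'$ symmetric with finite first moment, which is fine. But the hitting measure $\nu_{N_1}$ on the intermediate normal subgroup $N_1$ with $\Gamma'/N_1\cong\Z$ has, in general, \emph{no} finite first moment: the first return position for a null-recurrent walk on $\Z$ (or $\Z^2$) is typically heavy-tailed (Cauchy-type). So you cannot invoke Lemma~\ref{lem:stiff_finite_extension}(ii) a second time. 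The paper avoids this entirely: when $\ker(\rho_{\Gamma_\nu})$ is \emph{finite}, a group-theoretic lemma (a finite-by-free-Abelian group is virtually free Abelian) produces a genuine finite-index free Abelian subgroup $A_0\subset\Gamma$, so one applies Theorem~\ref{thm:raugi} once and closes with Lemma~\ref{lem:stiff_finite_extension}(i). No $\Z$-reductions are needed in that branch. The single $\Z$-reduction is used only in the loxodromic torus case, where the quotient is genuinely $\Z$ and the kernel is already an Abelian group of translations.

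\emph{Gap 2: you have no control of the kernel $N$ in the parabolic case with infinite kernel.} You invoke Proposition~\ref{prop:discrete} to make $N$ finite or relatively compact, but that proposition is about groups containing a \emph{loxodromic} element; it gives nothing when $\Gamma_\nu^*$ consists of parabolics. The paper handles this via Proposition~\ref{pro:parabolic_infinite} and the geometry of the invariant genus-$1$ fibration $\tau\colon X\to B$: either $X$ is a torus (and then the lifted action is by affine maps $(x,y)\mapsto(x+a,y+mx+b)$, hence $\Gamma$ is \emph{nilpotent of class $\leq 2$}, which is exactly what Theorem~\ref{thm:raugi} requires---note that compact-by-$\Z^k$ is not enough for Raugi's theorem), or the action on the base has finite order, in which case a finite-index subgroup acts by translations on fibers, so $\Gamma$ is \emph{virtually Abelian}. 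You cannot get either conclusion from the cohomological flag structure alone: that only constrains $\Gamma^*$, whereas stiffness is about $\Gamma$ itself, and the kernel of $\rho_{\Gamma_\nu}$ can be large.

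A minor point: your case (i) is not really a case---$\Gamma_\nu^*\subset\GL(H^2(X;\Z))$ is always discrete, so if it fixes a point of $\Hyp_X$ it lies in a compact group and is therefore finite, contradicting the hypothesis outright. This has nothing to do with whether $\Aut(X)$ is discrete. The correct organizing dichotomy, which you miss, is whether $\ker(\rho_{\Gamma_\nu})$ is finite or infinite; this is what steers the proof towards the group-theoretic argument in one branch and the geometric analysis (torus/fibration) in the other.
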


Note that stiffness can fail when $\Gamma_\nu^*$ is finite: see Example~\ref{eg:stiffness_elementary} below. 
The proof relies on the   classification of elementary subgroups of $\Aut(X)$ 
(see \cite[Thm 3.2]{Cantat:Milnor}, \cite{favre_bourbaki}): 
if $\Gamma_\nu$ is elementary and $\Gamma_\nu^*$ is infinite there exists a finite index subgroup $A^*\subset \Gamma_\nu^*$ which is  
\begin{enumerate}[(a)]
\item either cyclic and generated by a loxodromic map;
\item or a free Abelian group of parabolic transformations possessing a common isotropic line; in that case, there 
is a genus $1$ fibration $\tau\colon X\to S$, onto a compact Riemann surface $S$, such that $\Gamma_\nu$ permutes
the fibers of $\tau$.  
\end{enumerate}
Denote by $\rho_{\Gamma_\nu} \colon \Gamma_\nu\to \Gamma_\nu^*$ the restriction of $\rho$ 
to $\Gamma_\nu$. We distinguish two cases.

\begin{proof}[Proof when the kernel of $\rho_{\Gamma_\nu}$ is finite] 
Let $A$ be the pre-image of $A^*$ in $\Gamma$; it fits into an
exact sequence $1\to F\to A\to A^*\to 0$ with $F$ finite, so a classical group theoretic lemma 
(see Corollary 4.8 in \cite{cantat-guirardel-lonjou}) asserts that 
$A$ contains a finite index, free Abelian 
subgroup $A_0$, such that $\rho_{\Gamma_\nu}(A_0)$ has finite index in  $A^*$.  
 Since $A_0$ is Abelian,  Theorem~\ref{thm:raugi} shows that the action of $(A_0,\nu_{A_0})$ on $X$ is stiff. The index
of $A_0$ in $\Gamma$ being finite,  Lemma~\ref{lem:stiff_finite_extension} concludes the proof. 
 \end{proof}

\begin{proof}[Proof when the kernel of $\rho_{\Gamma_\nu}$ is infinite] In case (a), $X$ is a torus $\C^2/\Lambda$ and $\ker(\rho_{\Gamma_\nu})$ is a group of translations of $X$
(see Proposition~\ref{prop:discrete}). Let $A\subset \Gamma_\nu$ be the pre-image of $A^*$; setting $K=\ker(\rho_{\Gamma_\nu})$, 
we obtain an exact sequence $0\to K\to A\to A^*\to 0$, with $A\subset \Gamma_\nu$ of finite index, $A^*\simeq \Z$ generated by 
a loxodromic element, and $K\subset X$ an infinite group of translations. Since $\nu$ is symmetric, the measure $\nu_A$ is also 
symmetric; since $\nu_A$ satisfies the moment condition~\eqref{eq:moment}, its projection on $A^*$ has a first moment (note
that if $f$ is loxodromic, then $\log(\norm{(f^*)^n})\asymp \abs{n}$). Since $K$ is Abelian, its action on $X$ is stiff; thus, 
as in Lemma~\ref{lem:stiff_finite_extension}.{(ii)}, the action of $A$ on $X$ is stiff.
Since $A$ has finite index in $\Gamma$, 
the action of $\Gamma$ on $X$ is stiff too by Lemma~\ref{lem:stiff_finite_extension}.{(i)}.

In case (b), we apply Proposition~\ref{pro:parabolic_infinite}. So, either $X$ is a torus, or the action of $\Gamma_\nu$ on the base $S$ of its invariant fibration $\tau\colon X\to S$ has finite order.
In the latter case, a finite index subgroup $\Gamma_0$  of $\Gamma$ preserves each fiber of $\tau$; then, $\Gamma_0$ contains 
a subgroup of index dividing $12$ acting by translations on these fibers. This shows that $\Gamma$ is virtually Abelian; in particular, 
$\Gamma$ is stiff. 
The last case is when the image of $\Gamma$ in $\Aut(S)$ is infinite and $X$ is a torus $\C^2/\Lambda_X$. 
Then, $S=\C/\Lambda_S$ is an elliptic curve and $\tau$ is induced by a linear projection $\C^2\to \C$, say the 
projection $(x,y)\mapsto x$. Lifting $\Gamma$ to $\C^2$, and replacing $\Gamma$ by a finite index subgroup
if necesssary, its action is by affine transformations of the form 
\begin{equation}
{\tilde{f}} \colon (x,y)\mapsto (x+a, y+m  x +b)
\end{equation}
with $m$ in  $\C^*$, and $(a,b)$ in $\C^2$. This implies that $\Gamma$ is a nilpotent group 
of length $\leq 2$; by Theorem~\ref{thm:raugi}  it   also acts stiffly and we are done.
\end{proof}

\begin{eg}\label{eg:stiffness_elementary}
If $X=\P^2(\C)$, its group of automorphism is $\PGL_3(\C)$ and for most choices of $\nu$ there is a unique 
stationary measure, which is not invariant; the dynamics is proximal, and this is opposite to stiffness (see~\cite{furstenberg_stiffness}).
If $X=\P^1(\C)\times C$, for some algebraic curve $C$, then $\Aut(X)$ contains $\PGL_2(\C)\times \Aut(C)$; if $\nu$ 
is a probability measure on $\PGL_2(\C)\times \{\id_C\}$, then in most cases the stationary measures are again non invariant.  
\end{eg}

\begin{pro}\label{pro:stiffness_elementary2}
Let $X$ be a complex projective surface, and $\Gamma$ be a subgroup of $\Aut(X)$ such that 
$\Gamma^*$ is finite. If $\Gamma$ preserves a probability measure, whose support   is 
Zariski dense in $X$, then the action of $\Gamma$ on $X$ is stiff.
\end{pro}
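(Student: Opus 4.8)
The key point is that a finite image $\Gamma^*$ forces the dynamics on cohomology to be essentially trivial, so all the hyperbolic phenomena of the previous sections disappear and $\Gamma$ becomes ``small'' from the point of view of harmonic analysis. First I would pass to the finite index subgroup $\Gamma_0 = \ker(\rho_\Gamma) = \Gamma\cap\ker(\rho)$, where $\rho\colon\Aut(X)\to\GL(H^*(X;\Z))$ is the natural homomorphism; by Lemma~\ref{lem:stiff_finite_extension}.(i) it suffices to prove that $(X,\nu')$ is stiff for every $\nu'$ supported on (a generating subset of) $\Gamma_0$ and satisfying the moment condition, since $[\Gamma:\Gamma_0]<\infty$ and a finite index subgroup is automatically recurrent. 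Now by Lieberman's theorem (quoted in \S\ref{subs:stiffness_elementary}) $\Aut(X)^\circ$ has finite index in $\ker(\rho)$, so again up to finite index we may assume $\Gamma_0\subset\Aut(X)^\circ$; replacing $\Gamma_0$ by the closure of the group it generates inside the complex Lie group $\Aut(X)^\circ$ we obtain a \emph{connected} Lie subgroup $G\subset\Aut(X)$ containing $\Gamma_0$ as a Zariski-dense (in particular recurrent-irrelevant) subgroup, and it is enough to show $(X,\nu')$ is stiff with $\Gamma_{\nu'}\subset G$.

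The heart of the argument is then a structural dichotomy for the connected group $G$ acting on the projective surface $X$. Since $G$ is connected and preserves a probability measure $\mu$ with Zariski dense support, $G$ cannot contain a copy of a non-amenable group acting without invariant measure: more precisely, the Zariski closure $\overline{G}$ is a connected algebraic group acting algebraically on $X$, and the stabilizer of a generic point of $\supp(\mu)$ has the same dimension for $\mu$-a.e.\ point, so $G$ acts with a dense orbit only if $\dim G$ is controlled. I would invoke the classification of connected automorphism groups of projective surfaces: $\overline{G}$ is either trivial-on-cohomology and then (being connected, acting on a surface, preserving a measure with full support) it is solvable --- indeed a torus, an abelian variety, or an extension involving $\mathbb{G}_a,\mathbb{G}_m$ acting on a ruled or rational surface --- or it contains a semisimple factor, in which case $X$ is rational (e.g.\ $\mathbb{P}^2$ or a $\mathbb{P}^1$-bundle) and a semisimple group with a dense orbit has no invariant probability measure of full support, contradicting our hypothesis (this is the Furstenberg-type obstruction: proximal strongly irreducible actions on projective spaces admit no invariant measure, cf.\ \S\ref{par:furstenberg_measure} and Example~\ref{eg:stiffness_elementary}). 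Hence the existence of $\mu$ with Zariski dense support rules out the semisimple case, and $G$ --- hence $\Gamma_{\nu'}$ --- is solvable, in fact nilpotent of class $\leq 2$ in all the surviving cases (translations on a torus/abelian surface; the group of affine maps $(x,y)\mapsto(x+a,\,y+mx+b)$ over an elliptic base; a relatively compact group; or a product of such). In each of these situations stiffness follows: the relatively compact case from the maximum principle, and the nilpotent-of-class-$\leq 2$ case from Theorem~\ref{thm:raugi} (the Dynkin--Malyutov--Guivarc'h theorem), exactly as in the proof of Theorem~\ref{thm:stiffness_elementary_groups} case (b).

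The main obstacle I anticipate is making the case analysis on $\overline{G}$ clean and exhaustive: one must rule out \emph{every} configuration in which a connected algebraic group acting on a projective surface admits an invariant probability measure of Zariski dense support yet fails to be (virtually) nilpotent of class $\leq 2$. The delicate subcases are the solvable non-nilpotent ones --- e.g.\ a Borel subgroup of $\PGL_2$ acting on a $\mathbb{P}^1$-bundle, or $\mathbb{G}_m$ acting on a toric surface --- where I would argue that an invariant measure with full support would have to be supported on the (lower-dimensional) set of fixed points or on boundary divisors, contradicting Zariski density of $\supp(\mu)$; so these too are excluded and only the nilpotent-of-class-$\leq 2$ (or compact) possibilities remain. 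Once the structure of $G$ is pinned down, the stiffness conclusion is immediate from Theorem~\ref{thm:raugi} together with Lemma~\ref{lem:stiff_finite_extension}, and one reads off Theorem~\ref{mthm:stiffness}'s elementary case as promised.
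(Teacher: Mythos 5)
Your plan follows the paper's opening moves (reduce to a finite-index subgroup of $\Aut(X)^\circ$ via Lieberman, pass to the closure $G$ of the group in $\Aut(X)^\circ$), but then diverges: the paper does \emph{not} analyze $G$ through its Zariski closure and the classification of algebraic groups. Instead it uses two concrete geometric tools. It considers the homomorphism $\tau:\Aut(X)^\circ\to\Aut(A_X)^\circ$ induced by the Albanese morphism and splits into the two cases $\tau(G)$ trivial and $\tau(G)$ infinite. In the first case the $G$-invariance of a very ample line bundle $L$ yields a linear representation $\beta:G\to\PGL_{N+1}(\C)$, and a $KAK$ argument shows that an unbounded $G$ would force the pushforward of $\mu$ to concentrate on a proper projective subspace, contradicting Zariski density; so $G$ is compact. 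In the second case one shows $X$ is a $\P^1$-bundle over an elliptic curve and appeals to Maruyama's theorem: either $X = A_X\times\P^1$ (and the Step 1 argument again gives compactness) or $\Aut(X)^\circ$ is Abelian, so Choquet--Deny applies. This bypasses any general classification of algebraic groups acting on surfaces.

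Your sketch has several genuine gaps. First, the closure $G$ of $\Gamma_0$ inside the connected Lie group $\Aut(X)^\circ$ need \emph{not} be connected (take $\Gamma_0\simeq\Z$ inside a one-parameter subgroup), so the assertion that $G$ is a connected Lie subgroup is false; the paper is careful to work with $G$ as a closed, not necessarily connected, real Lie group. Second, and more seriously, passing to the Zariski closure $\overline{G}^{\mathrm{Zar}}$ loses information in exactly the wrong direction: $\mu$ is only known to be $G$-invariant, not $\overline{G}^{\mathrm{Zar}}$-invariant, and a compact real form like $\mathsf{SU}_2\subset\mathsf{SL}_2(\C)$ shows that the Zariski closure can be semisimple and non-compact even when $G$ itself is compact and does preserve a measure with full support. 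So ``$\overline{G}^{\mathrm{Zar}}$ semisimple $\Rightarrow$ no invariant $\mu$ of full support'' is not a valid step as stated; the dichotomy you want (either $G$ compact, or $G$ nilpotent of small class) has to be established for $G$ directly, which is precisely what the Albanese$+$Kodaira argument achieves. Third, as you yourself flag, the exhaustion of the solvable non-nilpotent cases ($\mathbb{G}_a$, $\mathbb{G}_m$, Borel of $\mathsf{PGL}_2$, $\ldots$) is not carried out and is not straightforward; the paper sidesteps this entirely because Maruyama's structure theorem hands it the answer in the only geometry that survives.
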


The main examples we have in mind is when the invariant measure is given by a volume form, or by an 
area form on the real part $X(\R)$ for some real structure on $X$, with $X(\R)\neq \emptyset$.

\begin{proof}
Replacing $\Gamma$ by a finite index subgroup we may assume that
 $\Gamma\subset\Aut(X)^\circ$. Denote by $\mu$ the invariant measure.
Let $G$ be the closure (for the euclidean topology) of $\Gamma$ in the Lie group $\Aut(X)^\circ$; then $G$ is a real Lie group preserving $\mu$. 

Let $\alpha_X\colon X\to A_X$ be the Albanese morphism of $X$. There is a homomorphism 
of complex Lie groups $\tau\colon \Aut(X)^\circ\to \Aut(A_X)^\circ$ such that $\alpha_X\circ f =  \tau(f)\circ \alpha_X$ 
for every $f$ in $\Aut(X)^\circ$. 

Pick a very ample line bundle $L$ on $X$, denote by 
$\P^N(\C)$ the projective space $\P(H^0(X,L)^\vee)$, where $N+1=h^0(X, L)$, and by 
$\Psi_L\colon X\to \P^N(\C)$ the Kodaira-Iitaka embedding of $X$ given by $L$.
By hypothesis, $(\Psi_L)_*\mu$ is not supported by a hyperplane of 
$\P^N(\C)$.

{\bf{Step 1.}}— Suppose $\tau(G)=1$. Since $\Pic^0(X)$ and $A_X$ are dual to each other, $G$ 
acts trivially on $\Pic^0(X)$ and $L$ is $G$-invariant, that is $g^*L=L$ for every $g\in G$. Thus 
there is a homomorphism $\beta\colon G\to \PGL_{N+1}(\C)$ such that $\Psi_L\circ g = \beta(g)\circ \Psi_L$ for every $g\in L$. 
If $G$ is not compact, there is a sequence of elements $g_n\in G$ going to infinity in $\PGL_{N+1}(\C)$: in the KAK 
decomposition $g_n=k_n a_n k_n’$, the diagonal part $a_n$ goes to $\infty$. 
Then, any probability measure on $\P^N(\C)$ which is invariant under all $g_n$ is 
supported in a proper projective subspace of $\P^N(\C)$, and this contradicts our preliminary remark. 
So, $G$ is compact in that case.

{\bf{Step 2.}}— Now, assume that $\tau(G)$ is infinite.  Identifying $\Aut(A_X)^\circ$ with $A_X$,
$\tau(\Aut(X)^\circ)$ is a complex algebraic 
subgroup of the torus $A_X$, of positive dimension since it contains $\tau(G)$. If the kernel of $\tau$ is finite, then $\Aut(X)^\circ$ is compact and virtually Abelian; thus, we may assume $\dim(\ker(\tau))\geq 1$. In particular the fibers of
$\alpha_X$ have positive dimension, $\dim(\alpha_X(X))\leq 1$ and $\alpha_X(X)$ is a 
curve, which is elliptic because it is invariant under the action of $\tau(\Aut(X)^\circ)$. 
Then, the universal 
property of the Albanese morphism implies $\alpha_X(X) = A_X$. 
 In particular, $\alpha_X$ is a submersion, for its critical values form a proper, $\tau(\Aut(X)^\circ)$-invariant subset of $A_X$. 
Thus, $X$ is a $\P^1(\C)$-bundle over $A_X$ because the fibers of $\alpha_X$ are smooth, are invariant under the action of 
$\ker(\tau)$, and can not be elliptic since otherwise $X$ would be a torus.
From~\cite[Thm 3]{Maruyama} 
(see also~\cite{Loray-Marin, Potters} for instance), there are two cases:
\begin{enumerate}
\item either $X=A_X\times \P^1(\C)$,  $\Aut(X)=\Aut(A_X)\times \PGL_2(\C)$ and we deduce as in the first step that $G$ is a compact group;
\item or $\Aut(X)^\circ$ is Abelian.
\end{enumerate}
In both cases  stiffness follows, and we are done. \end{proof}

\begin{rem}
Pushing the analysis further, it can be shown that, under the assumptions Proposition~\ref{pro:stiffness_elementary2},
$\Gamma$ is relatively compact. Indeed in the   last considered case, 
if $\Gamma$ is not bounded it can be  deduced  from \cite[Thm 3]{Maruyama} that there are elements 
with  wandering dynamics: all orbits in some Zariski open subset converge 
towards a section of $\alpha_X$. This contradicts the invariance of~$\mu$.
\end{rem}

\subsection{Invariant algebraic curves II} \label{par:invariant_curves2}
Let us start with an example. 
 \begin{eg}[See also~\cite{invariant}]\label{eg:mobius}
Consider an elliptic curve $E=\C/\Lambda$ and the Abelian surface $A=E\times E$. The group $\GL_2(\Z)$ determines a non-elementary 
group of automorphisms of $E\times E$ of the form $(x,y) \mapsto (ax+by, cx+dy)$. The involution $\eta=-\id$ 
generates a central subgroup of $\GL_2(\Z)$, 
hence $\PGL_2(\Z)$ acts on the (singular) Kummer surface $A/\eta$. Each singularity gives rise to a smooth $\P^1(\C)$ in the minimal 
resolution $X$ of $A/\eta$, the group $\{ B\in \PGL_2(\Z)\; ; \; B\equiv \id \mod 2\}$  preserves each of these $16$ rational curves, 
and its action on these curves is given by the usual linear projective action of $\PGL_2(\Z)$ on $\P^1(\C)$. In particular, it is proximal and strongly 
irreducible so it  admits  a unique, non-invariant, stationary measure. 
\end{eg}

The next result shows that when $\nu$ is symmetric, every 
non-invariant stationary measure is similar to  the previous example. 
 
\begin{pro}\label{pro:stiffness_curves}
Let  $(X, \nu)$ be a random holomorphic dynamical system, with
$\nu$ symmetric.  
Let $\mu$ be an ergodic $\nu$-stationary measure giving positive mass to some proper Zariski closed subset of $X$. 
Then $\mu$ is supported on a $\Gamma_\nu$-invariant proper Zariski closed subset and
\begin{enumerate}[\em (a)]        
\item either  $\mu$ is invariant; 
\item or the Zariski closure of $\supp(\mu)$   is a finite, disjoint union of smooth rational curves $C_i$, the stabilizer of $C_i$ in $\Gamma$ 
induces a strongly irreducible and  proximal subgroup of $\Aut(C_i)\simeq \PGL_2(\C)$, and $\mu(C_i)^{-1}\mu\rest{C_i}$ is the unique stationary measure of this group of Möbius transformations. 
\end{enumerate} 
Moreover, if $(X,\nu)$ is non-elementary, the curves $C_i$ have negative self-intersection and can be contracted on cyclic quotient singularities.
\end{pro}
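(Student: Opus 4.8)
\textbf{Proof plan for Proposition~\ref{pro:stiffness_curves}.}
The plan is to first reduce to the case where $\mu$ gives no mass to points, then analyze the support of $\mu$ via Lemma~\ref{lem:mu_zariski}, and finally treat the induced action on an invariant curve using the classification of elementary groups together with Furstenberg's theory. To begin, I would invoke Lemma~\ref{lem:mu_zariski}: since $\mu$ is ergodic and charges a proper Zariski closed set, either $\mu$ is the uniform measure on a finite $\Gamma_\nu$-orbit (in which case $\mu$ is automatically invariant by stationarity, so Alternative~(a) holds), or $\mu$ has no atom and is supported on a $\Gamma_\nu$-invariant algebraic curve $Y$ which is the $\Gamma_\nu$-orbit of a single irreducible curve $D$. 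In the latter case, ergodicity implies $\mu$ is the normalized sum of its restrictions to the finitely many irreducible components $C_1, \dots, C_N$ that realize the maximal value $\delta^1_{\mathrm{max}}(\mu)$, and $\Gamma_\nu$ permutes these components transitively.

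Next I would pass to a finite-index subgroup $\Gamma'$ of $\Gamma_\nu$ stabilizing each $C_i$ (the index divides $N!$) and induce the measure to $\nu'$ on $\Gamma'$ via the hitting-measure construction of \S\ref{subs:inducing}; since $\nu$ is symmetric with finite support (or satisfies~\eqref{eq:moment}), Lemma~\ref{lem:stiff_finite_extension} lets me work with $(C_i, \nu')$ and transfer stiffness back. Fix one component $C = C_i$. The normalization $\widehat{C} \to C$ of $C$ is a smooth curve on which $\Gamma'$ acts by automorphisms (every automorphism of $X$ preserving $C$ lifts to $\widehat C$), preserving the probability measure $\mu_C := \mu(C)^{-1}\mu\rest{C}$. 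Now I distinguish cases according to the genus of $\widehat C$. If the genus is $\geq 1$, then $\Aut(\widehat C)$ is virtually abelian (finite for genus $\geq 2$, a finite-by-abelian extension by translations for genus $1$), so by Theorem~\ref{thm:raugi} the action of $\Gamma'$ on $\widehat C$ is stiff, $\mu_C$ is $\Gamma'$-invariant, hence (after transferring back) $\mu$ is $\Gamma_\nu$-invariant and Alternative~(a) holds. The remaining case is $\widehat C \cong \P^1(\C)$, where $\Aut(\widehat C) = \PGL_2(\C)$; let $\Delta$ be the image of the stabilizer of $C$ in $\Gamma_\nu$ inside $\PGL_2(\C)$ acting on $\widehat C$, with hitting measure $\nu_\Delta$. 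If $\Delta$ is elementary (relatively compact, or virtually solvable fixing one or two points), then either $\Delta$ is relatively compact — in which case stiffness holds by the maximum principle and $\mu_C$ is invariant — or $\Delta$ has a finite orbit on $\widehat C$ and, because $\mu_C$ has no atom and is $\nu_\Delta$-stationary, one checks using the elementary structure (translations/homotheties on $\mathbf{C}$ or $\mathbf{C}^*$ fixing the orbit points) that a non-atomic stationary measure must again be invariant under a finite-index abelian subgroup, so $\mu_C$ is invariant by Theorem~\ref{thm:raugi} and Lemma~\ref{lem:stiff_finite_extension}. Finally, if $\Delta$ is non-elementary, then it is non-compact and acts strongly irreducibly on $\mathbf{C}^2$, hence is proximal and strongly irreducible on $\P^1$; Furstenberg's theorem gives a unique $\nu_\Delta$-stationary measure, which is not invariant and has no atoms, so it must coincide with $\mu_C$. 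This is Alternative~(b).

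For the last sentence, assume $(X,\nu)$ is non-elementary and we are in Alternative~(b), so $\mu$ is not invariant and the $C_i$ are smooth rational curves forming a $\Gamma_\nu$-invariant set $Y$. Since $\Gamma_\nu$ is non-elementary, Lemma~\ref{lem:periodic_curves} applies to the periodic curve $C_1$ (its $\Gamma_\nu$-orbit is exactly $Y$): the intersection form is negative definite on the subspace of $\NS(X;\mathbf{Z})$ spanned by the classes $[C_i]$, so in particular each $C_i^2 < 0$, and Grauert's contraction theorem (as recorded in Lemma~\ref{lem:periodic_curves}) provides a $\Gamma_\nu$-equivariant bimeromorphic morphism $X \to X_0$ contracting the $C_i$. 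Because the $C_i$ are disjoint smooth rational curves, each with negative self-intersection, the resulting singularities of $X_0$ are cyclic quotient singularities (for instance, a single smooth rational curve of self-intersection $-e$ contracts to an $A_{e-1}$-type cyclic quotient singularity).

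\textbf{Main obstacle.} The delicate point is the case analysis on $\widehat C \cong \P^1$ when $\Delta$ is elementary but non-compact: one must rule out that a non-atomic $\nu_\Delta$-stationary measure could fail to be invariant. This requires knowing that $\nu_\Delta$ inherits enough of the moment/symmetry properties of $\nu$ (which follows from \S\ref{subs:inducing} when the subgroup has finite index, but needs care if one only controls recurrence) and that the elementary subgroups of $\PGL_2(\C)$ fixing a finite set are virtually abelian, so that Theorem~\ref{thm:raugi} forces invariance. One should also verify that the lift to the normalization $\widehat C$ does not introduce spurious non-invariance — i.e. that $\mu_C$ pulls back to a well-defined $\Gamma'$-invariant (resp. stationary) measure on $\widehat C$, which is automatic since the normalization map is $\Gamma'$-equivariant and finite.
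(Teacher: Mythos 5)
Your overall structure follows the paper's argument closely: reduce via Lemma~\ref{lem:mu_zariski}, pass to the finite-index stabilizer $\Gamma'$ and its hitting measure, split by the genus of the normalization, and treat $\PGL_2(\C)$ separately. The last paragraph on negative self-intersection and contraction is also essentially the paper's argument (though your parenthetical is off: a single smooth rational curve of self-intersection $-e$ contracts to a cyclic quotient singularity of type $\tfrac{1}{e}(1,1)$, i.e. $(\C^2,0)/\langle (x,y)\mapsto(\alpha x,\alpha y)\rangle$ with $\alpha$ a primitive $e$-th root of unity; the $A_{e-1}$ singularity has a chain of $e-1$ $(-2)$-curves in its minimal resolution, not one $(-e)$-curve).

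The genuine gap is precisely the one you flag as the ``main obstacle,'' but your proposed resolution is wrong. You write that the elementary case is handled because ``elementary subgroups of $\PGL_2(\C)$ fixing a finite set are virtually abelian, so that Theorem~\ref{thm:raugi} forces invariance.'' This is false: a subgroup of $\Aff(\C)=\{z\mapsto az+b\}$ with no common fixed point in $\C$ fixes exactly one point of $\P^1$ (namely $\infty$), is elementary, and is typically solvable but not virtually abelian (e.g.\ a group containing both a genuine translation and a genuine dilation). Theorem~\ref{thm:raugi} only covers nilpotency class $\leq 2$, which these groups do not generally satisfy, so the Choquet--Deny argument does not close this case. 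The paper handles it with a separate ingredient: for a \emph{symmetric} measure $\nu$ on $\Aff(\C)$ with no fixed point in $\C$, the only $\nu$-stationary probability on $\P^1(\C)$ is $\delta_\infty$ (a consequence of a theorem of Bougerol--Picard). Since $\mu_C$ is atomless, this case simply cannot occur, and the remaining elementary cases (relatively compact, or preserving a two-point set) are virtually abelian and handled by the maximum principle or Choquet--Deny. Without this lemma, or some substitute explicitly ruling out the affine case, your proof has a hole exactly where you suspected it did.

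A secondary point: Alternative~(b) of the proposition asserts that the $C_i$ are smooth, pairwise disjoint, and that the stabilizer acts on $C_i$ (not just on $\hat C_i$) by M\"obius transformations; you take these for granted when you reach the non-elementary case. They need to be derived: if $C_1$ is singular or meets another $\Gamma_\nu$-periodic curve, then the preimages of those points in the normalization $\hat{C_1}$ form a non-empty finite $\Gamma'$-invariant set, so $\Delta$ is not strongly irreducible and you are back in the elementary case. The contrapositive — strong irreducibility forces smoothness and disjointness (and hence $\hat{C_i}=C_i$) — is what lets you conclude Alternative~(b); the paper spells this out and your proof should too.
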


Note that no moment assumption is assumed here. 
Before giving the proof, let us briefly discuss the question of stiffness for Möbius actions on 
$\pu(\C)$. Let $\nu$ be a symmetric  
measure on $\PGL_2(\C)$. 
As already said, by  Furstenberg's theory, if  
$\Gamma_{\nu}$ is strongly irreducible and unbounded  it admits a unique stationary measure, and this measure is not invariant.
Otherwise, any  $\nu$-stationary measure is invariant because 
\begin{itemize}
\item either $\Gamma_{\nu}$ is relatively compact and stiffness follows from 
\cite[Thm. 3.5]{furstenberg_stiffness};
\item or $\Gamma_{\nu}$ admits an invariant set made of two points, then $\Gamma_\nu$ is virtually Abelian and 
stiffness follows from Theorem~\ref{thm:raugi};
\item or $\Gamma_{\nu}$ is conjugate to a subgroup of the affine group $\Aff(\C)$ with no fixed point. 
\end{itemize}
In the latter case after conjugating $\Gamma_\nu$ to a subgroup of $\Aff(\C)$ 
we can write  any  $g\in \Gamma_{\nu}$  as $g(z)  = a(g)z+b(g)$. 
If $a(g)\equiv 1$ then $\Gamma_{\nu}$ is Abelian and we are done. Otherwise $\Gamma_{\nu}$ is 
merely  solvable and we apply the following lemma which follows from a result of    
Bougerol and Picard  
\begin{vcourte}
(see \cite[Thm. 2.4]{bougerol-picard};  a self-contained proof is provided
 in \cite{cantat-dujardin:vlongue}).  
\end{vcourte}
\begin{vlongue}
(see \cite[Thm. 2.4]{bougerol-picard}).  
\end{vlongue}

\begin{lem}
Let $\nu$ be a symmetric probability measure on $\Aff(\C)$.
If no point of $\C$ is fixed by $\nu$-almost every $g$, 
then the only $\nu$-stationary probability on  $\P^1(\C)$ is the point mass at $\infty$. 
\end{lem}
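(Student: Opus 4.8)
The statement to prove is: if $\nu$ is a symmetric probability measure on $\Aff(\C)$ such that no point of $\C$ is fixed by $\nu$-almost every $g$, then the only $\nu$-stationary probability measure on $\P^1(\C)$ is $\delta_\infty$. I would first observe that $\infty$ is a common fixed point of all of $\Aff(\C)$, so $\delta_\infty$ is indeed invariant, and the real content is \emph{uniqueness}. Let $\mu$ be any $\nu$-stationary probability on $\P^1(\C)$ and decompose $\mu = \mu(\{\infty\})\,\delta_\infty + \mu_0$, where $\mu_0$ is supported on $\C$; by stationarity and the fact that $\Aff(\C)$ preserves $\C$ and fixes $\infty$, the measure $\mu_0$ (suitably normalized, if nonzero) is again $\nu$-stationary and supported on $\C$. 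So it suffices to show there is no $\nu$-stationary probability measure carried by $\C$.

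Write $g(z) = a(g)z + b(g)$ with $a\colon \Aff(\C)\to \C^*$ the multiplicative character. The first step is to rule out the case where $a \equiv 1$ on $\supp(\nu)$: then $\Gamma_\nu$ is a group of translations, hence abelian, and a $\nu$-stationary measure on $\C$ would be translation-invariant under $\Gamma_\nu$; since $\Gamma_\nu$ is a nontrivial subgroup of $(\C,+)$ (nontrivial because otherwise every point of $\C$ is fixed), no probability measure on $\C$ can be invariant, a contradiction. So I may assume $a$ is nonconstant on $\supp(\nu)$. Now I would invoke the result of Bougerol and Picard (\cite[Thm. 2.4]{bougerol-picard}): for a random walk on $\Aff(\C)$ (more generally $\Aff(\R^d)$), either the linear parts generate a relatively compact group \emph{and} there is a common fixed point, or there is no stationary measure on $\C$ at all, unless the walk is ``degenerate'' in the translation direction. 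The symmetry hypothesis on $\nu$ is what excludes the remaining contracting-on-average regime: if $\nu$ is symmetric then $\E[\log|a(g)|] = 0$ (the integrand is an odd function of $g\mapsto g^{-1}$ composed with $\log|a|$, since $a(g^{-1}) = a(g)^{-1}$), so the walk is neither contracting nor expanding on average; combined with the genuine randomness in $b$ forced by the no-common-fixed-point assumption, the random walk on $\C$ is then ``recurrent but non-localizing'' and admits no invariant probability. The technical core is this dichotomy from \cite{bougerol-picard}; the self-contained proof alluded to in the long version proceeds by the standard argument that a stationary $\mu_0$ on $\C$ would give, via the martingale $z\mapsto \int \mathrm{dist}(z, \cdot)\wedge 1\, d\mu_0$ pulled back along the walk, a bounded $\nu$-harmonic function whose convergence along trajectories contradicts either recurrence (when $\E[\log|a|]=0$) or the absence of a common fixed point.

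The main obstacle I anticipate is making the last step rigorous without just citing \cite{bougerol-picard}: one must handle the interplay between the neutral behaviour of the linear part $a$ (guaranteed by symmetry) and the translation part $b$. The clean way is: when $\E[\log|a(g)|]=0$, the process $\log|a(g_n\cdots g_1)|$ is a mean-zero random walk on $\R$, hence recurrent, so it returns infinitely often to a bounded window; on the subsequence of such return times, the affine maps $g_n\cdots g_1$ have bounded linear part, and if $\mu_0$ were stationary one could extract a limit showing $\mu_0$ is invariant under a non-relatively-compact set of affine maps with bounded linear parts but unbounded translations (unbounded because there is no common fixed point), which is impossible for a probability measure on $\C$. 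I would organize the write-up so that the recurrence of the one-dimensional walk $\log|a|$ does the work, invoking the Chung--Fuchs theorem exactly as it is used elsewhere in Section~\ref{subs:inducing}, and then the no-fixed-point hypothesis to get the translational escape. Everything else — reducing to $\mu_0$, the abelian sub-case, the reduction to $\Aff(\C)$ from the general Möbius discussion — is routine and can be stated briefly.
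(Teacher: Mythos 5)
Your first reductions — writing $\mu = \mu(\{\infty\})\delta_\infty + \mu_0$, reducing to a stationary probability carried by $\C$, and disposing of the abelian case $a\equiv 1$ — match the paper's setup. But the route you take afterwards diverges from the paper and contains a genuine gap.

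The crux of your argument is that the symmetry of $\nu$ makes $\log|a(g_n\cdots g_1)|$ a mean-zero random walk on $\R$, hence recurrent by Chung--Fuchs; you then propose to exploit the subsequence of return times. The problem is that Chung--Fuchs requires the increments $\log|a(g)|$ to be integrable, and no moment hypothesis is available here — Proposition~\ref{pro:stiffness_curves}, inside which this lemma is used, is explicitly stated with the remark that ``no moment assumption is assumed here.'' Symmetry of $\nu$ does \emph{not} by itself force recurrence: a symmetric $\alpha$-stable increment law with $\alpha<1$ gives a transient symmetric walk on $\R$. So the sentence ``$\E[\log|a(g)|]=0$'' is only meaningful under an unstated integrability hypothesis, and the recurrence you rely on can simply fail. (This is also why the appeal to ``the Chung--Fuchs theorem exactly as used in Section~\ref{subs:inducing}'' doesn't transfer: there, the relevant measure is assumed to have a finite first moment.) A secondary, but real, soft spot is the step where you claim the translation parts are unbounded along the return times ``because there is no common fixed point'': this needs an argument (one must rule out that, along those times, $(a,b)$ stays bounded, and handle the resulting limit maps carefully), and as written it is a hand-wave.

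The paper's self-contained proof avoids all moment considerations. It takes the right random walk $r_n$, uses the martingale convergence of $\lrpar{r_n}_*\mu$ to a limit $\mu_\omega$, shows $r_n\to\infty$ in $\PGL_2(\C)$ (otherwise $\mu$ would be $\Gamma_\nu$-invariant, impossible since $\Gamma_\nu$ contains a nontrivial translation), then splits according to whether $(a(r_n),b(r_n))$ is bounded or not: if unbounded, a rescaled limit $\ell(\omega)$ forces $\mu$ to be a point mass; if bounded, then $r_n\to\infty$ in $\PGL_2(\C)$ forces $a(r_n)\to 0$ a.s., which contradicts the symmetry of $\nu$ because $a(r_n)$ and $a(r_n)^{-1}$ have the same distribution. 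That last use of symmetry is a distributional identity — no expectation of $\log|a|$ ever appears — and is where your argument should be redirected if you want to avoid the moment hypothesis.
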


\begin{vlongue} 
\begin{proof}   
Assume by contradiction that there exists a stationary  
measure $\mu$ such that $\mu(\C) = 1$ and  $\mu(\{\infty\})=0$. 
If  $\Gamma_\nu$ is abelian, it is made of translations because it has no fixed point in $\C$; on the other hand 
 if $\Gamma_\nu$
is not abelian, its derived subgroup contains a non-trivial translation. Thus, in any case $\Gamma_\nu$ contains a non-trivial translation, and we infer that $\Gamma_\nu$ does not preserve any finite measure on $\C$. In particular  $\mu$ is not invariant. 

 
Let now $r_n$ be the right random walk associated to $\nu$ on $\Aff(\C)$. Put $\nu^\infty = \sum_{k=0}^\infty 2^{-{k+1}} \nu^{*k}$.
A classical martingale convergence 
argument (see \cite[Lem. II.2.1]{bougerol-lacroix})
provides a measurable set $\Omega_0$ with $\nu^\N(\Omega_0) = 1$ such that, for all $\omega\in \Omega_0$,
\begin{itemize}
\item    $r_n(\omega)_* \mu$ converges toward a probability measure 
$\mu_\omega$  and $\mu = \int \mu_\omega d\nu^\N(\omega)$;
\item for  $ \nu^\infty$-almost every $ \gamma $, 
$r_n(\omega)_*\gamma_*\mu$ converges towards the same limit  $\mu_\omega$. 
\end{itemize}
Since 
$\mu = \int \mu_\omega d\nu^\N(\omega)$,   we have $\mu_\omega(\C)= 1$ almost surely. 
Now, assume that for  some    $\omega\in \Omega_0$, $r_n(\omega)$ does not go to $\infty$  in 
$\PGL_2(\C)$. Extracting a convergent subsequence $r_{n_j}(\omega)\to r$, we infer that  $\gamma_*\mu  = \gamma'_*\mu = (r\inv)_*\mu_\omega$ for $(\nu^\infty\times \nu^\infty)$-almost-every $(\gamma, \gamma')$; hence $\mu$ is $\Gamma_\nu$-invariant, a contradiction. Thus  $r_n(\omega)$ goes to $\infty$ in $\PGL_2(\C)$ for almost every $\omega$. 

Suppose    that $(a(r_n(\omega)), b(r_n(\omega)))$ is unbounded in $\C^2$ for  
a subset $\Omega_0'\subset \Omega_0$ of positive measure. Set 
\begin{equation}
\tilde r_{n} (\omega)= \unsur{\max(\abs{a(r_{n}(\omega))}, \abs{b(r_{n}(\omega))})} r_{n}(\omega)
\end{equation}
and extract a subsequence ${n_{j}}$ so that   $\tilde r_{n_{j}}(\omega) \to \ell(\omega)$, where $\ell(\omega)$ is an affine endomorphism of~$\C$. 
If $\ell(\omega)(z)\neq 0$ then $r_{n_{j}}(\omega)(z)\to \infty$. Since $r_{n_{j}}(\omega)_*\mu\to \mu_\omega$ and  $\mu_\omega(\C)= 1$, we deduce that $\mu(\ell(\omega)^{-1}\{0\}) =1$. This is  a contradiction because $\mu$ is not concentrated at a single point.
Thus,   $(a(r_n(\omega)), b(r_n(\omega)))$ is almost surely bounded. Since $r_n(\omega)$ goes to $\infty$ in 
$\PGL_2(\C)$, $a(r_n(\omega))$ goes to 0 almost surely, in contradiction with the symmetry of $\nu$. This concludes the proof.
\end{proof}
\end{vlongue}

\begin{proof}[Proof of Proposition~\ref{pro:stiffness_curves}] If $\mu$ has an atom then, by ergodicity, $\mu$ is supported on a finite orbit 
and it is invariant. So we now assume that $\mu$ is atomless. By ergodicity, $\mu$ gives full mass to a $\Gamma_\nu$-invariant curve $D$; 
let $C_1, \ldots , C_n$ be its irreducible 
components. Let $\Gamma'$ be the finite index subgroup of $\Gamma_\nu$ stabilizing each $C_i$
and $\nu'$ be the hitting measure induced by   $\nu$ on $\Gamma'$; it is symmetric, 
$\mu$  is   $\nu'$-stationary, and so are its restrictions $\mu\rest{C_i}$, for each $C_i$.

If the genus of (the normalization of) $C_1$ is positive, then $\Gamma'\rest{C_1}\subset \Aut(C_1)$ is virtually Abelian, hence $\mu\rest{C_1}$ 
is $\Gamma'$-invariant. Since $\mu$ is ergodic, $\Gamma_\nu$ permutes transitively the $C_i$, and arguing as in Lemma \ref{lem:stiff_finite_extension}, we
see that $\mu$ is $\nu$-invariant as well. 
Now, assume that the normalization $\hat{C_1}$ is isomorphic to $\P^1(\C)$. If $C_1$ is not smooth, or if it intersects another 
$\Gamma_\nu$-periodic curve, then the image of $\Gamma'$ in $\Aut(\hat{C_1})\simeq \PGL_2(\C)$ is not strongly irreducible, and 
the discussion preceding this proof shows
 that $\mu$ is $\Gamma'$-invariant. Again, this implies that $\mu$ is $\Gamma_\nu$-invariant. The same 
holds if $\Gamma'$ is a bounded subgroup of  $\Aut(\hat{C_1})$. The only possibility left is that
$C_1$ is smooth, disjoint from the other periodic curves, and  $\Gamma'$ induces a strongly irreducible subgroup of  $\Aut(C_1)$. 
Since $\Gamma_\nu$ permutes transitively the $C_i$, conjugating the dynamics of 
the groups $\Gamma'\rest{C_i}$, the same property 
holds for each $C_i$. 

If $\Gamma_\nu$ is non-elementary,  Lemma~\ref{lem:periodic_curves} shows that $C_i^2=-m$ for some $m>0$, 
which does not depend on $i$ because $\Gamma_\nu$ 
permutes the $C_i$ transitively. Then, the $C_i$ being disjoint, one can contract them simultaneously, each of the contractions leading to a quotient singularity
$(\C^2,0)/\langle \eta\rangle$ with $\eta(x,y)=(\alpha x, \alpha y)$ for some root of unity $\alpha$ of order $m$ (see~\cite[\S III.5]{BHPVDV}).
\end{proof}



\subsection{Non-elementary groups: real dynamics}
 We now consider general non-elementary actions.  
 As explained in the introduction, so far our results are restricted to   
subgroups  of $\Aut(X)$ preserving a totally real surface $Y$. We further assume that there exists a 
$\Gamma_\nu$-invariant volume form on $Y$; this is automatically the case if 
$X$ is an Abelian, a K3, or an Enriques surface 
\begin{vcourte} 
(see \cite{invariant}). 
\end{vcourte} 
\begin{vlongue} 
(see Lemma \ref{lem:volume_Y}). 
\end{vlongue} 
Note that, \emph{a posteriori},  the results of 
\begin{vcourte} 
\S \ref{sec:rigidity} and~\cite{invariant} 
\end{vcourte}  
\begin{vlongue} 
\S \ref{sec:parabolic} and \ref{sec:rigidity} 
\end{vlongue}  
suggest that measures  supported on a totally real surface and invariant under a non-elementary subgroup of $\Aut(X)$ 
tend to be absolutely continuous, unless they are supported by a curve or a finite set. 
\begin{vlongue}
We saw in Example~\ref{eg:mobius}   that 
stiffness can fail in      presence of 
invariant rational curves along which the dynamics is that of a proximal and strongly irreducible
random product of M\"obius transformations. The
next theorem shows that for actions preserving a totally real surface, 
 this  obstruction to stiffness is the only one. 
\end{vlongue}
 
\begin{thm}\label{thm:stiffness_real}
Let $(X, \nu)$ be a non-elementary  random holomorphic dynamical system satisfying the 
moment condition \eqref{eq:moment}. 
Assume that 
 $Y\subset X$ is  a $\Gamma_\nu$-invariant totally real 2-dimensional smooth 
 submanifold   such  that the action of  
$\Gamma_\nu$ on $Y$ preserves a probability measure $\vol_Y$  equivalent to the Riemannian volume on $Y$.
Then, 
every ergodic   stationary measure $\mu$ on $Y$ is:
\begin{enumerate}[\em (a)]        
\item  either almost surely invariant,
\item or  supported on a $\Gamma_\nu$-invariant algebraic curve. 
\end{enumerate}
In particular if there is no $\Gamma_\nu$-invariant  curve then   $(Y, \nu)$ is stiff. 
Moreover, if the fiber entropy  of $\mu$ is positive, then $\mu$ is the restriction of 
$\vol_Y$ to a subset of positive volume.
\end{thm}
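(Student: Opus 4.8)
\textbf{Proof strategy for Theorem~\ref{thm:stiffness_real}.}
The plan is to apply the machinery built in Sections~\ref{sec:currents}--\ref{sec:No_Invariant_Line_Fields} together with the Brown--Rodriguez-Hertz stiffness theorem. Let $\mu$ be an ergodic $\nu$-stationary measure on $Y$. First I would dispose of the case where $\mu$ charges a proper Zariski closed set: by Proposition~\ref{pro:stiffness_curves} (which requires no moment condition, only the symmetry already built into Theorem~\ref{mthm:stiffness}'s hypotheses — and here I would need to be slightly careful, invoking instead the non-symmetric variant, or noting that the conclusion (b) ``supported on an invariant curve'' is what we want) such a $\mu$ is either invariant or supported on a $\Gamma_\nu$-invariant curve, so we land in case (a) or (b). Hence from now on assume $\mu$ gives no mass to any proper subvariety, and the goal is to show $\mu$ is almost surely invariant.

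Next I would analyze the Lyapunov exponents $\lambda^+\ge\lambda^-$ of $\mu$ viewed as a measure for the random dynamics on the real surface $Y$. Since $\Gamma_\nu$ preserves the smooth volume $\vol_Y$ on $Y$, the Jacobian of each $f$ along $Y$ has absolute value $1$ at every point, so Proposition~\ref{pro:sum_exponents} (applied with the obvious real analogue, or directly: the real Jacobian relative to $\vol_Y$ is unimodular) gives $\lambda^++\lambda^-=0$. If $\lambda^+=\lambda^-=0$, the invariance principle (Theorem~\ref{thm:invariance_principle}) applies and $\mu$ is almost surely invariant — case (a). So the remaining, and main, case is $\lambda^+>0>\lambda^-$, i.e. $\mu$ is hyperbolic. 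Note $\mu$, a priori defined on $Y$, is also a hyperbolic ergodic stationary measure for the holomorphic dynamics on the ambient complex surface $X$ — here one uses that $Y$ is totally real, so $T_xY\otimes_{\R}\C=T_xX$ and the complex Lyapunov spectrum is $\{\lambda^+,\lambda^-\}$ with the stable/unstable directions the complexifications of the real ones.

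Now I would invoke Theorem~\ref{thm:alternative_stable} for the complex random dynamical system $(X,\nu)$ and the hyperbolic measure $\mu$, which is not supported on a $\Gamma_\nu$-invariant curve by assumption. The alternative says: either the Oseledets stable directions depend non-trivially on the itinerary, or $\mu$ is almost surely invariant (with zero fiber entropy). In the second case we are done. In the first case — stable directions genuinely depend on $\omega$ — the key point is that the stable Oseledets direction $E^s_\omega(x)\subset T_xX$ is the complexification of a real line $E^s_\omega(x)\cap T_xY\subset T_xY$, so non-triviality of the complex line field is equivalent to non-triviality of the real stable line field over $Y$. At this stage the hypotheses of the Brown--Rodriguez-Hertz theorem~\cite{br} are met: $\nu$ is a finitely supported (or satisfies the moment condition) symmetric measure acting by $C^2$ diffeomorphisms on the compact surface $Y$ preserving a smooth volume, $\mu$ is ergodic, hyperbolic, not supported on a finite set or invariant curve, and the stable line field is non-$\Gamma_\nu$-invariant. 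Their result then forces $\mu$ to be SRB-and-physical, in fact absolutely continuous with respect to $\vol_Y$; combined with $\Gamma_\nu$-invariance of $\vol_Y$ and ergodicity, $\mu$ is the normalized restriction of $\vol_Y$ to a positive-volume $\Gamma_\nu$-invariant set, which in particular is invariant — case (a), and moreover the final sentence of the theorem (positive fiber entropy forces $\mu$ to be a piece of $\vol_Y$) follows since the zero-entropy invariant case was already separated out.

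\textbf{The main obstacle} I anticipate is the bookkeeping at the interface between the complex and real pictures: verifying carefully that a hyperbolic stationary measure carried by the totally real surface $Y$ really is a \emph{hyperbolic} ergodic stationary measure for the complex dynamics with matching Oseledets data (no extra complex directions, stable manifolds in $X$ restricting to stable manifolds in $Y$), and conversely that the ``non-random stable directions'' dichotomy transfers between the two settings. A secondary delicate point is checking that the precise measure-rigidity output of~\cite{br} applies verbatim to random products of area-preserving $C^2$ diffeomorphisms with the moment condition~\eqref{eq:moment} rather than merely finitely supported measures, and citing the version of~\cite{avila-viana} used to upgrade ``non-invariant stable line field $\Rightarrow$ absolute continuity''. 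The elementary-group subtleties and the symmetry hypothesis play no role here since $\Gamma_\nu$ is non-elementary throughout.
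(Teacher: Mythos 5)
Your proof follows the same overall strategy as the paper's: use volume-preservation to get $\lambda^+ + \lambda^- = 0$, the invariance principle (Theorem~\ref{thm:invariance_principle}) to reduce to the hyperbolic case, Theorem~\ref{thm:alternative_stable} to analyse the stable line field, and the Brown--Rodriguez-Hertz result to conclude. The only structural difference is the order in which you combine the two last ingredients: the paper applies the trichotomy of~\cite{br} first and then Theorem~\ref{thm:alternative_stable} in contrapositive form to identify the ``supported on an invariant curve'' case, whereas you apply Theorem~\ref{thm:alternative_stable} first and then~\cite{br} to upgrade ``stable directions depend on the itinerary'' to absolute continuity. These are logically equivalent. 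Your concern about the symmetry hypothesis in Proposition~\ref{pro:stiffness_curves} can be sidestepped: Lemma~\ref{lem:mu_zariski}, which requires no symmetry, already shows that a stationary $\mu$ charging a proper Zariski closed set is either invariant (finite-orbit) or carried by a $\Gamma_\nu$-invariant curve.

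There is, however, a genuine gap in the last step. You claim the positive-entropy conclusion ``follows since the zero-entropy invariant case was already separated out,'' but this only disposes of alternative~(b) of Theorem~\ref{thm:alternative_stable}. You must also rule out the possibility that $\mu$ is supported on a $\Gamma_\nu$-invariant curve, and that requires a non-trivial argument: one uses the fibered Margulis--Ruelle inequality (Proposition~\ref{pro:margulis-ruelle}) to see that a positive-entropy stationary measure is hyperbolic; then, following the proof of Corollary~\ref{cor:nevanlinna}, a hyperbolic non-atomic $\mu$ carried by an invariant curve must have negative tangential Lyapunov exponent, so the dynamics along the curve is proximal and the stable conditionals of $\m$ are Dirac masses. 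By Corollary~\ref{cor:rokhlin_zero_entropy} this forces $h_\mu(X,\nu)=0$, a contradiction. Without this argument, you have not actually shown that positive entropy steers you into the absolutely continuous branch of the~\cite{br} trichotomy. Your flagged ``main obstacle'' (the real/complex bridge via $T_xX = T_xY \otimes_\R \C$) is handled correctly and is indeed the key subtlety, but it is this entropy-on-curves step that the write-up is missing.
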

Recall   from  Lemma \ref{lem:periodic_curves}  that 
$\Gamma_\nu$-invariant curves can be contracted. For 
the induced random dynamical system on the  
resulting   singular surface, stiffness holds unconditionally.  
If furthermore $\nu$ is symmetric then the result 
can be made more precise by applying  Proposition \ref{pro:stiffness_curves}.

\begin{proof}[Proof of Theorem~\ref{thm:stiffness_real}]  
We split the proof in two steps. 

{\bf{Step 1.--}} Let $\mu$ be an ergodic stationary measure supported on $Y$. We assume that $\mu$ is not invariant, 
and we want to prove that it is supported on a $\Gamma_\nu$-invariant curve. Since the action is 
volume preserving, its Lyapunov exponents satisfy $\lambda^{-}+\lambda^{+}=0$ (see Lemma~\ref{lem:sum_exponents}). 
The invariance principle (Theorem~\ref{thm:invariance_principle}) shows that $\mu$  is hyperbolic: indeed
 $\mu$ is almost surely invariant when $\lambda^{-}\geq 0$. We can therefore apply Theorem 3.4 of \cite{br} to obtain the following trichotomy:
\begin{itemize} 
\item[(1)] either $\mu$ has finite support, so it is invariant;
\item[(2)] or the   distribution  of Oseledets stable directions is non-random;
\item[(3)] or $\mu$ is almost surely invariant and absolutely continuous with respect to $\vol_Y$: even more, it is the restriction 
of $\vol_Y$ to a  subset of positive volume.
\end{itemize}
Since $\mu$ is not invariant,  we are in case (2).  Theorem~\ref{thm:alternative_stable} then
 implies that $\mu$ is supported
on an invariant algebraic curve. This concludes the proof of the first assertions in Theorem~\ref{thm:stiffness_real}, including the 
stiffness property   when $\Gamma$ has no periodic curve.

{\bf{Step 2.--}} It remains to prove the last assertion. 
Let then $\mu$ be an ergodic stationary measure with $h_\mu(X,\nu)>0$. In the above trichotomy, (1) is now excluded. To  exclude the alternative~(2), by Theorem~\ref{thm:alternative_stable},  
 it suffices to show that $\mu$ is not supported on an invariant curve. 
By Proposition \ref{pro:margulis-ruelle} ({i.e.} the fibered Margulis-Ruelle inequality),
$\mu$ is hyperbolic. If $\mu$ is supported on an algebraic 
curve, the proof of Corollary \ref{cor:nevanlinna} leads to the following alternative: either $\mu$ is atomic or the Lyapunov exponent along that curve is negative. In the latter case $\mu$ is proximal along that curve and its stable conditionals are points. In both cases the fiber entropy would vanish, in 
contradiction with our hypothesis, so  $\mu$ is not supported on an algebraic curve, as desired. \end{proof}


\begin{vlongue}

\section{Subgroups with parabolic elements}\label{sec:parabolic}

We say that  $\Gamma\subset \Aut(X)$ is {\bf{twisting}}  if it contains a  parabolic automorphism (this terminology is
justified below). 
This section investigates the dynamics of 
$(X, \nu)$ when $\Gamma_\nu$ is non-elementary and twisting. 
Under this assumption invariant measures can be classified (Theorem \ref{thm:classification_invariant}): 
they are either hyperbolic or carried by some proper algebraic subset (Theorem \ref{thm:hyperbolic}). 
 
\begin{rem}
In many examples for which  $\Aut(X)$ contains a non-elementary group, $\Aut(X)$ contains also 
a parabolic automorphism (see the examples in \S\S\ref{par:Wehler}--\ref{par:Coble-Blanc}). 
So, if we are interested in random 
dynamical systems for which $\Gamma_\nu$ has finite index in $\Aut(X)$, 
the twisting assumption is quite  natural. Also, if $\Aut(X)$ is
both twisting and non-elementary, then there are thin subgroups $\Gamma\subset \Aut(X)$ with the same property: one 
can take two  
parabolic automorphisms $g$ and $h$ generating a non-elementary group, and set $\Gamma=\langle g^m, h^n\rangle$
for large integers $m$ and $n$.  
\end{rem}
\end{vlongue}

\begin{vlongue}
\subsection{Dynamics of parabolic automorphisms}\label{par:parabolic_automorphisms} 
Recall from \S\ref{par:parabolic_basics} that if $h$ is  
a parabolic automorphism of a compact K\"ahler surface $X$, it 
preserves a unique genus $1$ fibration, given by the fibers of a rational map
$\pi_h\colon X\to B$. In particular there is an automorphism $h_B$ of $B$ such that 
\begin{equation}
\pi\circ h=h_B\circ \pi.
\end{equation}
Moreover, if $X$ is not a torus there exists an integer $m>0$ such that $h^m$ preserves every fiber of $\pi$ and acts by translation on every  smooth fiber ({Proposition \ref{pro:parabolic_infinite}}). 
As shown in Lemma~\ref{lem:halphen_flat}, $h$ behaves like a ``complex Dehn twist'', acting by translations along the fibers of $\pi$, with a  shearing property in the transversal direction. 
This twisting property justifies the vocabulary introduced for ``twisting groups''. When $X$ is 
rational, 
the invariant fibration comes from a Halphen pencil of $\P^2_\C$ (see~\cite{Cantat:SLC}); this is why parabolic automorphisms are also called {\bf{Halphen twists}}.

Let $h$ be a parabolic automorphism with $h_B=\id_B$. The critical values of $\pi$ form a finite
subset $\mathrm{Crit}(\pi)\subset B$; we denote its complement by $B^\circ$.
Each fiber $X_w:=\pi^{-1}(w)$, $w\in B^\circ$, is a smooth curve of genus $1$, isomorphic to $\C/L(w)$ for some 
lattice $L(w)=\Z\oplus \Z\tau(w)$; and $h$ induces a translation $h_w(z)=z+t(w)$ of $X_w$, for some $t(w)\in \C/L(w)$. 
The points $w$ for which $h_w$ is periodic are characterized by the relation $t(w)\in \Q\oplus \Q\tau(w)$. If 
\begin{equation}
t(w)-(a+b\tau(w))\in  \R \cdot (p+q\tau(w))
\end{equation}
for some $(a,b)\in \Q^2$ and $(p,q)\in \Z^2$, the closure of $\Z t(w)$ in $\C/L(w)$ is an Abelian Lie group of dimension $1$, 
isomorphic to $\Z / k\Z \times \R /\Z$ for some $k>0$; then, the closure of each orbit of $h_w$ is a union of $k$ circles. Locally in $B^\circ$ this occurs along a countable union of analytic curves $(R_j)$. Otherwise, the orbits of $h_w$ are dense in $X_w$, and the unique $h_w$ invariant probability
measure is the Haar measure on $X_w$. 

Now, assume that $Y\subset X$ is a real analytic subset of $X$ of real codimension $2$, and that $h$ preserves $Y$; 
for instance $h$ may preserve  a real structure on $X$, and $Y$   be a connected component of  $X(\R)$. 
Then,  $\pi(Y)\subset B$ is (locally) contained in the  curves $R_j$. The smooth fibers $\pi_{\vert Y}^{-1}(w)$, 
 for $w\in \pi(Y)\setminus \mathrm{Crit}(\pi)$, are unions of circles along which the orbits of  
$h_w$ are either dense (for most $w\in \pi(Y)$) or finite (for countably many $w\in \pi(Y)$).  

\begin{lem}\label{lem:halphen_flat} Assume that $h_B$ is the identity. Let $U\subset B^\circ$ be a simply connected open subset.
There is a countable union of analytic curves $R_j\subset U$, such that 
\begin{enumerate}[\em (1)]
\item $h$ acts  by translation on each fiber $X_w=\pi^{-1}(w)$, $w \in U$;

\item for $w\in U\setminus\cup_j R_j$, the action of $h$ in the fiber $X_w$ is a totally irrational 
translation (it is uniquely ergodic, and its orbits are dense in $X_w$); 

\item for $w$ in some countable subset of $U$, the orbits of $h_w$ are finite;

\item if the orbits of $h_w$ are neither dense nor finite, then $w\in \cup_j R_j$ and the closure of each orbit of $h_w$ 
is dense in a finite union of circles;

\item there is a finite subset $\mathrm{Flat}(h)\subset U$ such that for  $x\notin \pi\inv\lrpar{
\mathrm{Flat}(h)}$
\[
\lim_{n\to \pm \infty} \norm{D_xh^n}\to +\infty 
\]
locally uniformly in $x$; more precisely for every 
 $v\in T_xX \setminus T_x X_{\pi(x)}$, 
 $\norm{D_x h^{n}(v)}$ grows linearly while 
 $\frac{1}{n}\pi_*(D_xh^{n}(v))$ converges to $0$. 
\end{enumerate}
Moreover, if $h$ preserves a $2$-dimensional real analytic subset $Y\subset X$, then
\begin{enumerate}[\em (6)]
\item  $\pi$ induces on $Y$ a singular fibration whose generic leaves are union of (one or two)  circles, 
and there exists an integer $m\in\{1,2\}$ such that $h^m$ preserves these circles and is uniquely ergodic along these circles except countably many of them.  
\end{enumerate} 
\end{lem}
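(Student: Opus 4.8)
The starting point is that, since $h$ is parabolic and $h_B=\id_B$, the fibration $\pi\colon X\to B$ is $h$-invariant fiberwise, and a positive power of $h$ acts by translation on every smooth fiber; replacing $h$ by this power (and absorbing it at the end by the remark that $m\in\{1,2\}$), we may as well argue with such an $h$ for assertions (1)--(5) and then track the index for (6). The key structural input is that the family of lattices $L(w)=\Z\oplus\Z\tau(w)$ and the translation vector $t(w)\in\C/L(w)$ vary holomorphically in $w\in U$, where $U$ is simply connected so that we may choose a holomorphic branch of $\tau$ and a holomorphic lift of $t$. This is classical: it follows from the local triviality of $\pi$ over $B^\circ$ together with the fact that the period map of a genus $1$ fibration is holomorphic.

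First I would establish (1): over the simply connected $U\subset B^\circ$ one trivializes $\pi^{-1}(U)\simeq U\times(\C/\Z\oplus\Z\tau(w))$ in a way compatible with the holomorphic variation of $\tau$; in these coordinates $h$ acts as $(w,z)\mapsto(w,z+t(w))$ by Proposition~\ref{pro:parabolic_infinite} (the shearing in $w$ is what remains to be made explicit in (5)). For (2)--(4) I would write, for each $(p,q)\in\Z^2\setminus\{0\}$ and each $(a,b)\in\Q^2$, the real-analytic condition
\[
t(w)-(a+b\tau(w))\in\R\cdot(p+q\tau(w)),
\]
which locally cuts out a real-analytic subset $R_{p,q,a,b}\subset U$; since $t$ and $\tau$ are holomorphic and this condition is not identically satisfied (otherwise $h$ would not be parabolic, contradicting that $t$ is not locally constant in a way compatible with a fixed real direction — here one invokes that a parabolic automorphism is not virtually elliptic along fibers), each such set is a proper real-analytic subset, hence a countable union of real-analytic curves, and $\bigcup R_j$ is their countable union over the countably many parameters. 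Off $\bigcup R_j$, $t(w)$ is totally irrational in $\C/L(w)$, so the translation $h_w$ is uniquely ergodic with dense orbits (Weyl); this is (2). The condition $t(w)\in\Q\oplus\Q\tau(w)$ defining finite orbits is a further countable union of real-analytic curves, and the set of $w$ where it holds is countable precisely when it is not all of $U$, which again follows from $h$ being parabolic and not of finite order — this gives (3) after noting that a real-analytic curve on which $t\in\Q\oplus\Q\tau$ would force, by discreteness of $\Q^2$, local constancy of the relevant rational pair, hence (by analyticity) a fixed rational relation, contradicting irrationality generically. The intermediate case in (4) is exactly the complement: if $h_w$ is neither dense nor finite, the closure of an orbit is a closed infinite proper subgroup of $\C/L(w)$, hence isomorphic to $\Z/k\Z\times\R/\Z$, which puts $w$ on one of the $R_j$.

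For (5), the shearing estimate, I would differentiate the trivialized action: in coordinates $(w,z)$ one has $h^n(w,z)=(w,z+nt(w))$, so $D_xh^n$ fixes the tangent-to-fiber direction (with norm comparable to $1$) and sends a transverse vector $v$ with $\pi_*v\neq0$ to a vector whose fiber-component grows like $n\cdot t'(w)\cdot\pi_*(v)$ while its $w$-component is unchanged; hence $\|D_xh^n(v)\|$ grows linearly in $|n|$ and $\tfrac1n\pi_*(D_xh^n(v))\to0$, provided $t'(w)\neq0$. The exceptional set $\mathrm{Flat}(h)$ is the zero set of the holomorphic function $t'$ on $U$ (intersected with $B^\circ$, and completed with the finitely many fibers where the local model degenerates), which is finite because $t'$ is holomorphic and not identically zero — again because $h$ is genuinely parabolic and not, up to a power, a fiberwise translation by a constant, which would make it elliptic on cohomology. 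Local uniformity of $\|D_xh^n\|\to\infty$ away from $\pi^{-1}(\mathrm{Flat}(h))$ follows from continuity of $t'$ and compactness. The main obstacle here, and the step I would be most careful about, is the assertion that the various degeneracy loci ($t'\equiv0$, or $t\in\Q\oplus\Q\tau$ on an open set, or a fixed real direction for all $w$) are genuinely proper: this is where one must use precisely what ``parabolic'' means — that $h^*$ has infinite order but spectral radius $1$ with a single Jordan block on its isotropic eigenline — and translate it into the non-degeneracy of the variation $w\mapsto t(w)$; I would cite the description in \cite{Cantat:Milnor} and \cite{cantat-favre} for the monodromy of $\pi$ and the resulting properties of the period and translation maps.

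Finally for (6): if $h$ preserves a real-analytic surface $Y$, then $\pi(Y)$ is a real-analytic subset of $B$ of real dimension $\leq1$, and over its smooth locus the fibers $\pi_{|Y}^{-1}(w)$ are compact real-analytic $1$-manifolds inside $X_w\simeq\C/L(w)$ invariant (up to the power $m$) under the translation $h_w$; a compact connected subgroup-coset of $\C/L(w)$ of dimension $1$ is a circle, and an $h_w$-invariant real-analytic curve in $X_w$ is a finite union of translates of such a circle, so genericity in $\pi(Y)$ forces one or two circles, with $h^m$ ($m\in\{1,2\}$) preserving each and acting as an irrational rotation there for all but countably many $w$ by the same Weyl/rational-relation dichotomy as in (2)--(3) restricted to $\pi(Y)$. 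I would close by noting that the index $m$ is forced to lie in $\{1,2\}$ because the quotient of the fiber by the translation subgroup generated by the circle is either trivial or a two-point set (the orbit space of an irrational rotation on one of two symmetric circles), which is the only way a real curve can fail to be $h$-invariant while being $h^2$-invariant.
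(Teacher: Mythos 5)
Since the paper offers no proof of this lemma but only cites \cite{cantat_groupes, invariant}, I will evaluate your proposal on its own merits. The architecture is right: trivialize over a simply connected $U$, realize $h$ as the fiberwise translation $z\mapsto z+t(w)$, and split into the rational/irrational/intermediate regimes via real-analytic loci; this correctly handles (1)--(4) and (6), modulo a minor slip in (3) (the finite-orbit condition $\tilde t(w)=a+b\tau(w)$ for $(a,b)\in\Q^2$ is \emph{complex}-analytic in $w$, so its zero locus is already a discrete, hence countable, set of points, not a real-analytic curve as you wrote). The real problem is in assertion (5), which you yourself flagged as the step to be careful about; I think the care required goes beyond what you sketched, in two ways.

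First, the identification $\mathrm{Flat}(h)=\{t'(w)=0\}$ is not well-posed. The translation $t(w)$ lives in $\C/L(w)$, and a holomorphic lift $\tilde t$ over simply connected $U$ exists, but it is only defined up to adding $p+q\tau(w)$ with $(p,q)\in\Z^2$; hence $\tilde t'$ is only defined up to $q\tau'$, and the sets $\{\tilde t'=0\}$ genuinely depend on the lift when $\tau'\neq0$. The intrinsic quantity governing $\|D_xh^n\|$ is not $|\tilde t'|$. If you write $\tilde t=a+b\tau$ with $a,b$ real (period coordinates), then the growth rate of $\|D_xh^n\|$ is controlled by $|a_w+b_w\tau|$ (the Gauss--Manin covariant derivative paired with the holomorphic $1$-form on the fiber), not by $|\tilde t'|=|a_w+b_w\tau+b\tau'|$. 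The two differ by $b\tau'$. The discrepancy is precisely compensated, in the intrinsic computation, by the off-diagonal metric coefficient $g_{\tau\bar z}$, which is \emph{not} periodic in $z$ (it transforms by $-q\tau' g_{z\bar z}$ under the deck translation $z\mapsto z+q\tau$), and therefore is unbounded in the chart you are using. So the naive chart computation "$\|D_xh^n(v)\|\sim n|t'(w)||\pi_*v|$" is not valid: it ignores that the pullback metric at the target chart point escapes to infinity with $n$. Moreover $a_w+b_w\tau$ is only real-analytic, not holomorphic, so "zero set of a holomorphic function" is not even the right category. Second, and independently, even if the relevant invariant were holomorphic, the step "not identically zero $\Rightarrow$ isolated zeros $\Rightarrow$ finitely many" fails on a non-compact Riemann surface: a non-constant holomorphic function on $B^\circ$ (or on an unbounded simply connected $U$) can have infinitely many zeros accumulating at a critical value. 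Finiteness of $\mathrm{Flat}(h)$ is a \emph{global} claim, and the proofs in \cite{cantat_groupes, invariant} obtain it by exhibiting the twisting amount as (the modulus of) an object that extends meromorphically over the compactified base $B$ — this uses the algebraicity/compactness of $X$ and is exactly what is missing from your sketch.
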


This lemma is proven in~\cite{cantat_groupes, invariant}; Property(5) is the above mentioned twisting property of $h$.

\subsection{Classification of invariant measures} 
In this section, we review the classification of invariant ergodic probability measures for  twisting non-elementary groups of
automorphisms; we refer to \cite{cantat_groupes, invariant} for details and examples. 
If $X$ is a real K3 or Abelian surface and $X(\R)\neq \emptyset$
there is a unique section of the canonical bundle of $X$ which, when restricted to $X(\R)$, induces a 
positive area form of total area $1$; we denote this area form by $\vol_{X(\R)}$. 
\end{vlongue}
\begin{vlongue}
The associated probability measure 
is invariant under the action of $\Aut(X_\R)$, the subgroup of $\Aut(X)$ preserving the real structure.  
\end{vlongue}
\begin{vlongue}
In fact, such a smooth invariant probability measure exists on any totally real invariant surface 
(see \cite[\S 5]{invariant}): 

\begin{lem}\label{lem:volume_Y}
Let $X$ be an Abelian surface, or a K3 surface, or an Enriques surface with universal cover $X'$. 
Let $Y\subset X$ be a (real) surface of class $C^1$. Let $\Aut(X;Y)$ be the 
subgroup of $\Aut(X)$ preserving $Y$. If $Y$ is totally real,  
 $\Omega_X$ (resp. $\Omega_{X'}$) induces a smooth $\Aut(X;Y)$-invariant probability measure $\vol_Y$ on $Y$. \end{lem}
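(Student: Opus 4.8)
The claim is that on an Abelian, K3, or Enriques surface $X$, the canonical holomorphic $2$-form (or its pullback to the universal K3 cover, in the Enriques case) induces a smooth $\Aut(X;Y)$-invariant probability measure on any totally real $C^1$ surface $Y\subset X$. The plan is to write down explicitly the measure obtained by ``restricting $|\Omega_X|$ to $Y$'', check that it is a smooth positive finite measure because $Y$ is totally real, normalize it to a probability, and then use the fact that $\Omega_X$ is $\Aut(X)$-almost-invariant (Remark~\ref{rem:volume_form}) together with the fact that the Jacobian character takes values in ${\mathbb U}_1$ to conclude invariance under $\Aut(X;Y)$.

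First I would treat the Abelian and K3 cases together. By Remark~\ref{rem:volume_form}, there is a nowhere-vanishing holomorphic $2$-form $\Omega_X$ on $X$, unique up to a unit-modulus scalar, with $\int_X \Omega_X\wedge\overline{\Omega_X}=1$, and for every $f\in\Aut(X)$ one has $f^*\Omega_X=J(f)\Omega_X$ with $|J(f)|=1$. Now fix $y\in Y$. Since $Y$ is totally real of real dimension $2$, the real tangent plane $T_yY\subset T_yX$ satisfies $T_yY\cap i\,T_yY=\{0\}$, so $T_yY$ together with $i\,T_yY$ spans $T_yX$ over $\R$; equivalently, if $(e_1,e_2)$ is a real basis of $T_yY$, then $(e_1,i e_1,e_2,i e_2)$ is a real basis of $T_yX$. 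In a holomorphic chart write $\Omega_X=h(z)\,dz_1\wedge dz_2$ with $h$ nowhere zero; then the $4$-form $\Omega_X\wedge\overline{\Omega_X}=|h|^2\,dz_1\wedge dz_2\wedge d\bar z_1\wedge d\bar z_2$ restricts to $T_yX$ as a nonzero (real) volume form, and I claim its evaluation on the basis $(e_1,e_2,ie_1,ie_2)$ (or equivalently $|\Omega_X|$ evaluated suitably) is nonzero precisely because $Y$ is totally real: the $(2,0)$-form $\Omega_X$ restricted to the complexification of $T_yY$ is a nonzero element of $\Lambda^2(T_yY\otimes\C)^*$, which does not vanish since $T_yY$ is a genuine complex plane after complexification. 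Concretely, I would define a density $\vol_Y$ on $Y$ by setting, in local coordinates, $\vol_Y = c\,|{\Omega_X}|_Y|$, i.e. the absolute value of the $2$-form $\iota_Y^*\Omega_X$ where $\iota_Y:Y\hookrightarrow X$; here ``absolute value of a complex-valued $2$-form on a real $2$-manifold'' makes sense as a positive density, $|\iota_Y^*\Omega_X|$, and it is smooth and strictly positive on $Y$ by the total-reality computation above. Since $Y$ is compact (a totally real submanifold of a compact surface is closed, hence compact), $\int_Y |\iota_Y^*\Omega_X|<\infty$, and I choose $c$ so that the total mass is $1$.

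Next comes invariance. Let $f\in\Aut(X;Y)$, so $f(Y)=Y$ and $f\circ\iota_Y=\iota_Y\circ (f|_Y)$. Then $\iota_Y^*\Omega_X = \iota_Y^*\bigl(J(f)^{-1}f^*\Omega_X\bigr)=J(f)^{-1}(f|_Y)^*\iota_Y^*\Omega_X$, and taking absolute values of densities, $|J(f)^{-1}|=1$ gives $|\iota_Y^*\Omega_X| = (f|_Y)^*|\iota_Y^*\Omega_X|$; hence $f|_Y$ preserves the density $|\iota_Y^*\Omega_X|$, and therefore preserves the normalized probability measure $\vol_Y$. This settles the Abelian and K3 cases. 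For the Enriques case, let $\pi:X'\to X$ be the universal cover, with $X'$ a K3 surface and deck group generated by a fixed-point-free involution $\epsilon$; let $Y'=\pi^{-1}(Y)$, a $C^1$ surface in $X'$, still totally real (total reality is a pointwise condition on tangent planes, preserved by the local biholomorphism $\pi$). The $2$-form $\Omega_{X'}$ on $X'$ has $\epsilon^*\Omega_{X'}=J(\epsilon)\Omega_{X'}$ with $|J(\epsilon)|=1$, so $|\iota_{Y'}^*\Omega_{X'}|$ is $\epsilon$-invariant as a density on $Y'$ and descends to a smooth positive density on $Y$; this is the desired $\vol_Y$, and every $f\in\Aut(X;Y)$ lifts to $X'$ normalizing the deck group and preserving $Y'$, so the same absolute-value-of-Jacobian argument shows $f$ preserves $\vol_Y$. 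The main obstacle, and the only genuinely non-formal point, is the total-reality computation showing that $\iota_Y^*\Omega_X$ is a nowhere-vanishing complex-valued $2$-form on $Y$ (so that its absolute value is a smooth positive density rather than degenerating somewhere): this is where the hypothesis ``$Y$ totally real'' is used essentially, via the fact that a real $2$-plane $P\subset\C^2$ with $P\cap iP=0$ has $\Lambda^2_\C(P\otimes\C)$ hit nontrivially by the dual of $dz_1\wedge dz_2$. Everything else — compactness, finiteness, normalization, and the invariance check — is routine once the density is shown to be well-defined and smooth.
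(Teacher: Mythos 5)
Your construction is exactly the expected one (the paper itself defers to \cite[\S 5]{invariant} for this lemma, so there is no proof to compare against verbatim): define $\vol_Y$ as a constant multiple of the density $|\iota_Y^*\Omega_X|$, use total reality to see that $\iota_Y^*\Omega_X$ is nowhere vanishing, and use $|J(f)|=1$ from Remark~\ref{rem:volume_form} to get invariance; for Enriques, work on the K3 cover and descend by $\epsilon$-invariance. The key linear-algebra point is right: if $(e_1,e_2)$ is a real basis of the totally real plane $T_yY\subset T_yX\simeq\C^2$, then $(e_1,e_2)$ is also a $\C$-basis, and $(dz_1\wedge dz_2)(e_1,e_2)$ is precisely the $\C$-determinant of $(e_1\mid e_2)$, hence nonzero. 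The handling of the Enriques case — pass to $X'$, note $Y'=\pi^{-1}(Y)$ is still totally real because $\pi$ is a local biholomorphism, check $\epsilon$-invariance of the density, and lift automorphisms — is also correct.

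Two small remarks. First, your parenthetical ``a totally real submanifold of a compact surface is closed, hence compact'' is not a valid implication: total reality is a pointwise condition on tangent planes and says nothing about the subset being closed (an embedded open disk in $X(\R)$ is totally real and not compact). Compactness of $Y$ must be read into the hypothesis ``$Y$ is a surface'' (as it is in every application, e.g.\ $Y=X(\R)$); it is not a consequence of total reality. Second, since $Y$ is only $C^1$, the density $|\iota_Y^*\Omega_X|$ is merely continuous and strictly positive, so ``smooth'' in the conclusion should be understood as ``given by a positive continuous density (equivalent to Lebesgue on $Y$)''; the density inherits whatever regularity $Y$ has.
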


Note that there indeed exists examples of 
subgroups preserving a totally real surface $Y\subset X$ which is not a real form of $X$ (see \cite[\S 6]{invariant}). 
\end{vlongue}

\begin{vlongue}
The classification of invariant measures then reads as follows. 
\end{vlongue}

\begin{vlongue}
\begin{thm}\label{thm:classification_invariant}
Let $X$ be a compact K\"ahler surface. Let $\Gamma$ be a twisting non-elementary subgroup of $\Aut(X)$. 
Let $\mu$ be a $\Gamma$-invariant ergodic probablity measure on $X$. 
Then, $\mu$ satisfies one and only one of the following properties. 

\begin{enumerate}[{\em (a)}]
\item $\mu$ is the average on a finite orbit of $\Gamma$;
\item $\mu$ is supported by a $\Gamma$-invariant curve $D\subset X$;
\item  there is a $\Gamma$-invariant proper algebraic subset $Z$ of $X$, and a $\Gamma$-invariant, totally real, real analytic submanifold $Y$ of 
$X\setminus Z$ such that {\em {(1)}} $\mu(Z)=0$, {\em {(2)}} the support of $\mu$ is a union of finitely many connected components of $Y$, {\em {(3)}} $\mu$ 
is absolutely continuous with respect to the Lebesgue measure on $Y$, and {\em {(4)}} the density of $\mu$ with respect to any real analytic
area form on $Y$ is real analytic;
\item there is a $\Gamma$-invariant proper algebraic subset $Z$ of $X$ such that {\em {(1)}} $\mu(Z)=0$, {\em {(2)}} the support of $\mu$ is equal to $X$, {\em {(3)}} $\mu$ is absolutely continuous with respect to the Lebesgue measure on $X$, and {\em {(4)}} the density of $\mu$ with respect to any real analytic
volume form on $X$ is real analytic on $X\setminus Z$.
\end{enumerate}
If $X$ is not a rational surface, then in case \emph{(c)} (resp. \emph{(d)}) we can further conclude that the invariant measure is locally  proportional  to 
$\vol_Y$ (resp. equal to $\vol_X$). 
\end{thm}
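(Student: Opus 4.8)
The plan is to argue by successive reduction: first peel off the measures that interact with the algebraic structure of $X$, then handle the genuinely diffuse ones by combining the hyperbolicity statement of this section with the shearing dynamics of parabolic automorphisms. If $\mu$ has an atom, ergodicity forces $\mu$ to be the uniform probability on a finite $\Gamma$-orbit, which is case~(a). Otherwise $\mu$ is atomless; if in addition $\mu(Z)>0$ for some proper Zariski-closed subset $Z\subset X$, then, since a $\Gamma$-invariant measure is $\nu$-stationary for any probability measure $\nu$ whose support generates $\Gamma$, Lemma~\ref{lem:mu_zariski} shows that $\supp(\mu)$ is the $\Gamma$-orbit of a single irreducible curve $D\subset X$, which is case~(b). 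Hence from now on I may assume that $\mu$ is atomless and gives zero mass to every proper algebraic subset of $X$.

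Next I would prove that $\mu$ is hyperbolic and absolutely continuous. Hyperbolicity, $\lambda^-(\mu)<0<\lambda^+(\mu)$, is exactly Theorem~\ref{thm:hyperbolic}: it uses that $\Gamma$ is twisting, via a parabolic $h\in\Gamma$ together with the transversal expansion/contraction recorded in Lemma~\ref{lem:halphen_flat}(5), following the scheme of Barrientos--Malicet. To obtain absolute continuity I would fix a parabolic $h\in\Gamma$; after replacing $h$ by a suitable power (Proposition~\ref{pro:parabolic_infinite}) it fixes every fiber of its invariant genus-$1$ fibration $\pi_h\colon X\to B$ and acts there by translation. Disintegrating $\mu$ over $B$ as $\mu=\int \mu_w\,d\overline{\mu}(w)$ with $\overline{\mu}=(\pi_h)_*\mu$, each conditional $\mu_w$ is $h$-invariant; granting that $\overline{\mu}$ gives no mass to the countably many semi-periodic curves $R_j$ of Lemma~\ref{lem:halphen_flat} (the critical fibers being algebraic, $\mu$ ignores them automatically), that lemma shows that for $\overline{\mu}$-almost every $w$ the map $h$ acts on $X_w$ as a uniquely ergodic irrational translation, so that $\mu_w$ is the flat probability measure on $X_w$; thus $\mu$ is ``Lebesgue in the $\pi_h$-direction''. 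Taking a second parabolic $g\in\Gamma$ with $\pi_g$ transverse to $\pi_h$ (such $g$ exists by Lemma~\ref{lem:pairs_of_twists} since $\Gamma$ is non-elementary) and arguing symmetrically, one finds that $\mu$ is locally, off the tangency curve of the two fibrations, a product of two flat measures, hence absolutely continuous with respect to the Lebesgue measure on $X$; one may then invoke the bootstrap of \cite{invariant} to see that its density is real-analytic away from a $\Gamma$-invariant proper algebraic subset $Z$, this last regularity propagating along the shear of $h$ and $g$ from the real-analyticity of the translation fields $w\mapsto t_h(w)$, $w\mapsto t_g(w)$. Alternatively, once $\mu$ is known to be hyperbolic and of positive entropy for some element of $\Gamma$, absolute continuity also follows from Theorem~\ref{mthm:rigidity}.

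Finally I would read off the dichotomy (c)/(d) and the normalizations. The support of $\mu$ is a closed, $\Gamma$-invariant, real-analytic subset of $X\setminus Z$ on which $\mu$ is locally equivalent to Lebesgue; being invariant under a non-elementary group it is either of real dimension $4$, so $\supp(\mu)=X$ and case~(d) holds, or of real dimension $2$, in which case it cannot be a complex curve (that would be algebraic, excluded), so it is a totally real real-analytic surface $Y$ and case~(c) holds. When $X$ is not rational, Remark~\ref{rem:volume_form} and Lemma~\ref{lem:volume_Y} produce a canonical $\Gamma$-invariant smooth probability $\vol_Y$ (resp.\ $\vol_X$) from the holomorphic $2$-form $\Omega_X$; since the corresponding Jacobian cocycle is unitary, $\vol_Y$ (resp.\ $\vol_X$) is, up to scale and on each ergodic piece, the unique invariant measure absolutely continuous with respect to Lebesgue, which forces $\mu$ to be locally proportional to $\vol_Y$ (resp.\ equal to $\vol_X$).

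The main obstacle, where essentially all the work sits, is twofold: proving Theorem~\ref{thm:hyperbolic}, i.e.\ deducing genuine hyperbolicity of $\mu$ from the twist property with no a priori regularity of $\mu$; and, inside the argument for absolute continuity, showing that $\mu$ cannot concentrate on the semi-periodic fibers $\bigcup_j R_j$ of $\pi_h$ --- these are real-analytic but not algebraic curves, so the hypothesis ``$\mu$ charges no algebraic subset'' does not apply to them directly, and ruling this out requires a joining/measure-rigidity argument using a transverse parabolic, together with the real-analyticity statement; these are precisely the technical ingredients imported from \cite{cantat_groupes, invariant}. Everything else is bookkeeping with the lemmas of \S\ref{par:parabolic_basics} and \S\ref{par:parabolic_automorphisms}.
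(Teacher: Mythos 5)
The paper does not give an in-house proof of this theorem: it is imported wholesale from the companion paper \cite{invariant} (``This theorem is a combination of Theorem~1.1 and \S~5.3 of \cite{invariant}''), so there is no argument in the present paper to compare yours against. Your reduction to cases~(a) and~(b) is fine, but the core of your sketch has two genuine problems.

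The step ``Hyperbolicity \dots is exactly Theorem~\ref{thm:hyperbolic}'' is circular. The proof of Theorem~\ref{thm:hyperbolic} given in \S\ref{subs:proof_hyperbolic} opens with ``By Theorem~\ref{thm:classification_invariant}, $\mu$ is either equivalent to the Lebesgue measure on $X$, or to the $2$-dimensional Lebesgue measure on some components of an invariant totally real surface $Y\subset X$'', and the footnote attached to the statement of Theorem~\ref{thm:hyperbolic} makes this dependence explicit: that theorem sits \emph{downstream} of the classification, not upstream of it. Your alternative route via Theorem~\ref{mthm:rigidity} has its own gap: it requires $\mu$ to have positive entropy for some single $f\in\Gamma$, which you never establish, and it also presupposes that $\mu$ lives on a totally real surface, which is part of what is to be proved.

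More importantly, the disintegration argument conflates alternatives~(c) and~(d). When $w\notin\bigcup_j R_j$, the map $h_w$ is uniquely ergodic and $\mu_w$ is the Haar measure on $X_w$, which (modulo the product-of-fibrations step, itself only sketched) pushes toward case~(d). But if $\mu$ is of type~(c), supported on a totally real surface $Y$, then --- as the paper itself observes just before Lemma~\ref{lem:halphen_flat} --- the image $\pi_h(Y)$ is \emph{locally contained in} $\bigcup_j R_j$, so $(\pi_h)_*\mu$ is carried entirely by $\bigcup_j R_j$. Therefore the possibility that $(\pi_h)_*\mu$ charges the $R_j$ is not a technical nuisance to be ``ruled out by importing \cite{invariant}''; it is precisely alternative~(c), and it occurs (e.g.\ for real K3 surfaces). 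Your sketch establishes nothing in this branch: there $h_w$ is no longer uniquely ergodic on $X_w$, only on a finite union of circles, and extracting the real-analytic surface $Y$ together with the smoothness of the density is where essentially all the work of \cite{cantat_groupes,invariant} lies. Relatedly, your closing dichotomy (support of real dimension~$4$ vs.~$2$) contradicts what precedes it, since you had just claimed $\mu$ absolutely continuous with respect to Lebesgue on $X$, which already forecloses the $2$-dimensional possibility.
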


The reason why we say that $\mu$ is proportional to $\vol_Y$ (and not equal to it) in the last sentence is because $\mu$ may be equal to zero on some components of $Y\setminus Z$. This theorem is a combination of Theorem~1.1 and \S~5.3 of \cite{invariant}. 
Let us also point out the following corollary of the proof. 

\begin{cor} \label{cor:real_invariant}
Let $\Gamma\leq \Aut(X)$ be as in Theorem~\ref{thm:classification_invariant}.  Assume furthermore that 
$X$ and $\Gamma$ are defined over $\R$ and $\Gamma$ does not preserve any proper Zariski closed subset of $X$. 
Then any  $\Gamma$-invariant ergodic measure supported on $X(\R)$ is supported by 
a union $X(\R)'=\cup_j X(\R)_j$ of connected components $X(\R)_j$ of $X(\R)$,  and is locally given by 
positive real analytic $2$-forms on $X(\R)'$. If $X$ is not rational, $\mu$ is equal to  the restriction of $\vol_{X(\R)}$ to $X(\R)'$, 
up to  some normalizing factor. 
 \end{cor}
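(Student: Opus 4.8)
The plan is to deduce Corollary~\ref{cor:real_invariant} directly from Theorem~\ref{thm:classification_invariant} by analyzing which of the four cases (a)--(d) can occur under the stated hypotheses, and then upgrading the conclusion using the additional assumption that $\Gamma$ preserves no proper Zariski closed subset. So first I would fix a $\Gamma$-invariant ergodic probability measure $\mu$ supported on $X(\R)$, and recall that $X(\R)=\mathrm{Fix}(s)$ is a totally real, real-analytic (indeed compact smooth) surface inside $X$ whenever it is non-empty, so that the hypothesis ``$\mu$ is supported on $X(\R)$'' already places $\mu$ in the realm where Theorem~\ref{thm:classification_invariant} applies. The point is to rule out cases (a), (b) and (d).

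Case (a) is excluded because a finite $\Gamma$-orbit is a proper Zariski closed (indeed finite) $\Gamma$-invariant subset, contrary to hypothesis. Case (b) is excluded in the same way: a $\Gamma$-invariant curve $D\subset X$ is a proper Zariski closed $\Gamma$-invariant subset. Case (d) is excluded because there $\supp(\mu)=X$, whereas $\mu$ is supported on $X(\R)$, which is a totally real surface, hence a proper (real, hence a fortiori not all of $X$) subset of the complex surface $X$ --- more precisely $X(\R)$ has empty interior in $X$, so it cannot carry a measure of full support equal to $X$. Thus only case (c) remains: there is a $\Gamma$-invariant proper algebraic subset $Z$ with $\mu(Z)=0$, a $\Gamma$-invariant totally real real-analytic submanifold $Y\subset X\setminus Z$ with $\supp(\mu)$ a union of finitely many connected components of $Y$, with $\mu$ absolutely continuous on $Y$ and with real-analytic density. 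Since $Z$ is proper Zariski closed and $\Gamma$-invariant, our hypothesis forces $Z=\emptyset$; hence $Y\subset X$ is a $\Gamma$-invariant totally real real-analytic surface, $\mu$ is given by a positive real-analytic $2$-form on finitely many of its components, and it remains to identify $Y$ (up to components) with $X(\R)$.

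For this last identification I would argue that $Y$ and $X(\R)$ must coincide on the support of $\mu$: both are totally real real-analytic surfaces in $X$, $\supp(\mu)\subset X(\R)$ by assumption and $\supp(\mu)$ is a union of components of $Y$ by case (c), so on a component $Y_j$ meeting $\supp(\mu)$ positively we have $Y_j\cap X(\R)$ of full measure for $\mu|_{Y_j}$; since $\mu|_{Y_j}$ has real-analytic positive density with respect to an area form on $Y_j$, its support is a closed set with non-empty interior in $Y_j$, and as $X(\R)$ is closed this forces $Y_j\subset X(\R)$, and then by dimension and real-analyticity $Y_j$ is open and closed in $X(\R)$, i.e. a union of connected components of $X(\R)$. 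Running this over all components of $Y$ meeting $\supp(\mu)$ gives that $\supp(\mu)=X(\R)'$ is a union of connected components of $X(\R)$ and that $\mu$ is locally given by positive real-analytic $2$-forms there. Finally, when $X$ is not rational, Theorem~\ref{thm:existence_loxodromic} and Remark~\ref{rem:volume_form} provide the canonical $\Aut(X)$-invariant volume form, and on a real form this is exactly $\vol_{X(\R)}$ (Lemma~\ref{lem:volume_Y} applied to $Y=X(\R)$); the last sentence of Theorem~\ref{thm:classification_invariant} then gives that $\mu$ equals the restriction of $\vol_{X(\R)}$ to $X(\R)'$ up to a normalizing constant. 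The main obstacle I anticipate is the bookkeeping in this final step: making rigorous that $Z=\emptyset$ really forces $Y$ to be (a union of components of) $X(\R)$ rather than some a priori different totally real analytic surface sharing only the support of $\mu$ with $X(\R)$ --- this is where the real-analyticity of the density, together with the fact that a $\Gamma$-invariant totally real analytic surface whose intersection with $X(\R)$ has non-empty interior must be contained in $X(\R)$, does the work, but one has to be slightly careful about components of $Y$ that are $\mu$-null, which the statement already allows (hence the phrase ``up to some normalizing factor'' and the restriction to $X(\R)'$).
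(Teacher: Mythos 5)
Your derivation from the statement of Theorem~\ref{thm:classification_invariant} is essentially correct and is the natural route (the paper does not present a standalone proof, deferring to the proof in \cite{invariant}). The eliminations of cases (a), (b), (d) and the deduction $Z=\emptyset$ are right. On the step you flag as the ``main obstacle'': once you know, for each component $Y_j$ of $Y$ with $\mu(Y_j)>0$, that $\supp(\mu|_{Y_j})=Y_j$ (because the real analytic density is $\not\equiv 0$ on the connected $Y_j$), you get $Y_j\subset\supp(\mu)\subset X(\R)$, and $Y_j$ is open in $X(\R)$ since both are $2$-dimensional. You still need $Y_j$ to be \emph{closed}; this follows cleanly from disjointness of the $Y_j$'s: if $y\in\overline{Y_j}\setminus Y_j$ then $y\in\supp(\mu)$, so $y\in Y_{j'}$ for some $j'\neq j$; but $Y_{j'}$ is also open in $X(\R)$ and disjoint from $Y_j$, contradicting $y\in\overline{Y_j}$. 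Hence each $Y_j$ is clopen in $X(\R)$, i.e.\ a connected component, and the rest goes through. Two smaller points worth noting. First, the single normalizing constant in the last sentence uses ergodicity together with $\Gamma$-invariance of $\vol_{X(\R)}$: the local proportionality factor between $\mu$ and $\vol_{X(\R)}$ on $X(\R)'$ is a $\Gamma$-invariant measurable function, hence $\mu$-a.e.\ constant. Second, the word ``positive'' in the corollary (as applied to the real analytic $2$-forms) is not visibly delivered by the statement of case (c), which only asserts a real analytic density; if ``positive'' means nowhere vanishing rather than merely defining a positive measure, this likely comes from inspecting the proof in \cite{invariant} under the hypothesis $Z=\emptyset$, which is presumably why the paper phrases this as a corollary ``of the proof'' rather than of the statement.
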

\end{vlongue}

\begin{vlongue}
Using this classification we can now sharpen the conclusion of Theorem \ref{thm:stiffness_real} in the presence of parabolic automorphisms.  
When $Y= X(\R)$, the  statement   can also 
be combined with Corollary \ref{cor:real_invariant} to get an even more precise result.

\begin{cor}\label{cor:real_stationary}  
Let $(X, \nu)$ be a random holomorphic dynamical system on a compact K\"ahler surface, satisfying 
\eqref{eq:moment} and such that  $\Gamma_\nu$ is twisting and non-elementary. 
Let $Y\subset X$ be a $\Gamma_\nu$-invariant,  smooth,  totally real surface    such  that, on $Y$, $\Gamma_\nu$ preserves a probability measure $\vol_Y$  equivalent to the Riemannian volume.

Then  up to a positive multiplicative factor, every ergodic   stationary measure $\mu$ supported on $Y$ is :  
\begin{itemize}
\item  either the counting measure on a finite orbit;
\item or supported on a $\Gamma_\nu$-invariant algebraic curve; 
\item or  the restriction of $\vol_Y$ to a $\Gamma_\nu$-invariant 
open subset of $Y$ whose boundary is piecewise smooth.   
\end{itemize}
\end{cor}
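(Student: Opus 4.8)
The plan is to combine Theorem~\ref{thm:stiffness_real} with the classification of invariant measures in Theorem~\ref{thm:classification_invariant} (together with its refinement in \S~5.3 of \cite{invariant}). Let $\mu$ be an ergodic $\nu$-stationary measure supported on $Y$. By Theorem~\ref{thm:stiffness_real}, since $\Gamma_\nu$ is non-elementary and $\vol_Y$ is a $\Gamma_\nu$-invariant probability measure equivalent to the Riemannian volume on $Y$, one of two things happens: either $\mu$ is $\nu$-almost surely invariant, or $\mu$ is supported on a $\Gamma_\nu$-invariant algebraic curve. In the second case we are in the second alternative of the corollary and there is nothing more to prove. So the real content is to analyze the first case, where $\mu$ is $\nu$-almost surely invariant; since $X$ is compact and the action is continuous, $\mu$ is then $\Gamma_\nu$-invariant (Remark~\ref{rem:discrete}).

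Once $\mu$ is $\Gamma_\nu$-invariant, I would feed it into Theorem~\ref{thm:classification_invariant}, which applies precisely because $\Gamma_\nu$ is twisting and non-elementary. That theorem yields four mutually exclusive possibilities: (a) $\mu$ is the uniform measure on a finite orbit; (b) $\mu$ is supported on a $\Gamma_\nu$-invariant curve; (c) $\mu$ is absolutely continuous on a totally real invariant real-analytic submanifold $Y'$ with real-analytic density, with $\mu(Z)=0$ for some invariant proper algebraic subset $Z$; or (d) $\mu$ is absolutely continuous on all of $X$. Since $\mu$ is supported on the totally real surface $Y$, case (d) is impossible: an absolutely continuous measure on $X$ cannot be carried by a totally real (hence Lebesgue-null in $X$) submanifold. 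Cases (a) and (b) give directly the first two alternatives of the corollary. So I must show that case (c) produces the third alternative, namely that $\mu$ is (up to a positive factor) the restriction of $\vol_Y$ to a $\Gamma_\nu$-invariant open subset of $Y$ with piecewise smooth boundary.

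For case (c): the submanifold $Y'$ provided by Theorem~\ref{thm:classification_invariant} is totally real, real-analytic, $\Gamma_\nu$-invariant, and contained in $X\setminus Z$; moreover $\mathrm{supp}(\mu)$ is a union of finitely many connected components of $Y'$. Since $\mu$ lives on $Y$ and both $Y$ and $Y'$ are totally real real-analytic surfaces, I would argue that (after discarding the algebraic set $Z$, which is $\mu$-null) $\mathrm{supp}(\mu)$ lies in $Y\cap Y'$, and by the analytic-continuation/identity principle for real-analytic submanifolds these coincide near $\mathrm{supp}(\mu)$. Because $X$ is assumed not rational in this alternative --- this is exactly where the hypothesis that $\Gamma_\nu$-invariant curves can be contracted is used implicitly, and more precisely where the final sentence of Theorem~\ref{thm:classification_invariant} applies --- the last part of that theorem upgrades the conclusion: in case (c) the invariant measure is locally proportional to $\vol_Y$. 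Thus $\mu = c\,\vol_Y|_{\Omega}$ where $\Omega$ is the (open) set on which the density is nonzero and $c>0$; invariance of $\mu$ and of $\vol_Y$ forces $\Omega$ to be $\Gamma_\nu$-invariant, and the real-analyticity of the density (away from the algebraic set $Z$) together with the structure of $Z\cap Y$ as a semi-algebraic set forces $\partial\Omega$ to be piecewise smooth. This is the step I expect to be the main obstacle: extracting the piecewise-smoothness of $\partial\Omega$ requires a careful local analysis of the zero set of a real-analytic density along $Y$ and its interaction with the proper algebraic subset $Z$, and one has to invoke the finer description in \S~5.3 of \cite{invariant} rather than just the bare statement of Theorem~\ref{thm:classification_invariant}.

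A remark on the case $X$ rational, which the corollary statement does not explicitly exclude: there the conclusion "$\mu$ proportional to $\vol_Y$" still holds because $\vol_Y$ here is the hypothesized $\Gamma_\nu$-invariant smooth probability measure on $Y$, and the density statement of Theorem~\ref{thm:classification_invariant}(c) already says $\mu$ is absolutely continuous with real-analytic density; since $\vol_Y$ is equivalent to Riemannian volume, $\mu/\vol_Y$ is a real-analytic function on $Y\setminus Z$, $\Gamma_\nu$-invariant, hence by ergodicity of $\vol_Y$ constant on each component where it is nonzero, which again gives $\mu = c\,\vol_Y|_\Omega$ with $\Omega$ open, $\Gamma_\nu$-invariant, and piecewise-smooth boundary. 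I would present the proof uniformly by working with the given $\vol_Y$ throughout and only citing the last sentence of Theorem~\ref{thm:classification_invariant} for the non-rational normalization, noting that in all cases the three bulleted alternatives of the corollary are exhausted.
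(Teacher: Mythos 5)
Your overall strategy --- first apply Theorem~\ref{thm:stiffness_real} to reduce to the case where $\mu$ is $\Gamma_\nu$-invariant, then feed the resulting invariant measure into Theorem~\ref{thm:classification_invariant} and sort through its four cases --- is exactly the paper's (very terse) argument, and your case-by-case unpacking is in the right spirit. Two slips deserve attention. First, you write that ``both $Y$ and $Y'$ are totally real real-analytic surfaces'' in order to invoke the identity principle; but $Y$ is only assumed smooth, and the paper explicitly flags this point (``Note that $Y$ is automatically real analytic in this case'') as the one thing requiring justification. The fix is simpler than the identity principle: by Theorem~\ref{thm:classification_invariant}(c), $\supp(\mu)$ is a union of connected components of the real-analytic surface $Y'$, hence an \emph{open} subset of $Y'$, and it is contained in the two-dimensional embedded submanifold $Y$; since two embedded $2$-submanifolds of $X$ that share a common open $2$-dimensional piece coincide near that piece, $Y=Y'$ near $\supp(\mu)$, and in particular $Y$ is real-analytic there (a conclusion, not a hypothesis). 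Second, near the end you conclude that $d\mu/d\vol_Y$ is locally constant ``by ergodicity of $\vol_Y$''; but $\vol_Y$ is not assumed ergodic and need not be. The correct invocation is the ergodicity of $\mu$ itself: $d\mu/d\vol_Y$ is a $\Gamma_\nu$-invariant function, so each of its superlevel sets is invariant and thus has $\mu$-measure $0$ or $1$; hence the density is $\mu$-a.e.\ constant, and by real-analyticity on $Y\setminus Z$ it is constant on the set where it is positive, which is therefore a union of components of $Y\setminus Z$, giving $\partial\Omega\subset Z\cap Y$ and hence a piecewise-smooth (semi-algebraic) boundary.
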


In the last alternative, the boundary is obtained by intersecting an algebraic curve $D\subset X$ with $Y$; it may have a finite number
of singularities.

\begin{proof}
We just have to repeat the proof of Theorem \ref{thm:stiffness_real},  by incorporating the classification given in Theorem \ref{thm:classification_invariant}.  Note that $Y$ is automatically real analytic in this case. 
 \end{proof}
\end{vlongue}
 
\begin{vlongue}
\subsection{Hyperbolicity of the invariant volume} \label{subs:hyperbolicity}
It is a fundamental  (and mostly open) problem in conservative dynamics to show the typicality of non-zero Lyapunov exponents 
on a set of positive Lebesgue measure. In deterministic dynamics, a recent breakthrough is the work of 
Berger and Turaev \cite{berger-turaev}. Adding some randomness makes such a hyperbolicity result easier to obtain:
see \cite{blumenthal-xue-young} for random perturbation of the standard map, and \cite{barrientos-malicet, obata-poletti}
for random conservative diffeomorphisms on (closed real) surfaces. The  results  of Barrientos and Malicet or 
Obata and Poletti~\cite{barrientos-malicet, obata-poletti} are perturbative in nature and do not give explicit examples. 
Here the high rigidity of complex algebraic automorphisms
will be sufficient to show that   twisting, non-elementary, random dynamical systems $(X,\nu)$ automatically 
satisfy some non-uniform hyperbolicity with respect to the volume. 

\begin{thm}\label{thm:hyperbolic}
Let $X$ be a compact K\"ahler surface,
and let $\Gamma$ be a non-elementary, twisting subgroup of $\Aut(X)$. Let $\mu$ be an ergodic  $\Gamma$-invariant measure 
giving no mass to proper Zariski closed subsets of $X$ (\footnote{Hence by Theorem \ref{thm:classification_invariant},
$\mu$ is equivalent to  $\vol_X$ or $\vol_Y$ for some real analytic invariant surface with boundary.}). 
Then  for every  probability measure $\nu$ on $\Aut(X)$  
satisfying    the moment condition~\eqref{eq:moment} and such that $\Gamma_\nu=\Gamma$, 
$\mu$ is hyperbolic and the fiber entropy $h_\mu(X,\nu)$ is positive.\end{thm}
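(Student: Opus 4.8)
The plan is to argue by contradiction: suppose $\mu$ is not hyperbolic. Since $\mu$ is $\Gamma$-invariant, it is in particular $\nu$-stationary and $\nu$-almost surely invariant, so $\m=\nu^\Z\times\mu$ and the Lyapunov exponents of $\mu$ are well defined. By Theorem~\ref{thm:classification_invariant} (or rather its hypothesis together with the conclusion), $\mu$ is equivalent to a real-analytic area form on some $\Gamma$-invariant totally real surface $Y$ (possibly with boundary), or to $\vol_X$; in either case $\mu$ is a smooth measure on a smooth manifold of real dimension $2$ (on $Y$) or $4$ (on $X$), preserved by every $f\in\Gamma$. By Proposition~\ref{pro:sum_exponents} applied to the relevant volume or $2$-form (which is almost invariant), we have $\lambda^++\lambda^-=0$ when $\mu$ lives on $Y$; on $X$ the same holds because $\Omega_X\wedge\overline{\Omega_X}$ is invariant (Corollary~\ref{cor:sum_exponentsK3}). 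Hence non-hyperbolicity forces $\lambda^+=\lambda^-=0$. So it suffices to rule out $\lambda^+=0$.

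\textbf{Using a parabolic element.} Here is where the twisting assumption enters. Let $h\in\Gamma$ be a parabolic automorphism with invariant genus~$1$ fibration $\pi_h\colon X\to B$; replacing $h$ by a power we may assume $h$ acts fiberwise by translations. By Lemma~\ref{lem:halphen_flat}.(5), for $x$ outside the finite set $\pi_h^{-1}(\mathrm{Flat}(h))$ and for every $v\in T_xX\setminus T_xX_{\pi_h(x)}$, the norm $\|D_xh^n(v)\|$ grows \emph{linearly} in $n$ while the fiberwise component stays bounded — i.e.\ $h$ genuinely shears the transverse direction. Since $\mu$ gives no mass to $\pi_h^{-1}(\mathrm{Flat}(h))$ (a proper Zariski closed set), this shearing holds $\mu$-a.e. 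Now pick a second parabolic $g\in\Gamma$ whose invariant fibration $\pi_g$ is distinct from $\pi_h$ (such a $g$ exists by Lemma~\ref{lem:pairs_of_twists} since $\Gamma$ is non-elementary); outside the proper Zariski closed set $\{T_xX_{\pi_h}=T_xX_{\pi_g}\}\cup(\text{flat loci})$, which is again $\mu$-null, the two tangent directions $T_xX_{\pi_h(x)}$ and $T_xX_{\pi_g(x)}$ span $T_xX$. The idea is that if all Lyapunov exponents vanished, then along $\nu^\Z$-almost every trajectory the norms $\|D_xf^n_\xi\|$ grow subexponentially, and one derives a contradiction with the polynomial-in-time expansion produced by repeated application of $h$ and $g$ in transverse directions: a random word containing a positive density of occurrences of $h$ (which happens a.s.\ since $h\in\supp(\text{the walk})$ after passing to the hitting measure on $\langle h\rangle$, or more simply since $\Gamma_\nu=\Gamma\ni h$ means $h$ or a bounded word equal to a power of $h$ appears with positive frequency) must expand some transverse vector at least linearly at each such step, and these expansions do not cancel because of the transversality of the two shearing directions. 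Making this precise is the technical heart: one wants to show $\lambda^+>0$ by exhibiting, for $\mu\times\nu^\N$-a.e.\ $(x,\omega)$, a vector whose orbit under $D f^n_\omega$ has superlinear (in fact exponential, via a subadditivity/recurrence argument) growth. The approach of Barrientos--Malicet, cited in \S\ref{subs:hyperbolicity}, gives the template: one uses that the cocycle is not cohomologous to a rotation-valued cocycle precisely because of the twist, invoking an Avila--Viana / Furstenberg-type criterion for positivity of the exponent of a $\mathrm{SL}_2$ (or $\mathrm{GL}_2$) cocycle that is not conjugate into a compact group.

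\textbf{From positive exponent to positive entropy.} Once $\lambda^+>0$, hyperbolicity follows from $\lambda^++\lambda^-=0$. For the entropy statement: since $\mu$ is $\Gamma$-invariant with a hyperbolic exponent and gives no mass to proper Zariski closed sets, it is in particular not supported on a $\Gamma_\nu$-invariant curve, so Theorem~\ref{thm:alternative_stable} applies. If $h_\mu(X,\nu)=0$, then by that theorem the Oseledets stable directions would have to depend non-trivially on the itinerary \emph{or} — wait, the dichotomy is the reverse: vanishing entropy together with almost-sure invariance is alternative (b), and alternative (a) (non-random stable directions) combined with the work of \cite{br} via Corollary~\ref{cor:positive_entropy} forces positive entropy unless $\mu$ is invariant. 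Since $\mu$ \emph{is} invariant, I instead argue directly: by Theorem~\ref{thm:alternative_stable}, either (a) the stable directions depend on the itinerary, or (b) $h_\mu(X,\nu)=0$. In case (a), Corollary~\ref{cor:positive_entropy} gives $h_\mu(X,\nu)>0$ directly. So I must rule out case (b). But case (b) would say the stable Oseledets direction is non-random, hence is a measurable $\Gamma_\nu$-invariant line field defined $\mu$-a.e.; I claim this contradicts the presence of the two transverse parabolic shears. Concretely, a $\Gamma$-invariant measurable line field on the real surface $Y$ (or on $X$) must in particular be invariant under $h$ and $g$; but $Dh^n$ pushes any line not equal to the fiber direction of $\pi_h$ toward the fiber direction (by the shearing in Lemma~\ref{lem:halphen_flat}.(5)), forcing the invariant line field to be tangent to the fibers of $\pi_h$ $\mu$-a.e.; likewise it must be tangent to the fibers of $\pi_g$; these being transverse on a set of full $\mu$-measure, we get a contradiction. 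Hence (b) is impossible and $h_\mu(X,\nu)>0$.

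\textbf{Main obstacle.} The genuinely hard step is the first one — deriving $\lambda^+>0$ from the twisting hypothesis. The parabolic shear gives only \emph{polynomial} (linear) expansion per application of $h$, and one needs to upgrade the accumulation of such polynomial expansions along a random orbit to \emph{exponential} growth of the cocycle, which is exactly the phenomenon of positivity of Lyapunov exponents for random products that are not conjugate into a compact group. I expect to invoke a Furstenberg-type positivity criterion (as in \cite{avila-viana}, used already in the proof of Theorem~\ref{thm:stiffness_real}) applied to the derivative cocycle restricted to $TY$ (or to an appropriate $2$-dimensional invariant subbundle of $TX$), after first checking the two structural hypotheses of such a criterion: (i) the cocycle is $L^1$ (from~\eqref{eq:moment}), and (ii) it is not measurably conjugate to a cocycle of rotations, the latter being precisely what the transverse twisting of $h$ and $g$ rules out by the line-field argument of the previous paragraph. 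The bookkeeping needed to turn "two transverse shears each applied with positive frequency" into "the cocycle is not a rotation cocycle" is the part requiring care, and it is close to, but not identical with, the argument in \cite{barrientos-malicet}; the rigidity of algebraic automorphisms (Lemma~\ref{lem:halphen_flat} is exact, not approximate) is what makes it go through cleanly.
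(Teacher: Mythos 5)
Your strategy for the hyperbolicity part is essentially the one the paper follows, but the key step is left as a gesture rather than an argument. You correctly see that vanishing exponents should contradict the two transverse parabolic shears, and you correctly sense that this should be a Furstenberg/Avila--Viana type positivity criterion for a cocycle "not conjugate into a compact group." The concrete machinery that makes this work, however, is already available to you in \S\ref{subs:ledrappier_invariance_principle}: Theorem~\ref{thm:ledrappier_invariance_principle} (Ledrappier's invariance principle) shows that $\lambda^+=\lambda^-=0$ forces any stationary lift $\hat\mu$ on $\P TX$ to be projectively \emph{invariant}, and Theorem~\ref{thm:classification_proj_invariant} then gives the trichotomy: a single invariant line field, a pair of invariant line fields, or measurable reducibility to a compact group. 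The paper then rules out each case using Lusin's theorem together with Lemma~\ref{lem:halphen_flat}: unique ergodicity of $h$ along a generic fiber gives recurrence into the Lusin set, the unbounded growth of $\llbracket D_xh^n\rrbracket$ kills the compact case, and the shear toward $\ker D\pi_h$ forces any invariant line to be tangent to the fibers of $\pi_h$, whence a contradiction with the second twist $g$. Your heuristic ("a random word containing a positive density of occurrences of $h$ must expand some transverse vector at least linearly, and these expansions do not cancel") does not close by itself — linear expansion per step is consistent with sub-exponential overall growth, as you yourself note — and Theorems~\ref{thm:ledrappier_invariance_principle}--\ref{thm:classification_proj_invariant} are exactly the device the paper uses in place of a hand-made estimate. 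You also omit the torus case, where parabolic twists need not preserve each fiber; the paper handles it separately using that the derivative cocycle is then a constant unipotent matrix.

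For the entropy claim you take a genuinely different route: you invoke the alternative of Theorem~\ref{thm:alternative_stable}, rule out the zero-entropy branch by noting that a non-random stable line field would contradict the shears (the same argument as above), and conclude via the implication "stable directions depend on the itinerary $\Rightarrow h_\mu>0$." This is logically coherent, but two caveats apply. First, that implication is the one carrying the footnote about anticipating the extension of \cite{br} to the complex setting, so the route is less self-contained than the paper's. Second, Corollary~\ref{cor:positive_entropy} as stated has the hypothesis "$\mu$ not almost surely invariant," which fails here; what you actually want is the implication in the sentence preceding it. The paper instead argues directly: since $\mu$ is invariant, $\m=\nu^\Z\times\mu$, and $\mu$ is absolutely continuous w.r.t.\ Lebesgue (on $X$ or on $Y$) by Theorem~\ref{thm:classification_invariant}; the absolute continuity of the Pesin unstable lamination then forces the unstable conditionals of $\m$ to be non-atomic, and Corollary~\ref{cor:rokhlin_zero_entropy} gives $h_\mu>0$. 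That version is shorter and avoids the forward reference to \cite{br}.
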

\end{vlongue}


\begin{vlongue}
The same argument leads to a variant of this result when $\Gamma_\nu$ contains a Kummer example. 
\end{vlongue}

\begin{vlongue}
Before stating our next result, let us recall the definition of classical Kummer examples (see also Example~\ref{eg:mobius}, and~\cite[\S 1.3]{cantat-dupont} for a more general definition).
Let $A = \C^2/\Lambda$ be a complex torus and let $\eta$ be the involution given by $\eta(z_1, z_2) = (-z_1, -z_2)$, which has $16$ 
fixed points. Then $A/\langle\eta\rangle$ is a  
 surface with $16$ singular points, and resolving  these singularities (each of them requires a single blow-up)
yields a  so-called \textbf{Kummer  surface} $X$: a K3 surface with $16$ disjoint nodal curves. 
Let $f_A$ be a loxodromic  automorphism of $A$ which is induced by a linear transformation of $\C^2$ preserving $\Lambda$;
then $f_A$ commutes to $\eta$ and descends to an automorphism $f$ of~$X$; such automorphisms will be referred to as 
\textbf{classical Kummer examples}. Of course, they preserve the canonical volume $\vol_X$. Notice that the Kummer surface $X$
 also supports automorphisms which are not coming from automorphisms of $A$ (see~\cite{Keum-Kondo} and~\cite{Dolgachev-Keum} for instance). 

\begin{thm} \label{thm:hyperbolic_kummer} 
Let $(X, \nu)$ be a non-elementary  random dynamical system on a Kummer  K3 surface satisfying 
\eqref{eq:moment} and such that  $\Gamma_\nu$ contains a classical Kummer example. Then   any ergodic $\Gamma_\nu$-invariant 
measure giving no mass to proper Zariski closed subsets of $X$ is hyperbolic and has positive  fiber entropy. 
\end{thm}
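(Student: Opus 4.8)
The plan is to deduce Theorem~\ref{thm:hyperbolic_kummer} by the same strategy as Theorem~\ref{thm:hyperbolic}, the only new input being that a classical Kummer example plays the role of the parabolic (twisting) element: it is rigid enough to force non-randomness of stable directions to be impossible for measures that are spread out over $X$. First I would record the setup. Let $X$ be the Kummer K3 surface built from $A=\C^2/\Lambda$ and the involution $\eta$, with its $16$ nodal curves $C_1,\dots,C_{16}$; let $Z=\bigcup_i C_i$, which is $\Gamma_\nu$-invariant because $\Gamma_\nu$ contains a classical Kummer example $f$ (coming from a linear $f_A\in\GL_2(\C)$ preserving $\Lambda$) --- actually one should allow $\Gamma_\nu$ to move the $C_i$ around, but in any case $Z$ is a proper $\Gamma_\nu$-invariant Zariski closed set. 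Let $\mu$ be an ergodic $\Gamma_\nu$-invariant probability measure giving no mass to any proper Zariski closed subset. Since $X$ is a K3 surface, Corollary~\ref{cor:sum_exponentsK3} gives $\lambda^-(\mu)+\lambda^+(\mu)=0$, so it suffices to prove $\lambda^-(\mu)<0<\lambda^+(\mu)$; then positivity of fiber entropy follows either from Theorem~\ref{thm:stiffness_real}'s argument or, more directly, from Corollary~\ref{cor:positive_entropy} once we know the Oseledets stable directions depend non-trivially on the itinerary (which is what hyperbolicity plus the dichotomy below will give). By the invariance principle (Theorem~\ref{thm:invariance_principle}) and its Remark~\ref{rem:invariance_principle_strict}, the alternative is: either $\mu$ is hyperbolic, or $\lambda^-=\lambda^+=0$. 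So the whole content is to exclude $\lambda^\pm=0$.

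The key step is to exploit the Kummer structure in the zero-exponent case. On the complement $X\setminus Z$, which has full $\mu$-measure, the quotient map $A\setminus\mathrm{Fix}(\eta)\to X\setminus Z$ is an étale double cover, and the classical Kummer example $f$ lifts to the \emph{linear} map $f_A$ on $A$. Pulling $\mu\rest{X\setminus Z}$ back to $A$ gives an $\eta$-invariant measure $\hat\mu$ on $A$, invariant under the subgroup $\hat\Gamma$ of affine automorphisms of $A$ generated by the lifts of elements of $\Gamma_\nu$; the Lyapunov exponents of $(\hat\mu,\hat\Gamma)$ coincide with those of $(\mu,\Gamma_\nu)$ because the covering is a local isometry for suitable metrics. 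Now the differential of an affine automorphism of $A$ at a point is the \emph{constant} linear part, so the tangent cocycle over $(\hat\mu,\hat\Gamma)$ is the linear cocycle associated to the homomorphism $\Gamma_\nu\to\GL_2(\C)$, $f\mapsto (\text{linear part of the lift of }f)$. Because $\Gamma_\nu$ is non-elementary, its image in $H^{1,1}(X;\R)$ is not virtually abelian; tracing through the identification of $H^{2,0}\oplus H^{1,1}\oplus H^{0,2}$ with symmetric tensors in the linear parts on $A$ (this is the standard fact that for a Kummer surface the action on transcendental cohomology is governed by $\mathrm{Sym}$ of the linear action on $A$), one sees that the image of $\Gamma_\nu$ in $\PGL_2(\C)$ is unbounded and strongly irreducible. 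Then Furstenberg's theorem for the i.i.d.\ random product governed by $\nu$ on this $\GL_2(\C)$-cocycle gives a strictly positive top Lyapunov exponent $\lambda^+>0$, contradicting $\lambda^\pm=0$. Hence $\mu$ is hyperbolic.

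With hyperbolicity in hand, I would invoke Theorem~\ref{thm:alternative_stable}: since $\mu$ is ergodic, hyperbolic, and not supported on a $\Gamma_\nu$-invariant curve (it gives no mass to $Z$, and by Lemma~\ref{lem:mu_zariski}/Lemma~\ref{lem:periodic_curves} any such curve would be Zariski closed and $\mu$ charges none), either the Oseledets stable directions depend non-trivially on the itinerary, or $\mu$ is $\nu$-almost surely invariant with $h_\mu(X,\nu)=0$. In the first case Corollary~\ref{cor:positive_entropy} gives $h_\mu(X,\nu)>0$ and we are done. In the second case I claim the hypothesis of Theorem~\ref{thm:hyperbolic_kummer} is violated: when stable directions are non-random, the proof of Theorem~\ref{thm:hyperbolic} shows (via the same Ledrappier-type invariance-principle bootstrap used there, now applied with the rigid family of translations $f^m$ or, more to the point, with the linear rigidity of Kummer examples) that the stable line field is a measurable $\Gamma_\nu$-invariant line field on $T X$; pulling back to $A$ it becomes a measurable $\hat\Gamma$-invariant line field for a strongly irreducible unbounded linear cocycle, which by Furstenberg's theorem cannot exist unless $\hat\mu$ is carried by a proper invariant subset --- impossible since $\mu$ charges no proper Zariski closed set (a measurable invariant line field for a Zariski-dense subgroup of $\GL_2(\C)$ forces the measure to concentrate on the finitely many invariant directions). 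This contradiction rules out the second case, completing the proof.

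The main obstacle I expect is the second paragraph: making precise and clean the identification of the tangent cocycle of $(\mu,\Gamma_\nu)$ over $X\setminus Z$ with the linear $\GL_2(\C)$-cocycle on $A$, and in particular checking that non-elementariness of $\Gamma_\nu\subset\Aut(X)$ translates into strong irreducibility and unboundedness of the corresponding subgroup of $\PGL_2(\C)$. This requires knowing that $\Gamma_\nu$ normalizes the group of translations of $A$ modulo $\eta$ (true because a classical Kummer example is central enough, but one must argue that \emph{all} of $\Gamma_\nu$ lifts to affine automorphisms of $A$ --- this uses that the lifts exist on the étale cover $X\setminus Z$ and extend, which is where the presence of the Kummer example and the structure of $\Aut(X)$ for a Kummer surface enter). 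Once that dictionary is set up, the rest is a routine application of Furstenberg's theory together with Theorems~\ref{thm:invariance_principle}, \ref{thm:alternative_stable}, and Corollary~\ref{cor:positive_entropy}, in complete parallel with the proof of Theorem~\ref{thm:hyperbolic}.
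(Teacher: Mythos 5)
The main gap is at the heart of your second paragraph. You want to lift the whole group $\Gamma_\nu$ to a group $\hat\Gamma$ of \emph{affine} automorphisms of the torus $A$, so that the tangent cocycle over $\hat\mu$ becomes a constant $\GL_2(\C)$-cocycle to which Furstenberg's theory applies. But this lifting step fails: as the paper itself notes at the end of the discussion of Kummer examples (just before the theorem), \emph{a Kummer K3 surface supports automorphisms that do not come from automorphisms of $A$} (with references to Keum--Kond\={o} and Dolgachev--Keum). Only the single classical Kummer element $f\in\Gamma_\nu$ is guaranteed to lift to a linear map $f_A$; a generic $g\in\Gamma_\nu$ need not. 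So the homomorphism $\Gamma_\nu\to\GL_2(\C)$, ``linear part of the lift,'' is simply not defined, and the strong irreducibility/unboundedness claim you try to extract from non-elementariness of $\Gamma_\nu^*$ has no cocycle to attach to. You flag this yourself as ``the main obstacle I expect'' and suggest it should be resolvable because a Kummer example is ``central enough''; unfortunately, it is not resolvable, and this is exactly the point the paper is careful to work around.

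The paper's actual proof shares your opening moves (Corollary~\ref{cor:sum_exponentsK3} to force $\lambda^-+\lambda^+=0$, then Theorem~\ref{thm:ledrappier_invariance_principle} and Theorem~\ref{thm:classification_proj_invariant} to obtain the trichotomy of cases~(a), (b), (c)), but then it stays entirely on $X$ and uses only geometric data attached to the \emph{single} Kummer element~$f$: since $f$ is uniformly hyperbolic on a Zariski-open set of full $\mu$-measure, alternative~(c) is impossible outright. In alternative~(a), $f$-invariance of the measurable line field forces it to coincide with $E^u_f$ or $E^s_f$; the crucial further ingredient — which your approach does not use — is that $E^u_f$ is tangent to a singular holomorphic foliation $\mathcal F^u$ on $X$ (coming from the stable/unstable linear foliation of $f_A$ on $A$), whose generic leaves are entire curves whose only Ahlfors--Nevanlinna current is $T^+_f$. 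Any $g\in\Gamma_\nu$ preserving the line field then preserves $\mathcal F^u$, hence the ray $\R_+[T^+_f]$ in $H^{1,1}(X;\R)$, contradicting non-elementariness. Alternative~(b) is reduced to~(a) by an index-$2$ subgroup. If you want to salvage your write-up, replace the torus-lift argument by this foliation/Ahlfors--Nevanlinna-current argument (which also disposes of the invariant-line-field case you invoke in your entropy discussion); the appeal to Furstenberg on a linear cocycle has to go.
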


In this   statement  we do not assume that $\Gamma_\nu$ contains a parabolic element. 
In Theorem \ref{thm:rigidity_kummer} below, we classify  invariant probability measures which are 
supported on an invariant, real analytic, and  totally real surface $Y$, when $\Gamma_\nu$ contains a Kummer example.

Theorems \ref{thm:hyperbolic} and \ref{thm:hyperbolic_kummer} will be proven in \S\ref{subs:proof_hyperbolic}. 
\end{vlongue}

\begin{vlongue}
\subsection{Ledrappier's invariance  principle and   invariant measures on $\pp T\cX$} \label{subs:ledrappier_invariance_principle} 
This paragraph contains preliminary results for the proof of Theorems \ref{thm:hyperbolic} and \ref{thm:hyperbolic_kummer}. 
Our presentation is inspired by the exposition of \cite{barrientos-malicet}. It is similar in spirit
 to that of \cite{obata-poletti}, which relies on the ``pinching and twisting'' formalism of Avila and Viana 
(see \cite{viana} for an introduction\footnote{Beware that the word ``twisting'' has a different meaning there.}). 
Most of this discussion is valid for a random holomorphic  dynamical system on an arbitrary  complex surface (not necessarily compact), 
satisfying \eqref{eq:moment}.

Let $\mu$ be an ergodic $\nu$-stationary measure.  
We introduce the projectivized tangent bundles $\P T \cX_+  = \Omega \times \P TX$ and $\pp T\cX = \Sigma\times \P TX$. The tangent bundles $TX$ and $\P TX$ admit measurable trivializations over a set of full measure. 
Consider any probability measure $\hat\mu$ on  $\P TX$ that is stationary under the random dynamical system induced by $(X,\nu)$ on $\P TX$ 
and whose projection on $X$ coincides with $\mu$, i.e.
$\pi_*\hat\mu= \mu $ where $\pi\colon \pp TX\to X$ is the natural projection. Such measures always exist.
Indeed 
the set of probability measures on $\P TX$  projecting to $\mu$ is compact and convex, and it is non-empty since it contains
the  measures $\int \delta_{[v(x)]} d\mu(x)$ for any measurable section
$x\mapsto [v(x)]$ of $\P TX$; thus, the operator $\int \P (Df) \,d\nu(f)$ has a fixed point in that set. The stationarity of $\hat\mu$ is equivalent to the invariance of $\nu^\N\times \hat\mu$
 under  the transformation ${\widehat{F}_+}\colon \Omega\times \pp TX\to \Omega\times \pp TX$ defined by 
\begin{equation}
{\widehat{F}_+}(\omega, x, [v])= (\sigma(\omega), f^1_\omega(x), \P(D_xf^1_\omega)[v])
\end{equation}
for any non-zero tangent vector $v\in T_xX$. We denote by $\hat\mu_x$ the family of probability measures -- on the fibers $\pp T_x X$  of $\pi$ --
given by the disintegration of $\hat\mu$  with respect to $\pi$;
the conditional measures of $\nu^\N\times\hat\mu$ with respect to the projection 
$\P T\cX\to X$ are given by $\hat\mu_{\omega,x} = \nu^\N\times \hat\mu_x$. 
 
\begin{rem}
Even when  $\mu$ is $\Gamma_\nu$-invariant, this construction only provides a stationary measure on 
$\pp TX$.  This is exactly what    happens for twisting non-elementary subgroups:
 indeed  we will show in  \S\ref{subs:proof_hyperbolic} that projectively invariant measures do not exist in this case. 
\end{rem}
\end{vlongue}

\begin{vlongue}
 The tangent action of our random dynamical system  gives rise to a stationary product of 
matrices in $\GL(2, \C)$. To see this, fix a measurable trivialization $P\colon TX\to X\times \C^2$, given 
by linear isomorphisms  $P_x\colon T_xX\to \C^2$, which conjugates the action of $DF_+$ to that of a linear
  cocycle $A:\cX_+ \times \C^2\to \cX_+ \times \C^2$ over $(\cX_+, F_+, \nu^\N\times \mu)$. 
 In this context, 
Ledrappier establishes  in \cite{ledrappier_stationary} the following ``invariance principle''.

\begin{thm}\label{thm:ledrappier_invariance_principle}
If $\lambda^{-}(\mu) = \lambda^{+}(\mu)$, then for any  stationary measure $\hat \mu$ on $\pp TX$ projecting   to $\mu$, we have $\P(D_xf)_*\hat\mu_x = \hat\mu_{f(x)}$
for $\mu$-almost every $x$ and $\nu$-almost every $f$.
\end{thm}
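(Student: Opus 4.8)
\textbf{Proof plan for Theorem~\ref{thm:ledrappier_invariance_principle} (Ledrappier's invariance principle).}
The plan is to reduce the statement to the entropy characterisation of the invariance of conditional measures for a stationary linear cocycle, following Ledrappier~\cite{ledrappier_stationary} (see also the exposition in~\cite{barrientos-malicet, avila-viana}). First I would transfer the problem to the projective linear cocycle: fix a measurable trivialisation $P\colon TX\to X\times\C^2$ as in the paragraph preceding the statement, so that $DF_+$ becomes a linear cocycle $A$ over $(\cX_+,F_+,\nu^\N\times\mu)$, and the induced projective cocycle $\P A$ acts on $\cX_+\times\pp^1(\C)$; the measure $\hat\mu$ corresponds to a stationary measure $m$ for $\P A$ projecting to $\mu$, and the conditional measures $\hat\mu_x$ to a measurable family $m_x$ on $\pp^1(\C)$. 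The conclusion to be proved is that $m_x$ is $\P A$-equivariant: $(\P A(f,x))_\varstar m_x = m_{f(x)}$ for $(\nu\times\mu)$-a.e.\ $(f,x)$.

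The key analytic input is the following dichotomy, which is the heart of Ledrappier's argument. Consider the two natural ``fibered entropies'' (or rather Furstenberg-type exponents) attached to $m$: on one hand the sum of the Lyapunov exponents of the linear cocycle $A$ conditioned by the stationary measure $m$ on the projective bundle, and on the other the quantity obtained from the Furstenberg formula $\int \log\frac{\norm{A(f,x)v}}{\norm v}\,d\nu(f)\,dm(x,[v])$. One shows, via the cocycle/martingale identity and Jensen's inequality exactly as in the proof of the Furstenberg formula (cf.\ Remark~\ref{rem:lambda_with_log_mass} and Theorem~\ref{thm:def_stationary}), that the relevant conditional entropy $h(m\mid \cF^-)$ — measuring the failure of $m_x$ to be a measurable function of the past only, equivalently the failure of equivariance — is bounded above by $\lambda^+(\mu)-\lambda^-(\mu)$, with equality forcing genuine dependence and strict inequality when $m$ is ``too spread out''. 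The precise mechanism is: the $\cF^-$-conditional measures of $\nu^\Z\times m$ are governed by the backward products $f^n_{\vartheta^{-n}\xi}$, and by the martingale convergence argument used in Lemma~\ref{lem:conditionals} these converge; the defect between the forward-stationary $m_x$ and this backward limit is exactly captured by $\lambda^+-\lambda^-$. When $\lambda^+(\mu)=\lambda^-(\mu)$ this upper bound is $0$, so the defect vanishes, which is precisely the asserted equivariance $\P(D_xf)_\varstar\hat\mu_x=\hat\mu_{f(x)}$.

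Concretely the steps I would carry out are: (i) set up the trivialisation and the projective cocycle, and record that $\nu^\N\times\hat\mu$ is $\widehat F_+$-invariant; (ii) pass to the natural extension $\cX=\Sigma\times X$ and the two-sided measure, so that the $\sigma$-algebras $\cF^\pm$ of Section~\ref{sec:Glossary_II} are available, and express equivariance of $x\mapsto\hat\mu_x$ as $\cF^-$-measurability of the corresponding family on $\cX$ up to the forward dynamics; (iii) establish the entropy inequality $H_m(\widehat F^{-1}\hat\eta\mid\hat\eta)\le \lambda^+-\lambda^-$ for a suitable increasing partition $\hat\eta$ of the projective bundle subordinate to the past, by the telescoping/Jensen computation; (iv) invoke $\lambda^+=\lambda^-$ to conclude the left side vanishes, hence the conditional measures are carried by single atoms of $\widehat F^{-1}\hat\eta$ for every $n$, hence (by the generating property) $\hat\mu_x$ depends only on the past, which unwinds to the stated equivariance. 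I expect step~(iii) — the clean derivation of the entropy bound by the Lyapunov exponents, including the justification that the relevant conditional measures and partitions are well-defined and measurable in the non-compact fibered setting — to be the main obstacle; fortunately this is exactly the content of~\cite{ledrappier_stationary}, and in our situation the moment condition~\eqref{eq:moment} guarantees all the integrability needed, so it can be cited rather than reproved.
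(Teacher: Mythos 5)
The paper does not prove this theorem at all: it simply attributes the result to Ledrappier and cites~\cite{ledrappier_stationary}, after setting up the trivialisation $P\colon TX\to X\times\C^2$ so that the statement becomes a claim about stationary measures for a $\GL_2(\C)$-cocycle. Your proposal ends in the same place — you explicitly defer to Ledrappier for the central entropy inequality — so there is no real divergence in strategy; what you have added is a narrative of what Ledrappier's argument actually does.

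Your sketch of that argument is conceptually on track but loose in a couple of places. The quantity you call ``the conditional entropy $h(m\mid\cF^-)$ bounded above by $\lambda^+-\lambda^-$'' is not literally the form of Ledrappier's inequality: he works with the Furstenberg functional $\Phi(m)=\int\log\frac{\|A(f,x)v\|}{\|v\|}\,d\nu\,dm$ and a relative-entropy (Kullback–Leibler) type quantity measuring the defect of equivariance, and the mechanism is that this defect, being a nonnegative quantity squeezed between $\Phi(m)-\lambda^-$ and $\lambda^+-\Phi(m)$-type bounds, must vanish when $\lambda^+=\lambda^-$. Also, pointing to Remark~\ref{rem:lambda_with_log_mass} and Theorem~\ref{thm:def_stationary} as justification for the Jensen/telescoping step is not quite apt, since those concern the cohomological action and a one-sided Furstenberg formula; they are analogous in spirit but not the lemma you need. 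Since you explicitly propose to cite~\cite{ledrappier_stationary} for the hard part rather than reprove it, these imprecisions do not invalidate the plan — but if you were to actually carry out step~(iii) you would need to set up the correct entropy functional rather than the one you named. As it stands, your proposal is a reasonable gloss on a result the paper itself treats as external input.
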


From this point the second main ingredient of the proof is 
 a classification of such projectively invariant measures; this is where we follow \cite{barrientos-malicet}. 
To explain this result a bit of notation is required. Let $V$ and $W$ be   hermitian vector spaces of dimension 2. 
Endow the projective lines $\P(V)$ and $\P(W)$ with their respective Fubini-Study metrics. If $g\colon V\to W$ 
is a linear isomorphism,  set  
\begin{equation}
\llbracket g \rrbracket= \norm{ \P (g) }_{C^1}
\end{equation}
where  $\P(g) \colon \pp(V)\to \pp(W)$
is the projective linear map induced by $g$ and $\norm{\cdot}_{C^1}$
is the maximum of the norms of $D_z\P(g) \colon T_z\pp(V)\to T_{\P gz)}\pp(W)$ with respect to the Fubini-Study metrics. 
Let us fix  two  isometric isomorphisms $\iota_V\colon V\to \C^2$  
and $\iota_W\colon W\to \C^2$ to the standard
hermitian space $\C^2$.  If we denote by $\iota_W\circ g\circ \iota_V^{-1} = k_1ak_2$ the KAK decomposition of
$\iota_W\circ g\circ \iota_V^{-1}$ in $\PSL(2, \C)$, we get 
$
 \llbracket g \rrbracket  = \norm{a}^2   =  \norm{\iota_W\circ g\circ \iota_V^{-1}}^2
$
  where 
$\norm{\cdot}$ is the  matrix norm in $\PSL_2(\C)=\SL_2(\C)/\langle \pm \id\rangle$ associated to the Hermitian norm in $\C^2$.
In particular:
\begin{itemize}
\item[(a)] $\llbracket g  \rrbracket=1$ if and only if $\P (g)$ is an isometry from $\pp(V)$ to $\pp(W)$;
\item[(b)] for a sequence $(g_n)$ of linear maps $V\to W$, 
$\llbracket g_n\rrbracket$ tends to $+\infty$ as $n$ goes to $+\infty$ if and only if $\P(\iota_W\circ g\circ \iota_V^{-1})$ 
tends  to $\infty$ in  $\PSL_2(\C)$.
\end{itemize}
\end{vlongue}


\begin{vlongue}
We are now ready to state the classification of projectively invariant measures. 

\begin{thm}\label{thm:classification_proj_invariant}
Let  $(X, \nu)$ be a random dynamical system on a complex surface and let $\mu$ be an ergodic stationary measure. Let
$\hat\mu$ be a stationary measure on $\P TX$ such that $\pi_*{\hat{\mu}}=\mu$ and $(\P D_xf)_*\hat\mu_x = \hat\mu_{f(x)}$
for $\mu$-almost every $x$ and $\nu$-almost every $f$. Then, exactly one of the following two properties is satisfied: 
 \begin{enumerate}[\em (1)] 
 \item For $(\nu^\N\times \mu)$-almost every
 $\cx   = (\omega, x)$, the sequence $\llbracket D_xf^n_\omega\rrbracket$  
is unbounded  and then:
 \begin{itemize}
 \item[(1.a)] either there exists a measurable $\Gamma_\nu$-invariant family of lines  
  $E(x)\subset T_xX$ such that   $\hat\mu_x=\delta_{[E(x)]}$ for $\mu$-almost every $x$;
  \item[(1.b)] or there exists a measurable $\Gamma_\nu$-invariant family of pairs of  lines  
  $E_1(x), E_2(x) \subset T_xX$ and positive  numbers $ \lambda_1, \lambda_2$ with 
  $\lambda_1+\lambda_2 =1$  such that    $\hat\mu_x=\lambda_1\delta_{[E_1(x)]} + \lambda_2\delta_{[E_2(x)]}$  for $\mu$-almost every $x$;
 \end{itemize}
\item The projectivized tangent action of $\Gamma_\nu$ is reducible to a compact  group, that is 
there exists a measurable   trivialization  
of the tangent bundle $(P_x: T_xX \to \C^2)_{x\in X}$,    such that for every $f\in \Gamma_\nu$ and every 
$x$, $\P\lrpar{P_{f(x)}\circ D_xf\circ P_x\inv}$ belongs to the unitary group $\mathsf{PU}_2(\C)$. 
\end{enumerate}
\end{thm}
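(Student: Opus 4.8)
The strategy is to follow the scheme of Barrientos--Malicet, adapting it to our holomorphic setting. We start from a stationary measure $\hat\mu$ on $\P TX$ which is \emph{genuinely} invariant under the projectivized tangent cocycle, i.e. $(\P D_xf)_*\hat\mu_x=\hat\mu_{f(x)}$ for $\mu$-almost every $x$ and $\nu$-almost every $f$. The first dichotomy to set up is whether the sequence $\llbracket D_xf^n_\omega\rrbracket$ is $(\nu^\N\times\mu)$-almost surely unbounded, or not. By ergodicity of $(\X_+,F_+,\nu^\N\times\mu)$ and the subadditivity of $n\mapsto \log\llbracket D_xf^n_\omega\rrbracket$ (which is subadditive up to the cocycle relation, using that $\llbracket gh\rrbracket\le \llbracket g\rrbracket\,\llbracket h\rrbracket$), this quantity either grows at a fixed exponential rate, or stays subexponential; a finer zero-one law à la Furstenberg, or a direct argument on the stationary measure, will show that ``subexponential growth along a positive-measure set'' forces ``boundedness along a full-measure set''. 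So the real split is: unbounded a.s.\ (case (1)) versus bounded a.s.\ (case (2)).

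\textbf{Case (2): boundedness.} Here the argument is classical. If $\llbracket D_xf^n_\omega\rrbracket$ is $(\nu^\N\times\mu)$-almost surely bounded, one pulls back the Fubini--Study metric on the fibers of $\P TX$ by the cocycle and averages over $\omega$ to produce, over $\mu$-almost every $x$, a measurable family of Hermitian metrics on $T_xX$ which is $\Gamma_\nu$-invariant up to scale; normalizing, this gives a measurable trivialization $P_x\colon T_xX\to\C^2$ conjugating every $\P D_xf$ into $\mathsf{PU}_2(\C)$. The boundedness assumption is exactly what guarantees the averaged metric is finite and nondegenerate almost everywhere. This is property (2).

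\textbf{Case (1): unboundedness.} This is where the invariance of $\hat\mu$ (not mere stationarity) does the work, and it is the main obstacle. One analyses the fibre measures $\hat\mu_x$ on $\P T_xX\simeq \P^1(\C)$. Using the invariance relation and the martingale/convergence machinery for the cocycle, together with the characterization of $\llbracket\cdot\rrbracket\to\infty$ as divergence in $\PSL_2(\C)$, one shows that along $\nu^\N$-generic itineraries the maps $\P D_xf^n_\omega$ push $\hat\mu_x$ towards a limit supported on at most two points (the ``attracting'' directions of the divergent sequence in $\PGL_2$). A contraction/invariance argument --- precisely the point where one mimics Ledrappier's proof of the invariance principle and the Benoist--Quint style analysis of stationary measures on $\P^1$ --- then forces $\hat\mu_x$ itself to be supported on at most two atoms for $\mu$-almost every $x$. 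If it is a single atom, the corresponding line field $E(x)\subset T_xX$ is, by the invariance of $\hat\mu$, a $\Gamma_\nu$-invariant measurable line field: this is (1.a). If $\hat\mu_x=\lambda_1\delta_{[E_1(x)]}+\lambda_2\delta_{[E_2(x)]}$ with two distinct atoms, ergodicity of $\mu$ shows the weights $\lambda_1,\lambda_2$ are constant (each $\lambda_i$ is the common mass of the ``largest atom'' / ``second atom'', a $\Gamma_\nu$-invariant function of $x$, hence a.e.\ constant), and the unordered pair $\{E_1(x),E_2(x)\}$ is $\Gamma_\nu$-invariant; replacing $\Gamma_\nu$ by the index-$\le 2$ subgroup fixing the pair if necessary is absorbed into the statement's phrasing ``measurable $\Gamma_\nu$-invariant family of pairs of lines''. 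That is (1.b). The exclusivity of (1) and (2) is immediate since the first is characterized by $\llbracket D_xf^n_\omega\rrbracket\to\infty$ a.s.\ and the second by its boundedness a.s.

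\textbf{Expected main difficulty.} The delicate step is the passage, within case (1), from ``the pushed-forward fibre measures converge to measures supported on $\le 2$ points along generic itineraries'' to ``the fibre measures $\hat\mu_x$ are themselves supported on $\le 2$ points''. This requires exploiting the genuine invariance (not just stationarity) of $\hat\mu$ under the cocycle in an essential way, and carefully handling the fact that the two ``attracting directions'' of $\P D_xf^n_\omega$ may depend on $\omega$; the argument must show that the $\omega$-dependence disappears upon integrating, producing a line field or pair of line fields that is literally $\Gamma_\nu$-invariant over $X$ rather than merely measurable along itineraries. This is handled in Barrientos--Malicet by a clean measure-theoretic argument on $\P^1$, and I would reproduce it verbatim, checking only that the complex-analyticity of the maps $f$ introduces no obstruction (it does not --- everything here is at the level of the $\GL_2(\C)$-cocycle).
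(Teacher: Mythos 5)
Two genuine gaps. First, the dichotomy ``unbounded a.s.\ versus bounded a.s.'' that launches your argument is not a valid zero-one law, and your promotion of ``subexponential growth on a positive-measure set'' to ``bounded a.s.'' fails: a cocycle with $\lambda^+=\lambda^-$ (precisely the delicate case to which the theorem is applied, via Ledrappier's invariance principle) may have $\llbracket D_xf^n_\omega\rrbracket\to+\infty$ polynomially, so it is subexponential yet unbounded. The correct dichotomy is more delicate: either $\llbracket D_xf^n_\omega\rrbracket$ ``almost converges to $+\infty$'' (for every $K$, the times $n$ with $\llbracket D_xf^n_\omega\rrbracket\le K$ have asymptotic lower density $0$), or there is a subsequence of \emph{positive} lower density along which it stays bounded; these two events are $F_+$-invariant, so exactly one occurs $\m_+$-a.s. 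Your bounded-case treatment thus starts from too strong a hypothesis: the averaged metric $\frac1N\sum_{n<N}(\P D_xf^n_\omega)^*g_{\mathrm{FS}}$ need not converge (nor stay non-degenerate) if the cocycle only recurs to a compact along a positive-density subsequence. The paper instead builds a $G$-extension on $X\times\PGL_2(\C)$, uses the positive-density recurrence to extract from Ces\`aro averages a stationary probability measure on $X\times G$ projecting to $\mu$, and then applies a Furstenberg--Zimmer argument: the stabilizer of that measure under right $G$-translations is a compact subgroup $H$, and a measurable section $X\to G$ conjugates the cocycle into $H$.

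Second, in the unbounded case your ``contraction/invariance'' step skips the key quantitative lemma. What one actually proves is that $\hat\mu_x$ has an atom of mass $\ge 1/2$: if on a positive-measure set $\hat\mu_{\cx}$ lay in the set of probability measures on $\P^1$ with no ball of mass $\ge 1/2-\e$, then positive-density recurrence to that compact set of measures would force $\P(A^{(n)}_{\cx})$ into a compact subset of $\PGL_2(\C)$ at positive density, contradicting unboundedness. One then iterates, subtracting the largest atom and renormalizing; the crucial point you do not address is why three or more atoms are impossible. Producing $\ge 3$ atoms yields a finite $\Gamma_\nu$-invariant subset of $\P^1$ of cardinality $\ge 3$, whose stabilizer in $\PGL_2(\C)$ is finite, again contradicting unboundedness. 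Note also that your generic-itinerary pushforward argument produces ``attracting directions'' that a priori depend on $\omega$; it is precisely the atom-of-mass-$\ge 1/2$ argument, applied to the $\omega$-independent conditional $\hat\mu_x$, which makes the resulting line fields $x$-measurable rather than merely $\cx$-measurable. Finally, your suggested passage to an index-$\le 2$ subgroup of $\Gamma_\nu$ in (1.b) is neither needed nor compatible with the statement, which deliberately allows the dynamics to permute the unordered pair $\{E_1(x),E_2(x)\}$.
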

\end{vlongue}

\begin{vlongue}
In assertion (1.b), the pair is not naturally ordered, i.e. there is no natural distinction of $E_1$ and $E_2$, 
the random dynamical system may a priori permute these lines. The proof is obtained by adapting the 
arguments of  \cite{barrientos-malicet}  to the complex case. 
Details are given in Appendix~\ref{app:barrientos_malicet}. 
\end{vlongue}

\begin{vlongue}
\subsection{Proofs of Theorems \ref{thm:hyperbolic} and \ref{thm:hyperbolic_kummer}}\label{subs:proof_hyperbolic}  

\subsubsection{Proof of Theorem~\ref{thm:hyperbolic}}   
By Theorem~\ref{thm:classification_invariant}, 
$\mu$ is either equivalent  to the Lebesgue measure on $X$, or to the $2$-dimensional Lebesgue measure on some components of an invariant totally real surface $Y\subset X$.  
Let us assume, by contradiction, that $\mu$ is not hyperbolic. Hence its Lyapunov exponents vanish, 
and by Theorem~\ref{thm:ledrappier_invariance_principle} and Theorem \ref{thm:classification_proj_invariant}, there 
 is a measurable set $X'\subset X$ with $\mu(X') = 1$ such that one of the following properties is satisfied along $X'$:   \label{alternative_abc}
 \begin{itemize}
\item[(a)]  there is a measurable $\Gamma_\nu$-invariant line field $E(x)$; 
\item[(b)]  there exists  a measurable $\Gamma_\nu$-invariant splitting $E(x)\oplus E'(x)=T_xX$ of the tangent bundle; 
here, the invariance should be taken in the following weak sense: an element $f$ of $\Gamma_\nu$ maps $E(x)$ to  
$E(f(x))$ or $E'(f(x))$;
\item[(c)] there exists a measurable trivialization $P_x\colon T_xX\to \C^2$  such that in the corresponding coordinates
 the projectivized 
differential $\P(Df_x)$, $f\in \Gamma_\nu$,  takes its values in $\mathsf {PU}_2(\C)$. 
\end{itemize}
\end{vlongue}

\begin{vlongue}
Fix a small $\e>0$. By Lusin's  
 theorem, there is a compact set $K_\e $  with  $\mu(K_\e)>1-\e$ 
 such that the data   $x\mapsto E(x)$, resp.  $x\mapsto (E(x), E'(x))$ or $x\mapsto P_x$ in the respective 
 cases (a,b,c)  
 are continuous on $K_\e$. 
 In particular, in case (c), the norms of $P_x$ and $P_x^{-1}$ are bounded by some uniform constant $C(\e)$ 
on $K_\e$; hence, if $g\in \Gamma_\nu$ and $x$ and $g(x)$ belong to 
$K_\e$,   $\llbracket Dg_x \rrbracket$ is bounded by $C(\e)^2$. 

Fix a pair of Halphen twists $g$ and $h\in \Gamma_\nu$ with distinct invariant fibrations $\pi_g\colon X\to B_g$ and $\pi_h\colon X\to B_h$
respectively (see Lemma~\ref{lem:pairs_of_twists}). In a first stage assume that $X$ is not a torus: then by Proposition \ref{pro:parabolic_infinite}  we may  assume that 
$g$ and $h$  preserve every fiber of their respective invariant fibrations (see Section~\ref{par:parabolic_automorphisms}). 

First assume that $\mu$ is absolutely continuous with respect to the Lebesgue measure on $X$, with a positive real analytic density
on the complement of some invariant, proper, Zariski closed subset. Since the invariant fibration is holomorphic, the disintegration $\mu_b$ of 
$\mu$ is absolutely continuous on almost every fiber $\pi_h^{-1}(b)$. Thus,    
 there exists a fiber $\pi_h\inv(b)$  such that   (1) the Haar measure of 
$K_\varepsilon \cap \pi_h\inv(b)$ is positive,  
(2)  
$b \notin \mathrm{Flat}(h)$ and (3) 
the dynamics of $h$ in $\pi_h\inv(b)$ is uniquely ergodic (see Lemma~\ref{lem:halphen_flat}). 
Then we can pick $x\in \pi_h\inv(b)$ such that 
$(h^{k}(x))_{k\geq 0}$ visits $K_\e$ infinitely many times. The fifth assertion of Lemma~\ref{lem:halphen_flat}
rules out case (c) because the twisting property implies that the projectivized derivative $\llbracket Dh^n_x \rrbracket$ tends to infinity, while it should be bounded along the sequence of times 
 $n$ for which $h^n(x)\in K_\e$.
Case (b)  is also excluded: under the action of $h^n$, tangent vectors  projectively 
converge to the tangent space of the fibers, so the only possible invariant subspace is $\ker(D\pi_h)$.
Thus we are in case (a) and moreover  $E(x) = \ker D_x\pi_h$  for $\mu$-almost every $x$. But then, 
using $g$ instead of $h$ and the fact that $\mu$ does not charge the algebraic curve along which the fibrations 
$\pi_g$ and $\pi_h$ are tangent, we get a contradiction. 
\begin{vcourte}
This shows that (a) does not hold either, and proves that $\mu$ is hyperbolic.
\end{vcourte}
\begin{vlongue}
This shows that alternative (a) does not hold either, and this contradiction proves that $\mu$ is hyperbolic.
\end{vlongue}

\begin{vlongue}
If $\mu$ is supported by a $2$-dimensional real analytic subset $Y\subset X$, the same proof applies, 
except that we disintegrate
$\mu$ along the  singular  foliation of $Y$  by circles induced by $\pi_h$ and use the fact that a generic leaf is a 
circle along which
  $h$ is   uniquely ergodic (see Lemma~\ref{lem:halphen_flat}.{(6)}). 

If $X$ is a torus,
then its tangent bundle is trivial and the differential of an automorphism is 
constant. In an appropriate basis, the differential of a Halphen twist $h$  is of the form 
\begin{equation}
\begin{pmatrix} 1 & \alpha \\ 0 & 1 \end{pmatrix} \text{ with } \alpha\neq 0.
\end{equation}
 Thus we are in case (a) with $E(x) = \ker D_x\pi_h$ for $\mu$-almost every $x$. Using another 
twist $g$ transverse to $h$ we get a contradiction as before.

Since $\mu$ is invariant then the invariant 
measure $\m$ on $\cX$ is 
equal to $\nu^\Z\times \mu$. In both cases $\mu\ll \vol_X$ and  $\mu\ll \vol_Y$.  
The absolute continuity of the foliation by  local Pesin  unstable manifolds implies that 
the unstable conditionals of $\m$ cannot be atomic (see the classical argument showing
 that an absolutely continuous invariant measure has the SRB property, as in \cite{ledrappier_sinai}). 
Thus positivity of the entropy follows from Corollary \ref{cor:rokhlin_zero_entropy}, and the 
proof of Theorem \ref{thm:hyperbolic} is complete. 
\qed 
\end{vlongue}

\begin{vlongue}
\subsubsection{Proof of Theorem \ref{thm:hyperbolic_kummer}}\
  The proof is similar to that of Theorem \ref{thm:hyperbolic} so we only sketch it. Assume by contradiction that $\mu$ is 
not hyperbolic;  since $X$ is a K3 surface, Corollary~\ref{cor:sum_exponentsK3} shows that the sum of the Lyapunov exponents of $\mu$ vanishes; thus, each of them is equal to $0$, and one of the alternatives of Theorem \ref{thm:classification_proj_invariant} holds, referred to as (a), (b), (c) on page \pageref{alternative_abc}.  
By assumption, $\Gamma_\nu$ contains a   map $f$ which is uniformly
hyperbolic in  some Zariski open set $U$, which is thus of full $\mu$-measure. 
 We denote by $x\mapsto E^u_f(x)\oplus E^s_f(x)$ 
the associated splitting of $TX\rest{U}$. 
Since $f$ is uniformly expanding/contracting on $E^{u/s}_f$, alternative (c) is not possible. 

If 
alternative (a) holds, then $E(x)$ being $f$-invariant on a set of full 
measure, it must coincide with $E^u_f$ or $E^s_f$, say with $E^u_f$.  By continuity any $g\in \Gamma_\nu$ preserves 
$E^u_f$ pointwise. 
On the other hand, $E^u_f$ is everywhere tangent 
to an $f$-invariant (singular) holomorphic foliation $\mathcal F^u$, induced by a linear foliation on the torus $A$ given by the Kummer structure.
Every leaf of that foliation, except for a finite number of them, 
is biholomorphically equivalent to $\C$, and the Ahlfors-Nevanlinna currents of these entire curves 
are all equal to the unique closed positive current $T^+_f$ that satisfies $\M(T^+_f)=1$ and $f^*T^+_f=\lambda(f)T^+_f$ with $\lambda(f)>1$.
Now, pick any element $g$ of $\Gamma_\nu$. Since $g$ preserves the line field $E^u_f$,  $g$ preserves $\mathcal F^u$ as well, hence also the ray $\R_+[T^+_f]$, 
contradicting the non-elementary assumption. 

Finally, if alternative (b) holds, any $g\in \Gamma_\nu$ preserves 
$ \{E^u_f(x), E^s_f(x)\}$ on a set of full measure so by the continuity of the hyperbolic splitting it must either preserve or 
swap these directions. Passing to an index $2$ subgroup  both directions are preserved, and we are back to case (a). 
\qed 
\end{vlongue}

\section{Measure rigidity}\label{sec:rigidity}

In view of the results of 
\begin{vcourte}
Section~\ref{sec:stiffness} and~\cite{invariant}, 
\end{vcourte}
\begin{vlongue} 
Sections~\ref{sec:stiffness} and~\ref{sec:parabolic},
\end{vlongue}
it is natural to ask for a classification 
of invariant measures when $\Gamma$ does not contain parabolic elements.  
\begin{vlongue} 
The results in this section  belong to a thread of measure  rigidity results starting with 
Rudolph's theorem \cite{rudolph}  on Furstenberg's $\times 2\times 3$ conjecture, here in a non-linear and non-commutative setting. 
\end{vlongue} 
If $\mu$ is a probability measure on $X$, 
we denote by $\Aut_\mu(X)$ the group of automorphisms of $X$ preserving $\mu$.

\begin{thm}\label{thm:rigidity}
 Let $f$ be an automorphism of a compact K\"ahler surface $X$, preserving a totally real and real 
 analytic surface $Y\subset X$. 
 Let $\mu$ be an ergodic  $f$-invariant measure on $Y$ with positive entropy. Then 
 \begin{enumerate}[\em (a)]
 \item either $\mu$ is absolutely continuous with respect to the Lebesgue measure on $Y$;
 \item or $\Aut_\mu(X)$ is virtually cyclic. 
 \end{enumerate}
 If in addition the Lyapunov exponents of $f$ with respect to  $\mu$ satisfy $\lambda^-(f,\mu)+ \lambda^+(f,\mu) \neq 0$, then 
 case~(a) does not occur, so $\Aut_\mu(X)$ is virtually cyclic. 
\end{thm}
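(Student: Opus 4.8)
The plan is to reduce Theorem~\ref{thm:rigidity} to the machinery already developed, namely Theorem~\ref{thm:alternative_stable}, Theorem~\ref{mthm:currents} (and its avatars Theorem~\ref{thm:nevanlinna}, Corollary~\ref{cor:nevanlinna}), together with the Ledrappier--Brown--Rodriguez-Hertz invariance principle as used in Section~\ref{sec:parabolic}. First I would fix $f$ and the ergodic $f$-invariant measure $\mu$ of positive entropy on the totally real analytic surface $Y$, and let $\Gamma = \Aut_\mu(X)$. By the Margulis--Ruelle inequality for a single diffeomorphism, positive entropy forces $\mu$ to be hyperbolic for $f$, so $\lambda^u(f,\mu)>0>\lambda^s(f,\mu)$. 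The dichotomy to establish is: either $\Gamma$ is virtually cyclic, or (assuming it is not) $\mu$ is absolutely continuous. So assume $\Gamma$ is \emph{not} virtually cyclic.

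The first real step is to show that $\Gamma$ must then be non-elementary. Here I would invoke Lemma~\ref{lem:virtually_cyclic_non_projective} and the classification of automorphism groups of compact K\"ahler surfaces: if $\Gamma$ is elementary and contains a loxodromic element (which it does, since $f$ has positive entropy), then $\Gamma^*$ is virtually cyclic by the structure theory recalled in \S\ref{subs:stiffness_elementary}; lifting back to $\Gamma$ using that the kernel of $\rho$ is finite unless $X$ is a torus (Proposition~\ref{prop:discrete}), and treating the torus case by hand (where a virtually abelian group of affine maps preserving a probability measure on a totally real surface is in fact virtually cyclic once it contains a loxodromic element, because the centralizer of a hyperbolic matrix in $\GL_2(\Z)$ is virtually cyclic), we conclude $\Gamma$ is virtually cyclic, contrary to assumption. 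Hence $\Gamma$ is non-elementary. Now put $\nu = \tfrac12(\delta_f + \delta_{f^{-1}})$, a symmetric finitely supported measure with $\Gamma_\nu = \langle f\rangle$; but this is cyclic, so instead I would choose $\nu$ supported on a finite symmetric generating set of a non-elementary finitely generated subgroup $\Gamma_0 \leq \Gamma$ (such $\Gamma_0$ exists: take two loxodromic elements with disjoint fixed point sets, as in Lemma~\ref{lem:non-elementary_free_groups}). Then $\mu$ is $\Gamma_0$-invariant, hence $\nu$-stationary, and $(X,\nu)$ is non-elementary with finitely supported $\nu$, so all moment conditions hold.

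Next I would run the argument of Section~\ref{sec:stiffness}. The measure $\mu$ is $\Gamma_0$-invariant and supported on $Y$; it is hyperbolic \emph{for $f$}, but I need hyperbolicity for the random system $(X,\nu)$. For this I would use that $Y$ carries a $\Gamma_0$-invariant area form (Lemma~\ref{lem:volume_Y} when $X$ is abelian, K3 or Enriques; in the remaining cases $\Gamma$ non-elementary forces, via Theorem~\ref{thm:X-is-projective2} and Lemma~\ref{lem:proj_root}, a rational surface, where one argues with Proposition~\ref{pro:sum_exponents} applied to the meromorphic $2$-form, noting $\Aut_\mu(X)$ preserving $\mu$ on a real analytic surface makes the form almost invariant along the relevant orbits) so that $\lambda^+(\mu,\nu) + \lambda^-(\mu,\nu) = 0$ by Lemma~\ref{lem:sum_exponents}. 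If $\lambda^\pm(\mu,\nu)$ were both zero, Theorem~\ref{thm:classification_proj_invariant} and Theorem~\ref{thm:ledrappier_invariance_principle} would produce a $\Gamma_0$-invariant line field or pair of line fields or a reduction to $\mathsf{PU}_2$; the reduction to a compact group is incompatible with $f\in\Gamma_0$ having positive entropy (its differential cocycle cannot be conjugated into a bounded group on a positive measure set), and an invariant (pair of) line field(s) would have to be the stable/unstable field(s) of $f$, which are tangent to genuine invariant foliations only in very special (Kummer or torus) situations and in any case would be preserved by all of $\Gamma_0$, contradicting non-elementariness exactly as in the proof of Theorem~\ref{thm:hyperbolic_kummer}; so $\mu$ is $(X,\nu)$-hyperbolic. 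Now Theorem~\ref{thm:stiffness_real} applies (with $\vol_Y$ the invariant area): since $\mu$ is already $\Gamma_0$-invariant, alternatives (a) and (b) of that theorem say $\mu$ is supported on a $\Gamma_0$-invariant algebraic curve or is the restriction of $\vol_Y$. A $\Gamma_0$-invariant curve would carry $\mu$ with zero fiber entropy (Corollary~\ref{cor:nevanlinna} and the discussion in Step~2 of the proof of Theorem~\ref{thm:stiffness_real}), forcing the entropy of $f$ itself to vanish by the Margulis--Ruelle/Abramov--Rokhlin comparison $h(f,\mu)\le C\, h_\mu(X,\nu)$ available on such a curve — contradiction. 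Hence $\mu$ is absolutely continuous with respect to $\vol_Y$, i.e. case~(a).

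Finally, for the last sentence: if $\lambda^s(f,\mu)+\lambda^u(f,\mu)\neq 0$, then $\mu$ cannot be absolutely continuous with respect to Lebesgue on $Y$, because absolute continuity would give the SRB property and, combined with $f$-invariance of $Y$ and of a Lebesgue-class measure, would force $\int\log|\jac f|\,d\mu = 0$ — the Jacobian of $f$ relative to a Lebesgue-class measure on the invariant surface $Y$ integrates to the sum of exponents, which is the obstruction. More precisely, by Lemma~\ref{lem:sum_exponents} (or Proposition~\ref{pro:sum_exponents}) applied to the cyclic system generated by $f$, $\lambda^s+\lambda^u = \int\log|\jac f|\,d\mu$; if $\mu \ll \mathrm{Leb}_Y$ and $\mu$ is $f$-invariant then $\log|\jac f|$ has zero $\mu$-average, forcing $\lambda^s+\lambda^u = 0$, contrary to hypothesis. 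So case~(a) is excluded and we land in case~(b): $\Aut_\mu(X)$ is virtually cyclic. The main obstacle I anticipate is the verification that $(X,\nu)$-hyperbolicity can be deduced from $f$-hyperbolicity — i.e. ruling out the three degenerate alternatives of Theorem~\ref{thm:classification_proj_invariant} uniformly — and the bookkeeping across the different surface types (torus versus K3/abelian/Enriques versus rational) when invoking the existence of the invariant area or $2$-form; everything else is a recombination of results already proved.
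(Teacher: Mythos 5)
Your high-level strategy — assume $\Aut_\mu(X)$ is not virtually cyclic, argue it is therefore non-elementary, and then use the Ledrappier/Brown--Rodriguez-Hertz machinery to force absolute continuity — is a reasonable reorganization, but it has a genuine gap at exactly the step the paper's proof is designed to handle. When you reach the alternatives (1.a) and (1.b) of Theorem~\ref{thm:classification_proj_invariant} (an invariant line field, or an invariant pair of line fields), you assert these contradict non-elementarity ``exactly as in the proof of Theorem~\ref{thm:hyperbolic_kummer}.'' That argument does not transfer: for a Kummer example, the Oseledets line field $E^s_f$ is tangent to a genuine \emph{holomorphic} foliation, which is what lets one pass from ``$\Gamma$ preserves $E^s_f$'' to ``$\Gamma$ preserves the current $T^+_f$'' and hence fixes a boundary point of $\Hyp_X$. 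For a general loxodromic $f$ on a real analytic surface $Y$, $E^s_f$ is only a measurable line field with no holomorphic foliation in sight, and the deduction breaks. The paper's bridge is Lemma~\ref{lem:equal_stable_manifolds} (itself built on the quantitative tangency Corollary~\ref{cor:bls_quantitative}): tangency of stable directions of $f$ and $h=g^{-1}fg$ on a positive-measure set propagates to \emph{equality of the entire stable curves}, hence equality of their Ahlfors--Nevanlinna currents $T^+_f=T^+_h$, and only then does one see that $g$ must preserve $\{\P[T^+_f],\P[T^-_f]\}$. Your proposal never invokes this lemma (or anything playing its role), and without it the argument does not close.

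A secondary set of issues arises from routing through Theorem~\ref{thm:stiffness_real} instead of arguing directly as the paper does. That theorem requires a $\Gamma_\nu$-invariant probability $\vol_Y$ equivalent to the Riemannian volume on $Y$; you justify it via Lemma~\ref{lem:volume_Y}, but that lemma is for Abelian, K3, Enriques surfaces, and your fallback for the rational case (pushing through Proposition~\ref{pro:sum_exponents} with the meromorphic $2$-form) is not automatic — on Blanc's examples, for instance, the induced form is not integrable, and the moment/no-mass hypotheses of that proposition have to be verified, which you do not do. Even granting the invariant area form, to invoke the final clause of Theorem~\ref{thm:stiffness_real} (``fiber entropy positive $\Rightarrow$ restriction of $\vol_Y$'') you need $h_\mu(X,\nu)>0$ for the \emph{random} system $(X,\nu)$; but you only know $h_\mu(f)>0$, and there is no inequality bounding the fiber entropy of the random system below by the entropy of a single generator. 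The paper sidesteps all of this: it fixes $g\in\Aut_\mu(X)$, shows for $\Gamma=\langle f,g\rangle$ the sharp alternative ``$\Gamma^*$ is virtually cyclic preserving $\{\P[T^+_f],\P[T^-_f]\}$, or $\mu$ is absolutely continuous'' (Cases 1--2 via Lemma~\ref{lem:equal_stable_manifolds} and the current argument, Case 3 via \cite{br} after exhibiting explicitly the measures $\nu_\alpha$ and a choice of $\alpha$ with $\lambda^-_\alpha<0<\lambda^+_\alpha$), and then unions over $g$. Your proof of the final sentence (Jacobian integration forcing $\lambda^s+\lambda^u=0$ in the absolutely continuous case) is sound, and is a correct variant of the paper's appeal to Pesin's formula.
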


This result, and its proof, may be viewed as a counterpart,  in our setting, to Theorems 5.1 and 5.3 of \cite{br}; again the possibility 
of invariant line fields is ruled out  by using the complex structure.  As before the typical case to keep in mind is 
when $X$ is a projective surface defined over $\R$ and $Y=X(\R)$. Observe that  by ergodicity, if $f$ preserves a smooth 
volume $\vol_Y$, then  in case (a) $\mu$ will be the restriction of $\vol_Y$ to an 
$\Aut_\mu(X)$-invariant Borel set of positive volume.  

\begin{proof}[Proof of Theorem \ref{thm:rigidity}]  Since it admits a measure  of  positive entropy, $f$ is a loxodromic transformation. 
By the Ruelle-Margulis inequality  $\mu$ is hyperbolic with respect to $f$ and it does not charge any point, 
nor any piecewise smooth curve: indeed, the entropy of a homeomorphism of the circle or the interval  
is equal to zero. 

We first assume that $X$ is projective; non-projective surfaces will be studied at the end of the proof. 
For $\mu$-almost every $x\in X$, the stable manifold  $W^s(f,x)$ is an entire curve in $X$ which is 
either transcendental or contained in a periodic rational curve (see \cite[Thm. 6.2]{Cantat:Milnor}). 
Since $f$ has only finitely many invariant algebraic curves 
(see \cite[Prop. 4.1]{Cantat:Milnor}) and $\mu$ gives no mass to curves,  $W^s(f,x)$ is $\mu$-almost surely transcendental; then, 
the only Ahlfors-Nevanlinna current associated to $W^s(f,x)$ is $T^+_f$; similarly, the Ahlfors-Nevanlinna currents of the unstable manifolds
give $T^-_f$. (This is the analogue in deterministic 
dynamics of Theorem \ref{thm:nevanlinna}.)
Fix   $g\in \Aut_\mu(X)$ and set $\Gamma :=\langle f,g \rangle$. Our first goal is to prove the following:

\smallskip 

\noindent{\bf{Alternative}}: {\emph{either $\Gamma^*$
is virtually cyclic and preserves $\{ \P [T^+_f], \P[T^-_f]\}\subset \fr \Hyp_X$; or $\mu$ is absolutely continuous with respect to the 
Lebesgue measure on $Y$.}} 

\smallskip 

Let $Y'\subset Y$ be the union of the connected components of $Y$ of positive $\mu$-measure. 
The measure  $\mu$ does not charge any analytic subset of $Y$ of dimension $\leq 1$; thus, 
by analytic continuation, any $h\in \Gamma$ preserves $Y'$. So, without loss of generality we  can replace $Y$ by $Y'$.

We divide the argument into several cases according to the existence or non-existence of certain $\Gamma$-invariant line fields. 
In the first two cases we will conclude  that $\Gamma$ is elementary. 
In the third case,  $\mu$ will be absolutely continuous with respect to the Lebesgue measure on $Y$; then by the Pesin formula 
its Lyapunov exponents satisfy  
$\lambda^+(f, \mu)  = -\lambda^-(f, \mu)= h_\mu(f)$ so   when 
$\lambda^+ (f, \mu)+ \lambda^-(f, \mu)\neq 0$, Case~3  is actually impossible. 
 
 \smallskip

{\bf Case 1.-- } {\emph{There exists a $\Gamma$-invariant measurable line field.}} Specifically, we mean a measurable field of complex lines
$x\mapsto E(x) \in \P(T_xX)$, defined on a set of full $\mu$-measure,  such that $ D_xh(E(x)) =   E(h(x))$ for every 
$h\in  \Gamma$ and almost every $x\in X$; since $\mu$ is supported on the totally real surface $Y$, 
the field of real lines $E(x)\cap T_xY\subset T_xY$ is also invariant, and determines $E(x)$. Now, $\mu$ being ergodic and hyperbolic for $f$,  the Oseledets theorem shows that either $E(x) = E^s_f(x)$ $\mu$-almost everywhere or $E(x) = E^u_f(x)$ $\mu$-almost everywhere. Changing $f$ into $f^{-1}$ if necessary, we may assume that  $E(x) = E^s_f(x)$.


Consider the automorphism $h = g\inv f g \in \Aut_\mu(X)$. 
Since $h$ is conjugate to $f$,  $\mu$ is also ergodic and hyperbolic for $h$. Thus, either $E^s_h(x) = E^s_f(x)$
for $\mu$-almost every $x$ or $E^u_h(x) = E^s_f(x)$ for $\mu$-almost every $x$. 

\begin{lem}\label{lem:equal_stable_manifolds}
If there is a measurable set $A$ of positive measure  along which 
$E^s_h(x) = E^s_f(x)$ (resp. $E^u_h(x) = E^s_f(x)$), then $W^s(f,x) = W^s(h, x)$ for almost every $x$ in $A$ (resp. $W^u(h,x) = W^s(f, x)$).
\end{lem}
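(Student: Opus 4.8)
The plan is to prove Lemma~\ref{lem:equal_stable_manifolds} by a standard Pesin-theoretic argument: on the set $A$, two stable (or one stable and one unstable) manifolds are tangent at $\mu$-a.e. point, and we upgrade this tangency to coincidence by iterating and using the contraction property along stable manifolds. First I would set up the situation precisely. By hypothesis, on $A$ we have $E^s_h(x)=E^s_f(x)$ for $\mu$-a.e.\ $x\in A$. Both $W^s(f,x)$ and $W^s(h,x)$ are injectively immersed holomorphic curves through $x$, tangent to the common line $E^s_f(x)=E^s_h(x)$ at $x$. Consider the local stable manifolds $W^s_{r}(f,x)$ and $W^s_{r}(h,x)$ at some Pesin-regular point $x$ where the regularity functions of both $f$ and $h$ are bounded below (such points have full measure in $A$ after intersecting with appropriate Pesin sets). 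The two local curves are tangent at $x$; I want to show they coincide.

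The key step is the following: let $k\geq 1$ be the order of contact between $W^s_\loc(f,x)$ and $W^s_\loc(h,x)$ at $x$ (set $k=\infty$ if they coincide on a neighborhood). If $k<\infty$, I would derive a contradiction by iterating. Since $\mu$ is $f$-invariant and ergodic, by Poincar\'e recurrence the orbit $(f^n(x))_{n\geq 0}$ returns infinitely often to a Pesin set of uniform size for $f$; along such returns, $f^n$ contracts both $W^s_\loc(f,x)$ and $W^s_\loc(h,x)$ (the latter because $W^s(h,x)$ is by definition contracted by $h$, but here we need it contracted by $f$ — this requires care). The cleaner route: observe that $h$ also preserves $\mu$, so apply the argument with $h$. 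The curve $W^s_\loc(f,x)$ is tangent to $E^s_h(x)$ at $x$; I claim $W^s_\loc(f,x)\subset W^s(h,x)$. Indeed, by Proposition~\ref{pro:pesin} applied to $h$, the set of points $y$ with $\limsup_n \frac{1}{n}\log\dist_X(h^n(y),h^n(x))<0$ is exactly $W^s(h,x)$; since $W^s_\loc(f,x)$ is tangent to $E^s_h(x)$ and $\mu$-generic points on it are themselves $h$-Pesin-regular with stable direction matching (by invariance of the line field and the Oseledets dichotomy on the subset of $W^s_\loc(f,x)$ that carries the relevant conditional measure), one shows the distance along $h$-iterates decays. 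Running the symmetric argument with $f$ and $h$ exchanged gives the reverse inclusion locally, hence $W^s_\loc(f,x)=W^s_\loc(h,x)$, and then by the increasing-union definition \eqref{eq:union} (applied to $f$ and to $h$ separately, both using the same regularity along the orbit) the global manifolds coincide. The unstable case $E^u_h(x)=E^s_f(x)$ is handled identically, replacing $h$ by $h^{-1}$ so that its stable manifolds become the unstable ones.

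The main obstacle I anticipate is the measurability and recurrence bookkeeping needed to make ``$\mu$-generic points on $W^s_\loc(f,x)$ are $h$-Pesin-regular with the matching stable direction'' precise: $\mu$ lives on the totally real surface $Y$, and $W^s_\loc(f,x)$ is a complex curve, so one must work with the conditional measures of $\mu$ along an $f$-stable Pesin partition and check that the tangency of stable directions, which holds $\mu$-a.e., propagates to these conditionals. The rigidity here is that $E(x)\cap T_xY$ determines $E(x)$ (since $Y$ is totally real), so a $\Gamma$-invariant real line field on $Y$ globalizes to a complex line field, and the Oseledets decompositions for $f$ and for $h=g^{-1}fg$ must align along $E(x)=E^s_f(x)$ up to the stable/unstable dichotomy — this is where the hypothesis that $A$ has positive measure is used to force one of the two alternatives on a positive-measure subset, and ergodicity of $f$ then spreads it to full measure. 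Once the coincidence of stable (or stable-unstable) manifolds is established on a positive-measure set, ergodicity of $\mu$ under $f$ promotes it to a full-measure statement, completing the lemma.
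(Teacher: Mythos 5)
There is a genuine gap at the heart of your argument. You assert that because $W^s_\loc(f,x)$ is tangent to $E^s_h$ at $x$ and at $\mu$-generic points on the stable conditional, ``one shows the distance along $h$-iterates decays,'' hence $W^s_\loc(f,x)\subset W^s(h,x)$. This does not follow. Tangency of $E^s_h(y)$ to $W^s_\loc(f,x)$ at a collection of points $y$ (even a dense one along the real trace $W^s_\loc(f,x)\cap Y$) only says that the curves $W^s_\loc(h,y)$ are tangent to $W^s_\loc(f,x)$ \emph{at those base points}; it does not make $W^s_\loc(f,x)$ an $h$-contracted curve, and gives no control on $\dist(h^n(y),h^n(x))$. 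Two distinct analytic disks through $x$ can perfectly well be tangent to each other (even to high order) at $x$ while each is the exact stable manifold of a different dynamical system, so a direct inclusion argument cannot close without an additional rigidity input. Your ``main obstacle'' paragraph worries about measurability and recurrence bookkeeping, but that is not where the difficulty lies.

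The paper's proof argues by contradiction and turns precisely on two ingredients you do not invoke. First, positive entropy $h_\mu(f)>0$ gives that the conditional measures of $\mu$ along an $f$-stable Pesin partition $\eta^s$ are non-atomic (Corollary~\ref{cor:rokhlin_zero_entropy}); you mention the conditionals in passing but never use this. Second, and decisively, the local complex-geometric perturbation lemma (Lemma~\ref{lem:BLS_intersection}/Corollary~\ref{cor:bls_quantitative}): if two disks $\Delta_1,\Delta_2$ through a point are tangent to finite order $k$ with osculation bounded below, then any disk $\Delta_3$ that is disjoint from $\Delta_1$ but $C^1$-close to it must cut $\Delta_2$ in exactly $k$ \emph{transverse} points. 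The paper picks a Pesin set $\mathcal G_\e$ where $W^s_\loc(f,\cdot)$ and $W^s_\loc(h,\cdot)$ have controlled size and order of contact, uses non-atomicity of the conditional to find $x_j\to x$ with $x_j\in\eta^s(x)\cap\mathcal G_\e$, notes that the $W^s_r(h,x_j)$ are disjoint from $W^s_r(h,x)$ (being distinct $h$-stable leaves) and converge to it, yet each $W^s_r(h,x_j)$ is tangent to $W^s_r(f,x)=W^s_r(f,x_j)$ at $x_j$; Corollary~\ref{cor:bls_quantitative} then forces a transverse intersection, contradiction. This quantitative ``disjoint perturbations become transverse'' step is exactly the missing piece in your write-up; without it the tangency-of-line-fields information cannot be upgraded to coincidence of the curves.
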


Let us postpone the proof of this lemma and conclude the argument. Suppose first  that 
$E^s_h(x) = E^s_f(x)$ on a subset $A$ with $\mu(A)>0$. Then $T^+_f = T^+_h$ because for $\mu$-almost every $x$, the unique 
Ahlfors-Nevanlinna current associated to the (complex) stable manifold $W^s(f,x)$  (resp. $W^s(h, x)$)  is $T^+_f$ (resp. $T_h^+$). Since $T^+_{h} = \M(g^*T^+_f)\inv g^* T^+_f$,
we see that $g$, and therefore $\Gamma$ itself, preserve the line   $\R [T^+_f]\subset H^{1,1}(X)$.
Since $[T^+_f]^2=0$,  $\Gamma$ fixes a point  $\P[T^+_f]$ of the boundary $\fr\Hyp_X$, so it is  
elementary. Since in addition $\Gamma$ contains a loxodromic element,  Theorem~3.2 of \cite{Cantat:Milnor}  shows that 
$\Gamma^*$ is virtually cyclic. 

Now, suppose that $E^u_h(x) = E^s_f(x)$ on $A$. Then, $T^-_h=T^+_f$ and the group 
generated by $f$ and $h$ is elementary. Since it contains a loxodromic element  
  \cite[Thm 3.2]{Cantat:Milnor} says 
that $\langle f^*, h^*\rangle$ is virtually cyclic and fixes also $\P[T^-_f] \in\fr \Hyp_X$.  This implies that $g$, hence
$\Gamma$, preserves   the pair of boundary points $\{ \P [T^+_f], \P[T^-_f]\}\subset \fr \Hyp_X$.
Thus, in both cases $\Gamma^*$ is virtually cyclic and preserves $\{ \P [T^+_f], \P[T^-_f]\}\subset \fr \Hyp_X$.

\begin{proof}[Proof of Lemma \ref{lem:equal_stable_manifolds}] The argument is similar  to that of Theorem \ref{thm:alternative_stable}, 
in a simplified setting, so we only sketch it. 
For $\mu$-almost every $x$, $W^s(f,x)$ and  $W^s(h, x)$ are tangent at $x$. 
Assume by contradiction that there exists a measurable subset 
$A'$ of $A$ of positive measure such that $W^s(f,x) \neq W^s(h, x)$ for 
every $x\in A'$. 
Then for   small  $\e>0$ there exists two positive constants  $r=r(\e)$ and  $c=c(\e)$, an integer $k\geq 2$, 
 and  a measurable subset $\mathcal G_\e\subset A'$  such that $\mu(\mathcal G_\e)>0$ and 
 \begin{itemize} 
\item[-]  $W_\loc^s(f,x)$ and  $W_\loc^s(h, x)$ are well defined and of size $r$ for every $x\in \mathcal G_\e$,
\item[-]  $W_\loc^s(f,x)$ and  $W_\loc^s(h, x)$ depend continuously on $x$ on $\mathcal G_\e\subset X$,
\item[-]  $\inter_x(W^s_\loc(f,x), W^u_\loc(f,x)) = k $ for every $x\in \mathcal G_\e$,
\item[-]  and $\osc_{(k, x, r)} (W^s_{r}(f,x), W^s_{r}(h,x))\geq c$ for every $x\in \mathcal G_\e$.
\end{itemize}
Indeed, to get the first and second properties, one intersects $A'$ with a large Pesin set $\mathcal R_\e$. 
On $A'\cap \mathcal R_\e$ the  multiplicity of intersection $x\mapsto \inter_x(W^s_\loc(f,x), W^u_\loc(f,x))$
is semi-continuous, so we can find   $k\geq 2$ and a subset $\mathcal R'_\e \subset (A'\cap \mathcal R_\e)$ of positive measure 
such that 
\begin{vcourte}
\begin{equation}
\inter_x(W^s_\loc(f,x), W^u_\loc(f,x)) = k
\end{equation}
\end{vcourte}
\begin{vlongue}
\begin{equation}
\inter_x(W^s_\loc(f,x), W^u_\loc(f,x)) = k
\end{equation}
\end{vlongue}
for every $x\in \mathcal R'_\e$. Thus, the $k$-th osculation number
is well defined, and the last property holds on a subset $\mathcal G_\e\subset \mathcal R'_\e$ of positive measure if $c$ is small.

Let 
$\eta^s$ be a Pesin partition subordinate to the local stable manifolds of $f$.  
Since $h_\mu(f)>0$ the    conditional measures   $\mu(\cdot\vert \eta^s)$   are non-atomic. 
Thus there exists $x\in {\mathcal G}_\e$ such that  
$x$ is an accumulation point of $\supp\big(\mu(\cdot \vert  \eta^s(x))\rest{{\mathcal G}_\e\cap  \eta^s(x)}\big)$. Fix a neighborhood $N$ of $x$ such that 
$W^s_r(f,x)\cap W^s_r(h,x)\cap N = \set{x}$, and then pick a sequence 
$(x_j)$ of points in ${\mathcal G}_\e\cap  \eta^s(x)\cap N$ converging to $x$. 
The local stable manifolds $W^s_r(h,x_j)$ form a sequence of disks of size $r$ at $x_j$, each of them tangent to 
$W^s_r(f,x)$ (at $x_j$), and all of them disjoint from $W^s_r(h,x)$ (because  $x_j$ does not belong to $W^s_r(h,x)$).
This contradicts Corollary \ref{cor:bls_quantitative}, and the proof is complete. 
 \end{proof}
 

  \medskip 
  
{\bf Case 2.-- } {\emph{There is a pair of distinct measurable line fields $\set{E_1(x), E_2(x)}$   invariant under $\Gamma$}}. Again 
by the Oseledets theorem applied to $f$, necessarily $\{E_1(x), E_2(x)\} =  \{E^s_f(x), E^u_f(x)\}$. 
For $\mu$-almost every $x$, $g(  \{E^s_f(x), E^u_f(x)\})  = \{E^s_f(g(x)), E^u_f(g(x))\}$. 
As before,  consider   $h = g\inv f g \in \Aut_\mu(X)$.  Since $h$ is conjugate to $f$, it is hyperbolic and ergodic with 
respect to $\mu$, and   $\{E^s_f(x), E^u_f(x)\} = \{E^s_h(x), E^u_h(x)\}$ for almost every $x$. 
Replacing $h$ by $h\inv$ if necessary, there exists a set $A$ of positive measure for which $E^s_h(x) = E^s_f(x)$, 
and we conclude as in Case 1. 
   
\medskip 

{\bf Case 3.-- } {\emph{There is no $\Gamma$-invariant line field or pair of line fields.}}
In other words, Cases~1 or~2 are now excluded. This part of the argument is identical to the proof of \cite[Thm 5.1.a]{br}.

First, we claim that there exists $g_1, g_2 \in \Gamma$ and a subset $A$ of positive measure such that 
$D_xg_1(E^s_f(x))  \notin \{E^s_f(g_1(x)), E^u_f(g_1(x))\}$ and $D_xg_2(E^u_f(x))\notin  \{E^s_f(g_2(x)), E^u_f(g_2(x)\}$ for every $x$ in $A$. 
Indeed since we are not in Case~2 (possibly switching $E^u_f$ and $E^s_f$) there exists $g_1 \in \Gamma $ 
and a set $A$ of positive measure  such that for $x\in A$, $D_xg_1(E^s_f(x))  \nsubset E^s_f(g_1(x))\cup E^u_f(g_1(x))$. 
Since we are not in Case~1, there exists  $g\in \Gamma $ and a set $B$ of positive measure such that for $x\in B$, 
$D_xg(E^u_f(x))\neq E^u_f(g(x))$. If $D_xg(E^s_f(x))\in  \{E^s_f(g(x)), E^u_f(g(x))\}$ on a subset $B'$ of $B$ of positive measure, 
then choose $k>0$ and $\ell>0$ such that $\mu(f^\ell(A)\cap B')>0$ and $\mu(f^k(g(f^\ell(A)))\cap A)>0$  and define $g_2 = g_1f^k gf^\ell$; otherwise,
set $g_2= gf^\ell$ with $\ell$ such that $\mu(f^\ell(A)\cap B)>0$. Then change $A$ into $A=A\cap f^{-\ell}(B')$ (resp. $A\cap f^{-\ell}(B)$).

Denote by $\Delta$ the simplex $\set{(a, b,c, d)\in (\R_+^*)^4\; ; \;   a+b+c+d =1}$. 
For $\alpha=(a, b,c, d)$ in $\Delta$, let $\nu_\alpha$ be the probability measure $\nu_\alpha = a \delta_f + b\delta_{f\inv} +  c \delta_g + d \delta_{g\inv}$.
Then $\mu$ is   $\nu_\alpha$-stationary and since $\mu$ is $f$-ergodic and $\nu_\alpha(\set{f})>0$, it is also ergodic as a 
$\nu_\alpha$-stationary measure (see~\cite[\S 2.1.3]{benoist-quint_book}). 
\begin{vcourte}
Since we are not in Cases 1 or 2 and $\mu$ is hyperbolic for $f$, the invariance principle of 
Ledrappier~\cite{ledrappier_stationary} implies that the Lyapunov exponents of $\mu$, viewed 
as a $\nu_\alpha$-stationary measure, satisfy 
$\lambda_\alpha^-(\mu)< \lambda_\alpha^+(\mu)$. (see Section 13.2.2 of~\cite{br} and also~\cite{barrientos-malicet}; more precise statements and proofs can be found in~\cite[Thms. 11.10 and 11.11]{cantat-dujardin:vlongue}, and also~\cite{hyperbolic})
\end{vcourte}
\begin{vlongue}
Since we are not in Cases 1 or 2 and $\mu$ is hyperbolic for $f$, Theorems \ref{thm:ledrappier_invariance_principle} and   \ref{thm:classification_proj_invariant}
imply  that   the Lyapunov exponents of $\mu$, viewed as a $\nu_\alpha$-stationary measure, satisfy 
$\lambda_\alpha^-(\mu)< \lambda_\alpha^+(\mu)$. 
\end{vlongue}

\begin{lem}
There exists a choice of $\alpha\in \Delta$ such that $\mu$ is a hyperbolic $\nu_\alpha$-stationary measure, 
i.e. $\lambda_\alpha^-(\mu)< 0 < \lambda_\alpha^+(\mu)$
\end{lem}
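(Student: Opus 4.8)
Here is how I would prove this lemma.

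The plan is to vary the weights $\alpha = (a,b,c,d)$ and exploit the fact that the sum of the two Lyapunov exponents is a linear function of $\alpha$, while we already know the exponents are distinct for every $\alpha$. First I would invoke Lemma~\ref{lem:sum_exponents}: for the $\nu_\alpha$-stationary measure $\mu$ one has
\[
\lambda_\alpha^-(\mu) + \lambda_\alpha^+(\mu) = \int \log\abs{\jac h(x)}\, d\mu(x)\, d\nu_\alpha(h).
\]
Writing $J_f = \int\log\abs{\jac f}\, d\mu$ and $J_g = \int\log\abs{\jac g}\, d\mu$, and using $\jac(f\inv) = 1/(\jac f\circ f\inv)$ together with the $f$-invariance of $\mu$ (so $\int\log\abs{\jac f\inv}\,d\mu = -J_f$), this becomes
\[
\lambda_\alpha^-(\mu) + \lambda_\alpha^+(\mu) = (a-b)J_f + (c-d)J_g,
\]
an affine function of $\alpha$ on the simplex $\Delta$. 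Note that at $\alpha_0 = (1/2,1/2,0,0)$ this sum equals $0$, so the affine function vanishes somewhere on $\Delta$; more to the point, by choosing $a-b$ and $c-d$ appropriately we can make this quantity take any value in an open interval around $0$, in particular we can force it to be exactly $0$. Fix such an $\alpha$; I would also keep $a>0$ (which is needed for ergodicity of $\mu$ as a $\nu_\alpha$-stationary measure, and for $\mu$ being hyperbolic-for-$f$ to be usable).

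With $\alpha$ chosen so that $\lambda_\alpha^-(\mu) + \lambda_\alpha^+(\mu) = 0$, the proof concludes quickly. We established just before the lemma — using that we are in Case~3, i.e.\ there is no $\Gamma$-invariant line field nor invariant pair of line fields, together with the hyperbolicity of $\mu$ for $f$ — that $\lambda_\alpha^-(\mu) < \lambda_\alpha^+(\mu)$: indeed if $\lambda_\alpha^-(\mu) = \lambda_\alpha^+(\mu)$, then Ledrappier's invariance principle (Theorem~\ref{thm:ledrappier_invariance_principle}) would apply to any stationary lift $\hat\mu$ on $\P TX$, and Theorem~\ref{thm:classification_proj_invariant} would then place us in one of the alternatives (1.a), (1.b), or (2), each of which yields either a $\Gamma_\nu$-invariant line field, a $\Gamma_\nu$-invariant pair of line fields, or a reduction to a compact group — and the compact case is incompatible with the hyperbolicity of $f$ (the projectivized derivative $\llbracket D_xf^n\rrbracket$ would stay bounded along a set of full measure, contradicting $\lambda^-(f,\mu) < \lambda^+(f,\mu)$). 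So $\lambda_\alpha^-(\mu) < \lambda_\alpha^+(\mu)$, and combined with $\lambda_\alpha^-(\mu) + \lambda_\alpha^+(\mu) = 0$ this forces $\lambda_\alpha^-(\mu) < 0 < \lambda_\alpha^+(\mu)$, as desired.

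The main point requiring a little care — the only genuine obstacle — is making sure the chosen $\alpha$ can simultaneously satisfy the positivity constraints (all four coordinates strictly positive, so $\nu_\alpha$ is supported on $\{f,f\inv,g,g\inv\}$ generating $\Gamma$, and $a>0$ for the ergodicity/hyperbolicity inputs) and the equation $(a-b)J_f + (c-d)J_g = 0$. This is elementary: the hyperplane $\{(a-b)J_f + (c-d)J_g = 0\}$ passes through the interior point $(1/4,1/4,1/4,1/4)$ of $\Delta$ whenever it is nontrivial, and if both $J_f = 0$ and $J_g = 0$ the equation holds identically on $\Delta$ so any interior $\alpha$ works; hence the intersection of this hyperplane with the open simplex is always nonempty. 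I would spell this out in one or two lines. The rest is a direct assembly of Lemma~\ref{lem:sum_exponents}, Theorem~\ref{thm:ledrappier_invariance_principle}, and Theorem~\ref{thm:classification_proj_invariant}, exactly as in the corresponding step of \cite{br}.
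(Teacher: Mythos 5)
Your proof is correct, but it takes a genuinely different route than the paper. The paper reproduces the argument of \cite[\S 13.2.4]{br}: it invokes continuity of $\alpha\mapsto(\lambda^-_\alpha,\lambda^+_\alpha)$ on $\Delta$ (relying on \cite[Prop.~13.7]{br}, which itself needs the Case~3 assumptions plus the strict gap $\lambda^-_\alpha<\lambda^+_\alpha$), observes that the involution $(a,b,c,d)\mapsto(b,a,d,c)$ sends $\lambda^\pm_\alpha$ to $-\lambda^\mp_\alpha$, deduces that $P=\{\lambda^+_\alpha>0\}$ and $N=\{\lambda^-_\alpha<0\}$ are both nonempty and open and cover $\Delta$, and concludes by connectedness. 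Your argument instead uses Lemma~\ref{lem:sum_exponents} to write $\lambda^-_\alpha+\lambda^+_\alpha=(a-b)J_f+(c-d)J_g$ (which is valid here precisely because $\mu$ is invariant under $f$ and $g$ individually, not merely $\nu_\alpha$-stationary, so $J_{f^{-1}}=-J_f$ and $J_{g^{-1}}=-J_g$), then picks $\alpha$ in the open simplex annihilating this affine functional — e.g.\ $(1/4,1/4,1/4,1/4)$ always works. This is more elementary: it avoids continuity of the exponents entirely and makes the $\alpha$ explicit. The tradeoff is that it exploits the invariance of $\mu$ under the individual generators, whereas the connectedness argument of \cite{br} is formulated to apply even when one only knows stationarity; in the present context both ingredients are available, so your route is a clean shortcut. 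The paper even acknowledges this implicitly when it notes the lemma is automatic in the volume-preserving case ($J_f=J_g=0$) — your argument is the natural extension of that remark.
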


\begin{proof} This is automatic when $f$ and $g$ are volume preserving because $\lambda_\alpha^-(\mu)=- \lambda_\alpha^+(\mu)$
in that case. For completeness, let us copy the proof given in \cite[\S 13.2.4]{br}.
The assumptions of Case~3 and the strict inequality $\lambda^-(\mu)< \lambda^+(\mu)$ imply that 
\begin{equation}
\alpha \in \Delta \mapsto ( \lambda^-_\alpha(\mu), \lambda^+_\alpha(\mu))\in \R^2
\end{equation} 
is  continuous (see \cite[Prop. 13.7]{br} or \cite[Chap. 9]{viana}). 
Since $\lambda^-_\alpha(\mu)< \lambda^+_\alpha (\mu)$ for every $\alpha\in \Delta$, 
one of $\lambda^-_\alpha$ and $\lambda^+_\alpha$ is non zero. Furthermore,  $\mu$ being invariant, the involution 
$(a,b,c,d) \mapsto (b,a,d,c)$ interchanges the Lyapunov exponents. It follows that 
$P = \set{\alpha\in \Delta, \lambda^+_\alpha>0}$ and $N = \set{\alpha\in \Delta, \lambda^-_\alpha<0}$ are non-empty 
open subsets of $\Delta$ such that $P\cup N = \Delta$. The connectedness of $\Delta$ implies $P\cap N\neq \emptyset$, as was to be shown. \end{proof} 

Fix $\alpha\in \Delta$ such that $\mu$ is hyperbolic as a $\nu_\alpha$-stationary measure. The assumptions of Case~3 imply that  the 
stable directions depend  on the itinerary so the main result of \cite{br}
shows that $\mu$ is fiberwise SRB (on the surface $Y$), that is, the unstable conditionals of the measures $\mu_\cx$ (here $\mu_\cx = \mu$)
are given by the Lebesgue measure (in some natural affine parametrizations of the unstable manifolds by the real line $\R$). 
Since $\mu$ is invariant, we can revert the stable and unstable directions by applying the argument to $F\inv$, 
and we conclude that the stable conditionals are given by the Lebesgue measure as well. The absolute continuity property of the stable and unstable laminations then implies that $\mu$ is absolutely continuous with respect to the Lebesgue measure on $Y$. 

\smallskip

\noindent{\bf{Conclusion.--}} Assume  that $\mu$ is not absolutely continuous with respect to the Lebesgue 
measure on $Y$. The above alternative holds for all subgroups $\Gamma= \langle f, g \rangle$, 
with $g\in \Aut_\mu(X)$ arbitrary. Therefore, 
if $X$ is projective, then 
  $\Aut_\mu(X)^*$ preserves $\{ \P [T^+_f], \P[T^-_f]\}\subset \fr \Hyp_X$,
which implies that $\Aut_\mu(X)^*$ is virtually cyclic.  By  
Lemma~\ref{lem:virtually_cyclic_non_projective}, $\Aut_\mu(X)^*$ is
also virtually cyclic  when $X$ is not projective. It remains to prove that $\Aut_\mu(X)$ itself  is virtually cyclic.
If not, then $\Aut(X)^\circ$ is infinite, $X$ is a torus $\C^2/\Lambda$
(see Proposition~\ref{prop:discrete}), and $\Aut_\mu(X)\cap \Aut(X)^\circ$ is a normal subgroup of $\Aut_\mu(X)$ 
containing infinitely many  translations. 
This group  is a closed subgroup of the compact Lie group $\Aut(X)^\circ=\C^2/\Lambda$; 
thus, its  connected component  of the identity 
is a (real) torus $H\subset \C^2/\Lambda$ of positive dimension. This torus $H$ is invariant under the action of $f$ by conjugacy. Since
$X=\C^2/\Lambda$, $f$ is a complex linear Anosov diffeomorphism of $X$, and it follows  that $\dim_\R(H)\geq 2$. Being 
$H$-invariant, $\mu$ is then absolutely continuous with respect to the Lebesgue measure of $Y$; 
this contradiction completes the proof. 
\end{proof} 

\begin{vlongue}
\begin{rem}
Theorem \ref{thm:rigidity} can be extended to the case of singular analytic subsets  $Y$, after minor adjustments of the proof, because $\mu$ cannot charge its singular locus.
\end{rem}
\end{vlongue}
It is natural to expect that the positive entropy assumption in Theorem~\ref{thm:rigidity} could be replaced by 
a much weaker assumption, namely,  ``{\emph{$\mu$ gives no mass to  proper Zariski closed subsets}}''. 
\begin{vlongue}
In  full generality    this seems to exceed  the scope of techniques of this paper, however we are able to deal with a special case. 
\end{vlongue}
\begin{vcourte}
We are able to deal with the following special case, which shows that the stiffness Theorem \ref{thm:stiffness_real}   takes 
 a particularly strong form when $\supp(\nu)$ contains a Kummer example
\end{vcourte}

\begin{thm}\label{thm:rigidity_kummer}
Let  $f$ be a Kummer example  
on a compact K\"ahler surface $X$. Let $\mu$ be an atomless,  $f$-invariant, and ergodic 
probability measure that 
is supported on a totally real, real 
analytic   surface $Y\subset X$. 
If $g\in \Aut(X)$ preserves $\mu$, then:
\begin{enumerate}[\em (a)]
\item either $\mu$ is absolutely continuous with respect to $\vol_Y$; 
\item or $\langle f, g \rangle$ is virtually isomorphic to $\Z$. 
\end{enumerate}
\end{thm}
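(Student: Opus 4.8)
<br>

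The statement in question is Theorem~\ref{thm:rigidity_kummer}, a measure-rigidity result for Kummer examples that replaces the positive-entropy hypothesis of Theorem~\ref{thm:rigidity} by the much weaker ``$\mu$ atomless and not supported on a proper Zariski closed set'' (the latter following from ergodicity since $\mu$ is atomless and supported on $Y$, unless $\mu$ lives on an invariant curve, a case I will dispose of first). My plan is to run the architecture of the proof of Theorem~\ref{thm:rigidity}, but to circumvent the entropy input by exploiting the special geometry of a Kummer example: $f$ is semiconjugate to a linear hyperbolic automorphism $f_A$ of a torus $A$, and this linear structure forces the Lyapunov exponents of $\mu$ with respect to $f$ to be nonzero.

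First I would set up the dichotomy. If $\mu$ is supported on a $\langle f,g\rangle$-periodic curve, then Proposition~\ref{pro:diller-jackson-sommese} and the structure of invariant curves show $\mu$ must be atomic or supported on a rational/elliptic component, and a short argument (as in Proposition~\ref{pro:stiffness_curves}, using that the dynamics along such a curve is that of a Möbius or affine map) shows either $\langle f,g\rangle^*$ is virtually cyclic or we get a contradiction with $\mu$ atomless; this lands us in case~(b). So assume $\mu$ gives no mass to curves. Then I would establish \textbf{hyperbolicity of $\mu$ for $f$}. This is where the Kummer structure enters: on the torus $A$, $f_A$ is linear Anosov with a constant hyperbolic splitting $\C^2 = E^u_A\oplus E^s_A$ pulled back to a holomorphic foliation on $X$ away from the exceptional curves. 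If $\mu$ had a zero Lyapunov exponent for $f$, then since $X$ is a Kummer K3 (or $\mu$ is pulled back from $A$), Corollary~\ref{cor:sum_exponentsK3} gives $\lambda^+(\mu)+\lambda^-(\mu)=0$, hence both vanish; but the deterministic invariance principle (Theorem~\ref{thm:ledrappier_invariance_principle} applied with $\nu=\delta_f$, or directly the classical Ledrappier argument) combined with Theorem~\ref{thm:classification_proj_invariant} would force $f$ to be, up to a measurable change of frame, either projectively unitary along $\mu$ or to preserve a measurable line field; the former is impossible because $f$ expands $E^u_A$ and contracts $E^s_A$ with a definite rate, and the latter forces the line field to coincide $\mu$-a.e.\ with $E^u_A$ or $E^s_A$ — which is \emph{not} $\mu$-measurable-only but genuinely holomorphic, so in particular $\mu$-a.e.\ tangent to $\mathcal F^{u}$ resp.\ $\mathcal F^s$. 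I will argue this still yields a contradiction: either way, the hypothesis ``$\mu$ atomless, not carried by a curve'' rules it out, since a measure invariant under the unipotent-free linear map $f_A$ and tangent to one of its eigendirections, with no atoms and no mass on curves, cannot exist on the totally real surface $Y$ (one pushes $\mu$ down to $A$ and uses that $f_A$ contracts transversally to $E^u_A$, forcing concentration). Therefore $\mu$ is hyperbolic for $f$, with $\lambda^+(f,\mu)>0>\lambda^-(f,\mu)$.

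Once hyperbolicity is in hand, I would invoke the Margulis--Ruelle inequality in reverse: actually I prefer to note that a hyperbolic ergodic measure for a loxodromic automorphism either has positive entropy or is ``degenerate'' — but here I can do better, because for a Kummer example the stable/unstable manifolds of $\mu$-a.e.\ point are the (transcendental or linear) leaves of $\mathcal F^{s}/\mathcal F^u$, whose Ahlfors--Nevanlinna currents are the eigencurrents $T^{\pm}_f$ (this is the deterministic analogue of Theorem~\ref{thm:nevanlinna}, available here explicitly from the linear model). Now the proof of Theorem~\ref{thm:rigidity} applies \emph{verbatim}: setting $\Gamma=\langle f,g\rangle$ for arbitrary $g\in\Aut_\mu(X)$, I split into Case~1 (a $\Gamma$-invariant measurable line field), Case~2 (an invariant pair of line fields), and Case~3 (neither). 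In Cases~1 and~2, the Oseledets theorem for $f$ forces the line field(s) to be among $\{E^s_f,E^u_f\}$, and Lemma~\ref{lem:equal_stable_manifolds} together with the fact that the only Ahlfors--Nevanlinna currents of the relevant stable/unstable manifolds are $T^{\pm}_f$ shows $g$ preserves $\{\P[T^+_f],\P[T^-_f]\}\subset\partial\Hyp_X$, whence $\Gamma^*$ is virtually cyclic by \cite[Thm 3.2]{Cantat:Milnor}; promoting $\Gamma$ itself to virtually $\Z$ uses Proposition~\ref{prop:discrete} and the torus argument at the end of the proof of Theorem~\ref{thm:rigidity} (if $\Aut_\mu(X)$ were not virtually cyclic, $X$ is a torus, $\Aut_\mu(X)\cap\Aut(X)^\circ$ contains a positive-dimensional real subtorus $H$, and $H$-invariance plus the Anosov property of $f$ forces $\dim_\R H\ge 2$, giving absolute continuity, i.e.\ case~(a)). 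In Case~3, the stable directions of $\mu$ for a suitable convex combination $\nu_\alpha=a\delta_f+b\delta_{f^{-1}}+c\delta_g+d\delta_{g^{-1}}$ depend non-trivially on the itinerary — here I need the connectedness-of-the-simplex argument (the lemma in the proof of Theorem~\ref{thm:rigidity}, unchanged) to locate $\alpha$ with $\mu$ hyperbolic for $\nu_\alpha$ — and then \cite{br} gives that $\mu$ is fiberwise SRB in both time directions, hence absolutely continuous with respect to $\vol_Y$, which is case~(a).

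The main obstacle, and the only point genuinely new relative to Theorem~\ref{thm:rigidity}, is the hyperbolicity step: proving that an atomless, curve-free, $f$-invariant measure on $Y$ cannot have a zero Lyapunov exponent. The clean way is to pass to the torus: $Y$ maps (away from the exceptional curves, which $\mu$ ignores) to a totally real surface $\bar Y\subset A$ invariant under the linear Anosov $f_A$, and $\mu$ descends to an $f_A$-invariant $\bar\mu$ that is atomless and not carried by a (translate of a) subtorus or curve. For such a $\bar\mu$, a zero exponent would put us, via the invariance principle and Theorem~\ref{thm:classification_proj_invariant}, into the case of a measurable $f_A$-invariant line field, necessarily $E^u_A$ or $E^s_A$ a.e.; but then disintegrating $\bar\mu$ along the transverse linear foliation and using that $f_A$ strictly contracts that transverse direction forces the transverse conditionals to be Dirac masses (a standard Furstenberg-type argument), so $\bar\mu$ is carried by a single leaf of $\mathcal F^{u/s}$ — contradicting that $\bar\mu$ is not carried by a curve and lives on the totally real $\bar Y$ (a complex leaf meets $\bar Y$ in a real curve). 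The projectively-unitary alternative is excluded outright by the definite expansion/contraction rates of $f_A$. I expect the bookkeeping here — in particular making sure the semiconjugacy $X\dashrightarrow A$ is harmless on the full-measure set and that $\bar\mu$ inherits ``no mass on curves'' — to be the most delicate part, but it is routine given the rigidity of linear torus automorphisms.
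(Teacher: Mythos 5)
Your overall architecture is the right one — reduce to the linear Anosov model on the torus, deduce hyperbolicity, then run the three-case analysis of Theorem~\ref{thm:rigidity} — and this is indeed what the paper does. But there is a concrete gap in the Case~1/2 step, and the hyperbolicity step is far more complicated than it needs to be.

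On hyperbolicity: once you have lifted $(f,\mu)$ to $(\tilde f,\tilde\mu)$ with $\tilde f$ a \emph{linear} Anosov automorphism of a torus, the Lyapunov exponents of $\tilde\mu$ are the constants $\log|\lambda^\pm|$ where $\lambda^\pm$ are the eigenvalues of the linear map, and these are nonzero for \emph{every} invariant measure. There is no zero-exponent scenario to rule out, so the entire detour through the invariance principle (Theorem~\ref{thm:ledrappier_invariance_principle}) and the classification of projectively invariant measures (Theorem~\ref{thm:classification_proj_invariant}) is unnecessary. This is not an error, but it buries the one-line observation under a case analysis whose premise is vacuous, and in the process you slip into some genuinely confused reasoning (``a measure invariant under the unipotent-free linear map $f_A$ and tangent to one of its eigendirections... cannot exist'' — that is not a contradiction, it is just the statement that the Oseledets directions are the eigendirections).

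The real gap is in Cases~1 and~2. You invoke Lemma~\ref{lem:equal_stable_manifolds} to pass from ``$E^s_f=E^s_h$ on a positive-measure set'' to ``$W^s(f,x)=W^s(h,x)$''. But the proof of that lemma hinges on $h_\mu(f)>0$: it needs the stable conditional measures $\mu(\cdot\mid\eta^s)$ to be non-atomic (via Corollary~\ref{cor:rokhlin_zero_entropy}), in order to produce a sequence of nearby points in a Pesin set that contradicts Corollary~\ref{cor:bls_quantitative}. Positive entropy is exactly the hypothesis that Theorem~\ref{thm:rigidity_kummer} is meant to dispense with, and you nowhere establish it — a non-atomic hyperbolic invariant measure for a single diffeomorphism can perfectly well have zero entropy. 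The paper avoids this by exploiting the one feature of Kummer examples you actually mention but do not use: the stable foliations $\mathcal F^s_f$ and $\mathcal F^s_h$ are \emph{holomorphic} (singular) foliations. If they are tangent on a set of positive $\mu$-measure and $\mu$ charges no proper subvariety, then the tangency locus, being analytic, must be all of $X$, so $\mathcal F^s_f=\mathcal F^s_h$; one then concludes as in Theorem~\ref{thm:hyperbolic_kummer} that $g$ preserves $\R_+[T^+_f]$, hence $\Gamma^*$ is elementary — with no entropy input whatsoever. You have all the ingredients for this argument in hand (you note that the stable/unstable manifolds are leaves of holomorphic foliations with Ahlfors--Nevanlinna current $T^\pm_f$), but you thread them through the entropy-dependent Lemma~\ref{lem:equal_stable_manifolds} instead of through the rigidity of holomorphic tangencies.
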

 
\begin{vlongue}
Thus, as in the case of subgroups containing parabolic transformations, the stiffness Theorem \ref{thm:stiffness_real}   takes 
a particularly strong form when $\supp(\nu)$ contains a Kummer example.
\end{vlongue}
 
\begin{proof}
Let us start with a preliminary remark. Assume that $\mu(C)>0$ for some irreducible curve $C\subset X$; since $\mu$ does not 
charge any point the support of $\mu_{\vert C}$ is Zariski dense in $C$, and $C$ is an $f$-periodic curve. But $f$ being a Kummer 
example, such a curve is a rational curve $C\simeq \P^1(\C)$ (obtained by blowing-up a periodic point of a linear Anosov map on a torus), 
on which $f$ has a north-south dynamics; thus, all $f$-invariant 
probability measures on $C$ are atomic, and we get a contradiction. This means that the assumption ``{\emph{$\mu$ has no atom}}'' 
is equivalent to the assumption ``{\emph{$\mu$  gives no mass to  proper Zariski closed subsets of $X$}}''.
Now, we follow step by step the proof of Theorem \ref{thm:rigidity}, only insisting on the required modifications. 
Since $\mu$ does not charge any curve, we can contract all $f$-periodic curves, and lift $(f,\mu)$ to $({\tilde{f}}, {\tilde{\mu}})$, where ${\tilde{f}}$ is a 
linear Anosov diffeomorphism  of some compact torus $\C^2/\Lambda$ and ${\tilde{\mu}}$ is an ${\tilde{f}}$-invariant probability measure
 (see~\cite{Cantat-Zeghib} for details on Kummer examples). We deduce that $\tilde{\mu}$ is hyperbolic 
for ${\tilde{f}}$ and then, coming back to $X$, that $\mu$ is hyperbolic for $f$. Case~3 of the proof of Theorem \ref{thm:rigidity} only requires hyperbolicity of $\mu$ so it carries over without modification. In Cases~1 and 2 we have to show that if 
 $\Gamma = \langle f,g \rangle$ preserves a measurable line field or a pair of measurable line fields then $\Gamma^*$ is elementary. 
 In either case we consider $ h = g f g\inv$ and up to  possibly replacing $E^u_f$ by $E^s_f$ and $h$ by $h\inv$,  we have  
 $E^s_f(x) = E^s_h(x)$  on a set of positive  measure. But now $f$ and $h$ are Kummer examples so their respective stable  
 foliations $\mathcal F^{s} _f$ and $\mathcal F^{s} _h $ are (singular) holomorphic foliations. From the previous reasoning 
 $\mathcal F^{s} _f$ and $\mathcal F^{s} _h $ are tangent on a set of positive $\mu$-measure; thus, $\mathcal F^{s} _f = \mathcal F^{s} _h$ because the support of  $\mu$ is Zariski dense.   
\begin{vlongue} 
Then, we conclude exactly as in Theorem \ref{thm:hyperbolic_kummer}. 
\end{vlongue}
\begin{vcourte}
Moreover, every leaf of this foliation, except a finite number of algebraic leaves, is parametrized by $\C$ and the Ahlfors-Nevanlinna currents
 of these entire curves are all equal to the unique closed positive current $T^+_f$ that satisfies $\M(T^+_f)=1$ and $f^*T^+_f=\lambda(f)T^+_f$.
This implies that $g^*$ preserves $\R_+[T^+_f]$ or permute it with $\R_+[T^-_f]$. Thus, a  subgroup $\Gamma_0\subset \Gamma$ of index $\leq 2$
preserves $\R_+[T^+_f]$ and by~\cite[Thm. 3.2]{Cantat:Milnor}, $\Gamma$ is virtually cyclic.
\end{vcourte} 
\end{proof}
 
We expect that  most results in this paper can be extended
 to polynomial automorphisms of $\R^2$. 
This is indeed the case for Theorem \ref{thm:rigidity}, 
with essentially the same proof.

\begin{thm}\label{thm:rigidity_henon}
Let $f$ be a   polynomial  automorphism of $\R^2$. 
Let $\mu$ be an ergodic  $f$-invariant measure   with positive entropy supported on $\R^2$. 
If $g\in\Aut(\R^2)$  satisfies $g_*\mu   = \mu$,  then: 
\begin{enumerate}[\em (a)]
\item either $f$ and $g$ are conservative and $\mu$ is the restriction of  $\leb_{\R^2}$ to a Borel set of positive measure invariant under $f$ and $g$;
\item or  the group generated by $f$ and $g$ is solvable and virtually cyclic; in particular, there exists $(n,m)\in \Z^2\setminus{\set{(0,0)}}$ such that $f^n = g^m$. 
\end{enumerate}
\end{thm}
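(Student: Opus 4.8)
The plan is to mimic the proof of Theorem~\ref{thm:rigidity} almost verbatim, replacing the compact K\"ahler surface $X$ and its totally real invariant surface $Y$ by the affine plane. The first step is to pass to a compactification. A polynomial automorphism $f$ of $\mathbb{A}^2_\R$ of positive entropy is, after a well-chosen birational change of coordinates, a composition of generalized H\'enon maps (by the Friedland--Milnor theorem); in particular its dynamical degree $\lambda(f)$ is $>1$, and $f$ extends to a birational transformation of $\mathbb{P}^2$, or better, to a loxodromic automorphism $\bar f$ of a rational projective surface $X$ obtained by blowing up points on the line at infinity (this is classical; see the companion references cited in the paper). Under this extension $\mathbb{A}^2_\R$ embeds as a Zariski-open, totally real, $\bar f$-invariant surface in $X(\mathbb{C})$, and $g$ also extends to an automorphism of a possibly larger such surface; after taking a common model $X$ we may assume $f,g\in\Aut(X_\R)$ and $Y=X(\R)\cap \mathbb{A}^2_\R$ (more precisely, a real-analytic invariant surface). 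Since $\mu$ has positive entropy, the Ruelle--Margulis inequality shows $\mu$ is hyperbolic for $f$, and as in the proof of Theorem~\ref{thm:rigidity} it charges neither points nor piecewise-smooth curves; in particular it gives no mass to the (finitely many) $\bar f$-periodic curves in $X$, nor to the curves in the boundary $X\setminus \mathbb{A}^2_\R$.

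\textbf{Key steps.}
First I would run the ``Alternative'' of the proof of Theorem~\ref{thm:rigidity}: for an arbitrary $g\in\Aut_\mu(X)$, set $\Gamma=\langle f,g\rangle$ and divide into three cases according to the existence of a $\Gamma$-invariant measurable line field, a $\Gamma$-invariant pair of line fields, or neither. In Cases~1 and~2, the Oseledets line fields $E^s_f, E^u_f$ are the only candidates; examining $h=g^{-1}fg$ (again hyperbolic and ergodic for $\mu$, being conjugate to $f$), Lemma~\ref{lem:equal_stable_manifolds} forces $W^s(f,x)=W^s(h,x)$ or $W^u(h,x)=W^s(f,x)$ on a positive-measure set, hence $T^+_f=T^+_h$ or $T^+_f=T^-_h$, hence $\Gamma^*$ fixes a point or a pair of points of $\partial\Hyp_X$ and is therefore virtually cyclic by \cite[Thm~3.2]{Cantat:Milnor}. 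Here the transcendence of the generic stable/unstable manifold and the uniqueness of the associated Ahlfors--Nevanlinna current are exactly as in \S\ref{sec:nevanlinna} and \cite[Thm~6.2]{Cantat:Milnor}; the fact that $\mu$ gives no mass to curves is what rules out the ``stable manifold contained in a periodic rational curve'' possibility. In Case~3 (no invariant line field or pair), I would reproduce the argument copied from \cite[Thm~5.1.a]{br}: produce $g_1,g_2\in\Gamma$ genuinely rotating the stable and unstable directions, form $\nu_\alpha=a\delta_f+b\delta_{f^{-1}}+c\delta_g+d\delta_{g^{-1}}$, use the connectedness of the simplex $\Delta$ together with the symmetry $(a,b,c,d)\mapsto(b,a,d,c)$ to find $\alpha$ for which $\mu$ is a \emph{hyperbolic} $\nu_\alpha$-stationary measure, and then invoke the main theorem of \cite{br} to conclude that $\mu$ is fiberwise SRB in both the stable and unstable directions, hence absolutely continuous with respect to $\leb_{\R^2}$; by the Pesin entropy formula this forces $\lambda^s(f,\mu)+\lambda^u(f,\mu)=0$, i.e.\ $\jac(f)$ has modulus one on $\supp\mu$, i.e.\ $f$ is conservative, and the same for $g$. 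The invariance principle (Theorem~\ref{thm:ledrappier_invariance_principle}) and the classification of projectively invariant measures (Theorem~\ref{thm:classification_proj_invariant}) are what power the transition between Cases~1/2 and Case~3.

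\textbf{Conclusion and the main obstacle.}
If $\mu$ is not absolutely continuous, the Alternative applies to every $g\in\Aut_\mu(X)$, so $\Aut_\mu(X)^*$ preserves $\{\mathbb{P}[T^+_f],\mathbb{P}[T^-_f]\}\subset\partial\Hyp_X$ and is virtually cyclic; since $X$ is rational, $\Aut(X)^\circ$ can be nontrivial but any $f$-conjugacy-invariant positive-dimensional subgroup of translations would be ruled out exactly as in the torus case at the end of the proof of Theorem~\ref{thm:rigidity} (here there are no translations to worry about once we note a loxodromic $f$ cannot normalize such a subgroup nontrivially), so $\Aut_\mu(X)$ itself is virtually cyclic; restricting back to the group generated by the \emph{polynomial} automorphisms $f$ and $g$ gives a virtually cyclic, hence virtually $\Z$, group, and virtual cyclicity plus the fact that $f$ and $g$ have infinite order (positive entropy) yields $f^n=g^m$ for some $(n,m)\neq(0,0)$; solvability follows since a group with a cyclic finite-index subgroup acting on $H^2$ is solvable-by-finite and the kernel of the action on cohomology for a rational surface consists of automorphisms preserving a pencil, hence is solvable. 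In case (a), ergodicity of $\mu$ under $f$ plus absolute continuity forces $\mu$ to be the normalized restriction of $\leb_{\R^2}$ to an $\langle f,g\rangle$-invariant Borel set of positive finite measure. The main obstacle I anticipate is \emph{the compactification step}: one must check that a polynomial automorphism of $\R^2$ together with an \emph{arbitrary} commuting-up-to-$\mu$ automorphism $g$ can be simultaneously realized as automorphisms of a single smooth projective rational surface $X$ defined over $\R$ on which $f$ is loxodromic and $\mathbb{A}^2_\R$ sits as a Zariski-open totally real invariant piece of $X(\R)$ with $\mu$ charging no boundary curve --- and in particular that $g$, which need not be a H\'enon map, does not force the boundary divisor to behave pathologically. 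Once the geometric setup is in place, every remaining step is a transcription of arguments already established in \S\S\ref{sec:nevanlinna}--\ref{sec:rigidity} and in \cite{br}.
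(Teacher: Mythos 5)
Your strategy diverges fundamentally from the paper's, and your instinct that the compactification step is the main obstacle is correct --- but the obstacle is fatal, not merely technical. A polynomial automorphism $f$ of $\C^2$ of H\'enon type has dynamical degree $\lambda(f)=\prod d_j\geq 2$, an integer, whereas a loxodromic automorphism of a compact K\"ahler surface has dynamical degree equal to a reciprocal quadratic integer or a Salem number (this is recalled in the proof of Lemma~\ref{lem:proj_root}). An integer $\geq 2$ is neither, since $1/d$ is not an algebraic integer; as the dynamical degree is a birational invariant, no H\'enon-type $f$ is birationally conjugate to a regular automorphism of a smooth projective surface. Consequently there is \emph{no} compactification $X$ on which $f$ becomes a loxodromic element of $\Aut(X)$ --- let alone one that works simultaneously for all of $\langle f,g\rangle$, whose elements have unbounded algebraic degree --- and the $\Hyp_X$ machinery of Theorem~\ref{thm:rigidity} cannot be transplanted as you propose. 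Your later assertion that $\Aut_\mu(X)^*$ preserves $\{\P[T^+_f],\P[T^-_f]\}$ in $\partial\Hyp_X$, and the closing solvability discussion via the cohomological action, also fall with the missing compactification.

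The paper instead works directly in $\C^2$ with tools native to H\'enon dynamics. For Cases~1 and~2, the Ahlfors--Nevanlinna currents on a compact surface are replaced by the fact (from Bedford--Smillie/Fornaess--Sibony theory, quoted via \cite{sibony}) that for a H\'enon-type $h$ the closure of any stable manifold is the forward Julia set $J^+_h$, which carries a unique normalized positive closed current $T^+_h$; thus the coincidence of stable directions yields $T^+_h=T^{\pm}_f$, hence an equality of Green functions $G^+_h=G^{\pm}_f$. The passage to virtual cyclicity is then made not through $\partial\Hyp_X$ but through the Bass--Serre tree of $\Aut(\C^2)$: equality of Green functions forces $\Ax(h_*)=\Ax(f_*)$ by \cite[Thm.~5.4]{lamy}, so $\Gamma=\langle f,g\rangle$ preserves a single geodesic of the tree, and \cite[Prop.~4.10]{lamy} directly gives that $\Gamma$ is solvable and virtually cyclic. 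Your Case~3 argument --- producing a hyperbolic $\nu_\alpha$-stationary measure and invoking \cite{br} to get the SRB property, hence absolute continuity and $\lvert\Jac(f)\rvert=1$ --- matches the paper's. So the logical skeleton you sketch (Cases 1, 2, 3, and the dichotomy between absolute continuity and rigidity) is the right one, but the instruments you propose for Cases~1 and~2 and for the final step do not exist in the affine setting; the Green-function/Bass--Serre substitution is essential, not optional.
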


\begin{rem}
With the techniques developed in \cite{Cantat:BHPS}, the same result applies to the dynamics of 
${\mathsf{Out}}({\mathbb{F}}_2)$ acting on the real 
part of the character surfaces of the once punctured torus. 
\end{rem}

\begin{proof}
We briefly explain the modifications required to adapt the proof of Theorem~\ref{thm:rigidity}, and leave  the details  to the reader.  
We freely use standard facts from the dynamics of automorphisms of $\C^2$. 
Let    $f$ and $g$ be as in the statement of the theorem, and set $\Gamma=\langle f, g \rangle$. 
Since its entropy  is positive, $f$ is of H\'enon type in the sense of~\cite{lamy}: this means that $f$ is conjugate to a composition of 
generalized H\'enon maps, as in~\cite[Thm. 2.6]{friedland-milnor}. Thus, the support of $\mu$  is a compact subset of $\C^2$, because the 
basins of attraction of the line at infinity for $f$ and $f^{-1}$ cover the complement of a compact set; 
moreover, as in Theorem~\ref{thm:rigidity}, $\mu$ cannot charge any proper Zariski closed subset.

Let $\gamma$ be an arbitrary element of $\Gamma$; 
then  $h:=\gamma\inv f  \gamma$ is also  of H\'enon type. 
We run through Cases 1, 2 and 3 as in the proof of Theorem \ref{thm:rigidity}. Case 3 is treated exactly  in the same way as above and implies that $\mu$ is absolutely continuous. This in turn implies that the Jacobian of $f$, a constant $\Jac(f)\in \C^*$ since $f\in\Aut(\C^2)$, is equal to $\pm1$; and since $\mu$ is ergodic for $f$, 
it must be the restriction of $\leb_{\R^2}$ to some $\Gamma$-invariant subset. In Cases 1 and 2, arguing as before and keeping the 
same notation, we arrive at $W^s(h,x) = W^s(f,x)$ or $W^u(f,x)$ on a set of positive measure. 
For a H\'enon type automorphism of $\C^2$, the closure of any stable manifold is 
equal to the forward Julia set  $J^+$, and $J^+$ carries a unique positive closed current $T^+$ of mass 
$1$ relative to the Fubini Study form in $\P^2(\C)$ (see \cite{sibony}). So we 
infer that $T^+_h = T^+_f$ or $T^+_h =T^-_f$; as a consequence, the Green functions of $f$ and $h$ satisfy $G^+_h=G^+_f$
or $G^+_h=G^-_f$, respectively.

Automorphisms of $\C^2$ act on the Bass-Serre tree of $\Aut(\C^2)$, each $u\in \Aut(\C^2)$
giving rise to an isometry $u_*$ of the tree. If $u$ is of H\'enon type,  then $u_*$ is loxodromic;
its axis $\Ax(u_*)$ is the unique $u_*$-invariant geodesic, and $u_*$ acts as a translation along it. From~\cite[Thm.~5.4]{lamy}, $G^+_h = G^+_f$ implies  $\Ax(h_*)=\Ax(f_*)$; changing $f$ into
$f\inv$, $G^+_h =G^-_f$ gives $\Ax(h_*)=\Ax(f_*\inv)=\Ax(f_*)$ because $\Ax(f_*\inv)=\Ax(f_*)$. 
Since $\gamma_* \Ax(f_*)=\Ax(h_*)$, we see that $\Gamma$ preserves $\Ax(f_*)$; so, all
$u\in \Gamma$ of H\'enon type satisfy $\Ax(u_*)=\Ax(f_*)$. From \cite[Prop. 4.10]{lamy}, we conclude that $\Gamma$ is solvable and virtually 
cyclic. \end{proof}

\begin{vlongue}
\newpage
\end{vlongue}
\appendix

\section{General compact complex surfaces}\label{par:appendix_non_kahler}

Here, we  study the concept of non-elementary groups of automorphisms on (non Kähler) compact complex surfaces. We show that the two possible definitions of non-elementary group are equivalent and force the surface to be Kähler. 

Let $M$ be a compact manifold. We say that a  group $\Gamma$ of 
homeomorphisms of $M$ is {\bf{cohomologically non-elementary}} if its 
image $\Gamma^*$ in $\GL(H^*(M;\Z))$ contains a non-Abelian free subgroup, and that 
$\Gamma$ is {\bf{dynamically non-elementary}} if it contains a non-Abelian free group $\Gamma_0$ 
such that the topological entropy of every $f\in \Gamma_0\setminus \{ \id\}$ is positive. 
When $M$ is a compact Kähler surface and $\Gamma\subset \Aut(M)$,  
Theorem~3.2 of \cite{Cantat:Milnor} and the fact that parabolic automorphisms have zero entropy 
imply  that $\Gamma$ is non-elementary (in the sense of Section~\ref{par:hyp_X}) if and only if
it is cohomologically non-elementary, if and only if it is dynamically non-elementary.

\begin{lem}\label{lem:non-elementary_free_groups}
Let $M$ be a compact manifold, and   $\Gamma$ be a subgroup of $\Diff^\infty(M)$. If $\Gamma$ is cohomologically non-elementary, then $\Gamma$ is dynamically non-elementary. 
\end{lem}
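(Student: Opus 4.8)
The plan is to deduce dynamical non-elementariness from cohomological non-elementariness by combining the Gromov--Yomdin theorem with the ping-pong lemma. First I would recall Yomdin's inequality (together with its converse due to Gromov): for a $C^\infty$ diffeomorphism $f$ of a compact manifold $M$, the topological entropy satisfies
\[
h_{\mathrm{top}}(f)=\log\,\mathrm{sp}(f^*),
\]
where $\mathrm{sp}(f^*)$ denotes the spectral radius of the induced action $f^*$ on the total cohomology $H^*(M;\R)$. In particular $h_{\mathrm{top}}(f)>0$ if and only if $f^*$ has an eigenvalue of modulus $>1$, i.e. $f^*$ is not quasi-unipotent. So it suffices to produce a non-Abelian free subgroup $\Gamma_0\subset\Gamma$ such that every nontrivial element acts on $H^*(M;\R)$ with spectral radius $>1$.

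The key step is a purely group-theoretic / linear-algebraic statement: if $G\subset\GL_N(\R)$ contains a non-Abelian free subgroup, then it contains a non-Abelian free subgroup all of whose nontrivial elements have spectral radius $>1$. To prove this, I would first pass to the Zariski closure $H=\overline{G}^{\,\mathrm{Zar}}$; since $G$ contains a free group, $H$ is not virtually solvable, so its identity component $H^\circ$ is a reductive group with a semisimple part of positive dimension, and by the Tits alternative argument one finds a pair of elements generating a free group that moreover lie in a suitable proximal representation. Concretely: choose a non-trivial irreducible subrepresentation $W$ of $H^\circ$ on which the image is not relatively compact (such a $W$ exists because otherwise $H$ would be virtually bounded, hence virtually solvable by an averaging/compact-group argument, contradicting the existence of a free subgroup). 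On $W$, exterior powers give a proximal representation, and by the ping-pong construction of Tits one obtains $a,b\in G$ generating a free group such that every nontrivial reduced word $w(a,b)$ acts proximally on $\wedge^k W$ — hence has an eigenvalue of modulus $>1$ there, so $\mathrm{sp}(w(a,b)^*)>1$ on $H^*(M;\R)\supset W$ (after possibly replacing $W$ by an exterior power, which is again a subquotient of $H^*(M;\R)^{\otimes k}$, so eigenvalues of modulus $>1$ on the exterior power force the same on $H^*$). Passing from the abstract $G=\Gamma^*$ back to $\Gamma$: lift $a,b$ to elements of $\Gamma$; the subgroup they generate maps onto a free group, and since a subgroup of a group that surjects onto $F_2$ and is generated by two elements mapping to free generators is itself free of rank $2$, we get the desired $\Gamma_0\subset\Gamma$.

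Finally I would assemble the pieces: for the free group $\Gamma_0=\langle a,b\rangle\subset\Gamma$ just constructed, every $f\in\Gamma_0\setminus\{\mathrm{id}\}$ has $f^*$ with spectral radius $>1$ on $H^*(M;\R)$, hence by Gromov--Yomdin $h_{\mathrm{top}}(f)=\log\,\mathrm{sp}(f^*)>0$. Therefore $\Gamma$ is dynamically non-elementary.

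\textbf{Main obstacle.} The delicate point is the linear-algebra step: extracting from ``$G$ contains a free group'' a free subgroup whose \emph{every} nontrivial element is ``cohomologically loxodromic'' (spectral radius $>1$ on the ambient space, not merely on some subquotient). One must be careful that proximality on an irreducible factor $W$ (or on $\wedge^k W$) genuinely propagates to an eigenvalue of modulus $>1$ on all of $H^*(M;\R)$ — this is fine because $W$, or $\wedge^kW$, occurs as a subquotient of a tensor power of $H^*(M;\R)$ and spectral radii are submultiplicative/compatible under tensor and subquotient operations, so an eigenvalue of modulus $>1$ upstairs forces one downstairs. The other subtlety is ensuring the chosen irreducible factor has unbounded image; this is where one invokes that a linear group with all irreducible constituents relatively compact is virtually solvable (indeed virtually abelian-by-finite after unipotent considerations), contradicting the Tits-alternative hypothesis. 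Once these two points are handled, the Gromov--Yomdin input is a black box and the rest is bookkeeping.
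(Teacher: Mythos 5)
Your overall strategy matches the paper's: reduce to producing a non-Abelian free subgroup of $\Gamma^*$ whose nontrivial elements all have spectral radius $>1$ on $H^*(M;\mathbf{R})$, then invoke Yomdin's theorem to get positive entropy, and lift the free group to $\Gamma$ by projectivity of free groups (your Hopficity-style remark is a fine way to say this). Two points need attention, one cosmetic and one substantive.

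The cosmetic one: the equality $h_{\mathrm{top}}(f)=\log\mathrm{sp}(f^*)$ is not a theorem for arbitrary $C^\infty$ diffeomorphisms of compact manifolds; Gromov's inequality $h_{\mathrm{top}}\le\log\mathrm{sp}(f^*)$ is proved only for holomorphic maps of compact K\"ahler manifolds. You only use Yomdin's inequality $h_{\mathrm{top}}(f)\ge\log\mathrm{sp}(f^*)$, which is correct, so this does not break the proof, but the statement should be trimmed to the half you actually use.

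The substantive gap is in the step ``such a $W$ exists because otherwise $H$ would be virtually bounded, hence virtually solvable by an averaging/compact-group argument, contradicting the existence of a free subgroup.'' Over $\mathbf{R}$ this implication is false: a bounded (relatively compact) subgroup of $\GL_N(\mathbf{R})$ can perfectly well contain a non-Abelian free group --- $\SO_3(\mathbf{R})$ does, acting irreducibly on $\mathbf{R}^3$. So ``all irreducible constituents relatively compact'' is compatible with containing $F_2$, and your claimed contradiction does not arise. What saves the day, and what the paper uses, is that $\Gamma^*$ preserves the lattice $H^*_{\mathrm{t.f.}}(M;\mathbf{Z})$, so the cocycles are integer matrices. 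Then ``all eigenvalues of every element of modulus $\le 1$'' forces, by Kronecker's lemma, that all eigenvalues are roots of unity, i.e., every element is quasi-unipotent; a group of quasi-unipotent integer matrices is virtually nilpotent (the paper cites Benoist's notes), and that does contradict the presence of a free subgroup. In short, you need the integrality of the cohomological representation to rule out the bounded case; a real argument alone cannot do it. Once this is repaired the rest of your argument (passing to a suitable exterior power of a Jordan--H\"older constituent to get proximality, then Tits-style ping-pong) is parallel to the paper's proof.
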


\begin{proof} We split the proof in two steps, the first one concerning groups of matrices, and 
the second one concerning topological entropy.

\smallskip

{\bf{Step 1.-}}  {\emph{$\Gamma^*$ contains a free subgroup $\Gamma_1^*$, all of whose non-trivial elements  
have spectral radius larger than $1$}}.

\smallskip

The proof uses basic ideas involved in Tits's alternative, here in the simple case of subgroups of $\GL_n(\Z)$.
Let $N$ be the rank of $H^*_{t.\!f\!.}(M;\Z)$, where $t.\!f\!.$ 
stands for ``torsion free''. Fix a basis of this free $\Z$-module. 
Then $\Gamma^*$ determines a subgroup of $\GL_N(\Z)$. Our assumption implies that the 
derived subgroup of $\Gamma^*$ contains a non-Abelian free group $\Gamma_0^*$ of rank~$2$.  

If all (complex) eigenvalues 
of all elements of $\Gamma_0^*$ have modulus $\leq 1$, then by Kronecker's lemma all of them are roots 
of unity. This implies that $\Gamma_0^*$ contains a finite index nilpotent subgroup (see 
Proposition 2.2 and Corollary 2.4 of~\cite{Benoist:Grp_Disc}), contradicting the existence of a non-Abelian free subgroup. 
Thus, there is an element $f^*$ in $\Gamma_0^*$ with a complex eigenvalue of modulus  $\alpha>1$. 
Let $m$ be the number
of eigenvalues of $f^*$ of modulus $\alpha$, counted with multiplicities. Consider the linear representation of $\Gamma_0^*$ 
on $\bigwedge^m H^*(M;\C)$; the action of $f^*$ on this space has a unique dominant eigenvalue, of 
modulus $\alpha^m$; the corresponding eigenline determines an attracting fixed point for $f^*$ in the
projective space $\P(\bigwedge^m H^*(M;\C))$; the action of $f^*$ on this topological space is proximal. 

Let 
\begin{equation}
\{0\}=W_0 \subset W_1\subset \cdots \subset W_k \subset W_{k+1}=\bigwedge^m H^*(M;\C)
\end{equation}
be a Jordan-H\"older sequence for the representation of $\Gamma^*$: the subspaces $W_i$ are 
invariant, and the induced representation of $\Gamma^*$ on $W_{i+1}/W_i$ is irreducible for all $0\leq i\leq k$. 
Let $V$ be the quotient space $W_{i+1}/W_{i}$ in which the eigenvalue of $f^*$ of modulus $\alpha^m$
appears. Since $\Gamma_0^*$ is contained in the derived subgroup of $\Gamma$, the linear 
transformation of $V$ induced by $f^*$ has determinant $1$; thus, $\dim(V)\geq 2$. Now, 
we can apply Lemma~3.9 of~\cite{Benoist:Grp_Disc} to (a finite index, Zariski connected subgroup of) $\Gamma^*_{0}\rest{V}$:
changing $f$ is necessary, both $f^*\rest{V}$ and $(f^{-1})^*\rest{ V}$ are proximal, and there is 
an element $g^*$ in $\Gamma^*$ that maps the attracting fixed points $a^+_f$ and $a^-_f\in \P(V)$ of $f^*\rest{ V}$ and $(f^*\rest{V})\inv$ to two distinct
points (i.e. $\{ a^+_f, a^-_f\}\cap \{ g(a^+_f), g(a^-_f)\} =\emptyset$) ; then, by the ping-pong lemma, large powers of $f^*$ and $g^*\circ f^*\circ (g^*)^{-1}$ 
generate a non-Abelian free group $\Gamma_1^*\subset \Gamma^*$ such that each element 
$h^*\in \Gamma_1^*\setminus \{\id\}$ has an attracting fixed point in $\P(V)$. This implies that every element of $\Gamma_1^*\setminus \{\id\}$
has an eigenvalue of modulus $>1$ in $H^*(M;\C)$. 

\smallskip

{\bf{Step 2.-}}  Since $\Gamma_1^*$ is free, there is a free subgroup  $\Gamma_1\subset \Gamma$ such that the homomorphism
$\Gamma_1\mapsto \Gamma_1^*$ is an isomorphism. By Yomdin's theorem~\cite{yomdin}, all elements of $\Gamma_1\setminus \{\id\}$ have positive entropy, and we are done. 
\end{proof}

\begin{thm}
Let $M$ be  a compact complex surface, and  $\Gamma$ be a subgroup of $\Aut(M)$.
Then, $\Gamma$ is cohomologically non-elementary if and only if it is dynamically non-elementary. 
If such a subgroup exists, then  $M$ is a projective surface. 
 
\end{thm}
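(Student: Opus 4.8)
The statement has two parts: the equivalence of cohomological and dynamical non-elementarity for a subgroup $\Gamma\subset\Aut(M)$ of a general compact complex surface, and the fact that the existence of such a $\Gamma$ forces $M$ to be projective. For the equivalence, one direction is immediate from Lemma~\ref{lem:non-elementary_free_groups}: a cohomologically non-elementary group of diffeomorphisms (in particular of automorphisms) is automatically dynamically non-elementary, since that lemma is stated for $\Gamma\subset\Diff^\infty(M)$ and no complex structure was used. For the converse, I would invoke the structure of the $\Aut(M)$-action on $H^*(M;\Z)$: if $\Gamma$ is dynamically non-elementary, it contains a free group $\Gamma_0$ of rank $2$ all of whose nontrivial elements have positive topological entropy, hence (by Gromov--Yomdin) positive algebraic entropy, hence act on $H^*(M;\C)$ with spectral radius $>1$; in particular $\Gamma^*$ is infinite and not virtually unipotent. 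One then argues that $\Gamma^*$ cannot be virtually abelian — otherwise $\Gamma_0^*$ would be virtually abelian, and a virtually abelian group of integral matrices with an element of spectral radius $>1$ is virtually cyclic, contradicting that $\Gamma_0^*$ must be free of rank $2$ (the image $\Gamma_0\to\Gamma_0^*$ is injective because the kernel of $f\mapsto f^*$ acts with zero entropy, being contained in a compact-by-finite group by Lieberman's theorem, adapted to the non-Kähler case via the results on $\Aut(M)^\circ$). So $\Gamma^*$ contains a non-abelian free subgroup, i.e. $\Gamma$ is cohomologically non-elementary.

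\textbf{Projectivity.}
For the second assertion I would run the Kodaira--Enriques classification of compact complex surfaces, treating each class of minimal models in turn, and reduce to a known Kähler case. A dynamically non-elementary $\Gamma$ contains elements of positive entropy, i.e. \emph{loxodromic} automorphisms in the terminology of the Kähler theory; the key point is that the proof of Theorem~\ref{thm:existence_loxodromic}, or rather the part of Cantat's classification (\cite{Cantat:Milnor}, Theorem~10.1) concerning surfaces carrying a positive-entropy automorphism, applies to arbitrary compact complex surfaces and shows that $M$ is bimeromorphic to a rational surface, a torus, a K3 surface, or an Enriques surface — and in every one of these cases $M$ is Kähler (rational and Enriques surfaces are projective; tori and K3 surfaces may a priori be non-Kähler, but they carry positive-entropy automorphisms only when they are projective, by the argument of Lemma~\ref{lem:proj_root} and Lemma~\ref{lem:virtually_cyclic_non_projective}). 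More precisely: surfaces with odd first Betti number (class VII, non-Kähler elliptic, non-Kähler Kodaira) either have no automorphism of positive entropy (their $H^{1,1}$ is too small, or the intersection form has the wrong signature) or, in the class VII case, have finite or virtually cyclic $\Aut^*$ by the work on Kato surfaces; so they cannot carry a non-elementary group. Once $M$ is known to be Kähler, Theorem~\ref{thm:X-is-projective2} (equivalently the discussion in \S\ref{subs:X-is-projective}) gives projectivity directly.

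\textbf{Main obstacle.}
The delicate point is the case-by-case elimination of the non-Kähler surfaces, specifically class VII surfaces of positive second Betti number, which do carry interesting automorphisms (Kato surfaces). Here one must check that $\Aut(M)^*$, acting on $H^2(M;\Z)$ equipped with its (negative definite or degenerate) intersection form, cannot contain a non-abelian free subgroup satisfying the positive-entropy condition: the cohomology of such surfaces simply does not support the hyperbolic-space geometry needed for two independent loxodromic isometries, so any $\Gamma^*$ is virtually solvable. I would isolate this as a separate lemma, citing the relevant structure results for automorphism groups of Kato and class VII surfaces, and for the remaining (Kähler) classes reduce cleanly to the already-established Theorem~\ref{thm:X-is-projective2}. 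The entropy-to-cohomology implication (Gromov--Yomdin) and the injectivity of $\Gamma_0\to\Gamma_0^*$ are routine once the Lieberman-type finiteness for $\ker(f\mapsto f^*)$ is in hand, which holds in the full generality of compact complex surfaces.
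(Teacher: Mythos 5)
Your forward implication (cohomological $\Rightarrow$ dynamical) matches the paper's proof via Lemma~\ref{lem:non-elementary_free_groups}. The rest of your proposal, however, misses the one ingredient the paper's argument actually pivots on: the result of \cite{Cantat:CRAS} that a compact complex surface carrying an automorphism of positive topological entropy is automatically K\"ahler. Once this is invoked, a dynamically non-elementary $\Gamma$ forces $M$ to be K\"ahler; Theorem~3.2 of \cite{Cantat:Milnor} then gives the equivalence of all notions of non-elementarity, and Theorem~\ref{thm:X-is-projective2} gives projectivity. The whole theorem is thus a three-line corollary of \cite{Cantat:CRAS}. Your plan instead re-derives these facts by running the Kodaira--Enriques classification over \emph{all} compact complex surfaces, which, as you acknowledge, requires a separate and delicate lemma to rule out class VII and other non-K\"ahler surfaces; this is in effect redoing the content of \cite{Cantat:CRAS} rather than citing it, and your sketch leaves precisely that step to unspecified references. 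Your claim that Theorem~10.1 of \cite{Cantat:Milnor} ``applies to arbitrary compact complex surfaces'' is also not stated there and needs justification.

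There is a further circularity in your direct argument for the reverse implication. To pass from $h_{\mathrm{top}}(g)>0$ to $\rho(g^*)>1$ you need Gromov's inequality $h_{\mathrm{top}}(g)\le \log\rho(g^*)$, which in its classical form is a theorem about compact K\"ahler manifolds; Yomdin's theorem only gives the opposite inequality, valid for all $C^\infty$ maps. So the step ``positive topological entropy $\Rightarrow$ positive algebraic entropy'' is not routine on a general compact complex surface --- it is essentially what is at stake --- and invoking it before knowing $M$ is K\"ahler begs the question. (The same issue affects the Lieberman-type finiteness you invoke: Lieberman's theorem is a statement about compact K\"ahler manifolds.) Once $M$ is known to be K\"ahler, all of this is fine, but then the established K\"ahler theory applies wholesale and the detour is unnecessary. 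Finally, a small slip: the intermediate claim that a virtually abelian subgroup of $\GL_N(\Z)$ with an element of spectral radius $>1$ is virtually cyclic is false (take a diagonal product of two independent hyperbolic elements of $\GL_2(\Z)$ inside $\GL_4(\Z)$); it is also unnecessary, since once $\Gamma_0\hookrightarrow\Gamma_0^*$ is established, $\Gamma_0^*\subset\Gamma^*$ is already the non-abelian free subgroup required by the definition.
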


\begin{proof}
Indeed it was shown in~\cite{Cantat:CRAS} that every compact complex surface possessing
an automorphism of positive entropy is K\"ahler. Thus, the first assertion follows from Lemma~\ref{lem:non-elementary_free_groups} and  Theorem~3.2 of \cite{Cantat:Milnor}, and the second one  follows from  Theorem~\ref{thm:X-is-projective2}.
\end{proof}

\section{Strong laminarity of  Ahlfors currents}\label{par:appendix_ahlfors}

In this appendix, we sketch the proof of Lemma~\ref{lem:ahlfors_current}, explaining how to adapt arguments of 
\cite{bls,Dujardin:Laminar2003,isect}, written for $X=\P^2(\C)$, to our context.  
 
\begin{proof}[Proof of Lemma \ref{lem:ahlfors_current}] 
 Let $(\Delta_n)$ be a sequence of unions of disks, 
as in the definition of injective Ahlfors currents, such that 
$\unsur{\M(\Delta_n)}\set{\Delta_n}$ converges to $T$.
 Since $X$ is projective we can choose a finite family of meromorphic fibrations $\varpi_i: X\dasharrow \pu$ such that 
\begin{itemize}
\item the general fibers of $\varpi_i$ are smooth curves of genus $\geq 2$;
\item for every $x\in X$, there are at least two of the fibrations $\varpi_{i}$, denoted for simplicity 
by  $\varpi_{1}$ and $\varpi_{2}$,
which are well defined in some neighborhood $U_x$ of  $x$ ($x$ is not 
a base point of the corresponding pencils), satisfy $(d\varpi_{1}\wedge d\varpi_{2})(x)\neq 0$ (the fibrations are transverse), 
and for which the fibers $\varpi_{k}^{-1}(\varpi_{k}(x))$ containing $x$ are smooth.
\end{itemize}
 If we blow-up the base points of $\varpi_{k}$, $k=1,2$, we obtain a new surface $X'\to X$ on which each $\varpi_{k}$ lifts to a 
regular fibration $\varpi_{k}'$; the open neighborhood $U_x$ is isomorphic to its preimage in $X'$ so, when 
working on $U_x$, we can do as if the two fibrations $\varpi_{k}$ were local submersions with smooth fibers
of genus $\geq 2$.

To construct $T_r$, we follow the proof of \cite[Proposition 4.4]{isect} (see also \cite[Proposition 3.4]{Dujardin:Laminar2003}). The construction
will work as follows: we fix a sequence $(r_j)$ converging to zero, and for 
every $j$ we extract from $\unsur{\M(\Delta_n)}\set{\Delta_n}$ a current 
$T_{n,r_j}$ made of disks of size $\approx r_j$ which are obtained from $\Delta_n$ by only keeping graphs of size $r_j$ 
over one of the projections 
$\varpi_i$.  

By a covering argument, it is enough to
 work locally near a point $x$, with two projections $\varpi_{1}$ and $\varpi_{2}$ as above.
Let $S\subset \C$ be the unit square $\{x+{\mathsf{i}} y\; ; \; 0\leq x \leq 1, \; 0\leq y\leq 1\}\simeq [0,1]^2$. To simplify the exposition, we may assume that 
\begin{equation}
\quad \varpi_{k}(U_x)= S\subset \C\subset \P^1(\C)\quad ({\text{for }} \; k=1,2).
\end{equation}
Set $r_j=2^{-j}$ and consider the subdivision $\qq_j$ of $S\simeq [0,1]^2$ into $4^j$ squares $Q$ of size $r_j$.
A connected component of $\Delta_n\cap \varpi_{k}^{-1}(Q)$, for such a small square $Q$, is  called a graph
(with respect to $\varpi_{k}$) if it lifts to a local section of the fibration $\varpi_{k}'\colon X'\to \P^1(\C)$ above~$Q$.
Then, we fix $j$,  intersect $\Delta_n$ with  $\varpi_{k}^{-1}(Q)$, and keep only the components 
of $\varpi_{k}\inv(Q\cap \Delta_n)$, $Q\in \qq_j$ which are graphs with respect to $\varpi_{k}$. 
Such a family of graphs is normal because the fibers of $\varpi'_{k}$ have genus $\geq 2$ (compare to Lemma~3.5 of \cite{Dujardin:Laminar2003}). 

This being done, we can copy the proof of \cite[Proposition 4.4]{isect}.
Letting $n$ go to $+\infty$ and extracting a converging subsequence, we obtain a uniformly laminar current 
$T_{\qq_j, k}\leq T$.   Away from the base points of $\varpi_{k}$, $T_{\qq_j, k}$ is made  
of disks of size $\asymp r_j$ which are limits of disks 
contained in  the $\Delta_n$.
Combining the two currents $T_{\qq_j, k}$,   we get a  current $T_{r_j}\leq T$  
which is uniformly laminar in every cube $\varpi_{1}\inv(Q)\cap \varpi_{2}\inv(Q')$, $Q,Q'\in\qq_j$, and 
such that 
\begin{equation}\label{eq:strong_laminar}
\langle T-T_{r_j}, \varpi_{1}^* {\kappa_{\P^1}}+ \varpi_{1}^* {\kappa_{\P^1}}\rangle
\leq \langle T-T_{\qq_j, 1}, \varpi_{1}^* {\kappa_{\P^1}}\rangle + \langle T-T_{\qq_j, 2}, \varpi_{2}^* {\kappa_{\P^1}}\rangle, 
\end{equation}
where ${\kappa_{\P^1}}$ is the Fubini-Study form.
By definition, $T$ will be  strongly approximable if locally $\M(T - T_{r_j})\leq O(r_j^2)$. 
Using the fact that $\varpi_{1}^* {\kappa_{\P^1}}+ \varpi_{1}^* {\kappa_{\P^1}}\geq C \kappa_0$ and the Inequality~\eqref{eq:strong_laminar},
it will be enough to show that  
$\langle T  - T_{\qq_j, k}, \varpi_{k}^*{\kappa_{\P^1}}\rangle  = O(r_j^2)$  for $k=1,2$. This itself reduces
to counting (with multiplicity) the number of ``good components''  of $\Delta_n$ for the projections $\varpi_{k}: \Delta_n \to \qq_j$ that is, 
the components above the squares $Q$ of $Q_j$ that are kept in the above contruction of $T_{\qq_j, k}$ (the graphs relative
to $\varpi_{k}$).

The counting argument  is identical to 
\cite[\S 7]{bls}, except that we apply the Ahlfors theory of covering surfaces to a union of disks, not just one.
For notational ease, set $\varpi = \varpi_{k}$, $r=r_j$  and $\qq = \qq_j$; $\qq$ is a subdivision of $S\simeq [0,1]^2$ by squares
of size $2^{-j}$.   
We decompose $\qq$ as a  union of four non-overlapping subdivisions $\qq^{\ell}$, $\ell = 1, 2, 3, 4$; by this we mean that for each $\ell$,
the squares $Q\in \qq^\ell$ have disjoint closures $\overline Q$.  Fix such an $\ell$
and let $q = \# \qq^\ell= 4^{j-1}$. Applying  Ahlfors' theorem  to each of the disks constituting $\Delta_n$ and 
summing over these disks, we deduce 
  that the number of good components $N(\qq^{\ell})$ satisfies (\footnote{The term $(q-4)$ instead of $(q-2)$ in 
  \cite{bls} is due to the fact that we are projecting on $\P^1$ and not on $\C$.})
\begin{equation}\label{eq:good_components}
N(\qq^{\ell}) \geq (q-4) \area_{\pu} (\Delta_n) - h \length_{\pu}(\fr\Delta_n),
\end{equation}
where $\area_{\pu}$ (resp. $\length_{\pu}$) is the area of the projection $\varpi(\Delta_n)$ (resp. length of $\varpi(\fr\Delta_n)$), counted with multiplicity, and $h$ is a constant that depends only on the geometry of $\qq^\ell$. Dividing by $\area_{\pu} (\Delta_n)$,
using $\length_{\pu}(\fr\Delta_n) = o(\area_{\pu} (\Delta_n))$, which is guaranteed by Ahlfors' construction, 
and letting $n$ go to $+\infty$,  we obtain
\begin{equation}\langle  T_{\qq}\rest{\qq^{\ell}}, \varpi^*{\kappa_{\P^1}}\rangle \geq (q-4) r^2 = 
\area_{\pu}\lrpar{ \bigcup\nolimits_{S\in \qq^\ell} S } -4 r^2. \end{equation}
Finally, summing from $\ell=1$ to $4$, we see that, relative to $\varpi^*\kappa_{\pu}$, the  mass lost   by discarding  the bad components  of size $r$ in $T$ is of order $O(r^2)$: this is precisely   the required estimate.

Let us now justify the geometric intersection statement, following step by step the proof of \cite[Thm. 4.2]{isect}:  
let $S$ be a current with continuous normalized potential on $X$; we have to show that $S\wedge T_r$ 
increases to $S\wedge T$ as $r$ decreases to 0. Again the result is local so we work near $x$, use 
the projections $\varpi_1$ and $\varpi_2$, and keep notation as above. Given squares $Q, Q'\in \qq$ and a real number 
$\lambda<1$, we denote by $\lambda Q$ the homothetic of $Q$ of factor $\lambda$ 
with respect to its center,  and by $C(Q, Q')$ the cube $\varpi_{1}\inv(Q)\cap \varpi_{2}\inv(Q')$.
Fix $\e>0$. We want to show that for   $r\leq r(\e)$, the mass of $(T-T_r)\wedge S$ is smaller than $\e$. 
The first observation  is that there exists $\lambda(\e)\in (0,1)$, independent of $r$, such that 
translating $\qq$ if necessary,    the mass of 
$T\wedge S$ concentrated in ${\bigcup_{Q, Q'}  C(Q, Q')\setminus C(\lambda Q, \lambda Q')}$ is smaller than 
$ \e/2$
 (see \cite[Lem. 4.5]{isect}). Fix such a $\lambda$. 
 It only remains to  estimate the mass of $(T-T_r)\wedge S$ in 
 $\bigcup_{Q, Q'}    C(\lambda Q, \lambda Q')$. 
In such a cube $C(\lambda Q, \lambda Q')$ the argument presented in~\cite[pp. 123-124]{isect}, based on an integration by parts,  gives the estimate
 \begin{equation} \int_{ C(\lambda Q, \lambda Q')} (T-T_r)\wedge S \leq 
 C(\lambda)  \moco(u_S, r) \frac{1}{r^2} \M\lrpar{(T-T_r)\rest{ C(  Q,   Q')}}, 
 \end{equation}
 where $\moco(u_S, r)$ is the modulus of continuity of the potential $u_S$ of $S$. To conclude, we  sum 
 over all squares $Q, Q'$ and use the estimate $M(T-T_r) = O(r^2)$ to get that 
 \begin{equation}
 \M\lrpar{(T-T_r)\rest{ \bigcup_{Q, Q'}    C(\lambda Q, \lambda Q')}}\leq C\omega(u_S, r).
 \end{equation}
This is smaller than $\e/2$ if $r\leq r(\e)$. 
\end{proof}

\begin{vlongue}
\section{Proof of Theorem \ref{thm:classification_proj_invariant}} \label{app:barrientos_malicet}

Let us consider a random dynamical system $(X, \nu)$ and $\mu$ an ergodic stationary measure, 
as in Theorem \ref{thm:classification_proj_invariant}. We keep the notation from \S \ref{subs:ledrappier_invariance_principle}.  

We say that a sequence of real numbers $(u_n)_{n\geq 0}$  {\bf{almost converges towards $+\infty$}} if for every 
$K\in \R$, 
 the set $L_K=\set{ n\in \N\; ; \; u_n\leq K }$ has an asymptotic lower density
\begin{equation}
\underline\dens(L_K):=\liminf_{n\to +\infty}\left( \frac{\sharp (L_K\cap [0,n])}{n+1}\right)
\end{equation}
which is  equal to $0$: $\underline \dens(L_K)=0$ for all $K$.

\begin{lem}\label{lem:alternative_bdd}
The set   of points $\cx=(\omega,x)$ in $\X_+$ such that $\llbracket D_xf^n_\omega\rrbracket$ 
almost converges towards $+\infty$ on $\pp (T_xM)$ is $F_+$-invariant. In particular, by ergodicity, 
\begin{enumerate}[\em (a)]
\item either $\llbracket D_xf^n_\omega\rrbracket$  almost converges towards $+\infty$ for $(\nu^\N\times \mu)$-almost every $(\omega, x)$;
\item or, for $(\nu^\N\times \mu)$-almost every $(\omega, x)$, there is a sequence $(n_i)$ with positive lower density along which 
$\llbracket  D_xf^{n_i}_\omega \rrbracket$ is  bounded.
\end{enumerate}
\end{lem}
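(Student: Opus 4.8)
The plan is to show that the set
\[
G = \set{ \cx = (\omega, x) \in \X_+ \; ; \; \llbracket D_x f^n_\omega \rrbracket \text{ almost converges to } +\infty }
\]
is $F_+$-invariant modulo $\m_+$-negligible sets, and then to invoke ergodicity of $(\X_+, F_+, \m_+)$ (Lemma~\ref{lem:stationary}). Writing $a_n(\cx) = \log \llbracket D_x f^n_\omega \rrbracket$, I would rely on two elementary properties of $\llbracket \cdot \rrbracket$ that are immediate from the identity $\llbracket g \rrbracket = \norm{a}^2$ for the $KAK$ decomposition $g = k_1 a k_2$ recalled in \S\ref{subs:ledrappier_invariance_principle} (equivalently, $\llbracket g \rrbracket = \sigma_1(g)/\sigma_2(g)$, the ratio of singular values): submultiplicativity, $\llbracket g \circ h \rrbracket \leq \llbracket g \rrbracket \llbracket h \rrbracket$, and inversion-invariance, $\llbracket g^{-1} \rrbracket = \llbracket g \rrbracket$. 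Together with the cocycle relation $D_x f^{n+m}_\omega = D_{f^n_\omega(x)} f^m_{\sigma^n \omega} \circ D_x f^n_\omega$ and $F_+^n(\omega, x) = (\sigma^n \omega, f^n_\omega(x))$, submultiplicativity shows that $(a_n)$ is a subadditive cocycle over $F_+$.

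First I would check that $G$ is Borel: $a_n$ is continuous in $x$ and measurable in $\omega$, the lower density $\underline{\dens}\set{ n \; ; \; a_n(\cx) \leq K }$ is a countable $\liminf$ of Borel functions, and membership in $G$ amounts to requiring this lower density to vanish for every integer $K$. Next, the moment condition~\eqref{eq:moment} together with $\llbracket D_x f_0 \rrbracket \leq \norm{f_0}_{C^1}\,\norm{f_0^{-1}}_{C^1}$ shows that $\cx \mapsto a_1(\cx)$ is $\m_+$-integrable, hence finite $\m_+$-almost everywhere. The core step is then the comparison of $\cx$ with $\cy := F_+(\cx) = (\sigma\omega, f_0 x)$: applying submultiplicativity and inversion-invariance to $D_x f^{n+1}_\omega = D_{f_0 x} f^n_{\sigma\omega} \circ D_x f_0$ gives
\[
\bigl| a_{n+1}(\cx) - a_n(\cy) \bigr| \leq a_1(\cx) \qquad \text{for every } n \geq 0 .
\]
For $\cx$ with $a_1(\cx) < \infty$ this forces $\set{ n \; ; \; a_n(\cy) \leq K } \subseteq \set{ n \; ; \; a_{n+1}(\cx) \leq K + a_1(\cx) }$ and $\set{ n \; ; \; a_{n+1}(\cx) \leq K } \subseteq \set{ n \; ; \; a_n(\cy) \leq K + a_1(\cx) }$; since neither shifting the index by $1$ nor adding a fixed constant to $K$ affects whether the lower density vanishes for all $K$, one obtains $\cx \in G \Leftrightarrow F_+(\cx) \in G$, i.e. $\mathbf{1}_G \circ F_+ = \mathbf{1}_G$ $\m_+$-almost everywhere.

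By ergodicity of $\m_+$ this gives $\m_+(G) \in \set{0, 1}$. If $\m_+(G) = 1$ we are in alternative~(a). If $\m_+(G) = 0$, then for $\m_+$-almost every $\cx$ there is an integer $K(\cx)$ with $\underline{\dens}\set{ n \; ; \; a_n(\cx) \leq K(\cx) } > 0$; enumerating this set as $(n_i)$ yields a sequence of positive lower, hence upper, density along which $\llbracket D_x f^{n_i}_\omega \rrbracket \leq e^{K(\cx)}$ stays bounded, which is alternative~(b). I do not expect a genuine obstacle here; the only points that need a little care are the two algebraic properties of $\llbracket \cdot \rrbracket$ and the Borel measurability of $G$, both of which are routine.
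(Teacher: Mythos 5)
Your proof is correct, and since the paper declares the lemma ``straightforward'' and supplies no argument, your write-up is precisely the natural one that fills that gap: the two-sided estimate $\bigl|a_{n+1}(\cx) - a_n(F_+(\cx))\bigr| \leq a_1(\cx)$ from submultiplicativity and inversion-invariance of $\llbracket\cdot\rrbracket$, the observation that a unit index shift and a bounded additive shift in $K$ both leave ``$\underline{\dens}(L_K)=0$ for all $K$'' unchanged, and the ergodicity step all check out. One tiny remark: since $X$ is compact and each $f$ is a diffeomorphism, $a_1(\cx)=\log\llbracket D_xf_0\rrbracket$ is in fact finite \emph{everywhere}, not merely $\m_+$-almost everywhere, so the set $G$ is genuinely $F_+$-invariant (not just invariant modulo a null set) — the moment hypothesis is only needed later, not for this invariance.
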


The proof is straightforward. 
We are now ready for the proof of Theorem \ref{thm:classification_proj_invariant}. 
Let us first  emphasize one delicate issue: in Conclusion~(1) of the theorem,  
it is important that the directions $E$ (resp. $E_1$ and $E_2$) only depend on $x\in X$ (and not on 
$\cx = (x,\omega)\in \cX_+$). Likewise in Conclusion~(2), the trivialization $P_x$ should depend  only on $x$. 
This justifies the inclusion of a detailed proof of Theorem \ref{thm:classification_proj_invariant}, since in
 the slightly different setting of       
\cite{barrientos-malicet}, the authors did not have to check this point carefully.

We fix a measurable trivialization $P\colon TX\to X\times \C^2$, given 
by linear isometries  $P_x\colon T_xX\to \C^2$, where $T_xX$ is endowed with the hermitian form $(\kappa_0)_x$,  and 
$\C^2$ with its standard hermitian form. This trivialization conjugates the action of $DF_+$ to that of a 
cocycle $A\colon \cX_+ \times \C^2\to \cX_+ \times \C^2$ over $F_+$.  
We denote by $A_\cx\colon \set{\cx}\times \C^2\to \set{F_+(\cx)}\times \C^2$ the induced linear map; observe that 
$A_\cx = A_{(\omega, x)}$ depends only on $x$ and on the first coordinate $f_\omega^1 = f_0$ of $\omega$. 
Using  $P$ we transport the measure $\hat\mu$ to a measure,  
still denoted by $\hat\mu$, on the product space $X\times \pp^1(\C)$. By our invariance assumption, its
disintegrations $\hat\mu_\cx=\hat\mu_x$ satisfy $(\P A_\cx)_*\hat\mu_\cx=\hat\mu_{F_+(\cx)}=\hat\mu_{f^1_\omega(x)}$.

\smallskip

\textbf{The bounded case. --} In this paragraph we show
  that in the essentially bounded case (b) of Lemma \ref{lem:alternative_bdd}, 
Conclusion~(2) of Theorem \ref{thm:classification_proj_invariant} holds.  We streamline the argument following the proof of   
\cite[Prop. 4.7]{barrientos-malicet} which deals with  the more general case of $\GL(d, \R)$-cocycles,
 and is  itself   a variation on previously known ideas  (see e.g.~\cite{Arnold-Nguyen-Oseledets, Zimmer:Israel1980}). 

Set $G =\PGL(2, \C)$, and  define the $G$-extension   ${\widetilde{F}}_+$ of
$F_+$  on $\X_+\times G$ by 
\begin{equation}
{\widetilde{F}}_+(\cx, g)=(F_+(\cx), \P(A_\cx)  g)=((\sigma(\omega), f^1_\omega(x)), \P(A_{(\omega, x)})  g)
\end{equation}
for every $\cx=(\omega,x)$ in $\X_+$ and $g$ in $G$; thus ${\widetilde{F}}_+$  is given by $F_+$ on $\X_+$ 
and is the multiplication by  $\P(A_\cx)$ on $G$.
Since $\P(A_{(\omega, x)})$ depends on $\omega $ only through its first coordinate, 
${\widetilde{F}}_+$ can be interpreted as the skew product 
map associated to a random dynamical system on $X\times G$. 
Denote by $\mathcal P$ the convolution operator associated to this random dynamical system; 
thus $\mathcal P$ 
acts on probability measures on $X\times G$. Let $\mathrm{Prob}_\mu(X\times G)$ 
the set of probability measures 
on $X\times G$ projecting to $\mu$ under the natural map $X\times G \to X$. Since $\mu$ is stationary, 
$\mathcal P$ maps $\mathrm{Prob}_\mu(X\times G)$ to itself.

Recall that by assumption 
there is a set $E$ of positive measure in $\X_+$, 
a compact subset $K_G$ of $G$,  and a positive real number $\varepsilon_0$ such that 
\begin{equation}
\underline{\dens}\set{ n\; ; \; \P(A^{(n)}_\cx)\in K_G }\geq \varepsilon_0
\end{equation}
for all $\cx$ in $E$.  

\begin{lem} There exists  an ergodic, stationary, Borel probability measure $\widetilde{\mu}_G$ on $X\times G$ 
with marginal measure $\mu$ on $X$.
\end{lem}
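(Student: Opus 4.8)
The plan is to run a Krylov--Bogolyubov argument for the convolution operator $\mathcal P$ on $\mathrm{Prob}_\mu(X\times G)$, the only obstruction being that $G=\PGL(2,\C)$ is not compact so that mass could a priori escape to infinity in the fibers; the boundedness hypothesis recorded just above (case (b) of Lemma~\ref{lem:alternative_bdd}), which provides $E\subset\X_+$ with $\m_+(E)>0$, a compact set $K_G\subset G$ and $\varepsilon_0>0$ such that $\underline\dens\{n\colon \P(A^{(n)}_\cx)\in K_G\}\geq\varepsilon_0$ for all $\cx\in E$, is precisely what rules this out. Concretely, one starts from $\mu_0:=\mu\times\delta_{e}\in\mathrm{Prob}_\mu(X\times G)$ and sets $\eta_N:=\frac1N\sum_{n=0}^{N-1}\mathcal P^n\mu_0$; since $\mu$ is $\nu$-stationary, each $\eta_N$ again projects to $\mu$, and the telescoping identity $\mathcal P\eta_N-\eta_N=\frac1N(\mathcal P^N\mu_0-\mu_0)$ has total mass $\leq 2/N$.

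Next I would extract a subsequence $\eta_{N_j}$ converging, in the vague topology on sub-probability measures of the Polish space $X\times G$, to some measure $\eta$. To see that $\eta$ is non-zero, write $\eta_N(X\times K_G)=\int_{\X_+}\frac1N\sum_{n=0}^{N-1}\mathbf 1\{\P(A^{(n)}_\cx)\in K_G\}\,d\m_+(\cx)$ and apply Fatou's lemma together with the hypothesis: $\liminf_N\eta_N(X\times K_G)\geq\int_E\varepsilon_0\,d\m_+=\varepsilon_0\,\m_+(E)>0$; since $X\times K_G$ is compact, $\eta(X\times K_G)\geq\limsup_j\eta_{N_j}(X\times K_G)\geq\varepsilon_0\,\m_+(E)=:c>0$, so $\eta$ has total mass at least $c$, and its $X$-marginal is $\leq\mu$. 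That $\eta$ is $\mathcal P$-stationary follows from the telescoping estimate, provided $\mathcal P$ behaves well for the vague topology; the point requiring care here is that the trivialization $P$ is only measurable, so $\widetilde F_+$ is a priori only a measurable skew product on $X\times G$ rather than a continuous random dynamical system. I would handle this exactly as in \cite{barrientos-malicet}, using the disintegration over the (continuous) base system $(X,\nu)$ and the fact that each fiber map $g\mapsto\P(A_{\cx})g$ is an honest homeomorphism of $G$; alternatively, one checks the Feller property $\mathcal P(C_0(X\times G))\subset C_0(X\times G)$ directly, which is where the moment condition~\eqref{eq:moment} enters, via $\log\norm{A_\cx}\lesssim 1+\log\norm{f_\omega^1}_{C^1}$ and $\nu(\{\log\norm{f}_{C^1}\geq t\})\to0$, so that $\P(A_\cx)^{-1}$ pushes any fixed compact subset of $G$ off to infinity with $\nu$-probability tending to $0$, uniformly in $x$.

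Finally I would normalize $\bar\eta:=\eta/\eta(X\times G)$, a $\mathcal P$-stationary probability on $X\times G$ whose $X$-marginal $\pi_*\bar\eta$ is a $\nu$-stationary probability with $\pi_*\bar\eta\leq\eta(X\times G)^{-1}\mu\leq c^{-1}\mu$. Because $\mu$ is ergodic as a $\nu$-stationary measure, any $\nu$-stationary probability dominated by a multiple of $\mu$ must equal $\mu$: indeed, with $t=\min(1,c)$ one decomposes $\mu=t\,\pi_*\bar\eta+(\mu-t\,\pi_*\bar\eta)$ into two non-negative $\nu$-stationary measures and invokes extremality of $\mu$ among stationary probabilities; hence $\pi_*\bar\eta=\mu$. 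The set of $\mathcal P$-stationary probabilities on $X\times G$ projecting to $\mu$ is therefore non-empty and convex, and taking an ergodic component of $\bar\eta$ (for $\widetilde F_+$) and projecting it to $X$, the ergodicity of $\mu$ forces almost every component to project to $\mu$; any such component is the required $\widetilde\mu_G$. The main obstacle in all of this is the second paragraph: turning the purely qualitative recurrence ``positive-density return to a fixed compact set on a positive-measure subset of the base'' into a quantitative mass bound that survives the vague limit, while simultaneously dealing with the fact that the $G$-extension is only measurable; everything else in the argument is soft.
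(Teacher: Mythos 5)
Your proof is correct and follows essentially the same strategy as the paper's: a Krylov--Bogolyubov argument on the Ces\`aro averages of $\mathcal P^i(\mu\times\delta_{e})$, with the positive-density recurrence to $K_G$ on a positive-measure set supplying the lower mass bound that prevents escape to infinity in the $G$-fiber, followed by normalization, identification of the $X$-marginal with $\mu$ via extremality, and an ergodic decomposition. You give a bit more detail than the paper (which delegates to Barrientos--Malicet, Prop.\ 4.13), and you are right to flag that the measurability of the trivialization $P$ means the stationarity of the vague limit needs the fibered argument rather than a naive Feller property, but the route is the same.
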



\begin{proof} (See \cite[Prop. 4.13]{barrientos-malicet} for details).
Let $\widetilde{\mu}_G$ be any cluster value of the sequence of probability measures  
\begin{equation}
\unsur{N}\sum_{i=0}^{N-1} \mathcal P^i (\mu\times \delta_{1_G}).
\end{equation} 
By the boundedness assumption, $\widetilde{\mu}_G$ has mass $M\geq \e_0$ 
and is stationary (i.e. $\mathcal P$-invariant). Standard arguments show that its projection on the 
first factor is equal to $M\mu$.  We renormalize it 
to get a probability measure and using  the ergodic decomposition and  the ergodicity of $\mu$,  we may replace 
it by an ergodic stationary measure in $\mathrm{Prob}_\mu(X\times G)$.
\end{proof}

Denote by $\widetilde \m_G = \nu^\N\times \widetilde \mu_G$ the  ${\widetilde{F}}_+$-invariant measure associated to 
$\widetilde{\mu}_G$. 
The action of ${\widetilde{F}}_+$ on     $\cX_+\times G$ (resp. of the induced random dynamical system on 
$X\times G$) commutes to the action of $G$ 
by right multiplication, i.e. to the diffeomorphisms $R_h$, $h\in G$, defined by
\begin{equation}
R_h(\cx, g)= (\cx, g  h).
\end{equation}
Slightly abusing notation we also denote by $R_h$ the analogous map on $X\times G$. 
The next lemma combines classical arguments due to Furstenberg and Zimmer. 

\begin{lem}
Let $\widetilde\mu_G$ be a Borel stationary measure on $X\times G$ with marginal $\mu$ on $X$.  
Set 
\[
H =\set{ h \in G \; ; \; (R_h)_*{\widetilde{\mu}}_G  = \widetilde \mu_G}
= \set{ h \in G \; ; \; (R_h)_*{\widetilde{\m}}_G  =\widetilde \m_G }.
\]
Then $H$ is a compact subgroup of $G$ and there is a measurable function $Q\colon X \to G$ 
such that the cocycle $B_\cx = Q_{f^1_\omega(x)}^{-1}\times \P(A_\cx) \times Q_x$ takes its values in $H$
for $(\nu^\N\times \mu)$-almost every $\cx$. 
\end{lem}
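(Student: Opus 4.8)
The plan is to follow the classical Furstenberg--Zimmer argument for stationary measures on compact group extensions, specialized to $G=\PGL(2,\C)$. The key point is that $H$ is a closed subgroup of $G$ which is compact: compactness will come from the essentially-bounded hypothesis that governs this case (the cocycle $\P(A_\cx^{(n)})$ returns to a fixed compact set with positive lower density along a set of positive $\m_+$-measure), together with the fact that $\widetilde\mu_G$ has full marginal $\mu$. More precisely, I would first verify that $H$ is a group: if $(R_h)_*\widetilde\mu_G=\widetilde\mu_G$ and $(R_{h'})_*\widetilde\mu_G=\widetilde\mu_G$ then clearly $(R_{hh'})_*\widetilde\mu_G=\widetilde\mu_G$ and $(R_{h^{-1}})_*\widetilde\mu_G=\widetilde\mu_G$; and $H$ is closed because $h\mapsto (R_h)_*\widetilde\mu_G$ is continuous for the weak-$\star$ topology on the (metrizable) space of Borel probability measures on the locally compact space $X\times G$. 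The equivalence of the two descriptions of $H$ (in terms of $\widetilde\mu_G$ or of $\widetilde\m_G=\nu^\N\times\widetilde\mu_G$) is immediate since $R_h$ acts only on the $G$-coordinate and commutes with the projection to $\Omega$.

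Next I would establish compactness of $H$. The disintegration of $\widetilde\mu_G$ over $\mu$ gives conditional measures $\widetilde\mu_{G,x}$ on $G$, and stationarity of $\widetilde\mu_G$ translates (as in \S\ref{subs:ledrappier_invariance_principle}, using the invariance principle already available in the bounded case) into the equivariance $(\P A_\cx)_*\widetilde\mu_{G,x}=\widetilde\mu_{G,f^1_\omega(x)}$ for $\nu$-a.e. $f^1_\omega$. Iterating and using that $\P(A^{(n)}_\cx)$ lies in a fixed compact $K_G\subset G$ along a subsequence of positive lower density for $\cx$ in a set of positive measure, I would deduce that for $\mu$-a.e. $x$ the measure $\widetilde\mu_{G,x}$ is a weak-$\star$ limit of measures of the form $(\P A^{(n_i)}_{\mathscr y})_*\widetilde\mu_{G,y}$ with $\P A^{(n_i)}_{\mathscr y}\in K_G$; since $K_G$ is compact and the $\widetilde\mu_{G,y}$ form a measurable family of probability measures, $\widetilde\mu_{G,x}$ cannot escape to infinity, i.e. it is genuinely a probability measure (no mass loss) supported on a bounded subset of $G$ in a suitable sense. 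Then $H\subset G$ stabilizes $\widetilde\mu_G$, hence stabilizes (by disintegration) the a.e. defined family $\widetilde\mu_{G,x}$ up to the $\mu$-measure-zero ambiguity; a standard argument (Furstenberg's lemma on stabilizers of measures on $\P^1(\C)$, or directly the fact that the stabilizer in $\PGL(2,\C)$ of a compactly supported family of probability measures is compact) shows $H$ is compact. The alternative description of $H$ via $\widetilde\m_G$ makes the group structure and closedness transparent.

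Finally, for the cocycle reduction I would use that $X\times G/H$ carries the push-forward of $\widetilde\mu_G$, and by construction of $H$ this push-forward disintegrates over $\mu$ into \emph{single points}: the map $x\mapsto \widetilde\mu_{G,x}\bmod H$ selects, for $\mu$-a.e. $x$, a well-defined coset, hence a measurable section $Q\colon X\to G$ (using measurable selection, e.g. Kuratowski--Ryll-Nardzewski, since $G/H$ is a standard Borel space and the section is the barycenter-type choice forced by $H$ being exactly the stabilizer). Conjugating the cocycle by $Q$, i.e. setting $B_\cx=Q_{f^1_\omega(x)}^{-1}\,\P(A_\cx)\,Q_x$, the equivariance of the $\widetilde\mu_{G,x}$ forces $B_\cx$ to preserve the fixed coset $H$, hence $B_\cx\in H$ for $\m_+$-a.e. $\cx$. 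Since $H$ is compact it is conjugate in $\PGL(2,\C)$ to a subgroup of $\mathsf{PU}_2(\C)$, so after composing $Q_x$ with this fixed conjugation we may assume $B_\cx\in\mathsf{PU}_2(\C)$; pulling $Q$ and $P$ together gives the desired measurable trivialization $(P_x)_{x\in X}$ with $\P(P_{f(x)}\circ D_xf\circ P_x^{-1})\in\mathsf{PU}_2(\C)$ for every $f\in\Gamma_\nu$ (the passage from $\nu$-a.e. $f$ to every $f\in\Gamma_\nu$ being routine since $\Gamma_\nu$ is generated by $\supp(\nu)$ and the relation is multiplicative). The main obstacle I anticipate is the careful bookkeeping in the compactness step — ruling out escape of mass in the conditional measures $\widetilde\mu_{G,x}$ and making the measurable-selection argument clean so that $Q$ depends only on $x$ and not on $\omega$; everything else is formal once $H$ is known to be a compact group.
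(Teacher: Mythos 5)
Your compactness argument is more elaborate than necessary, and the appeal to stabilizers of measures on $\P^1(\C)$ is off target here --- $\widetilde{\mu}_G$ lives on $G$, not on $\P^1(\C)$. The boundedness hypothesis is also unneeded for this step: compactness of $H$ follows purely from finiteness of $\widetilde{\mu}_G$. Pick a compact $C\subset G$ with $\widetilde{\mu}_G(X\times C)>0$; if $H$ were unbounded one could extract $h_n\in H$ making the translates $Ch_n$ pairwise disjoint, and then $\sum_n\widetilde{\mu}_G(X\times Ch_n)=\infty$ would contradict $\widetilde{\mu}_G(X\times G)=1$. This is the paper's argument, and it requires no dynamics, no disintegration, and no control of conditional measures.

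The genuine gap is in the cocycle reduction. The claim that ``by construction of $H$'' the push-forward of $\widetilde{\mu}_G$ to $X\times(G/H)$ has point-mass conditionals over $\mu$ is not a formal consequence of the definition of $H$. That $H$ stabilizes $\widetilde{\mu}_G$ tells you only that the a.e.\ conditionals $\widetilde{\mu}_{G,x}$ are right-$H$-invariant; it does not force each $\widetilde{\mu}_{G,x}$ to be supported in a single right $H$-coset --- a priori the support could be a union of several cosets, and then your measurable-selection step breaks down. The missing ingredient, which is the heart of the paper's proof, is the Birkhoff ergodic theorem applied to the ergodic measure $\widetilde{\m}_G$: since $R_h$ commutes with $\widetilde{F}_+$, if $(x,g)$ is Birkhoff-generic for $\widetilde{\m}_G$ then $(x,gh)$ is Birkhoff-generic for $(R_h)_*\widetilde{\m}_G$, so $(x,g)$ and $(x,gh)$ cannot both be $\widetilde{\m}_G$-generic unless $(R_h)_*\widetilde{\m}_G=\widetilde{\m}_G$, i.e.\ $h\in H$. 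This establishes precisely the single-coset property you need: the slice of $\widetilde{\m}_G$-generic points over a.e.\ $x$ lies in one right $H$-coset, $Q$ is a measurable section of this slice, and the $\widetilde{F}_+$-invariance of the set of generic points then forces $B_\cx\in H$ a.e.
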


\begin{proof}  
Clearly, $H$ is a closed subgroup of $G$. 
If $H$ were not
bounded then, given any compact subset $C$ of $G$, we could find a sequence $(h_n)$ of elements of $H$ 
such that the subsets $R_{h_n}(C)$ are pairwise disjoint. Choosing  $C$ 
such that $X\times C$ has positive $\widetilde\mu_G$-measure, 
we would get a contradiction with  the finiteness of $\widetilde\mu_G$. So $H$ is a compact subgroup of $G$. 

We say that a point $(x,g)$ in $X\times G$ is generic  
if for $\nu^N$-almost every $\omega$, 
\begin{equation}\label{eq:generic_G}
\frac{1}{N}\sum_{n=0}^{N-1}  \varphi\lrpar{{\widetilde{F}}^n_+(\omega, x, g) } \underset{N\to\infty}\longrightarrow 
\int_{\cX_+\times G} \varphi \; d  \widetilde\m _G 
\end{equation}
for every compactly supported continuous function on $\cX_+\times G$.
The Birkhoff ergodic theorem 
provides a Borel set $\mathcal E$ of full $\widetilde \mu_G$-measure made of generic points. 
Now  if   $(x, g_1)$ and $(x, g_2)$ belong to $\mathcal E$,  
writing $g_2=g_1  h=R_h(g_1)$ for $h=g_1^{-1}  g_2$, we get that   $h$ is an 
element of $H$. 

Given $g\in G$, define ${\mathcal{E}}_x\subset G$ to be the set of elements $g\in G$ such that $(x,g)$ is generic. Then there 
exists a   measurable section
 $X\ni x\mapsto Q_x  \in  G$ such that $Q_x\in {\mathcal{E}}_x$ for almost all~$x$. By definition of $\mathcal{E}_x$, 
$(\omega, x, Q_x)$ satisfies \eqref{eq:generic_G} for $\nu^N$-almost every $\omega$.  
Then for  $\nu$-almost every $f_0 = f_\omega^1$, 
  by $\widetilde F_+$-invariance of the set of Birkhoff generic points we infer that 
  $(f_\omega^1(x),  \P(A_\cx) Q_x)$ belongs to $\mathcal E$. Since $(f_\omega^1(x), Q_{f_\omega^1(x)})$ belongs to $\mathcal E$ 
  as well, it follows that $Q_{f_\omega^1(x)}^{-1}  \P(A_\cx)  Q_x$ is in $H$. 
We conclude that the cocycle 
$B_\cx = Q_{f^1_\omega(x)}^{-1}\times \P(A_\cx) \times Q_x$ takes its values in $H$ for almost all $\cx$, as claimed. 
\end{proof}

Note that the map $x\mapsto Q_x$ lifts to a measurable map $x\mapsto Q'_x\in \GL_2(\C)$.  
Conjugating $H$ to a subgroup of $\mathsf{PU}_2$ by some element $g_0\in G$, we can now readily conclude from the two previous lemmas that when 
$\llbracket  D_xf^{n}_\omega \rrbracket$ is essentially bounded, Conclusion~(2) of Theorem~\ref{thm:classification_proj_invariant} holds
(the $P_x$ are obtained by composing the $Q'_x$ with a lift of $g_0$ to $\GL_2(\C)$).

\smallskip

\textbf{The unbounded case. --}  Now, we suppose that $\llbracket D_xf^n_\omega\rrbracket$ is essentially unbounded (alternative (a) of Lemma \ref{lem:alternative_bdd}), and adapt the results of \cite[\S 4.1]{barrientos-malicet} to the complex setting to arrive at one of the Conclusions~(1.a) or (1.b)
of Theorem~\ref{thm:classification_proj_invariant}.
 The main step of the proof is the following lemma.

\begin{lem}\label{lem:1/2}
Let $A$ be a measurable $\GL(2, \C)$ cocycle over $(\cX_+, F_+, \nu^\N\times \mu)$
 admitting a projectively  invariant family of probability 
measures  $\lrpar{\hat \mu_x}_{x\in X}$ such that almost surely $\llbracket A_{\cx}^{(n)}\rrbracket$ almost converges to 
infinity. Then for almost every $x$, $\hat\mu_x$ possesses an atom of mass at least $1/2$; more precisely: 
\begin{itemize}
\item either $\hat\mu_x$ has a unique atom $[w(x)]$ of mass $\geq 1/2$, that depends measurably on $x\in X$;
\item or $\hat\mu_x$ has a unique pair of atoms   
 of mass $1/2$, and this (unordered) pair depends measurably on $x\in X$. 
\end{itemize}
\end{lem}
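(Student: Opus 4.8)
The plan is to adapt the $\GL(2,\R)$-arguments of Barrientos and Malicet \cite[\S 4.1]{barrientos-malicet} to our $\GL(2,\C)$-setting, identifying each fibre $\P(T_xX)$ with $\P^1(\C)=\P(\C^2)$. First I would record the elementary contraction principle in $\PSL_2(\C)$, viewed inside the compact space $\P(\mathrm{End}(\C^2))$: if $g_n\to\infty$ in $\PSL_2(\C)$, then after extracting a subsequence $\P(g_n)$ converges to a rank-one class, described by a \emph{sink} $a\in\P^1(\C)$ and a \emph{source} $b\in\P^1(\C)$, in the sense that $\P(g_n)\to a$ locally uniformly on $\P^1(\C)\setminus\{b\}$; correspondingly $\P(g_n^{-1})$ converges to a rank-one class with sink $b$ and source $a$. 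Since $\llbracket g\rrbracket=\norm{\P(g)}_{C^1}=\norm a^2$ in a $KAK$ decomposition $g=k_1ak_2$, the hypothesis $\llbracket g_n\rrbracket\to\infty$ means exactly $\P(g_n)\to\infty$ in $\PSL_2(\C)$. The consequence I will use: for any probability measure $m$ on $\P^1(\C)$ and any such divergent sequence, every weak-$*$ limit of $(g_n)_*m$ has the form $(1-\gamma)\,\delta_a+\gamma\,\kappa$, where $\gamma=\lim_{\varepsilon\to0}m\big(B(b,\varepsilon)\big)$ (if $m$ is fixed) and $\kappa$ is a probability measure; in particular its largest atom has mass $\ge 1-\gamma$.

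Next I would exploit two structural facts. Because each $\P(A_\cx)$ is an \emph{invertible} projective transformation, it carries the decreasing sequence of atom-masses of $\hat\mu_x$ onto that of $\hat\mu_{f^1_\omega(x)}$; hence $\cx\mapsto\delta^{(1)}(\hat\mu_x)$ (mass of the largest atom) and $\cx\mapsto\delta^{(2)}(\hat\mu_x)$ (sum of the two largest atom-masses) are $F_+$-invariant, so $\nu^\N\times\mu$-almost everywhere constant, say $\delta^{(1)}_0$ and $\delta^{(2)}_0$. Moreover, since $(\hat\mu_x)$ is projectively invariant, the measure $\hat\mu=\int\delta_x\otimes\hat\mu_x\,d\mu(x)$ on $X\times\P^1(\C)$ is $\nu$-almost surely invariant for the induced random dynamical system; passing to the natural extension $(\X,F,\m)$ as in \S\ref{par:definition_skew_products} and Proposition~\ref{pro:m}, the corresponding $F$-invariant measure on $\X\times\P^1(\C)$ is $\nu^\Z\times\hat\mu$, whose conditional over a point $(\xi,x)\in\X$ is $\hat\mu_x$. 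Iterating the invariance relation backwards along the orbit then yields, for $\m$-a.e.\ $(\xi,x)$ and all $n\ge0$,
\begin{equation*}
\hat\mu_x=(g_n)_*\hat\mu_{x_{-n}},\qquad \hat\mu_{x_{-n}}=(g_n^{-1})_*\hat\mu_x,
\end{equation*}
where $x_{-n}$ is the $X$-coordinate of $F^{-n}(\xi,x)$ and $g_n:=\P(A^{(n)})$ along this backward orbit.

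The core of the proof is then to extract the atom. We are in alternative~(a) of Lemma~\ref{lem:alternative_bdd}, so $\llbracket A^{(n)}_\cx\rrbracket$ almost converges to $+\infty$; using $\m$-invariance of the natural extension and an ergodic (Poincaré-recurrence) argument, the same holds for $\llbracket g_n\rrbracket$, i.e.\ for $\m$-a.e.\ $(\xi,x)$ there is a set of positive lower density of integers $n$ along which $\llbracket g_n\rrbracket\to\infty$; along a further subsequence $(n_i)$ both $\P(g_{n_i})$ and $\P(g_{n_i}^{-1})$ converge to rank-one classes, with sink $a$, source $b$ for $g_{n_i}$ and sink $b$, source $a$ for $g_{n_i}^{-1}$. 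By the first step, $\hat\mu_{x_{-n_i}}=(g_{n_i}^{-1})_*\hat\mu_x\rightharpoonup(1-\hat\mu_x(\{a\}))\,\delta_b+\hat\mu_x(\{a\})\,\kappa$, while $\hat\mu_x=(g_{n_i})_*\hat\mu_{x_{-n_i}}$ for every $i$; feeding the (now explicit) limit of $\hat\mu_{x_{-n_i}}$ back into the second identity and applying the contraction principle once more to $g_{n_i}$ — whose source $b$ is precisely the dominant atom location of $\lim_i\hat\mu_{x_{-n_i}}$ — gives, exactly as in \cite[\S 4.1]{barrientos-malicet}, that $\hat\mu_x$ is carried by at most two points. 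The pigeonhole bound for two nonnegative masses summing to $1$ then forces $\delta^{(1)}_0\ge\tfrac12$. The delicate point here is exactly this last deduction: naive weak-$*$ semicontinuity of ``mass of the largest atom'' runs the wrong way (atoms are created, not destroyed, in weak limits), so one must genuinely use the equivariance of the entire family $(\hat\mu_x)$ together with the ergodic theorem to control how much mass $\hat\mu_{x_{-n}}$ places near the source of $g_n$; this is where the two-atoms-of-mass-$\tfrac12$ possibility really occurs, corresponding to the dynamics interchanging two equivariant directions, and transferring ``almost convergence to $+\infty$'' from the forward cocycle to the backward-evaluated cocycle $g_n$ is a second, more routine, technical step.

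Finally, $\delta^{(1)}_0\ge\tfrac12$ pins down the structure. If the largest atom is strict, that is $\delta^{(1)}_0>\delta^{(2)}_0-\delta^{(1)}_0$, then for $\mu$-a.e.\ $x$ there is a unique point $w(x)\in\P(T_xX)$ with $\hat\mu_x(\{w(x)\})=\delta^{(1)}_0\ge\tfrac12$; measurable selection of the (a.e.\ unique) maximizing atom makes $x\mapsto w(x)$ measurable, and the $\Gamma_\nu$-invariance of $x\mapsto[w(x)]$ is just the projective invariance of $(\hat\mu_x)$. Otherwise two atoms share the maximal mass: since their masses sum to at most $1$ and each is $\ge\tfrac12$, both equal $\tfrac12$ and carry all the mass, so $\hat\mu_x=\tfrac12\delta_{[E_1(x)]}+\tfrac12\delta_{[E_2(x)]}$ for a measurably varying unordered pair of line fields, again $\Gamma_\nu$-invariant. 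This is the asserted dichotomy.
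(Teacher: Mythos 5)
Your approach tries to reproduce the Barrientos--Malicet contraction-along-backward-orbits argument, whereas the paper closes the argument with a cleaner "bounded set of projective maps" lemma. Comparing them reveals a genuine gap in your version.

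The gap is in the atom-extraction step, exactly where you flag the argument as "delicate" and then leave it unresolved. Set $\alpha = \hat\mu_x(\{a\})$ where $a$ is the sink of $g_{n_i}$, and $\beta = m_\infty(\{b\})$ where $b$ is the source. Your two applications of the contraction principle give $\alpha \geq 1 - \beta$ (from $\hat\mu_x = (g_{n_i})_* \hat\mu_{x_{-n_i}}$ and $\hat\mu_{x_{-n_i}} \rightharpoonup m_\infty$) and $\beta \geq 1 - \alpha$ (from $\hat\mu_{x_{-n_i}} = (g_{n_i}^{-1})_* \hat\mu_x$). But these are the same inequality, $\alpha + \beta \geq 1$, which is perfectly compatible with $\alpha = 0.1$, $\beta = 0.95$. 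Nothing in the sketch pins $\alpha$ (or the largest atom mass of $\hat\mu_x$) to be $\geq 1/2$. To close this, you would need to control the position of the heavy atom of $\hat\mu_{x_{-n_i}}$ relative to the source $b$ of $g_{n_i}$, which is precisely the content you omit. Your assertion that "$\hat\mu_x$ is carried by at most two points" is also stronger than the lemma: in the first alternative, $\hat\mu_x$ has a unique atom of mass $\geq 1/2$ but may still have a diffuse remainder of mass $<1/2$; it is only in the second alternative that the two atoms of mass $1/2$ exhaust the measure.

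The paper's proof avoids this bookkeeping altogether by working with \emph{pairs} of measures. Define $\mathrm{Prob}_{r,\e}(\P^1(\C))$ as the set of probability measures $m$ satisfying $\sup_y m(B(y,r)) \leq 1/2 - \e$, and prove (by the same KAK/contraction computation you state) that the set
$G_{r,\e} = \{\gamma \in \PGL(2,\C) \; ; \; \gamma_* m_1 = m_2 \text{ for some } m_1, m_2 \in \mathrm{Prob}_{r,\e}\}$
is \emph{bounded}: if $\gamma_n$ were unbounded, the dilatation would concentrate $m_1$ at a point of mass $\geq 1/2 + \e$, contradicting $m_2 \in \mathrm{Prob}_{r,\e}$. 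The key is that it constrains the source and target measure simultaneously, which is exactly what your one-sided inequalities fail to do. Then if $\hat\mu_\cx \in \mathrm{Prob}_{r,\e}$ on a set $B$ of positive measure, recurrence (Birkhoff) gives a positive-density set of times $n$ with both $\cx \in B$ and $F_+^n(\cx) \in B$, forcing $\P(A^{(n)}_\cx) \in G_{r,\e}$ along that subsequence and contradicting essential unboundedness of $\llbracket A^{(n)}_\cx \rrbracket$. Letting $r, \e \to 0$ and using compactness of $\P^1(\C)$ produces the atom of mass $\geq 1/2$. This is the contrapositive of what you want, phrased so that the boundedness is a soft geometric fact rather than a delicate subsequence extraction.
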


For the moment, we take  this result for granted and proceed with the proof. 
By ergodicity, the number of atoms of $\hat\mu_x$  and the list of their masses  are constant on a set
of full measure.  A first possibility is that $\hat\mu_x$ is almost surely the single point mass $\delta_{[w(x)]}$; this corresponds to (1.a).
A second possibility is that $\hat\mu_x$ is the sum of two point masses of mass $1/2$; this corresponds to~(1.b). 
In the 
remaining cases, 
 there is exactly one atom of mass $1/2\leq \alpha <1$ at a point $[w(x)]$. 
Changing the trivialization $P_x$, we can suppose that $[w(x)]=[w]=[1:0]$. Then we
write $\hat\mu_x=\alpha \delta_{[1:0]}+\hat\mu_x'$, and apply Lemma \ref{lem:1/2} to the family of measures  
$\hat\mu_\cx'$ (after normalization to get a probability measure). 
 We deduce that almost surely $\hat\mu_x'$ admits  an atom  of mass $\geq (1-\alpha)/2$. 
Two cases may occur:  
\begin{itemize}
\item $\hat\mu_x'$ has a unique atom of mass $\beta \geq (1-\alpha)/2$,
\item $\hat\mu_x'$ has two atoms of mass $(1-\alpha)/2$.
\end{itemize}
The second one is  impossible, because changing the trivialization, we would have
$\hat\mu_x=\alpha \delta_{[1:0]}+\frac{1-\alpha}{2} (\delta_{[-1:1]}+\delta_{[1:1]})$, and the invariance 
of the finite set $\set{ [1:0], [-1:1], [1:1] }$ would imply that the cocycle $\P (A_\cx)$ stays in a finite subgroup of $\PGL_2(\C)$, contradicting 
the unboundedness assumption. 

If  $\hat\mu_\cx'$ has a unique atom of mass $\beta \geq (1-\alpha)/2$, we change $P_x$ to put it at $[0:1]$
(the trivialization $P_x$ is not an isometry anymore).
We repeat the argument with  $\hat\mu_x=\alpha \delta_{[1:0]}+\beta \delta_{[0:1]}+\hat\mu_x''$. 
If $\beta = 1-\alpha$, i.e. $\hat\mu_x''=0$, then we are done. Otherwise 
  $\hat\mu_x''$ has one or two atoms of mass $\gamma \geq (1-\alpha-\beta)/2$, and we change
$P_x$ to assume that one of them is $[1:1]$ and the second  one --provided it exists-- 
is $[\tau(x):1]$; here, $x\mapsto \tau(x)$ is a
complex valued measurable function. 
Endow the projective line $\pp^1(\C)$ with the coordinate $[z:1]$; then   $\P(A_\cx)$ is of the 
form $z\mapsto a(\cx) z$. Since $\P (A_\cx) \lrpar{\set{1, \tau(x)}} = \lrpar{\set{1, \tau(F_+(\cx))}}$, we infer that:
\begin{itemize}
\item either $a(\cx)1 = 1$ and $\P(A_\cx)$ is the identity;
\item  or $a(\cx)1 = \tau(\pi_X(F_+(\cx))) $ and  $a(\cx)\tau(x) =1$ in which case  $\tau(\pi_X(F_+(\cx))) = \tau(x)\inv$. 
\end{itemize}
Thus we see that along the  orbit of $\cx$, $a(F^n_+(\cx))$ takes at most two values $\tau(\pi_X(F^n_+(\cx)))^{\pm 1}$, and $\llbracket A^{(n)}_\cx\rrbracket$ is bounded, which is contradictory. This concludes the proof. \qed

\begin{proof}[Proof of Lemma \ref{lem:1/2}]
Let $r$ and $\e$ be small positive real numbers.
Let $\mathrm{Prob}_{r, \e}(\P^1(\C))$ be the set of probability measures $m$
on $\P^{1}(\C)$ such that $\sup_{x\in \pu} m(B(x, r))\leq 1/2-\e$, 
where the ball is with respect to some fixed Fubini-Study metric. 
This is a compact subset of the space of probability measures on $\pu$. 
The set 
\begin{equation}
G_{r, \e} = \set{\gamma\in \PGL(2, \C),\ \exists m_1, m_2 \in \mathrm{Prob}_{r, \e}(\P^1(\C)), \ \gamma_*m_1 =m_2}
\end{equation}
is a bounded subset of $\PGL(2, \C)$. Indeed otherwise there would be an unbounded sequence $\gamma_n$ together 
with sequences $(m_{1, n})$ and $(m_{2, n})$ in $\mathrm{Prob}_{r, \e}(\P^1(\C))$ such that 
$(\gamma_n)_*m_{1, n} =  m_{2, n}$. Denote by $\gamma_n=k_n a_n k'_n$  the KAK decomposition of $\gamma_n$ in $\PGL(2, \C)$,
with $k_n$ and $k'_n$ two isometries for the Fubini-Study metric; since $\gamma_n$ is unbounded, we can extract a
subsequence such that the measures $(k'_n)_*m_{1, n}$ and $(k_n^{-1})_*m_{2, n}$ converge in 
$\mathrm{Prob}_{r, \e}(\P^1(\C))$ to two measures $m_1$ and $m_2$,
while the diagonal transformations $a_n$ converge locally uniformly 
on $\P^1(\C)\setminus\set{[0:1]}$ to the  constant map $\gamma: \P^1(\C) \setminus\set{[0:1]} \mapsto \set{[1:0]}$. Then
\begin{equation}
\gamma_*\lrpar{m_{1\vert \P^1(\C)\setminus\set{[0:1]}}} =  m_1(\P^1(\C)\setminus\set{[0:1]}) \delta_a \leq m_2;
\end{equation}
since $m_1$  belongs to  $\mathrm{Prob}_{r, \e}(\P^1(\C))$, $m_1(\P^1(\C)\setminus\set{[0:1]}) \geq 1/2+\e$, hence 
$m_2\geq (1/2+\e) \delta_a$, in contradiction with $m_2\in \mathrm{Prob}_{r, \e}(\P^1(\C))$. This proves that $G_{r,\e}$ is bounded.

To prove the lemma, let us consider the ergodic dynamical system $\P DF_+$, and the family of conditional 
probability measures $\hat\mu_\cx$ for the projection $(\omega, x,v)\mapsto \cx=(\omega,x)$.
If there exist  $r, \e>0$ such that $\hat \mu_\cx $ belongs to $\mathrm{Prob}_{r, \e}(\P^1(\C))$ 
for $\cx$ in some positive measure subset $B$ then, by ergodicity, for almost every $\cx\in \X_+$ there exists a set of integers $L(\cx)$ of 
 positive density such that for $n\in L(\cx)$,  $F_+^n(\cx)$ belongs to $B$, hence $A^{(n)}_{\cx} $ belongs to $G_{r, \e}$ (\footnote{We are slightly abusing here when the Fubini-Study metric depends on $x$, for instance when $P_x$ is not an isometry; however restricting to subset of large positive measure the metric $(P_x)_*(\kappa_0)_x$ is uniformly comparable to a fixed Fubini-Study metric.}). 
From the above claim we deduce that   $\llbracket A^{(n)}_\cx \rrbracket$ is uniformly bounded for $n\in L(\cx)$, a contradiction. 
Therefore for every $r, \e>0$, the measure of $\set{\cx, \ \hat \mu_\cx \in \mathrm{Prob}_{r, \e}(\P^1(\C))}$ is equal to $0$; it  follows that 
for almost every $\cx$, $\hat\mu_\cx$ possesses an atom of mass at least $1/2$.

If there is a unique atom of mass $\geq 1/2$, this atom determines a measurable map $\cx\mapsto [w(\cx)]\in \P T_xX$; since $\hat \mu_\cx$ does not depend on $\omega$, $[w(\cx)]$ depends only on $x$, not on $\omega$. If there are generically two atoms of mass $\geq 1/2$, then 
both of them has mass $1/2$, and the pair of points determined by these atoms depends only on $x$. 
 \end{proof}
 
\newpage
\end{vlongue}

 \bibliographystyle{plain}
\bibliography{biblio-serge}

\end{document}